\documentclass[11pt, a4paper, english]{smfart}

\usepackage[utf8]{inputenc}

\usepackage{amsmath}
\usepackage{amssymb}
\usepackage{amsfonts}
\usepackage{amsthm}
\usepackage{mathrsfs}
\usepackage{mathtools}

\usepackage{tikz-cd}
\tikzcdset{scale cd/.style={every label/.append style={scale=#1},
    cells={nodes={scale=#1}}}}
\usepackage{tikz}
\usetikzlibrary{patterns,shapes,arrows}
\usepackage{array}
\usepackage{setspace}
\usepackage{url}
\usepackage{enumerate}
\usepackage{comment}
\usepackage{graphicx}
\usepackage{hyperref}
\hypersetup{colorlinks=true,linkcolor=black,citecolor=black,urlcolor=black}
\usepackage{caption}
\usepackage{subcaption}
\usepackage{stmaryrd}

\swapnumbers

\newtheoremstyle{numpar}
{\topsep}			
{\topsep}			
{}			
{}			
{\bfseries}			
{ }			
{5pt plus 1pt minus 1pt}			
{}			

\theoremstyle{numpar}
\newtheorem{para}[subsubsection]{\unskip}

\newtheoremstyle{mythm}
  {\topsep}   
  {\topsep}   
  {\itshape}  
  {0pt}       
  {\bfseries} 
  {.}         
  {5pt plus 1pt minus 1pt} 
  {}          

\theoremstyle{mythm}
\newtheorem{assumption}[subsubsection]{Assumption}
\newtheorem{theorem}[subsubsection]{Theorem}
\newtheorem{lemma}[subsubsection]{Lemma}
\newtheorem{prop}[subsubsection]{Proposition}
\newtheorem{conjecture}[subsubsection]{Conjecture}
\newtheorem{corollary}[subsubsection]{Corollary}

\newtheoremstyle{myrmk}
  {\topsep}   
  {\topsep}   
  {}  
  {0pt}       
  {\bfseries} 
  {.}         
  {5pt plus 1pt minus 1pt} 
  {}          
  
\theoremstyle{myrmk}
\newtheorem{definition}[subsubsection]{Definition}
\newtheorem{example}[subsubsection]{Example}

\theoremstyle{myrmk}
\newtheorem{remark}[subsubsection]{Remark}

\numberwithin{equation}{subsection}

\newcommand{\ints}{\mathbf{Z}}
\newcommand{\rationals}{\mathbf{Q}}

\newcommand{\cO}{{\mathcal{O}}}
\newcommand{\rec}{\operatorname{rec}}

\newcommand{\Frob}{\operatorname{Frob}}
\newcommand{\Hom}{\operatorname{Hom}}
\newcommand{\Ind}{\operatorname{Ind}}

\newcommand{\End}{\operatorname{End}}
\newcommand{\id}{\operatorname{id}}

\newcommand{\Gal}{\operatorname{Gal}}

\newcommand{\Spec}{\operatorname{Spec}}

\newcommand{\Spf}{\operatorname{Spf}}

\newcommand{\GL}{\operatorname{GL}}

\newcommand{\ad}{\operatorname{ad}}

\newcommand{\im}{\operatorname{im}}

\newcommand{\Res}{\operatorname{Res}}
\newcommand{\gr}{\operatorname{gr}}
\newcommand{\Mod}{\operatorname{Mod}}
\newcommand{\coker}{\operatorname{coker}}

\allowdisplaybreaks

\title[Local-global compatibility]{Local-global compatibility of automorphic Galois representations over CM fields at \texorpdfstring{$p$}{p}}

\author{L. A'Campo \and B. Hevesi \and J. A. Thorne \and D. Whitmore }

\begin{document}

\begin{abstract}
    Let $F$ be a CM number field; then, to any cuspidal, regular algebraic automorphic representation of $\GL_n(\mathbf{A}_F)$ is associated a compatible system of $p$-adic Galois representations of the absolute Galois group of $F$. We prove that these representations are potentially semi-stable, in the sense of $p$-adic Hodge theory, and satisfy compatibility with the local Langlands correspondence, up to semi-simplification. 
\end{abstract}

\maketitle

\tableofcontents

\section{Introduction}

\subsection{Context} Let $F$ be a number field, and let $n \geq 1$. According to the philosophy of the Langlands programme, automorphic representations of $\GL_n(\mathbf{A}_F)$ should be related to Galois representations, i.e.\ $n$-dimensional representations of the absolute Galois group $G_F = \Gal(\overline{F} / F)$. A conjectural answer to the question of which automorphic representations should participate in this correspondence was given by Clozel \cite{Clo90}, who singled out those representations that are `algebraic' (a condition depending only on the infinite component $\pi_\infty$ of an automorphic representation $\pi$). Clozel made the following conjecture:
\begin{conjecture}\label{introconj_existence} Let $\pi$ be a cuspidal, algebraic automorphic representation of $\GL_n(\mathbf{A}_F)$. Then:
\begin{enumerate}
    \item The finite part $\pi^\infty$ (an irreducible admissible representation of $\GL_n(\mathbf{A}_F^\infty)$) is defined over a number field $K_\pi \leq  \mathbf{C}$.
    \item For any prime $p$ and embedding $K_\pi \to \overline{\mathbf{Q}}_p$, there is a continuous, semi-simple representation $r_{\pi, p} \colon G_F \to \GL_n(\overline{\mathbf{Q}}_p)$ with the following property: for any finite place $v \nmid p$ such that $\pi_v$ is unramified, the local representation $r_{\pi, p}|_{G_{F_v}}$ is unramified, and $r_{\pi, p}(\Frob_v)^{\textnormal{ss}} \in \GL_n(\overline{\mathbf{Q}}_p)$ is conjugate to the (Tate--normalised) Satake parameter $t_{\pi_v}^T \in \GL_n(\overline{\mathbf{Q}}_p)$.
\end{enumerate}
\end{conjecture}
(Here $G_{F_v} \leq G_F$ denotes the decomposition group, defined up to conjugation, and $\Frob_v \in G_{F_v}$ is a choice of geometric Frobenius lift; see \S \ref{subsec_notation} below for any further undefined notation.) The Chebotarev density theorem implies that $r_{\pi, p}$ is determined uniquely up to isomorphism by the matrices $t_{\pi_v}^T$. Henceforth we usually avoid mentioning $K_\pi$ and speak in terms of a choice of isomorphism $\iota \colon \overline{\mathbf{Q}}_p \to \mathbf{C}$; we obtain a representation $r_{\pi, \iota}$ that is characterized by the relation $r_{\pi, \iota}(\Frob_v)^{\textnormal{ss}} \sim \iota^{-1} t_{\pi_v}^T$ at unramified places. 

If $\pi$ is a regular algebraic automorphic representation, then it is cohomological, and Clozel proved in \cite{Clo90} that it is defined over a number field. If further $F$ is a CM field, then  Harris--Lan--Taylor--Thorne \cite{Har16} and Scholze \cite{Sch15} proved that $r_{\pi, \iota}$ can be constructed by approximating it (or more precisely, by approximating $r_{\pi, \iota} \oplus r_{\pi, \iota}^{c, \vee} \otimes \epsilon^{1-2n}$) by Galois representations associated to conjugate self-dual automorphic representations, whose properties were already well-understood. 

A more refined version of Conjecture \ref{introconj_existence} would ask for a description of $r_{\pi, \iota}|_{G_{F_v}}$ at all places, not just the unramified ones. This is the requirement of local-global compatibility, phrased in terms of the local Langlands correspondence. Recall that if $v$ is a finite place of $F$, then the (Tate-normalised) local Langlands correspondence $\rec_{F_v}^T$ \cite{HT01} gives a bijection between following two sets:
\begin{itemize}
    \item The set of isomorphism classes of $n$-dimensional Frobenius-semi-simple Weil--Deligne representations $(r, N)$ of $W_{F_v}$ over $\mathbf{C}$.
    \item The set of isomorphism classes of irreducible admissible representations of $\GL_n(F_v)$ over $\mathbf{C}$.
\end{itemize}
(The Tate-normalised correspondence has the virtue that it is invariant under field automorphisms, so $\mathbf{C}$ can be replaced by any abstractly isomorphic field here, such as $\overline{\mathbf{Q}}_p$.) It extends the construction of Satake parameters, in the sense that if $\pi_v$ is unramified then $\rec^T_{F_v}(\pi_v)$ is unramified, and $\rec^T_{F_v}(\pi_v)(\Frob_v) \sim t^T_{\pi_v}$. We state a refinement of Conjecture \ref{introconj_existence} in the case that $\pi$ is regular algebraic (not just algebraic), in which case there is also a convenient recipe for the expected Hodge--Tate weights:
\begin{conjecture}\label{introconj_lgc}
    Let $\pi$ be a cuspidal, regular algebraic automorphic representation of $\GL_n(\mathbf{A}_F)$ of weight $\lambda$, and let $\iota \colon \overline{\mathbf{Q}}_p \to \mathbf{C}$ be an isomorphism. Then there exists a continuous, semi-simple representation $r_{\pi, \iota} \colon G_F \to \GL_n(\overline{\mathbf{Q}}_p)$ satisfying the following conditions:
    \begin{enumerate}
        \item For any finite, prime-to-$p$ place $v$ of $F$, there is an isomorphism 
        \[ \mathrm{WD}(r_{\pi, \iota}|_{G_{F_v}})^{F-ss} \cong \iota^{-1}\rec_{F_v}^T(\pi_v) \]
        of Weil--Deligne representations of $W_{F_v}$.
        \item For any place $v | p$ of $F$, the representation $r_{\pi, \iota}|_{G_{F_v}}$ is de Rham of Hodge--Tate weights 
        \[ \mathrm{HT}_\tau(r_{\pi, \iota}) = \{ \lambda_{\iota \tau, 1} + (n-1), \lambda_{\iota \tau, 2} + (n-2), \dots, \lambda_{\iota \tau, n} \}, \]
        and there is an isomorphism 
        \[ \mathrm{WD}(r_{\pi, \iota}|_{G_{F_v}})^{F-ss} \cong \iota^{-1} \rec_{F_v}^T(\pi_v) \]
        of Weil--Deligne representations of $W_{F_v}$.
    \end{enumerate}
\end{conjecture}
Here we recall that the weight $\lambda = (\lambda_\tau)_\tau \in (\mathbf{Z}^n)^{\Hom(F, \mathbf{C})}$ is defined by the requirement that $\pi_\infty$ have the same infinitesimal character as $V_\lambda^\vee$, where $V_\lambda$ is the algebraic representation of $\GL_n(F \otimes_\mathbf{Q} \mathbf{C}) \cong \prod_\tau \GL_n(\mathbf{C})$ of highest weight $\lambda$. The definition of the Weil--Deligne representation of a continuous representation $r_v \colon G_{F_v} \to \GL_n(\overline{\mathbf{Q}}_p)$ depends, in the case $v \nmid p$, on Grothendieck's $\ell$-adic monodromy theorem, see e.g.\ \cite{Tat79}. In contrast, when $v | p$, it is defined only under the condition that $r_v$ is de Rham (equivalently, potentially semi-stable), using a recipe of Fontaine \cite{Fon94}. It is this conjecture that we study in this paper. In fact, we will be able to prove results only towards a slightly weaker, `semi-simplified' version:
\begin{conjecture}\label{introconj_lgc_ss}
    Let $\pi$ be a cuspidal, regular algebraic automorphic representation of $\GL_n(\mathbf{A}_F)$ of weight $\lambda$, and let $\iota \colon \overline{\mathbf{Q}}_p \to \mathbf{C}$ be an isomorphism. Then there exists a continuous, semi-simple representation $r_{\pi, \iota} \colon G_F \to \GL_n(\overline{\mathbf{Q}}_p)$ satisfying the following conditions:
    \begin{enumerate}
        \item For any finite, prime-to-$p$ place $v$ of $F$, there is an isomorphism 
        \[ \mathrm{WD}(r_{\pi, \iota}|_{G_{F_v}})^{\textnormal{ss}} \cong \iota^{-1}\rec_{F_v}^T(\pi_v)^{\textnormal{ss}} \]
        of Weil--Deligne representations of $W_{F_v}$.
        \item For any place $v | p$ of $F$, the representation $r_{\pi, \iota}|_{G_{F_v}}$ is de Rham of Hodge--Tate weights 
        \[ \mathrm{HT}_\tau(r_{\pi, \iota}) = \{ \lambda_{\iota \tau, 1} + (n-1), \lambda_{\iota \tau, 2} + (n-2), \dots, \lambda_{\iota \tau, n} \}, \]
        and there is an isomorphism 
        \[ \mathrm{WD}(r_{\pi, \iota}|_{G_{F_v}})^{\textnormal{ss}} \cong \iota^{-1} \rec_{F_v}^T(\pi_v)^{\textnormal{ss}} \]
        of Weil--Deligne representations of $W_{F_v}$.
    \end{enumerate}
\end{conjecture}
(This is the same as Conjecture \ref{introconj_lgc}, up to `forgetting the $N$'.) Before coming to our results, we recall what is known about Conjectures \ref{introconj_lgc} and \ref{introconj_lgc_ss}. As mentioned above, a candidate representation $r_{\pi, \iota}$ satisfying local-global compatibility at all but finitely many unramified places has already been constructed, so the question is really whether this representation satisfies local-global compatibility at every place. The following results are known:
\begin{itemize}
    \item If $\pi$ is conjugate self-dual, then Conjecture \ref{introconj_lgc} is known to hold in complete generality (see e.g.\ \cite{Car14} for the capstone in a long series of works by many authors on this problem). 
    \item If $v \nmid p$, then Varma proved \cite{Var24} that $r_{\pi, \iota}$ satisfies semi-simplified local-global compatibility at $v$, using a refinement of the `$p$-adic approximation by overconvergent conjugate self-dual cusp forms' approach given in \cite{Har16}. This approach is harder to apply when $v | p$, because $p$-adic limits of de Rham Galois representations of $G_{F_v}$ need not be de Rham. 
    \item If $v | p$, then many cases of local-global compatibility at $v$ are known, provided that $r_{\pi, \iota}$ satisfies auxiliary global hypotheses, such as that the residual representation $\overline{r}_{\pi, \iota}$ be irreducible, and that there exist a `decomposed generic' prime $\ell \neq p$, such that at places $w | \ell$ of $F$, $\overline{r}_{\pi, \iota}|_{G_{F_w}}$ is unramified, with Frobenius eigenvalues that are `as generic as possible'. As a representative example, we cite e.g. \cite[Theorem 1.1]{Hev24}, which proves Conjecture \ref{introconj_lgc_ss} under conditions of this type, using an elaboration of the degree-shifting argument introduced in \cite{10author}. 
\end{itemize}
    Let $U(n, n)$ denote the quasi-split unitary group in $2n$ variables associated to the quadratic extension $F / F^+$ (of $F$ over its maximal totally real subfield). The arithmetic locally symmetric spaces associated to $U(n, n)$ (or rather, their Borel--Serre compactifications) admit boundary components corresponding to parabolic subgroups, in particular to the Siegel parabolic, with Levi $\GL_{n / F}$. The global conditions mentioned in the previous paragraph greatly simplify the analysis of the cohomology of the $U(n, n)$ symmetric space with integral (say, $\mathbf{Z}_p$ or $\mathbf{Z} / p^N \mathbf{Z}$) coefficients. 
    
    For example, the `non-Eisenstein' (or irreducibility of residual representation) condition implies that no boundary components contribute to the cohomology groups, after localisation at a suitable maximal ideal of the Hecke algebra, except those corresponding to the Siegel parabolic. The `decomposed generic' condition implies that one can invoke the vanishing results of \cite{caraiani_scholze}, which are the key input into the degree-shifting argument of \cite{10author}: they imply that, for suitable $\mathbf{Z}_p$-free local systems $\mathcal{V}_\lambda$, middle degree cohomology classes with coefficients in $\mathcal{V}_\lambda / (p^N)$ can always be lifted to middle degree classes with coefficients in $\mathcal{V}_\lambda$. This allows the construction of congruences with cuspidal, regular algebraic, conjugate self-dual automorphic forms for $\GL_{2n}(\mathbf{A}_F)$, with controlled Hodge--Tate weights at $p$-adic embeddings $\tau$ lying above $v$.
    \subsection{Results of this paper} We complete the proof of Conjecture \ref{introconj_lgc_ss} for CM number fields $F$, by establishing semi-simplified local-global compatibility at the $p$-adic places:
    \begin{theorem}\label{introthm_lgc}
        Let $F$ be a CM number field, let $n \geq 1$, and let $\iota \colon \overline{\mathbf{Q}}_p \to \mathbf{C}$ be an isomorphism. Let $\pi$ be a cuspidal, regular algebraic automorphic representation of $\GL_n(\mathbf{A}_F)$ of weight $\lambda$. Then for any place $v | p$ of $F$, the representation $r_{\pi, \iota}|_{G_{F_v}}$ is de Rham of Hodge--Tate weights
        \[ \mathrm{HT}_\tau(r_{\pi, \iota}) = \{ \lambda_{\iota \tau, 1} + (n-1), \lambda_{\iota \tau, 2} + (n-2), \dots, \lambda_{\iota \tau, n} \}, \]
        and there is an isomorphism $\mathrm{WD}(r_{\pi, \iota}|_{G_{F_v}})^{\textnormal{ss}} \cong \iota^{-1} \rec^T_{F_v}(\pi_v)^{\textnormal{ss}}$. 
    \end{theorem}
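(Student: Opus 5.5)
The plan is to reduce to the situation where the auxiliary global hypotheses of \cite[Theorem 1.1]{Hev24} hold, using base change and automorphy-lifting-free manipulations that are insensitive to the local question at $v$. Both properties in question, being de Rham with the stated Hodge--Tate weights and the semi-simplified Weil--Deligne representation, can be checked after restriction to $G_{F'_{w}}$ for a finite extension $F'_w/F_v$. The semi-simplified $\mathrm{WD}$ is determined by its restriction to an open subgroup together with the Weil group action, which can be recovered by a character-twisting and solvable descent argument. Moreover $\rec^T$ is compatible with solvable base change. So I will first choose a solvable CM extension $F'/F$, linearly disjoint from the fields cut out by $\overline{r}_{\pi,\iota}$ and chosen so that $v$ splits suitably. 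I will then replace $\pi$ by its base change $\pi_{F'}$ (Arthur--Clozel), noting that $\pi_{F'}$ might fail to be cuspidal. In that case it is an isobaric sum of cuspidal pieces, and the statement for $\pi$ follows from the statement for each piece, by induction on $n$.

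The main obstacle is removing the residual irreducibility (non-Eisenstein) and decomposed-generic hypotheses, which cannot be arranged by base change when $\overline{r}_{\pi,\iota}$ is reducible. First I would attack it by changing the prime. Local-global compatibility at $p$ is a property of one $\iota$, but I would exploit the compatible-system structure in the following way.

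\begin{itemize}
\item Decomposed genericity can be arranged by passing to a suitable solvable extension and choosing an auxiliary prime $\ell$, as in \cite{10author}. This works provided the image of $\overline{r}$ is not too small, and in the reducible case genericity can still be obtained for a large enough $F'$.
\item For the Eisenstein issue, I would rather not insist on irreducibility. Instead I would analyse the boundary cohomology of the $U(n,n)$ Borel--Serre compactification directly, localised at a possibly Eisenstein maximal ideal.
\end{itemize}

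The key new input I expect to need is a version of the degree-shifting argument in which the contributions of non-Siegel boundary strata are controlled. I would try to kill them by a twisting trick: replace $\pi$ by $\pi\otimes\chi$ for an algebraic Hecke character $\chi$ of $F$ with prescribed infinity type and local behaviour at $v$. The aim is that the Hecke eigensystem of $\pi\otimes\chi$ on the Siegel stratum of $U(n,n)$ cannot occur on other strata. Because $\chi$ can be taken with large, generic weight, the relevant systems of Hecke eigenvalues at unramified places, and their weights, distinguish the strata in characteristic zero. I would try to use a ``weight separation'' argument to ensure this persists modulo $p^N$ in the relevant range. This is the step I am least sure of, and it would be the technical heart.

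Granting this, the rest follows the existing strategy. The degree-shifting argument produces, for each $N$, congruences modulo $p^N$ between $r_{\pi,\iota}\oplus r^{c,\vee}_{\pi,\iota}\otimes\epsilon^{1-2n}$ and Galois representations $r_{\Pi_N,\iota}$ of regular algebraic conjugate self-dual cuspidal $\Pi_N$ on $\GL_{2n}(\mathbf{A}_F)$, as in \cite{Car14}. These congruences come with fixed Hodge--Tate weights at embeddings above $v$ and with a fixed level at $v$ depending only on the depth of $\pi_v$. Fixed weight and bounded level place $r_{\Pi_N,\iota}|_{G_{F_v}}$ in a fixed potentially semistable deformation ring of fixed Galois type, which is Zariski closed. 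Hence the $p$-adic limit is potentially semistable of the same type and weights. Finally, the Weil group part of $\mathrm{WD}$ is read off via the characteristic polynomials of the Bernstein-centre elements. These act on $\pi_v$ and on $\Pi_{N,v}$ compatibly with the congruence, through the Jacquet module of the Siegel parabolic. Passing to the limit gives $\mathrm{WD}(r_{\pi,\iota}|_{G_{F_v}})^{\mathrm{ss}}\cong\iota^{-1}\rec^T_{F_v}(\pi_v)^{\mathrm{ss}}$, and the Hodge--Tate weight formula follows by splitting off the $r_{\pi,\iota}$ summand.
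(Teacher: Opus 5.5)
Your first bullet is false, and this is the decisive gap. Decomposed genericity cannot be arranged in general. Suppose $\overline{r}_{\pi,\iota}$ has two Jordan--H\"older constituents differing by a twist by $\overline{\epsilon}$, as happens for Eisenstein-type congruences. Then for every prime $\ell$ and every place $w \mid \ell$ where $\overline{r}_{\pi,\iota}$ is unramified, $\overline{r}_{\pi,\iota}(\Frob_w)$ has two eigenvalues with ratio $q_w$. No solvable base change alters this, and the same holds for $\overline{r}_{\pi,\iota} \oplus \overline{r}_{\pi,\iota}^{c,\vee}\overline{\epsilon}^{1-2n}$. Without decomposed genericity you lose the vanishing of \cite{caraiani_scholze}, which is exactly what lets degree shifting lift boundary torsion classes to middle-degree classes of $U(n,n)$. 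Your argument has nothing in its place.

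The paper's key new input is Theorem~\ref{VanishingUpToBoundedTorsion}. It says an explicit power of an unramified operator $M\Delta$ at an auxiliary split prime $\mathfrak{p}$, with exponent independent of the level, kills torsion cohomology below the middle degree. This is proved via Fargues--Scholze, local--local compatibility and Mantovan's formula for compactly supported cohomology. Everything is then proved only up to bounded $T_n$- and $p^{N_2}$-torsion. The representation $\pi$ is recovered at the end because $T_n$ acts on a twist of $\pi$ by a nonzero scalar, since $\pi_{\mathfrak{p}}$ is generic by \cite{Sha74}; this is a characteristic-zero fact needing no residual hypothesis.

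The boundary step also does not go through as proposed. Hecke eigensystems modulo $p^N$ carry no memory of the weight. Twisting by $\chi$ moves every stratum at once without changing the reducibility of $\overline{r}_{\pi,\iota}$, so ``weight separation'' cannot isolate the Siegel stratum. More seriously, you have missed where the boundary really enters. Passing to a cohomologically cuspidal coefficient system $\mathcal{W}$ is only possible modulo $\varpi^m$, and the eigensystem of $\pi$ modulo $\varpi^m$ need not come from interior cohomology with $\mathcal{O}$-coefficients. So one must control all of $H^q(X_K,\mathcal{W}/\varpi^m)$, including the Borel--Serre boundary of the $\GL_n$-space. The paper does not kill non-Siegel strata at all. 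Interior $\GL_n$-cohomology is a subquotient of the whole $\overline{a}$-ordinary boundary cohomology of $U(n,n)$ (Lemma~\ref{lemma_degreeshiftingpreliminarylemma1}). The $\GL_n$-boundary is then handled by induction on $n$ via torsion Eisenstein functoriality (Proposition~\ref{Proposition_EisFunc}, Lemma~\ref{Lemma_BoundaryLGC}), which needs Bushnell--Kutzko types and integral models of Bernstein centres to track the action at $v$.

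Your limiting step fails too. With $\overline{r}_{\pi,\iota}$ reducible you only have congruences of determinants. A potentially semistable condition on local pseudodeformations then controls only $(r_{\pi,\iota}|_{G_{F_v}})^{\textnormal{ss}}$, and since being de Rham is not closed under extensions, this does not give de Rhamness (Remark~\ref{rem:local_global_pseudoring}). The paper adds the global Wake--Wang-Erickson condition (Theorem~\ref{Thm_WWE_global}) that the Cayley--Hamilton algebra be torsion semistable over $L_v$. This condition is inherited from the $2n$-dimensional representations through an embedding into $M_{2n}$. The paper also constructs the local potentially semistable pseudodeformation rings without multiplicity-freeness (Theorem~\ref{thm_existence_of_pseudodeformation_ring_with_conditions}), so that weights and inertial type can be read off from the semisimplification.
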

    In fact, Theorem \ref{introthm_lgc} has the following apparently stronger corollary:
    \begin{corollary}\label{introcor_lgc_nilp}
        In the situation of Theorem \ref{introthm_lgc}, we have
        \[ \mathrm{WD}(r_{\pi, \iota}|_{G_{F_v}})^{\textnormal{F-ss}} \prec \iota^{-1} \rec^T_{F_v}(\pi_v). \]
    \end{corollary}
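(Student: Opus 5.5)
This follows the standard argument that a semisimplified match forces a monodromy inequality, as in Taylor--Yoshida and Caraiani. Here $\prec$ is the partial order on Frobenius-semisimple Weil--Deligne representations with the same semisimplification: $(r,N)\prec(r',N')$ means $r\cong r'$ and $N$ lies in the closure of the $r$-equivariant conjugacy orbit of $N'$. Equivalently, for every $j\ge 1$ the rank of $N^j$ is at most the rank of $N'^j$, with the ranks computed on each isotypic piece.

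Write $\mathrm{WD}(r_{\pi,\iota}|_{G_{F_v}})^{\textnormal{F-ss}} = (r, N)$ and $\iota^{-1}\rec^T_{F_v}(\pi_v) = (r', N')$. Theorem \ref{introthm_lgc} gives $r \cong r'$, so the task is to show $N \prec N'$.

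The key input is genericity of $\pi_v$. Since $\pi$ is cuspidal on $\GL_n$, each local component $\pi_v$ is generic (Shalika). By the Bernstein--Zelevinsky classification, a generic representation corresponds under $\rec^T$ to a Weil--Deligne representation whose monodromy is maximal among those with the given semisimplification. Concretely, $\pi_v$ is the full induction $\mathrm{St}_{k_1}(\sigma_1)\times\cdots\times\mathrm{St}_{k_m}(\sigma_m)$ of segments, ordered so that no two are linked in the wrong direction, and irreducibility of this induction means no two segments are linked at all. So $(r',N') = \bigoplus_i \mathrm{Sp}_{k_i}(\rec(\sigma_i))$ with pairwise unlinked segments.

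We then use the following standard fact: for a Frobenius-semisimple representation $r$ of $W_{F_v}$, the Weil--Deligne representations $(r,N)$ correspond to $N$ in a finite union of orbits of $Z(r)$ acting on $\{N : r(w)N r(w)^{-1} = |w|^{-1}N\}$. This is a quiver-type variety: a graded nilpotent representation of a type $A$ quiver on each line $\{\rho\otimes|\cdot|^{j}\}$ of unramified twists. The orbit corresponding to a multisegment with no linked pairs is the unique open, dense orbit. Indeed, in Zelevinsky's description of the orbit closure order on graded nilpotent type $A$ representations, degenerations correspond to replacing linked pairs of segments by their union and intersection. So a multisegment with no linked pairs is maximal, and every $N$ lies in the closure of the orbit of $N'$. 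Hence $(r,N) \prec (r',N')$, which is the claim.

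The main obstacle is stating and justifying the maximality claim precisely. One must reduce to a single supercuspidal line: decompose $r$ according to inertial classes $\rho\otimes|\cdot|^{\ints}$. On each line, identify the equivariant $N$'s with representations of the $A_\ell$ quiver, where $\ell$ is determined by which twists occur. Then check that the multisegment without linked segments gives the open orbit. The cleanest route is to compare ranks. For the generic multisegment, the rank of the induced map between graded pieces $V_a \to V_b$, for $a<b$ on the line, equals $\min$ over intermediate degrees of $\dim V_c$. Any $N$ has ranks bounded by these minima, and rank conditions characterise orbit closures for type $A$ (Abeasis--Del Fra). It is worth remarking that the de Rham property from Theorem \ref{introthm_lgc} is needed to even define $N$ at $v\mid p$, which is why the corollary is stated only after the theorem.
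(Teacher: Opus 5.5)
Your argument is correct, but it reaches the key geometric fact by a different route from the paper. The paper never writes down $\rec^T_{F_v}(\pi_v)$ explicitly. It argues in three steps:
\begin{enumerate}
\item By Vogan, $G(r)=\operatorname{Cent}_{\GL(V)}(r(W_K))$ has finitely many orbits on $P(r)=\Hom_{W_K}(V,V(-1))$, so there is a unique open, hence dense, orbit.
\item That orbit is exactly the locus of generic $N$, i.e.\ those with $\Hom_{\mathrm{WD}}((r,N),(r(1),N))=0$. This locus is open and non-empty, and it is a single orbit by a lemma of A'Campo.
\item Genericity passes from $\pi_v$ (Shalika) to $\rec_{F_v}(\pi_v)$ by a lemma of Allen.
\end{enumerate}
You instead make everything explicit. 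Zelevinsky's classification turns genericity into ``pairwise unlinked segments''. You identify $P(r)$ with a product of equioriented type $A$ quiver representation spaces, one per line $\rho\otimes|\cdot|^{\mathbf{Z}}$. You then check by hand that the unlinked multisegment realises the maximal rank $\min_{a\le c\le b}\dim V_c$ for every composite $V_a\to V_b$. This does hold: unlinked segments form a laminar family in which disjoint segments are never adjacent, so an interval covered $k$ times at every point lies in $k$ segments. Your route is more elementary for $\GL_n$ and shows concretely what the open orbit is. The paper's route is shorter and phrased via the intrinsic notion of a generic Weil--Deligne representation, at the cost of citing Vogan, A'Campo and Allen. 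You do not even need Abeasis--Del Fra. The maximal-rank locus is open by semicontinuity, and it is a single orbit because the ranks of composites determine a type $A$ representation up to isomorphism (interval decomposition). Hence it is the dense orbit.

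Two small corrections. First, the relation $\prec$ in the statement is Definition~\ref{def_prec} (from \cite{Var24}), a dominance condition on the partitions $(m_{i,\omega})_i$ for each $\omega$. It only sees the Jordan type of $N$ on all of $V[\omega]$, not how $V[\omega]$ splits into lines and graded pieces. So it is strictly weaker than ``$N$ lies in the closure of the $G(r)$-orbit of $N'$'', and your ``equivalently'' is false. This is harmless: you prove the stronger orbit-closure statement, and only the easy implication closure $\Rightarrow$ $\prec$ is needed. That implication follows from lower semicontinuity of $\operatorname{rk}(N^j|_{V[\omega]})$, which is exactly Lemma~\ref{lem_description_of_closure_ordering}. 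Second, replacing a linked pair by its union and intersection lengthens segments, i.e.\ moves to a larger orbit, so it is the inverse of a degeneration. Your conclusion that an unlinked multisegment is maximal is nevertheless right.
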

    If $(r, N)$ and $(r', N')$ are Frobenius-semisimple Weil--Deligne representations, the relation $(r, N) \prec (r', N')$, defined in \cite[\S 8]{Var24}, means that $N'$ is `closer to regular nilpotent than $N$'. We recall the definition of this relation, and explain why Theorem \ref{introthm_lgc} implies Corollary \ref{introcor_lgc_nilp}, in \S \ref{sec_nilp_prec} below.
    
    We now sketch the proof of Theorem \ref{introthm_lgc}. The basic idea is to carry out the degree-shifting argument of \cite{10author} without any additional global hypotheses. We first need to find a replacement for the torsion-vanishing results of \cite{caraiani_scholze}, which state that, after localisation at a decomposed generic maximal ideal of the Hecke algebra, the cohomology of the locally symmetric space of $U(n, n)$ vanishes below the middle degree $d$. (There is a dual statement for compactly supported cohomology, which follows by Poincar\'e duality.) 

    \para We prove a quantitative version of this statement. More precisely, if $w$ is an unramified place, split over the maximal totally real subfield of $F$, then we construct an unramified Hecke operator $T_w$ with the following properties:
    \begin{itemize}
        \item The eigenvalue of $T_w$ in any generic unramified representation of $\GL_{2n}(F_w)$ over $\mathbf{C}$ is non-zero.
        \item There is an explicit integer $N = N(n, F, w) \geq 0$ such that $T_w^{N}$ annihilates the degree $i$ cohomology of the locally symmetric space for all $i < d$. 
    \end{itemize}
    See Theorem \ref{VanishingUpToBoundedTorsion} for a precise statement. This implies the main theorem of \cite{caraiani_scholze} in the case that $p$ is a banal characteristic for $\GL_{2n}(F_w)$, since $T_w$ then becomes a unit after localisation at a generic maximal ideal. It is also well-adapted to our target theorem, because any cuspidal automorphic representation of $\GL_n$ has the property that all of its local components are generic \cite{Sha74}. The proof of Theorem \ref{VanishingUpToBoundedTorsion} follows the strategy introduced in \cite{caraiani_scholze}. More precisely, our argument follows the variant developed in \cite{Kos21}: rather than analysing the cohomology of Igusa varieties, we study the cohomology of local Shimura varieties appearing in Mantovan's product formula, using the machinery of \cite{FS24}.

    \para We next need to account for the removal of the non-Eisenstein condition. This creates two difficulties. First, on the Galois side: if the residual representation $\overline{r}_{\pi, \iota}$ \emph{is} irreducible, then it has a well-defined deformation ring, and we can write down quotients corresponding to conditions from $p$-adic Hodge theory (such as `crystalline with given Hodge--Tate weights') using the results of \cite{Kis08}. This shows in particular that, if $r_{\pi, \iota}$ can be presented as a $p$-adic limit of Galois representations satisfying such conditions, then it will also satisfy these conditions.

    If $\overline{r}_{\pi, \iota}$ is reducible, then we need to deal with pseudocharacters and their deformations in the sense of \cite{Che14}. These admit a pseudodeformation space, but the question of imposing conditions from $p$-adic Hodge theory, studied e.g.\ in \cite{WE18}, is more subtle. For example, in \emph{op. cit.} pseudodeformation spaces with $p$-adic Hodge--theoretic conditions are constructed when $\overline{r}_{\pi, \iota}$ is residually multiplicity-free; but we want a result with no condition at all on the residual representation. The key new observation we make here is that the finiteness of cohomology of reductive groups over Noetherian rings can be combined with the ideas developed in \cite{WE18}, and the theory of `pseudodeformations with conditions' developed in \cite{WWE19}, to construct the needed pseudodeformation spaces. This construction is the subject of \S \ref{sec_potentially_semistable_pseudodeformation_rings}. 

    \para The second difficulty that arises without the non-Eisenstein condition is that, in considering the cohomology of the (Borel--Serre compactification of the) locally symmetric space for $U(n, n)$, we need to consider contributions from parabolic subgroups other than the Siegel parabolic. Indeed, it is the cohomology of the whole Borel--Serre boundary that admits comparison with the cohomology of the $U(n, n)$ locally symmetric space (via the excision exact sequence); in the non-Eisenstein case, the cohomology of the Borel--Serre boundary and the cohomology of the Siegel component agree, after localisation, but otherwise the best one can hope for is that the interior cohomology of the Siegel component is a subquotient of the cohomology of the Borel--Serre boundary. 

    As interior cohomology, being defined as the image of the compactly supported cohomology in the usual cohomology, does not behave well in exact sequences, this necessitates a complicated induction argument, where we prove Theorem \ref{introthm_lgc} (and in fact, a version for the cohomology with torsion coefficients of the $\GL_{n / F}$-locally symmetric space) by induction on $n$. Since we need to use coefficient systems that pick out automorphic representations with the right Hodge--Tate weights and inertial type at the place $v$, this further requires us to understand what these local systems look like when restricted to boundary components, via an excursion into type theory for the group $\GL_n(F_v)$. We recall the needed local results in \S \ref{sec_local_representation_theory}, and carry out this new degree-shifting argument, including the necessary induction on $n$, in \S \ref{sec_LGC_for_Betti}. 

\subsection{Remarks}    The induction argument in \S \ref{sec_LGC_for_Betti} would not be needed if we knew that the representations $r_{\pi, \iota}$ were irreducible (as is conjectured to be the case). On the other hand, the knowledge that the representations $r_{\pi, \iota}$ satisfy the expected conditions from $p$-adic Hodge theory is a useful input into many proofs of irreducibility (see e.g.\ \cite{Sha26}).

    Although the main theorem of this paper concerns the $\overline{\mathbf{Q}}_p$-representations associated to automorphic representations, it is also of interest to establish local-global compatibility for torsion classes, with the aim of proving e.g.\ $R = \mathbf{T}$ theorems under more degenerate conditions than the ones considered in \cite{10author}. Our arguments do give meaningful results for torsion classes. However, instead of proving the expected result `up to a nilpotent ideal', we are able to prove a result `up to bounded $T_w$-torsion', where $T_w$ is the operator described above, which is (at least in the banal case) nonzero modulo any decomposed generic maximal ideal of the Hecke algebra. We apply these results to prove the vanishing of the adjoint Bloch--Kato Selmer group for many automorphic Galois representations in the companion paper \cite{Aca26}, generalising the theorems proved in \cite{New23} (which just treated the conjugate self-dual part of the adjoint Bloch--Kato Selmer group for conjugate self-dual automorphic Galois representations). 

    \subsection*{Acknowledgments} We thank Frank Calegari, Toby Gee, and Peter Scholze for helpful discussions and correspondence. B.H. thanks Patrick Allen and Vincent Pilloni for some conversations from which this work has benefited. 

    B.H.'s work was supported by the Engineering and Physical Sciences Research Council grant number EP/Y030648/1 and by the European Union’s Horizon 2020 research and innovation programme under the Marie Skłodowska-Curie grant agreement No 101034255.

    J.T.'s work was funded by the European Union (ERC CoG-101169866). Views and opinions expressed are however those of the author(s) only and do not necessarily reflect those of the European Union or the European Research Council. Neither the European Union nor the granting authority can be held responsible for them. 

    \subsection{Notation}\label{subsec_notation}

    If $F$ is a field of characteristic zero, we generally fix an algebraic closure $\overline{F} / F$ and write $G_F$ for the absolute Galois group of $F$ with respect to this choice. If $F$ is a number field, then we will also fix embeddings $\overline{F} \to \overline{F}_v$ extending the map $F\to F_v$ for each place $v$ of $F$; this choice determines a homomorphism $G_{F_v} \to G_F$. When $v$ is a finite place, we will write $\cO_{F_v} \subset F_v$ for the valuation ring, $\varpi_v \in \cO_{F_v}$ for a fixed choice of uniformizer, $\Frob_v \in G_{F_v}$ for a fixed choice of Frobenius lift, $k(v) = \cO_{F_v} / (\varpi_v)$ for the residue field, and $q_v = \# k(v)$ for the cardinality of the residue field. When $v$ is a real place, we write $c_v \in G_{F_v}$ for complex conjugation. 

    If $p$ is a prime, then we call a coefficient field a finite extension $E / \mathbf{Q}_p$ contained inside our fixed algebraic closure $\overline{\mathbf{Q}}_p$, and write $\cO$ for the valuation ring of $E$, $\varpi \in \cO$ for a fixed choice of uniformizer, and $k = \cO / (\varpi)$ for the residue field. (In the body of the paper, we generally use $p$-adic coefficients, except in \S \ref{sec_torsion_in_the_cohomology}, where we use $\ell$-adic coefficients in order to be consistent with the relevant literature.) 

    Let $K$ be a non-archimedean characteristic $0$ local field, and let $\Omega$ 
be an algebraically 
closed field of characteristic $0$. We write $W_K \subset G_K$ for the Weil group of $K$ and $I_K \subset W_K$ for the inertia subgroup.  We write $\rec^T_{K}$ for the Tate normalisation of the local Langlands correspondence for $\GL_n(K)$: it is the bijection $\rec_K^T$ between isomorphism classes of irreducible, 
admissible $\Omega[\GL_n(K)]$-modules and isomorphism classes of Frobenius-semisimple Weil--Deligne 
representations over $\Omega$ of rank $n$ defined in \cite[\S 2.1]{Clo14}. When $\Omega = \mathbf{C}$, it is related to the usual unitary normalisation by the formula $\rec_{K}^T(\pi) = \rec_{K}(\pi |\cdot|^{(1-n)/2})$.

If $K$ is a $p$-adic local field, $\tau \colon K \to \overline{\mathbf{Q}}_p$ is a continuous embedding, and $\rho \colon G_K \to \GL_n(\overline{\mathbf{Q}}_p)$ is a continuous representation, then we write $\mathrm{HT}_\tau(\rho)$ for the multiset of integers $i$ such that $\operatorname{gr}^i(\rho \otimes_{\tau, K} B_{dR})^{G_K} \neq 0$, with multiplicity equal to the dimension of this $\overline{\mathbf{Q}}_p$-vector space. If $\rho$ is de Rham, then this set has $n$ elements (counted with multiplicity). In particular, if $\rho$ is the $p$-adic cyclotomic character, we have $\textnormal{HT}_\tau(\rho) = \{-1\}$ for all $\tau$.

\section{Torsion in the cohomology of Shimura varieties}\label{sec_torsion_in_the_cohomology}

An essential input in the proof of the local–global compatibility theorems of \cite{10author} is the main result of \cite{caraiani_scholze}, which establishes the vanishing of the mod $\ell$ compactly supported cohomology of $\mathrm{Res}_{F^+/\mathbf{Q}}GU(n,n)$-Shimura varieties above the middle degree after localisation at decomposed generic maximal ideals of the global Hecke algebra. Equivalently, this can be formulated as vanishing above the middle degree after inverting a Hecke operator $\Delta$ at an auxiliary prime $p \neq \ell$, which measures the genericity of unramified principal series representations. The goal of this section is to prove a quantitative version of this vanishing result.

In \cite{Kos21}, Koshikawa gives a new proof of the vanishing result of \cite{caraiani_scholze} using the framework of Fargues--Scholze \cite{FS24}. He first proves that, after localisation at generic maximal ideals of the local Hecke algebra $\mathcal{H}_p$ for $p \neq \ell$, or equivalently after inverting $\Delta \in \mathcal{H}_p$, the mod $\ell$ compactly supported cohomology groups of local Shimura varieties associated with non-ordinary Kottwitz elements vanish. He then deduces the global vanishing result for unitary Shimura varieties by combining a version of Mantovan’s product formula for \textit{cohomology without compact support} with the semi-perversity result of \cite{caraiani_scholze}, §4.6.

Following Koshikawa’s strategy, we combine the description of the cohomology of local Shimura varieties provided by the Fargues–Scholze machinery with the local–local compatibility result of \cite{HKW22}. This allows us to show that the compactly supported cohomology of non-ordinary local Shimura varieties with coefficients in any $\mathbf{Z}_\ell[p^{1/2}]$-algebra $\Lambda$ is annihilated by $M\Delta$, for some integer $M \geq 1$ depending only on $n$ and $F$.

Finally, we deduce that, above the middle degree, the compactly supported cohomology of the unitary Shimura variety with coefficients in $\Lambda$ is annihilated by a power of $M\Delta$, with exponent depending only on $n$ and $F$. For this, we use Mantovan’s product formula for \textit{compactly supported cohomology} established in \cite{HL25}, reducing the problem to the vanishing above the middle degree of the contribution of the ordinary stratum. This is then obtained from the corresponding result for partially compactly supported cohomology of the ordinary Igusa variety, together with cases of the Harris–Viehmann conjecture proved in \cite{GI22}.

\subsection{Setup} Fix an imaginary CM field $F$ with totally real subfield $F^+\subset F$ and an integer $n\geq 2$. Denote by $c$ the non-trivial element of $\textnormal{Gal}(F/F^+)$. For the $F$-vector space $V:=F^{2n}$, we consider the alternating form
\begin{align*}
    (., .) \colon V\times V & \to \mathbf{Q} \\
    (x,y) & \mapsto \textnormal{tr}_{F/\mathbf{Q}}\langle x,y\rangle,
\end{align*}
associated with the skew-hermitian form
\begin{equation*}
    \langle x,y\rangle :={}^txJ_ny^c.
\end{equation*}
Here $J_n$ is the $2n\times 2n$ matrix
$$J_n:=\begin{pmatrix}
0 & \Psi_n \\
-\Psi_n & 0
\end{pmatrix}$$
with the matrix $\Psi_n$ in the formula given by the $n\times n$ matrix with $1$'s on the anti-diagonal and $0$'s elsewhere. We consider the associated quasi-split unitary similitude $\mathbf{Q}$-group $\mathbf{G}$ with functor of points
\begin{equation*}
    \mathbf{G}(R)=\{(g,d)\in \textnormal{GL}_{F\otimes_{\mathbf{Q}}R}(V\otimes_{\mathbf{Q}}R)\times \mathbf{G}_m(R)\mid (gx,gy)=d(x,y) \textnormal{ } \forall x,y\in V\}.
\end{equation*}

\begin{para}
We define the morphism
\begin{align*}
h=\prod_{\tau:F^+\hookrightarrow\mathbf{R}}h_{\tau}\colon\mathbf{S} &\to \mathbf{G}_{\mathbf{R}} 
\end{align*}
from the Deligne torus $\mathbf{S}=\textnormal{Res}_{\mathbf{C}/\mathbf{R}}\mathbf{G}_m$ by setting $h(i)=J_n$. We consider the Shimura datum $(\mathbf{G},X)$ associated with the PEL datum $(F,c,V,(.,.),h)$ of type $A$. The corresponding $\mathbf{G}(\mathbf{C})$-conjugacy class of the Hodge cocharacter $\mu=\mu_h\colon\mathbf{G}_m\to \mathbf{G}_{\mathbf{C}}$, under the identification $\mathbf{G}_{\mathbf{C}}\cong (\prod_{\tau:F^+\hookrightarrow \mathbf{C}}\textnormal{GL}_{2n})\times \mathbf{G}_m$, is identified with the tuple $((1,1,...,1,0,...,0)_{\tau},1)\in X_{\ast}((\prod_{\tau:F^+\hookrightarrow \mathbf{C}}\textnormal{GL}_{2n})\times \mathbf{G}_m)\cong (\mathbf{Z}^{2n})^{\Hom(F^+,\mathbf{C})}\times\mathbf{Z}$ with $1$ appearing exactly $n$ times for each $\tau$-factor.

For neat compact open subgroups $K\subset \mathbf{G}(\mathbf{A}^{\infty})$, we obtain the corresponding Shimura variety $\textnormal{Sh}_K:=\textnormal{Sh}_K(\mathbf{G},X)$, a quasi-projective smooth $\mathbf{Q}$-scheme.

\end{para}

\begin{para}
We fix a rational prime $p$, a field isomorphism $\iota \colon\overline{\mathbf{Q}}_p\xrightarrow{\sim} \mathbf{C}$ and assume that $p$ splits completely in $F$. 
We set $\textnormal{G}:=\mathbf{G}_{\mathbf{Q}_p}$ and note that there is an isomorphism
\begin{equation}\label{tau_p}
    \tau_p\colon\textnormal{G}\xrightarrow{\sim} (\prod_{v\mid p}\textnormal{GL}_{2n})\times \mathbf{G}_m=\textnormal{GL}_{2n}^{[F^+:\mathbf{Q}]}\times \mathbf{G}_m.
\end{equation}
We further set $\textnormal{T}=(\prod_{v\mid p}\textnormal{T}_{2n})\times \mathbf{G}_m\subset \textnormal{B}=(\prod_{v\mid p}\textnormal{B}_{2n})\times \mathbf{G}_m$ to be our choice of torus and Borel subgroup.

We fix another rational prime $\ell\neq p$.
We denote by $\mathcal{O}$ a complete DVR over $\mathbf{Z}_{\ell}[p^{1/2}]$ and by $\varpi\in \mathcal{O}$ a choice of uniformiser.

The goal of this section is to prove boundedness of $\ell$-torsion in some suitably generic part of the cohomology groups
\begin{equation*}
    H^{\ast}(\textnormal{Sh}_K(\mathbf{C}),\mathcal{O})
\end{equation*}
below the middle degree $d=\textnormal{dim}_{\mathbf{C}}(\textnormal{Sh}_K(\mathbf{C}))$.
\end{para}

\subsection{The statement}\label{subsection_Vanishing statement}
To state the main result, we introduce some notation. Consider the hyperspecial level subgroup $K_p^{\textnormal{hs}}:=\textnormal{G}(\mathbf{Z}_p)$ and the corresponding Hecke algebra
\begin{equation*}
    \mathcal{H}:=\mathcal{O}[K_p^{\textnormal{hs}}\backslash\textnormal{G}(\mathbf{Q}_p)/K_p^{\textnormal{hs}}].
\end{equation*}
The isomorphism $\tau_p$ induces an algebra isomorphism $\mathcal{H}\xrightarrow{\sim}(\otimes_{v\mid p}\mathcal{H}_v)\otimes \mathcal{H}_0$
where, for $v|p$, we have
\begin{equation*}
    \mathcal{H}_v=\mathcal{O}[\textnormal{GL}_{2n}(\mathbf{Z}_p)\backslash\textnormal{GL}_{2n}(\mathbf{Q}_p)/\textnormal{GL}_{2n}(\mathbf{Z}_p)]
\end{equation*}
and
\begin{equation*}
    \mathcal{H}_0=\mathcal{O}[\mathbf{Z}_{p}^{\times}\backslash\mathbf{Q}_p^{\times}/\mathbf{Z}_{p}^{\times}].
\end{equation*}

\begin{para} \label{Deltapara}
For $v| p$ we have the Satake isomorphism
\begin{equation*}
    \mathcal{H}_v \cong \mathcal{O}[X_{v,1}^{\pm 1}, \dots, X_{v,2n}^{\pm 1}]^{S_{2n}}
\end{equation*}
and we define the element
\begin{equation*}
    \Delta_v := \prod_{1\leq i\neq j\leq 2n}(X_{v,i} X_{v,j}^{-1} - p)\in \mathcal{O}[X_{v,1}^{\pm1},...,X_{v,2n}^{\pm 1}]^{S_{2n}}.
\end{equation*}
By abuse of notation we also consider it as an element of $\mathcal{H}_v$. Let $\Delta:=\prod_{v|p}\Delta_v\in \mathcal{H}$.
Let $\Lambda$ be a local Artinian $\mathcal{O}$-algebra. 
The main result of this section is as follows.
\end{para}

\begin{theorem}[= Theorem \ref{VanishingUpToBoundedTorsion}]
    Let 
    \[
        M=(1-p)^{[F^+:\mathbf{Q}](2n-1)} \prod_{m = 1}^{2n}(p^m - 1)^{2n[F^+:\mathbf{Q}]+1}.
    \]
    For every good subgroup $K^p\leq \mathbf{G}(\mathbf{A}^{\{\infty,p\}})$, and integer $0\leq q\leq d-1$, the Hecke operator $M\Delta\in \mathcal{H}$ acts nilpotently on the cohomology groups
    \begin{equation*}
    H^q(\textnormal{Sh}_{K^pK_p^{\textnormal{hs}}}(\mathbf{C}),\Lambda)
    \end{equation*}
    and
    \begin{equation*}
        H^{2d-q}_c(\textnormal{Sh}_{K^pK_p^{\textnormal{hs}}}(\mathbf{C}),\Lambda).
    \end{equation*}
    More precisely, each such group is annihilated by\footnote{Here $B(\textnormal{G},\mu^{-1})$ denotes the $\mu^{-1}$-admissible elements of the Kottwitz set of $\textnormal{G}$.} 
    \[ M^{[F^+ : \mathbf{Q}] n + | B(\textnormal{G},\mu^{-1})|} \Delta^{| B(\textnormal{G},\mu^{-1})|}. \]
\end{theorem}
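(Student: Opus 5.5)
We follow the strategy of \cite{Kos21}, made quantitative, as outlined at the start of this section. First reduce to compactly supported cohomology. Poincar\'e duality on the smooth variety $\textnormal{Sh}_K(\mathbf{C})$ relates $H^q(\textnormal{Sh}_K(\mathbf{C}),\Lambda)$ to $H^{2d-q}_c(\textnormal{Sh}_K(\mathbf{C}),\Lambda^\vee)$, with Matlis dual $\Lambda^\vee$ over the Artinian ring. Under this duality the Hecke algebra acts through the transpose, which sends $\Delta$ to a unit multiple of $\Delta$ evaluated at the dual Satake parameters; the defining product for $\Delta$ is visibly stable under $X_i \mapsto X_i^{-1}$, up to units. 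So it suffices to show that $H^{2d-q}_c$ for $q<d$, with arbitrary Artinian $\Lambda$, is killed by the stated element.

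For this we use the Newton stratification of the special fibre at the hyperspecial level, indexed by $b \in B(\textnormal{G},\mu^{-1})$. Excision gives a filtration of $R\Gamma_c$ with $|B(\textnormal{G},\mu^{-1})|$ graded pieces, one per stratum, and these pieces are $\mathcal{H}$-equivariant. If $x$ kills each graded piece in the relevant degrees, then $x^{|B(\textnormal{G},\mu^{-1})|}$ kills the whole group, which explains that exponent.

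We then treat each stratum using the Mantovan product formula for compactly supported cohomology \cite{HL25}. It expresses the stratum's contribution as $R\Gamma_c(\mathrm{Ig}_b)\otimes^{\mathbb{L}}_{\mathcal{H}(J_b)} R\Gamma_c(\mathcal{M}_{b,\mu})$, Hecke-equivariantly for the $\textnormal{G}(\mathbf{Q}_p)$-action through the local Shimura variety.

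For non-ordinary $b$ we argue via Fargues--Scholze. The cohomology of $\mathcal{M}_{b,\mu}$ is computed by a Hecke operator $T_\mu$ applied to sheaves on $\mathrm{Bun}_G$ supported on the stratum of $b$, and the spectral action of the spherical Hecke algebra factors through the excursion algebra, with the $L$-parameters having the shape given by \cite{HKW22}. Since $b$ is non-basic in a non-ordinary way, the semisimplified parameters are induced from a proper Levi $M_b$ twisted by $|\cdot|$-shifts. Such parameters force some ratio $X_iX_j^{-1}$ to equal $p$, up to the normalisation of Fargues--Scholze against the classical Satake transform. To make this integral we compare the Bernstein centre with the Satake isomorphism over $\mathbf{Z}_\ell[p^{1/2}]$; the denominators there are controlled by pro-order factors $\prod(p^m-1)$ and $(1-p)$. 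This yields an identity $M\Delta=0$ on each non-ordinary contribution. The multiple exponents of $M$, namely $[F^+:\mathbf{Q}]n$ plus one per stratum, come from tracking these denominators through each $v|p$ factor and through the filtration.

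The ordinary stratum needs separate treatment, and we expect it to be the main obstacle. Here $\mathcal{M}_{b,\mu}$ is parabolically induced, by the cases of the Harris--Viehmann conjecture in \cite{GI22}. This identifies the contribution with partially compactly supported cohomology of the ordinary Igusa variety, which is affine, so the semi-perversity argument of \cite[\S4.6]{caraiani_scholze} shows it vanishes above the middle degree. The difficulty is to make the comparison integral and Hecke-equivariant with coefficients in an arbitrary $\Lambda$. It may cost further factors of $M$, which we absorb into the exponent $[F^+:\mathbf{Q}]n$. Finally, nilpotence of $M\Delta$ follows immediately from the explicit annihilator.
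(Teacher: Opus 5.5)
Your outline (Poincar\'e duality with $\iota(\Delta)=\Delta$, the compactly supported Mantovan formula of \cite{HL25}, Fargues--Scholze together with \cite{HKW22}, and \cite{GI22} for $b_\mu$) is the paper's. However, two load-bearing steps are missing, and your bookkeeping of $M$ is wrong.

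\textbf{First gap: the level at $p$.} You propose an $\mathcal{H}$-equivariant excision filtration coming from the Newton stratification of the special fibre at hyperspecial level. For that you would have to construct an action of $\mathcal{H}$ on the individual strata contributions. This is not formal, since the $p$-Hecke correspondences are not \'etale in characteristic $p$. The paper never does this. The equivariant filtration of Theorem~\ref{thm_mantovan} lives on $R\Gamma_c(\mathcal{S}_{K^p,C},\Lambda)\in D_{\textnormal{sm}}(\textnormal{G}(\mathbf{Q}_p),\Lambda)$, i.e.\ at infinite level at $p$, where the Bernstein centre acts. There one shows that $H^q_c(\mathcal{S}_{K^p,C},\Lambda)$, $q>d$, is killed by $(\Delta^1)^{|B(\textnormal{G},\mu^{-1})|}$:
\begin{itemize}
\item $\Delta^1$ is zero on each non-ordinary graded piece;
\item the ordinary piece is $\textnormal{Ind}_{\textnormal{P}_\mu}^{\textnormal{G}}(V_{b_\mu})$, concentrated in $[0,d]$ by Artin vanishing on the affine $\textnormal{Ig}^{b_\mu,\ast}_{K^p}$ (no perversity input is needed).
\end{itemize}
So the ordinary stratum is not the main obstacle and costs no power of $M$.

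What is missing is the descent to $K_p^{\textnormal{hs}}$. Take $K_p=\ker(K_p^{\textnormal{hs}}\to\textnormal{G}(\mathbf{F}_p))$, which is pro-$p$. Hochschild--Serre gives $E_2^{0,q}=H^q_c(\mathcal{S}_{K^p,C},\Lambda)^{K_p^{\textnormal{hs}}}$, on which $\Delta^1$ acts through its image $M\Delta$ in $\mathcal{H}$ (Lemma~\ref{delta_compare_lemma}). The terms $E_2^{r,q-r}$ with $r>0$ are killed by the prime-to-$p$ part $(p-1)N^{[F^+:\mathbf{Q}]}$ of $|\textnormal{G}(\mathbf{F}_p)|$. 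Since the abutment filtration has at most $2d$ steps, this integer to the power $2d-1$ divides $M^{[F^+:\mathbf{Q}]n}$. That descent, not the ordinary stratum, is where the factor $M^{[F^+:\mathbf{Q}]n}$ comes from.

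\textbf{Second gap: the element acting on the non-ordinary pieces.} $\Delta$ is only an element of the spherical Hecke algebra. To kill $R\Gamma_{K^p,b}$ you need $z\in\mathcal{Z}^{\textnormal{spec}}(\textnormal{G},\mathcal{O})$ with $\Psi^b_{\textnormal{G}}(z)=0$. That is, $z$ must vanish at the Fargues--Scholze parameter of \emph{every} irreducible representation of $\textnormal{G}_b(\mathbf{Q}_p)$, including those with ramified parameter, where ``$X_iX_j^{-1}=p$'' means nothing.

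The paper uses, on the tame component,
$\Delta^{\textnormal{spec}}=\prod_v\det(\ad^0(\phi_v)-p)\cdot f(\tau_0)\prod_v\det f(\tau_v)$, with $f(x)=(x^N-1)/(x-1)$.
\begin{itemize}
\item The eigenvalues of $\tau_v$ are roots of unity of order dividing some $p^m-1$, $m\le 2n$, so the $f$-factor kills every ramified tame parameter.
\item Hence $\Delta^{\textnormal{spec}}$ vanishes exactly off the generic unramified locus.
\item Its image in $\mathcal{H}$ is $M\Delta$: the factor $N^{2n[F^+:\mathbf{Q}]+1}$ is $f(1)^{2n[F^+:\mathbf{Q}]+1}$, and $(1-p)^{[F^+:\mathbf{Q}](2n-1)}$ comes from the Cartan directions of $\ad^0$.
\end{itemize}
So $M$ is the price of integrally cutting out ramified parameters, not a denominator in a Bernstein-centre/Satake comparison.

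Your reason for non-genericity is also not the right one. ``Parameters induced from a proper Levi with $|\cdot|$-shifts'' fits $b_\mu$ equally well, yet $\textnormal{G}_{b_\mu}=\textnormal{M}_\mu$ has generic unramified parameters. What is actually used is that $\textnormal{G}_b$ is a non-quasi-split inner form for $b\neq b_\mu$ (non-integral slopes). By \cite{HKW22} and Jacquet--Langlands, a generic unramified parameter would then force toral supercuspidal support, which is impossible.

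Two further inputs are missing or only gestured at. Deducing $\Psi^b_{\textnormal{G}}(\Delta^{\textnormal{spec}})=0$ in $\mathfrak{Z}(\textnormal{G}_b)_{\mathcal{O}}$ from vanishing at $\overline{\mathbf{Q}}_\ell$-points requires that the integral Bernstein centre embeds in its rationalisation \cite{DHKM24} (Lemma~\ref{Qlbarpoint_lemma}). Transferring this vanishing to the $\textnormal{G}(\mathbf{Q}_p)$-action on $R\Gamma_{K^p,b}$ is Proposition~\ref{KottwitzConjShadow}, which you only gesture at.
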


\begin{remark}
    Note that $M$ only depends on $p$, $n$, and $[F:\rationals]$. This is important in the degree-shifting argument, where we will apply the theorem with many different choices of $K^p$. Moreover, a prime $\ell \neq p$ divides $M$ if and only if it divides the order of $\GL_{2n}(\mathbf{F}_p)$. If $\ell$ does not divide $M$, then the theorem implies the Caraiani--Scholze vanishing theorem \cite{caraiani_scholze}, since $\Delta$ becomes a unit modulo decomposed generic maximal ideals.
\end{remark}

\subsection{Recollection on the work of Fargues--Scholze}
Let $H$ be a connected reductive group over $\mathbf{Q}_p$. For a fixed algebraic closure $\overline{\mathbf{F}}_p/\mathbf{F}_p$, set $\check{\mathbf{Z}}_p=W(\overline{\mathbf{F}}_p)$ and $\check{\mathbf{Q}}_p=\check{\mathbf{Z}}_p[1/p]$. This admits an automorphism $\sigma$ induced by the absolute Frobenius $x\mapsto x^p$ on $\overline{\mathbf{F}}_p$. We denote by $B(H)$ the Kottwitz set of $\sigma$-conjugacy classes in $H(\check{\mathbf{Q}}_p)$ parametrising $H$-isocrystals over $\overline{\mathbf{F}}_p$. Kottwitz described $B(H)$ via two invariants, the Newton point and the Kottwitz point (cf. \cite{Kot97}). The first one is a map
\begin{equation*}
    \nu:B(H)\to (X_{\ast}(H)_{\mathbf{Q}}^+)^{\Gamma}
\end{equation*}
where $X_{\ast}(H)^+:=\Hom(\mathbf{G}_{m,\overline{\mathbf{Q}}_p},H_{\overline{\mathbf{Q}}_p})/(H(\overline{\mathbf{Q}}_p)-\textnormal{conjugacy})=X_{\ast}(T)^+$ is the set of $B$-dominant cocharacters for any choice of Borel and torus for $H_{\overline{\mathbf{Q}}_p}$ and $\Gamma:=\textnormal{Gal}(\overline{\mathbf{Q}}_p/\mathbf{Q}_p)$. The latter is a map
\begin{equation*}
    \kappa:B(H)\to \pi_1(H)_{\Gamma}
\end{equation*}
where $\pi_1(H):=\pi_1(H_{\overline{\mathbf{Q}}_p})$ is Borovoi's fundamental group for $H_{\overline{\mathbf{Q}}_p}$. Kottwitz proves that these two maps turn out to combine to an injective map
\begin{equation*}
    (\nu,\kappa):B(H)\hookrightarrow (X_{\ast}(H)_{\mathbf{Q}}^+)^{\Gamma}\times \pi_1(H)_{\Gamma}
\end{equation*}
describing $B(H)$ completely. Moreover, they are compatible in the sense that they have the same image in $\pi_1(H)^{\Gamma}\otimes_{\mathbf{Z}}\mathbf{Q}$ after post-composition with the natural quotient map and the averaging map along Galois conjugates, respectively.

In \cite[\S2.3]{RR96}, they describe a partial order $\prec$ on $B(H)$ using the Newton and Kottwitz maps. We equip $B(H)$ with the topology induced by \textit{opposite} of this partial order. In particular, for the $H(\check{\mathbf{Q}}_p)$-conjugacy class of a minuscule cocharacter $\mu$, we can speak of the finite subset $B(H,\mu)\subset B(H)$ of $\mu$\textit{-admissible elements} that forms an open subset with respect to the induced topology. For later use, we note that we will write $\mu^{-1}$ for the dominant inverse of $\mu$, another minuscule cocharacter.

For every $b\in B(H)$, we obtain a reductive group $H_b$ over $\mathbf{Q}_p$ with functor of points
\begin{equation*}
    H_b(R)=\{g\in H(R\otimes_{\mathbf{Q}_p}\check{\mathbf{Q}}_p)\mid gb=b\sigma(g)\}.
\end{equation*}
If $H$ is quasisplit, $H_b$ is an inner form of a Levi subgroup of $H$. In general, it is an inner form of the Levi of the quasisplit inner form of $H$.
\begin{example}\label{KottwitzSetExample}
    We now spell out the case $H=\textnormal{GL}_m$, our group of interest. In this case, the Kottwitz set parametrises isocrystals of height $m$. Namely, to the element $b\in B(H)$ corresponds the isocrystal
    \begin{equation*}
        (\check{\mathbf{Q}}_p^m,b\sigma).
    \end{equation*}
    The Dieudonn\'e--Manin classification shows that the category of isocrystals is semisimple with simple objects
    \begin{equation*}
        V_{r/s}=(\check{\mathbf{Q}}_p^{r},\begin{pmatrix}
            0 & 1 & \dots  & 0 \\
    \vdots &  & \ddots  & \vdots \\
    0 &  &  & 1 \\
    p^s & 0 & \dots  & 0
        \end{pmatrix})
    \end{equation*}
    labelled by slopes $\lambda=s/r\in \mathbf{Q}$; rational numbers in their primitive form. In particular, every isocrystal $V$ admits a unique "slope decomposition" $V\cong \oplus_{\lambda}V_{\lambda}^{\oplus m_{\lambda}}$ and corresponds to a unique tuple \[
    (\lambda_1,\dots,\lambda_1,\dots,\lambda_k,\dots,\lambda_k)\in(\mathbf{Q}^m)^+=\{(\mu_1,\dots,\mu_m)\in \mathbf{Q}^{m}\mid \mu_1\geq \dots \geq \mu_m\}
    \]
    with $\lambda=s/r$ occurring exactly $r\cdot m_{\lambda}$ times. Finally, the endomorphism ring $D_{\lambda}:=\textnormal{End}(V_{\lambda})$ of the isocrystal of slope $\lambda$ recovers the central division algebra of invariant $[\lambda]\in \textnormal{Br}(\mathbf{Q}_p)\cong \mathbf{Q}/\mathbf{Z}$.
    
    The Newton map
    \begin{align*} 
        \nu:B(H) &\to (X_{\ast}(H)^+_{\mathbf{Q}})^{\Gamma}=(\mathbf{Q}^{m})^+
     \\ 
        b &\mapsto (\nu_{b,1},...,\nu_{b,m}),
     \end{align*}
    recovers the slope decomposition of the isocrystal attached to $b$.
    Moreover, the Kottwitz map
    \begin{align*} 
        \kappa \colon B(H) &\to \pi_1(H)_{\Gamma}\cong \mathbf{Z} \\ 
        b &\mapsto \sum_{i=1}^m\nu_{b,i},
     \end{align*}
    gives the endpoint of the Newton polygon. In particular, the injectivity of $(\nu,\kappa)$ follows from the Dieudonn\'e--Manin classification of isocrystals.

    For $b\in B(H)$ with associated isocrystal $V\cong \oplus_{\lambda} V_{\lambda}^{\oplus m_{\lambda}}$ we see from the definitions that
    \begin{equation*}
        H_b\cong \prod_{\lambda, m_{\lambda}\neq 0}\textnormal{GL}_{m_{\lambda}}(D_{\lambda}),
    \end{equation*}
    an inner form of $\prod_{\lambda=s/r}\textnormal{GL}_{r\cdot m_{\lambda}}$. In particular, it is quasisplit if and only if $\nu_{b,i}$ is an integer for every $1\leq i\leq m$.
    
    Finally, the partial order is given by the usual partial order
    \begin{align*}
        \nu\prec \mu & \Leftrightarrow \\
        \sum_{i=1}^k\nu_i\leq \sum_{i=1}^k\mu_i \textit{ }\forall 1\leq  k\leq m & \textnormal{ and }\sum_{i=1}^m\nu_i=\sum_{i=1}^m\mu_i
    \end{align*}
    of Newton polygons. In particular, for the $H(\overline{\mathbf{Q}}_p)$-conjugacy class corresponding to the minuscule cocharacter
    $\mu=(1,...,1,0,...,0)\in X_{\ast}(H)^+$ with $1$ appearing exactly $q$ times, 
    \begin{equation*}
        B(H,\mu^{-1})=\{b\mid -1\leq \nu_{b,i}\leq 0,\textnormal{ }\sum_{i=1}^{m}\nu_{b,i}=-q\}.
    \end{equation*} Therefore, we see that there is a unique element $b\in B(H,\mu^{-1})$ such that $\nu_{b,i}$ are all integers. Concretely, it is represented by
    \begin{equation*}
        b_{\mu}:=\textnormal{diag}(1,...,1,p^{-1},...,p^{-1})
    \end{equation*} 
    with $p^{-1}$ appearing in the diagonal exactly $q$ times, and is referred to as the $\mu$-ordinary element. In particular, we can conclude that for $b\in B(H,\mu^{-1})$, $H_b$ is non-quasisplit if $b\neq b_{\mu}$.
\end{example}

\begin{para} For a field extension $k/\mathbf{F}_p$, let $\textnormal{Perf}_k$ denote the category of characteristic $p$ perfectoid spaces over $\textnormal{Spd}(k)$ equipped with the $v$-topology of \cite{Sch17}. For a non-archimedean field extension $K/\mathbf{Q}_p$, we will write $\textnormal{Perf}_K$ for the slice category $\textnormal{Perf}_{/\textnormal{Spd(K)}}$. We will denote by $\textnormal{Bun}_H$ the small v-stack of $H$-bundles on ``the'' Fargues--Fontaine curve sending $S\in \textnormal{Perf}_{\overline{\mathbf{F}}_p}$ to the groupoid of $H$-torsors on the relative (adic) Fargues--Fontaine curve $X_S$ (cf. \cite[Definition III.0.1]{FS24}). 

A theorem of Fargues asserts the existence of a bijection of sets
\begin{equation*}
    |\textnormal{Bun}_H|\xrightarrow{\sim} B(H)
\end{equation*}
and a theorem of Viehmann shows that it in fact upgrades to a homeomorphism of topological spaces (where the underlying topological space $|\textnormal{Bun}_H|$ is defined as in \cite[Proposition 12.7]{Sch17}). In particular, by \cite[Proposition 12.9]{Sch17}, $B(H,\mu)\subset B(H)$ defines an open substack
\begin{equation*}
    j_{\mu}\colon\textnormal{Bun}_{H,\mu}\hookrightarrow \textnormal{Bun}_H.
\end{equation*} 
Moreover, we obtain locally closed substacks 
\begin{equation*}
j_b\colon\textnormal{Bun}_H^{b}:=\textnormal{Bun}_H\times_{|B(H)|}\{b\}\hookrightarrow \textnormal{Bun}_H
\end{equation*} for every Kottwitz element $b\in B(H)$ (\cite[Theorem III.02.(v)]{FS24}).

We now discuss how the \'etale sheaf theory of $\textnormal{Bun}_H$ governs the representation theory of $H$ and its inner forms $H_b$, $b\in B(H)$. For a small $v$-stack $X$, \cite[Definition 14.13]{Sch17} introduces the triangulated category $D(X,\Lambda):=D_{\textit{\'et}}(X,\Lambda)$ that can be thought of as the derived category of \'etale sheaves of $\Lambda$-modules on $X$. As is shown in \cite[Proposition V.2.2]{FS24}, for every $b\in B(H)$, there is an equivalence of categories
\begin{equation*}
    D(\textnormal{Bun}_H^{b},\Lambda)\cong D(H_b(\mathbf{Q}_p),\Lambda).
\end{equation*}
In particular, by slight abuse of notation, the formalism of \cite{Sch17} (cf. \textit{loc. cit.} \S17, \S22, \S23) yields adjoint functors
\begin{align*}
    j_b^{\ast} \colon D(\textnormal{Bun}_H,\Lambda) & \rightleftarrows D(H_b(\mathbf{Q}_p),\Lambda):Rj_{b\ast}, \\
    j_{b!} \colon D(\textnormal{Bun}_H,\Lambda) & \rightleftarrows D(H_b(\mathbf{Q}_p),\Lambda):Rj_{b}^!.
\end{align*}
Moreover, when $b\in B(H)$ is a \textit{basic} element (i.e. minimal for the partial order), then $j_{b!}$ is a left adjoint to $j_b^{\ast}$ (cf. \cite{Sch17}, Definition/Proposition 19.1).

Denote by $\widehat{H}$ the Langlands dual group of $H$ and write $Q$ for a discrete quotient of $W_{\mathbf{Q}_p}$ through which the natural action on $\widehat{H}$ factors. For any finite set $I$ and algebraic representation $V\in\textnormal{Rep}_{\Lambda}\left((\widehat{H}\rtimes Q)^I\right)$, an associated Hecke operator
\begin{equation*}
    T_V:D(\textnormal{Bun}_H,\Lambda)\to D(\textnormal{Bun}_H\times [\ast/\underline{W_{\mathbf{Q}_p}}^I],\Lambda)
\end{equation*}
is constructed in \cite[Chapter IX]{FS24}. For an algebraically closed non-archimedean field $C/\mathbf{Q}_p$, define the map
\begin{equation*}
    \pi^I_C\colon\textnormal{Bun}_H\times \textnormal{Spd}(C)\to \textnormal{Bun}_H\times[\ast/\underline{W_{\mathbf{Q}_p}}^I]
\end{equation*}
induced by $\textnormal{Spd}(C)\to \ast \to [\ast /W_{\mathbf{Q}_p}^I]$. By pulling back along $\pi_C^I$
(i.e. forgetting the $W_{\mathbf{Q}_p}^I$-action), we obtain an endofunctor
\begin{equation*}
    T_{V,C}:=(\pi^I_C)^{\ast}T_V:D(\textnormal{Bun}_H,\Lambda)\to D(\textnormal{Bun}_H\times \textnormal{Spd}(C),\Lambda)\cong D(\textnormal{Bun}_H,\Lambda)
\end{equation*}
where the identification at the end is \cite[Corollary V.2.3]{FS24}.
By \cite[Theorem IX.0.1]{FS24}, $T_{V,C}$ preserves both compact and ULA objects. For an $H(\overline{\mathbf{Q}}_p)$-conjugacy class of minuscule cocharacters $\mu$ with associated highest weight representation $V_{\mu}$, we will write $T_{\mu}:=T_{V_{\mu}}$. To explicate $T_{\mu,C}$, consider the Hecke correspondence
\begin{equation*}
    \begin{tikzcd}
	&& {\textnormal{Hck}_{H,\leq\mu}} \\
	{\textnormal{Bun}_{H}} &&&& {\textnormal{Bun}_{H}\times \textnormal{Spd}(C).}
	\arrow["{h_{\mu}^{\leftarrow}}"', from=1-3, to=2-1]
	\arrow["{h_{\mu}^{\rightarrow}}", from=1-3, to=2-5]
\end{tikzcd}
\end{equation*}
Here $\textnormal{Hck}_{H,\leq\mu}$ is the $v$-stack sending $S\in\textnormal{Perf}_C$ to the groupoid of modifications $\mathcal{E}_1\dashrightarrow \mathcal{E}_2$ of $H$-bundles on $X_S$ at the divisor defined by $S\to\textnormal{Spd}(C)$ with meromorphy bounded by $\mu$. Moreover, $h_{\mu}^{\leftarrow}$ and $h_{\mu}^{\rightarrow}$ remember $\mathcal{E}_1$ and $(\mathcal{E}_2, S\to \textnormal{Spd}(C))$, respectively. Then, for $A\in D(\textnormal{Bun}_H,\Lambda)$ we have the formula
\begin{equation*}
    T_{\mu,C}(A)=Rh_{\mu,\ast}^{\rightarrow}(h_{\mu}^{\ast}(A))[d_{\mu}]
\end{equation*}
where $d_{\mu}=\langle2\rho_{H},\mu\rangle$.
\end{para}

\begin{para}
Finally, we discuss the Fargues--Scholze semisimple correspondence. To do so, we write 
\begin{equation*}
    \mathcal{Z}^{\textnormal{spec}}(H,\mathcal{O}):=\mathcal{O}(Z^1(W_{\mathbf{Q}_p},\widehat{H})_{\mathcal{O}})^{\widehat{H}}
\end{equation*} 
for the global sections of the Artin stack $[Z^1(W_{\mathbf{Q}_p},\widehat{H})_{\mathcal{O}}/\widehat{H}]$ over $\textnormal{Spec}(\mathcal{O})$ (cf. \cite[Chapter VIII]{FS24}) and will refer to it as the \textit{spectral Bernstein centre}. Note that it coincides with the ring of functions of the coarse moduli given by the GIT quotient. In particular, one sees that $\overline{\mathbf{Q}}_{\ell}$-points of the spectral Bernstein centre are in bijection with \textnormal{semisimple} $L$-parameters of $H$ (cf. \cite{FS24}, Proposition VIII.3.2).

On the other  side of the correspondence, we have the \textit{geometric Bernstein centre}
\begin{equation*}
    \mathcal{Z}^{\textnormal{geom}}(H,\mathcal{O}):=\pi_0\textnormal{End}(\textnormal{id}_{\mathcal{D}_{\textnormal{lis}}(\textnormal{Bun}_H,\mathcal{O})}).
\end{equation*}

Under the assumption that the order of $\pi_0Z(H)$ is invertible in $\mathcal{O}$, \cite[Corollary IX.0.3]{FS24}  asserts the existence of a map of $\mathcal{O}$-algebras
\begin{equation*}
\Psi^{\textnormal{geom}}\colon\mathcal{Z}^{\textnormal{spec}}(H,\mathcal{O})\to \mathcal{Z}^{\textnormal{geom}}(H,\mathcal{O})
\end{equation*}
with image landing in the subalgebra $\mathcal{Z}^{\textnormal{geom}}_{\textnormal{Hecke}}(H,\mathcal{O})\subset \mathcal{Z}^{\textnormal{geom}}(H,\mathcal{O})$ of elements commuting with the Hecke operators $T_{V,C}$.

For $b\in B(H)$, shriek pushforward along $j_b$ induces an $\mathcal{O}$-algebra map
\begin{equation*}
    \Psi_H^b\colon\mathcal{Z}^{\textnormal{spec}}(H,\mathcal{O})\xrightarrow{\Psi^{\textnormal{geom}}}\mathcal{Z}^{\textnormal{geom}}(H,\mathcal{O})\xrightarrow{j_{b!}}\mathfrak{Z}(H_b)_{\mathcal{O}}
\end{equation*}
towards the Bernstein centre for $H_b(\mathbf{Q}_p)$
\begin{equation*}
    \mathfrak{Z}(H_b)_{\mathcal{O}}:=\textnormal{End}(\textnormal{id}_{\textnormal{Mod}_{\textnormal{sm}}(H_b(\mathbf{Q}_p),\mathcal{O})})=\varprojlim_{K\subset H_b(\mathbf{Q}_p)}\mathcal{O}[K\backslash H_b(\mathbf{Q}_p)/K].
\end{equation*}
For $b=1$, we will sometimes abbreviate $\Psi_H:=\Psi^1_{H}$.
As a consequence, for $b\in B(H)$ and $\pi$ an irreducible smooth $\overline{\mathbf{Q}}_{\ell}$-representation $H_b(\mathbf{Q}_{p})$, post-composition by $\Psi_H^b$ of the induced $\overline{\mathbf{Q}}_{\ell}$-point of the Bernstein centre $\mathfrak{Z}(H_b)_{\mathcal{O}}$ yields a semisimple $L$-parameter
\begin{equation*}
\varphi_{H,\pi}^{\textnormal{FS}}:W_{\mathbf{Q}_{p}}\to {}^LH(\overline{\mathbf{Q}}_{\ell}).
\end{equation*}
The $\pi\mapsto \varphi_{H,\pi}^{\textnormal{FS}}$ correspondence satisfies several desirable properties (cf. \cite[Theorem IX.0.5]{FS24}). 
\end{para}

\subsubsection{Local-local compatibility}
An essential property of the Fargues--Scholze correspondence for our argument is that it recovers the semisimplification of the classical local Langlands correspondence constructed in \cite{HT01} and \cite{Hen00}.

For $H=\textnormal{GL}_m$ and $b\in B(H)$, $H_b$ is an inner form of some standard Levi subgroup $M_b\subset H$. In particular, using the Jacquet--Langlands correspondence \cite{DKV84}, the Langlands classification and the local Langlands correspondence for general linear groups \cite{HT01}, \cite{Zel80}, to every smooth irreducible $\overline{\mathbf{Q}}_{\ell}$-representation $\sigma$ of $H_b(\mathbf{Q}_p)$, one can attach an $L$-parameter
$\textnormal{rec}^{H_b}_{\mathbf{Q}_p}(\sigma):W_{\mathbf{Q}_p}\to M_b(\overline{\mathbf{Q}}_{\ell})$
for $M_b$ (see for instance \cite{ABPS16}). This association is always injective and is a bijection exactly when $b$ is basic, that is, $H_b=M_b$.

We set $\textnormal{JL}(\sigma):=(\textnormal{rec}_{\mathbf{Q}_p}^{M_b})^{-1}(\textnormal{rec}_{\mathbf{Q}_p}^{H_b}(\sigma))\in \textnormal{Mod}_{\textnormal{sm}}(M_b(\mathbf{Q}_p),\overline{\mathbf{Q}}_{\ell})$. We note that if $\sigma$ has supercuspidal support $[H_b^{\textnormal{sc}}(\mathbf{Q}_p),\sigma^{\textnormal{sc}}]$, and $M_b^{\textnormal{sc}}\subset M_b$ is the Levi subgroup associated with $H_b^{\textnormal{sc}}$, then $\textnormal{JL}(\sigma)$ is a subquotient of $\textnormal{n-Ind}_{M_b^{\textnormal{sc}}(\mathbf{Q}_{p})}^{M_b(\mathbf{Q}_p)}\textnormal{JL}(\sigma^{\textnormal{sc}})$.

\begin{theorem}[{``local-local compatibility"}] \label{LocalLocal}
    Assume that $H=\textnormal{GL}_m$, $b\in B(H)$ and that $\sigma$ is a smooth irreducible $\overline{\mathbf{Q}}_{\ell}$-representation of $H_b(\mathbf{Q}_p)$. Let $\pi$ be any Jordan--H\"older constituent of $\textnormal{n-Ind}_{Q_b(\mathbf{Q}_p)}^{H(\mathbf{Q}_p)}\textnormal{JL}(\sigma)$ where $Q_b\leq H$ denotes the standard parabolic subgroup such that $H_b$ is an inner form of its Levi quotient $M_b$.

    Then we have an isomorphism
    \begin{equation*}
\varphi_{H,\sigma}^{\textnormal{FS}}\cong\textnormal{rec}_{\mathbf{Q}_p}(\pi)^{\textnormal{ss}}
    \end{equation*}
    of semisimple $L$-parameters $W_{\mathbf{Q}_p}\to \textnormal{GL}_m(\overline{\mathbf{Q}}_{\ell})$.
     Here $\textnormal{rec}_{\mathbf{Q}_p}(-)$ is the classical local Langlands correspondence for general linear groups constructed in \cite{HT01}.

    In particular, if $\varphi_{H,\sigma}^{\textnormal{FS}}$ is a generic unramified $L$-parameter, then $H_b$ has to be the quasi-split inner form (i.e. $H_b=M_b$).
\end{theorem}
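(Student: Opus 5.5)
This theorem is a known compatibility result, and I would prove it by combining the main properties of the Fargues--Scholze correspondence \cite[Theorem IX.0.5]{FS24} with the local--local compatibility of Hansen--Kaletha--Weinstein \cite{HKW22}.

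\textbf{Step 1: reduce to $\overline{\mathbf{Q}}_\ell$-points of the Bernstein centre.} The parameter $\varphi^{\textnormal{FS}}_{H,\sigma}$ is obtained by composing the character of $\mathfrak{Z}(H_b)$ on $\sigma$ with $\Psi^b_H$. The right-hand side depends only on the supercuspidal support of $\pi$. Every Jordan--H\"older constituent of $\textnormal{n-Ind}_{Q_b}^{H}\textnormal{JL}(\sigma)$ has the same supercuspidal support, namely that of $\textnormal{JL}(\sigma)$ viewed in $H$. So it suffices to prove the isomorphism for one constituent.

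\textbf{Step 2: invoke \cite{HKW22} for basic $b$.} For $H=\textnormal{GL}_m$ and $b$ basic, $H_b=\textnormal{GL}_{m/r}(D)$ is an inner form of $\textnormal{GL}_m$. Here \cite{HKW22} (building on Fargues--Scholze for $b=1$, where $\varphi^{\textnormal{FS}}_{H,\pi}=\textnormal{rec}(\pi)^{\textnormal{ss}}$) shows $\varphi^{\textnormal{FS}}_{H,\sigma}\cong\textnormal{rec}(\textnormal{JL}(\sigma))^{\textnormal{ss}}$, which is exactly the statement with $Q_b=H$.

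\textbf{Step 3: handle general $b$.} I would use compatibility of the Fargues--Scholze construction with the Levi inclusion $M_b\subset H$. By \cite[IX.7]{FS24}, the map $\Psi^b_H$ factors through $\Psi^{b_M}_{M_b}$, with $b$ basic in $M_b$, composed with the map $\mathcal{Z}^{\textnormal{spec}}(H)\to\mathcal{Z}^{\textnormal{spec}}(M_b)$ induced by ${}^LM_b\to{}^LH$. There may be a twist by a modulus character, which is absorbed by the normalized induction and the $p^{1/2}$ in $\mathcal{O}$. Hence $\varphi^{\textnormal{FS}}_{H,\sigma}$ is the pushforward of $\varphi^{\textnormal{FS}}_{M_b,\sigma}$. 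Since $M_b$ is a product of $\textnormal{GL}$'s, Step 2 applied factorwise identifies this with $\textnormal{rec}^{M_b}(\textnormal{JL}(\sigma))^{\textnormal{ss}}$. Compatibility of the classical LLC with parabolic induction on semisimplified parameters then gives $\textnormal{rec}(\pi)^{\textnormal{ss}}$ for any constituent $\pi$.

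\textbf{Main obstacle.} The hardest step is the bookkeeping of the Levi compatibility for non-basic $b$, in particular checking the precise normalizing twist. If the Levi compatibility is not directly available in the needed form, I would instead compute $j_b^*T_{\mu,C}$ applied to $j_{1!}$ of a representation through the geometric Eisenstein-type description of $\textnormal{Bun}^b_H$ as a stratum fibred over $\textnormal{Bun}_{M_b}$.

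\textbf{Final assertion.} Suppose $\varphi^{\textnormal{FS}}_{H,\sigma}$ is generic unramified. Then $\textnormal{rec}(\pi)^{\textnormal{ss}}$ is a sum of unramified characters $\chi_i$ with $\chi_i\chi_j^{-1}\neq|\cdot|^{\pm1}$. If $H_b\neq M_b$, some factor $\textnormal{GL}_k(D_\lambda)$ with $D_\lambda$ a non-split division algebra occurs. The Jacquet--Langlands transfer to $\textnormal{GL}_{rk}$ of any representation of $\textnormal{GL}_k(D_\lambda)$ is essentially discrete series-like. Its supercuspidal support therefore contains a segment of length at least $r\ge2$, or a non-unramified supercuspidal. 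Either case contradicts genericity, since a segment yields two characters differing by $|\cdot|$.
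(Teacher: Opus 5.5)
Your proposal is correct and follows essentially the same route as the paper: use the Levi compatibility of the Fargues--Scholze construction together with its compatibility properties (\cite[Theorem IX.0.5 (vi) and Theorem IX.7.2]{FS24}) to reduce to the case where $H_b$ is an inner form of $\textnormal{GL}_m$, and then apply \cite[Theorem 6.6.1]{HKW22}. The final assertion is also argued as in the paper. The only point to tighten is the phrase ``discrete series-like'', which the paper makes precise by passing to the supercuspidal support $\sigma^{\textnormal{sc}}$ of $\sigma$. Since $\textnormal{JL}(\sigma^{\textnormal{sc}})$ is a discrete series representation of $M_b^{\textnormal{sc}}$, a generic unramified principal series can occur in its parabolic induction only if $M_b^{\textnormal{sc}}$ is a torus, which is exactly your segment dichotomy.
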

\begin{proof}
By \cite[Theorem IX.0.5 (vi) and Theorem IX.7.2]{FS24}, to see the first claim, we can assume that $H_b$ is an inner form of $H=\textnormal{GL}_m$. In that case, it is exactly \cite[Theorem 6.6.1.]{HKW22}.

    For the last part, note that $\varphi_{H,\sigma}^{\textnormal{FS}}\cong\textnormal{rec}_{\mathbf{Q}_p}(\pi)^{\textnormal{ss}}$ being unramified and generic means that the Jordan--H\"older constituent $\pi$ of $\textnormal{n-Ind}_{Q_b(\mathbf{Q}_p)}^{H(\mathbf{Q}_p)}\textnormal{JL}(\sigma)$ is a generic unramified principal series. In particular, if $(\sigma^{\textnormal{sc}},H_b^{\textnormal{sc}}(\mathbf{Q}_p))$ denotes the supercuspidal support of $\sigma$, and $Q_b^{\textnormal{sc}}\leq Q_b$ is the standard parabolic subgroup with Levi quotient $M_b^{\textnormal{sc}}\leq M_b$ corresponding to $H_{b}^{\textnormal{sc}}\leq H_b$, then $\textnormal{n-Ind}_{Q_b^{\textnormal{sc}}(\mathbf{Q}_p)}^{H(\mathbf{Q}_p)}\textnormal{JL}(\sigma^{\textnormal{sc}})$ admits a generic unramified principal series as a Jordan--H\"older constituent. As $\textnormal{JL}(\sigma^{\textnormal{sc}})$ is a discrete representation of $M_b^{\textnormal{sc}}(\mathbf{Q}_p)$, this can only happen if $M_b^{\textnormal{sc}}$ is a torus, forcing $H_b$ to be quasi-split.
\end{proof}

\subsection{Mantovan's product formula}
Recall that $\Lambda$ denoted a local Artinian algebra over a complete DVR $\mathcal{O}$ over $\mathbf{Z}_{\ell}[p^{1/2}]$ and set $C=\widehat{\overline{\mathbf{Q}}}_p$. We revisit our global setup of the PEL type A Shimura datum associated with the quasisplit unitary similitude group $\mathbf{G}$ attached to an integer $n\geq 2$ and imaginary CM field $F$ with corresponding tower of Shimura varieties $\{\textnormal{Sh}_K\}_{K\subset \mathbf{G}(\mathbf{A}^{\infty})}$. Following the strategy of \cite{Kos21}, an essential tool of our proof of Theorem~\ref{VanishingUpToBoundedTorsion} will be Mantovan's product formula. We will use the version proved in \cite{HL25} and follow their notation.

For any sufficiently small compact open subgroup $K\subset \mathbf{G}(\mathbf{A}^{\infty})$, we set
\begin{equation*}
    \mathcal{S}_K:=(\textnormal{Sh}_{K,\mathbf{Q}_p})^{\textnormal{ad},\Diamond}\subset \mathcal{S}_K^{\ast}:=(\textnormal{Sh}_{K,\mathbf{Q}_p}^{\ast})^{\textnormal{ad},\Diamond},
\end{equation*}
the diamonds over $\textnormal{Spd}(\mathbf{Q}_p)$ associated with the Shimura variety $\textnormal{Sh}_{K,\mathbf{Q}_p}$ and its minimal compactification $\textnormal{Sh}_{K,\mathbf{Q}_p}^{\ast}$. We further consider their infinite level counterparts
\begin{equation*}
    \mathcal{S}_{K^p}:=\varprojlim_{K_p\subset \textnormal{G}(\mathbf{Q}_p)}\mathcal{S}_{K^pK_p}\subset \mathcal{S}_{K^p}^{\ast}:=\varprojlim_{K_p\subset \textnormal{G}(\mathbf{Q}_p)}\mathcal{S}_{K^pK_p}^{\ast}.
\end{equation*}
As shown in \cite{Sch15}, for $C=\widehat{\overline{\mathbf{Q}}}_p$, $\mathcal{S}_{K^p,C}^{}$ and $\mathcal{S}_{K^p,C}^{\ast}$ are in fact represented by perfectoid spaces and are referred to as perfectoid Shimura varieties. In \cite{CS17}, and \cite{caraiani_scholze} $\textnormal{G}(\mathbf{Q}_p)$- and Hecke-equivariant Hodge--Tate period morphisms

\begin{equation*}
    \begin{tikzcd}
	{[\mathcal{S}_{K^p,C}/\underline{\textnormal{G}(\mathbf{Q}_p)}]} && {[\mathcal{F}l_{\textnormal{G},\mu^{-1}}/\underline{\textnormal{G}(\mathbf{Q}_p)}]} \\
	{[\mathcal{S}_{K^p,C}^{\ast}/\underline{\textnormal{G}(\mathbf{Q}_p)}]} & {}
	\arrow["{\pi_{\textnormal{HT}}}", from=1-1, to=1-3]
	\arrow[hook, from=1-1, to=2-1]
	\arrow["{\pi_{\textnormal{HT}}^{\ast}}"', from=2-1, to=1-3]
\end{tikzcd}
\end{equation*}
were constructed towards the flag variety\footnote{We note that the appearance of $\mu^{-1}$ instead of $\mu$ in the notation originates in the difference of the normalisations of the Schubert cells $\textnormal{Gr}_{\textnormal{G},\mu}$ in \cite{CS17} and \cite{SW20}. We are using the normalisations introduced in \cite{SW20}.} $\mathcal{F}l_{\textnormal{G},\mu^{-1}}:=( \textnormal{G}/\textnormal{P}_{\mu})^{\textnormal{ad},\Diamond}_C$. The Bialynicki--Birula map identifies this flag variety with the de Rham affine Grassmannian $\textnormal{Gr}_{\textnormal{G},\mu^{-1}}$ of \cite{SW20}, the functor sending $S\in \textnormal{Perf}_C$ to the set of modifications $\mathcal{E}_1\dashrightarrow \mathcal{E}_0$ of the trivial $\textnormal{G}$-bundle $\mathcal{E}_0$ over the relative Fargues--Fontaine curve $X_S$ at the untilt $S\to \textnormal{Spd}(C)$ with meromorphy given by $\mu$. In particular, under the identification $\textnormal{Bun}_{\textnormal{G},C}^1\cong[\textnormal{Spd}(C)/\underline{\textnormal{G}(\mathbf{Q}_p)}]$, the structure map
\begin{equation*}
    h^{\rightarrow}:[\mathcal{F}l_{\textnormal{G},\mu^{-1}}/\underline{\textnormal{G}(\mathbf{Q}_p)}]\to[\textnormal{Spd}(C)/\underline{\textnormal{G}(\mathbf{Q}_p)}]\cong \textnormal{Bun}_{\textnormal{G},C}^1
\end{equation*}
is identified with the map sending $\mathcal{E}_1\dashrightarrow \mathcal{E}_2$ to $\mathcal{E}_2$. For the structure map \begin{equation*}
    f_{K^p}:[\mathcal{S}_{K^p}/\underline{\textnormal{G}(\mathbf{Q}_p)}]\to[\textnormal{Spd}(C)/\underline{\textnormal{G}(\mathbf{Q}_p)}]\cong\textnormal{Bun}_{\textnormal{G},C}^1
\end{equation*}
we consider the compactly supported cohomology complex
\begin{equation*}
R\Gamma_c(\mathcal{S}_{K^p},\Lambda):=Rf_{K^p,!}\Lambda=Rh_{\ast}^{\rightarrow}R\pi_{\textnormal{HT,!}}\Lambda\in D_{\textnormal{sm}}(\textnormal{G}(\mathbf{Q}_p),\Lambda).
\end{equation*}
We also note that, on top of the natural $\textnormal{G}(\mathbf{Q}_p)$-action, one defines a prime-to-$p$ Hecke action on these complexes via correspondences.

For $b\in B(\textnormal{G},\mu^{-1})$, we pick a completely slope divisible $p$-divisible group $\mathbf{X}_b$ and write $\textnormal{Ig}_{K^p}^b$ for the perfect Igusa variety associated with it by \cite{CS17}, a perfect $\overline{\mathbf{F}}_p$-scheme admitting a natural action of $\underline{\textnormal{G}_b(\mathbf{Q}_p)}$.
We will denote by $\textnormal{Ig}_{K^p}^{b,\ast}$ its partial minimal compactification and by $g_b\colon\textnormal{Ig}^b_{K^p}\hookrightarrow \textnormal{Ig}_{K^p}^{b,\ast}$ the open embedding with dense image, both constructed in \cite{caraiani_scholze}. It is a perfect normal scheme that turns out to be \textit{affine} (cf. \textit{loc. cit.}, Theorem 2.8.1) and consequently, by a version of Hartogs's principle, we get that $\textnormal{Ig}^{b,\ast}_{K^p}=\textnormal{Spec}(\mathcal{O}(\textnormal{Ig}_{K^p}^b))$. In particular, we obtain an extension of the $\underline{\textnormal{G}_b(\mathbf{Q}_p)}$-action compatible with the map $g_b$. We consider the partially compactly supported cohomology
\begin{equation*}
    V_b:=R\Gamma_{c-\partial}(\textnormal{Ig}^b_{K^p},\Lambda):=R\Gamma(\textnormal{Ig}^{b,\ast}_{K^p},g_{b!}\Lambda)
\end{equation*}
that is naturally an object of $D_{\textnormal{sm}}(\textnormal{G}_b(\mathbf{Q}_p),\Lambda)$ and admits an action of Hecke operators away from $p$. One has the following crucial result about partially compactly supported cohomology of Igusa varieties.
\begin{theorem}\label{CohOfIV's}
    The complex $V_b$ vanishes outside the range $[0,d_b]$ for $d_b:=\langle2\rho_{\textnormal{G}},\nu_b\rangle=\dim( \textnormal{Ig}_{K^p}^b)$ and is an admissible object of $D_{\textnormal{sm}}(\textnormal{G}_b(\mathbf{Q}_p),\Lambda)$ in the sense that $V_b^{K}$ is a perfect complex of $\Lambda$-modules for every pro-$p$ compact open subgroup $K\subset \textnormal{G}_b(\mathbf{Q}_p)$.
\end{theorem}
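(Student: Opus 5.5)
This theorem is essentially a recollection of results of Caraiani--Scholze, so the plan is to reduce both claims to statements proved in \cite{caraiani_scholze} and \cite{CS17} rather than prove anything from scratch.

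\textbf{The degree bound.} The partial minimal compactification $\textnormal{Ig}^{b,\ast}_{K^p}$ is affine, by \cite[Theorem 2.8.1]{caraiani_scholze}. It is the perfection of a finite-type affine scheme of dimension $d_b=\dim \textnormal{Ig}^b_{K^p}$. The identity $\dim \textnormal{Ig}^b_{K^p} = \langle 2\rho_{\textnormal{G}},\nu_b\rangle$ comes from the dimension formula for Igusa varieties (central leaves) in \cite{CS17}. Since perfection does not change the étale site, Artin vanishing for affine schemes applies. The sheaf $g_{b!}\Lambda$ is constructible, so its cohomology on an affine scheme vanishes above degree $\dim = d_b$. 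It also trivially vanishes in negative degrees. For non-torsion-free $\Lambda$ one can dévissage along the maximal ideal, since $\Lambda$ is Artinian.

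\textbf{Admissibility.} Let $K\subset \textnormal{G}_b(\mathbf{Q}_p)$ be a pro-$p$ compact open subgroup. Since $\ell\neq p$, taking $K$-invariants is exact on smooth $\Lambda$-representations. Hence $V_b^K$ is computed by the cohomology of a finite-level object, namely the quotient of $\textnormal{Ig}^{b,\ast}_{K^p}$ by $K$. I would identify this quotient with a finite-type Igusa variety, or rather its perfection, compactified. To do this, one writes the perfect Igusa variety as an inverse limit of finite étale Igusa covers over the central leaf, with the $\textnormal{G}_b(\mathbf{Q}_p)$-action as in \cite[\S 4]{CS17}. The partial compactifications are compatible with this limit, because of the Hartogs description $\textnormal{Spec}\,\mathcal{O}(\textnormal{Ig}^b)$.

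On a finite-type separated scheme over $\overline{\mathbf{F}}_p$, the cohomology of the constructible sheaf $g_{b!}\Lambda$ is a perfect complex of $\Lambda$-modules, since $\Lambda$ is Noetherian and of finite Tor-dimension locally. Passing to $K$-invariants commutes with this identification. The invariants are for the group of the finite Galois-type cover of the finite-level leaf; as this group is a pro-$p$ quotient and $p$ is invertible in $\Lambda$, one gets $R\Gamma(\textnormal{Ig}^{b,\ast}/K, g_{b!}\Lambda)$.

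\textbf{Main obstacle.} The delicate point is this last identification. It requires that the action of $K$ on the non-quasi-compact perfect scheme be given by a limit of finite-level quotients compatibly with the partial compactification. This is exactly what \cite[\S 4]{caraiani_scholze} proves (their Theorem 4.5.x / Corollary on admissibility), and I would cite it there. The result $V_b$ vanishing outside $[0,d_b]$ is \cite[Theorem 4.6.x]{caraiani_scholze} (the affineness-based bound). So in the end the proof is a citation plus the Artin vanishing remark above.
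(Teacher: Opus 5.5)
Your plan is essentially the paper's: the degree bound is Artin vanishing on the affine scheme $\textnormal{Ig}^{b,\ast}_{K^p}$, for which the paper cites \cite[Proposition 2.8.2]{caraiani_scholze} rather than a result in \S 4. Admissibility is reduced to finiteness of cohomology at finite level, where the paper points to the proof of \cite[Proposition 8.21]{zhang_thesis} rather than to Caraiani--Scholze, and $\textnormal{Ig}^{b,\ast}_{K^p}$ is not itself the perfection of a finite-type scheme but a limit of such along finite transition maps, which is harmless for Artin vanishing since \'etale cohomology commutes with such limits.
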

\begin{proof}
    The vanishing is \cite[Proposition 2.8.2]{caraiani_scholze} (see \cite[Proposition 3.7]{HL25} as well) and follows from Artin's vanishing theorem using that $\textnormal{Ig}_{K^p}^{b,\ast}$ is affine.

    For admissibility, see the proof of \cite[Proposition 8.21]{zhang_thesis}.
\end{proof}
We now spell out the relation of $V_b$ with $R\pi_{\textnormal{HT!}}\Lambda$ and therefore with the compactly supported cohomology of the Shimura variety. Consider the map
\begin{align*}
    h^{\leftarrow} \colon [\mathcal{F}l_{\textnormal{G},\mu^{-1}}/\underline{\textnormal{G}(\mathbf{Q}_p)}] & \to \textnormal{Bun}_{\textnormal{G},\mu^{-1}} \\
    (\mathcal{E}_1\dashrightarrow \mathcal{E}_2) &\mapsto \mathcal{E}_1
\end{align*}
and write $\mathcal{F}l_{\textnormal{G},\mu^{-1}}^b$ for the fiber of $j_{b,\mu^{-1}}\colon\textnormal{Bun}_{\textnormal{G}}^b\hookrightarrow\textnormal{Bun}_{\textnormal{G},\mu^{-1}}$ along this map. As observed in \cite{CS17}, \cite{caraiani_scholze}, the fibers of $R(\pi_{\textnormal{HT}}^{\ast})_{\ast}\Lambda$ over $\mathcal{F}l_{\textnormal{G},\mu^{-1}}^b$ compute $R\Gamma(\textnormal{Ig}^{b,\ast}_{K^p},\Lambda)$. In \cite{HL25}, it was proved that the fibers of $R\pi_{\textnormal{HT}!}\Lambda$ compute $V_b$ and a further interpolation of this result is proved as well. Namely, consider the Cartesian square
\begin{equation*}
    \begin{tikzcd}
	{[\mathcal{F}l_{\textnormal{G},\mu^{-1}}^b/\underline{\textnormal{G}(\mathbf{Q}_p)}]} && {\textnormal{Bun}_{\textnormal{G}}^b} \\
	{[\mathcal{F}l_{\textnormal{G},\mu^{-1}}/\underline{\textnormal{G}(\mathbf{Q}_p)}]} && {\textnormal{Bun}_{\textnormal{G},\mu^{-1}}.} \\
	& {}
	\arrow["{h_b^{\leftarrow}}", from=1-1, to=1-3]
	\arrow["{i_b}"', hook', from=1-1, to=2-1]
	\arrow["{j_{b,\mu^{-1}}}", hook', from=1-3, to=2-3]
	\arrow["{h^{\leftarrow}}"', from=2-1, to=2-3]
\end{tikzcd}
\end{equation*}
Then \cite[\S3]{HL25} shows that there is a canonical isomorphism
\begin{equation*}
    i_b^{\ast}(R\pi_{\textnormal{HT}!}\Lambda)\cong h_b^{\leftarrow\ast}(V_b)
\end{equation*}
where we view $V_b$ as a sheaf on $\textnormal{Bun}_{\textnormal{G}}^b$ via $D(\textnormal{Bun}_{\textnormal{G}}^b,\Lambda)\cong D_{\textnormal{sm}}(\textnormal{G}_b(\mathbf{Q}_p),\Lambda)$. By proper base change (\cite[Proposition 22.19]{Sch17}), after applying $Rh^{\rightarrow}_{\ast}i_{b!}$, we obtain an isomorphism
\begin{equation*}
    R\Gamma(\mathcal{F}l_{\textnormal{G},{\mu^{-1}}},i_{b!}i_b^{\ast}(R\pi_{\textnormal{HT}!}\Lambda))\cong Rh^{\rightarrow}_{\ast}h^{\leftarrow\ast}j_{b,\mu^{-1}!}V_b
\end{equation*}
in $D_{\textnormal{sm}}(\textnormal{G}(\mathbf{Q}_p),\Lambda)$. Finally, we can relate these to the Hecke operators $T_{\mu,C}$
using the following diagram

\begin{equation*}
    \begin{tikzcd}
	&& {[\mathcal{F}l_{\textnormal{G},\mu^{-1}}/\underline{\textnormal{G}(\mathbf{Q}_p)}]} \\
	{\textnormal{Bun}_{\textnormal{G},\mu^{-1}}} && {\textnormal{Hck}_{\textnormal{G},\leq\mu}} & {} & {\textnormal{Bun}^1_{\textnormal{G}}\times \textnormal{Spd}(C)} \\
	{\textnormal{Bun}_{\textnormal{G}}} &&&& {\textnormal{Bun}_{\textnormal{G}}\times\textnormal{Spd}(C)}
	\arrow["{h^{\leftarrow}}"', from=1-3, to=2-1]
	\arrow["{\widetilde{j}_1}"', hook, from=1-3, to=2-3]
	\arrow["{h^{\rightarrow}}", from=1-3, to=2-5]
	\arrow["{j_{\mu^{-1}}}", hook, from=2-1, to=3-1]
	\arrow["{h_{\mu}^{\leftarrow}}", from=2-3, to=3-1]
	\arrow["{h_{\mu}^{\rightarrow}}"', from=2-3, to=3-5]
	\arrow["{j_1}", hook, from=2-5, to=3-5]
\end{tikzcd}
\end{equation*}
where $\widetilde{j}_1$ is the open embedding of the locus where $\mathcal{E}_2$ is geometrically fiberwise trivial. In particular, the right square is Cartesian. We then compute
\begin{align*}
    Rh_{\ast}^{\rightarrow}h^{\leftarrow\ast}j_{b,\mu^{-1},!}(V_b) & \cong
    Rh_{\ast}^{\rightarrow}\widetilde{j}_1^{\ast}h^{\leftarrow\ast}_{\mu}j_{b!}(V_b) \\
    & \cong j_1^{\ast}Rh_{\mu\ast}^{\rightarrow}h_{\mu}^{\leftarrow\ast}j_{b!}(V_b),
\end{align*}
where the latter holds by proper base-change.
Using the formula $T_{\mu}(-)=Rh_{\mu\ast}^{\rightarrow}(h^{\leftarrow\ast}_{\mu}(-))[d_{b_{\mu}}]$ and noting that $d_{b_{\mu}}=d=\dim \textnormal{Sh}_{K}$, we arrive at Mantovan's formula \cite[Corollary 3.18]{HL25}.\footnote{We note that we are ignoring the action of the Weil group $W_{\mathbf{Q}_p}$.}

\begin{theorem}[Mantovan's formula]\label{thm_mantovan}
    There is a prime-to-$p$ Hecke equivariant filtration on the complex $R\Gamma_c(\mathcal{S}_{K^p,C},\Lambda)$ in $D_{\textnormal{sm}}(\textnormal{G}(\mathbf{Q}_{p}),\Lambda)$ labelled by $B(\textnormal{G},\mu^{-1})$ with graded pieces given by admissible objects
    \begin{equation*}
    R\Gamma_{K^p,b}:=j_1^{\ast}T_{\mu,C}\left(j_{b!}(V_b[-d])\right)\in D_{\textnormal{sm}}(\textnormal{G}(\mathbf{Q}_p),\Lambda).
    \end{equation*}
\end{theorem}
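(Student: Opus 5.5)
The statement is essentially a repackaging of \cite[Corollary 3.18]{HL25}, and I would prove it by assembling the identifications made in the preceding paragraphs. The plan has three steps: build the filtration on the Hodge--Tate side, compute its graded pieces as stalk-type pushforwards, and then rewrite those using the Hecke operator $T_{\mu,C}$.

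First, I would stratify the flag variety. The map $h^{\leftarrow}\colon[\mathcal{F}l_{\textnormal{G},\mu^{-1}}/\underline{\textnormal{G}(\mathbf{Q}_p)}]\to\textnormal{Bun}_{\textnormal{G},\mu^{-1}}$ pulls back the Harder--Narasimhan (Newton) stratification by the locally closed substacks $\textnormal{Bun}^b_{\textnormal{G}}$, $b\in B(\textnormal{G},\mu^{-1})$. Choose a total order on the finite set $B(\textnormal{G},\mu^{-1})$ refining the partial order $\prec$. Then the unions of strata give an increasing chain of open substacks $U_1\subset U_2\subset\cdots$ of $\mathcal{F}l_{\textnormal{G},\mu^{-1}}$; this uses that the strata are open for the opposite order, by Viehmann's homeomorphism. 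Applying the excision triangles $j_!j^*\to\mathrm{id}\to i_*i^*$ successively to $R\pi_{\textnormal{HT}!}\Lambda$ gives a finite filtration whose graded pieces are $i_{b!}i_b^*R\pi_{\textnormal{HT}!}\Lambda$. Here $i_{b!}$ is interpreted as extension by zero from the locally closed stratum.

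Second, I would apply $Rh^{\rightarrow}_*$. Since $R\Gamma_c(\mathcal{S}_{K^p},\Lambda)=Rh^{\rightarrow}_*R\pi_{\textnormal{HT}!}\Lambda$, this turns the filtration into a filtration on $R\Gamma_c$ in $D_{\textnormal{sm}}(\textnormal{G}(\mathbf{Q}_p),\Lambda)$, with graded pieces $Rh^{\rightarrow}_*i_{b!}i_b^*R\pi_{\textnormal{HT}!}\Lambda$. Using the \cite[\S3]{HL25} isomorphism $i_b^*R\pi_{\textnormal{HT}!}\Lambda\cong h_b^{\leftarrow*}V_b$ together with proper base change, these become $Rh^{\rightarrow}_*h^{\leftarrow*}j_{b,\mu^{-1}!}V_b$.

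Third, I would use the diagram displayed before the statement, with its Cartesian right square and the proper base change computation, to rewrite each graded piece as
\[ j_1^*Rh^{\rightarrow}_{\mu*}h^{\leftarrow*}_{\mu}j_{b!}V_b=j_1^*T_{\mu,C}(j_{b!}V_b)[-d], \]
using $d_{b_\mu}=d$. Moving the shift inside is harmless because $T_{\mu,C}$ is triangulated. This is exactly $R\Gamma_{K^p,b}$.

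It remains to check admissibility and equivariance. For admissibility: $V_b$ is admissible by Theorem \ref{CohOfIV's}, so $j_{b!}V_b$ is ULA. $T_{\mu,C}$ preserves ULA objects, and $j_1^*$ of a ULA object is an admissible representation. For prime-to-$p$ Hecke equivariance: every construction above is functorial in $K^p$, and the isomorphism of \cite{HL25} is compatible with the prime-to-$p$ Hecke correspondences acting on the Shimura and Igusa varieties.

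The main obstacle is this last compatibility. The excision filtration must be genuinely Hecke equivariant, not merely equivariant up to non-canonical isomorphism. I would handle it by working with the whole tower in $K^p$ and noting that the strata are pulled back from $\textnormal{Bun}_{\textnormal{G}}$, so they are stable under the prime-to-$p$ Hecke correspondences, which act only on the source of $\pi_{\textnormal{HT}}$. If needed, I would simply cite \cite[Corollary 3.18]{HL25} for this point.
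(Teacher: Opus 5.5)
Your proposal is correct and follows essentially the same route as the paper. The paper cites \cite[Corollary 3.18]{HL25} and notes that it extends from $\overline{\mathbf{F}}_\ell$ to general torsion coefficients $\Lambda$. The derivation behind that citation is what you reconstruct and what the paper recalls just before the statement: excision along the Newton strata of the flag variety, the identification $i_b^{\ast}R\pi_{\textnormal{HT}!}\Lambda\cong h_b^{\leftarrow\ast}V_b$, and base change through the Hecke correspondence. The paper then gets admissibility because $j_{b!}$, $T_{\mu,C}$ and $j_1^{\ast}$ preserve ULA objects, and Hecke equivariance because the filtrations are compatible as $K^p$ varies, exactly as you argue.
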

\begin{proof}
    Most of the statement is \cite[Corollary 3.18]{HL25} when $\Lambda=\overline{\mathbf{F}}_{\ell}$. The proof works with general $\ell$-torsion coefficients $\Lambda$ as well. 
    
    To see that the filtration is Hecke equivariant, we note that the filtrations are compatible when varying $K^p$ by construction. 
    
    Finally, to see that $R\Gamma_{K^p,b}$ is admissible, we recall that along the identifications $D(\textnormal{Bun}_{\textnormal{G}}^{b},\Lambda)\cong D_{\textnormal{sm}}(\textnormal{G}_{b}(\mathbf{Q}_p),\Lambda)$, admissibility corresponds to being ULA and the functors $j_{b!}$, $T_{\mu,C}$ and $j_1^{\ast}$ all preserve the ULA property. In particular, admissibility follows from the admissibility of $V_b$.
\end{proof}
\begin{remark}
     Even though we will not need it explicitly, it certainly deserves mentioning that the graded pieces $R\Gamma_{K^p,b}$ are described in \cite{HL25} in more classical terms. Namely, they are identified with
    \begin{equation*}
R\Gamma_c(G,b,\mu,\Lambda)\otimes^{\mathbf{L}}_{\mathcal{H}(\textnormal{G}_b)}V_b[2d_b]
    \end{equation*}
    where $R\Gamma_c(G,b,\mu,\Lambda)$ is a certain (normalised) compactly supported cohomology of the infinite level local Shimura variety $\mathcal{M}_{(G,b,\mu),\infty,C}$ associated with the local Shimura datum $(\textnormal{G},b,\mu)$ parametrising modifications $\mathcal{E}_b\dashrightarrow \mathcal{E}_0$ at $S\to \textnormal{Spd}(C)$ of meromorphy $\mu$ and $\mathcal{H}(\textnormal{G}_b):=C_c(\textnormal{G}_b(\mathbf{Q}_p),\Lambda)$ is the smooth Hecke algebra for $\textnormal{G}_b(\mathbf{Q}_p)$.
\end{remark}
\subsubsection{Cohomology of local Shimura varieties}
By the definition of the Bernstein centre, we have a map
\begin{equation*}
    t_1^b\colon\mathfrak{Z}(\textnormal{G})_{\mathcal{O}}\to\textnormal{End}_{D_{\textnormal{sm}}(\textnormal{G}(\mathbf{Q}_p),\Lambda)}(R\Gamma_{K^p,b}).
\end{equation*}
On the other hand, by acting with $\mathfrak{Z}(\textnormal{G}_b)_{\mathcal{O}}$ on $V_b$, we obtain a map
\begin{equation*}
    t_b^b\colon\mathfrak{Z}(\textnormal{G}_b)_{\mathcal{O}}\to \textnormal{End}_{D_{\textnormal{sm}}(\textnormal{G}_b(\mathbf{Q}_p),\Lambda)}(R\Gamma_{K^p,b}).
\end{equation*}
As a consequence of the Fargues--Scholze formalism, we have the following compatibility result, a shadow of the Kottwitz conjecture for the local Shimura variety $\mathcal{M}_{(\textnormal{G},b,\mu),\infty,C}$.
\begin{prop}\label{KottwitzConjShadow}
    The following is a commutative diagram
    
    \begin{equation*}
        \begin{tikzcd}
	&& {\mathfrak{Z}(\textnormal{G})_{\mathcal{O}}} \\
	{\mathcal{Z}^{\textnormal{spec}}(\textnormal{G},\mathcal{O})} &&&& {Z\left(\textnormal{End}_{D_{\textnormal{sm}}(\textnormal{G}(\mathbf{Q}_p),\Lambda)}(R\Gamma_{K^p,b})\right).} \\
	&& {\mathfrak{Z}(\textnormal{G}_b)_{\mathcal{O}}}
	\arrow["{t_1^b}", from=1-3, to=2-5]
	\arrow["{\Psi_{\textnormal{G}}^1}", from=2-1, to=1-3]
	\arrow["{\Psi_{\textnormal{G}}^b}"', from=2-1, to=3-3]
	\arrow["{t_b^b}"', from=3-3, to=2-5]
\end{tikzcd}
    \end{equation*}
\end{prop}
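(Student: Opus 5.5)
The plan is to push the statement through the Fargues--Scholze formalism. We have $R\Gamma_{K^p,b}=j_1^{\ast}T_{\mu,C}(j_{b!}(V_b[-d]))$, and we will use two structural properties of the map $\Psi^{\textnormal{geom}}\colon\mathcal{Z}^{\textnormal{spec}}(\textnormal{G},\mathcal{O})\to\mathcal{Z}^{\textnormal{geom}}(\textnormal{G},\mathcal{O})$. First, an element $z$ of the spectral Bernstein centre acts through $\Psi^{\textnormal{geom}}(z)$ as a natural endomorphism of the identity functor on $D_{\textnormal{lis}}(\textnormal{Bun}_{\textnormal{G}},\mathcal{O})$. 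Second, this action lands in $\mathcal{Z}^{\textnormal{geom}}_{\textnormal{Hecke}}$, so it commutes with the Hecke operators $T_{V,C}$. We will then compute the action of $\Psi^{\textnormal{geom}}(z)$ on the object $T_{\mu,C}(j_{b!}V_b)$ in two ways.

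\textbf{Step 1 (the route through $\mathfrak{Z}(\textnormal{G})$).} Naturality of $\Psi^{\textnormal{geom}}(z)$ with respect to the restriction $j_1^{\ast}$ gives the following. The action of $\Psi^{\textnormal{geom}}(z)$ on $T_{\mu,C}(j_{b!}V_b)$, restricted along $j_1$, equals the action of $\Psi^{\textnormal{geom}}(z)$ on the stratum $\textnormal{Bun}^1_{\textnormal{G}}$ evaluated on $j_1^{\ast}T_{\mu,C}(j_{b!}V_b)$. Under $D(\textnormal{Bun}^1_{\textnormal{G}},\Lambda)\cong D(\textnormal{G}(\mathbf{Q}_p),\Lambda)$, this restricted endomorphism of the identity is the Bernstein-centre element $\Psi^1_{\textnormal{G}}(z)$, by the definition of $\Psi^1_{\textnormal{G}}$. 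Here we must check a compatibility between the two constructions. The paper defines $\Psi^b_{\textnormal{G}}$ via $j_{b!}$, that is, by letting $\Psi^{\textnormal{geom}}(z)$ act on $j_{b!}(\sigma)$ and transporting the result back through the fully faithful functor $j_{b!}$. Restricting along $j_b^{\ast}$ gives the same element because $j_b^{\ast}j_{b!}=\id$ and $\Psi^{\textnormal{geom}}(z)$ is natural. So $t_1^b\circ\Psi^1_{\textnormal{G}}(z)$ is $j_1^{\ast}$ applied to the action of $\Psi^{\textnormal{geom}}(z)$ on $T_{\mu,C}(j_{b!}V_b)$.

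\textbf{Step 2 (the route through $\mathfrak{Z}(\textnormal{G}_b)$).} By Hecke-compatibility, the action of $\Psi^{\textnormal{geom}}(z)$ on $T_{\mu,C}(X)$ equals $T_{\mu,C}$ applied to its action on $X$, for $X=j_{b!}V_b$. By the definition of $\Psi^b_{\textnormal{G}}$, the action on $j_{b!}V_b$ is $j_{b!}$ of the action of $\Psi^b_{\textnormal{G}}(z)\in\mathfrak{Z}(\textnormal{G}_b)_{\mathcal{O}}$ on $V_b$. Applying the functor $j_1^{\ast}T_{\mu,C}j_{b!}$, this is exactly $t^b_b(\Psi^b_{\textnormal{G}}(z))$. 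Comparing with Step 1 gives commutativity. Landing in the centre of the endomorphism ring is automatic on the upper route, since Bernstein-centre elements act centrally on any object.

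\textbf{Main obstacle.} The delicate point is the Hecke-commutation. $\mathcal{Z}^{\textnormal{geom}}_{\textnormal{Hecke}}$ is defined as commuting with $T_V$ as functors valued in $D(\textnormal{Bun}_{\textnormal{G}}\times[\ast/\underline{W_{\mathbf{Q}_p}}^I])$, and we need the statement after pulling back along $\pi^I_C$. We must also check that $j_{b!}V_b$ lies in $D_{\textnormal{lis}}$, so that $\mathcal{Z}^{\textnormal{geom}}$ acts on it at all. For the first point, pullback along $\pi^I_C$ is a functor, so commutation persists after pullback; the spectral action is also compatible with base change to $\textnormal{Spd}(C)$, by \cite[Chapter IX]{FS24}. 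For the second point, $V_b$ is admissible by Theorem \ref{CohOfIV's}, hence ULA, and in particular lies in $D_{\textnormal{lis}}$, which contains ULA objects by \cite[Chapter VII]{FS24}. The remaining step is routine bookkeeping of the equivalence \cite[Proposition V.2.2]{FS24} against the two Bernstein-centre actions.
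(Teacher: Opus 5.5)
Your proposal is correct and is essentially the paper's argument. Hecke-commutation together with $\Psi^{\textnormal{geom}}(z)=j_{b!}\Psi^b_{\textnormal{G}}(z)$ reduces everything to the identity $j_1^{\ast}\Psi^{\textnormal{geom}}(z)=\Psi^1_{\textnormal{G}}(z)$ on $j_1^{\ast}T_{\mu,C}j_{b!}V_b$; you perform the two reductions in the opposite order, which does not matter. The paper proves that identity by Yoneda and the adjunction $j_{1!}\dashv j_1^{\ast}$, and this is exactly your ``naturality plus $j_1^{\ast}j_{1!}=\id$'' in Step 1 made precise: apply naturality of $\Psi^{\textnormal{geom}}(z)$ to the counit $j_{1!}j_1^{\ast}X\to X$, which exists because $j_1$ is an open immersion, and note that its image under $j_1^{\ast}$ is an isomorphism.
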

\begin{proof}
    Recall that by definition we have $R\Gamma_{K^p,b}=j_1^{\ast}T_{\mu,C}j_{b!}(V_b[-d])$.
    Given $z\in\mathcal{Z}^{\textnormal{spec}}(\textnormal{G},\mathcal{O})$, it suffices to show the equality \[
        \Psi_{\textnormal{G}}^1(z) = j_1^{\ast}T_{\mu,C}j_{b!} \Psi_{\textnormal{G}}^b(z)
    \]
    of endomorphisms of $j_1^{\ast}T_{\mu,C}j_{b!} A$ for all objects $A$ in $D_{\textnormal{sm}}(\textnormal{G}_{b}(\mathbf{Q}_p),\Lambda)$.

    Since $\Psi_\textnormal{G}^{\textnormal{geom}}(z)=j_{b!}\Psi_{\textnormal{G}}^b(z)$ by definition and excursion operators commute with Hecke operators, it suffices to prove the equality
    \[
        \Psi_{\textnormal{G}}^1(z) = j_1^* \Psi_{\textnormal{G}}^{\textnormal{geom}}(z)
    \]
    as elements of the endomorphism ring of $j_1^{\ast}T_{\mu,C}j_{b!} A$.

    By Yoneda, it suffices to check that for every $B\in D_{\textnormal{sm}}(\textnormal{G}(\mathbf{Q}_p),\Lambda)$ the two actions agree on
    \begin{equation}\label{adjunctionequation}
        \Hom_{D_{\textnormal{sm}}(\textnormal{G}(\mathbf{Q}_p),\Lambda)}(B,j_1^{\ast}T_{\mu,C}j_{b!} A)=\Hom_{D(\textnormal{Bun}_{\textnormal{G}},\Lambda)}(j_{1!}B,T_{\mu,C}j_{b!} A)
    \end{equation}
    where we used that $j_1^{\ast}$ admits $j_{1!}$ as left adjoint as $j_1$ is an open embedding (cf. \cite{Sch17}, Definition/Proposition 19.1). However, in the endomorphism ring of the RHS of \ref{adjunctionequation} we have
    \begin{equation*}
        \Hom(\id,\Psi_{\textnormal{G}}^{\textnormal{geom}}(z))=\Hom(\Psi_{\textnormal{G}}^{\textnormal{geom}}(z),\id)=\Hom(j_{1!}\Psi_{\textnormal{G}}^1(z),\id)
    \end{equation*}
    using that $\Psi_{\textnormal{G}}^{\textnormal{geom}}(z)$ lies in the centre of the category and the definition of $\Psi^1_{\textnormal{G}}$. In particular, by functoriality of adjunction, and centrality of $\Psi^1_{\textnormal{G}}(z)$, we obtain the equality
    \begin{equation*}
        \Hom(\id,j_1^{\ast}\Psi_{\textnormal{G}}^{\textnormal{geom}}(z))=\Hom(\id, \Psi^1_{\textnormal{G}}(z))
    \end{equation*}
    in the endomorphism ring of the LHS of \ref{adjunctionequation}
    as desired.
\end{proof}

\begin{para} Finally, we will need a consequence of an instance of the Harris--Viehmann conjecture for $\mathcal{M}_{(\textnormal{G},b_{\mu},\mu),C,\infty}$. Let  $\textnormal{P}_{\mu}$ denote the (non-standard)\footnote{This is the parabolic subgroup containing the opposite of the product of the Borel subgroups of lower triangular matrices.} dynamic parabolic, and  $\textnormal{M}_{\mu}$ its Levi quotient, the centraliser of $\mu$ in $\textnormal{G}$. Note that for the $\mu$-ordinary element $b_{\mu}$ we have $\textnormal{G}_{b_{\mu}}=\textnormal{M}_{\mu}$.
\end{para}

\begin{prop}\label{RapoportViehmann}
    For the $\mu$-ordinary element $b_{\mu}\in B(\textnormal{G},\mu^{-1})$, the functor
    \begin{equation*}
    j_1^{\ast}T_{\mu,C}j_{b_{\mu}!}(-):D_{\textnormal{sm}}(\textnormal{M}_{\mu}(\mathbf{Q}_p),\Lambda)\to D_{\textnormal{sm}}(\textnormal{G}(\mathbf{Q}_p),\Lambda)
   \end{equation*}
can be identified with the exact functor $\textnormal{Ind}_{\textnormal{P}_{\mu}(\mathbf{Q}_p)}^{\textnormal{G}(\mathbf{Q}_p)}(-)[d]$. In particular, the complex $R\Gamma_{K^p, b_{\mu}}$ is concentrated in $[0,d]$.
\end{prop}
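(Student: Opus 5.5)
We identify $j_1^{\ast}T_{\mu,C}j_{b_{\mu}!}$ with parabolic induction by computing the Hecke correspondence restricted to the relevant strata. The degree bound for $R\Gamma_{K^p,b_\mu}$ then follows from exactness of induction and Theorem~\ref{CohOfIV's}.

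\emph{Geometric computation.} Since $\mu$ is minuscule, $\textnormal{Hck}_{\textnormal{G},\leq\mu}=\textnormal{Hck}_{\textnormal{G},\mu}$ and the sheaf $h_\mu^{\leftarrow\ast}$ needs no IC correction. Restrict the Hecke diagram to the locus where $\mathcal{E}_1\in \textnormal{Bun}_{\textnormal{G}}^{b_\mu}$ and $\mathcal{E}_2$ is trivial. After base change along $\textnormal{Spd}(C)\to \textnormal{Bun}^1_{\textnormal{G}}$, this locus is the ordinary stratum $\mathcal{F}l^{b_\mu}_{\textnormal{G},\mu^{-1}}$ of the flag variety. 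Concretely, $\mathcal{F}l^{b_\mu}_{\textnormal{G},\mu^{-1}}$ consists of the $\mu^{-1}$-modifications of the trivial bundle whose result is $\mathcal{E}_{b_\mu}$. Since $b_\mu$ is the unique element of $B(\textnormal{G},\mu^{-1})$ with integral slopes (Example~\ref{KottwitzSetExample}, applied factorwise using $\tau_p$), this locus is the union of the $\textnormal{G}(\mathbf{Q}_p)$-translates of the rational points $\mathcal{F}l^{b_\mu}(\mathbf{Q}_p)$, i.e.
\[
\mathcal{F}l^{b_\mu}_{\textnormal{G},\mu^{-1}}\cong \underline{\textnormal{G}(\mathbf{Q}_p)/\textnormal{P}_\mu(\mathbf{Q}_p)}\times\textnormal{Spd}(C).
\]
This is the standard description of the ordinary locus, as in \cite{CS17}, \cite{caraiani_scholze} and \cite{GI22}. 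The map $h_{b_\mu}^{\leftarrow}$ to $\textnormal{Bun}^{b_\mu}_{\textnormal{G}}\cong[\ast/\underline{\textnormal{M}_\mu(\mathbf{Q}_p)}]$ is then induced by $\textnormal{P}_\mu\to\textnormal{M}_\mu$. Hence
\[
[\mathcal{F}l^{b_\mu}/\textnormal{G}(\mathbf{Q}_p)]\cong[\ast/\underline{\textnormal{P}_\mu(\mathbf{Q}_p)}].
\]

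\emph{Translation into representation theory.} The base change computation preceding Theorem~\ref{thm_mantovan} gives
\[
j_1^{\ast}T_{\mu,C}j_{b_\mu!}(A)\cong Rh^{\rightarrow}_{\ast}i_{b_\mu!}h_{b_\mu}^{\leftarrow\ast}A\,[d].
\]
Under the above identification this functor becomes inflation from $\textnormal{M}_\mu(\mathbf{Q}_p)$ to $\textnormal{P}_\mu(\mathbf{Q}_p)$, followed by pushforward along $[\ast/\textnormal{P}_\mu(\mathbf{Q}_p)]\to[\ast/\textnormal{G}(\mathbf{Q}_p)]$.

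The key point is that $\textnormal{G}(\mathbf{Q}_p)/\textnormal{P}_\mu(\mathbf{Q}_p)$ is profinite, so this pushforward is both $!$ and $\ast$. It computes smooth induction $\textnormal{Ind}_{\textnormal{P}_\mu(\mathbf{Q}_p)}^{\textnormal{G}(\mathbf{Q}_p)}$, which equals compact induction here because $\textnormal{G}/\textnormal{P}_\mu$ is compact, and it is exact. The modulus/half-twist conventions must be checked but do not affect exactness. This step is essentially the Harris--Viehmann statement for $\mu$-ordinary $b_\mu$ proved in \cite{GI22} (or \cite[\S 3]{HL25}), and I would cite it, including the compatibility of the $[d]$ shift.

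\emph{Degree bound.} Consequently
\[
R\Gamma_{K^p,b_\mu}\cong\textnormal{Ind}_{\textnormal{P}_\mu(\mathbf{Q}_p)}^{\textnormal{G}(\mathbf{Q}_p)}(V_{b_\mu}[-d])[d]=\textnormal{Ind}(V_{b_\mu}).
\]
By Theorem~\ref{CohOfIV's}, $V_{b_\mu}$ is concentrated in $[0,d_{b_\mu}]=[0,d]$, and exactness of induction preserves this range.

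\emph{Main obstacle.} The hard step is the precise identification of the ordinary stratum together with its $\textnormal{G}(\mathbf{Q}_p)$- and $\textnormal{M}_\mu(\mathbf{Q}_p)$-actions. In particular, one must check that $h^{\leftarrow}_{b_\mu}$ is really the quotient by $\textnormal{P}_\mu\to\textnormal{M}_\mu$, with no nontrivial unipotent or Banach–Colmez contributions. This works because the slopes of $\mathcal{E}_{b_\mu}$ differ by exactly $1$, so the relevant automorphism group sheaf $\widetilde{\textnormal{G}}_{b_\mu}$ has trivial positive-slope part after restriction to trivial $\mathcal{E}_2$. I would rely on \cite{GI22} for this.
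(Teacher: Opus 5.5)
Your proposal is correct and rests on the same input as the paper, whose proof simply cites \cite{GI22} via \cite[Proposition 10.2.5]{DvHKZ24}; your sketch is what that citation contains: the ordinary stratum is $\underline{\textnormal{G}(\mathbf{Q}_p)/\textnormal{P}_\mu(\mathbf{Q}_p)}\times\textnormal{Spd}(C)$, pushforward along this profinite fibration is exact parabolic induction, and the vanishing range $[0,d_{b_\mu}]$ of $V_{b_\mu}$ with $d_{b_\mu}=d$ gives the bound.

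One correction: $\textnormal{Bun}^{b_\mu}_{\textnormal{G}}$ is $[\ast/\widetilde{\textnormal{G}}_{b_\mu}]$, not $[\ast/\underline{\textnormal{M}_\mu(\mathbf{Q}_p)}]$, and $\widetilde{\textnormal{G}}^{>0}_{b_\mu}$ is a nontrivial Banach--Colmez group of dimension $d$ into which the unipotent radical of $\textnormal{P}_\mu(\mathbf{Q}_p)$ maps, so the reason given in your last paragraph is wrong. This is harmless: only $h^{\leftarrow\ast}_{b_\mu}$ enters, and $D(\textnormal{Bun}^{b}_{\textnormal{G}},\Lambda)\cong D(\textnormal{G}_b(\mathbf{Q}_p),\Lambda)$ is itself defined by pullback along $[\ast/\widetilde{\textnormal{G}}_b]\to[\ast/\underline{\textnormal{G}_b(\mathbf{Q}_p)}]$, so that pullback is automatically inflation along $\textnormal{P}_\mu(\mathbf{Q}_p)\to\textnormal{M}_\mu(\mathbf{Q}_p)$.
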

\begin{proof}
    This follows from \cite{GI22} as shown in \cite[Proposition 10.2.5]{DvHKZ24}.
\end{proof}

\subsection{The proof} \label{sec_VanishingProof}
Let $X^{\textnormal{tame}} \subset Z^1(W_{\rationals_p}, \widehat{\textnormal{G}})$ be the connected component of tame parameters. The functor of points of $X^{\textnormal{tame}}$ can be described as
\[
    X^{\textnormal{tame}}(R) = \{ ((\phi_0, \tau_0), (\phi_v, \tau_v)) \in \mathbf{G}_m(R)^2 \times (\GL_{2n}(R)^2)^{[F^+ : \rationals]} : \phi_v \tau_v \phi_v^{-1} = \tau_v^p \}.
\]
Let $\ad^0 \colon \GL_{2n} \to \GL(\mathfrak{sl}_{2n})$ be the adjoint representation.
Let $\Delta^{\textnormal{spec}}_\phi \in \mathcal{O}(X^{\textnormal{tame}})^{\widehat{\textnormal{G}}} \subset \mathcal{Z}^{\textnormal{spec}}(\textnormal{G}, \mathcal{O})$ be the conjugation invariant function
defined by
\[
    \Delta^{\textnormal{spec}}_\phi = \prod_{v \mid p} \det(\ad^0(\phi_v) - p \cdot \operatorname{id}).
\]
After specialising at a $\overline{\mathbf{Q}}_{\ell}$-point $x$ of $X^{\textnormal{tame}}$, we have the formula 
\[
    \Delta^{\textnormal{spec}}_\phi = (1 - p)^{[F^+:\rationals] (2n-1)} \prod_{v \mid p} \prod_{i \neq j} (\alpha_{v,i} \alpha_{v,j}^{-1} - p), 
\]
where $\{\alpha_{v,i} \}$ are the eigenvalues of $x(\phi_v)$.
Moreover, we define
\[
    \Delta^{\textnormal{spec}}_{\tau} = f(\tau_0) \prod_{v \mid p} \det(f(\tau_v)) \in \mathcal{O}(X^{\textnormal{tame}})^{\widehat{\textnormal{G}}},
\]
where 
\[
f(x) = \frac{x^{ N } - 1}{x - 1} \qquad \textnormal{and} \qquad N = \prod_{m = 1}^{2n} (p^{m} - 1).
\]
Finally, we set $\Delta^{\textnormal{spec}} = \Delta^{\textnormal{spec}}_{\phi} \Delta^{\textnormal{spec}}_{\tau}$.

\begin{lemma} \label{deltaspecpoints_lemma}
Let $y \colon \Spec \overline{\rationals}_\ell \to X^\textnormal{tame}\sslash \widehat{\textnormal{G}}$ be a point corresponding to a semisimple representation $\varphi_y \colon W_{\rationals_p} \to \widehat{\textnormal{G}}(\overline{\rationals}_\ell)$.
Then $\Delta^{\textnormal{spec}}(y) \neq 0$ if and only if $\varphi_y$ is a generic unramified $L$-parameter.
\end{lemma}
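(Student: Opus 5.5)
We evaluate both factors of $\Delta^{\textnormal{spec}} = \Delta^{\textnormal{spec}}_\phi \Delta^{\textnormal{spec}}_\tau$ at $y$. Since these are $\widehat{\textnormal{G}}$-invariant functions, we may replace the point by any lift $x = ((\phi_0,\tau_0),(\phi_v,\tau_v)) \in X^{\textnormal{tame}}(\overline{\rationals}_\ell)$ in the closed orbit. This orbit corresponds to the semisimple parameter $\varphi_y$, so $\phi_v$ and $\tau_v$ are semisimple; in particular $\tau_v$ is diagonalizable. We take $\phi_v$ to be the image of a Frobenius lift and $\tau_v$ the image of a topological generator of tame inertia. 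We then show that $\Delta^{\textnormal{spec}}_\tau(x) \neq 0$ if and only if $\varphi_y$ is unramified, and that, when $\varphi_y$ is unramified, $\Delta^{\textnormal{spec}}_\phi(x)\neq 0$ if and only if $\varphi_y$ is generic. Combining the two statements gives the lemma, because a nonzero product requires both factors to be nonzero.

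For the $\tau$-factor, the relation $\phi_v \tau_v \phi_v^{-1} = \tau_v^p$ shows that $\phi_v$ permutes the eigenvalues of $\tau_v$ via $\beta \mapsto \beta^{p}$, up to the direction convention. Hence each eigenvalue $\beta$ satisfies $\beta^{p^m} = \beta$ for some $1\le m\le 2n$, since the orbit of $\beta$ under this map has size at most $2n$. It follows that $\beta$ is a root of unity of order dividing $p^m-1$, and therefore dividing $N$. The same holds for $\tau_0$, using the relation $\phi_0\tau_0\phi_0^{-1}=\tau_0^p$ in $\mathbf{G}_m$, which forces $\tau_0^{p-1}=1$.

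Now $f(\beta) = \sum_{k=0}^{N-1}\beta^k$ equals $N$ if $\beta = 1$, and equals $0$ if $\beta\ne 1$ and $\beta^N=1$. Since $N$ is prime to $p$ but we work over $\overline{\rationals}_\ell$ in characteristic $0$, the value $N$ is nonzero. Therefore $\det f(\tau_v) = \prod_\beta f(\beta)$ is nonzero exactly when all eigenvalues of $\tau_v$ equal $1$, i.e.\ when $\tau_v = 1$ (using semisimplicity), and similarly for $\tau_0$. Thus $\Delta^{\textnormal{spec}}_\tau(x)\neq0$ if and only if $\varphi_y$ is trivial on tame inertia. Since every semisimple parameter in $X^{\textnormal{tame}}$ is trivial on wild inertia, this is equivalent to $\varphi_y$ being unramified.

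For the $\phi$-factor, we use the displayed formula: $\Delta^{\textnormal{spec}}_\phi(x)$ equals $(1-p)^{[F^+:\rationals](2n-1)}\prod_v\prod_{i\neq j}(\alpha_{v,i}\alpha_{v,j}^{-1}-p)$. The constant $(1-p)$ is nonzero in characteristic $0$. For an unramified semisimple parameter of $\GL_{2n}$, genericity in the sense used here (the associated unramified principal series is irreducible and generic) means exactly that $\alpha_{v,i}/\alpha_{v,j}\neq p$ for all $i\neq j$ and all $v$. The $\mathbf{G}_m$-factor imposes no condition. So the product is nonzero precisely when $\varphi_y$ is generic.

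The only delicate points are bookkeeping. First, we must confirm that the evaluation of $\Delta^{\textnormal{spec}}_\tau$ is independent of the choice of lift, which holds by invariance together with the fact that closed orbits correspond to semisimple parameters. Second, we must match the convention of $p$ versus $p^{-1}$ and of geometric versus arithmetic Frobenius with the paper's definition of "generic". The condition is symmetric under $i\leftrightarrow j$, so the ratio $\alpha_i/\alpha_j = p$ and its inverse are both excluded, and the conventions do not affect the conclusion.
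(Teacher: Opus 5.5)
Your proof is correct and takes the same route as the paper. You show that $\Delta^{\textnormal{spec}}_\tau(y)\neq 0$ exactly for unramified parameters, because the relation $\phi_v\tau_v\phi_v^{-1}=\tau_v^p$ forces the eigenvalues of $\tau_v$ to be roots of unity of order dividing $p^m-1$ with $m\leq 2n$; you then read off genericity from the eigenvalue formula for $\Delta^{\textnormal{spec}}_\phi$. You also supply details the paper leaves implicit: passing to the closed orbit so that $\tau_v$ is semisimple, the permutation argument for the eigenvalues, and the evaluations $f(1)=N\neq 0$ and $f(\beta)=0$ otherwise.
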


\begin{proof}
    The relation $\tau_v^p = \phi_v \tau_v \phi_v^{-1}$ implies that the eigenvalues of $\tau_v$ are roots of unity of order $p^m - 1$ for some $1 \leq m \leq 2n$.
    Hence, we see that $\Delta_{\tau}^{\textnormal{spec}}(y) \neq 0$ if and only if $\varphi_y$ is unramified. For such parameters it follows from the definition that $\Delta^{\textnormal{spec}}_\phi(y) \neq 0$ if and only if $\varphi_y$ is generic.
    Therefore, $\Delta^{\textnormal{spec}}(y) = \Delta^{\textnormal{spec}}_{\phi}(y) \Delta^{\textnormal{spec}}_{\tau}(y) \neq 0$ if and only if $\varphi_y$ is a generic unramified $L$-parameter.
\end{proof}

\begin{para}
For $b\in B(\textnormal{G})$, we define
\begin{equation*}
    \Delta^b:=\Psi_{\textnormal{G}}^b(\Delta^{\textnormal{spec}}) \in \mathfrak{Z}(\textnormal{G}_b)_{\mathcal{O}}.
\end{equation*}
\end{para}

\begin{lemma} \label{Qlbarpoint_lemma}
    Let $b \in B(\textnormal{G})$ and $x, y \in \mathfrak{Z}(\textnormal{G}_b)_{\mathcal{O}}$. Then we have that $x = y$ if and only if 
    $h_\pi(x) = h_\pi(y)$ for all smooth irreducible $\overline{\rationals}_\ell$-representations $\pi$ of $\textnormal{G}_b$, where $h_\pi \colon \mathfrak{Z}(\textnormal{G}_b)_{\mathcal{O}} \to \End(\pi) \cong \overline{\rationals}_\ell$ is the corresponding ring homomorphism.
\end{lemma}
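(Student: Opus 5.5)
The ``only if'' direction is immediate, so the plan is to prove that $h_\pi(z)=0$ for all smooth irreducible $\overline{\rationals}_\ell$-representations $\pi$ implies $z=0$, where $z=x-y$. The strategy is to reduce to a faithfulness statement for Hecke algebras at finite level.

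\textbf{Step 1: reduce to a finite-level Hecke algebra.} By definition,
\[
\mathfrak{Z}(\textnormal{G}_b)_{\mathcal{O}}=\varprojlim_K \mathcal{O}[K\backslash \textnormal{G}_b(\mathbf{Q}_p)/K].
\]
It therefore suffices to show that the image $z_K$ of $z$ in $Z_K:=Z(\mathcal{O}[K\backslash \textnormal{G}_b(\mathbf{Q}_p)/K])$ vanishes for every pro-$p$ compact open $K$ (these are cofinal). The Hecke algebra $\mathcal{H}_K=\mathcal{O}[K\backslash \textnormal{G}_b(\mathbf{Q}_p)/K]$ is a free $\mathcal{O}$-module, with basis the double coset characteristic functions; this uses that $K$ is pro-$p$ and $p$ is invertible in $\mathcal{O}$. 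Hence $\mathcal{H}_K\hookrightarrow \mathcal{H}_K\otimes_{\mathcal{O}}\overline{\rationals}_\ell$, and it suffices to show that $z_K$ acts by zero on $\mathcal{H}_K\otimes\overline{\rationals}_\ell$.

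\textbf{Step 2: pass to smooth representations in characteristic zero.} Over $\overline{\rationals}_\ell$, the algebra $\mathcal{H}_K\otimes\overline{\rationals}_\ell$ is $e_K C_c(\textnormal{G}_b(\mathbf{Q}_p),\overline{\rationals}_\ell)e_K$. Acting by $z$ on the smooth representation $C_c(\textnormal{G}_b(\mathbf{Q}_p),\overline{\rationals}_\ell)e_K$ and taking $K$-invariants recovers the action of $z_K$. Thus it suffices to show that $z$ acts by zero on every finitely generated smooth $\overline{\rationals}_\ell$-representation $V$.

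\textbf{Step 3: use the Bernstein decomposition.} Such a $V$ lies in finitely many Bernstein blocks. On each block, Bernstein's theory identifies the centre with the ring of regular functions $\mathcal{O}(\Omega)$ on the Bernstein variety $\Omega$, which is reduced and of finite type. Points of $\Omega$ correspond to supercuspidal supports. For each point, some irreducible $\pi$ with that support exists (for instance a constituent of the parabolic induction), and $h_\pi$ is evaluation at that point. The hypothesis therefore says that the image of $z$ in $\mathcal{O}(\Omega)$ vanishes at every $\overline{\rationals}_\ell$-point. Since $\mathcal{O}(\Omega)$ is reduced, this image is $0$, and so $z$ acts by zero on the block.

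\textbf{Main obstacle.} The key step is Step 3. It requires that the map from $\mathfrak{Z}(\textnormal{G}_b)_{\mathcal{O}}$ into the $\overline{\rationals}_\ell$-Bernstein centre is compatible with Bernstein's description, and it requires reducedness of $\mathcal{O}(\Omega)$, i.e. that the centre of $C_c$ over $\overline{\rationals}_\ell$ is the ring of functions on the Bernstein variety. If one prefers to avoid citing reducedness, an alternative is available: $z_K$ acts on the finite-type $Z_K\otimes\overline{\rationals}_\ell$-module $\mathcal{H}_K\otimes\overline{\rationals}_\ell$, and the irreducible $\mathcal{H}_K$-modules are exactly the $\pi^K$. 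This gives nilpotence of $z_K$ via the Jacobson property of finitely generated algebras, and one then upgrades nilpotence to vanishing using reducedness of $Z_K\otimes\overline{\rationals}_\ell$. Reducedness is the point to check in either approach.
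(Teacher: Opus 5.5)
Your argument is correct and follows essentially the same route as the paper. You pass from $\mathfrak{Z}(\textnormal{G}_b)_{\mathcal{O}}$ to the $\overline{\mathbf{Q}}_\ell$-Bernstein centre, whose relevant pieces are reduced finitely generated $\overline{\mathbf{Q}}_\ell$-algebras with closed points the supercuspidal supports, and you conclude by the Nullstellensatz. The differences are minor: you get injectivity into the generic fibre directly from the $\mathcal{O}$-freeness of the finite-level Hecke algebras instead of citing Dat--Helm--Kurinczuk--Moss, and you apply the Nullstellensatz block by block to the centres $\mathfrak{Z}_\Omega$ rather than to $Z(\mathcal{H}(\textnormal{G}_b,K)_{\overline{\mathbf{Q}}_\ell})$ for Bernstein--Deligne splitting subgroups $K$. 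That is also why your main argument, unlike your fallback with arbitrary $K$, never needs reducedness of a finite-level centre.
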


\begin{proof}
    By \cite[Cor 3.3]{DHKM24} it suffices to prove that the image of $x - y$
    in $\mathfrak{Z}(\textnormal{G}_b)_{\mathcal{O}}[1/\ell] \subset \mathfrak{Z}(\textnormal{G}_b)_{\overline{\rationals}_\ell}$ vanishes.
    Since
    \[
    \mathfrak{Z}(\textnormal{G}_b)_{\overline{\rationals}_\ell} = \lim_{\substack{\leftarrow \\ K}} Z(\mathcal{H}(\textnormal{G}_b, K)_{\overline{\rationals}_\ell}),
    \]
    it suffices to show that the image $z_K$ of $x - y$ in $Z(\mathcal{H}(\textnormal{G}_b, K)_{\overline{\rationals}_\ell})$ vanishes for a system of shrinking open compact subgroups $K$. By \cite[2.13 and 3.9]{bernstein_deligne84}, there is such a system such that each $Z(\mathcal{H}(\textnormal{G}_b, K)_{\overline{\rationals}_\ell})$ is a reduced finitely generated $\overline{\rationals}_\ell$-algebra. By the Nullstellensatz, it suffices to show that $z_K \in \mathfrak{m}$ for every maximal ideal of $Z(\mathcal{H}(\textnormal{G}_b, K)_{\overline{\rationals}_\ell})$. Such a maximal ideal corresponds to a supercuspidal support $(M, \sigma)$ by \cite[2.13]{bernstein_deligne84} and the image of $z_K$ in $Z(\mathcal{H}(\textnormal{G}_b, K)_{\overline{\rationals}_\ell})/\mathfrak{m} \cong \overline{\rationals}_\ell$ is equal to $h_\pi(z)$, where $\pi$ is any smooth irreducible representation with supercuspidal support $(M, \sigma)$. 
\end{proof}

\begin{lemma} \label{keylemma}
    If $b \in B(\textnormal{G}, \mu^{-1})$ and $b \neq b_{\mu}$ , then $\Delta^b = 0$.
\end{lemma}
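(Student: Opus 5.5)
By Lemma \ref{Qlbarpoint_lemma}, it suffices to show that $h_\sigma(\Delta^b) = 0$ for every smooth irreducible $\overline{\rationals}_\ell$-representation $\sigma$ of $\textnormal{G}_b(\rationals_p)$. Here $h_\sigma \colon \mathfrak{Z}(\textnormal{G}_b)_{\mathcal{O}} \to \overline{\rationals}_\ell$ is the associated character.

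By the definition of the Fargues--Scholze parameter, the composite $h_\sigma \circ \Psi^b_{\textnormal{G}}$ is the $\overline{\rationals}_\ell$-point $y_\sigma$ of $\mathcal{Z}^{\textnormal{spec}}(\textnormal{G}, \mathcal{O})$ attached to the semisimple parameter $\varphi^{\textnormal{FS}}_{\textnormal{G},\sigma}$. Hence
\[
h_\sigma(\Delta^b) = \Delta^{\textnormal{spec}}(y_\sigma).
\]
The function $\Delta^{\textnormal{spec}}$ is supported on the tame component. More precisely, we regard it as the element of $\mathcal{O}(Z^1(W_{\rationals_p},\widehat{\textnormal{G}}))^{\widehat{\textnormal{G}}}$ that equals the given function on $X^{\textnormal{tame}}$ and vanishes on all other connected components. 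So if $\varphi^{\textnormal{FS}}_{\textnormal{G},\sigma}$ is not tame, the value is $0$ and there is nothing to prove. If it is tame, Lemma \ref{deltaspecpoints_lemma} says that $\Delta^{\textnormal{spec}}(y_\sigma) \neq 0$ exactly when $\varphi^{\textnormal{FS}}_{\textnormal{G},\sigma}$ is a generic unramified $L$-parameter. It therefore suffices to show that, for $b \neq b_\mu$, no $\sigma$ has a generic unramified Fargues--Scholze parameter.

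For this we reduce to the factors of $\textnormal{G}$. Via $\tau_p$ we have $\textnormal{G} \cong \prod_{v|p}\GL_{2n} \times \mathbf{G}_m$, so $B(\textnormal{G}) = \prod_v B(\GL_{2n}) \times B(\mathbf{G}_m)$. Correspondingly $\textnormal{G}_b = \prod_v (\GL_{2n})_{b_v} \times \mathbf{G}_m$ and $\sigma = \boxtimes_v \sigma_v \boxtimes \chi$. The Fargues--Scholze correspondence is compatible with products (\cite[Theorem IX.0.5]{FS24}), so $\varphi^{\textnormal{FS}}_{\textnormal{G},\sigma}$ is the product of the $\varphi^{\textnormal{FS}}_{\GL_{2n},\sigma_v}$ and the parameter of $\chi$. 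The invariant $\Delta^{\textnormal{spec}}$ is likewise a product over $v$ (together with the $\tau_0$ factor). Thus if $\varphi^{\textnormal{FS}}_{\textnormal{G},\sigma}$ is generic unramified, then each $\varphi^{\textnormal{FS}}_{\GL_{2n},\sigma_v}$ is generic unramified.

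By the last assertion of Theorem \ref{LocalLocal}, each $(\GL_{2n})_{b_v}$ must then be quasi-split, i.e.\ each $b_v$ has integral slopes. Now $b \in B(\textnormal{G},\mu^{-1})$ means that every $b_v \in B(\GL_{2n},\mu_v^{-1})$ with $\mu_v = (1^n,0^n)$. By Example \ref{KottwitzSetExample}, the only element of this set with integral slopes is $b_{\mu_v}$. Hence $b = b_\mu$, contradicting our assumption, and so $\Delta^b = 0$.

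The main point needing care is the identification $h_\sigma \circ \Psi^b_{\textnormal{G}} = y_\sigma$ at the integral level, together with the product compatibility. Both follow from the construction of $\Psi^b_{\textnormal{G}}$ via $j_{b!}$ and the definition of $\varphi^{\textnormal{FS}}$ given above, so I expect this step to be routine rather than a genuine obstacle.
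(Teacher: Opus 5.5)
Your proof is correct and follows the same route as the paper. Both use Lemma \ref{Qlbarpoint_lemma} to reduce to irreducible $\overline{\mathbf{Q}}_\ell$-representations, then combine the non-quasi-splitness of $\textnormal{G}_b$ (Example \ref{KottwitzSetExample}) with the last part of Theorem \ref{LocalLocal} and Lemma \ref{deltaspecpoints_lemma}. Along the way you make explicit two points the paper leaves implicit: that $\Delta^{\textnormal{spec}}$ is extended by zero off the tame component, and that Theorem \ref{LocalLocal}, stated only for $\GL_m$, is applied to each $\GL_{2n}$-factor via product compatibility of the Fargues--Scholze correspondence.
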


\begin{proof}
    By Lemma \ref{Qlbarpoint_lemma} it suffices to prove that $\Delta^b$ acts as $0$ on any smooth irreducible $\overline{\mathbf{Q}}_{\ell}$-representation $\sigma$ of $\textnormal{G}_b(\mathbf{Q}_p)$.
    As the group $\textnormal{G}_b$ is not quasi-split by Example \ref{KottwitzSetExample}, this follows from the second part of Theorem \ref{LocalLocal} and Lemma \ref{deltaspecpoints_lemma}.
\end{proof}

\begin{lemma} \label{delta_compare_lemma}
    The image of $\Delta^1 = \Psi_{\textnormal{G}}(\Delta^{\textnormal{spec}})$ under the natural map $\mathfrak{Z}(\textnormal{G})_\cO \to \mathcal{H}$ coincides with 
    $M \Delta$, where $\Delta$ is the element defined in paragraph \ref{Deltapara} and
    \begin{equation*}
        M=(1-p)^{[F^+:\mathbf{Q}](2n-1)}N^{2n[F^+:\mathbf{Q}]+1}.
    \end{equation*}
\end{lemma}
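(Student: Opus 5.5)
The plan is to compare the two elements of $\mathcal{H}$ by evaluating them on unramified irreducible representations, in the spirit of Lemma \ref{Qlbarpoint_lemma}. The natural map $\mathfrak{Z}(\textnormal{G})_\cO \to \mathcal{H}$ sends $z$ to its action on the spherical module $\textnormal{c-Ind}_{K_p^{\textnormal{hs}}}^{\textnormal{G}(\mathbf{Q}_p)}\cO$, whose endomorphism ring is $\mathcal{H}$.

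The first step is the reduction to unramified points. By the Satake isomorphism, $\mathcal{H} \cong \cO[X_{v,i}^{\pm 1}, Y^{\pm 1}]^{W}$ is $\cO$-flat and reduced, so it injects into $\mathcal{H}\otimes_\cO \overline{\mathbf{Q}}_\ell$. An element of the latter is determined by its values at all $\overline{\mathbf{Q}}_\ell$-points, by the Nullstellensatz. These points are the unramified characters of $\textnormal{T}(\mathbf{Q}_p)$ modulo $W$. Equivalently, they are the unramified irreducible representations $\pi$ of $\textnormal{G}(\mathbf{Q}_p)$: take the spherical constituent of the corresponding principal series, and note that $\pi^{K_p^{\textnormal{hs}}}$ is a one-dimensional $\mathcal{H}$-module. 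For such $\pi$, the scalar by which the image of $z$ acts on $\pi^{K_p^{\textnormal{hs}}}$ is $h_\pi(z)$. This holds because $\pi^{K_p^{\textnormal{hs}}} = \Hom_{\textnormal{G}}(\textnormal{c-Ind}\,\cO, \pi)$ and $z$ acts on both sides compatibly. So it suffices to show $h_\pi(\Psi_\textnormal{G}(\Delta^{\textnormal{spec}})) = M\cdot \Delta(\pi)$ for every unramified $\pi$, where $\Delta(\pi)$ denotes the Satake value.

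The second step is to compute the left-hand side. By definition, $h_\pi(\Psi_\textnormal{G}(\Delta^{\textnormal{spec}})) = \Delta^{\textnormal{spec}}(\varphi^{\textnormal{FS}}_{\textnormal{G},\pi})$. By Theorem \ref{LocalLocal} with $b=1$ (or directly by the compatibility of Fargues--Scholze with unramified parameters, \cite[Theorem IX.0.5]{FS24}), $\varphi^{\textnormal{FS}}_{\textnormal{G},\pi}$ is unramified. Its Frobenius component at $v$ has eigenvalues $\alpha_{v,i}$ equal to the Satake parameters $X_{v,i}(\pi)$, up to a common twist by a power of $p^{1/2}$ and possibly inversion, depending on the choice of arithmetic versus geometric Frobenius and on normalisation. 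These ambiguities are harmless. The expression $\prod_{i\neq j}(\alpha_{v,i}\alpha_{v,j}^{-1}-p)$ is invariant under scaling all $\alpha_{v,i}$ by a common factor. It is also invariant under $\alpha \mapsto \alpha^{-1}$, since that amounts to swapping $i$ and $j$. The $\mathbf{G}_m$ similitude factor does not enter $\Delta$ at all. With $\tau$ trivial, the formula in \S\ref{sec_VanishingProof} gives $\Delta^{\textnormal{spec}}_\phi = (1-p)^{[F^+:\mathbf{Q}](2n-1)}\prod_v\prod_{i\neq j}(\alpha_{v,i}\alpha_{v,j}^{-1}-p)$. Also $f(1)=N$, so $\Delta^{\textnormal{spec}}_\tau = f(1)\prod_v \det(f(1_{2n})) = N\cdot N^{2n[F^+:\mathbf{Q}]}$. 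Multiplying, we get exactly $M\prod_v \Delta_v(\pi) = M\Delta(\pi)$.

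The main obstacle is the bookkeeping in the second step. One has to pin down precisely the unramified compatibility of $\Psi_\textnormal{G}$ with the Satake isomorphism over $\cO\ni p^{1/2}$, including how $\widehat{\textnormal{G}}$ and the Frobenius are normalised, and check that the identification of $\mathfrak{Z}(\textnormal{G})_\cO\to\mathcal{H}$ with evaluation on spherical vectors holds integrally, not just after inverting $\ell$. I would handle the latter by flatness of $\mathcal{H}$, which lets us check equality after $\otimes\overline{\mathbf{Q}}_\ell$. For the former, I would rely on the two invariances above, which make the exact normalisation irrelevant.
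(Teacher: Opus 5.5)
Your proposal is correct and follows essentially the same route as the paper: reduce to comparing eigenvalues on unramified irreducible $\overline{\mathbf{Q}}_\ell$-representations, then evaluate $\Delta^{\textnormal{spec}}$ at the unramified Fargues--Scholze parameter, using $f(1)=N$ for the $\tau$-factor and matching the $\phi$-factor with $\Delta$ via Satake. The differences are cosmetic. You justify the reduction directly in $\mathcal{H}$ (flatness, reducedness and the Nullstellensatz) rather than citing Lemma \ref{Qlbarpoint_lemma}. You also handle normalisations via the invariance of $\prod_{i\neq j}(\alpha_{i}\alpha_{j}^{-1}-p)$ under common scaling and inversion, rather than appealing to the compatibility of Satake with normalised induction; this presupposes, as the paper implicitly does by working over $\cO\ni p^{1/2}$, that the Satake isomorphism is the normalised one.
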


\begin{proof}
    By Lemma \ref{Qlbarpoint_lemma}, it suffices to prove that 
    \[
    h_\pi(\Psi_{\textnormal{G}}(\Delta^{\textnormal{spec}})) = h_\pi(M\Delta)
    \]
    for all unramified smooth irreducible $\overline{\mathbf{Q}}_{\ell}$-representations $\pi$ of $\textnormal{G}(\mathbf{Q}_p)$. 
    Consider unramified characters $\chi_{v,1}, \dots, \chi_{v,2n} \colon \rationals_p^\times \to \overline{\rationals}_\ell^{\times}$ such that $\pi$ is an unramified twist of a subquotient of the normalised parabolic induction $\textnormal{n-Ind}_{\textnormal{B}(\mathbf{Q}_p)}^{\textnormal{G}( \mathbf{Q}_p)}(\bigotimes_{v|p} \chi_{v,1} \otimes \dots \otimes\chi_{v,2n})$. Then $\varphi^{\textnormal{FS}}_{\textnormal{G},\pi}$ is a twist of $(\oplus_{i=1}^{2n} \varphi^{\textnormal{FS}}_{\mathbf{G}_m,\chi_{v,i}})_{v | p}$ and
    \[
        h_\pi(\Psi_{\textnormal{G}}(\Delta^{\textnormal{spec}}_\phi)) = (1-p)^{[F^+:\mathbf{Q}](2n-1)}\prod_{v \mid p} \prod_{i \neq j} (\chi_{v,i}(p)\chi_{v,j}(p)^{-1} - p )
    \]
    and
    \[
        h_\pi(\Psi_{\textnormal{G}}(\Delta^{\textnormal{spec}}_\tau)) = f(1) \prod_{v \mid p} f(1)^{2n} = N^{1 + 2n [F^+ : \rationals]}.
    \]
    Hence, the claim follows from the compatibility between the Satake isomorphism and normalised induction.
\end{proof}

\begin{corollary}\label{nonordinaryvanishing}
    For every element $b\in B(\textnormal{G},\mu^{-1})$ different from the $\mu$-ordinary element $b_{\mu}$, $\Delta^1$ is sent to the zero endomorphism under
    \begin{equation*}
        t_1^b\colon\mathfrak{Z}(\textnormal{G})_{\mathcal{O}}\to \textnormal{End}_{D_{\textnormal{sm}}(\textnormal{G}(\mathbf{Q}_p),\Lambda)}(R\Gamma_{K^p,b}).
    \end{equation*}
\end{corollary}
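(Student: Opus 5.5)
The plan is to combine the commutative diagram of Proposition \ref{KottwitzConjShadow} with the vanishing in Lemma \ref{keylemma}. By definition $\Delta^1 = \Psi_{\textnormal{G}}^1(\Delta^{\textnormal{spec}})$, so its image under $t_1^b$ is $t_1^b(\Psi^1_{\textnormal{G}}(\Delta^{\textnormal{spec}}))$. Commutativity of the diagram, applied to $z = \Delta^{\textnormal{spec}} \in \mathcal{Z}^{\textnormal{spec}}(\textnormal{G},\mathcal{O})$, identifies this with $t_b^b(\Psi^b_{\textnormal{G}}(\Delta^{\textnormal{spec}})) = t_b^b(\Delta^b)$. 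Here $t_b^b$ is the map induced by the action of $\mathfrak{Z}(\textnormal{G}_b)_{\mathcal{O}}$ on $V_b$, transported through the functor $j_1^{\ast}T_{\mu,C}j_{b!}(-)[-d]$.

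Next, since $b \in B(\textnormal{G},\mu^{-1})$ and $b \neq b_\mu$, Lemma \ref{keylemma} gives $\Delta^b = 0$ in $\mathfrak{Z}(\textnormal{G}_b)_{\mathcal{O}}$. The map $t_b^b$ is a ring homomorphism: it is the action of the Bernstein centre on $V_b$, composed with the functoriality of $j_1^{\ast}T_{\mu,C}j_{b!}$ on endomorphisms. In particular it is $\mathcal{O}$-linear and sends $0$ to the zero endomorphism. Hence $t_1^b(\Delta^1) = t_b^b(0) = 0$, which is the claim.

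The only point needing care is the target of the maps. Proposition \ref{KottwitzConjShadow} is stated with values in the centre of $\textnormal{End}(R\Gamma_{K^p,b})$, whereas the corollary concerns $t_1^b$ with values in the full endomorphism ring. This is harmless, because the centre sits inside the full ring, so equality of the two images in the centre is equality in $\textnormal{End}(R\Gamma_{K^p,b})$. I would also check that $t_b^b(0)$ is really the zero endomorphism rather than, say, an endomorphism that is zero only up to homotopy. It is the image of the zero natural transformation under an additive functor, hence honestly zero in the triangulated category. Beyond these checks there is no real obstacle: the substance was already proved in Lemma \ref{keylemma}, via local-local compatibility (Theorem \ref{LocalLocal}) and the non-quasi-splitness of $\textnormal{G}_b$ for non-ordinary $b$.
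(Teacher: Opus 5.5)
Your proposal is correct and follows the paper's proof exactly. You use the commutativity in Proposition \ref{KottwitzConjShadow} to rewrite $t_1^b(\Delta^1)$ as $t_b^b(\Psi^b_{\textnormal{G}}(\Delta^{\textnormal{spec}}))$, which vanishes because $\Delta^b=0$ by Lemma \ref{keylemma}. Your additional checks — that the centre sits inside the full endomorphism ring, and that the image of $0$ is honestly the zero endomorphism — are harmless bookkeeping that the paper leaves implicit.
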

\begin{proof}
    By Proposition~\ref{KottwitzConjShadow}, it suffices to show that $t_b^b(\Psi_{\textnormal{G}}^b(\Delta^{\textnormal{spec}}))$ vanishes. However, by Lemma~\ref{keylemma}, $\Psi_{\textnormal{G}}^b(\Delta^{\textnormal{spec}})$ is zero already, finishing the proof.
\end{proof}

We can finally prove our vanishing theorem.

\begin{theorem}\label{VanishingUpToBoundedTorsion}
    For every good subgroup $K^p\leq \mathbf{G}(\mathbf{A}^{\{\infty,p\}})$, and integer $0\leq q\leq d-1$, the Hecke operator $M\Delta\in \mathcal{H}$ acts nilpotently on the cohomology groups
    \begin{equation*}
    H^q(\textnormal{Sh}_{K^pK_p^{\textnormal{hs}}}(\mathbf{C}),\Lambda)
    \end{equation*}
    and
    \begin{equation*}
        H^{2d-q}_c(\textnormal{Sh}_{K^pK_p^{\textnormal{hs}}}(\mathbf{C}),\Lambda).
    \end{equation*}
    More precisely, each such group is annihilated by the operator \[
    M^{[F^+ : \mathbf{Q}] n + |B(\textnormal{G}, \mu^{-1})|} \Delta^{|B(\textnormal{G}, \mu^{-1})|}.
    \]
\end{theorem}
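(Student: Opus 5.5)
We first reduce to compactly supported cohomology in degrees $>d$ at infinite level, and then pass to hyperspecial level.

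\textbf{Step 1 (infinite level).} Consider $R\Gamma_c(\mathcal{S}_{K^p,C},\Lambda)$ in $D_{\textnormal{sm}}(\textnormal{G}(\mathbf{Q}_p),\Lambda)$ with the Hecke-equivariant filtration of Theorem~\ref{thm_mantovan}. It has $|B(\textnormal{G},\mu^{-1})|$ graded pieces $R\Gamma_{K^p,b}$.
\begin{itemize}
\item For $b\neq b_\mu$, Corollary~\ref{nonordinaryvanishing} shows that $\Delta^1$ acts as zero on $R\Gamma_{K^p,b}$ as an endomorphism in the derived category. In particular it kills its cohomology.
\item For $b=b_\mu$, Proposition~\ref{RapoportViehmann} together with Theorem~\ref{CohOfIV's} shows $R\Gamma_{K^p,b_\mu}$ is concentrated in $[0,d]$. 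Its cohomology in degrees $>d$ therefore vanishes.
\end{itemize}
Since $\Delta^1$ is central, it acts compatibly on every piece of the filtration. By the long exact sequences attached to the successive extensions, the product of the $|B(\textnormal{G},\mu^{-1})|$ operators annihilating the graded pieces annihilates the total cohomology in degrees $>d$. So $(\Delta^1)^{|B(\textnormal{G},\mu^{-1})|}$ kills $H^i_c(\mathcal{S}_{K^p,C},\Lambda)$ for $i>d$.

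\textbf{Step 2 (hyperspecial level).} Taking $K_p^{\textnormal{hs}}$-invariants does not commute with cohomology, because $K_p^{\textnormal{hs}}$ is not pro-$\ell$. We have
\[
R\Gamma_c(\textnormal{Sh}_{K^pK_p^{\textnormal{hs}}},\Lambda)=R\Gamma(K_p^{\textnormal{hs}},R\Gamma_c(\mathcal{S}_{K^p},\Lambda)).
\]
The plan is to choose a pro-$p$ normal subgroup $K_1\subset K_p^{\textnormal{hs}}$, for example the principal congruence subgroup. Invariants under $K_1$ are exact since $\ell\ne p$. The finite quotient $K_p^{\textnormal{hs}}/K_1\cong \prod_{v|p}\GL_{2n}(\mathbf{F}_p)\times\mathbf{F}_p^\times$ has order dividing a power of $N$.

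Group cohomology of this finite group in positive degrees is killed by its order. The Hochschild--Serre spectral sequence for it has $E_2^{s,t}$ with $t>d$ killed by powers of $\Delta^1$ when $s=0$, and killed by the group order when $s>0$. This should produce the extra factor of $M$, since $M$ contains the relevant power of $N$. Convergence needs a bound on the number of nonzero rows or steps, which is where the exponent $[F^+:\mathbf{Q}]n$ enters. The bookkeeping needs care; alternatively one can use a norm/trace argument, namely that restriction followed by corestriction equals multiplication by the index.

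Then identify the image of $\Delta^1$ in $\mathcal{H}$ acting at hyperspecial level with $M\Delta$ via Lemma~\ref{delta_compare_lemma}. This gives the annihilation of $H^{2d-q}_c$ for $q\le d-1$ by the stated operator.

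\textbf{Step 3 (usual cohomology).} Deduce the statement for $H^q$ with $q<d$ by Poincaré duality on the smooth $\textnormal{Sh}_K$ of dimension $d$. First handle $\Lambda$ an injective Artinian coefficient ring such as $\mathcal{O}/\varpi^m$, where the dual of $H^q$ is $H^{2d-q}_c$ with the transposed Hecke action, and $\Delta$ is stable under transpose up to the Satake symmetry. Then pass to general Artinian $\Lambda$ via the universal coefficient spectral sequence, or by applying Step 2 directly to the Verdier-dual complex.

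The main obstacle is Step 2: controlling the loss from non-exactness of $K_p^{\textnormal{hs}}$-invariants in terms of $N$, with an exponent independent of $K^p$ and matching the exponent stated in the theorem.
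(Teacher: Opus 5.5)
Your proposal is correct and is essentially the paper's proof. The paper also gets annihilation of $H^q_c(\mathcal{S}_{K^p,C},\Lambda)$ for $q>d$ by $(\Delta^1)^{|B(\textnormal{G},\mu^{-1})|}$ from Theorem~\ref{thm_mantovan}, Corollary~\ref{nonordinaryvanishing} and Proposition~\ref{RapoportViehmann}. It then runs Hochschild--Serre for $K_p=\ker(K_p^{\textnormal{hs}}\to\textnormal{G}(\mathbf{F}_p))$, treating the $r=0$ term via Lemma~\ref{delta_compare_lemma} and killing the $r>0$ terms by the prime-to-$p$ part $(p-1)N^{[F^+:\mathbf{Q}]}$ of $|\textnormal{G}(\mathbf{F}_p)|$ (the $p$-part is a unit in $\Lambda$, so your ``order divides a power of $N$'' should be read this way). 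Your bookkeeping is settled by bounding the length of the abutment filtration by $2d$ and checking that $((p-1)N^{[F^+:\mathbf{Q}]})^{2d-1}$ divides $M^{[F^+:\mathbf{Q}]n}$.

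The only real difference is order: the paper does the Poincar\'e duality step first (checking $\iota(\Delta)=\Delta$), after taking $\Lambda=\mathcal{O}/\varpi^m$ without loss of generality. That reduction is cleanest by writing $\Lambda$ as a direct sum of cyclic $\mathcal{O}$-modules. A Tor-type universal coefficient argument for $H^q$ would bring in degrees $\geq d$, where nothing is annihilated, and an Ext-type argument would spoil the stated exponent.
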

\begin{proof}
    Without loss of generality, we can take $\Lambda=\mathcal{O}/\varpi^m$ for some $m\geq 1$, an injective module over itself. Let $\iota\colon\mathcal{H}\to \mathcal{H}$ denote the involution $f\mapsto (\iota(f):g\mapsto f(g^{-1}))$ and recall that the Poincar\'e duality isomorphism\footnote{Here we denote by $(\cdot)^{\vee}$ the Pontryagin dual.}
    \begin{equation*}
        H^q(\mathcal{S}_{K^pK_p^{\textnormal{hs}},C},\Lambda)\cong H^{2d-q}_c(\mathcal{S}_{K^pK_p^{\textnormal{hs}},C},\Lambda)^{\vee}
    \end{equation*}
    is $\mathcal{H}$-equivariant when we act via $\iota$ (and transpose) on the RHS. We claim that $\iota(\Delta)=\Delta$. To see this, note that for every $\overline{\mathbf{Q}}_{\ell}$-point $x_{\pi}\colon\mathcal{H}\to \overline{\mathbf{Q}}_{\ell}$ corresponding to an irreducible induction $\pi=\textnormal{n-Ind}_{\textnormal{B}(\mathbf{Q}_p)}^{\textnormal{G}(\mathbf{Q}_p)}\chi$, we have $x_{\pi}(\iota(\Delta))=x_{\pi^{\vee}}(\Delta)$. Furthermore, the existence of an isomorphism
    \begin{equation*}
        \pi^{\vee}\cong \textnormal{n-Ind}_{\textnormal{B}(\mathbf{Q}_p)}^{\textnormal{G}(\mathbf{Q}_p)}(\chi^{\vee})
    \end{equation*}
    implies that $x_{\pi^{\vee}}(\Delta)=x_{\pi}(\Delta)$. The claim now follows noting that $\mathcal{H}$ is a reduced $\mathcal{O}$-flat finitely generated $\mathcal{O}$-algebra, and that the prime ideals corresponding to irreducible unitary inductions are dense. Consequently, it suffices to prove that $M \Delta$ acts nilpotently on $H^q_c(\mathcal{S}_{K^pK_p^{\textnormal{hs}},C},\Lambda)$ for $q\geq d+1$.

    For $K_p:=\ker(K_p^{\textnormal{hs}}\to \textnormal{G}(\mathbf{F}_p))$, we have a $\mathfrak{Z}(\textnormal{G})_{\mathcal{O}}$-equivariant spectral sequence
    \begin{equation*}
E_2^{r,s}=H^r(K_p^{\textnormal{hs}}/K_p,H^0(K_p,H^s_c(\mathcal{S}_{K^p,C},\Lambda)))\Rightarrow H^{r+s}_c(\mathcal{S}_{K^pK_p^{\textnormal{hs}}},\Lambda).
    \end{equation*}
    Suppose that $q > d$. Then $H^{q}_c(\mathcal{S}_{K^pK_p^{\textnormal{hs}}},\Lambda)$ has an abutment filtration whose graded pieces are subquotients of the groups
    \[ E_2^{r, q-r} = H^{r}(K_p^{\textnormal{hs}}/K_p,H^0(K_p,H^{q-r}_c(\mathcal{S}_{K^p,C},\Lambda))), \]
    for $r = 0, \dots, q$. The group $E_2^{0, q}$ is annihilated by $(M \Delta)^{|B(\textnormal{G}, \mu^{-1})|}$, because Corollary \ref{nonordinaryvanishing},  Proposition \ref{RapoportViehmann}, and  Theorem \ref{thm_mantovan} together show that $H^{q}_c(\mathcal{S}_{K^p,C},\Lambda)$ is annihilated by $(\Delta^1)^{|B(\textnormal{G}, \mu^{-1})|}$,  and the induced action of $\Delta^1$ on $E_2^{0, q}$ agrees with the action of $M \Delta$ by Lemma \ref{delta_compare_lemma}. If $r > 0$, then the group $E_2^{r, q-r}$ is annihilated by the prime-to-$p$ part $(p-1)N^{[F^+:\mathbf{Q}]}=(p-1)\prod_{m=1}^{2n}(p^m-1)^{[F^+:\mathbf{Q}]}$ of $|\textnormal{G}(\mathbf{F}_p)|$. 
    
    Since the number of terms in the abutment filtration is bounded by $q \leq 2d = \dim_{\mathbf{R}} \textnormal{Sh}_{K^p K_p^{\textnormal{hs}}}(\mathbf{C}) = 2 n^2[ F^+ : \mathbf{Q}]$, we conclude that $H^q_c(\mathcal{S}_{K^pK_p^{\textnormal{hs}},C},\Lambda)$ is annihilated by 
    \[ \left( (p-1) N^{[F^+ : \mathbf{Q}]} \right)^{2 n^2 [ F^+ : \mathbf{Q} ] -1} (M \Delta)^{| B(\textnormal{G}, \mu^{-1}) |}. \]
    The integer $\left( (p-1) N^{[F^+ : \mathbf{Q}]} \right)^{2 n^2 [ F^+ : \mathbf{Q} ] -1}$ divides $M^{[F^+ : \mathbf{Q}]n}$, so we conclude that $H^q_c(\mathcal{S}_{K^pK_p^{\textnormal{hs}},C},\Lambda)$ is annihilated by $M^{[F^+ : \mathbf{Q}] n} (M \Delta)^{|B(\textnormal{G}, \mu^{-1})|}$, and in particular that the action of $M \Delta$ on this group is nilpotent. 
\end{proof}

\section{Potentially semistable pseudodeformation rings}\label{sec_potentially_semistable_pseudodeformation_rings}

The goal of this section is to define spaces of pseudocharacters, i.e. determinants in the sense of \cite{Che14}, satisfying conditions from $p$-adic Hodge theory (e.g.\ potentially semistable with fixed Hodge and inertial type). Wang-Erickson \cite{WE18} showed how to do this under the condition that the residual pseudocharacter is multiplicity-free, and also under the (unproved) condition that the algebraic stack of representations with given pseudocharacter satisfies formal GAGA. Roughly speaking, the techniques of \cite{Kis08} allow one to define a closed locus in the associated formal stack of representations satisfying the given conditions; using formal GAGA, one can algebraise this locus and take its image in the spectrum of the universal pseudodeformation ring to get the desired space of pseudocharacters.

We do not wish to put any hypothesis on the residual representation, such as being multiplicity-free. We get around the need to establish formal GAGA by combining the approach of \cite{WE18} with the ideas of Wake--Wang-Erickson \cite{WWE19}. In the latter paper, the authors showed how to define a quotient of the universal pseudodeformation ring corresponding to conditions such as `semistable after passage to $L / K$, with Hodge--Tate weights in an interval $[a, b]$'. Returning to the setting of \cite{WE18}, this replaces the problem of algebraising an arbitrary closed formal substack with the much easier one of algebraising a union of connected components of its generic fibre. We complete the argument here by proving `half' of formal GAGA in our setting, namely the statement that passage to completion is a fully faithful functor for certain quotient stacks. This necessitates a detour through certain finiteness results, that we turn to first. 

\subsection{Finiteness of cohomology}
Let $S$ be a Noetherian ring and $G$ a reductive group over $S$ (i.e.\ a smooth affine group scheme whose geometric fibres are reductive groups, in particular, connected). A finitely generated $S$-algebra $A$ with $G$-action corresponds to an affine scheme of finite type over $S$ with an action of $G$. We will need the following theorem. 

\begin{theorem} \label{thm:group_coh_fg}
    \begin{enumerate}
        \item $A^G$ is a finitely generated $S$-algebra.
        \item If $M$ is a finite $A$-module with semilinear $G$-action, then for any $n \geq 0$, $H^n(G, M)$ is a finite $A^G$-module. 
    \end{enumerate}
\end{theorem}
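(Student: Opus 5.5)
Part (1) is the relative form of Hilbert's finiteness theorem, and I would cite it rather than reprove it. Over a Noetherian base, a reductive group scheme $G$ is geometrically reductive in the sense of Seshadri (``Geometric reductivity over arbitrary base''). Seshadri deduces from this that $A^G$ is finitely generated over $S$ whenever $A$ is. The only point to check is that his hypotheses are met: $G$ smooth affine with connected reductive geometric fibres, and $S$ Noetherian. These are exactly our assumptions; Thomason's results cover the non-split case. So (1) is a citation.

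For (2) I would first reduce everything to a statement about the algebra $A$ alone. Consider the square-zero extension $B = A \oplus M$, with multiplication $(a,m)(a',m') = (aa', am' + a'm)$ and the diagonal $G$-action. It is well defined because the action on $M$ is semilinear, and $B$ is a finitely generated $S$-algebra because $M$ is a finite $A$-module. The key input is the ``cohomological finite generation'' theorem of van der Kallen over a Noetherian base (building on Touzé--van der Kallen): for a reductive $G$ over Noetherian $S$ and a finitely generated $S$-algebra $B$ with $G$-action, the graded algebra $H^*(G,B)$ is finitely generated over $S$. I would cite this in exactly this relative form, and checking that the cited version allows an arbitrary Noetherian base and a non-split reductive $G$ is the step I expect to be the main obstacle. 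If the available reference only treats the split case, I would first reduce to it by étale descent on $S$, using that $H^n(G,-)$ commutes with flat base change on $S$. If it only treats fields or $\mathbf{Z}$, I would instead follow van der Kallen's Grosshans-filtration argument in families.

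Granting this, set $R = H^*(G,B)$. Cohomology commutes with the direct sum of $G$-modules, so
\[
R = H^*(G,A) \oplus H^*(G,M).
\]
Here $I = H^*(G,M)$ is a graded ideal with $I^2 = 0$, and $R/I = H^*(G,A)$. Since $R$ is a finitely generated $S$-algebra, it is Noetherian, so $I$ is a finitely generated $R$-module. Because $I^2 = 0$, the $R$-action on $I$ factors through $R/I$, so $I$ is also finite over $H^*(G,A)$.

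Applying the CFG theorem to $A$ itself shows that $H^*(G,A)$ is a finitely generated graded $S$-algebra whose degree-zero part is $A^G$. Hence it is generated over $A^G$ by finitely many homogeneous elements, and each graded piece $H^n(G,A)$ is a finite $A^G$-module. Finally, $I$ is generated over $H^*(G,A)$ by finitely many homogeneous elements, so each $H^n(G,M)$ is a finite sum of images of the pieces $H^{n-k}(G,A)$. Each such piece is finite over $A^G$, and therefore $H^n(G,M)$ is a finite $A^G$-module, which proves (2).
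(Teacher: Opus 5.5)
Your proof is correct modulo the citations, but for (2) it follows the citation route that the paper mentions and deliberately sets aside, not the argument the paper actually gives. The module form of cohomological finite generation is exactly \cite[Theorem 10.5]{vdK15}, which the paper cites for (2). You could cite that module statement directly instead of passing through the algebra statement for $B = A \oplus M$, although your square-zero reduction and your extraction of degreewise finiteness over $A^G$ from the graded structure are both fine. For (1), citing Seshadri/Thomason instead of Franjou--van der Kallen is equally acceptable.

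The paper instead proves (2) without CFG, because CFG relies on advanced tools (including results of Suslin). It uses only the case $n=0$, which is Alper's theorem, together with the Grosshans filtration, and argues by induction on $n$ after reducing to split $G$:
\begin{itemize}
\item There is an embedding $\gr M \hookrightarrow \Ind_{B^-}^G M^U$. Its cokernel $M'$ is finite over $\gr A$, by the same square-zero trick you use, now combined with the finiteness of $\gr A \to \Ind_{B^-}^G A^U$.
\item The middle term $\Ind_{B^-}^G M^U$ is $G$-acyclic, because $M^U$ has only dominant weights. Hence $H^{n-1}(G,M') \twoheadrightarrow H^n(G,\gr M)$.
\item The induction hypothesis, applied to the pair $(\gr A, M')$ and using $(\gr A)^G = A^G$, shows that $H^n(G,\gr M)$ is finite over $A^G$.
\item The spectral sequence of the Grosshans filtration then shows $H^n(G,M)$ is finite, since only finitely many graded pieces contribute in degree $n$.
\end{itemize}

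The trade-off is as follows. Your route is shorter and proves more: finite generation of all of $H^*(G,A)$, and Noetherianity of $H^*(G,M)$ over it. The paper's route is elementary and proves exactly the degreewise finiteness over $A^G$ that is used later, in the derived-completeness argument for Theorem \ref{thm:general_gaga_thm}.
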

    Part (1) is proved as \cite[Theorem 3]{vanderkallen_franjou}, using general properties of the Grosshans filtration, and part (2) is proved as \cite[Theorem 49]{vanderkallen_franjou}, under the additional assumption that $S = \mathbf{Z}$, and as \cite[Theorem 10.5]{vdK15}, without any assumption on $S$. (See also \cite[Theorem 6.3.3]{alper_adequate} for the case $n = 0$.) The deduction of \cite[Theorem 10.5]{vdK15} uses advanced tools (including results of \cite{Sus97}), but in fact part (2) of the theorem can be deduced from the case $n = 0$ using a relatively simple dimension-shifting argument. Since this seems worth recording, we sketch the details here. 

    By descent, we can assume that $G$ is split. Choose Borel subgroups $B, B^- \leq G$ which are opposite with respect to a maximal torus $T = B \cap B^- \leq G$. Let $U \leq B, U^- \leq B^-$ denote the respective unipotent radicals. We take the set $\Phi(G, T)^+$ of positive roots to be those belonging to $B$. Recall \cite[Lemma 6]{grosshans} that there exists a homomorphism $h \colon X^\bullet(T) \to \ints$ such that 
\begin{enumerate}
    \item for all $\lambda \in X^\bullet(T)^+$, $h(\lambda) \geq 0$,
    \item if $\lambda > \mu$, then $h(\lambda) > h(\mu)$,
    \item for all $\lambda \in X^\bullet(G)$, we have $h(\lambda) = 0$.
\end{enumerate}
    For example, if $G = \GL_2$ and $T \leq B \leq G$ are the standard diagonal torus and upper-triangular Borel subgroup, then we can take $h(\lambda_1, \lambda_2) = \lambda_1 - \lambda_2$.
\begin{definition}
    For any $S$-module $M$ with $G$-action, we define the Grosshans filtration
    \[
        M_0 \leq M_1 \leq \dots \leq M,
    \]
    where $M_n \subset M$ is the largest $G$-stable $S$-submodule such that every weight $\lambda \in X^\bullet(T)$ appearing in $M_n$ satisfies $h(\lambda) \leq n$. The associated graded module is denoted by $\gr M$. It is again an $S$-module with $G$-action.
\end{definition}
    We record the following properties of this filtration (where $M, N$ are $S$-modules with $G$-action, and $A$ is an $S$-algebra with $G$-action):
    \begin{enumerate}
        \item The weights appearing in $M \otimes_S N$ are sums of weights appearing in $M$ and $N$. Therefore, the image of $M_i \otimes_S N_j$ in $M \otimes_S N$ is contained in $(M \otimes_S N)_{i + j}$. In particular, if $A$ is an $S$-algebra with $G$-action, then $A_i \cdot A_j \leq A_{i + j}$. 
        \item For a $T$-module $P$, let $P_{(i)} = \oplus_{h(\lambda) = i} P_\lambda$. Then $M^{U}$ only has dominant weights, and for any $i \geq 0$, the map $(M^{U})_{(i)} \to (\gr_i M)^{U}$ is an isomorphism. Consequently, there is an isomorphism $M^U \cong (\gr M)^U$ of $S$-modules with $T$-action. Moreover, the induced map $\gr M \to \Ind_{B^-}^G M^U$ is injective \cite[Lemma 26]{vanderkallen_franjou}.
        \item The ring homomorphism $\gr A \to \Ind_{B^-}^G A^U$ is an integral ring extension, by \cite[Theorem 29]{vanderkallen_franjou}. 
        \item If $A$ is finitely generated over $S$, then both $\gr A$ and $\Ind_{B^{-}}^G A^U$ are finitely generated $S$-algebras, by \cite[Theorem 30]{vanderkallen_franjou}, and consequently the map $\gr A \to \Ind_{B^-}^G A^U$ is a finite algebra homomorphism. 
    \end{enumerate}
\begin{lemma} \label{lem:ind_finite_gr_mod}
    Let $A$ be a finitely generated $S$-algebra with $G$-action and $M$ a finite $A$-module with semilinear $G$-action. Then $\Ind_{B^-}^G M^U$ is a finite $\gr A$-module. 
\end{lemma}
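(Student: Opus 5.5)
The plan is to reduce the statement to the finiteness properties (3) and (4) by passing to a suitable auxiliary algebra, namely the trivial square-zero extension $A \oplus M$. This is a finitely generated $S$-algebra, since $A$ is finitely generated and $M$ is a finite $A$-module. Its $G$-action, given by the actions on $A$ and on $M$, is by ring automorphisms because the action on $M$ is semilinear. By property (4) applied to $B := A \oplus M$, the map $\gr B \to \Ind_{B^-}^G B^U$ is a finite algebra homomorphism, and $\gr B$ is a finitely generated $S$-algebra.

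Next I would identify the pieces. The Grosshans filtration is functorial and compatible with direct sums of $G$-modules: a $G$-stable submodule of $A\oplus M$ whose weights satisfy $h\le n$ projects into $A_n$ and $M_n$, and conversely $A_n\oplus M_n$ is such a submodule. Hence $B_n = A_n \oplus M_n$ and $\gr B = \gr A \oplus \gr M$, again a square-zero extension with $\gr M$ a $\gr A$-module by property (1). Similarly $B^U = A^U \oplus M^U$, and since $\Ind_{B^-}^G$ is additive, $\Ind_{B^-}^G B^U = \Ind_{B^-}^G A^U \oplus \Ind_{B^-}^G M^U$. The ideal $\Ind_{B^-}^G M^U$ squares to zero, and the module structure over $\Ind_{B^-}^G A^U$, hence over $\gr A$ via the ring map of (3), is the one in the statement.

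Then I would deduce finiteness. Because $\gr B \to \Ind B^U$ is finite, $\Ind B^U$ is a finite $\gr B$-module. Also $\gr B$ is a finite $\gr A$-module: it is generated by $1$ together with finitely many elements of $\gr M$. Indeed, $\gr B$ is a finitely generated $S$-algebra, and all generators lying in the square-zero ideal $\gr M$ multiply to zero pairwise, so $\gr M$ is generated over $\gr A$ by the $\gr M$-components of a finite generating set. Therefore $\Ind B^U$ is a finite $\gr A$-module, and its direct summand $\Ind_{B^-}^G M^U$, which is a $\gr A$-submodule stable under the $\gr A$-action, is also finite, since $\gr A$ is Noetherian as a finitely generated $S$-algebra by (4).

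The main point to check carefully is that the $\gr A$-module structure on $\Ind_{B^-}^G M^U$ obtained through $\gr B$ agrees with the intended one, and that the decompositions are compatible with the map in (2)–(3). This is routine functoriality, because every construction is natural in the $G$-algebra and respects the grading by $h$.
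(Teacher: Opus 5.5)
Your proposal is correct and is essentially the paper's own argument. Both pass to the square-zero extension $A \oplus M$, use property (4) to see that $\gr(A\oplus M) = \gr A \oplus \gr M$ is finite over $\gr A$ and that $\Ind_{B^-}^G (A\oplus M)^U$ is finite over it, and then extract the summand $\Ind_{B^-}^G M^U$. Your appeal to Noetherianity of $\gr A$ at the end is harmless but unnecessary, since a direct summand of a finite module is a quotient of it and hence finite.
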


\begin{proof}
    Consider the finitely generated $S$-algebra $A' = A \oplus M$ with multiplication $(a,m) \cdot (a',m') = (a a', am' + a'm)$. By (4) above, $\gr A' = \gr A \oplus \gr M$ is a finitely generated $S$-algebra, hence a finite $\gr A$-algebra. Moreover, $\Ind_{B^{-}}^G (A')^U$ is a finite $\gr A'$-module, hence also a finite $\gr A$-module. The result follows. 
\end{proof}

\begin{proof}[Proof of Theorem \ref{thm:group_coh_fg}]
We can assume that $G$ is a split reductive group over $S$; that $A$ is a finitely generated $S$-algebra with $G$-action; and that $M$ is a finite $A$-module. We must show that for each $n \geq 0$, $H^n(G, M)$ is a finite $A^G$-module. We prove this by induction on $n$. 
When $n = 0$, this follows from \cite[Theorem 6.3.3]{alper_adequate}.
Let $n > 0$ and assume the property holds for $n - 1$.
Consider the short exact sequence
\[
    0 \to \gr M \to \Ind_{B^{-}}^G M^U \to M' \to 0,
\]
where $\gr M \to \Ind_{B^{-}}^G M^U$ is the injective map from \cite[Lemma 26]{vanderkallen_franjou}. It follows from Lemma \ref{lem:ind_finite_gr_mod} that $M'$ is a finite $\gr A$-module. By \cite[Prop. 19]{vanderkallen_franjou}, all weights appearing in $M^U$ are dominant, hence the module $\Ind_{B^{-}}^G M^U$ is $G$-acyclic by \cite[B.4]{jantzen2003} and the universal coefficient theorem.
Hence, there is a surjection $H^{n-1}(G, M') \to H^n(G, \gr M)$. By the inductive hypothesis, $H^n(G, \gr M)$ is a finite module over $(\gr A)^G = A^G$.
Since,
\[
    H^{n}(G, \gr M) = \bigoplus_{p \leq 0} H^{n}(G, \gr_{-p} M)
\]
is finite over $A^G$, there are only finitely many $p \leq 0$ such that 
$
H^n(G, \gr_{-p} M) \neq 0.
$
The Grosshans filtration induces a spectral sequence
\[
    E_1^{p,q} = H^{p + q}(G, \gr_{-p}(M)) \implies H^{p + q}(G, M).
\]
There are finitely many pairs $(p,q)$ such that $p + q = n$ and $E_1^{p,q} \neq 0$. Since each $E_1^{p,q}$ is finite over $A^G$, we deduce that $H^{n}(G, M)$ is finite, as desired.
\end{proof}

\subsection{Formal quotient stacks}

As in the previous section let $S$ be a Noetherian ring, $G/S$ a reductive group and $A$ a finitely generated $S$-algebra with $G$-action. Let $I \leq A^G$ be an ideal such that  $A^G$ is $I$-adically complete. 

We define $\Mod^{\text{f.g.}}_G(A)$ to be the category of finitely generated $A$-modules with semilinear $G$-action. Moreover, we let $\widehat{\Mod}^{\text{f.g.}}_G(A)$ be the category of inverse systems 
\[
    M_1 \leftarrow M_2 \leftarrow M_3 \leftarrow \dots
\]
where $M_n$ is a finitely generated $A/I^n A$-module with semilinear $G$-action and the transition maps induce isomorphisms $M_{n}/I^{n-1}M_n \cong M_{n-1}$. This is an abelian category. The cokernel of a morphism $f_n \colon M_n \to N_n$ in $\widehat{\Mod}^{\text{f.g.}}_G(A)$ is the inverse system $(\coker f_n)_{n \geq 1}$. To describe the kernel of $(f_n)$, let 
\[
    K_n = \bigcap_{m \geq n} \im ( \ker f_m \to \ker f_n ).
\]
By the Artin--Rees lemma, the maps $K_{m+1} / I^n \to K_m / I^n$ ($n$ fixed, $m \geq n$) are eventually constant. We define
\[
    K_n' := \varprojlim_m K_m/I^n,
\]
which satisfies $K_n'/I^{n-1} K_n' \cong K_{n-1}'$, where the $G$-action is well-defined because the maps $K_{m+1}/I^n \to K_m/I^n$ are isomorphisms for large enough $m$. The kernel of $(f_n)$ is $(K_n')$.

\para
The categories $\Mod^{\text{f.g.}}_G(A)$ and $\widehat{\Mod}^{\text{f.g.}}_G(A)$ both have natural tensor products and internal homs. The only non-trivial one to define is the internal hom in $\widehat{\Mod}^{\text{f.g.}}_G(A)$. Let $(M_n)$ and $(N_n)$ be objects of $\widehat{\Mod}^{\text{f.g.}}_G(A)$. 
The internal hom $\underline{\Hom}((M_n)_{n \geq 1}, (N_n)_{n \geq 1})$ is defined as the inverse system $(H_n')_{n \geq 1}$, where 
\[
    H_n = \bigcap_{m \geq n} \im( \underline{\Hom}(M_m, N_m) \to \underline{\Hom}(M_n, N_n) )
\]
and
\[
    H_n' := \lim_{\substack{\leftarrow \\ m}} H_m/I^n H_m.
\]
The natural completion functor 
\begin{align*}
    \Mod^{\text{f.g.}}_G(A) &\to \widehat{\Mod}^{\text{f.g.}}_G(A) \\
    M &\mapsto \widehat{M} := (M/IM \leftarrow M/I^2M \leftarrow M/I^3 M \leftarrow \dots )
\end{align*}
is exact and preserves tensor products and internal homs.

\begin{definition}
    We say that the stack $[\Spec A / G]$ satisfies formal GAGA if 
    the completion functor $\Mod^{\text{f.g.}}_G(A) \to \widehat{\Mod}^{\text{f.g.}}_G(A)$
    is an equivalence of categories. 
\end{definition}

\begin{remark}
    When $G/S$ is linearly reductive, it has been shown in \cite{geraschenko2015} that the stack $[\Spec A/G]$ satisfies formal GAGA. 
\end{remark}

\begin{theorem}  \label{thm:general_gaga_thm}
    The completion functor $\Mod^{\text{f.g.}}_G(A) \to \widehat{\Mod}^{\text{f.g.}}_G(A)$ is fully faithful.
\end{theorem}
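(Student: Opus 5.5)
Since the completion functor preserves internal homs, and morphisms in either category are the $G$-invariants of the internal hom, the plan is to reduce full faithfulness to a statement about invariants of a single finite module. Given $M, N \in \Mod^{\text{f.g.}}_G(A)$, put $H = \underline{\Hom}_A(M,N)$. This is a finite $A$-module with semilinear $G$-action (because $A$ is Noetherian), and $\Hom_{G}(M,N) = H^G$. On the completed side, $\Hom(\widehat{M}, \widehat{N})$ is the set of compatible families of $G$-equivariant maps $M/I^nM \to N/I^nN$, i.e.\ $\varprojlim_n \Hom_{A, G}(M/I^n M, N/I^n N)$. Because $M$ is finitely presented, $\Hom_A(M/I^nM, N/I^nN) = \Hom_A(M, N)/I^n$ up to Artin--Rees corrections. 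In the limit these corrections disappear, so the target is $\varprojlim_n (H/I^nH)^G$. It therefore suffices to prove that the natural map
\[ H^G \to \varprojlim_n (H/I^n H)^G \]
is an isomorphism for every finite $A$-module $H$ with semilinear $G$-action. Here $I \subset A^G$, so $I^nH$ is $G$-stable.

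To prove this, I will use the short exact sequences $0 \to I^nH \to H \to H/I^nH \to 0$ and take $G$-cohomology:
\[ 0 \to (I^nH)^G \to H^G \to (H/I^nH)^G \to H^1(G, I^nH) \to H^1(G,H). \]
By Theorem \ref{thm:group_coh_fg}, both $H^G$ and $H^1(G,H)$ are finite $A^G$-modules, and $A^G$ is Noetherian (part (1)). The plan has three steps.

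\textbf{Step 1.} Show that the filtration $(I^nH)^G$ on $H^G$ is $I$-adically good, i.e.\ sandwiched between $I^{n+c}H^G$ and $I^{n-c}H^G$. For this I apply Theorem \ref{thm:group_coh_fg} to the Rees algebra $\bigoplus_n I^n A$, which is a finitely generated $S$-algebra with $G$-action, and to its finite module $\bigoplus_n I^nH$. The invariants $\bigoplus_n (I^nH)^G$ are then finite over $\bigoplus_n I^n A^G$-type invariants, which gives the Artin--Rees-type stability. As a result, $\varprojlim H^G/(I^nH)^G = \widehat{H^G} = H^G$, the last equality because $H^G$ is finite over the $I$-adically complete Noetherian ring $A^G$.

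\textbf{Step 2.} Control the cokernel terms, namely the images $C_n$ of $(H/I^nH)^G$ in $\ker(H^1(G,I^nH) \to H^1(G,H))$. Again via the Rees algebra, $\bigoplus_n H^1(G, I^nH)$ is a finite graded module over $(\bigoplus I^nA)^G$. I expect this to show that the transition maps $C_{m} \to C_n$ are zero for $m \geq n + c$ with $c$ uniform, so that $\varprojlim C_n = 0$ by a Mittag-Leffler/pro-zero argument.

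\textbf{Step 3.} Conclude. The inverse system $H^G/(I^nH)^G \to (H/I^nH)^G \to C_n$ has first terms satisfying Mittag-Leffler, so passing to the limit gives $\varprojlim (H/I^nH)^G \cong \varprojlim H^G/(I^nH)^G = H^G$, as required.

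The main obstacle is Step 2, i.e.\ showing that the connecting terms are pro-zero. The difficulty is that $G$ is not linearly reductive, so $H^1$ genuinely appears and invariants are not exact. My first attempt will be the graded finiteness of $\bigoplus_n H^1(G,I^nH)$ over the Rees ring, combined with the fact that the map $H^1(G,I^{m}H) \to H^1(G,I^nH)$ is multiplication-compatible. If that is insufficient, the fallback is to realize $C_n$ as an $I$-torsion-type subquotient of a finite module over the Noetherian Rees invariant ring, where the Artin--Rees lemma applies directly.
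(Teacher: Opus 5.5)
Your reduction to showing that $H^G \to \varprojlim_n (H/I^nH)^G$ is an isomorphism for every finite $H$ is fine (it is the paper's property $(F_M)$), and Step 3 is correct bookkeeping once Steps 1 and 2 are in place. The genuine gap is Step 1.

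Applying Theorem~\ref{thm:group_coh_fg} to $B=\bigoplus_n I^nA\,t^n$ and $\bigoplus_n I^nH\,t^n$ only shows that $\bigoplus_n (I^nH)^G$ is finite over $B^G=\bigoplus_n (I^nA)^G$, where $(I^nA)^G=I^nA\cap A^G$. To get an $I$-good filtration you need finiteness over the Rees algebra $A^G[It]=\bigoplus_n I^n$ of $A^G$, i.e.\ you need $B^G$ to be finite over $A^G[It]$. But that is exactly your stability claim for $H=A$, so the argument is circular. Without linear reductivity there is no Reynolds operator forcing $(I^nA)^G=I^n$, and controlling this discrepancy is the real content of the theorem.

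Step 1 can be repaired, but only with an extra input. For instance, adequacy/geometric reductivity (cf.\ \cite{alper_adequate}) gives $JB\cap B^G\subseteq\sqrt{J}$ for $J=(It)B^G$. So the positive-degree algebra generators of $B^G$ are nilpotent modulo $J$, and graded Nakayama then makes $B^G$ finite over $A^G[It]$. For principal $I=(x)$ there is a more elementary route: $(x^nH)^G/x^nH^G$ embeds in $H^1(G,H[x^n])$, with transition maps induced by multiplication by $x$ on $H[x^n]$. This system is pro-zero by Artin--Rees over $A$. For several generators this turns into a Koszul-complex argument.

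Step 2, which you flag as the main obstacle, is left unproved, and the proposed mechanism does not work as stated. The transition maps $H^1(G,I^mH)\to H^1(G,I^nH)$ come from inclusions, which are not multiplication by elements of $\bigoplus_n I^nA$. The fix is the extended Rees algebra $\widetilde{B}=A[It,t^{-1}]$ with the module $\bigoplus_{n\in\mathbf{Z}}I^nH\,t^n$, where $I^n:=A$ for $n\le 0$:
\begin{itemize}
\item by Theorem~\ref{thm:group_coh_fg}, $\bigoplus_n H^1(G,I^nH)$ is finite over the Noetherian ring $\widetilde{B}^G$;
\item the transition maps are multiplication by the invariant element $u=t^{-1}$;
\item $\bigoplus_{n\ge 0}C_n$ lies in the $u$-power torsion, which is finitely generated and hence killed by a fixed $u^c$.
\end{itemize}
Hence $C_m\to C_n$ vanishes for $m\ge n+c$.

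With both repairs your route works. The paper avoids comparing $(I^nM)^G$ with $I^nM^G$ altogether. Finiteness of all $H^i(G,M)$ over $A^G$ makes $R\Gamma(G,M)$ derived $I$-complete, so the $I$-adic filtration on $H^0(G,M)$ appears directly. The only Artin--Rees input it then needs is the pro-vanishing of the positive-degree Koszul homology $K_q(\underline{x}^n,M)$ over the Noetherian ring $A$.
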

\begin{remark}
    After completing the present article, we learned that Theorem \ref{thm:general_gaga_thm} also follows from the work of Alper--Hall--Lim \cite{AHL23}. Indeed, in \cite[Example 4.5]{AHL23} they explain that the triple
    \begin{equation*}
        \big([\textnormal{Spec}A/G]\to \textnormal{Spec}A^G, [\textnormal{Spec}(A/I)/G], \textnormal{Spec}(A^G/I)\big)
    \end{equation*} is \textit{strongly cohomologically proper} in their terminology. Therefore, by \cite[Theorem 4.6 (2)]{AHL23}, $([\textnormal{Spec}A/G],[\textnormal{Spec}(A/I)/G])$ satisfies \textit{formal functions} in the sense of \cite[Definition 3.2]{AHL23}; that is, the natural map
    \begin{equation}\label{eqn:formalfunctions}
        H^i([\textnormal{Spec}A/G],\mathcal{F})\to H^i(\widehat{[\textnormal{Spec}A/G]},\widehat{\mathcal{F}})
    \end{equation}
    is an isomorphism for every $i\geq 0$ and $\mathcal{F}\in \textnormal{Coh}([\textnormal{Spec}A/G])$. In particular, as they observe in \cite[Corollary 3.8]{AHL23}, the completion functor $\textnormal{Coh}([\textnormal{Spec}A/G])\to \textnormal{Coh}(\widehat{[\textnormal{Spec}A/G]})$ is fully faithful, or equivalently, by \cite[Theorem 2.3]{ConradFormalGAGA},  $\Mod^{\text{f.g.}}_G(A) \to \widehat{\Mod}^{\text{f.g.}}_G(A)$ is fully faithful. 

    The main point of the argument is to establish the isomorphism \eqref{eqn:formalfunctions}. We give a proof of this in the case of $i=0$, which is sufficient to deduce full faithfulness. Although their proof, like ours, ultimately relies on \cite[Theorem 10.5]{vdK15}, the two arguments are different, and we therefore include ours.
\end{remark}

\begin{proof}
  We claim that it suffices to prove the following property for all objects $M$ of $\Mod^{\text{f.g.}}_G(A)$: 
    \begin{enumerate}
        \item[$(F_M)$] The natural map $H^0(G, M) \to \lim_n H^0(G, M/I^n M)$ is an isomorphism.
    \end{enumerate}
    Indeed, taking $M = \underline{\Hom}(M', M'')$, we see that the natural functor $\Mod^{\text{f.g.}}_G(A) \to \widehat{\Mod}^{\text{f.g.}}_G(A)$ is fully faithful.
    
    We now show $(F_M)$ for a general $M \in \Mod^{\text{f.g.}}_G(A)$. By Theorem \ref{thm:group_coh_fg}, each cohomology group $H^n(G, M)$ is a finite module over the Noetherian ring $A^G$. In particular, $H^n(G, M)$ is $I$-adically complete. 

    We will use the notion of derived completeness for complexes of $A^G$-modules defined in \cite[\href{https://stacks.math.columbia.edu/tag/091S}{Definition 091S}]{stacks-project}. By \cite[\href{https://stacks.math.columbia.edu/tag/091R}{Lemma 091R}]{stacks-project} and \cite[\href{https://stacks.math.columbia.edu/tag/091P}{Lemma 091P}]{stacks-project}, the complex $R \Gamma(G, M)$ is a derived complete complex of $A^G$-modules. 

    Let $x_1, \dots, x_r \in A^G$ be generators of $I$, and let $K_n^\bullet$ be the Koszul complex of the ring $A^G$ with respect to $x_1^n, \dots, x_r^n$. (Thus $K_n^\bullet$ is a complex of finite free $A^G$-modules, concentrated in degrees $[-r, 0]$.) By \cite[\href{https://stacks.math.columbia.edu/tag/091Z}{Lemma 091Z}]{stacks-project}, there is an isomorphism
    \[ R \Gamma(G, M) \cong R \lim (R \Gamma(G, M) \otimes^{\mathbf{L}}_{A^G} K_n^\bullet) \]
    in $D(A^G)$. Thus there is a short exact sequence
    \[
    0 \to  R^1 \lim H^{-1}(R \Gamma(G, M) \otimes^{\mathbf{L}}_{A^G} K_n^\bullet) \to H^0(G, M) \to \varprojlim_n H^0(R \Gamma(G, M) \otimes^{\mathbf{L}}_{A^G} K_n^\bullet) \to 0. 
    \]
    To prove the theorem, we need to show that the first entry in this sequence is zero, and that the third may be identified with $\varprojlim_n H^0(G, M / I^n M)$. We take these in turn. 

    Let $\underline{x}^n = (x_1^n, \dots, x_r^n)$, and let $K_q(\underline{x}^n, N) = H^{-q}(N \otimes_{A^G} K_n^\bullet)$ denote the degree $q$ Koszul homology of an $A^G$-module $N$. For any $n \geq 1$, there exists $m(n) \geq n$ such that the map $K_q(\underline{x}^m, M) \to K_q(\underline{x}^n, M)$ is zero for all $q > 0$. (The proof is the same as the proof of \cite[\href{https://stacks.math.columbia.edu/tag/0921}{Lemma 0921}]{stacks-project}, using that $M$ is a finitely generated module over the Noetherian ring  $A$.) There is a convergent spectral sequence 
    \[ E_2^{p, q} = H^p(G, K_{-q}(\underline{x}^n, M)) \Rightarrow H^{p+q}(G, M \otimes^{\mathbf{L}}_{A^G} K_n^\bullet), \]
    which comes equipped with an edge map
    \[ e_n \colon H^0(G, M \otimes^{\mathbf{L}}_{A^G} K_n^\bullet) \to H^0(G, M / (x_1^n, \dots, x_r^n) M) \]
    (as $K_0(\underline{x}^n, M) = M / (x_1^n, \dots, x_r^n) M$), therefore determining a homomorphism
    \[ \varprojlim_n e_n \colon \varprojlim_n H^0(G, M \otimes^{\mathbf{L}}_{A^G} K_n^\bullet) \to \varprojlim_n H^0(G, M / (x_1^n, \dots, x_r^n) M). \]
    We claim that this map is an isomorphism. If $(y_n)_{n \geq 1}$ lies in the kernel, choose $n = n_0 < n_1 < n_2 < \dots < n_r$ such that the map $K_q(\underline{x}^{n_i}, M) \to K_q(\underline{x}^{n_{i-1}}, M)$ is zero for $i > 0$, $q > 0$. Then $y_{n_r} \in \ker e_{n_r}$, so $y_{n_r}$ lies in $F^1 H^0(G, M \otimes^{\mathbf{L}}_{A^G} K_{n_r}^\bullet)$. The first graded piece of the abutment filtration is a subquotient of $H^1(G, K_1(\underline{x}^{n_r}, M))$, so we see that $y_{n_{r-1}}$ lies in $F^2 H^0(G, M \otimes^{\mathbf{L}}_{A^G} K_{n_{r-1}}^\bullet)$. Continuing in this way, we eventually find that $y_n = 0$. Since $n$ was arbitrary, $\ker \varprojlim_n e_n = 0$. To see that $\varprojlim_n e_n$ is surjective, take an element 
    \[ (z_n)_{n \geq 1} \in \varprojlim_n H^0(G, K_0(\underline{x}^n, M)). \]
    A similar argument shows that $z_n \in \operatorname{im}(e_n)$ for all $n \geq 1$ (because the differentials starting at $H^0(G,  K_0(\underline{x}^n, M))$ go to groups valued in $K_q(\underline{x}^n, M)$ for $q > 0$). So write $z_n = e_n(y_n)$. Another argument of the same type 
    shows that the image of $y_{m}$ in $H^0(G, M \otimes^{\mathbf{L}}_{A^G} K_n^\bullet)$ stabilises for $m \gg n$, showing that $(z_n)_{n \geq 1}$ is in the image of $\varprojlim_n e_n$. Since the sequences of ideals $I^n$ and $(x_1^n, \dots, x_r^n)$ of $A^G$ define isomorphic pro-systems, it follows that the third term of our short exact sequence is as it should be. 

    It remains to show that $R^1 \lim H^{-1}(R \Gamma(G, M) \otimes^{\mathbf{L}}_{A^G} K_n^\bullet) = 0$. We recall that this $R^1 \lim$ may be computed as the cokernel of the map
    \[ \prod_n H^{-1}(R \Gamma(G, M\otimes^{\mathbf{L}}_{A^G} K_n^\bullet)) \to \prod_n H^{-1}(R \Gamma(G, M\otimes^{\mathbf{L}}_{A^G} K_n^\bullet)) \]
    given by $(y_n) \mapsto (y_n - f_{n+1}(y_{n+1}))$ (where we write $f_n$ for the transition maps). The graded pieces of the abutment filtration of $H^{-1}(R \Gamma(G, M \otimes^{\mathbf{L}}_{A^G} K_n^\bullet))$ are subquotients of groups $H^\ast(G, K_q(\underline{x}^n, M))$ with $q > 0$. In particular, for any $n$, there exists $m > n$ such that $f_{n+1} \circ f_{n+2} \circ \dots \circ f_{n+m}$ is the zero map. Refining a prosystem does not change $R^1 \lim$, by \cite[\href{https://stacks.math.columbia.edu/tag/0H9K}{Lemma 0H9K}]{stacks-project}. In our case, this shows that $R^1 \lim H^{-1}(R \Gamma(G, M) \otimes^{\mathbf{L}}_{A^G} K_n^\bullet)$ may be identified with the cokernel of the identity map: in other words, it is zero, as required. This completes the proof. 
\end{proof}

\subsection{Application to Galois deformations}\label{subsection_ExistenceofPSSdefnrings}
In this section we apply theorem \ref{thm:general_gaga_thm} to the deformation theory of pseudorepresentations of absolute Galois groups. In particular, we prove the results of \cite{WE18} without the multiplicity-free hypothesis. As a consequence we are able to define potentially semistable Galois pseudodeformations with fixed $p$-adic Hodge type and Galois type.

\begin{para}
    Let $E$ be a coefficient field, and let $G$ be a profinite group. Fix a continuous $n$-dimensional determinant $\overline{D} \colon G \to k$ such that $H^1(G, \ad \rho^{\textnormal{ss}}_{\overline{D} \otimes \overline{k}})$ is finite dimensional over $\overline{k}$. By \cite[Prop 3.3]{Che14} there exists a profinite local $\mathcal{O}$-algebra $R_{\overline{D}}$ with residue field $k$ and a universal determinant $D \colon G \to R_{\overline{D}}$ such that for every Artinian local $\mathcal{O}$-algebra with residue field $k$, the map 
    \begin{align*}
        \Hom(R_{\overline{D}}, A) & \to \{ D_A \colon G \to A \mid D_A \equiv \overline{D} \pmod{\mathfrak{m}_A} \} \\
        f & \mapsto f \circ D
    \end{align*}
    is a bijection. It follows from \cite[Prop 3.7]{WE18} that $R_{\overline{D}}$ is Noetherian.
\end{para}

\begin{para}
    Let $D^u \colon G \to R_{\overline{D}}$ denote the universal pseudocharacter, and let $\mathcal{E} = R_{\overline{D}} \llbracket G \rrbracket / (\mathrm{CH}(D^u))$ denote the universal Cayley--Hamilton algebra described in \cite[\S 3.2]{WE18}; it is a finite $R_{\overline{D}}$-algebra. If $A$ is an Artinian local $\cO$-algebra with residue field $k$, then there is a natural bijection between the set of homomorphisms $\rho \colon G \to \GL_n(A)$ of pseudocharacter deforming $\overline{D}$, and the set of pairs of  compatible homomorphisms $R_{\overline{D}} \to A$, $\mathcal{E} \to M_n(A)$ (cf. \cite[Theorem 3.7]{WE18}).

    We write $\operatorname{Rep}_{\overline{D}}^\square = \operatorname{Rep}_{\mathcal{E}, D^u}^\square$ for the affine scheme over $\Spec R_{\overline{D}}$ representing the functor of compatible pairs as above, and $\mathcal{R}ep^\square_{\overline{D}}$ for its $\mathfrak{m}_{R_{\overline{D}}}$-adic completion. Similarly, we write $\operatorname{Rep}_{\overline{D}} = [ \operatorname{Rep}_{\overline{D}}^\square / \GL_n ]$ for the quotient stack, where $\GL_n$ acts by conjugation, and  $\mathcal{R}ep_{\overline{D}}$ for its $\mathfrak{m}_{R_{\overline{D}}}$-adic completion. The following result follows from \cite[Theorem 3.8(3)]{WE18}.
\end{para}
    
    \begin{theorem}\label{thm_global_sections_finite}
        The induced morphism $R_{\overline{D}} \to \cO( \operatorname{Rep}_{\overline{D}}) = \cO(\operatorname{Rep}^\square_{\overline{D}})^{\GL_n}$ is a finite algebra homomorphism.
    \end{theorem}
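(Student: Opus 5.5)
The plan is to prove this directly from the finite generation results of Theorem \ref{thm:group_coh_fg} together with the structure of the Cayley--Hamilton algebra, rather than quoting \cite{WE18}. The argument has two steps: first show that $\cO(\operatorname{Rep}^\square_{\overline{D}})^{\GL_n}$ is a finitely generated $R_{\overline{D}}$-algebra, and then show that it is integral over $R_{\overline{D}}$. A finitely generated integral algebra is finite.

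For finite generation, recall that $\operatorname{Rep}^\square_{\overline{D}}$ is the affine scheme of compatible $\mathcal{E} \to M_n$. Since $\mathcal{E}$ is a finite $R_{\overline{D}}$-algebra, choose generators $e_1, \dots, e_s$ of $\mathcal{E}$ as an $R_{\overline{D}}$-module. A homomorphism $\mathcal{E} \to M_n(B)$ is then determined by the images of the $e_i$, subject to polynomial relations: the multiplication table, the relations among the $e_i$, and the condition that the determinant of the representation agrees with $D^u$. This presents $\operatorname{Rep}^\square_{\overline{D}}$ as a closed $\GL_n$-stable subscheme of $\mathbf{A}^{sn^2}_{R_{\overline{D}}}$. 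Hence $A := \cO(\operatorname{Rep}^\square_{\overline{D}})$ is a finitely generated $R_{\overline{D}}$-algebra with $\GL_n$-action. The ring $R_{\overline{D}}$ is Noetherian, as noted above via \cite[Prop 3.7]{WE18}, and $\GL_n$ is reductive over it. Theorem \ref{thm:group_coh_fg}(1) then gives that $A^{\GL_n}$ is finitely generated over $R_{\overline{D}}$.

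For integrality, work over $\mathbf{Z}$ with the universal affine scheme of $s$-tuples of $n\times n$ matrices. By Donkin's theorem, the invariants of $M_n^{s}$ under simultaneous conjugation are generated by coefficients of characteristic polynomials of monomials in the matrices, and this holds over any base by base change; the same generation then passes to $A^{\GL_n}$ by the argument in the next paragraph. On $\operatorname{Rep}^\square_{\overline{D}}$, the characteristic polynomial of the image of any $x \in \mathcal{E}$ is the characteristic polynomial $\chi^{D^u}(x,t)$ of the universal determinant, whose coefficients lie in $R_{\overline{D}}$. So the image of every such generating invariant lies in the image of $R_{\overline{D}}$.

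The main obstacle is that invariants do not commute with passing to closed subschemes: a surjection $P \to A$ of $\GL_n$-algebras, with $P$ a polynomial ring over $R_{\overline{D}}$, need not induce a surjection $P^{\GL_n} \to A^{\GL_n}$, because $\GL_n$ is not linearly reductive. The fix is that $\GL_n$ is geometrically reductive, or adequate in the sense of \cite{alper_adequate}, which says $P^{\GL_n} \to A^{\GL_n}$ is universally \emph{adequate}: every element of $A^{\GL_n}$ has some $p$-power lying in the image. In particular $A^{\GL_n}$ is integral over the image of $P^{\GL_n}$, which is contained in the image of $R_{\overline{D}}$. Combining this integrality with the finite generation from the first step shows that $R_{\overline{D}} \to A^{\GL_n}$ is finite.
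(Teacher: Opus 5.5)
Your proof is correct, but it does not follow the paper's route. The paper gives no argument of its own and simply quotes \cite[Theorem 3.8(3)]{WE18}. You instead prove the statement directly by splitting finiteness into two parts.
\begin{itemize}
\item Finite generation of $\cO(\operatorname{Rep}^\square_{\overline{D}})^{\GL_n}$ over the Noetherian ring $R_{\overline{D}}$. You get this from Theorem \ref{thm:group_coh_fg}(1), once $\operatorname{Rep}^\square_{\overline{D}}$ is presented as a $\GL_n$-stable closed subscheme of $M_n^s$ over $R_{\overline{D}}$.
\item Integrality over $R_{\overline{D}}$. You get this from Donkin's theorem, together with the fact that characteristic polynomials of images of elements of $\mathcal{E}$ are computed by $D^u$ and so have coefficients in $R_{\overline{D}}$, and the adequacy of $\GL_n$ from \cite{alper_adequate}.
\end{itemize}
This essentially unpacks the invariant-theoretic content behind the cited result.

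What your route buys is that it isolates exactly what is used: a module-finite Cayley--Hamilton algebra over a Noetherian base. So it applies verbatim to quotients such as $\mathcal{E}^{?, L, [a,b]}$. That gives the finiteness half of the analogue the paper needs in the next Proposition, where the paper only asserts it can be proved ``in the same way''. What the citation buys is brevity. It also packages further information from \cite[Theorem 3.8]{WE18}, such as the description of $\overline{\mathbf{Q}}_p$-points in terms of semisimple representations, which the paper uses later in that subsection.

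Two small points of precision.
\begin{itemize}
\item Adequacy only guarantees that some power $b^N$ of each invariant $b$ lies in the image of $P^{\GL_n}$. That is all you need for integrality, so there is no reason to insist on a $p$-power. Similarly, only integrality, not generation, passes from $P^{\GL_n}$ to $A^{\GL_n}$, as your next paragraph in fact says.
\item The passage of Donkin's theorem from $\mathbf{Z}$ to $R_{\overline{D}}$ needs a word of justification. The vanishing of higher $\GL_n$-cohomology of $\mathbf{Z}[M_n^s]$, which comes from Donkin's good filtrations, makes the invariants commute with arbitrary base change. Alternatively, adequacy of $\mathbf{Z}[M_n^s]^{\GL_n}\otimes_{\mathbf{Z}} R_{\overline{D}} \to P^{\GL_n}$ already gives the integrality you need, without exact generation over $R_{\overline{D}}$.
\end{itemize}
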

    \begin{corollary}\label{cor_global_sections_of_formal_stack}
        The completion functor from coherent sheaves on $\operatorname{Rep}_{\overline{D}}$ to coherent sheaves on $\mathcal{R}ep_{\overline{D}}$ is fully faithful. In particular, the induced map
        \[ \mathcal{O}(\operatorname{Rep}_{\overline{D}}) \to \mathcal{O}(\mathcal{R}ep_{\overline{D}}) \]
        is an isomorphism.
    \end{corollary}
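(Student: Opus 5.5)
We apply Theorem \ref{thm:general_gaga_thm} with $S = R_{\overline{D}}$, $G = \GL_n$ acting by conjugation on $A = \cO(\operatorname{Rep}^\square_{\overline{D}})$, and a suitable ideal $I \leq A^G$. The hypotheses need a little care. First, $R_{\overline{D}}$ is Noetherian, as recalled above from \cite[Prop 3.7]{WE18}, and $\GL_n$ is a (split) reductive group over it. Second, $A$ is a finitely generated $R_{\overline{D}}$-algebra. Indeed, $\operatorname{Rep}^\square_{\overline{D}}$ represents compatible homomorphisms $\mathcal{E} \to M_n(-)$, and $\mathcal{E}$ is a finite $R_{\overline{D}}$-algebra. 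Hence $\operatorname{Rep}^\square_{\overline{D}}$ is a closed subscheme of the affine space parametrising $R_{\overline{D}}$-linear maps from a finite set of generators of $\mathcal{E}$ to $M_n$, so it is of finite type over $\Spec R_{\overline{D}}$.

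For the ideal, take $I = \mathfrak{m}_{R_{\overline{D}}} A^G$. By Theorem \ref{thm_global_sections_finite}, $A^G = \cO(\operatorname{Rep}_{\overline{D}})$ is a finite $R_{\overline{D}}$-algebra. Since $R_{\overline{D}}$ is a complete Noetherian local ring, every finite $R_{\overline{D}}$-module is $\mathfrak{m}$-adically complete. So $A^G$ is $I$-adically complete, which is exactly the hypothesis of Theorem \ref{thm:general_gaga_thm}.

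We also need the completions to match. The $\mathfrak{m}_{R_{\overline{D}}}$-adic completion of $\operatorname{Rep}_{\overline{D}}$ used to define $\mathcal{R}ep_{\overline{D}}$ is the completion along $I^n A = \mathfrak{m}^n A$. So the formal stack in Theorem \ref{thm:general_gaga_thm} is precisely $\mathcal{R}ep_{\overline{D}}$. Moreover, coherent sheaves on $\operatorname{Rep}_{\overline{D}} = [\operatorname{Rep}^\square_{\overline{D}}/\GL_n]$ are the same as $\Mod^{\text{f.g.}}_G(A)$. Coherent sheaves on the formal completion are the compatible inverse systems in $\widehat{\Mod}^{\text{f.g.}}_G(A)$, as in the remark citing \cite{ConradFormalGAGA}. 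Theorem \ref{thm:general_gaga_thm} then gives full faithfulness of completion.

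For the second statement, apply full faithfulness to $\Hom(\mathcal{O}, \mathcal{O})$, where $\mathcal{O}$ is the structure sheaf. On the algebraic side this is $A^G = \cO(\operatorname{Rep}_{\overline{D}})$. On the formal side it is $\varprojlim_n (A/I^nA)^G = \cO(\mathcal{R}ep_{\overline{D}})$. The comparison map between them is exactly the map in the statement, which is therefore an isomorphism. Alternatively, this is the property $(F_A)$ established in the proof of Theorem \ref{thm:general_gaga_thm}.

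The only step needing genuine input beyond bookkeeping is the completeness of $A^G$, and that is precisely where Theorem \ref{thm_global_sections_finite} is used. The identification of coherent sheaves on the formal quotient stack with $\widehat{\Mod}^{\text{f.g.}}_G(A)$ is essentially definitional.
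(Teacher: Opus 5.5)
Your proposal is correct and matches the paper's proof: the paper likewise uses Theorem \ref{thm_global_sections_finite} to see that $\cO(\operatorname{Rep}_{\overline{D}})$ is a finite $R_{\overline{D}}$-algebra, hence $\mathfrak{m}_{R_{\overline{D}}}$-adically complete, and then applies Theorem \ref{thm:general_gaga_thm}. Your extra checks are bookkeeping that the paper leaves implicit: that $\operatorname{Rep}^\square_{\overline{D}}$ is of finite type, the choice $I = \mathfrak{m}_{R_{\overline{D}}}A^G$, and reading off the isomorphism on global sections from the property $(F_A)$.
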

    \begin{proof}
        By Theorem \ref{thm_global_sections_finite}, $\cO( \operatorname{Rep}_{\mathcal{E}, D^u})$ is a finite $R_{\overline{D}}$-algebra; in particular, $\mathfrak{m}_{R_{\overline{D}}}$-adically complete. We can thus apply Theorem \ref{thm:general_gaga_thm}. 
    \end{proof}

\begin{para} \label{para:CayleyHamilton}
    Now we specialize to the situation where $G = G_K$ is the absolute Galois group of a finite field extension $K/\rationals_p$.
    Let $L/K$ be a finite Galois extension. Fix integers $a \leq b$. The following are stable conditions in the sense of \cite[\S 2.3]{WWE19}:
    \begin{itemize}
        \item $\mathcal{C}_{L/K,[a,b]}^{\textnormal{cr}}$: Subquotient of a $\mathbf{Z}_p$-lattice in a de Rham $\mathbf{Q}_p[G_K]$-module, crystalline after restriction to $L$, with Hodge--Tate weights in $[a, b]$.
        \item $\mathcal{C}_{L/K,[a,b]}^{\textnormal{st}}$: Subquotient of a $\mathbf{Z}_p$-lattice in a de Rham $\mathbf{Q}_p[G_K]$-module, semistable after restriction to $L$, with Hodge--Tate weights in $[a, b]$.
    \end{itemize}
    Following \cite[Definition 2.5.4]{WWE19}, we say that $n$-dimensional pseudorepresentation $D \colon G \to A$ (for $A$ an Artinian local $\cO$-algebra with residue field $k$) satisfies one of these conditions if there exists a Cayley--Hamilton representation $(\mathcal{E}, D^u) \to (\mathcal{E}', D)$ such that $\mathcal{E}'$, when considered as left $\mathbf{Z}_p[G]$-module via the composite $\mathbf{Z}_p[G] \to \mathcal{E} \to \mathcal{E}'$, satisfies the given condition. According to \cite[Theorem 2.5.5]{WWE19}, the subfunctor of pseudorepresentations satisfying a stable condition $\mathcal{C}$ is represented by a quotient of $R_{\overline{D}}$. Moreover, there is a Cayley--Hamilton quotient $(\mathcal{E}, D^u) \to (\mathcal{E}^\mathcal{C}, D^{u, \mathcal{C}})$ with the following property: for any Artinian local $\cO$-algebra $A$ with residue field $k$, and any Cayley--Hamilton representation $(\mathcal{E}, D^u) \to (E, D)$ over $A$, we have that $(E, D)$ satisfies $\mathcal{C}$ if and only if the representation factors through $(\mathcal{E}^{\mathcal{C}}, D^{u, \mathcal{C}})$.

    We can therefore define closed, $\GL_n$-stable subschemes
     \[
    \operatorname{Rep}_{\overline{D}}^{\Box, \textnormal{pcr}, L, [a,b]} \subset \operatorname{Rep}_{\overline{D}}^{\Box, \textnormal{pst}, L, [a,b]} \subset \operatorname{Rep}^\Box_{\overline{D}}
    \]
    by taking e.g.\ $\operatorname{Rep}_{\overline{D}}^{\Box, \textnormal{pcr}, L, [a,b]}$ to represent the subfunctor of representations of $\mathcal{E}$ that factor through $\mathcal{E}^{\textnormal{pcr}, L, [a, b]}$ (in the obvious notation). By construction, the structure morphism to $\Spec R_{\overline{D}}$ factors through $\Spec R_{\overline{D}}^{\textnormal{pcr}, L, [a, b]}$.
    Quotienting by $\GL_n$, we obtain closed algebraic substacks
    \[
    \operatorname{Rep}_{\overline{D}}^{\textnormal{pcr}, L, [a,b]} \subset \operatorname{Rep}_{\overline{D}}^{\textnormal{pst}, L, [a,b]} \subset \operatorname{Rep}_{\overline{D}},
    \]
    sitting above the closed subschemes
    \[ \Spec R_{\overline{D}}^{\textnormal{pcr}, L, [a, b]} \subset \Spec R_{\overline{D}}^{\textnormal{pst}, L, [a, b]} \subset \Spec R_{\overline{D}}.
    \]
    We can also define their $\mathfrak{m}_{R_{\overline{D}}}$-adic completions as before. Our goal now is to use these objects to prove the following theorem:

\end{para}
    
\begin{theorem}\label{thm_existence_of_pseudodeformation_ring_with_conditions}
        Let $\mathbf{v}$, $\tau \colon I_{L / K} \to \GL_n(E)$ be a pair consisting of a Hodge and inertial type, with $\mathbf{v}$ bounded by $[a, b]$. Let $? \in \{ \textnormal{pcr}, \textnormal{pst} \}$. Then there exists a quotient $R_{\overline{D}}^{?, \mathbf{v}, \tau}$ of $R_{\overline{D}}^{?, L, [ a, b]}$ with the following properties:
        \begin{enumerate}
            \item $R_{\overline{D}}^{?, \mathbf{v}, \tau}$ is reduced and $p$-torsion-free.
            \item $\Spec R_{\overline{D}}^{?, \mathbf{v}, \tau}[1/p]$ is a union of connected components of $\Spec R_{\overline{D}}^{?, L, [ a, b]}[1/p]$.
            \item Let $x \colon R_{\overline{D}} \to \overline{\mathbf{Q}}_p$ be an $\cO$-algebra homomorphism, and let $\rho_x \colon G_K \to \GL_n(\overline{\mathbf{Q}}_p)$ be the corresponding semi-simple representation (determined up to isomorphism by $x$). Then $x$ factors through $R_{\overline{D}}^{?, \mathbf{v}, \tau}[1/p]$ if and only if $\rho_x$ is potentially crystalline (resp. potentially semi-stable), of Hodge type $\mathbf{v}$, and of inertial type $\tau$. 
        \end{enumerate}
    \end{theorem}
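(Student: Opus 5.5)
We will follow the strategy of \cite{WE18}, using Corollary \ref{cor_global_sections_of_formal_stack} in place of formal GAGA. Write $R = R_{\overline{D}}^{?, L, [a,b]}$, $X^\square = \operatorname{Rep}_{\overline{D}}^{\square, ?, L, [a,b]}$ and $X = [X^\square/\GL_n]$, with completion $\widehat{X}$ along $\mathfrak{m}_{R_{\overline{D}}}$. The first step is to apply Kisin's construction \cite{Kis08} in families over $\widehat{X}^\square$. Over each complete local ring of $X^\square$ at a closed point, and compatibly in the formal (Artinian-limit) sense, the universal framed representation satisfies the stable condition $\mathcal{C}^{?}_{L/K,[a,b]}$. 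Hence its generic fibre is potentially crystalline (resp.\ semistable) over $L$ with Hodge--Tate weights in $[a,b]$. Kisin's theory, or its reformulation via Kisin modules and the Hodge and inertial type being locally constant in families, then shows that the locus of Hodge type $\mathbf{v}$ and inertial type $\tau$ is a union of connected components of the rigid generic fibre. We want to encode this as an idempotent. Concretely, Kisin's argument produces a $\GL_n$-invariant element $e \in \cO(\widehat{X}^\square)[1/p]$ with $e^2 = e$, cutting out the $(\mathbf{v},\tau)$-locus; invariance holds because the type conditions are conjugation-invariant.

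The second step is algebraisation of $e$. By Corollary \ref{cor_global_sections_of_formal_stack}, applied to the closed substack $X$ (note that $\cO(X^\square)^{\GL_n}$ is again finite over $R_{\overline{D}}$, being a quotient-compatible version of Theorem \ref{thm_global_sections_finite}, so Theorem \ref{thm:general_gaga_thm} applies), we get $\cO(X)=\cO(X^\square)^{\GL_n} \xrightarrow{\sim} \cO(\widehat{X}) = \cO(\widehat{X}^\square)^{\GL_n}$. Inverting $p$, $e$ then lies in $B[1/p]$, where $B := \cO(X)$ is a finite $R$-algebra. Let $B^{\mathbf{v},\tau}$ be the image of $B$ in $eB[1/p]$ modulo its nilradical. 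Define $R^{?,\mathbf{v},\tau}_{\overline{D}}$ as the image of $R$ in $B^{\mathbf{v},\tau}$. This ring is a subring of a reduced $p$-torsion-free ring, so (1) holds.

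For (2), note that $\Spec B[1/p] \to \Spec R[1/p]$ is finite. The main obstacle is showing that the image of the component $\Spec eB[1/p]$ is disjoint from the image of $\Spec(1-e)B[1/p]$, so that the image is open and closed. This says that two representations with the same semisimplification cannot have different types. That holds because the Hodge and inertial type of a potentially semistable representation depend only on its semisimplification, since $D_{\mathrm{pst}}$ is exact and $\tau$ is semisimple on inertia. We check this at $\overline{\mathbf{Q}}_p$-points: every fibre over a closed point of $\Spec R[1/p]$ consists of representations with the same semisimplification, namely the closed orbit. Since closed points are dense in this Jacobson scheme, the two images are disjoint closed sets covering $\Spec R[1/p]$ (their union is everything because $R \to B$ is finite and, by the GIT property of \cite{WE18}, surjective on points). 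Hence they are unions of connected components, and the image of $eB[1/p]$ is $\Spec R^{?,\mathbf{v},\tau}_{\overline{D}}[1/p]$ up to reducedness. Property (3) then follows at points: $x$ lifts to the semisimple $\rho_x$ in $X^\square(\overline{\mathbf{Q}}_p)$, which lies in the $e=1$ locus exactly when $\rho_x$ has the stated type. Finally, $\rho_x$ is potentially crystalline (resp.\ semistable) over $L$ with weights in $[a,b]$ if and only if $x$ factors through $R[1/p]$, by \cite[Theorem 2.5.5]{WWE19}, since the Cayley--Hamilton quotient contains $\rho_x$.
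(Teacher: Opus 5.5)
Your argument is correct and shares the paper's architecture: an idempotent on the framed formal space, descent to $\GL_n$-invariants, algebraisation via Theorem~\ref{thm:general_gaga_thm}, and a scheme-theoretic image in $\Spec R_{\overline{D}}$. You diverge at (2) and (3).

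The paper's Proposition says more than finiteness. It says $R_{\overline{D}}^{?,L,[a,b]}\to\cO(\operatorname{Rep}_{\overline{D}}^{?,L,[a,b]})$ becomes an isomorphism after inverting $p$, proved as in \cite{WE18}. So the algebraised idempotent already lies in $R_{\overline{D}}^{?,L,[a,b]}[1/p]$. Then (2) is immediate, and (3) is just evaluation at the semisimple $\rho_x$, with no appeal to exactness of $D_{\mathrm{pst}}$.

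You use only finiteness, so you must separately show that the images of $\{e=1\}$ and $\{e=0\}$ in $\Spec R[1/p]$ are disjoint. Your inputs for this are:
\begin{enumerate}
\item exactness of $D_{\mathrm{pst}}$ (the content of Lemma~\ref{lemma_semisimplification_of_deRham_reps});
\item the fact that a determinant determines the semisimplification;
\item surjectivity of the moduli map on geometric points;
\item density of closed points in the Jacobson scheme $\Spec R[1/p]$.
\end{enumerate}
This works. It trades characteristic-$0$ invariant theory for $p$-adic Hodge theory, and in effect reproves that $e$ is constant on fibres which the paper knows to be single points. What your route does not give is reducedness of $R_{\overline{D}}^{?,L,[a,b]}[1/p]$. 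So your (2) is a priori a statement about an open-closed subset with its reduced structure. The paper's isomorphism, combined with \cite[Theorem B]{WE18}, gives it scheme-theoretically.

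The $\GL_n$-invariance of $e$ needs more than you wrote. Conjugation-invariance of the conditions, together with uniqueness of $e$ (\cite[Lemma 4.17]{WE18}), gives $r_g(p_1(e)-a(e))=0$ for each $g\in\GL_n(\cO)$. Here $p_1$ and $a$ are the projection and coaction maps into $\cO(\widehat{X}^\square\times_{\Spf\cO}\widehat{\GL}_n)[1/p]$, and $r_g$ is specialisation at $g$. You then still need that an element of that ring killed by every $r_g$ is zero; only then is $e\in\cO(\widehat{X})[1/p]$. This is exactly how the paper concludes.

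Your Step~1 phrasing via complete local rings at closed points is also imprecise. Idempotents on those local rings do not glue by themselves, since the special fibre has infinitely many closed points. That is why the paper appeals directly to the formal-scheme statement \cite[Theorem 6.6]{WE18}, after checking via \cite{Liu07} that the universal framed representation is semistable over $L$ with weights in $[a,b]$.
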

    We remark that $R_{\overline{D}}^{?, \mathbf{v}, \tau}$, if it exists, is uniquely determined by properties (1) and (3). 

    \begin{prop}
        Let $? \in \{ \textnormal{pcr}, \textnormal{pst} \}$. Then:
        \begin{enumerate}
            \item The map
        \[ R_{\overline{D}}^{?, L, [a, b]} \to \cO( \operatorname{Rep}_{\overline{D}}^{?, L, [a, b]}) \]
        is a finite algebra homomorphism, which becomes an isomorphism after inverting $p$.
        \item The map
        \[ \cO( \operatorname{Rep}_{\overline{D}}^{?, L, [a, b]}) \to \cO( \mathcal{R}ep_{\overline{D}}^{?, L, [a, b]})\]
        is an isomorphism.
        \end{enumerate}
    \end{prop}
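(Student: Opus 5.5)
Part (2) comes first and is the easier half. The stack $\operatorname{Rep}_{\overline{D}}^{?, L, [a,b]} = [\operatorname{Rep}_{\overline{D}}^{\Box, ?, L, [a,b]}/\GL_n]$ is the quotient of a finite type affine scheme over the Noetherian ring $R_{\overline{D}}$ by the reductive group $\GL_n$. Its global sections are $A^{\GL_n}$, where $A = \cO(\operatorname{Rep}_{\overline{D}}^{\Box, ?, L, [a,b]})$ is a quotient of $\cO(\operatorname{Rep}^\Box_{\overline{D}})$. By Theorem \ref{thm:group_coh_fg}(2) in degree $0$, applied with $A$ regarded as a finite $\cO(\operatorname{Rep}^\Box_{\overline{D}})$-module, $A^{\GL_n}$ is a finite module over $\cO(\operatorname{Rep}_{\overline{D}})$. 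By Theorem \ref{thm_global_sections_finite} it is therefore finite over $R_{\overline{D}}$, and hence $\mathfrak{m}_{R_{\overline{D}}}$-adically complete. This gives the finiteness claim in (1). Theorem \ref{thm:general_gaga_thm}, applied with $I = \mathfrak{m}_{R_{\overline{D}}}A^{\GL_n}$ and $M = A$ (exactly as in Corollary \ref{cor_global_sections_of_formal_stack}), shows that $A^{\GL_n} \to \varprojlim_n (A/I^nA)^{\GL_n}$ is an isomorphism. The target is $\cO(\mathcal{R}ep_{\overline{D}}^{?, L, [a,b]})$, which proves (2).

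The remaining content of (1) is that $R^{?, L,[a,b]}_{\overline{D}}[1/p] \to A^{\GL_n}[1/p]$ is an isomorphism. I would split this into injectivity and surjectivity.

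For injectivity, the composite $R_{\overline{D}} \to \cO(\operatorname{Rep}_{\overline{D}})$ already has nilpotent kernel in general. To control the kernel exactly, I would use the Cayley--Hamilton structure. The universal Cayley--Hamilton quotient $(\mathcal{E}^{\mathcal{C}}, D^{u,\mathcal{C}})$ over $R^{?,L,[a,b]}_{\overline{D}}$ is finite over that ring. By \cite[Theorem 3.8]{WE18}, the map $R \to \cO(\operatorname{Rep}^\Box_{\mathcal{E}^{\mathcal{C}}, D^{u,\mathcal{C}}})^{\GL_n}$ is an isomorphism after inverting $p$; more precisely, its kernel and cokernel are killed by a power of $p$, or are at least nilpotent and torsion. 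This is the rational-coefficient GIT statement of Procesi type: over $\mathbf{Q}$ the group $\GL_n$ is linearly reductive, and invariants of the representation scheme of a Cayley--Hamilton algebra recover the base ring. Note that $\operatorname{Rep}_{\overline{D}}^{\Box, ?, L, [a,b]}$ is by definition $\operatorname{Rep}^\Box_{\mathcal{E}^{\mathcal{C}}, D^{u,\mathcal{C}}}$. So (1) reduces to the statement of \cite[Theorem 3.8]{WE18} applied to the Cayley--Hamilton algebra $(\mathcal{E}^{\mathcal{C}}, D^{u,\mathcal{C}})$, together with the fact that after inverting $p$ taking $\GL_n$-invariants is exact. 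The exactness lets invariants commute with passing to the closed subscheme cut out by the Cayley--Hamilton quotient.

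I expect the main obstacle to be checking that \cite[Theorem 3.8]{WE18} applies to an arbitrary Cayley--Hamilton quotient, and in particular pinning down exactly what "isomorphism after inverting $p$" requires. Concretely, one must show that the kernel of $R^{?,L,[a,b]}_{\overline{D}} \to A^{\GL_n}$ is $p$-power torsion and not merely nilpotent. If that approach fails, I would argue directly.

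For surjectivity, after inverting $p$ the Reynolds operator exists, and Procesi's theorem says $\GL_n$-invariants of $\operatorname{Rep}^\Box_{\mathcal{E}}$ are generated by characteristic-polynomial coefficients of elements of $\mathcal{E}$. Those coefficients come from $D^{u,\mathcal{C}}$, hence from $R$.

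For injectivity, I would check on points and on Artinian thickenings. For an Artinian $E'$-algebra point $x$ of $R[1/p]$, the semisimple representation $\rho_x$ attached to $D_x$ gives a point of $\operatorname{Rep}^\Box$ lying over $x$, using \cite[Theorem 2.5.5]{WWE19} and stability of the condition. This shows $\Spec A^{\GL_n}[1/p] \to \Spec R[1/p]$ is surjective. Combined with the comparison of the reducedness-free universal property of $R$ with the Cayley--Hamilton algebra, this should yield the isomorphism.
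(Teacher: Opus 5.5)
Your proposal is correct and follows essentially the paper's route. Part (2) is deduced from finiteness via Theorem \ref{thm:general_gaga_thm}, exactly as in Corollary \ref{cor_global_sections_of_formal_stack}, and the rational isomorphism in (1) comes from running the argument of \cite[Theorem 3.8]{WE18} (characteristic-$0$ invariant theory of Procesi type) for the Cayley--Hamilton quotient $\mathcal{E}^{?,L,[a,b]}$, which is precisely how the paper disposes of the obstacle you flag. The only deviation is that you obtain finiteness from Theorem \ref{thm:group_coh_fg}(2) and Theorem \ref{thm_global_sections_finite} instead of rerunning Wang-Erickson's proof, which is valid; your fallback injectivity sketch via points is unnecessary and, as written, would only show that the kernel is nilpotent.
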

    \begin{proof}
        The first part is an analogue of Theorem \ref{thm_global_sections_finite}; it may be proved in the same way as \cite[Theorem 3.7]{WE18} (note that the statement of that result is for the universal pseudocharacter, but the proof reduces immediately to studying representations of the universal Cayley--Hamilton algebra; applying this argument to $\mathcal{E}^{?, L, [ a,b]}$ gives what we need). The second part follows from the first in the same way as in the proof of Corollary \ref{cor_global_sections_of_formal_stack}. 
    \end{proof}
    \begin{corollary}
        To prove Theorem \ref{thm_existence_of_pseudodeformation_ring_with_conditions}, it suffices to construct an idempotent $e \in \cO(\mathcal{R}ep_{\overline{D}}^{?,[ a, b]})[1/p]$ with the following property: for any finite extension $E' / E$ with ring of integers $\cO'$, and any morphism $x \colon \Spf \cO' \to \mathcal{R}ep_{\overline{D}}^{?,[ a, b]}$ corresponding to a representation $\rho_x \colon G_K \to \GL_n(\cO')$, the following are equivalent:
        \begin{enumerate}
            \item $x^\ast(e) = 1$ in $E'$.
            \item $\rho_x[1/p]$ is potentially crystalline (resp. potentially semi-stable), of Hodge type $\mathbf{v}$, and of inertial type $\tau$. 
        \end{enumerate}
    \end{corollary}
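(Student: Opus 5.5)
Given such an idempotent $e$, the plan is to transport it to $R_{\overline{D}}^{?, L, [a,b]}[1/p]$ and define the ring as the image of $R_{\overline{D}}^{?, L, [a,b]}$ in the factor $e$ cuts out. By part (2) of the preceding Proposition, $\cO(\operatorname{Rep}_{\overline{D}}^{?, L, [a,b]}) \to \cO(\mathcal{R}ep_{\overline{D}}^{?, L, [a,b]})$ is an isomorphism, so $e$ comes from an idempotent of $\cO(\operatorname{Rep}_{\overline{D}}^{?, L, [a,b]})[1/p]$. By part (1), the map $R_{\overline{D}}^{?, L, [a,b]} \to \cO(\operatorname{Rep}_{\overline{D}}^{?, L, [a,b]})$ becomes an isomorphism after inverting $p$, so $e$ is an idempotent of $R := R_{\overline{D}}^{?, L, [a,b]}[1/p]$. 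Then $eR$ is a direct factor, and $\Spec eR$ is a union of connected components of $\Spec R$.

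Define $R_{\overline{D}}^{?, \mathbf{v}, \tau}$ to be the image of $R_{\overline{D}}^{?, L, [a,b]}$ in $(eR)_{\mathrm{red}}$, i.e.\ the quotient by the kernel of $R_{\overline{D}}^{?, L, [a,b]} \to (eR)_{\mathrm{red}}$.
\begin{enumerate}
    \item This ring is a subring of a reduced $\mathbf{Q}_p$-algebra, hence reduced and $p$-torsion free.
    \item After inverting $p$ it equals $(eR)_{\mathrm{red}}$. Reducedness does not change the underlying space, so property (2) holds.
\end{enumerate}

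For (3), take $x \colon R_{\overline{D}} \to \overline{\mathbf{Q}}_p$; it lands in some $\cO'$ by compactness. If $x$ factors through $R_{\overline{D}}^{?, \mathbf{v}, \tau}[1/p]$, then it factors through $R$ and $x(e)=1$. Conversely, suppose $\rho_x$ has the stated properties. Its semisimplification lies in the category of the stable condition, so $x$ factors through $R_{\overline{D}}^{?, L,[a,b]}$. To evaluate $e$ at $x$ we need a point of the formal stack. Choose a $G_K$-stable $\cO'$-lattice in $\rho_x$, after enlarging $\cO'$ if necessary; this gives a representation $\rho \colon G_K \to \GL_n(\cO')$ with determinant $x$.

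The subtle step is to check that $\rho$ defines a point $\Spf \cO' \to \mathcal{R}ep_{\overline{D}}^{?,[a,b]}$, i.e.\ that its reductions mod $\varpi^m$ factor through the Cayley--Hamilton quotients $\mathcal{E}^{?, L, [a,b]}$. This should follow from the defining property of $\mathcal{E}^{\mathcal{C}}$ in \cite[Theorem 2.5.5]{WWE19}. The lattice mod $\varpi^m$ is a subquotient of a lattice in a de Rham representation that is crystalline (resp.\ semistable) over $L$ with weights in $[a,b]$, so the Cayley--Hamilton representation $(M_n(\cO'/\varpi^m), \det)$ satisfies $\mathcal{C}$. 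The one point to verify is that the image algebra, viewed as a $G_K$-module via the left regular action, is a subquotient of a direct sum of copies of $\rho_x$; this holds because it sits inside $\End(\rho)\cong \rho^{\oplus n}$ as a left module. Hence the representation factors through $\mathcal{E}^{\mathcal{C}}$.

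With this point $y$, its image in $\Spec R$ is $x$, since global sections of the stack agree with $R$ after inverting $p$ and $y$ induces $x$ on $R_{\overline{D}}$. Condition (1)$\Leftrightarrow$(2) then gives $y^*(e)=1$, so $x(e)=1$ and $x$ factors through $eR$. It factors through the reduced quotient automatically, since $\overline{\mathbf{Q}}_p$ is reduced.

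The converse direction of (3) uses the same construction run backwards. Any $x$ with $x(e)=1$ that factors through $R$ lifts, via a lattice, to a $y$ with $y^*(e)=1$, and then $\rho_y[1/p] \cong \rho_x$ up to semisimplification has the required type. Here one uses that the Hodge type and inertial type depend only on the semisimplification: the Hodge–Tate weights and the restriction of the Weil–Deligne representation to inertia are insensitive to extensions.

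The main obstacle is the point-lifting step, i.e.\ the lattice membership in the Cayley--Hamilton quotient; the rest is formal.
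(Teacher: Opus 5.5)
Your construction of the ring and your verification of properties (1) and (2) are fine. So is the implication ``$\rho_x$ of the required type $\Rightarrow$ $x$ factors through $R_{\overline{D}}^{?,\mathbf{v},\tau}[1/p]$''. Transporting $e$ into $R=R_{\overline{D}}^{?,L,[a,b]}[1/p]$ via both parts of the Proposition is just a more direct description of the paper's scheme-theoretic image of the closure of $\{e'=1\}$. Your lattice argument for membership in $\mathcal{E}^{?,L,[a,b]}$ spells out something the paper leaves implicit. The step you flagged as the main obstacle is therefore not where the problem lies; the gap is in the other implication of (3).

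Given $x$ factoring through $R$ with $x(e)=1$, you need an $\cO'$-point $y$ of $\mathcal{R}ep_{\overline{D}}^{?,[a,b]}$ lying over $x$ before you can apply (1)$\Rightarrow$(2). ``The same construction run backwards'' does not supply one. Your construction certified that a lattice in $\rho_x$ gives such a point by checking the stable condition $\mathcal{C}$ on its reductions. That check used that $\rho_x$ is de Rham, semistable over $L$, with weights in $[a,b]$, which is exactly what is unknown here. Knowing that $x \bmod \varpi^m$ factors through $R_{\overline{D}}^{?,L,[a,b]}$ only says that \emph{some} Cayley--Hamilton representation over $\cO'/\varpi^m$ with that determinant satisfies $\mathcal{C}$. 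It does not say that your lattice mod $\varpi^m$ does.

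The missing input is a point of the algebraic stack over $x$. Since $R \cong \cO(\operatorname{Rep}_{\overline{D}}^{?,L,[a,b]})[1/p]$ is the ring of $\GL_n$-invariants of $\operatorname{Rep}_{\overline{D}}^{\square,?,L,[a,b]}[1/p]$, the fibre over $x$ is non-empty and of finite type over $\overline{\mathbf{Q}}_p$. Hence there is a representation $\rho'$, defined over some $E'$, factoring through $\mathcal{E}^{?,L,[a,b]}$ and with $(\rho')^{\mathrm{ss}}\cong\rho_x$; this is what \cite[Theorem 3.8]{WE18} provides in the paper's proof. Because $\mathcal{E}^{?,L,[a,b]}$ is finite over $R_{\overline{D}}$, its image in $M_n(E')$ stabilises an $\cO'$-lattice, which yields $y$. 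This is the paper's remark that every $E'$-point of $\operatorname{Rep}_{\overline{D}}^{?,[a,b]}$ comes from an $\cO'$-point of $\mathcal{R}ep_{\overline{D}}^{?,[a,b]}$. Then $y^*(e)=x(e)=1$, so $\rho'$ has the required type, and Lemma \ref{lemma_semisimplification_of_deRham_reps} passes this to $\rho_x$.

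Alternatively, you could invoke the local analogue of the characteristic-zero description of $\overline{\mathbf{Q}}_p$-points of pseudodeformation rings with a stable condition from \cite{WWE19}, as used in Theorem \ref{Thm_WWE_global}. That shows $\rho_x$ itself is semistable over $L$ with weights in $[a,b]$, after which your lattice argument applies to $\rho_x$ directly.
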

    \begin{proof}
        Given $e$, let $e' \in \cO(\operatorname{Rep}_{\overline{D}}^{?,[ a, b]})[1/p]$ be the algebraisation of $e$, let $Y^{\mathbf{v}, \tau}$ be the closure of the closed substack of $(\operatorname{Rep}_{\overline{D}}^{?,[ a, b]})[1/p]$ defined by the equation $e' = 1$, and let $R_{\overline{D}}^{?, \mathbf{v}, \tau}$ be the affine ring of the scheme-theoretic image of $Y^{\mathbf{v}, \tau}$  in $\Spec R_{\overline{D}}$. Then  $R_{\overline{D}}^{?, \mathbf{v}, \tau}$ satisfies properties (1) and (2) of Theorem \ref{thm_existence_of_pseudodeformation_ring_with_conditions} more or less by definition. By \cite[Theorem 3.8]{WE18}, the $\overline{\mathbf{Q}}_p$-points of $R_{\overline{D}}^{?, \mathbf{v}, \tau}$ correspond bijectively to the $\overline{\mathbf{Q}}_p$-points of $Y^{\mathbf{v}, \tau}$ supporting semisimple representations, so property (3) follows from the defining property of $e$, on noting that any $E'$-point of $\operatorname{Rep}_{\overline{D}}^{?,[ a, b]}$ comes from an $\cO'$-point of $\mathcal{R}ep_{\overline{D}}^{?,[ a, b]}$.
    \end{proof}
    \begin{proof}[Proof of Theorem \ref{thm_existence_of_pseudodeformation_ring_with_conditions}]
    Let 
    \[ A = \cO(\mathcal{R}ep_{\overline{D}}^{?,[ a, b]})[1/p], \,\, A^\square = \cO(\mathcal{R}ep_{\overline{D}}^{\square, ?,[ a, b]})[1/p], \]
    and 
    \[ A^{\square, 1} = \cO(\mathcal{R}ep_{\overline{D}}^{\square, ?,[ a, b]} \times_{\operatorname{Spf} \cO} \widehat{\GL}_n)[1/p], \]
    where $\widehat{\GL_n} / \Spf \cO$ denotes the $\varpi$-adic completion of $\GL_n$. These rings are reduced (see \cite[Theorem B]{WE18}). 

    Let $\rho^u \colon G_K \to \GL_n(A^\square)$ denote the universal representation. After passage to twist, we can assume that $a \geq 0$. We now want to apply the results of \cite[\S 6]{WE18}. We first claim that, with the notation of \emph{loc. cit.}, we have $A^\square = (A^{\square})^{L-\operatorname{st}, b}$. After applying the main theorem of \cite{Liu07}, we see that this follows from the defining property of $A^\square$, namely that if we set $A^{\square, \circ} = \cO(\mathcal{R}ep_{\overline{D}}^{\square, ?,[ a, b]})$, and write $\rho^{u, \circ}$ for the corresponding universal representation, then $\rho^{u, \circ} \otimes_{R_{\overline{D}}} R_{\overline{D}} / \mathfrak{m}_{R_{\overline{D}}}^i|_{G_L}$ is, for every $i \geq 0$, a subquotient of a lattice in a $\mathbf{Q}_p[G_L]$-module, semi-stable of Hodge--Tate weights in $[a, b]$. By \cite[Theorem 6.6]{WE18}, there exists a unique idempotent $e \in A^{\square}$ such that for any finite extension $E' / E$ and $\cO$-algebra homomorphism $x \colon A^\square \to E'$, with specialisation $\rho_x$ of the universal representation, we have that $\rho_x$ is potentially crystalline (resp. potentially semi-stable) of Hodge type $\mathbf{v}$ and inertial type $\tau$ if and only if $x(e) = 1$. 

    To complete the proof, we need to show that $e \in A$. By definition, we have $A = \ker (p_1 - a)$, where $p_1 \colon A^\square \to A^{\square, 1}$ induced by projection on the first factor, and $a \colon A^\square \to A^{\square, 1}$ is the coaction map. If $g \in \GL_n(\cO)$, then there is a retraction $r_g \colon A^{\square, 1} \to A^\square$. Let $f = p_1(e) - a(e)$. The uniqueness property of $e$ (which follows from \cite[Lemma 4.17]{WE18}) shows that for any $g \in \GL_n(\cO)$, we have $r_g(f) = 0$. To conclude the proof, it suffices to note that the only element $z \in A^{\square, 1}$ such that $r_g(z) = 0$ for all $g \in \GL_n(\cO)$ is $z = 0$.  
    \end{proof}

\para\label{para_deformation_ring_notations}

In \S\ref{sec_LGC_for_Betti}, it will be convenient to label our deformation rings by invariants appearing on the automorphic side. Let $\mathbf{Q}_p\leq E\leq \overline{\mathbf{Q}}_p$ be a finite field extension with $\# \Hom(K,E)=[K:\mathbf{Q}_p]$ and residue field $k$. Let $\overline{D}:G_K\to k$ be a continuous $n$-dimensional determinant as before.

Given a highest weight vector $\lambda\in (\mathbf{Z}_+^n)^{\Hom(K,E)}$ for $\Res_{K/\mathbf{Q}_p}\textnormal{GL}_n$, we can define an associated $p$-adic Hodge type in the sense of \cite{Kis08} as follows. Let $D$ be an $n$-dimensional $E$-vector space and write
\begin{equation*}
    D_K:=D\otimes_{\mathbf{Q}_p}K=\oplus_{\iota:K\hookrightarrow E}D_{\iota}.
\end{equation*}
For an embedding $\iota:K\hookrightarrow E $, choose a decreasing filtration $\textnormal{Fil}^{\bullet}D_{\iota}$ on $D_{\iota}$ by $E$-subvector spaces such that
\begin{equation*}
    \dim_E\textnormal{gr}^jD_{\iota}=\begin{cases}
        1 & \text{if }j\in\{\lambda_{\iota,i}+n-i\}_{1\leq i\leq n} \\
    0 & \text{otherwise.}
    \end{cases}
\end{equation*}
This yields a filtration $\textnormal{Fil}^{\bullet}D_K$ on $D_K$ by sub-$K\otimes_{\mathbf{Q}_p}E$-modules. We set $\mathbf{v}_{\lambda}=(D,\textnormal{Fil}^{\bullet}D_K)$.

Given a Bernstein block $\Omega$ of $\textnormal{Rep}_{\textnormal{sm}}(\textnormal{GL}_n(K),\overline{\mathbf{Q}}_p)$, there is an associated inertial type $\tau_{\Omega}:I_K\to \textnormal{GL}_n(\overline{\mathbf{Q}}_p)$ that, up to isomorphism, can be defined as $\textnormal{rec}_K(\pi)|_{I_K}$ for any irreducible smooth representation $\pi$ lying in $\Omega$. At the cost of possibly enlarging $E$ and $L$, we can assume that it is of the form $\tau_{\Omega}:I_{L/K}\to\textnormal{GL}_n(E)$.
\begin{definition}\label{def_defnring_notation}
    Given a highest weight vector $\lambda$ of $\textnormal{Res}_{K/\mathbf{Q}_p}\textnormal{GL}_n$ and a Bernstein block $\Omega$ of $\textnormal{Rep}_{\textnormal{sm}}(\textnormal{GL}_n(K),\overline{\mathbf{Q}}_p)$ with associated $p$-adic Hodge type $\mathbf{v}_{\lambda}$ and inertial type $\tau_{\Omega}:I_{L/K}\to \textnormal{GL}_n(E)$, we set
    \begin{equation*}
        R^{\lambda,\Omega}_{\overline{D}}:=R^{\textnormal{pst},\mathbf{v}_{\lambda},\tau_{\Omega}}_{\overline{D}}.
    \end{equation*}
\end{definition}

\para Given $\lambda$ and $\Omega$ as above, we will also need that the semisimple local Langlands correspondence interpolates over $R^{\lambda,\Omega}_{\overline{D}}$.

 To discuss this, let $Z^1(W_K, L, \GL_n)_E$ denote the scheme, of finite type over $E$, of homomorphisms $W_K \to \GL_n$, trivial on $I_L$. Applying Fontaine's construction to the $(\varphi,N,L/K)$-module over $\cO(\mathcal{R}ep_{\overline{D}}^{\square, \textnormal{pst},[ a, b]})[1/p]$ constructed in \cite[\S 6]{WE18} and descending sections to $\cO(\mathcal{R}ep_{\overline{D}}^{ ?,[ a, b]})[1/p]$, as in the proof of Theorem \ref{thm_existence_of_pseudodeformation_ring_with_conditions},  shows that there is a homomorphism $\zeta \colon \cO(Z^1(W_K, L, \GL_n)_E)^{\GL_n} \to R_{\overline{D}}^{\lambda,\Omega}[1/p]$ such that for any $x \colon R_{\overline{D}}^{\lambda,\Omega} \to \overline{\mathbf{Q}}_p$ corresponding to a semisimple representation $\rho_x \colon G_K \to \GL_n(\overline{\mathbf{Q}}_p)$, the composite $x\circ\zeta$ classifies the representation $\mathrm{WD}(\rho_x)^{\textnormal{ss}}$.
 
  Let $\mathfrak{Z}_\Omega$ be the centre of $\Omega$. It is proven in \cite[Prop 3.11]{chenevier20} that there exists a pseudorepresentation $T \colon W_K \to \mathfrak{Z}_\Omega$ such that for every smooth irreducible representation $\pi$ in $\Omega$, therefore determining a specialisation $T(\pi) \colon W_K \to \overline{\mathbf{Q}}_p$, we have $T(\pi)$ is the pseudocharacter associated to $\rec_K^T(\pi)$. 
\begin{lemma}\label{Corollary_InperpolationOfSSLL}
    There is a ring homomorphism
    \[
        \eta \colon \mathfrak{Z}_\Omega \to R_{\overline{D}}^{\lambda,\Omega} \otimes_{\mathcal{O}} \overline{\rationals}_p
    \]
    such that for every $z \in \mathfrak{Z}_\Omega$ and every homomorphism $R_{\overline{D}}^{\lambda,\Omega} \to \overline{\mathbf{Q}}_p$, corresponding to a semisimple representation $\rho_x \colon G_K \to \GL_n(\overline{\mathbf{Q}}_p)$, we have 
    \[
        x(\eta(z)) = z( (\operatorname{rec}^T_K)^{-1}(\textnormal{WD}(\rho_x))^{\textnormal{F-ss}}).
    \]
\end{lemma}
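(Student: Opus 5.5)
We will build $\eta$ as a composite of two maps: the map $\zeta$ already constructed in the paragraph preceding the statement, and a map going from $\mathfrak{Z}_\Omega$ to invariant functions on the space of Weil group parameters.

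\textbf{Step 1: a map from $\mathfrak{Z}_\Omega$ to invariant functions on parameters.} The Bernstein centre $\mathfrak{Z}_\Omega$ is, by Bernstein--Deligne, the ring of regular functions on the variety of supercuspidal supports in $\Omega$. Via the semisimple local Langlands correspondence, this variety is a finite quotient of a torus of unramified twists. Points of $\Spec \cO(Z^1(W_K,L,\GL_n)_E)^{\GL_n}\otimes\overline{\mathbf{Q}}_p$ are semisimple $n$-dimensional representations of $W_K$, trivial on $I_L$. Among them, those whose restriction to inertia is $\tau_\Omega$ form a union of connected components, and by the semisimple LLC they are exactly the semisimplified parameters $\rec^T_K(\pi)^{\textnormal{ss}}$ for $\pi\in\Omega$.

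To turn this into an algebra map, we use Chenevier's pseudorepresentation $T\colon W_K\to\mathfrak{Z}_\Omega$ and argue in the opposite direction. $T$ determines a map $\cO(\text{pseudochar. of }W_K/I_L)\to\mathfrak{Z}_\Omega$. Because $\mathfrak{Z}_\Omega$ is reduced and finitely generated, and $T$ separates the points of $\Spec\mathfrak{Z}_\Omega$ (the semisimple parameter determines the supercuspidal support), this map identifies $\Spec\mathfrak{Z}_\Omega$ with the normalisation of a closed subvariety of the component $C_\Omega$ of $(Z^1\sslash\GL_n)_{\overline{\mathbf{Q}}_p}$ of type $\tau_\Omega$. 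In fact, an unramified-twist computation shows this is an isomorphism onto the normalisation of $C_\Omega$.

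\textbf{Step 2: factoring through the normalisation.} The map $x\mapsto \mathrm{WD}(\rho_x)^{\textnormal{ss}}$ induced by $\zeta$ sends $\Spec R^{\lambda,\Omega}_{\overline{D}}[1/p]$ into $C_\Omega$, by property (3) of Theorem \ref{thm_existence_of_pseudodeformation_ring_with_conditions}. Indeed, $\rho_x$ has inertial type $\tau_\Omega$, so $\mathrm{WD}(\rho_x)|_{I_K}\cong\tau_\Omega$. Composing with the idempotent cutting out $C_\Omega$ gives
\[
\cO(C_\Omega)\to R^{\lambda,\Omega}_{\overline{D}}\otimes_\cO\overline{\mathbf{Q}}_p .
\]

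It remains to extend this map to the normalisation $\mathfrak{Z}_\Omega$. Since $R^{\lambda,\Omega}_{\overline{D}}[1/p]$ is reduced, but not obviously normal, this step is the main obstacle. The first approach to try is to avoid normalisation altogether, by showing that $\mathfrak{Z}_\Omega$ is generated by the coefficients of $T$ together with inverses of determinants.

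Concretely, write $\Omega$ as the block of a type $\prod_i\sigma_i^{\otimes m_i}$, with the $\sigma_i$ pairwise inequivalent up to unramified twist. Then $\mathfrak{Z}_\Omega\cong\bigotimes_i\overline{\mathbf{Q}}_p[X_{i,1}^{\pm1},\dots,X_{i,m_i}^{\pm1}]^{S_{m_i}}$, where the variables are twisting parameters with $X^{t_i}$ equal to the twist torus coordinate, $t_i$ being the torsion number of $\sigma_i$.

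The idea is to decompose a semisimple parameter with inertial type $\tau_\Omega$ into isotypic pieces according to the $I_K$-isotypic decomposition. For each $i$, we would take the $\rec(\sigma_i)|_{I_K}$-isotypic part $V_i$ of the restriction to $I_K$, and then the characteristic polynomial of $\Frob^{f_i}$ on $\Hom_{I_K}(\rec(\sigma_i),V_i)$. Here $f_i$ is the order of the stabiliser of $\rec(\sigma_i)|_{I_K}$ in $W_K/I_K$. These characteristic polynomials have coefficients equal to the elementary symmetric functions in the $X_{i,j}$, up to a fixed constant, and therefore generate $\mathfrak{Z}_\Omega$.

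All of these operations make sense uniformly over $R^{\lambda,\Omega}_{\overline{D}}\otimes\overline{\mathbf{Q}}_p$. Two facts justify this. First, the Weil--Deligne module descended in the construction of $\zeta$ is a projective module with $I_L$-action over this reduced ring. Second, isotypic projectors for the finite group $I_{L/K}$ exist over characteristic zero. We would define $\eta$ on these generators as the corresponding coefficients.

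\textbf{Step 3: checking the formula.} The identity $x(\eta(z))=z\bigl((\rec^T_K)^{-1}(\mathrm{WD}(\rho_x))^{\textnormal{F-ss}}\bigr)$ then holds for generators by construction: both sides compute the same Frobenius characteristic polynomial on isotypic pieces. It extends to all $z$ because both sides are ring homomorphisms in $z$.

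Well-definedness of $\eta$ is the remaining point. If the generators satisfy a relation in $\mathfrak{Z}_\Omega$, then the image of that relation vanishes at every $\overline{\mathbf{Q}}_p$-point of the reduced Jacobson ring $R^{\lambda,\Omega}_{\overline{D}}\otimes\overline{\mathbf{Q}}_p$, and hence vanishes identically.
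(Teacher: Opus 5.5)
Your argument is correct in substance and has the same skeleton as the paper's proof. Both realise a generating set of $\mathfrak{Z}_\Omega$ inside $R^{\lambda,\Omega}_{\overline{D}}\otimes_{\cO}\overline{\mathbf{Q}}_p$ through $\zeta$, and then check relations pointwise using that this ring is reduced and Jacobson. The paper phrases the last step as injectivity of $h\colon R^{\lambda,\Omega}_{\overline{D}}\otimes_{\cO}\overline{\mathbf{Q}}_p\to\prod_x\overline{\mathbf{Q}}_p$, and sets $\eta=h^{-1}\circ h'$.

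The difference is the generating set.
\begin{itemize}
\item The paper uses the traces of Chenevier's pseudorepresentation $T\colon W_K\to\mathfrak{Z}_\Omega$. It quotes a lemma that these generate $\mathfrak{Z}_\Omega$ as a $\overline{\mathbf{Q}}_p$-algebra, and gets their images for free as $\zeta\circ U$, with $U$ the tautological pseudorepresentation. In particular the parameter side surjects onto $\mathfrak{Z}_\Omega$, so the normalisation problem in your Step 1 never arises, and Step 1 can simply be dropped.
\item You instead prove generation by hand, from Bernstein's presentation $\mathfrak{Z}_\Omega\cong\bigotimes_i\overline{\mathbf{Q}}_p[Y_{i,1}^{\pm1},\dots,Y_{i,m_i}^{\pm1}]^{S_{m_i}}$ and a computation of Frobenius on the multiplicity spaces $\Hom_{I_K}(\rec(\sigma_i),V_i)$. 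This is more self-contained, but the bookkeeping needs fixing.
\begin{itemize}
\item $f_i$ should be the \emph{index} of the stabiliser of an irreducible constituent of $\rec(\sigma_i)|_{I_K}$; it equals the torsion number $t_i$.
\item Frobenius acts on $\Hom_{I_K}(\rec(\sigma_i),\rec(\sigma_i)\otimes\chi)$ as $\chi(\Frob)$ times a cyclic permutation of order $f_i$. So the characteristic polynomial of $\Frob^{f_i}$ is $\prod_k(X-cY_{i,k})^{f_i}$, not $\prod_k(X-cY_{i,k})$. Its coefficients still generate the same algebra in characteristic $0$. Alternatively, use $\Frob$ itself, whose characteristic polynomial is $\prod_k(X^{f_i}-cY_{i,k})$.
\item You also need the inverse of the constant term as a generator.
\end{itemize}
\end{itemize}

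One justification is wrong as written: there is no Weil--Deligne module over $R^{\lambda,\Omega}_{\overline{D}}\otimes_{\cO}\overline{\mathbf{Q}}_p$. In the construction of $\zeta$, the $(\varphi,N,L/K)$-module lives over the framed ring $\cO(\mathcal{R}ep^{\square,\textnormal{pst},[a,b]}_{\overline{D}})[1/p]$, and only $\GL_n$-invariant functions are descended.

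You do not need such a module. Let $V$ be the universal representation over $Z^1(W_K,L,\GL_n)_{\overline{\mathbf{Q}}_p}$. The module $\Hom_{I_{L/K}}(\rec(\sigma_i),V)$ is cut out of a free module by an averaging idempotent. Hence the coefficients of the characteristic polynomial of Frobenius on it, and the inverse of its determinant, are $\GL_n$-invariant functions on the component of inertial type $\tau_\Omega$, i.e.\ elements of $\cO(C_\Omega)$. Define $\eta$ on your generators as their images under the map $\cO(C_\Omega)\to R^{\lambda,\Omega}_{\overline{D}}\otimes_{\cO}\overline{\mathbf{Q}}_p$ from your Step 2.

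By the averaging formula and Newton's identities, these functions are polynomials in the traces of $U$. So after this repair your argument is the paper's proof, with an explicit proof of the generation lemma in place of the citation.
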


\begin{proof}
    We argue in a similar way as \cite[Theorem 3.3]{Pyv21}.
    Let \[
    U \colon W_K \to \mathcal{O}(Z^1(W_K, \GL_n)_E)^{\GL_n}
    \]
    be the tautological pseudorepresentation. Let $h \colon R_{\overline{D}}^{\lambda,\Omega} \otimes_{\mathcal{O}} \overline{\rationals}_p \to \prod_{x} \overline{\rationals}_p$ be the natural map. Since $R_{\overline{D}}^{\lambda,\Omega}[1/p]$ is reduced and Jacobson by \cite[Lemma 4.17]{WE18}, it follows that $h$ is injective. Further, we define a map $h' \colon \mathfrak{Z}_\Omega \to \prod_{x} \overline{\rationals}_p$ by sending $z$ to its action on $(\operatorname{rec}^T_K)^{-1}(\mathrm{WD}(\rho_{x})^\textnormal{ss})$ for each $x$.

    It follows from the properties on $\overline{\rationals}_p$-points that $h \circ \zeta \circ U = h' \circ T$. It was proven in \cite[lemma 4.5]{6author} that the traces of $T$ generate the $\overline{\rationals}_p$-algebra $\mathfrak{Z}_\Omega$. Therefore, the image of $h'$ lies in the image of $h$ and we obtain the desired ring homomorphism. 
\end{proof}

\begin{para}\label{para_global_deformation_rings}
    We conclude \S\ref{sec_potentially_semistable_pseudodeformation_rings} by introducing the global deformation rings from \cite{WWE19} that we need in this article. Let $F$ be a number field and $\Sigma_p$ be a subset of the set $S_p(F)$ of  $p$-adic places of $F$. Fix a finite field extension $L_v/F_v$ for each $v\in \Sigma_p$ and a pair of integers $a\leq b$. Fix another finite set $S$ of finite places of $F$, containing $S_p(F)$, and a continuous $n$-dimensional determinant $\overline{D}$ of $G_{F, S}$ over $k$.
\end{para}

    \begin{remark} \label{rem:local_global_pseudoring}
    We warn the reader that, for our applications to proving de Rhamness of  automorphic Galois representations, working with, say, $R_{\overline{D}}\widehat{\otimes}_{R_{\overline{D}\mid_{G_{F_v}}}}R_{\overline{D}\mid_{G_{F_v}}}^{\textnormal{pst},\mathbf{v},\tau}$ does not suffice, even in the case when the global residual representation is irreducible. Namely, the $\overline{\mathbf{Q}}_p$-points
    \begin{equation*}
x \colon R_{\overline{D}}\widehat{\otimes}_{R_{\overline{D}\mid_{G_{F_v}}}}R_{\overline{D}\mid_{G_{F_v}}}^{\textnormal{pst},\mathbf{v},\tau}\to \overline{\mathbf{Q}}_p
    \end{equation*} 
    correspond to semisimple Galois representations $\rho_x:G_{F,S}\to \textnormal{GL}_n(\overline{\mathbf{Q}}_p)$ such that $(\rho_x|_{G_{F_v}})^{\textnormal{ss}}$ is de Rham with $p$-adic Hodge type $\mathbf{v}$ and inertial type $\tau$. As the category of de Rham representations is not closed under extensions, this does not show de Rhamness of $\rho_x|_{G_{F_v}}$.

    For example,\footnote{We are grateful to Frank Calegari for pointing out that our original example was not correct, and proposing a replacement.} consider the case $n = 2$, $F = \rationals$ and $p = 11$. Let $\rho_\Delta : G_\rationals \to \GL_2(\rationals_p)$ be the representation associated to Ramanujan's modular form $\Delta$. Then the residual representation $\overline{\rho}_\Delta$ has image  $\GL_2(\mathbf{F}_p)$ and is absolutely irreducible. Since $\Delta$ is ordinary at ${11}$, there is an isomorphism
    \[ \rho_\Delta|_{G_{\rationals_p}} \sim \left( \begin{array}{cc} \mathrm{ur}_\alpha & * \\ 0 & \epsilon^{-11} \mathrm{ur}_{\alpha^{-1}} \end{array} \right), \]
    where $\alpha \in \mathbf{Z}_p$ is the unit root of the Hecke polynomial 
    \[ X^2 - \tau(11) X + 11^{11} \equiv X(X-1) \text{ mod } 11.  \]
    There is a congruence $\Delta(z) \equiv f(z) := \eta(z)^2 \eta(11z)^2 \pmod{11}$ and the Galois representation $\rho_f$ is isomorphic to the dual of the Tate module of the elliptic curve $X_1(11)$. 
    Therefore, the restriction $\overline{\rho}_\Delta\rvert_{G_{\rationals_p}} \cong \overline{\rho}_f \rvert_{G_{\rationals_p}}$ is non-split because $X_1(11)$ has split multiplicative reduction at $11$ and its minimal discriminant is equal to $-11$. 

    Let $\rho : G_{\rationals} \to \GL_2(\mathbf{Z}_p)$ be the Galois representation attached to the weight $-8$ specialisation of the cuspidal $11$-adic Hida family through $\Delta$. Then there is an isomorphism
    \[ \rho|_{G_{\rationals_p}} \sim \left( \begin{array}{cc} \mathrm{ur}_\beta & * \\ 0 & \epsilon^{9} \mathrm{ur}_{\beta^{-1}} \end{array} \right) \]
    for some $\beta \in \mathbf{Z}_p$, and so $\rho$ determines a $\overline{\mathbf{Q}}_p$-point of the ring $R_{\overline{D}}\widehat{\otimes}_{R_{\overline{D}\mid_{G_{\rationals_p}}}}R_{\overline{D}\mid_{G_{\rationals_p}}}^{\textnormal{pst},\mathbf{v},\tau}$ (with $\overline{D} = D(\overline{\rho}_\Delta)$, $\tau = 1$, and $\mathbf{v} = \mathbf{v}_{(-1, -9)}$). It follows from \cite[Proposition 6]{ghate2005} that $\rho|_{G_{\rationals_p}}$ is non-split, and therefore not de Rham, since $H^1_g(\rationals_p, \mathrm{ur}_\beta^2 \epsilon^{-9}) = 0$. 
\end{remark}

   \begin{para}
As the Remark shows, it is not possible to prove our main local-global compatibility theorem by working only with local pseudodeformation rings with conditions. However, the structure of the degree-shifting argument means that they must play a role. (Roughly speaking, to study $n$-dimensional automorphic Galois representations we need to introduce $2n$-dimensional automorphic Galois representations arising from the unitary group; these do not have a canonical invariant $n$-dimensional invariant subspace, but they do after restriction to decomposition groups at the $p$-adic places of interest -- cf. the proof of Lemma \ref{lemma_interiorLGC}.) 

Ultimately, we are saved by the observation that if we have a $p$-adic, semisimple global representation $\rho : G_{F, S} \to \GL_n(\overline{\mathbf{Q}}_p)$ that is potentially semistable, and $(\rho|_{G_{F_v}})^{ss}$ has both the correct Hodge--Tate weights and the correct inertial type, then $\rho|_{G_{F_v}}$ also has the correct Hodge--Tate weights and correct inertial type. This observation is formalised in Lemma \ref{lem_inertial_type_of_semisimplified_representation} and informs the axiomatisation of the induction hypothesis underlying the degree-shifting argument that we give in Definition \ref{def_LGC}. 
\end{para}

   \begin{para}
     Given an Artinian local $\mathcal{O}$-algebra $(A,\mathfrak{m})$ with residue field $k$ and a (continuous) deformation $D:G_{F,S}\to A$ of $\overline{D}$, a Cayley--Hamilton representation $(\mathcal{E},\rho,D')$ lifting $D$ consists of a Cayley--Hamilton pseudorepresentation $D'\colon\mathcal{E}\to A$ in the sense of \cite[Definition 2.1.6]{WWE19} and a (continuous) group homomorphism $\rho:G_{F,S}\to \mathcal{E}^{\times}$ such that $D=D'\circ \rho$. In particular, $\mathcal{E}$ is a finitely generated $A$-module and so it is naturally an object of the category $\textnormal{Mod}_{\textnormal{fin}}(\mathbf{Z}_p[G_{F,S}])$ of continuous $G_{F,S}$-representations on $\mathbf{Z}_p$-modules of \textit{finite cardinality}.
\end{para}
     
    \begin{theorem}[Wake--Wang-Erickson]\label{Thm_WWE_global}
        For $? \in \{\textnormal{pcr},\textnormal{pst}\}$, there is a quotient $R_{\overline{D}}^{\Sigma_p,?,L_v,[a,b]}$ of $R_{\overline{D}}$ such that a point $x:R_{\overline{D}}\to A$ factors through $R_{\overline{D}}^{\Sigma_p,?,L_v,[a,b]}$ if and only if there is a Cayley--Hamilton representation $(\mathcal{E},\rho,D)$ lifting $D_x$ such that, for every $v\in \Sigma_p$, $\mathcal{E}$ satisfies $\mathcal{C}_{L_v/F_v,[a,b]}^{\textnormal{cr}}$ (resp. $\mathcal{C}_{L_v/F_v,[a,b]}^{\textnormal{st}}$) from \S\ref{para:CayleyHamilton}.

        Moreover, a point $x:R_{\overline{D}}\to \overline{\mathbf{Q}}_p$ factors through $R_{\overline{D}}^{\Sigma_p,?,L_v,[a,b]}$ if and only if, for the corresponding semisimple representation $\rho_x:G_{F,S}\to \textnormal{GL}_n(\overline{\mathbf{Q}}_p)$, $\rho_x|_{G_{L_v}}$ is crystalline (resp. semistable) with Hodge--Tate weights lying in $[a,b]$ for every $v\in \Sigma_p$.
    \end{theorem}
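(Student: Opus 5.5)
The natural approach is to deduce the global statement from the local results of \cite{WWE19} and the local quotients $\mathcal{E}^{\mathcal{C}}$ of \S\ref{para:CayleyHamilton}, exactly as in \cite[\S 2.5]{WWE19}. For each $v\in\Sigma_p$ the stable condition $\mathcal{C}_v:=\mathcal{C}^{?}_{L_v/F_v,[a,b]}$ is a condition on finite $\mathbf{Z}_p[G_{F_v}]$-modules. Restricting along $G_{F_v}\to G_{F,S}$, it defines a condition $\mathcal{C}_v^{\mathrm{glob}}$ on objects of $\textnormal{Mod}_{\textnormal{fin}}(\mathbf{Z}_p[G_{F,S}])$, and the intersection $\mathcal{C}=\bigcap_{v\in\Sigma_p}\mathcal{C}_v^{\mathrm{glob}}$ is the relevant global condition.

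The first step is to check that $\mathcal{C}$ is stable in the sense of \cite[\S 2.3]{WWE19}, i.e.\ closed under subobjects, quotients and finite direct sums. This holds because restriction to $G_{F_v}$ is exact and commutes with these operations, each $\mathcal{C}_v$ is stable, and an intersection of stable conditions is stable.

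With stability in hand, \cite[Theorem 2.5.5]{WWE19} applied to $G_{F,S}$ (whose residual determinant satisfies the required finiteness, since $G_{F,S}$ satisfies Mazur's $p$-finiteness condition) gives a quotient $R_{\overline{D}}\to R_{\overline{D}}^{\mathcal{C}}$ representing exactly the pseudodeformations admitting a Cayley--Hamilton representation satisfying $\mathcal{C}$. I would define $R_{\overline{D}}^{\Sigma_p,?,L_v,[a,b]}:=R_{\overline{D}}^{\mathcal{C}}$. Unwinding the definitions, this is the first assertion.

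For the second assertion, given $x:R_{\overline{D}}\to\overline{\mathbf{Q}}_p$ with image in $\cO'$, I would argue in two directions.

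\emph{The condition implies factorisation.} Suppose $\rho_x|_{G_{L_v}}$ is crystalline (resp.\ semistable) with weights in $[a,b]$. Choose a $G_{F,S}$-stable $\cO'$-lattice $T$. Then $\End_{\cO'}(T)/\mathfrak{m}^i$, with the image of $\cO'[G_{F,S}]$, is a Cayley--Hamilton representation lifting $D_x \bmod \mathfrak{m}^i$. One has to check it is a subquotient of lattices in de Rham representations that are crystalline/semistable over $L_v$ with weights in $[a,b]$. This needs care, because $\End(T)\cong T\otimes T^\vee$ does not preserve the weight range. So instead I would use $E'=\cO'[\rho_x(G)]$, the image algebra, which is a $G$-submodule of $T^{\oplus n}$ under left multiplication; this has weights in $[a,b]$. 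Each $x \bmod \mathfrak{m}^i$ then factors through $R^{\mathcal{C}}_{\overline{D}}$, and since the quotient is closed, so does $x$.

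\emph{Factorisation implies the condition.} This is the delicate direction, and I expect it to be the main obstacle. Given the factorisation, each $x\bmod\mathfrak{m}^i$ has such a Cayley--Hamilton representation, and it is tempting to conclude that $\rho_x$ is a limit of torsion crystalline/semistable objects and then apply Liu's theorem \cite{Liu07}. The catch is that the Cayley--Hamilton algebras may vary with $i$. To get around this, I would use the universal Cayley--Hamilton quotient $\mathcal{E}^{\mathcal{C}}$ over $R^{\mathcal{C}}_{\overline{D}}$, which is finite over it, and tensor it along $x$ to get $\mathcal{E}^{\mathcal{C}}\otimes\cO'$. This is a finite $\cO'$-module whose reductions modulo $\mathfrak{m}^i$ all satisfy $\mathcal{C}_v$ for every $v\in\Sigma_p$. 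Liu's theorem then shows that its torsion-free quotient, after inverting $p$, is semistable (resp.\ crystalline) over $L_v$ with weights in $[a,b]$. The semisimple representation $\rho_x$ is a subquotient of $(\mathcal{E}^{\mathcal{C}}\otimes\cO')[1/p]$, because the Cayley--Hamilton representation surjects onto $M_n(\overline{\mathbf{Q}}_p)$ semisimplified, and these $p$-adic Hodge theoretic conditions pass to subquotients. This mirrors \cite[Theorem 2.5.5 and \S 5]{WWE19}, which I would cite for the crystalline case.
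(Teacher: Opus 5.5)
Your proposal is correct and takes the same route as the paper. The paper's proof simply invokes \cite[Theorem 2.5.5]{WWE19} for the stable global condition obtained by intersecting the restricted local conditions (as in \cite[\S\S 5.2--5.3]{WWE19}), and delegates the description of $\overline{\mathbf{Q}}_p$-points to \cite[Remark 5.2.2]{WWE19}. Your argument supplies those details: the universal $\mathcal{C}$-quotient specialised at $x$, Liu's theorem, and the fact that each constituent of $\rho_x$ is a quotient of that specialisation.

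Two minor remarks. First, the Galois group acts on a Cayley--Hamilton algebra by left multiplication, so $(M_n(\cO'/\mathfrak{m}^i),\rho_x,\det)$, with $M_n(\cO')\cong T^{\oplus n}$, already satisfies the condition; the detour through the image algebra is therefore unnecessary. Second, $\cO'/\mathfrak{m}^i$ may have residue field larger than $k$, so you are using the universal property of the quotient slightly outside the category in which it is stated; this is routine but deserves a sentence.
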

    \begin{proof}
        The first part follows from \cite[Theorem 2.5.5]{WWE19}, \S5.2 and \S5.3. The second part follows from \cite{WWE19}, Remark 5.2.2.
    \end{proof}

\begin{para}
        Let $R_{\overline{D}}^{\textnormal{loc}} = \widehat{\bigotimes}_{v \in \Sigma_p} R_{\overline{D}\rvert_{G_{F_v}}}$.
        For $? \in \{\textnormal{pcr}, \textnormal{pst}\}$ and a tuple of highest weight vectors and Bernstein blocks $(\lambda_v, \Omega_v)_{v \in \Sigma_p}$, let
        $R_{\overline{D}}^{\Sigma_p,?,(\lambda_v, \Omega_v)}$ be the maximal $p$-torsion free, reduced quotient of 
        $R_{\overline{D}}^{\Sigma_p, ?, L_v, [a,b]} \otimes_{R_{\overline{D}}^{\textnormal{loc}} } \widehat{\bigotimes}_{v \in \Sigma_p} R_{\overline{D} \rvert_{G_{F_v}}}^{?, \lambda_v, \Omega_v}$. 
    \end{para}
    \begin{lemma}\label{lemma_semisimplification_of_deRham_reps}
    Let $K/\mathbf{Q}_p$ be a finite field extension,
    and $\rho:G_K\to \textnormal{GL}_n(\overline{\mathbf{Q}}_p)$ be a continuous de Rham Galois representation. The following hold true.
    \begin{enumerate}
        \item For every $\tau:K\hookrightarrow \overline{\mathbf{Q}}_p$, $\textnormal{HT}_{\tau}(\rho)=\textnormal{HT}_{\tau}(\rho^{\textnormal{ss}})$.
        \item There is an isomorphism
        \begin{equation*}
            \textnormal{WD}(\rho)^{\textnormal{ss}}\cong \textnormal{WD}(\rho^{\textnormal{ss}})^{\textnormal{ss}}.
        \end{equation*}
    \end{enumerate}
\end{lemma}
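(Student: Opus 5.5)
The plan is to reduce both statements to the exactness of Fontaine's functors on de Rham (equivalently, potentially semistable) representations. Choose a finite Galois extension $L/K$ such that $\rho|_{G_L}$ is semistable. Subquotients of semistable representations are semistable, so every Jordan--H\"older constituent of $\rho$ is potentially semistable, hence de Rham. In particular $\rho^{\textnormal{ss}}$ is de Rham and $\textnormal{WD}(\rho^{\textnormal{ss}})$ is defined. Fix a filtration $0 = V_0 \subset V_1 \subset \dots \subset V_r = V$ of the underlying space of $\rho$ by $G_K$-stable subspaces with irreducible graded pieces $W_i = V_i/V_{i-1}$, so that $\rho^{\textnormal{ss}} \cong \bigoplus_i W_i$.

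For part (1), I will use that $D_{\textnormal{dR}}$ is exact on the category of de Rham representations and is compatible with filtrations. Concretely, the functor $V \mapsto (V \otimes_{\tau,K} B_{dR})^{G_K}$, with its induced filtration, is exact on de Rham representations, and the resulting sequences of filtered vector spaces are strict. Therefore $\dim \textnormal{gr}^j$ is additive along the filtration $V_\bullet$. This gives $\textnormal{HT}_\tau(\rho) = \bigsqcup_i \textnormal{HT}_\tau(W_i) = \textnormal{HT}_\tau(\rho^{\textnormal{ss}})$ as multisets. Strictness follows because in an exact sequence of de Rham representations, dimension counting forces $\textnormal{gr}^j$ to be exact. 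Alternatively, one can simply observe that $\mathbf{C}_p$-Hodge--Tate decompositions are additive in short exact sequences of Hodge--Tate representations.

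For part (2), recall that $\textnormal{WD}(\rho)$ is built from $D_{\textnormal{pst}}(\rho) = \varinjlim_{L}(V \otimes B_{\textnormal{st}})^{G_L}$ with its $(\varphi, N, \Gal(L/K))$-structure. The functor $D_{\textnormal{pst}}$ is exact on potentially semistable representations: it is left exact, and dimensions are additive because $\dim D_{\textnormal{pst}} = \dim V$ on this category. Fontaine's recipe taking $(\varphi,N,\Gal(L/K))$-modules to Weil--Deligne representations is linear algebra (twisting the Weil action by $\varphi^{-f\cdot v(\cdot)}$ and restricting scalars along a fixed embedding), and it is also exact. Hence $\textnormal{WD}(\rho)$ carries a filtration by sub-Weil--Deligne representations with graded pieces $\textnormal{WD}(W_i)$.

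Semisimplification in the sense of forgetting $N$ and semisimplifying the $W_K$-action depends only on the Jordan--H\"older constituents as $W_K$-representations. This gives
\[ \textnormal{WD}(\rho)^{\textnormal{ss}} \cong \bigoplus_i \textnormal{WD}(W_i)^{\textnormal{ss}} \cong \textnormal{WD}(\rho^{\textnormal{ss}})^{\textnormal{ss}}. \]

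The only point needing care is the exactness of $D_{\textnormal{pst}}$ (and of $D_{\textnormal{dR}}$ with strict filtrations) on the potentially semistable category. I would cite Fontaine's results here: these categories are abelian subcategories closed under subquotients, and the functors are exact, indeed tensor equivalences onto admissible filtered $(\varphi,N,\Gal(L/K))$-modules.
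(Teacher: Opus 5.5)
Your proposal is correct and follows essentially the same route as the paper: both claims are deduced from the exactness of Fontaine's functors on the relevant categories of admissible representations. The paper phrases (1) via exactness of $D_{\textnormal{HT}}$ on Hodge--Tate representations, which is the alternative you mention. It phrases (2) via exactness of $D_{\textnormal{st}}$ on semistable representations (implicitly after restricting to $G_L$), which amounts to the same thing as your use of $D_{\textnormal{pst}}$ followed by Fontaine's linear-algebra recipe.
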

\begin{proof}
    The first claim follows from $D_{\textnormal{HT}}$ being exact on the category of Hodge--Tate representations (see for instance \cite[Proposition 2.4.4]{conrad_brinon_notes}). The second claim follows from $D_{\textnormal{st}}$ being exact on the category of semistable (in other words $B_{\textnormal{st}}$-admissible) representations (see for instance \cite[Theorem 5.2.1]{conrad_brinon_notes} with $B=B_{\textnormal{st}}$). 
\end{proof}

    \begin{lemma}\label{lem_inertial_type_of_semisimplified_representation}
        An $\mathcal{O}$-algebra homomorphism 
        $x \colon R_{\overline{D}}^{\Sigma_p,?,L_v,[a,b]} \to \overline{\rationals}_p$ factors through $R_{\overline{D}}^{\Sigma_p,?,(\lambda_v, \Omega_v)}$ if and only if the corresponding semisimple representation $\rho_x \colon G_{F,S} \to \GL_n(\overline{\rationals}_p)$ (which is potentially semistable by Theorem \ref{Thm_WWE_global}), satisfies that for each $v \in \Sigma_p$, $\rho_x \rvert_{G_{F_v}}$ has $p$-adic Hodge type $\mathbf{v}_{\lambda_v}$ and inertial type $\tau_{\Omega_v}$.
    \end{lemma}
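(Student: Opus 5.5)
The plan is to unwind the definition of $R_{\overline{D}}^{\Sigma_p,?,(\lambda_v,\Omega_v)}$ as the maximal reduced $p$-torsion-free quotient of
\[ R_{\overline{D}}^{\Sigma_p,?,L_v,[a,b]} \otimes_{R_{\overline{D}}^{\textnormal{loc}}} \widehat{\bigotimes}_{v\in\Sigma_p} R^{?,\lambda_v,\Omega_v}_{\overline{D}|_{G_{F_v}}} \]
and to reduce everything to the local statement of Theorem \ref{thm_existence_of_pseudodeformation_ring_with_conditions}(3), using Lemma \ref{lemma_semisimplification_of_deRham_reps} to pass between $\rho_x|_{G_{F_v}}$ and its semisimplification.

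First I will note that passing to the maximal reduced $p$-torsion-free quotient does not change the set of $\overline{\mathbf{Q}}_p$-points. A homomorphism to $\overline{\mathbf{Q}}_p$ kills nilpotents and $p$-power torsion, since $\overline{\mathbf{Q}}_p$ is a reduced field of characteristic $0$. Next, a point $x$ of $R_{\overline{D}}^{\Sigma_p,?,L_v,[a,b]}$ factors through the tensor product if and only if, for each $v\in\Sigma_p$, the composite $R_{\overline{D}|_{G_{F_v}}}\to R_{\overline{D}}\xrightarrow{x}\overline{\mathbf{Q}}_p$ factors through $R^{?,\lambda_v,\Omega_v}_{\overline{D}|_{G_{F_v}}}$. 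This is immediate from the universal property of the (completed) tensor product, because each local map is a quotient. Some care is needed here: a point of a completed tensor product with values in $\overline{\mathbf{Q}}_p$ should be taken to have image in $\mathcal{O}'$ for some finite $E'/E$. Since all rings involved are complete Noetherian local, continuity is automatic on $\mathcal{O}'$-points, so the completed and uncompleted tensor products have the same such points.

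The composite local point is the one attached to the determinant $D_x|_{G_{F_v}}$. Its associated semisimple representation is $(\rho_x|_{G_{F_v}})^{\textnormal{ss}}$, not $\rho_x|_{G_{F_v}}$ itself. By Theorem \ref{thm_existence_of_pseudodeformation_ring_with_conditions}(3), the local factorisation therefore holds if and only if $(\rho_x|_{G_{F_v}})^{\textnormal{ss}}$ is potentially crystalline (resp. semistable) of Hodge type $\mathbf{v}_{\lambda_v}$ and inertial type $\tau_{\Omega_v}$.

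It remains to transfer these conditions to $\rho_x|_{G_{F_v}}$, and this is the only substantive step. By Theorem \ref{Thm_WWE_global}, $\rho_x|_{G_{L_v}}$ is crystalline (resp. semistable), so $\rho_x|_{G_{F_v}}$ is de Rham. Lemma \ref{lemma_semisimplification_of_deRham_reps}(1) then shows that the Hodge–Tate weights of $\rho_x|_{G_{F_v}}$ and of its semisimplification agree, and Hodge type $\mathbf{v}_{\lambda_v}$ is exactly the condition on these multisets of weights.

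For the inertial type, Lemma \ref{lemma_semisimplification_of_deRham_reps}(2) gives $\mathrm{WD}(\rho_x|_{G_{F_v}})^{\textnormal{ss}}\cong \mathrm{WD}((\rho_x|_{G_{F_v}})^{\textnormal{ss}})^{\textnormal{ss}}$. The inertial type is the restriction of the Weil–Deligne representation to $I_{F_v}$ with $N$ forgotten. Since $I_{F_v}$ acts through a finite quotient, this restriction is semisimple and unchanged by semisimplification. Hence the inertial types of $\rho_x|_{G_{F_v}}$ and of $(\rho_x|_{G_{F_v}})^{\textnormal{ss}}$ coincide.

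In the crystalline case, "potentially crystalline of type $\tau$" should be read as in Kisin, meaning a $\tau$-type with $N=0$. The main obstacle is that the $N=0$ condition is not preserved under passing from the semisimplification back to $\rho_x|_{G_{F_v}}$, so I would not try to transfer it directly. Instead, for $?=\textnormal{pcr}$, I expect it to come from $\rho_x|_{G_{L_v}}$ already being crystalline by Theorem \ref{Thm_WWE_global}, which forces $N=0$ on $\mathrm{WD}(\rho_x|_{G_{F_v}})$. With that, both directions of the equivalence follow.
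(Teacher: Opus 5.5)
Your proposal is correct and follows the paper's argument: you unwind the definition to reduce, via Theorem \ref{thm_existence_of_pseudodeformation_ring_with_conditions}(3), to conditions on $(\rho_x|_{G_{F_v}})^{\textnormal{ss}}$, and then drop the semisimplification using Lemma \ref{lemma_semisimplification_of_deRham_reps}. For $?=\textnormal{pcr}$, the paper's one-line proof leaves implicit that crystallinity of $\rho_x|_{G_{L_v}}$ must come from Theorem \ref{Thm_WWE_global} rather than being transferred from the semisimplification; you make this explicit and handle it correctly.
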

 
    \begin{proof}
        By definition $x$ factors through $R_{\overline{D}}^{\Sigma_p,?,(\lambda_v, \Omega_v)}$ if and only if for all $v \in \Sigma_p$, the \emph{semisimplification} of $\rho_x \rvert_{G_{F_v}}$ has $p$-adic Hodge type $\mathbf{v}_{\lambda_v}$ and inertial type $\tau_{\Omega_v}$.
        It follows from Lemma \ref{lemma_semisimplification_of_deRham_reps} that we can drop the semisimplification here and the claim follows.
    \end{proof}

    \begin{corollary}
        For each $v \in \Sigma_p$, there is a map $\eta : \mathfrak{Z}_{\Omega_v} \to R_{\overline{D}}^{\Sigma_p,?,(\lambda_v, \Omega_v)} \otimes_{\mathcal{O}} \overline{\rationals}_p$, such that for every $z \in \mathfrak{Z}_{\Omega_v}$ and every homomorphism $x \colon R_{\overline{D}}^{\Sigma_p,?,(\lambda_v, \Omega_v)} \to \overline{\mathbf{Q}}_p$, corresponding to a semisimple representation $\rho_x \colon G_{F,S} \to \GL_n(\overline{\mathbf{Q}}_p)$, we have 
    \[
        x(\eta(z)) = z( (\operatorname{rec}^T_{F_v})^{-1}(\textnormal{WD}(\rho_x \rvert_{G_{F_v}}))^{\textnormal{F-ss}}).
    \]
    \end{corollary}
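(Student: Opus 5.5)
The plan is to obtain $\eta$ for the global ring by composing the local interpolation map of Lemma \ref{Corollary_InperpolationOfSSLL} with the natural map from the local ring at $v$ to the global ring. Concretely, the global ring $R_{\overline{D}}^{\Sigma_p,?,(\lambda_v, \Omega_v)}$ is by definition a quotient of
\[ R_{\overline{D}}^{\Sigma_p, ?, L_v, [a,b]} \otimes_{R_{\overline{D}}^{\textnormal{loc}}} \widehat{\bigotimes}_{w \in \Sigma_p} R_{\overline{D} \rvert_{G_{F_w}}}^{?, \lambda_w, \Omega_w}. \]
This gives a natural $\mathcal{O}$-algebra map
\[ \iota_v \colon R_{\overline{D}|_{G_{F_v}}}^{\lambda_v,\Omega_v} \to R_{\overline{D}}^{\Sigma_p,?,(\lambda_v, \Omega_v)}, \]
via the $v$-th tensor factor. (For $?=\textnormal{pcr}$ the local factor is the pcr ring; this surjects from the pst ring, or one can simply run Lemma \ref{Corollary_InperpolationOfSSLL} with pcr in place of pst, since the same construction of $\zeta$ applies.) I would then define
\[ \eta := (\iota_v \otimes \overline{\mathbf{Q}}_p) \circ \eta_v, \]
where $\eta_v \colon \mathfrak{Z}_{\Omega_v} \to R^{\lambda_v,\Omega_v}_{\overline{D}|_{G_{F_v}}} \otimes_{\mathcal{O}} \overline{\mathbf{Q}}_p$ is the map of Lemma \ref{Corollary_InperpolationOfSSLL}.

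The main check is the identification of points. Take $x \colon R_{\overline{D}}^{\Sigma_p,?,(\lambda_v,\Omega_v)} \to \overline{\mathbf{Q}}_p$ with associated semisimple $\rho_x$. Then $x \circ \iota_v$ is a $\overline{\mathbf{Q}}_p$-point of the local ring, and it corresponds to the semisimple local representation $(\rho_x|_{G_{F_v}})^{\textnormal{ss}}$. This holds because the map $R_{\overline{D}|_{G_{F_v}}} \to R_{\overline{D}}$ is induced by restricting the universal determinant, and a determinant determines only the semisimplification.

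Lemma \ref{Corollary_InperpolationOfSSLL} therefore gives
\[ x(\eta(z)) = z\big((\operatorname{rec}^T_{F_v})^{-1}(\textnormal{WD}((\rho_x|_{G_{F_v}})^{\textnormal{ss}}))^{\textnormal{F-ss}}\big). \]
To replace $(\rho_x|_{G_{F_v}})^{\textnormal{ss}}$ by $\rho_x|_{G_{F_v}}$, I would proceed in two steps:
\begin{enumerate}
\item Lemma \ref{lem_inertial_type_of_semisimplified_representation} (together with Theorem \ref{Thm_WWE_global}) shows that $\rho_x|_{G_{F_v}}$ is itself de Rham.
\item Lemma \ref{lemma_semisimplification_of_deRham_reps}(2) then gives
\[ \textnormal{WD}(\rho_x|_{G_{F_v}})^{\textnormal{ss}} \cong \textnormal{WD}((\rho_x|_{G_{F_v}})^{\textnormal{ss}})^{\textnormal{ss}}. \]
\end{enumerate}

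The step I expect to be the main obstacle is that the Bernstein centre element $z$ acts on a representation $\pi$ through its supercuspidal support. So $z(\pi)$ depends only on $\rec^T(\pi)^{\textnormal{ss}}$, meaning the Weil-group part with $N$ forgotten, up to semisimplification. Both Weil--Deligne representations above have the same semisimplification, so their images under $(\operatorname{rec}^T_{F_v})^{-1}$ share supercuspidal support, and $z$ takes the same value on them. This also makes the right-hand side of the statement independent of the monodromy operator. With that, the displayed identity holds for every $z$ and every $x$, which is the claim.
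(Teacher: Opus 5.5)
Your proposal is correct and follows the route the paper intends; the paper states this corollary without proof, as an immediate consequence of Lemma \ref{Corollary_InperpolationOfSSLL} together with Lemmas \ref{lem_inertial_type_of_semisimplified_representation} and \ref{lemma_semisimplification_of_deRham_reps}. You compose the local map $\eta$ with the map from the $v$-th local factor, then use that $\rho_x|_{G_{F_v}}$ is de Rham to pass from $(\rho_x|_{G_{F_v}})^{\textnormal{ss}}$ to $\rho_x|_{G_{F_v}}$, and your remark that $z$ only depends on the supercuspidal support (hence on $\textnormal{WD}^{\textnormal{ss}}$) makes explicit the step the paper leaves implicit.
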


    \begin{corollary} \label{cor:local_global_deformation_ring}
        For $v \in \Sigma_p$, let $\pi_v$ be smooth irreducible representations of $\GL_n(F_v)$ lying in $\Omega_v$. Suppose that $x \colon R_{\overline{D}}^{\Sigma_p,?,(\lambda_v, \Omega_v)} \to \overline{\rationals}_p$ is a point such that for each $v \in \Sigma_p$, we have a commutative diagram
        \[
            \begin{tikzcd}
            \mathfrak{Z}_{\Omega_v} \arrow{r} \arrow{dr}[swap]{[\pi_v]} & R_{\overline{D}}^{\Sigma_p,?,(\lambda_v, \Omega_v)} \otimes_{\mathcal{O}} \overline{\rationals}_p \arrow{d}{x} \\
            & \overline{\rationals}_p,
            \end{tikzcd}
        \]
        where $z \in \mathfrak{Z}_{\Omega_v}$ acts on $\pi_v$ as multiplication by $[\pi_v](z)$. Then the corresponding representation $\rho_x \colon G_{F,S} \to \GL_n(\overline{\rationals}_p)$ satisfies for each $v \in \Sigma_p$:
        \begin{enumerate}[(1)]
            \item The representation $\rho_x \rvert_{G_{L_v}}$ is potentially crystalline (resp. semistable) with $p$-adic Hodge type $\mathbf{v}_{\lambda_v}$ and inertial type $\tau_{\Omega_v}$.
            \item We have $\rec_{F_v}^T(\pi_v)^{\textnormal{ss}} \cong \textnormal{WD}(\rho_x\rvert_{G_{F_v}})^{\textnormal{ss}}$.
        \end{enumerate}
    \end{corollary}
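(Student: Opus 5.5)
Part (1) is the easy half. By Lemma~\ref{lem_inertial_type_of_semisimplified_representation}, any $\overline{\mathbf{Q}}_p$-point $x$ of $R_{\overline{D}}^{\Sigma_p,?,(\lambda_v, \Omega_v)}$ gives a semisimple $\rho_x$ such that, for every $v\in\Sigma_p$, $\rho_x|_{G_{F_v}}$ has $p$-adic Hodge type $\mathbf{v}_{\lambda_v}$ and inertial type $\tau_{\Omega_v}$. Theorem~\ref{Thm_WWE_global} shows that $\rho_x|_{G_{L_v}}$ is crystalline (resp.\ semistable), since $x$ factors through $R_{\overline{D}}^{\Sigma_p,?,L_v,[a,b]}$. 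Together these give (1). Note that (1) does not use the commutative diagram at all.

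For part (2), the plan is to compare the two characters of the Bernstein centre $\mathfrak{Z}_{\Omega_v}$: the character $x\circ\eta$ and the character $[\pi_v]$.
\begin{enumerate}
\item Apply the preceding corollary. For every $z\in\mathfrak{Z}_{\Omega_v}$ it gives
\[ x(\eta(z)) = z\bigl((\rec^T_{F_v})^{-1}(\mathrm{WD}(\rho_x|_{G_{F_v}}))^{\textnormal{F-ss}}\bigr). \]
Set $\pi'_v := (\rec^T_{F_v})^{-1}\bigl(\mathrm{WD}(\rho_x|_{G_{F_v}})^{\textnormal{F-ss}}\bigr)$, an irreducible smooth representation of $\GL_n(F_v)$.
\item Check that $\pi'_v$ lies in $\Omega_v$. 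Its inertial type is $\tau_{\Omega_v}$ by part (1), and the inertial type determines the Bernstein block for $\GL_n$. This is why $z$ acts on $\pi'_v$ by a scalar, so the expression in step~1 makes sense.
\item Use the commutative diagram. It gives $[\pi'_v]=[\pi_v]$ as characters of $\mathfrak{Z}_{\Omega_v}$.
\item Conclude that $\pi_v$ and $\pi'_v$ have the same supercuspidal support. Characters of the Bernstein centre of a block correspond bijectively to supercuspidal supports in that block (Bernstein--Deligne).
\item Translate to the Galois side. Irreducible representations with the same supercuspidal support have isomorphic semisimplified $L$-parameters under $\rec^T$, since $\rec^T(\pi)^{\textnormal{ss}}$ is the direct sum of the parameters of the supercuspidal support. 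Hence
\[ \rec^T_{F_v}(\pi_v)^{\textnormal{ss}}\cong \rec^T_{F_v}(\pi'_v)^{\textnormal{ss}}=\mathrm{WD}(\rho_x|_{G_{F_v}})^{\textnormal{ss}}. \]
Here we use that Frobenius-semisimplification followed by semisimplification is just semisimplification.
\end{enumerate}

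The only real subtlety is the bookkeeping in steps 4 and 5. The relevant facts are standard: the Bernstein centre of a block is the ring of regular functions on the variety of supercuspidal supports in that block, and $\rec^T$ is compatible with supercuspidal support. So no serious obstacle is expected.
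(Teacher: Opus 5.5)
Your proof is correct and follows the route the paper intends. The paper states this corollary without proof, as an immediate consequence of Lemma~\ref{lem_inertial_type_of_semisimplified_representation}, Theorem~\ref{Thm_WWE_global} and the preceding corollary constructing $\eta$. You obtain part (1) from the description of the $\overline{\mathbf{Q}}_p$-points, and part (2) by identifying the two characters $x\circ\eta$ and $[\pi_v]$ of $\mathfrak{Z}_{\Omega_v}$, which fixes the supercuspidal support and hence $\rec^T_{F_v}(\cdot)^{\textnormal{ss}}$.
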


\section{Some local representation theory}\label{sec_local_representation_theory}
This section prepares the proof of Proposition \ref{Proposition_EisFunc}, which describes the Hecke eigensystems appearing in the Betti cohomology of Borel–Serre strata of $\mathrm{Res}_{F/\mathbf{Q}}\mathrm{GL}_n$-locally symmetric spaces in terms of the cohomology of their Levi factors.

Understanding the local systems arising in this description reduces to the study of certain continuous group cohomology groups of $\mathbf{Z}_p$-lattices in locally algebraic types. After inverting $p$, the smooth part of such a type is given by a Bushnell–Kutzko type, while the algebraic part is given by a highest weight representation. The corresponding descriptions are provided by Proposition \ref{Proposition_BushnellKutzkoTypesIso} and Theorem \ref{Theorem_Kostant}. In Lemma \ref{Lemma_changeofweightI} and Lemma \ref{Lemma_changeofweightII}, we show that, up to ignoring $p^N$-torsion (with exponent depending only on the choices made in \S\ref{Subsection_integralstructures}), the integral local systems that arise agree with those predicted by the characteristic $0$ formulas of \S\ref{Subsubsection_smooth_char0} and \S\ref{Subsection_locallyalgreps}.

Describing the Hecke action at $p$ is more subtle. To do this, in Lemma \ref{Lemma_explicitdescriptionofgenerators} we single out a collection of Hecke operators arising from Bushnell--Kutzko’s generalisation of Bernstein’s presentation for which such a description is possible, and in Lemma \ref{Lemma_centercontainedinBernsteinsalgebra} we show that these operators suffice to control the action of the Bernstein centre. In \S\ref{Subsection_integralstructures}, we rescale these operators to make them compatible with the chosen lattices in our locally algebraic types. Finally, in \S\ref{subsection_smooth_modp}, we carry out the necessary computations in the derived category of smooth $p$-power torsion representations needed to compare the action of this collection of Hecke operators with the characteristic $0$ formula appearing in Remark \ref{Remark_MinusculeSatakeTransform}.

\subsection{Smooth representations with characteristic \texorpdfstring{$0$}{0}  coefficients}\label{Subsubsection_smooth_char0}
Let $L/\mathbf{Q}_p$ be a finite field extension, $\textnormal{G}:=\textnormal{GL}_n(L)$. For every integer $m\geq 1$ and supercuspidal Bernstein block $\Omega$ of $\textnormal{Rep}_{\textnormal{sm}}(\textnormal{GL}_m(L),\overline{\mathbf{Q}}_p)$, we fix a representative $(\textnormal{GL}_m(L),\pi_{\Omega})$ once and for all. Set $\textnormal{SC}_0$ to be the set of these supercuspidal representations for each supercuspidal block $\Omega$ of $\textnormal{Rep}_{\textnormal{sm}}(\textnormal{GL}_m(L),\overline{\mathbf{Q}}_p)$ for some $m\geq 1$. We also fix a strict ordering $<$ on $\textnormal{SC}_0$. 

Let $\Omega$ be a Bernstein block of $\textnormal{Rep}_{\textnormal{sm}}(\textnormal{G},\overline{\mathbf{Q}}_p)$. 
Recall that a pair $(J,\kappa)$ of a compact open subgroup $J\leq \textnormal{G}$ and an irreducible $\overline{\mathbf{Q}}_p$-representation $\kappa$ is called an $\Omega$-type if for $\pi$ a smooth $\overline{\mathbf{Q}}_p$-representation of $\textnormal{G}$, $\pi$ lies in $\Omega$ if and only if it is generated by its $\kappa$-typical vectors as a $\overline{\mathbf{Q}}_p[\textnormal{G}]$-module.

The work of Bushnell--Kutzko \cite{BK93}, \cite{BK98}, \cite{BK99} allows us to define an $\Omega$-type $(J,\kappa)$ with $J\leq K:=\textnormal{GL}_n(\mathcal{O}_L)$ as follows. By \cite[Theorem 8.4.1]{BK93}, for every $\pi_0\in \textnormal{SC}_0$, we can and do fix a maximal simple type $(J_0,\kappa_0)$ with $J_0\leq \textnormal{GL}_m(\mathcal{O}_L)$ appearing in $\pi_0$. In particular, by \cite[Corollary 6.2.3]{BK93}, if we write $(\textnormal{M},\pi)=(\textnormal{GL}_{n_1}(L)\times...\times \textnormal{GL}_{n_h}(L),\pi_1\otimes...\otimes \pi_h)$ for the unique representative of $\Omega$ with $\pi_i\in \textnormal{SC}_0$ and $\pi_1\leq...\leq\pi_h$, we obtain a well-defined $[\textnormal{M},\pi]_{\textnormal{M}}$-type $(J_{\textnormal{M}},\kappa_{\textnormal{M}})$.

Let $\textnormal{M}\leq \widetilde{\textnormal{M}}\leq \textnormal{G}$ be the unique smallest Levi subgroup containing the $N_\textnormal{G}(\textnormal{M})$-stabiliser of $[\textnormal{M},\pi]_{\textnormal{M}}$. By \cite[Theorem 7.2.17]{BK93} (see also \cite[Proposition 1.4]{BK99}), we can and do fix an $\widetilde{\textnormal{M}}$-cover $(J_{\widetilde{\textnormal{M}}},\kappa_{\widetilde{\textnormal{M}}})$ of $(J_{\textnormal{M}},\kappa_{\textnormal{M}})$ (in the sense of \cite{BK99}, p.55).

Finally, the main result of \cite{BK99} is the construction of a $\textnormal{G}$-cover $(J,\kappa)$ of $(J_{\widetilde{\textnormal{M}}},\kappa_{\widetilde{\textnormal{M}}})$ that we call the \textit{Bushnell--Kutzko type} for the block $\Omega$. 

Given the Bushnell--Kutzko type $(J,\kappa)$ for $\Omega$ we set 
\begin{equation*}
    \sigma_{\Omega}:=\textnormal{c-Ind}_J^K\kappa\in \textnormal{Rep}_{\textnormal{sm}}(K,\overline{\mathbf{Q}}_p).
\end{equation*} 
For a parabolic subgroup $\textnormal{P}=\textnormal{M}\textnormal{N}\subset \textnormal{G}$, we write $K_{\textnormal{P}}=K\cap \textnormal{P}$, $K_{\textnormal{M}}:=K\cap \textnormal{M}$ and $K_{\textnormal{N}}:=K\cap\textnormal{N}$. Moreover, we set $\sigma_{\textnormal{P},\Omega}:=\Gamma(K_{\textnormal{N}},\sigma_{\Omega})\in \textnormal{Rep}_{\textnormal{sm}}(K_{\textnormal{M}},\overline{\mathbf{Q}}_p)$.

\begin{lemma}\label{BKlemma}
    Given a smooth irreducible $\overline{\mathbf{Q}}_p$-representation $\pi$ of $\textnormal{M}$, the following are equivalent.
    \begin{enumerate}
            \item The set $\Hom_{K_{\textnormal{M}}}(\sigma_{\textnormal{P},\Omega},\pi)$ is non-zero.
        \item The representation $\textnormal{Ind}_{\textnormal{P}}^{\textnormal{G}}\pi$ lies in the block $\Omega$.
    \end{enumerate}
\end{lemma}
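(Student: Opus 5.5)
Throughout, $\textnormal{Ind}$ denotes normalised parabolic induction (as in Lemma \ref{BKlemma}), and $\textnormal{P}^-=\textnormal{M}\textnormal{N}^-$ is the opposite parabolic. The plan is to reduce both conditions to a statement about $\kappa$-isotypic vectors via Frobenius reciprocity, and then use the cover property of $(J,\kappa)$.

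\textbf{Step 1.} Mackey/Frobenius reciprocity gives
\[ \Hom_{K}(\sigma_\Omega,\textnormal{Ind}_{\textnormal{P}}^{\textnormal{G}}\pi)\cong \Hom_J(\kappa,\textnormal{Ind}_{\textnormal{P}}^{\textnormal{G}}\pi). \]
Since $\textnormal{G}=\textnormal{P}K$ (Iwasawa), restriction to $K$ identifies $\textnormal{Ind}_{\textnormal{P}}^{\textnormal{G}}\pi|_K$ with $\textnormal{Ind}_{K_{\textnormal{P}}}^K(\pi|_{K_{\textnormal{M}}})$, where $K_{\textnormal{N}}$ acts trivially. Frobenius reciprocity for this compact induction, using that $\sigma_\Omega$ is finite-dimensional and hence self-dual up to contragredient, then gives
\[ \Hom_K(\sigma_\Omega,\textnormal{Ind}_{\textnormal{P}}^{\textnormal{G}}\pi)\cong\Hom_{K_{\textnormal{P}}}(\sigma_\Omega,\pi)=\Hom_{K_{\textnormal{M}}}((\sigma_\Omega)_{K_{\textnormal{N}}},\pi). \]
Because we are in characteristic $0$ and $K_{\textnormal{N}}$ is pro-$p$, the coinvariants are canonically isomorphic to the invariants $\Gamma(K_{\textnormal{N}},\sigma_\Omega)=\sigma_{\textnormal{P},\Omega}$. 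So condition (1) is equivalent to $\Hom_J(\kappa,\textnormal{Ind}_{\textnormal{P}}^{\textnormal{G}}\pi)\neq 0$.

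\textbf{Step 2.} Show that for any irreducible $\pi$ of $\textnormal{M}$,
\[ \Hom_J(\kappa,\textnormal{Ind}_{\textnormal{P}}^{\textnormal{G}}\pi)\neq 0 \iff \textnormal{Ind}_{\textnormal{P}}^{\textnormal{G}}\pi\in\Omega. \]
Since $(J,\kappa)$ is an $\Omega$-type, the functor $V\mapsto \Hom_J(\kappa,V)$ is exact, and on finite length representations it is nonzero exactly on those having a constituent in $\Omega$. The induced representation $\textnormal{Ind}_{\textnormal{P}}^{\textnormal{G}}\pi$ has finite length. All its constituents share the supercuspidal support of $\pi$, so it lies entirely in the single Bernstein block determined by that support. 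Hence it either has a constituent in $\Omega$, in which case it lies in $\Omega$, or it has no such constituent. This gives the equivalence.

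\textbf{Main obstacle.} I expect the delicate point to be Step 1: correctly identifying the restriction to $K$ and checking that Frobenius reciprocity yields invariants rather than coinvariants in the relevant direction. The cleanest route is to write $\textnormal{Ind}_{\textnormal{P}}^{\textnormal{G}}\pi|_K=\textnormal{Ind}_{K_{\textnormal{P}}}^K\pi$ and use semisimplicity of smooth $K_{\textnormal{N}}$-representations over $\overline{\mathbf{Q}}_p$, which identifies invariants and coinvariants. If left versus right adjointness causes trouble, I would instead work with contragredients: use that $\sigma_\Omega^\vee$ is the type of the contragredient block and that $(\textnormal{Ind}_{\textnormal{P}}^{\textnormal{G}}\pi)^\vee\cong\textnormal{Ind}_{\textnormal{P}}^{\textnormal{G}}\pi^\vee$. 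Step 2 is standard and needs only the defining property of types, and it does not depend on the fact that $\kappa$ is a cover.
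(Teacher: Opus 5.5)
Your argument is correct and has the same structure as the paper's proof. Both reduce (1) to $\Hom_J(\kappa,\textnormal{Ind}_{\textnormal{P}}^{\textnormal{G}}\pi)\neq 0$ by a chain of Frobenius reciprocities, and then conclude from the defining property of the type together with the fact that all constituents of $\textnormal{Ind}_{\textnormal{P}}^{\textnormal{G}}\pi$ lie in one Bernstein block. The only difference is in Step 1. The paper passes through $\Hom_{\textnormal{M}}(\textnormal{c-Ind}_{K_{\textnormal{M}}}^{\textnormal{M}}\sigma_{\textnormal{P},\Omega},\pi)\cong\Hom_{\textnormal{M}}((\textnormal{c-Ind}_K^{\textnormal{G}}\sigma_\Omega)_{\textnormal{N}},\pi)$ using \cite[Lemma 10.3]{BK98} and the parabolic adjunction. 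You instead compute $\Hom_K(\sigma_\Omega,\textnormal{Ind}_{\textnormal{P}}^{\textnormal{G}}\pi)$ directly from $\textnormal{G}=\textnormal{P}K$, reciprocity for the closed subgroup $K_{\textnormal{P}}\le K$, and the identification of $K_{\textnormal{N}}$-invariants with coinvariants in characteristic $0$. This is a slightly more elementary unpacking of the same isomorphism. Your "self-dual" remark is unnecessary: smooth induction is right adjoint to restriction, so no duality argument is needed at that step.
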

\begin{proof}
    We have a series of isomorphisms
    \begin{align*}
        \Hom_{K_{\textnormal{M}}}(\sigma_{\textnormal{P},\Omega},\pi) &\cong \Hom_{\textnormal{M}}(\textnormal{c-Ind}_{K_{\textnormal{M}}}^{\textnormal{M}}\sigma_{\textnormal{P},\Omega},\pi) \\
        & \cong
        \Hom_{\textnormal{M}}((\textnormal{c-Ind}_{K}^{\textnormal{G}}\sigma_{\Omega})_{\textnormal{N}},\pi) \\
        & \cong \Hom_{\textnormal{G}}(\textnormal{c-Ind}_K^{\textnormal{G}}\sigma_{\Omega},\textnormal{Ind}_{\textnormal{P}}^{\textnormal{G}}\pi) \\
        &\cong \Hom_K(\sigma_{\Omega},\textnormal{Ind}_{\textnormal{P}}^{\textnormal{G}}\pi) \\
        &\cong \Hom_J(\kappa,\textnormal{Ind}_{\textnormal{P}}^{\textnormal{G}}\pi).
    \end{align*}
    Indeed, the first and last two identifications are via Frobenius reciprocity, the second one is \cite[Lemma 10.3]{BK98}, and the third one is via Bernstein's first adjoint theorem.

    The lemma now follows from the facts that $\kappa$ is a type for the block $\Omega$ and all Jordan--H\"older factors of $\textnormal{Ind}_{\textnormal{P}}^{\textnormal{G}}\pi$ lie in the same Bernstein block.
\end{proof}

We set $\textnormal{Rep}_{\textnormal{sm}}(\textnormal{M},\overline{\mathbf{Q}}_p)[\Omega]$ to be the direct product factor of $\textnormal{Rep}_{\textnormal{sm}}(\textnormal{M},\overline{\mathbf{Q}}_p)$ given by the product of Bernstein blocks $\Omega_M$ that induce $\Omega$ along parabolic induction.

Set $\mathcal{H}(\sigma_{\Omega}):=\textnormal{End}_{\textnormal{G}}(\textnormal{c-Ind}_K^{\textnormal{G}}\sigma_{\Omega})$ and $\mathcal{H}(\sigma_{\textnormal{P},\Omega}):=\textnormal{End}_{\textnormal{M}}(\textnormal{c-Ind}_{K_{\textnormal{M}}}^{\textnormal{M}}\sigma_{\textnormal{P},\Omega})$. 
The $\overline{\mathbf{Q}}_p$-algebra $\mathcal{H}(\sigma_{\Omega})$ can be canonically identified with the convolution algebra of $\sigma_{\Omega}$-bi-invariant compactly supported functions $f\colon\textnormal{G}\to \textnormal{End}(\sigma_{\Omega})$ by acting on the former via convolution. We will freely move between these two ``perspectives" of the Hecke algebra. Using the latter description, we can write down an anti-isomorphism
\begin{align}\label{equation-antiiso}
\begin{split}
    \mathcal{H}(\sigma_{\Omega}) &\xrightarrow{\sim}\mathcal{H}(\sigma_{\Omega}^{\vee}) \\
    F &\mapsto (F^{\vee}:g\mapsto F(g^{-1})^t).
\end{split}
\end{align}

Given $\pi\in \textnormal{Rep}_{\textnormal{sm}}(\textnormal{G},\overline{\mathbf{Q}}_p)$, $\Hom_{\textnormal{G}}(\textnormal{c-Ind}_{K}^{\textnormal{G}}\sigma_{\Omega},\pi)$ becomes a \textit{right} $\mathcal{H}(\sigma_{\Omega})$-module by acting on the first factor. In particular, under the anti-isomorphism \eqref{equation-antiiso}, it becomes a \textit{left} $\mathcal{H}(\sigma_{\Omega}^{\vee})$-module. Under the Frobenius reciprocity isomorphism
\begin{equation*}
    \Hom_{\textnormal{G}}(\textnormal{c-Ind}_K^{\textnormal{G}}\sigma_{\Omega},\pi)\cong \Hom_{K}(\sigma_{\Omega},\pi)\cong \Gamma(K,\pi\otimes\sigma_{\Omega}^{\vee})
\end{equation*}
the described left action on the LHS coincides with the usual Hecke action of $\mathcal{H}(\sigma_{\Omega}^{\vee})$ on the RHS where we treat the elements of the Hecke algebra as bi-invariant functions.

The same remarks apply equally well to $\mathcal{H}(\sigma_{\textnormal{P},\Omega})$.
\begin{corollary}\label{corollary_equiv_with_cat_of_Heckemodules}
    There is an equivalence of categories
    \begin{align*}
        \textnormal{Rep}_{\textnormal{sm}}(\textnormal{M},\overline{\mathbf{Q}}_p)[\Omega] & \xrightarrow{\sim}\textnormal{Mod}(\mathcal{H}(\sigma_{\textnormal{P},\Omega}^{\vee})) \\
    \pi & \mapsto \Hom_{\textnormal{M}}(\textnormal{c-Ind}_{K_{\textnormal{M}}}^{\textnormal{M}}\sigma_{\textnormal{P},\Omega},\pi).
    \end{align*}
    In particular, the centre $\mathfrak{Z}_{\textnormal{M},\Omega}$ of $\textnormal{Rep}_{\textnormal{sm}}(\textnormal{M},\overline{\mathbf{Q}}_p)[\Omega]$ is identified with $Z(\mathcal{H}(\sigma_{\textnormal{P},\Omega}^{\vee}))$.
\end{corollary}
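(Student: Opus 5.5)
The plan is to reduce the statement to the Bushnell--Kutzko theory of types for the Levi $\textnormal{M}$, using Lemma \ref{BKlemma} as the bridge. First I will identify $\textnormal{c-Ind}_{K_{\textnormal{M}}}^{\textnormal{M}}\sigma_{\textnormal{P},\Omega}$ as a progenerator of the factor $\textnormal{Rep}_{\textnormal{sm}}(\textnormal{M},\overline{\mathbf{Q}}_p)[\Omega]$. Granting this, the standard Morita-type result (as in \cite{BK98}, or Bernstein's theory) says that for a finitely generated projective generator $P$ of an abelian category with exact filtered colimits, the functor $\Hom(P,-)$ is an equivalence onto right modules over $\End(P)$. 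Here $\End(P)=\mathcal{H}(\sigma_{\textnormal{P},\Omega})$, and its right modules are converted into left $\mathcal{H}(\sigma_{\textnormal{P},\Omega}^{\vee})$-modules via the anti-isomorphism \eqref{equation-antiiso}.

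I will take the ingredients in order.

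\textbf{Projectivity and finite generation.} Since $K_{\textnormal{M}}$ is compact and we work in characteristic $0$, $\sigma_{\textnormal{P},\Omega}$ is a finite-dimensional semisimple representation. Frobenius reciprocity then shows that $\textnormal{c-Ind}_{K_{\textnormal{M}}}^{\textnormal{M}}\sigma_{\textnormal{P},\Omega}$ is projective and finitely generated in $\textnormal{Rep}_{\textnormal{sm}}(\textnormal{M},\overline{\mathbf{Q}}_p)$.

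\textbf{Support in the $[\Omega]$-factor.} By the isomorphism chain in the proof of Lemma \ref{BKlemma}, which applies to arbitrary smooth $\pi$, the functor $\Hom_{\textnormal{M}}(\textnormal{c-Ind}\,\sigma_{\textnormal{P},\Omega},\pi)$ agrees with $\Hom_J(\kappa,\textnormal{Ind}_{\textnormal{P}}^{\textnormal{G}}\pi)$. For irreducible $\pi$ this is nonzero exactly when $\textnormal{Ind}_{\textnormal{P}}^{\textnormal{G}}\pi$ lies in $\Omega$, which is the case exactly when $\pi$ lies in $\textnormal{Rep}(\textnormal{M})[\Omega]$. Decomposing the induced module along Bernstein blocks of $\textnormal{M}$, every block component outside $[\Omega]$ admits no nonzero map to its irreducible quotients, so it vanishes. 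Hence the projective lies in the $[\Omega]$-factor.

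\textbf{Generation.} Every nonzero object of $[\Omega]$ has an irreducible subquotient. Moreover, an object $\pi$ of a block with $\Hom(P,\pi)=0$ for a projective $P$ has all of its irreducible subquotients $\tau$ satisfying $\Hom(P,\tau)=0$. This follows from exactness of $\Hom(P,-)$ together with finite generation, which yields irreducible quotients of finitely generated subobjects. Combined with the previous step, this shows that $P$ generates.

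The centre statement then follows formally: the centre of a module category is the centre of the ring, and the centre of the opposite ring is the same.

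The main obstacle I expect is the support step. Lemma \ref{BKlemma} is phrased only for irreducible $\pi$, and the factor $[\Omega]$ may consist of several Bernstein blocks of $\textnormal{M}$, namely the $W$-conjugates inducing $\Omega$. I will need the full isomorphism chain for non-irreducible $\pi$, which requires the cited lemma \cite[Lemma 10.3]{BK98} and Bernstein's second adjointness in general. Failing that, I would instead invoke that $(J_{\textnormal{M}}$-covers$)$ give types for the finite set of $\textnormal{M}$-blocks, and assemble $\sigma_{\textnormal{P},\Omega}$ as a sum of compact inductions of those types.
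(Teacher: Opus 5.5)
Your proposal is correct and follows essentially the same route as the paper, which invokes a Morita-type criterion (Roche, Helm) after checking three things about $\textnormal{c-Ind}_{K_{\textnormal{M}}}^{\textnormal{M}}\sigma_{\textnormal{P},\Omega}$: it is projective and small as the compact induction of a finite-dimensional representation, it lies in the $[\Omega]$-factor, and it is faithful there by Lemma \ref{BKlemma}. The support step you flag as the main obstacle is already settled by your own argument: a nonzero block component outside $[\Omega]$ is finitely generated, so it has an irreducible quotient, which contradicts Lemma \ref{BKlemma}; no extension of that lemma to non-irreducible $\pi$ is needed.
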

\begin{proof}
    According to \cite[Theorem 1.1]{Roc02} (see also \cite[Proposition 2.3]{Hel16}), it suffices to check that $\textnormal{c-Ind}_{K_{\textnormal{M}}}^{\textnormal{M}}\sigma_{\textnormal{P}}$ is a faithfully projective object of the category $\textnormal{Rep}_{\textnormal{sm}}(\textnormal{M},\overline{\mathbf{Q}}_p)[\Omega]$ in the sense of \cite[Definition 2.2.]{Hel16} It lies in $\textnormal{Rep}_{\textnormal{sm}}(\textnormal{M},\overline{\mathbf{Q}}_p)[\Omega]$ and is faithful by Lemma~\ref{BKlemma}. Being a compact induction of a finite dimensional $\overline{\mathbf{Q}}_p$-representation of $K_{\textnormal{M}}$, it is projective and small.
\end{proof}

Recall that, using the Bernstein--Deligne description of the Bernstein centres $\mathfrak{Z}_{\textnormal{G},\Omega}$ and $\mathfrak{Z}_{\textnormal{M},\Omega}$, one has a unique algebra map $\textnormal{BD}:=\textnormal{BD}_{\textnormal{M},\Omega}\colon\mathfrak{Z}_{\textnormal{G},\Omega}\to \mathfrak{Z}_{\textnormal{M},\Omega}$ such that, for any $\pi \in \textnormal{Rep}_{\textnormal{sm}}(\textnormal{M},\overline{\mathbf{Q}}_p)$, the following diagram commutes
\begin{equation*}
    \begin{tikzcd}
	{\mathfrak{Z}_{\textnormal{G},\Omega}} & {\textnormal{End}_{\textnormal{G}}(\textnormal{Ind}_{\textnormal{P}}^{\textnormal{G}}\pi)} \\
	{\mathfrak{Z}_{\textnormal{M},\Omega}} & {\textnormal{End}_{\textnormal{M}}(\pi)}.
	\arrow[from=1-1, to=1-2]
	\arrow["{\textnormal{BD}}"', from=1-1, to=2-1]
	\arrow[from=2-1, to=2-2]
	\arrow["{\textnormal{Ind}_{\textnormal{P}}^{\textnormal{G}}}"', from=2-2, to=1-2]
\end{tikzcd}
\end{equation*}

On the other hand, there is the usual Satake transform
\begin{equation*}
\mathcal{S}:=\mathcal{S}_{\textnormal{P},\Omega}\colon\mathcal{H}(\sigma_{\Omega}^{\vee})\to \mathcal{H}(\sigma_{\textnormal{P},\Omega}^{\vee}).
\end{equation*}
To define this, recall that $\mathcal{H}(\sigma_{\Omega}^{\vee})$ and $\mathcal{H}(\sigma_{\textnormal{P},\Omega}^{\vee})$ are identified with the convolution algebras of $\sigma_{\Omega}^{\vee}$-, and $\sigma_{\textnormal{P},\Omega}^{\vee}$-bi-invariant compactly supported functions, respectively.
Then the Satake transform is defined by restricting a function $f\colon\textnormal{G}\to \textnormal{End}(\sigma_{\Omega}^{\vee})$ to $\textnormal{P}$ and then integrating $f\mid_{\textnormal{P}}$ along $\textnormal{N}$. In formulas we have
\begin{align*}
    \mathcal{S} \colon \mathcal{H}(\sigma_{\Omega}^{\vee}) &\to \mathcal{H}(\sigma_{\textnormal{P},\Omega}^{\vee}) \\
    f & \mapsto (m\mapsto \sum_{n\in \textnormal{N}/K_{\textnormal{N}}}f(mn)\mid_{\sigma_{\textnormal{P},\Omega}^{\vee}}).
\end{align*}

\begin{lemma}
    The following diagram is commutative
    \begin{equation*}
        \begin{tikzcd}
	{\mathfrak{Z}_{\textnormal{G},\Omega}} & {Z(\mathcal{H}(\sigma_{\Omega}^{\vee}))} & {\mathcal{H}(\sigma_{\Omega}^{\vee})} \\
	{\mathfrak{Z}_{\textnormal{M},\Omega}}  & {Z(\mathcal{H}(\sigma_{\textnormal{P},\Omega}^{\vee}))} & {\mathcal{H}(\sigma_{\textnormal{P},\Omega}^{\vee})}.
	\arrow["\sim", from=1-1, to=1-2]
	\arrow["{\textnormal{BD}}"', from=1-1, to=2-1]
	\arrow[hook, from=1-2, to=1-3]
	\arrow["{\mathcal{S}}", from=1-3, to=2-3]
	\arrow["\sim", from=2-1, to=2-2]
	\arrow[hook, from=2-2, to=2-3]
\end{tikzcd}
    \end{equation*}
\end{lemma}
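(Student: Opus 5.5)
The diagram says that the Bernstein--Deligne map $\textnormal{BD}$, transported to the Hecke algebras through the identifications of Corollary \ref{corollary_equiv_with_cat_of_Heckemodules}, agrees with the restriction of the Satake transform $\mathcal{S}$ to the centre. Since $\textnormal{BD}$ is characterised by its action on parabolic inductions, the plan is to show that $\mathcal{S}$ has the same effect on modules. That is, I will show that $\mathcal{S}$ intertwines the $\mathcal{H}(\sigma_{\Omega}^{\vee})$-module $\Hom_{\textnormal{G}}(\textnormal{c-Ind}_K^{\textnormal{G}}\sigma_\Omega,\textnormal{Ind}_{\textnormal{P}}^{\textnormal{G}}\pi)$ with the $\mathcal{H}(\sigma_{\textnormal{P},\Omega}^{\vee})$-module $\Hom_{\textnormal{M}}(\textnormal{c-Ind}_{K_{\textnormal{M}}}^{\textnormal{M}}\sigma_{\textnormal{P},\Omega},\pi)$.

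Concretely, fix $\pi \in \textnormal{Rep}_{\textnormal{sm}}(\textnormal{M},\overline{\mathbf{Q}}_p)[\Omega]$. The chain of isomorphisms in the proof of Lemma \ref{BKlemma} gives
\[
\Gamma(K,\textnormal{Ind}_{\textnormal{P}}^{\textnormal{G}}\pi\otimes\sigma_\Omega^\vee)\cong\Hom_K(\sigma_\Omega,\textnormal{Ind}_{\textnormal{P}}^{\textnormal{G}}\pi)\cong\Hom_{K_{\textnormal{M}}}(\sigma_{\textnormal{P},\Omega},\pi)\cong\Gamma(K_{\textnormal{M}},\pi\otimes\sigma_{\textnormal{P},\Omega}^\vee).
\]
Explicitly, by Iwasawa decomposition $\textnormal{G}=\textnormal{P}K$, the composite is evaluation at the identity followed by projection onto $K_{\textnormal{N}}$-invariants of $\sigma_\Omega^\vee$ (its dual is the $K_{\textnormal{N}}$-coinvariants of $\sigma^\vee_\Omega$, identified with $\sigma_{\textnormal{P},\Omega}^\vee$). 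The key step is the standard computation that, for $f\in\mathcal{H}(\sigma_\Omega^\vee)$ and $\phi$ on the left, $(f\ast\phi)(1)$ projected equals $\mathcal{S}(f)\ast(\phi(1))$. To see this, write the Hecke action as a sum over $\textnormal{G}/K$, decompose $\textnormal{G}/K=\textnormal{P}/K_{\textnormal{P}}=\bigsqcup_{m\in \textnormal{M}/K_{\textnormal{M}}}\bigsqcup_{n\in \textnormal{N}/K_{\textnormal{N}}} mnK$, and use that $\phi(mn)=\pi(m)\delta^{1/2}(m)\phi(1)$ (up to the modulus character convention). The inner sum over $n$ is exactly the formula defining $\mathcal{S}(f)(m)$. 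One has to handle the modulus character: either it is absorbed into the normalisation of $\textnormal{Ind}$ versus unnormalised induction in the $\mathcal{S}$ formula, or I twist $\pi$ accordingly. This shows $\mathcal{S}$ is compatible with the module structures, up to this normalisation, which I would check carefully.

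Now let $z\in\mathfrak{Z}_{\textnormal{G},\Omega}$ with image $z_{\textnormal{G}}\in Z(\mathcal{H}(\sigma_\Omega^\vee))$. Then $z_{\textnormal{G}}$ acts on the left-hand side via the action of $z$ on $\textnormal{Ind}_{\textnormal{P}}^{\textnormal{G}}\pi$. By the defining diagram of $\textnormal{BD}$, this is $\textnormal{Ind}_{\textnormal{P}}^{\textnormal{G}}$ of the action of $\textnormal{BD}(z)$ on $\pi$, and the isomorphism above is functorial in $\pi$, so on the right-hand side it acts as the image of $\textnormal{BD}(z)$ in $Z(\mathcal{H}(\sigma_{\textnormal{P},\Omega}^\vee))$. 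By the intertwining it also acts as $\mathcal{S}(z_{\textnormal{G}})$. Take $\pi=\textnormal{c-Ind}_{K_{\textnormal{M}}}^{\textnormal{M}}\sigma_{\textnormal{P},\Omega}$, whose associated module is $\mathcal{H}(\sigma_{\textnormal{P},\Omega}^\vee)$ itself, which is faithful. Then $\mathcal{S}(z_{\textnormal{G}})$ equals the image of $\textnormal{BD}(z)$, since both act on the regular module by the same element. In particular $\mathcal{S}(z_{\textnormal{G}})$ is central.

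I expect the main obstacle to be the explicit intertwining computation. There one must match the conventions in the definition of $\mathcal{S}$: restriction to $\sigma_{\textnormal{P},\Omega}^\vee$, the anti-isomorphism \eqref{equation-antiiso}, and left versus right actions. One must also check that the second isomorphism in Lemma \ref{BKlemma} (from \cite[Lemma 10.3]{BK98}) is indeed given by the evaluation-and-projection map, so that no modulus factor is lost.
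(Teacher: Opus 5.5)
Your proposal is correct and follows essentially the same route as the paper: Frobenius reciprocity plus the Iwasawa identification $\Gamma(K,\textnormal{Ind}_{\textnormal{P}}^{\textnormal{G}}\pi\otimes\sigma_{\Omega}^{\vee})\cong\Gamma(K_{\textnormal{M}},\pi\otimes\sigma_{\textnormal{P},\Omega}^{\vee})$, which intertwines the Hecke action of $z$ with that of $\mathcal{S}(z)$, then the defining property of $\textnormal{BD}$, and finally faithfulness of the action on $\textnormal{c-Ind}_{K_{\textnormal{M}}}^{\textnormal{M}}\sigma_{\textnormal{P},\Omega}$. The normalisation issues you flag do not arise: both $\textnormal{Ind}_{\textnormal{P}}^{\textnormal{G}}$ in the definition of $\textnormal{BD}$ and $\mathcal{S}$ are unnormalised, so no $\delta^{1/2}$ appears, and the identification comes directly from $\textnormal{G}=\textnormal{P}K$, so the Bushnell--Kutzko input used in Lemma~\ref{BKlemma} is not needed.
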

\begin{proof}
    Let $z\in \mathfrak{Z}_{\textnormal{G},\Omega}$ and first treat it as an element of $\mathcal{H}(\sigma_{\Omega}^{\vee})$. For $\pi \in \textnormal{Rep}_{\textnormal{sm}}(\textnormal{M},\overline{\mathbf{Q}}_p)$, consider the Frobenius reciprocity isomorphism
    \begin{equation}\label{equation_FrobRec}\Hom_{\textnormal{G}}(\textnormal{c-Ind}_K^{\textnormal{G}}\sigma_{\Omega},\textnormal{Ind}_{\textnormal{P}}^{\textnormal{G}}\pi)\cong \Gamma(K,\textnormal{Ind}_{\textnormal{P}}^{\textnormal{G}}\pi\otimes\sigma_{\Omega}^{\vee}).
    \end{equation}
    On the left, $z$ acts by acting on the source $\textnormal{c-Ind}_{K}^{\textnormal{G}}\sigma_{\Omega}$. On the other hand, when we treat $z$ as a $\sigma_{\Omega}^{\vee}$-bi-invariant function $\textnormal{G}\to \textnormal{End}(\sigma^{\vee}_{\Omega})$, it acts on the right via the usual Hecke action. These two actions of $z$ are intertwined by the isomorphism \eqref{equation_FrobRec}.

    We further have a series of isomorphisms
    \begin{equation}\label{SatakeId}
\Gamma(K,\textnormal{Ind}_{\textnormal{P}}^{\textnormal{G}}\pi\otimes\sigma_{\Omega}^{\vee})\cong \Gamma(K_{\textnormal{P}}, \textnormal{Inf}_{\textnormal{M}}^{\textnormal{P}}\pi\otimes\sigma_{\Omega}^{\vee}) \cong
        \Gamma(K_{\textnormal{M}},\pi\otimes \sigma_{\textnormal{P},\Omega}^{\vee})
    \end{equation}
    where the first identification follows from the Iwasawa decomposition $\textnormal{G}=\textnormal{P}K$. Using the definition of the Satake transform $\mathcal{S}$, one sees that the isomorphism above intertwines the Hecke action of $z$ on the LHS with the Hecke action of $\mathcal{S}(z)$ on the RHS.

    Finally, Frobenius reciprocity intertwines the Hecke action of $\mathcal{S}(z)$ on $\Gamma(K_{\textnormal{M}},\pi\otimes \sigma_{\textnormal{P},\Omega}^{\vee})$ with the action of $\mathcal{S}(z)$ via the first factor on $\Hom_{\textnormal{M}}(\textnormal{c-Ind}_{K_{\textnormal{M}}}^{\textnormal{M}}\sigma_{\textnormal{P},\Omega},\pi)$.

    On the other hand, by treating $z$ as an element of the Bernstein centre, the endomorphism of $\Hom_{\textnormal{G}}(\textnormal{c-Ind}_{K}^{\textnormal{G}}\sigma_{\Omega},\textnormal{Ind}_{\textnormal{P}}^{\textnormal{G}}\pi)$ induced by acting on the first and, respectively, second factor coincide. By the characterising property of $\textnormal{BD}$, under the Frobenius reciprocity isomorphism
    \begin{equation*}
        \Hom_{\textnormal{G}}(\textnormal{c-Ind}_K^{\textnormal{G}}\sigma_{\Omega},\textnormal{Ind}_{\textnormal{P}}^{\textnormal{G}}\pi)\cong \Gamma(K, \textnormal{Ind}_{\textnormal{P}}^{\textnormal{G}}\pi\otimes \sigma_{\Omega}^{\vee})
    \end{equation*}
    this endomorphism also coincides with the endomorphism  on the target induced by acting on $\pi$ via $\textnormal{BD}(z)$.

    Under the isomorphism \ref{SatakeId}, the endomorphism on the LHS induced by acting on $\pi$ via $\textnormal{BD}(z)$ matches the endomorphism induced by acting on $\pi$ via $\textnormal{BD}(z)$ on the RHS. Finally, under Frobenius reciprocity, using that $\textnormal{BD}(z)$ lies in the centre of the category, this endomorphism matches the endomorphism of $\Hom_{\textnormal{M}}(\textnormal{c-Ind}_{K_{\textnormal{M}}}^{\textnormal{M}}\sigma_{\textnormal{P},\Omega},\pi)$ induced by acting via $\textnormal{BD}(z)$ on the first factor.

 In summary, we proved that, for any $\pi\in \textnormal{Rep}_{\textnormal{sm}}(\textnormal{M},\overline{\mathbf{Q}}_p)$, $\mathcal{S}(z)$ and $\textnormal{BD}(z)$ act via the same endomorphism on $\pi$. Finally, we note that $\mathcal{H}(\sigma_{\textnormal{P},\Omega}^{\vee})$ acts faithfully on $\textnormal{c-Ind}_{K_{\textnormal{M}}}^{\textnormal{M}}\sigma_{\textnormal{P},\Omega}$. In particular, setting $\pi=\textnormal{c-Ind}_{K_{\textnormal{M}}}^{\textnormal{M}}\sigma_{\textnormal{P},\Omega}$, we obtain the desired equality $\mathcal{S}(z)=\textnormal{BD}(z)$.
\end{proof}

As we observed, $\sigma_{\textnormal{P},\Omega}$ is a $K_{\textnormal{M}}$-typical representation for $\textnormal{Rep}_{\textnormal{sm}}(\textnormal{M},\overline{\mathbf{Q}}_p)[\Omega]$. To gain a better understanding of this representation we would like to relate it to Bushnell--Kutzko types for the blocks $\Omega_{\textnormal{M}}$ appearing in $\textnormal{Rep}_{\textnormal{sm}}(\textnormal{M},\overline{\mathbf{Q}}_p)[\Omega]$. We are able to give such a description after applying $\textnormal{c-Ind}_{K_{\textnormal{M}}}^{\textnormal{M}}$ which turns out to be sufficient for our applications.
\begin{prop}\label{Proposition_BushnellKutzkoTypesIso}
    There is an isomorphism
    \begin{equation*}
        \textnormal{c-Ind}_{K_{\textnormal{M}}}^{\textnormal{M}}\sigma_{\textnormal{P},\Omega}\xrightarrow{\sim}\textnormal{c-Ind}_{K_{\textnormal{M}}}^{\textnormal{M}}\sigma_{\textnormal{M},\Omega}
    \end{equation*}
    for
    \begin{equation*}
        \sigma_{\textnormal{M},\Omega}:=\oplus_{\Omega_{\textnormal{M}}\in \mathfrak{B}(\textnormal{M},\Omega)}\sigma_{\Omega_{\textnormal{M}}}
    \end{equation*}
    where the sum runs over some multiset $\mathfrak{B}(\textnormal{M},\Omega)$ of Bernstein blocks $\Omega_{\textnormal{M}}$ of $\textnormal{Rep}_{\textnormal{sm}}(\textnormal{M},\overline{\mathbf{Q}}_p)$ inducing $\Omega$ under parabolic induction.
\end{prop}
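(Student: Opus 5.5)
The plan is to compute $\sigma_{\textnormal{P},\Omega}=\Gamma(K_{\textnormal{N}},\textnormal{c-Ind}_J^K\kappa)$ with a Mackey decomposition, and then to identify each resulting piece, after applying $\textnormal{c-Ind}_{K_{\textnormal{M}}}^{\textnormal{M}}$, with a Bushnell--Kutzko type for a block of $\textnormal{M}$. Since $K_{\textnormal{P}}=K_{\textnormal{M}}\ltimes K_{\textnormal{N}}$, Mackey gives
\[ \sigma_\Omega|_{K_{\textnormal{P}}}\cong\bigoplus_{w\in K_{\textnormal{P}}\backslash K/J}\textnormal{c-Ind}^{K_{\textnormal{P}}}_{K_{\textnormal{P}}\cap {}^wJ}({}^w\kappa). \]
Taking $K_{\textnormal{N}}$-invariants then expresses $\sigma_{\textnormal{P},\Omega}$ as a finite direct sum of representations of the form
\[ \textnormal{c-Ind}^{K_{\textnormal{M}}}_{\textnormal{pr}_{\textnormal{M}}(K_{\textnormal{P}}\cap{}^wJ)}\bigl(({}^w\kappa)^{K_{\textnormal{N}}\cap{}^wJ}\bigr), \]
possibly up to a further averaging step. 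This step is routine, provided one uses that everything is over $\overline{\mathbf{Q}}_p$ and that the groups are pro-finite, so that invariants are exact.

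The main step is to show that each summand, after applying $\textnormal{c-Ind}_{K_{\textnormal{M}}}^{\textnormal{M}}$, is either zero or isomorphic to $\textnormal{c-Ind}_{K_{\textnormal{M}}}^{\textnormal{M}}\sigma_{\Omega_{\textnormal{M}}}$ for a block $\Omega_{\textnormal{M}}$ inducing $\Omega$. The first thing to try is to use the cover property. For $w$ such that ${}^wJ$ has an Iwahori decomposition with respect to $\textnormal{P}$, the pair $(\textnormal{pr}_{\textnormal{M}}({}^wJ\cap \textnormal{P}),({}^w\kappa)^{{}^wJ\cap \textnormal{N}})$ should be a type for some $\textnormal{M}$-block $\Omega_{\textnormal{M}}$ in $\textnormal{Rep}_{\textnormal{sm}}(\textnormal{M},\overline{\mathbf{Q}}_p)[\Omega]$, by \cite{BK98} and \cite{BK99}. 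Types for the same block give isomorphic compact inductions, up to multiplicity, once we compare their Hecke algebras. So I would aim to show that $\textnormal{c-Ind}^{\textnormal{M}}$ of such a type is isomorphic to $\textnormal{c-Ind}^{\textnormal{M}}_{K_{\textnormal{M}}}\sigma_{\Omega_{\textnormal{M}}}$. Both are projective generators of the block $\Omega_{\textnormal{M}}$, and by Corollary \ref{corollary_equiv_with_cat_of_Heckemodules} it is enough to match the corresponding Hecke modules.

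A cleaner route avoids the explicit double cosets altogether. By Lemma \ref{BKlemma} and the chain of isomorphisms in its proof, $\textnormal{c-Ind}_{K_{\textnormal{M}}}^{\textnormal{M}}\sigma_{\textnormal{P},\Omega}\cong(\textnormal{c-Ind}_K^{\textnormal{G}}\sigma_\Omega)_{\textnormal{N}}$, which is the Jacquet module of a projective generator of $\Omega$. This Jacquet module is finitely generated and projective in $\textnormal{Rep}_{\textnormal{sm}}(\textnormal{M},\overline{\mathbf{Q}}_p)[\Omega]$, because the Jacquet functor has an exact right adjoint. Hence it decomposes along the blocks $\Omega_{\textnormal{M}}$. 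On each block it suffices to show that the projective component is a direct sum of copies of the projective generator $P_{\Omega_{\textnormal{M}}}:=\textnormal{c-Ind}_{K_{\textnormal{M}}}^{\textnormal{M}}\sigma_{\Omega_{\textnormal{M}}}$. Under the equivalence with modules over $\mathcal{H}(\sigma^\vee_{\Omega_{\textnormal{M}}})$, this amounts to showing that a certain finitely generated projective module is free.

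I expect this freeness to be the main obstacle. I would handle it using the explicit Bushnell--Kutzko structure: each $\mathcal{H}(\sigma^\vee_{\Omega_{\textnormal{M}}})$ is a tensor product of affine Hecke algebras of type $A$, which are free of finite rank over a Laurent polynomial centre. Projectives over such rings are obtained from $\mathfrak{Z}$-projectives, and I would show they are free by a Quillen--Suslin type argument over the Laurent polynomial ring. Alternatively, the Mackey computation above produces the summands directly as covers, which would give freeness without this argument. The multiset $\mathfrak{B}(\textnormal{M},\Omega)$ is then read off from the ranks, and the statement follows by summing over blocks.
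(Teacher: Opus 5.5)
Your reduction to the Jacquet module $(\textnormal{c-Ind}_K^{\textnormal{G}}\sigma_\Omega)_{\textnormal{N}}$ via \cite[Lemma 10.3]{BK98} is exactly the paper's first step. It is also true that this Jacquet module is a finitely generated projective object of $\textnormal{Rep}_{\textnormal{sm}}(\textnormal{M},\overline{\mathbf{Q}}_p)[\Omega]$.

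The gap is in the step you flag as the main obstacle. Being projective and lying in a given block does not make an object a direct sum of copies of the particular progenerator $\textnormal{c-Ind}_{K_{\textnormal{M}}}^{\textnormal{M}}\sigma_{\Omega_{\textnormal{M}}}$. Already in the principal block of $\textnormal{GL}_2(L)$ this fails:
\begin{itemize}
\item $\textnormal{c-Ind}_{\textnormal{GL}_2(\mathcal{O}_L)}^{\textnormal{GL}_2(L)}\mathbf{1}$ is finitely generated and projective.
\item It is not a sum of copies of $\textnormal{c-Ind}_I^{\textnormal{GL}_2(L)}\mathbf{1}$, because the Steinberg representation has nonzero Iwahori invariants but no spherical vectors.
\item On the Hecke side it is $\mathcal{H}(\textnormal{GL}_2(L),I)e$, with $e$ the idempotent of the trivial character of $\textnormal{GL}_2(\mathcal{O}_L)$. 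This module is projective and free of rank $2$ over the centre. It is not free over the Iwahori--Hecke algebra, which has rank $4$ over the centre.
\end{itemize}
So the claim that projectives over these algebras come from projectives over the centre is false. No Quillen--Suslin argument on the centre can give freeness over the noncommutative algebra. The decomposition you need is a special feature of this particular Jacquet module and has to be extracted from its structure.

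Your Mackey alternative over $K_{\textnormal{P}}\backslash K/J$ does not supply that structure either. For deep $J$ this double coset space is much larger than a Weyl group, and most conjugates ${}^wJ$ have no Iwahori decomposition with respect to $\textnormal{P}$. You say nothing about what those summands contribute. The principle that types for the same block have isomorphic compact inductions up to multiplicity is not a general fact; it is essentially the statement to be proved.

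The missing idea is to use the cover property at the level of $\textnormal{G}$ rather than $K$. Since $(J,\kappa)$ is a $\textnormal{G}$-cover of the cuspidal type on $\widetilde{\textnormal{M}}$, \cite[\S 1.5]{Dat1999} gives $\textnormal{c-Ind}_K^{\textnormal{G}}\sigma_\Omega\cong\textnormal{n-Ind}_{\widetilde{\textnormal{P}}}^{\textnormal{G}}(\textnormal{c-Ind}_{K_{\widetilde{\textnormal{M}}}}^{\widetilde{\textnormal{M}}}\sigma_{\widetilde\Omega})$. The argument then runs as follows.
\begin{itemize}
\item The Jacquet module of the right side is filtered by the geometric lemma \cite[Lemma 2.12]{Bernstein1977}, with pieces indexed by ${}^{\textnormal{P}}W^{\widetilde{\textnormal{P}}}$.
\item Cuspidality of $\widetilde\Omega$ kills every piece except those with $w\widetilde{\textnormal{M}}w^{-1}\subset\textnormal{M}$.
\item Each surviving piece is, up to the harmless twist by $\delta_{\textnormal{P}}^{1/2}$, a parabolic induction inside $\textnormal{M}$ of a cuspidal compact induction.
\item Dat's result applies again, because the chosen type for $\Omega^w_{\textnormal{M}}$ is an $\textnormal{M}$-cover of a Weyl conjugate of the cuspidal type. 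Hence each surviving piece is $\textnormal{c-Ind}_{K_{\textnormal{M}}}^{\textnormal{M}}\sigma_{\Omega^w_{\textnormal{M}}}$.
\item Only at the very end is projectivity used, to split this filtration.
\end{itemize}
The surviving $w$ are also where the multiset $\mathfrak{B}(\textnormal{M},\Omega)$ comes from.
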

\begin{proof}
    By \cite[Lemma 10.3]{BK98}, we have an isomorphism
    \begin{equation*}
        \textnormal{c-Ind}_{K_{\textnormal{M}}}^{\textnormal{M}}\sigma_{\textnormal{P},\Omega}\xrightarrow{\sim}(\textnormal{c-Ind}_K^{\textnormal{G}}\sigma_{\Omega})_{\textnormal{N}}.
    \end{equation*}
    
    Consider the pair $(\widetilde{\textnormal{M}},\widetilde{\pi})=(\textnormal{GL}_{n_1}(L)\times...\times \textnormal{GL}_{n_h}(L),\widetilde{\pi}_1\otimes...\otimes \widetilde{\pi}_h)$ for the standard parabolic subgroup $\widetilde{\textnormal{P}}=\widetilde{\textnormal{M}}\widetilde{\textnormal{N}}$ of $\textnormal{G}$ and the supercuspidal $\overline{\mathbf{Q}}_p$-representation $\widetilde{\pi}$ of $\widetilde{\textnormal{M}}$ such that $\Omega=[\widetilde{\textnormal{M}},\widetilde{\pi}]$, $\widetilde{\pi}_i\in \textnormal{SC}_0$ and
    $\widetilde{\pi}_1\leq...\leq\widetilde{\pi}_h$ with respect to the ordering chosen on $\textnormal{SC}_0$. Set $\widetilde{\Omega}:=[\widetilde{\textnormal{M}},\widetilde{\pi}]_{\widetilde{\textnormal{M}}}$. By \cite{Dat1999}, section 1.5, we have an isomorphism
    \begin{equation*}
        (\textnormal{c-Ind}_K^{\textnormal{G}}\sigma_{\Omega})_{\textnormal{N}}\xrightarrow{\sim}\left(\textnormal{n-Ind}_{\widetilde{\textnormal{P}}}^{\textnormal{G}}(\textnormal{c-Ind}_{K_{\widetilde{\textnormal{M}}}}^{\widetilde{\textnormal{M}}}\sigma_{\widetilde{\Omega}})\right)_{\textnormal{N}}.
        \end{equation*}

        By the geometric lemma \cite[Lemma 2.12]{Bernstein1977}, the RHS admits a filtration with graded pieces $I_w$ labelled by $^{\textnormal{P}}W^{\widetilde{\textnormal{P}}}$. Moreover, since $\widetilde{\Omega}$ is a cuspidal Bernstein block, any non-trivial Jacquet module of $\textnormal{c-Ind}_{K_{\widetilde{\textnormal{M}}}}^{\widetilde{\textnormal{M}}}\sigma_{\widetilde{\Omega}}$ vanishes as can be seen by combining Bernstein's first adjoint theorem, Frobenius reciprocity for $\textnormal{c-Ind}_{K_{\widetilde{\textnormal{M}}}}^{\widetilde{\textnormal{M}}}$ and the fact that a cuspidal block only consists of supercuspidal representations. 
        In particular, the only graded pieces that survive are those labelled by Kostant representatives $w\in {}^{\textnormal{P}}W^{\widetilde{\textnormal{P}}}$ satisfying $\widetilde{\textnormal{M}}^w:=w\widetilde{\textnormal{M}}w^{-1}\subset \textnormal{P}$. If we write $\widetilde{\textnormal{P}}^w\subset \textnormal{M}$ for the corresponding standard parabolic subgroup, we have
        \begin{align*}
            I_w&=\delta_{\textnormal{P}}^{1/2}\textnormal{n-Ind}_{\widetilde{P}^w}^{\textnormal{M}}w^{\ast}(\textnormal{c-Ind}_{K_{\widetilde{\textnormal{M}}}}^{\widetilde{\textnormal{M}}}\sigma_{\widetilde{\Omega}}) 
        \\
            &\cong \delta_{\textnormal{P}}^{1/2}\textnormal{n-Ind}_{\widetilde{\textnormal{P}}^w}^{\textnormal{M}}(\textnormal{c-Ind}_{K_{\widetilde{\textnormal{M}}^w}}^{\widetilde{\textnormal{M}}^w}\sigma_{w^{\ast}\widetilde{\Omega}}).
        \end{align*}

        Write $\Omega_{\textnormal{M}}^w$ for the Bernstein block of  $\textnormal{Rep}_{\textnormal{sm}}(\textnormal{M},\overline{\mathbf{Q}}_p)$ induced by $w^{\ast}\Omega_{\widetilde{\textnormal{M}}}$ via $\textnormal{Ind}_{\widetilde{\textnormal{P}}^w}^{\textnormal{M}}$ and $(J_w,\kappa_w)$ for the Bushnell--Kutzko type for (the cuspidal block) $w^{\ast}\Omega_{\widetilde{\textnormal{M}}}$. Note that there is a Weyl group element $\widetilde{w}\in W_{\textnormal{M}}^{\widetilde{\textnormal{M}}^w}$ such that the Bushnell--Kutzko type $(J^w,\kappa^w)$ for $\Omega_{\textnormal{M}}^w$ introduced at the beginning of the section is an $\textnormal{M}$-cover of $\widetilde{w}^{\ast}\kappa_{w}$ and, consequently, $(\widetilde{w}^{-1}J^w\widetilde{w},\widetilde{w}^{-1,\ast}\kappa^w)$ is an $\textnormal{M}$-cover of $\kappa_w$. In particular, by another application of \cite{Dat1999}, section 1.5, $I_w$ is isomorphic to
        \begin{equation*}
            \delta_{\textnormal{P}}^{1/2}  \textnormal{c-Ind}_{\widetilde{w}^{-1}J^w\widetilde{w}}^{\textnormal{M}}(\widetilde{w}^{-1,\ast}\kappa^w)\cong\delta_{\textnormal{P}}^{1/2}\textnormal{c-Ind}_{K_{\textnormal{M}}}^{\textnormal{M}}\sigma_{\Omega_{\textnormal{M}}^w}.
        \end{equation*}
         In addition, this is isomorphic to $\textnormal{c-Ind}_{K_{\textnormal{M}}}^{\textnormal{M}}\sigma_{\Omega_{\textnormal{M}}^w}$.

        We proved that $\textnormal{c-Ind}_{K_{\textnormal{M}}}^{\textnormal{M}}\sigma_{\textnormal{P},\Omega}$ admits a filtration with graded pieces $\textnormal{c-Ind}_{K_{\textnormal{M}}}^{\textnormal{M}}\sigma_{\Omega_{\textnormal{M}}^w}$ labelled by $\{w\in {}^{\textnormal{P}}W^{\widetilde{\textnormal{P}}}\mid w\widetilde{\textnormal{M}}w^{-1}\subset \textnormal{P} \}$. As all of these representation are projective objects in $\textnormal{Rep}_{\textnormal{sm}}(\textnormal{M},\overline{\mathbf{Q}}_p)$, the filtration is necessarily split, finishing the proof.
\end{proof}

We will also need an explicit description of the Hecke algebras $\mathcal{H}(\kappa^{\vee})$. Recall the subgroups $\textnormal{M}\leq \widetilde{\textnormal{M}}\leq \textnormal{G}$ and the types $(J_{\textnormal{M}},\kappa_{\textnormal{M}})$, $(J_{\widetilde{M}},\kappa_{\widetilde{\textnormal{M}}})$ appearing in the first paragraphs of the section.

Given a parabolic subgroup $\textnormal{P}$ with Levi factor $\textnormal{M}$, by \cite[Propositions 7.6.2, 7.6.5]{BK93}, there is a unique \textit{support preserving} injective algebra homomorphism
\begin{equation*}
    t_{\textnormal{P}\cap \widetilde{\textnormal{M}}}\colon\mathcal{H}(\kappa_{\textnormal{M}}^{\vee})\hookrightarrow \mathcal{H}(\kappa_{\widetilde{\textnormal{M}}}^{\vee})
\end{equation*}
such that the functor $\Hom_{\mathcal{H}(\kappa_{\textnormal{M}}^{\vee})}(\mathcal{H}(\kappa_{\widetilde{\textnormal{M}}}^{\vee}),-)$ matches with $\textnormal{n-Ind}_{\textnormal{P}\cap \widetilde{\textnormal{M}}}^{\widetilde{\textnormal{M}}}$ under the equivalence of categories given by Corollary \ref{corollary_equiv_with_cat_of_Heckemodules} for $\textnormal{M}$ and $\widetilde{\textnormal{M}}$, respectively. Moreover, by \cite{BK98} Theorem 12.1, there is a unique support preserving algebra isomorphism
\begin{equation*}
    t_{\textnormal{P}\widetilde{\textnormal{M}}}\colon\mathcal{H}(\kappa_{\widetilde{\textnormal{M}}}^{\vee})\xrightarrow{\sim}\mathcal{H}(\kappa^{\vee})
\end{equation*}
such that the functor $\Hom_{\mathcal{H}(\kappa_{\widetilde{\textnormal{M}}}^{\vee})}(\mathcal{H}(\kappa^{\vee}),-)$ matches with $\textnormal{n-Ind}_{\widetilde{\textnormal{M}}}^{\textnormal{G}}$
under the equivalence of categories provided by Corollary \ref{corollary_equiv_with_cat_of_Heckemodules} for $\widetilde{\textnormal{M}}$ and $\textnormal{G}$, respectively.

In \cite[7.6]{BK93} it is shown that $\mathcal{H}(\kappa^{\vee})$ is isomorphic to a tensor product of affine Hecke algebras of type $A$ in a way compatible with the algebra map $t_{\textnormal{P}}:=t_{\textnormal{P}\widetilde{\textnormal{M}}} \circ t_{\textnormal{P}\cap \widetilde{\textnormal{M}}}$. To discuss this, let us first assume that $\widetilde{\textnormal{M}}=\textnormal{G}$. In this case, we have $\textnormal{M}=\textnormal{GL}_d(\textnormal{L})\times...\times \textnormal{GL}_d(L)$ and $\pi=\pi_0\otimes...\otimes \pi_0$ for a supercuspidal $\overline{\mathbf{Q}}_p$-representation $\pi_0$ of $\textnormal{GL}_d(L)$. They consider a certain subfield $E\hookrightarrow \textnormal{End}_L(L^n)=M_n(L)$ of degree $d$ over $L$ such that the intersection of the image of the induced embedding 
\begin{equation}
    \iota\colon\textnormal{H}:=\textnormal{GL}_{n/d}(    E)=\textnormal{Aut}_E(L^n)\hookrightarrow \textnormal{Aut}_L(L^n)=\textnormal{GL}_n(L)
\end{equation}
with $\textnormal{M}$ is a maximal torus $\textnormal{T}_{\textnormal{H}}\subset \textnormal{H}$. The intersection $I_{\textnormal{H}}:=\textnormal{H}\cap J\subset \textnormal{H}$ is the Iwahori subgroup associated with the Borel $\textnormal{B}_{\textnormal{H}}:=\textnormal{P}\cap \textnormal{H}\subset \textnormal{H}$ of upper triangular matrices. We set $\textnormal{T}^{\circ}_{\textnormal{H}}:=\textnormal{T}_{\textnormal{H}}\cap I_{\textnormal{H}}$. Then the map
\begin{equation*}
    t_{\textnormal{B}_{\textnormal{H}}}\colon\mathcal{H}(\textnormal{T}_{\textnormal{H}},\textnormal{T}^{\circ}_{\textnormal{H}})\to \mathcal{H}(\textnormal{H},I_{\textnormal{H}}),
\end{equation*}
upon identifying the source with $\overline{\mathbf{Q}}_p[X_{\ast}(\textnormal{T}_{\textnormal{H}})]$, sends a  minuscule cocharacter $\mu$ to $\delta_{\textnormal{B}_{\textnormal{H}}}(\mu(\varpi_E))^{1/2}[I_{\textnormal{H}}\mu(\varpi_E)I_{\textnormal{H}}]$ and therefore coincides with Bernstein's map $\Theta\colon\overline{\mathbf{Q}}_p[X_{\ast}(\textnormal{T}_{\textnormal{H}})]\hookrightarrow \mathcal{H}(\textnormal{H},I_{\textnormal{H}})$.

In addition, by \cite[7.6.19]{BK93} one has an algebra isomorphism
\begin{align*}
    \mathcal{H}(\textnormal{T}_{\textnormal{H}},\textnormal{T}_{\textnormal{H}}^{\circ}) &\xrightarrow{\sim}\mathcal{H}(\kappa_{\textnormal{M}}^{\vee}) \\
    \mu &\mapsto T_{\mu}
\end{align*}
with $T_{\mu}$ being supported on  $J_{\textnormal{M}}\iota(\mu(\varpi_E))J_{\textnormal{M}}$ that only depends on the choice of $\pi_0$ in the inertial equivalence class.
By \cite[Theorem 7.6.20]{BK93}, there is an algebra isomorphism $\mathcal{H}(\textnormal{H},I_{\textnormal{H}})\xrightarrow{\sim}\mathcal{H}(\kappa^{\vee})$ that fits into a commutative diagram
\begin{equation*}
    \begin{tikzcd}
	{\mathcal{H}(\textnormal{T}_{\textnormal{H}},\textnormal{T}_{\textnormal{H}}^{\circ})} & {\mathcal{H}(\kappa_{\textnormal{M}}^{\vee})} \\
	{\mathcal{H}(\textnormal{H},I_{\textnormal{H}})} & {\mathcal{H}(\kappa^{\vee}).}
	\arrow["\sim", from=1-1, to=1-2]
	\arrow["{t_{\textnormal{B}_{\textnormal{H}}}}"', from=1-1, to=2-1]
	\arrow["{t_{\textnormal{P}}}", from=1-2, to=2-2]
	\arrow["\sim", from=2-1, to=2-2]
\end{tikzcd}
\end{equation*}

Now assume that $\widetilde{\textnormal{M}}\subset \textnormal{G}$ is arbitrary. Then $\textnormal{M}=\textnormal{M}_1\times...\times \textnormal{M}_k\leq \widetilde{\textnormal{M}}=\textnormal{G}_1\times...\times \textnormal{G}_k\leq \textnormal{G}$ is a group of block diagonal matrices with $\textnormal{G}_i=\textnormal{GL}_{n_i}(L)$ for $i=1,...,k$. Write $\textnormal{M}=\textnormal{M}_1\times...\times \textnormal{M}_k\leq \widetilde{\textnormal{M}}=\textnormal{G}_1\times...\times \textnormal{G}_k$ for the induced product decomposition. By the previous discussion, we obtain field extensions $E_i/L$ of degree $d_i$, embeddings $\iota_i\colon\textnormal{H}_i=\textnormal{GL}_{n_i/d_i}(E_i)\hookrightarrow \textnormal{G}_i$ with $\textnormal{H}_i\cap \textnormal{M}_i=\textnormal{T}_{\textnormal{H}_i}$ and a commutative diagram
\begin{equation}\label{Diagram_CompatibilityWithIwahori}
    \begin{tikzcd}
	{\otimes_{i=1,...,k}\mathcal{H}(\textnormal{T}_{\textnormal{H}_i},\textnormal{T}_{\textnormal{H}_i}^{\circ})} & {\mathcal{H}(\kappa_{\textnormal{M}}^{\vee})} \\
	{\otimes_{i=1,...,k}\mathcal{H}(\textnormal{H}_i,I_{\textnormal{H}_i})} & {\mathcal{H}(\kappa_{\widetilde{\textnormal{M}}}^{\vee}} )\\
	& {\mathcal{H}(\kappa^{\vee})}
	\arrow["\sim", from=1-1, to=1-2]
	\arrow["{\otimes_{i}t_{\textnormal{B}_{\textnormal{H}_i}}}"', hook', from=1-1, to=2-1]
	\arrow["{t_{\textnormal{P}\cap \widetilde{\textnormal{M}}}}", hook', from=1-2, to=2-2]
	\arrow["\sim", from=2-1, to=2-2]
	\arrow["{t_{\textnormal{P}\widetilde{\textnormal{M}}}}", from=2-2, to=3-2]
\end{tikzcd}
\end{equation}
where $t_{\textnormal{P}\widetilde{\textnormal{M}}}$ is an algebra isomorphism and, in particular, $t_{\textnormal{P}}:=t_{\textnormal{P}\widetilde{\textnormal{M}}}\circ t_{\textnormal{P}\cap \widetilde{\textnormal{M}}}$ is an injective algebra homomorphism.

Let us now make this slightly more explicit. For $1\leq i\leq k$, and $1\leq j\leq n_i/d_i$, write $\mu_{i,j}\in X_{\ast}(\textnormal{T}_{\textnormal{H}_i})$ for the minuscule cocharacter $(1,...,1,0,...,0)$ with $1$ appearing exactly $j$ times and $T_{\mu_{i,j}}\in \mathcal{H}(\kappa_{\textnormal{M}}^{\vee})$ for the image of $1\otimes...\otimes1\otimes \mu_{i,j}\otimes1\otimes...\otimes 1$. This is a function that is supported on
\begin{equation*}
    J_{\textnormal{M}}(\mathbf{1}_{n_1},...,\mathbf{1}_{n_{i-1}},\iota_i(\mu_{i,j}(\varpi_{E_i})),\mathbf{1}_{n_{i+1}},...,\mathbf{1}_{n_k})J_{\textnormal{M}}.
\end{equation*}
We then have an isomorphism
\begin{equation*}
    \mathcal{H}(\kappa_{\textnormal{M}}^{\vee})\cong \overline{\mathbf{Q}}_p[T_{\mu_{i,j}}^{\pm 1}\mid 1\leq i\leq k, 1\leq j\leq n_i/d_i].
\end{equation*}

Recall the smooth $\overline{\mathbf{Q}}_p$-representation $\sigma_{\Omega}=\textnormal{c-Ind}_J^K\kappa$. We have an algebra isomorphism
\begin{equation*}
    A\colon\mathcal{H}(\kappa^{\vee})\xrightarrow{\sim}\mathcal{H}(\sigma^{\vee}_{\Omega})
\end{equation*}
induced by $B\colon\textnormal{c-Ind}_J^{\textnormal{G}}\kappa^{\vee}\xrightarrow{\sim} \textnormal{c-Ind}_K^{\textnormal{G}}\sigma^{\vee}_{\Omega}$ (with inverse sending $f\in \textnormal{c-Ind}_K^{\textnormal{G}}\sigma^{\vee}_{\Omega}$ to $f(-)(1)\in \textnormal{c-Ind}_J^{\textnormal{G}}\kappa^{\vee}$), sending an element $F\in \mathcal{H}(\kappa^{\vee})=\textnormal{End}_{\textnormal{G}}(\textnormal{c-Ind}_J^{\textnormal{G}}\kappa^{\vee})$ to $B\circ F\circ B^{-1}\in \mathcal{H}(\sigma^{\vee}_{\Omega})=\textnormal{End}_{\textnormal{G}}(\textnormal{c-Ind}_K^{\textnormal{G}}\sigma^{\vee}_{\Omega})$.
\begin{lemma}\label{Lemma_explicitdescriptionofgenerators}
    Suppose that $F\in \mathcal{H}(\kappa^{\vee})$, viewed as a function $\textnormal{G}\to \textnormal{End}_{\overline{\mathbf{Q}}_p}(\kappa^{\vee})$ is supported on the double coset $JgJ$ for an element $g\in \textnormal{G}$. Then $A(F)\colon\textnormal{G}\to \textnormal{End}_{\overline{\mathbf{Q}}_p}(\sigma^{\vee}_{\Omega})$ is supported on the double coset $KgK$.

    In particular, for $i=1,...,k$, $j=1,...,n_i/d_i$, the element $Y_{i,j}:=A(t_{\textnormal{P}}(T_{\mu_{i,j}}))\in \mathcal{H}(\sigma^{\vee}_{\Omega})$ is of the form $[Kt_{i,j}K,\psi_{i,j}]$ where $t_{i,j}=\nu_{i,j}(\varpi_L)$ for some \textit{minuscule cocharacter} $\nu_{i,j}\in X_{\ast}(\textnormal{T})$ and $\psi_{i,j}\in\Hom_{t_{i,j}^{-1}Kt_{i,j}\cap K}(\sigma_{\Omega}^{\vee},t_{i,j}^{\ast}\sigma_{\Omega}^{\vee})$.
\end{lemma}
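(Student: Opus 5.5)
The support statement follows from writing $B$ and $B^{-1}$ explicitly and computing the conjugate $B\circ F\circ B^{-1}$ as a function on $\textnormal{G}$. I will identify $\textnormal{c-Ind}_J^{\textnormal{G}}\kappa^{\vee}$ with $\textnormal{c-Ind}_K^{\textnormal{G}}\textnormal{c-Ind}_J^K\kappa^{\vee}$ by transitivity of induction. The map $B$ sends $\phi$ to $g\mapsto (k\mapsto \phi(kg))$, and its inverse is evaluation at $1$. An element $F\in\mathcal{H}(\kappa^{\vee})$ acts by convolution:
\[ (F\ast\phi)(x)=\sum_{y\in \textnormal{G}/J}F(y)\phi(y^{-1}x). \]
The function attached to $A(F)$ is then the $\textnormal{End}(\sigma_\Omega^\vee)$-valued kernel
\[ A(F)(g)\colon f\mapsto \Big(k\mapsto \sum_{k'\in K/J} F(kgk'^{-1})\,f(k')\Big), \]
up to the usual normalisation of the convolution identification. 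This value vanishes unless some $kgk'^{-1}$ lies in the support $JhJ$ of $F$, that is, unless $g\in K JhJ K=KhK$. So $A(F)$ is supported on $KhK$. I will verify this kernel formula by checking that convolution with it reproduces $B\circ F\circ B^{-1}$ on elements of the form $B(\phi)$. This is bookkeeping and the only real computation in the argument.

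For the second assertion I apply the first part to $F=t_{\textnormal{P}}(T_{\mu_{i,j}})$, which requires knowing its support. $T_{\mu_{i,j}}$ is supported on $J_{\textnormal{M}}\,m\,J_{\textnormal{M}}$ with
\[ m=(\mathbf{1},\dots,\iota_i(\mu_{i,j}(\varpi_{E_i})),\dots,\mathbf{1}). \]
Both $t_{\textnormal{P}\cap\widetilde{\textnormal{M}}}$ and $t_{\textnormal{P}\widetilde{\textnormal{M}}}$ are support preserving in the sense of \cite{BK93,BK98}, so $t_{\textnormal{P}}(T_{\mu_{i,j}})$ is supported on $JmJ$. The first part then shows $A(t_{\textnormal{P}}(T_{\mu_{i,j}}))$ is supported on $KmK$.

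It remains to see that $KmK=Kt_{i,j}K$ with $t_{i,j}=\nu_{i,j}(\varpi_L)$ for a minuscule $\nu_{i,j}$. This is the step I expect to need care: one has to unwind how $\iota_i$ embeds the diagonal torus of $\textnormal{GL}_{n_i/d_i}(E_i)$ into $\textnormal{GL}_{n_i}(L)$. In Bushnell--Kutzko's construction, $\iota_i(\mu_{i,j}(\varpi_{E_i}))$ is block diagonal, with $j$ blocks equal to $\varpi_{E_i}$ acting on $E_i\cong L^{d_i}$ and the remaining blocks equal to $1$. Multiplication by $\varpi_{E_i}$ on $\mathcal{O}_{E_i}$ has elementary divisors $(\varpi_L,\dots,\varpi_L,1,\dots,1)$, with $f(E_i/L)$ copies of $\varpi_L$. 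This holds whenever $\mathcal{O}_L^{d_i}$ corresponds to $\mathcal{O}_{E_i}$ under the lattice identification; I will either check this for the lattice chain used in BK, or argue directly that $\varpi_{E_i}\mathcal{O}_{E_i}\supseteq\varpi_L\mathcal{O}_{E_i}$, so all elementary divisors lie in $\{0,1\}$.

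By the Cartan decomposition, $m\in K\,\nu(\varpi_L)\,K$ for a cocharacter $\nu$ with entries in $\{0,1\}$, which is minuscule. Finally, any element of $\mathcal{H}(\sigma_\Omega^\vee)$ supported on one double coset $KtK$ is determined by its value at $t$. That value lies in $\Hom_{t^{-1}Kt\cap K}(\sigma_\Omega^\vee,t^\ast\sigma_\Omega^\vee)$ by bi-equivariance, which gives the form $[Kt_{i,j}K,\psi_{i,j}]$.
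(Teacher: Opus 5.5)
Your proposal is correct and is essentially the paper's proof. For the support statement it is the same unwinding of $B\circ F\circ B^{-1}$: the paper evaluates $A(F)$ on the basis functions $[K,[Jk,v_i]]$ and uses $\textnormal{G}$-equivariance of $B$ instead of writing out the full kernel (whose sum should run over $J\backslash K$ rather than $K/J$). The second statement is likewise obtained from support preservation of $t_{\textnormal{P}}$ together with the Cartan decomposition.

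The only variation is in counting the elementary divisors of $\varpi_{E_i}$. You read them off directly from $\mathcal{O}_{E_i}/\varpi_{E_i}\mathcal{O}_{E_i}\cong k_{E_i}$, whereas the paper factors $\iota_i$ through the maximal unramified subextension $K_i/L$ and uses that $\varpi_{E_i}$ has Eisenstein characteristic polynomial over $K_i$. Both arguments need the standard $\mathcal{O}_L$-lattice in each block to be $\mathcal{O}_{E_i}$-stable; the paper uses this tacitly. So your second ``or'' option is not an alternative to checking that compatibility.
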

\begin{proof}
    Pick a $\overline{\mathbf{Q}}_p$-basis $v_1,...,v_h$ for the representation space of $\kappa^{\vee}$. Then the functions $[Jk,v_i]:K\to \kappa^{\vee}$ supported on $Jk$ and sending $k$ to $v_i$, $k\in J\backslash K$, $1\leq i\leq h$ form a basis of the representation space of $\sigma^{\vee}_{\Omega}=\textnormal{c-Ind}_J^K\kappa^{\vee}$.

    Let $[K,[Jk,v_i]]\colon\textnormal{G}\to \sigma^{\vee}_{\Omega}$ be the function supported on $K$, sending $1$ to $[Jk,v_i]\in \sigma^{\vee}_{\Omega}$. We claim that $A(F)$ is supported on $KgK$ if and only if $A(F)\ast [K,[Jk,v_i]]$ is supported on $KgK$ for every $k\in J\backslash K$, $1\leq i\leq h$. Indeed, this follows from the fact that for $x\in \textnormal{G}$, $A(F)\ast [K,[Jk,v_i]](x)=A(F)(x)\bigl([Jk,v_i]\bigr)$ and the elements $[Jk,v_i]$ form a basis for the representation space of $\sigma^{\vee}_{\Omega}$.

    We now compute
    \begin{align*}
        A(F)\ast [K,[Jk,v_i]] & =B\Bigl(F\ast\bigl(B^{-1}([K,[Jk,v_i]])\bigr)\Bigr)
    \\
        &= B\bigl(F\ast[Jk,v_i]\bigr) \\
        &= k^{-1}B(F\ast [J,v_i])
    \end{align*}
    where the second equality used $B([Jk,v_i])=[K,[Jk,v_i]]$ and the last equality that $k^{-1}[J,v_i]=[Jk,v_i]$ and both $F\ast (-)$, and $B$ are $\textnormal{G}$-equivariant. The function $F\ast[J,v_i]\colon\textnormal{G}\to \kappa^{\vee}$ is supported on $JgJ$, so $B(F\ast[J,v_i])$ is supported on $KgJ$. In particular, $k^{-1}B(F\ast [J,v_i])$ is supported on $KgK$ showing that so is $A(F)$.

    To prove the second statement, recall that $t_{\textnormal{P}}(T_{\mu_{i,j}})$ is supported on
    \begin{equation*}
        J\textnormal{diag}(\mathbf{1}_{n_1},...,\mathbf{1}_{n_{i-1}},\iota_i(\mu_{i,j}(\varpi_{E_i})),\mathbf{1}_{n_{i+1}},...,\mathbf{1}_{n_k})J.
    \end{equation*}
    We claim that
    \begin{equation}\label{equation_minusculeclaim}
        \iota_i(\mu_{i,j}(\varpi_{E_i}))\in \textnormal{GL}_{n_i}(\mathcal{O}_L)\widetilde{\mu}_{i,j}(\varpi_L)\textnormal{GL}_{n_i}(\mathcal{O}_L)\subset \textnormal{GL}_{n_i}(L)
    \end{equation}
    for some minuscule cocharacter $\widetilde{\mu}_{i,j}\colon\mathbf{G}_m(L)\to T_{n_i}(L)$. Once the claim is proved, the second part of the Lemma follows as $A(t_{\textnormal{P}}(T_{\mu_{i,j}}))$ is then supported on
    \begin{equation*}
        K\textnormal{diag}(\mathbf{1}_{n_1},...,\mathbf{1}_{n_{i-1}},\widetilde{\mu}_{i,j}(\varpi_{L}),\mathbf{1}_{n_{i+1}},...,\mathbf{1}_{n_k})K=K\nu_{i,j}(\varpi_L)K
    \end{equation*}
    for some minuscule cocharacter $\nu_{i,j}\colon\mathbf{G}_m(L)\to \textnormal{T}$.

    Let $K_i/L$ be the maximal unramified subextension of $E_i / L$, let $f_i:=[K_i:L]$ and let $r_i:=[E_i:K_i]$. We have a commutative diagram
    \begin{equation*}
    \begin{tikzcd}[scale cd=0.7, column sep = small]
        {\textnormal{GL}_{n_i/d_i}(E_i)=\textnormal{Aut}_{E_i}(L^{n_i})} & {\textnormal{GL}_{n_i/f_i}(K_i)=\textnormal{Aut}_{K_i}(L^{n_i})} & {\textnormal{GL}_{n_i}(L)=\textnormal{Aut}_{L}(L^{n_i})=G_i} \\
	{T_{n_i/d_i}(E_i)=\textnormal{M}_i\cap \textnormal{GL}_{n_i/d_i}(E_i)} & {\textnormal{GL}_{r_i}(K_i)^{\times n_i/d_i}=\textnormal{M}_i\cap \textnormal{GL}_{n_i/f_i}(K_i)} & {\textnormal{GL}_{d_i}(L)^{\times n_i/d_i}=\textnormal{M}_i,}
	\arrow["\rho", hook, from=1-1, to=1-2]
	\arrow["\upsilon", hook, from=1-2, to=1-3]
	\arrow["{\Delta}"', hook, from=2-1, to=1-1]
	\arrow["{\widetilde{\rho}^{\times n_i/d_i}}"', hook, from=2-1, to=2-2]
	\arrow["{\Delta}"', hook, from=2-2, to=1-2]
	\arrow["{\widetilde{\upsilon}^{\times n_i/d_i}}"', hook, from=2-2, to=2-3]
	\arrow["{\Delta}"', hook, from=2-3, to=1-3]
\end{tikzcd}
    \end{equation*}
    where the top maps factorise $\iota_i$ and the bottom maps are induced by intersecting the members of the top row with $\textnormal{M}_i$.
    
Concretely, $\widetilde{\rho}$ can be identified with the natural inclusion $\textnormal{Aut}_{E_i}(E_i)=\textnormal{GL}_1(E_i)\hookrightarrow \textnormal{Aut}_{K_i}(E_i)=\textnormal{GL}_{r_i}(K_i)$. In particular, the $\varpi_{K_{i}}$-adic valuation of $\det(\widetilde{\rho}(\varpi_{E_i}))$ is $1$. Indeed, this is because the characteristic polynomial of multiplication by $\varpi_{E_i}$ on the $K_i$-vector space $E_i$ is Eisenstein. Therefore, $\widetilde{\rho}(\varpi_{E_i})\in \textnormal{GL}_{r_i}(\mathcal{O}_{K_i})\textnormal{diag}(\varpi_{K_i},1,...,1)\textnormal{GL}_{r_i}(\mathcal{O}_{K_i})$ by the Cartan decomposition. 
    
    Moreover, $\widetilde{\upsilon}$ can be identified with the natural inclusion $\textnormal{Aut}_{K_i}(K_i^{r_i})=\textnormal{GL}_{r_i}(K_i)\hookrightarrow \textnormal{Aut}_{L}(K_i^{r_i})=\textnormal{GL}_{d_i}(L)$. In particular, $\widetilde{\upsilon}(\textnormal{diag}(\varpi_{K_i},1,...,1))=\textnormal{diag}(\varpi_L,...,\varpi_L,1,...,1)\in \textnormal{GL}_{d_i}(L)$ with the uniformiser occurring in the diagonal exactly $f_i$ times. The claim \eqref{equation_minusculeclaim} now follows from the identity 
    \[ \iota_i\Big(\Delta\big(\mu_{i,j}(\varpi_{E_i})\big)\Big)=\Delta \biggl(\widetilde{\upsilon}^{\times n_i/d_i}\Bigl(\widetilde{\rho}^{\times n_i/d_i}\big(\mu_{i,j}(\varpi_{E_i})\big)\Bigr)\biggr). \]
\end{proof}

We can finally introduce the commutative $\overline{\mathbf{Q}}_p$-subalgebras 
\begin{align}\label{equation_BernsteinSubalgPlus}
    \mathcal{H}^{B,+}(\sigma_{\Omega}) &:= \overline{\mathbf{Q}}_p[Y_{i,j}\mid 1\leq i\leq k, 1\leq j\leq n_i/d_i]\subset \mathcal{H}(\sigma^{\vee}_{\Omega}), \\
\label{equation_BernsteinSubalg}
    \mathcal{H}^B(\sigma_{\Omega})&:=\mathcal{H}^{B,+}(\sigma_{\Omega})[Y_{i,j}^{-1}\mid i,j] \\
    & =\overline{\mathbf{Q}}_p[Y_{i,j}^{\pm1}\mid 1\leq i\leq k,1\leq j\leq n_i/d_i]\subset \mathcal{H}(\sigma_{\Omega}^{\vee}). \nonumber
\end{align}
\begin{lemma}\label{Lemma_centercontainedinBernsteinsalgebra}
    The centre of $\mathcal{H}(\sigma_{\Omega}^{\vee})$ is contained in $\mathcal{H}^B(\sigma_{\Omega})$.
\end{lemma}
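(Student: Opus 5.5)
We reduce the statement to the classical fact that the centre of an Iwahori--Hecke algebra lies in its Bernstein commutative subalgebra. The algebra isomorphism $A\colon\mathcal{H}(\kappa^{\vee})\xrightarrow{\sim}\mathcal{H}(\sigma^{\vee}_{\Omega})$ identifies centres. By construction, $\mathcal{H}^B(\sigma_{\Omega})$ is the image under $A$ of the subalgebra of $\mathcal{H}(\kappa^{\vee})$ generated by the elements $t_{\textnormal{P}}(T_{\mu_{i,j}})^{\pm1}$. Since $\mathcal{H}(\kappa_{\textnormal{M}}^{\vee})\cong\overline{\mathbf{Q}}_p[T_{\mu_{i,j}}^{\pm1}]$, this subalgebra is exactly $t_{\textnormal{P}}(\mathcal{H}(\kappa_{\textnormal{M}}^{\vee}))$. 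It therefore suffices to show $Z(\mathcal{H}(\kappa^{\vee}))\subset t_{\textnormal{P}}(\mathcal{H}(\kappa_{\textnormal{M}}^{\vee}))$.

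We would transport this along diagram \eqref{Diagram_CompatibilityWithIwahori}. The maps $t_{\textnormal{P}\widetilde{\textnormal{M}}}$ and $\otimes_i\mathcal{H}(\textnormal{H}_i,I_{\textnormal{H}_i})\xrightarrow{\sim}\mathcal{H}(\kappa_{\widetilde{\textnormal{M}}}^{\vee})$ are algebra isomorphisms. By commutativity of the diagram, the subalgebra $t_{\textnormal{P}}(\mathcal{H}(\kappa_{\textnormal{M}}^{\vee}))$ corresponds to $\otimes_i t_{\textnormal{B}_{\textnormal{H}_i}}(\mathcal{H}(\textnormal{T}_{\textnormal{H}_i},\textnormal{T}^{\circ}_{\textnormal{H}_i}))$. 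Each $t_{\textnormal{B}_{\textnormal{H}_i}}$ coincides with Bernstein's embedding $\Theta_i\colon\overline{\mathbf{Q}}_p[X_{\ast}(\textnormal{T}_{\textnormal{H}_i})]\hookrightarrow\mathcal{H}(\textnormal{H}_i,I_{\textnormal{H}_i})$. So the problem becomes: the centre of $\bigotimes_i\mathcal{H}(\textnormal{H}_i,I_{\textnormal{H}_i})$ lies in $\bigotimes_i\Theta_i(\overline{\mathbf{Q}}_p[X_{\ast}(\textnormal{T}_{\textnormal{H}_i})])$.

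For a single factor, this is Bernstein's theorem. The Iwahori--Hecke algebra of $\textnormal{GL}_m(E)$ has a Bernstein presentation $\mathcal{H}(\textnormal{H},I_{\textnormal{H}})=\bigoplus_{w\in W_{\textnormal{fin}}}\Theta(\overline{\mathbf{Q}}_p[X_{\ast}])T_w$, and its centre is $\Theta(\overline{\mathbf{Q}}_p[X_{\ast}]^{W_{\textnormal{fin}}})$. In particular the centre lies in $\Theta(\overline{\mathbf{Q}}_p[X_{\ast}])$; we would cite Lusztig or Haines--Kottwitz--Prasad for this. The tensor product step is where some care is needed, since the centre of a tensor product of algebras over a field is the tensor product of the centres. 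To see this, choose a basis $\{b_\alpha\}$ of $\bigotimes_{i\neq i_0}$ and expand a central element $z=\sum_\alpha z_\alpha\otimes b_\alpha$. Commuting with $x\otimes 1$ for $x$ in the $i_0$-th factor forces every $z_\alpha$ to be central in that factor, and iterating over all factors gives the claim. Hence $Z$ lies in the tensor product of the $\Theta_i$-images.

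The main obstacle is bookkeeping rather than mathematics. One has to check that the isomorphism $\otimes_i\mathcal{H}(\textnormal{H}_i,I_{\textnormal{H}_i})\cong\mathcal{H}(\kappa_{\widetilde{\textnormal{M}}}^{\vee})$ of \cite[7.6.20]{BK93} is compatible with the normalisation of $\Theta$. In particular, the $\delta^{1/2}$ factor must match, so that the images agree as sets; the normalisation only rescales generators, so the generated subalgebras coincide anyway. One also has to check that $A$ sends $t_{\textnormal{P}}(T_{\mu_{i,j}})^{-1}$ to $Y_{i,j}^{-1}$. This is automatic because $A$ is an algebra isomorphism and the $T_{\mu_{i,j}}$ are units in $\mathcal{H}(\kappa_{\textnormal{M}}^{\vee})$.
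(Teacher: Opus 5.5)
Your proposal is correct and follows essentially the same route as the paper: transport along the compatibility diagram with $\otimes_i\mathcal{H}(\textnormal{H}_i,I_{\textnormal{H}_i})$ and invoke Bernstein's description of the centre of an Iwahori--Hecke algebra, using that $t_{\textnormal{B}_{\textnormal{H}_i}}$ coincides with $\Theta$. Your explicit argument that the centre of a tensor product of algebras over a field is the tensor product of the centres fills in a step the paper leaves implicit when it reduces to the individual factors.
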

\begin{proof}
    By \ref{Diagram_CompatibilityWithIwahori}, it suffices to prove that the image of $t_{\textnormal{B}_{\textnormal{H}_i}}$ contains the centre of $\mathcal{H}(\textnormal{H}_i,I_{\textnormal{H}_i})$ for $i=1,...,k$. This follows from Bernstein's presentation of the centre of the Iwahori Hecke algebra noting that $t_{\textnormal{B}_{\textnormal{H}_i}}$ coincides with Bernstein's map $\Theta\colon\overline{\mathbf{Q}}_p[X_{\ast}(\textnormal{T}_{\textnormal{H}_i})]\hookrightarrow \mathcal{H}(\textnormal{H}_i,I_{\textnormal{H}_i})$.
\end{proof}

\begin{example}
Let $\textnormal{B}=\textnormal{T}\textnormal{U}\subset \textnormal{G}$ be the Borel subgroup of upper triangular matrices with its Levi decomposition.
For the convenience of the reader, we spell out the case when $\Omega$ is the principal block $[\textnormal{T},\mathbf{1}]_{\textnormal{G}}$. In this case, $J$ is the Iwahori subgroup $I:=\textnormal{red}^{-1}(B(\mathcal{O}_L/\varpi_L))\leq \textnormal{G}$ and $\kappa$ is the trivial representation. We have $\widetilde{\textnormal{M}}=\textnormal{G}$ and $\textnormal{M}=\textnormal{T}$. Moreover, $\mathcal{H}(\kappa^{\vee})$ is the usual Iwahori Hecke algebra $\mathcal{H}(G,I)=\overline{\mathbf{Q}}_p[I\backslash \textnormal{G}/I]$. 
    
    For $1\leq i\leq n$, let $\mu_i=(1,...,1,0,...0)$ be the minuscule cocharacter with $1$ occurring exactly $i$ times. 
    The map sending $\mu_i$ to $\delta_{\textnormal{B}}^{1/2}(\mu_i(\varpi_L))[I\mu_i(\varpi_L)I]=q_L^{\frac{-i(n-i)}{2}}[I\mu_i(\varpi_L)I]$ can be promoted to an algebra homomorphism \[
    \overline{\mathbf{Q}}_p[X_{\ast}(\textnormal{T})^+]\to  \mathcal{H}(\textnormal{G},I),
    \]
    sending a dominant $\mu$ to $\delta_{\textnormal{B}}^{1/2}(\mu(\varpi_L))[I\mu(\varpi_L)I]$. 
    In other words, for dominant cocharacters $\mu_1,\mu_2$, $[I\mu_1(\varpi_L)I]\ast [I\mu_2(\varpi_L)I]=[I\mu_1\mu_2(\varpi_L)I]$. Finally, a well known fact is that for dominant $\mu$, $[I\mu(\varpi_L)I]$ is invertible in the Iwahori Hecke algebra. We then obtain the algebra homomorphism $t_{\textnormal{B}}\colon\overline{\mathbf{Q}}_p[X_{\ast}(\textnormal{T})]=\mathcal{H}(\textnormal{T},\textnormal{T}^0)\to \mathcal{H}(\textnormal{G},I)$ by sending $\mu^{-1}$ for $\mu$ dominant to $t_{\textnormal{B}}(\mu)^{-1}$. We observe that this coincides with the algebra homomorphism $\Theta$ considered by Bernstein.

    Finally, in the definition of $\mathcal{H}^{B,+}(\sigma)$, $k=1$, and $Y_{1,j}\colon\textnormal{G}\to \textnormal{End}(\sigma)$ is the function associated with $\delta_{\textnormal{B}}^{1/2}(\mu_j(\varpi_L))[I\mu_j(\varpi_L)I]$ that, as observed, is supported on $K\mu_j(\varpi_L)K$.

\end{example}

\subsection{Locally algebraic representations with characteristic \texorpdfstring{$0$}{0} coefficients}\label{Subsection_locallyalgreps} Let $\textnormal{T}_n\textnormal{U}_n=\textnormal{B}_n\subset\textnormal{GL}_{n,L}$ be the Borel subgroup of upper triangular matrices with its standard Levi decomposition. Let $E/\mathbf{Q}_p$ be a finite field extension so that $\#\Hom(L,E)=[L:\mathbf{Q}_p]$. Let $\mathcal{O}\subset E$ denote its ring of integers and $\varpi\in \mathcal{O}$ a choice of uniformiser. 
We set $W_{n,L}:=W(\textnormal{Res}_{L/\mathbf{Q}_p}\textnormal{GL}_n,\textnormal{Res}_{L/\mathbf{Q}_p}\textnormal{B}_n)\cong W_n^{[L:\mathbf{Q}_p]}$. 

For a standard parabolic subgroup $P=MN\subset \textnormal{GL}_{n,L}$, we set $W_{M,L}:=W(\textnormal{Res}_{L/\mathbf{Q}_p}M,\textnormal{Res}_{L/\mathbf{Q}_p}(M\cap \textnormal{B}_n))\cong W_M^{[L:\mathbf{Q}_p]}$ and $W^M_L$ to be the set of minimal length representatives of $W_{L,n}/W_{M,L}$.

For a $\textnormal{Res}_{L/\mathbf{Q}_p}\textnormal{B}_n$-dominant weight
\begin{equation*}
    \lambda=(\lambda_{\iota})_{\iota:L\hookrightarrow E}\in X^{\ast}(\textnormal{Res}_{L/\mathbf{Q}_p}\textnormal{T}_n)_+=(\mathbf{Z}_+^n)^{\Hom(L,E)},
\end{equation*} we set $V_{\lambda}=\otimes_{\iota:L\hookrightarrow E}V_{\lambda_{\iota}}$ to be the absolutely irreducible algebraic $E$-representation of $\textnormal{G}$ of highest weight $\lambda$. 

For this subsection, fix $\sigma\in \textnormal{Rep}_{\textnormal{sm}}(K,E)$, finite dimensional over $E$. 
Set $\sigma_{\lambda}:=\sigma\otimes_EV_{\lambda}$, a locally algebraic $E$-representation of $K$. 
Moreover, we set $\mathcal{H}(\sigma_{\lambda})$ to be the convolution algebra of compactly supported functions $f\colon\textnormal{G}\to \textnormal{End}_{E}(\sigma_{\lambda})$ satisfying $f(k_1gk_2)=\sigma_{\lambda}(k_1)\circ f(g)\circ \sigma_{\lambda}(k_2)$ for every $k_1,k_2\in K$, $g\in \textnormal{G}$. 
Using the Cartan decomposition for $\textnormal{G}$, an $E$-basis of $\mathcal{H}(\sigma_{\lambda})$ is given by functions $[Kt_{\mu}K,\psi]$ supported on $Kt_{\mu}K$, sending $t_{\mu}$ to $\psi$, where
$t_{\mu}=\mu(\varpi_L)$ for some $\mu\in X_{\ast}(\textnormal{T}_n)^+$ and $\psi$ runs over an $E$-basis of $\Hom_{K\cap t_{\mu}^{-1}Kt_{\mu}}(\sigma_{\lambda},t_{\mu}^{\ast}\sigma_{\lambda})$.

We note that acting on $\textnormal{c-Ind}_K^{\textnormal{G}}\sigma_{\lambda}$ 
via convolution induces isomorphisms $\mathcal{H} (\sigma_{\lambda})\cong \textnormal{End}_{\textnormal{G}}(\textnormal{c-Ind}_K^{\textnormal{G}}\sigma_{\lambda})$, see for instance \cite[Lemma 1.2]{ST07}. We also note that there is an algebra isomorphism
\begin{align*}
    \mathcal{H}(\sigma) &\xrightarrow{\sim}\mathcal{H}(\sigma_{\lambda}) \\
    [Kt_{\mu}K,\phi] &\mapsto [Kt_\mu K,\phi\otimes t_{\mu}],
\end{align*}
see \cite[Lemma 1.4]{ST07}. 

We finally consider a standard parabolic subgroup $P=MN\subset \textnormal{GL}_{n,L}$ and write $\textnormal{P}=P(L)$, $\textnormal{M}=M(L)$, $\textnormal{N}=N(L)$. Let $\mathfrak{n}$ denote the Lie algebra of $\textnormal{N}$. We recall the following theorem of Lazard (cf. \cite[Ch. V, (2.4.10), Th\'eor\`eme]{Laz65}).

\begin{theorem}[Lazard]\label{Theorem_Lazard}
     Given a finite dimensional continuous $E$-representation $V$ of $K_{\textnormal{P}}$, we have a functorial isomorphism
    \begin{equation*}
\varinjlim_{K_{\textnormal{N}}'\trianglelefteq K_{\textnormal{N}}}H^{\ast}_{\textnormal{cont}}(K_{\textnormal{N}}',V) \xrightarrow{\sim}H^{\ast}(\mathfrak{n},V)
    \end{equation*}
     of $K_{\textnormal{P}}$-representations where the colimit runs over open normal subgroups and the transition maps are the ones given by restriction on group cohomology.

     Moreover, pre-composition with the canonical map 
     \begin{equation*}
         H^{\ast}_{\textnormal{cont}}(K_{\textnormal{N}}',V)\to\varinjlim_{K_{\textnormal{N}}'\trianglelefteq K_{\textnormal{N}}}H^{\ast}_{\textnormal{cont}}(K_{\textnormal{N}}',V)
     \end{equation*} induces an isomorphism
     \begin{equation*}
         \alpha_{K_{\textnormal{N}}'}:H^{\ast}_{\textnormal{cont}}(K_{\textnormal{N}}',V)\xrightarrow{\sim}\Gamma(K_{\textnormal{N}}',H^{\ast}(\mathfrak{n},V)).
     \end{equation*}
\end{theorem}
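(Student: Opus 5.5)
The plan is to reduce to Lazard's comparison theorem for uniform pro-$p$ groups and analytic representations, and then pass to arbitrary open normal subgroups using Hochschild--Serre with characteristic-zero coefficients. I would prove the second statement (the isomorphisms $\alpha_{K_{\textnormal{N}}'}$) first and deduce the colimit statement from it.

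\textbf{Step 1: analyticity.} The representation $V$ is a continuous homomorphism $K_{\textnormal{P}}\to \textnormal{GL}(V)$ between $p$-adic Lie groups. It is therefore $\mathbf{Q}_p$-analytic, so on a sufficiently small open subgroup it is given by a convergent power series in coordinates of the second kind. Its derivative defines the action of $\mathfrak{n}$ (and of $\textnormal{Lie}\,K_{\textnormal{P}}$) on $V$. Conjugation by $K_{\textnormal{P}}$ preserves $K_{\textnormal{N}}$ and acts on $\mathfrak{n}$ by the adjoint action. This gives an action of $K_{\textnormal{P}}$ on $H^{\ast}(\mathfrak{n},V)$, together with compatible actions $H^{\ast}_{\textnormal{cont}}(K_{\textnormal{N}}',V)\to H^{\ast}_{\textnormal{cont}}(kK_{\textnormal{N}}'k^{-1},V)$ for $k\in K_{\textnormal{P}}$. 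These are the structures with respect to which equivariance and functoriality will be checked; functoriality in $V$ is clear throughout.

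\textbf{Step 2: Lazard's theorem on small subgroups.} Choose an open uniform (saturated) pro-$p$ subgroup $H\trianglelefteq K_{\textnormal{N}}$ on which $V$ is analytic. Lazard's theorem \cite[Ch.~V, (2.4.10)]{Laz65} then gives a natural isomorphism $H^{\ast}_{\textnormal{cont}}(H,V)\cong H^{\ast}(\mathfrak{n},V)^H$.

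The action of $K_{\textnormal{N}}$ on the finite-dimensional space $H^{\ast}(\mathfrak{n},V)$ is analytic. Its derivative is the action of $\mathfrak{n}$ on its own Lie algebra cohomology, which vanishes. Hence, after shrinking $H$, the group $H$ acts trivially, and we get $H^{\ast}_{\textnormal{cont}}(H,V)\cong H^{\ast}(\mathfrak{n},V)$. I expect this step to be the main obstacle: one has to verify that the map $\alpha_H$ is exactly Lazard's comparison (and not merely some abstract isomorphism), and that it is compatible with restriction to smaller uniform subgroups and with conjugation by $K_{\textnormal{P}}$. I would try to deduce this from the naturality of Lazard's isomorphism in the group, via the saturated-group formalism. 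Given that compatibility, the transition maps in the colimit are eventually isomorphisms, and the first statement follows.

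\textbf{Step 3: arbitrary $K_{\textnormal{N}}'$.} For an arbitrary open normal $K_{\textnormal{N}}'$, pick $H\subseteq K_{\textnormal{N}}'$ as in Step 2 and normal in $K_{\textnormal{N}}'$. Consider the Hochschild--Serre spectral sequence for the finite quotient $K_{\textnormal{N}}'/H$. Since $V$ is an $E$-vector space and $E$ has characteristic $0$, higher cohomology of a finite group vanishes. Therefore
\[ H^{\ast}_{\textnormal{cont}}(K_{\textnormal{N}}',V)\cong H^{\ast}_{\textnormal{cont}}(H,V)^{K_{\textnormal{N}}'/H}\cong \Gamma\bigl(K_{\textnormal{N}}',H^{\ast}(\mathfrak{n},V)\bigr), \]
the last step using Step 2. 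This composite is identified with $\alpha_{K_{\textnormal{N}}'}$, because the restriction map into the colimit factors through restriction to $H$. That gives the second assertion.
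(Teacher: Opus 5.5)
Your proposal is correct and follows the same route as the paper, which gives no argument beyond citing Lazard's theorem \cite[Ch.~V, (2.4.10)]{Laz65}. Your Steps 2 and 3 unpack how the stated form follows from Lazard's comparison: shrink to a small normal uniform subgroup, which acts trivially on $H^{\ast}(\mathfrak{n},V)$, then pass to an arbitrary $K_{\textnormal{N}}'$ by restriction--corestriction, using that the coefficients have characteristic zero. The compatibility of Lazard's maps with restriction and with $K_{\textnormal{P}}$-conjugation, which you flag as the delicate point, is exactly what that citation supplies.
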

Combining Lazard's theorem with Kostant's formula (cf. \cite{Kos61}) yields the following result.
\begin{theorem}\label{Theorem_Kostant}
    We have a collection of canonical isomorphisms
    \begin{equation*}
        H^{\ast}(K_{\textnormal{N}}',V_{\lambda})\xrightarrow{\sim} \oplus_{w\in W^{\textnormal{M}}}V_{w\cdot \lambda}
    \end{equation*}
     for compact open normal subgroups $K_{\textnormal{N}}'\trianglelefteq K_{\textnormal{N}}$ that are compatible under restriction in group cohomology. In particular, each of these restriction maps are isomorphisms.
\end{theorem}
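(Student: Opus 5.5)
The plan is to derive the statement from Lazard's theorem (Theorem \ref{Theorem_Lazard}) applied to $V = V_\lambda$ restricted to $K_{\textnormal{P}}$, combined with Kostant's computation of $\mathfrak{n}$-cohomology. Kostant's theorem \cite{Kos61} gives an isomorphism of algebraic $M$-representations (after base change to $E$, using that $\#\Hom(L,E) = [L:\mathbf{Q}_p]$, so $\textnormal{Res}_{L/\mathbf{Q}_p}M$ splits over $E$ and we may work embedding by embedding):
\[ H^{\ast}(\mathfrak{n}, V_\lambda) \cong \bigoplus_{w \in W^{\textnormal{M}}} V_{w\cdot\lambda}, \]
with $V_{w\cdot\lambda}$ sitting in degree $\ell(w)$. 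Here $\mathfrak{n}$ is the $\mathbf{Q}_p$-Lie algebra of $\textnormal{Res}_{L/\mathbf{Q}_p}N$, and $\mathfrak{n}\otimes_{\mathbf{Q}_p}E = \oplus_\iota \mathfrak{n}_\iota$. Kostant is applied to each factor $V_{\lambda_\iota}$, and the Künneth formula for Lie algebra cohomology of a direct sum assembles the factors into the indexing set $W^{\textnormal{M}}_L \cong \prod_\iota W^{M}$.

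The key step is to check that $\textnormal{N}$, and in particular $K_{\textnormal{N}}$, acts trivially on $H^\ast(\mathfrak{n}, V_\lambda)$. The action of $K_{\textnormal{P}}$ on $H^\ast(\mathfrak{n},V_\lambda)$ comes from the algebraic $P$-action on the Chevalley--Eilenberg complex $\Hom(\wedge^\bullet \mathfrak{n}, V_\lambda)$. The unipotent group $N$ acts on the cohomology through an action whose derivative is the action of $\mathfrak{n}$ on its own cohomology, and this derivative is zero by the standard Cartan homotopy formula. Since $N$ is connected unipotent, an algebraic action with trivial derivative is itself trivial. Hence
\[ \Gamma(K_{\textnormal{N}}', H^\ast(\mathfrak{n},V_\lambda)) = H^\ast(\mathfrak{n},V_\lambda) \]
for every open normal $K_{\textnormal{N}}'$.

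Feeding this into the second part of Theorem \ref{Theorem_Lazard}, $\alpha_{K'_{\textnormal{N}}}$ becomes an isomorphism $H^\ast_{\textnormal{cont}}(K_{\textnormal{N}}', V_\lambda) \xrightarrow{\sim} H^\ast(\mathfrak{n},V_\lambda)$. Composing with Kostant's isomorphism gives the desired map to $\oplus_w V_{w\cdot\lambda}$. The maps $\alpha_{K'_{\textnormal{N}}}$ are defined by composing with the canonical map into the colimit, so they are compatible with restriction by construction. Consequently, for $K_{\textnormal{N}}'' \subset K_{\textnormal{N}}'$ the restriction map equals $\alpha_{K''}^{-1}\circ\alpha_{K'}$, and is therefore an isomorphism. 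Canonicity and $K_{\textnormal{M}}$-equivariance are inherited from the functoriality in Theorem \ref{Theorem_Lazard} and the $M$-equivariance of Kostant's isomorphism.

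The main obstacle is the triviality of the $N$-action together with the bookkeeping over $E$. One has to make sure that Lazard's theorem is applied to the $\mathbf{Q}_p$-analytic group $K_{\textnormal{N}}$ with $E$-coefficients. In that setting the relevant Lie algebra cohomology is $H^\ast(\mathfrak{n}\otimes_{\mathbf{Q}_p} E, V_\lambda)$, and only after splitting over $E$ does Kostant's theorem for split reductive groups apply factorwise. If the $N$-triviality argument via derivatives is delicate for the profinite group, I would instead use the fact that $H^\ast(\mathfrak{n},V_\lambda)$ is semisimple as an $M$-module with $N$ acting unipotently. The $P$-action then factors through $M$, because the weights occurring are distinct for distinct $w$ and each isotypic piece is stable.
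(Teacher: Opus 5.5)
Your proposal is correct and follows the paper's route. The paper simply says the result follows by combining Lazard's theorem (Theorem \ref{Theorem_Lazard}) with Kostant's formula. Your Cartan-homotopy argument, showing that $N$ and hence $K_{\textnormal{N}}'$ acts trivially on $H^{\ast}(\mathfrak{n},V_{\lambda})$, is exactly the implicit step needed to identify $\Gamma(K_{\textnormal{N}}',H^{\ast}(\mathfrak{n},V_{\lambda}))$ with $H^{\ast}(\mathfrak{n},V_{\lambda})$. Compatibility with restriction then follows from the construction of the maps $\alpha_{K_{\textnormal{N}}'}$, as you say.

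Drop the fallback in your last paragraph and rely on the Cartan homotopy argument. As stated it is not a proof: distinctness of the highest weights $w\cdot\lambda$ does not by itself stop $\mathfrak{n}$ from mapping one $M$-constituent into another.
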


We will need the following simple corollary of Lazard's comparison isomorphisms and Kostant's formula.
\begin{lemma}\label{Lemma_cupproductiso}
     For every compact open normal subgroup $K'_{\textnormal{N}}\trianglelefteq K_{\textnormal{N}}$, the cup product on group cohomology induces an isomorphism
     \begin{equation}\label{equation_cupproductiso}
         \cup\colon H^0(K'_{\textnormal{N}},\sigma)\otimes_E H^{\ast}(K'_{\textnormal{N}},V_{\lambda})\xrightarrow{\sim} H^{\ast}(K'_{\textnormal{N}},\sigma_{\lambda}).
     \end{equation}
     Moreover, these isomorphisms fit into commutative diagrams
     \begin{equation*}
         \begin{tikzcd}
	{H^{0}(K_{\textnormal{N}}',\sigma)\otimes_EH^{\ast}(K_{\textnormal{N}}',V_{\lambda})} & {H^{\ast}(K_{\textnormal{N}}',\sigma_{\lambda})} \\
	{H^{0}(K_{\textnormal{N}},\sigma)\otimes_EH^{\ast}(K_{\textnormal{N}},V_{\lambda})} & {H^{\ast}(K_{\textnormal{N}},\sigma_{\lambda}).}
	\arrow["\cup", from=1-1, to=1-2]
	\arrow["{\textnormal{tr}_{K_{\textnormal{N}}/K_{\textnormal{N}}'}\otimes \textnormal{res}^{-1}}", from=1-1, to=2-1]
	\arrow["{\textnormal{cores}}", from=1-2, to=2-2]
	\arrow["\cup", from=2-1, to=2-2]
\end{tikzcd}
     \end{equation*}
\end{lemma}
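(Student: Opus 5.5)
I would prove the lemma by reducing to Lie algebra cohomology via Lazard's theorem, where the statement becomes essentially formal. The key structural input is that $\sigma$ is a smooth representation of $K$, so there is an open normal subgroup $K''_{\textnormal{N}} \trianglelefteq K_{\textnormal{N}}$, contained in $K'_{\textnormal{N}}$, acting trivially on $\sigma$. Consequently, $\mathfrak{n}$ acts trivially on $\sigma$, and there is an isomorphism $H^{\ast}(\mathfrak{n}, \sigma_{\lambda}) \cong \sigma \otimes_E H^{\ast}(\mathfrak{n}, V_{\lambda})$ of $K_{\textnormal{P}}$-representations. This holds because the Chevalley--Eilenberg complex of $\sigma\otimes V_\lambda$ is $\sigma\otimes_E C^\bullet(\mathfrak{n},V_\lambda)$ with differential $\id\otimes d$, and cohomology commutes with tensoring by the finite-dimensional $E$-space $\sigma$. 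The isomorphism is induced by the cup product $H^0(\mathfrak{n},\sigma)\otimes H^\ast(\mathfrak{n},V_\lambda)\to H^\ast(\mathfrak{n},\sigma_\lambda)$.

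Next, I would apply the second part of Theorem \ref{Theorem_Lazard}. This gives $H^{\ast}(K'_{\textnormal{N}},\sigma_\lambda)\cong \Gamma(K'_{\textnormal{N}}, \sigma\otimes H^\ast(\mathfrak{n},V_\lambda))$. Theorem \ref{Theorem_Kostant} identifies $H^\ast(\mathfrak{n},V_\lambda)$ with $\oplus_{w}V_{w\cdot\lambda}$ restricted to $\textnormal{M}$. On this, $K_{\textnormal{N}}$ acts trivially, since $\mathfrak{n}$ acts trivially on $\mathfrak{n}$-cohomology and $K_{\textnormal{N}}$ is generated by the exponentials of elements of $\mathfrak{n}$ over a compact lattice, the action being algebraic. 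Therefore $\Gamma(K'_{\textnormal{N}},\sigma\otimes H^\ast(\mathfrak{n},V_\lambda)) = \sigma^{K'_{\textnormal{N}}}\otimes H^\ast(\mathfrak{n},V_\lambda)$, and again by Lazard the second factor equals $H^\ast(K'_{\textnormal{N}},V_\lambda)$.

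To conclude, I must check that the composite isomorphism is the group-cohomology cup product. Both Lazard's map and the cup product are compatible with cup products: Lazard's comparison is multiplicative, being induced by restriction to small subgroups followed by the map to Lie algebra cochains, and the cup product with a degree-$0$ class is just tensoring an invariant vector. I would verify this by writing classes in $H^0(K'_{\textnormal{N}},\sigma)$ as invariant vectors $s$ and observing that $s\cup(-)$ is induced by the $K'_{\textnormal{N}}$-equivariant map $V_\lambda\to\sigma_\lambda$, $v\mapsto s\otimes v$. Functoriality of the isomorphism $\alpha_{K'_{\textnormal{N}}}$ in $V$ then gives the compatibility directly, which is the cleanest route.

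For the diagram, I would use the same functoriality with $s$ now $K'_{\textnormal{N}}$-invariant. Here I apply the projection formula $\textnormal{cores}(x\cup\textnormal{res}(y)) = \textnormal{cores}(x)\cup y$ for $x\in H^0(K'_{\textnormal{N}},\sigma)$ and $y \in H^\ast(K_{\textnormal{N}},V_\lambda)$. I take $y=\textnormal{res}^{-1}(\cdot)$, which is legitimate because restriction is an isomorphism by Theorem \ref{Theorem_Kostant}, and use that corestriction in degree $0$ is the trace $\textnormal{tr}_{K_{\textnormal{N}}/K'_{\textnormal{N}}}$. The main obstacle is the triviality of the $K_{\textnormal{N}}$-action on $H^\ast(\mathfrak{n},V_\lambda)$ and the multiplicativity and naturality of Lazard's map. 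If the exponential argument is awkward, I would instead deduce the triviality from Theorem \ref{Theorem_Kostant}'s statement that restriction maps are isomorphisms: if restriction from $K_{\textnormal{N}}$ to $K'_{\textnormal{N}}$ is an isomorphism, the $K_{\textnormal{N}}/K'_{\textnormal{N}}$-action on $H^\ast(K'_{\textnormal{N}},V_\lambda)$ must be trivial, because its image lands in the invariants.
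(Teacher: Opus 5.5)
Your proof is correct and is essentially the paper's. The paper likewise uses the second part of Theorem~\ref{Theorem_Lazard} to reduce to a setting where $\sigma$ is trivial, and then proves the diagram by exactly your projection-formula computation, with $\textnormal{res}^{-1}$ supplied by Theorem~\ref{Theorem_Kostant}. The only difference in the first part is where the reduction lands: the paper passes to a normal $K''_{\textnormal{N}}$ acting trivially on $\sigma$ and uses that group cohomology commutes with the finite direct sum $\sigma_\lambda\cong V_\lambda^{\oplus\dim\sigma}$, whereas you go all the way to $\mathfrak{n}$-cohomology via the Chevalley--Eilenberg complex. Your explicit check that $K_{\textnormal{N}}$ acts trivially on $H^\ast(\mathfrak{n},V_\lambda)$, and your identification of the composite with the cup product via naturality of $\alpha_{K'_{\textnormal{N}}}$ along $v\mapsto s\otimes v$, spell out steps the paper leaves implicit.
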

\begin{proof}
To prove the first claim, note that by Theorem~\ref{Theorem_Lazard}, we have $H^{\ast}(K_{\textnormal{N}}',V)=H^0(K_{\textnormal{N}}'/K''_{\textnormal{N}},H^{\ast}(K_{\textnormal{N}}'',V))$ for any compact open normal subgroup $K''_{\textnormal{N}}\trianglelefteq K'_{\textnormal{N}}$ and $V=\sigma, V_{\lambda},\sigma_{\lambda}$. In particular, as restriction commutes with cup product, we can assume that $K_{\textnormal{N}}'$ acts trivially on $\sigma$. In this case, the claim follows from the fact that group cohomology commutes with direct sums and the canonical comparing isomorphism can be identified with the cup product.

To see the second claim, note that for $v\otimes w\in H^{0}(K_{\textnormal{N}}',\sigma)\otimes_EH^{\ast}(K_{\textnormal{N}}',V_{\lambda})$ we have
\begin{align*}
    \textnormal{cores}(v\cup w) &=\textnormal{cores}(v\cup \textnormal{res}(\textnormal{res}^{-1}w)) \\
    &= \textnormal{cores}(v)\cup\textnormal{res}^{-1}(w) \\
    &=\textnormal{tr}_{K_{\textnormal{N}}/K_{\textnormal{N}}'}(v)\cup \textnormal{res}^{-1}(w)
\end{align*}
where we used Theorem~\ref{Theorem_Kostant} to see that $\textnormal{res}:H^{\ast}(K_{\textnormal{N}},V_{\lambda})\to H^{\ast}(K_{\textnormal{N}'},V_{\lambda})$ is invertible.
\end{proof}
Define $\sigma_{\lambda, \textnormal{P}}:=H^{\ast}(K_{\textnormal{N}},\sigma_{\lambda})\in \textnormal{Rep}_{\textnormal{sm}}(K_{\textnormal{M}},E)$. By the previous discussion, there is a canonical isomorphism
\begin{equation*}
\sigma_{ \lambda, \textnormal{P}}\cong\bigoplus_{i\in\mathbf{Z}}\left(\bigoplus_{w\in W^{\textnormal{M}}, \textnormal{ }l(w)=i}\sigma_{\textnormal{P},w\cdot\lambda}\right)
\end{equation*}
where, following the previous notation, $\sigma_{\textnormal{P},w\cdot \lambda}=\sigma^{K_{\textnormal{N}}}\otimes_E V_{w\cdot \lambda}$ is a locally algebraic $E$-representation of $K_{\textnormal{M}}$. We have an induced isomorphism
\begin{equation*}
    \mathcal{H}(\sigma_{\lambda,\textnormal{P}})\cong \prod_{w\in W^{\textnormal{M}}}\mathcal{H}(\sigma_{\textnormal{P},w\cdot \lambda})
\end{equation*}
(noting that the weights $(w \cdot \lambda)_{w \in W}$ are distinct, as the stabilizer for the dotted action must fix, under the usual action, $\lambda + \rho$ which lies inside an open Weyl chamber).

We can now define a ``locally algebraic Satake transform"
\begin{equation*}
    \mathcal{S}_{\textnormal{P},\lambda} \colon \mathcal{H}(\sigma_{\lambda})\cong \mathcal{H}(\sigma)\xrightarrow{\mathcal{S}_{\textnormal{P}}}\mathcal{H}(\sigma_{\textnormal{P}})\xrightarrow{\Delta}\prod_{w\in W^{\textnormal{M}}}\mathcal{H}(\sigma_{\textnormal{P}})\cong\prod_{w\in W^{\textnormal{M}}}\mathcal{H}(\sigma_{\textnormal{P},w\cdot \lambda})\cong \mathcal{H}(\sigma_{\lambda,\textnormal{P}}),
\end{equation*}
where $\Delta$ denotes the diagonal embedding.

\begin{remark}\label{Remark_MinusculeSatakeTransform}
    Let us spell out the effect of $\mathcal{S}_{\textnormal{P},\lambda}$ on basis elements $[Kt_{\mu}K,\phi\otimes t_{\mu}]$ for \textit{minuscule} cocharacters $\mu\in X_{\ast}(\textnormal{T}_n)^+$ and arbitrary intertwining maps $\phi\in \Hom_{t_{\mu}^{-1}Kt_{\mu}\cap K}(\sigma,t_{\mu}^{\ast}\sigma)$.

    To do this, we just need to compute $\mathcal{S}_{\textnormal{P}}([Kt_{\mu}K,\phi])$. Recall that $\mathcal{S}_{\textnormal{P}}$ is the composition of the algebra homomorphisms $r_{\textnormal{P}}\colon\mathcal{H}(\sigma)\to \mathcal{H}(\sigma\mid_{K_{\textnormal{P}}})$ and $r_{\textnormal{M}}\colon\mathcal{H}(\sigma\mid_{K_{\textnormal{P}}})\to \mathcal{H}(\sigma_{\textnormal{P}})$, restricting a function $f\colon\textnormal{G}\to \textnormal{End}(\sigma)$ to $\textnormal{P}$, and integrating along the unipotent radical $\textnormal{N}$, respectively.
We first unravel $r_{\textnormal{P}}([Kt_{\mu}K,\phi])$. We compute double cosets
\begin{align*}
    Kt_{\mu}K &= K/(t_{\mu}Kt_{\mu}^{-1}\cap K)t_{\mu}K \\
    &= (K/\mathcal{P}_{\mu})t_{\mu}K \\
    & =\coprod_{w\in ^{\textnormal{M}}W^{\textnormal{M}_{\mu}}}K_{\textnormal{P}}wt_{\mu}K \\
    &=\coprod_{w\in ^{\textnormal{M}}W^{\textnormal{M}_{\mu}}}K_{\textnormal{P}}t_{\mu}^{w}K,
\end{align*}
where $\textnormal{P}_{\mu}=\textnormal{M}_{\mu}\textnormal{N}_{\mu}\subset \textnormal{GL}_n$ is the \textit{opposite} parabolic subgroup associated with $\mu$, $\mathcal{P}_{\mu}:=\textnormal{red}^{-1}(\textnormal{P}_{\mu}(\mathcal{O}_L/\varpi_L))$ the corresponding parahoric subgroup, $^{\textnormal{M}}W^{\textnormal{M}_{\mu}}\subset W:=W(\textnormal{GL}_{n,L},\textnormal{B}_n)$ is the set of minimal length representatives for $W_{\textnormal{M}}\backslash W/W_{\textnormal{M}_{\mu}}$, and $t_{\mu}^w:=wt_{\mu}w^{-1}$. In particular, we obtain
\begin{equation*}
    Kt_{\mu}K\cap \textnormal{P}=\coprod_{w\in {}^{\textnormal{M}}W^{\textnormal{M}_{\mu}}}K_{\textnormal{P}}t_{\mu}^wK_{\textnormal{P}}.
\end{equation*}
Consequently, we obtain that
\begin{equation*}
    r_{\textnormal{P}}([Kt_{\mu}K,\phi])=\sum_{w\in {}^{\textnormal{M}}W^{\textnormal{M}_{\mu}}}[K_{\textnormal{P}}t_{\mu}^{w}K_{\textnormal{P}},\phi^{w}]
\end{equation*}
where $\phi^{w}:=\sigma(w)\circ \phi\circ \sigma(w^{-1})$.

To compute $\mathcal{S}_{\textnormal{P}}([Kt_{\mu}K,\phi])$, we are left with computing $r_{\textnormal{M}}([K_{\textnormal{P}}t_{\mu}^{w}K_{\textnormal{P}},\phi^w])$ for $w\in {}^{\textnormal{M}}W^{\textnormal{M}_{\mu}}$. To give the formula, we define
\begin{align*}
    \phi_{\textnormal{P}}^w \colon \sigma_{\textnormal{P}} &\to \sigma_{\textnormal{P}} \\
    v &\mapsto \sum_{n\in K_{\textnormal{N}}/t_{\mu}^wK_{\textnormal{N}}t_{\mu}^{w,-1}\cap K_{\textnormal{N}}}(\sigma(n)\circ \phi^w)(v).
\end{align*}
Then the formula is as follows
\begin{equation*}
    r_{\textnormal{M}}([K_{\textnormal{P}}t_{\mu}^wK_{\textnormal{P}},\phi^w])=[K_{\textnormal{M}}t_{\mu}^wK_{\textnormal{M}},\phi^w_{\textnormal{P}}].
\end{equation*}

Finally we obtain the formula
\begin{equation*}
    \mathcal{S}_{\textnormal{P},\lambda}([Kt_{\mu}K,\phi\otimes t_{\mu}])=\left(\sum_{w\in ^{\textnormal{M}}W^{\textnormal{M}_{\mu}}}[K_{\textnormal{M}}t^{w}_{\mu}K_{\textnormal{M}},\phi^w_{\textnormal{P}}\otimes t_{\mu}^w]\right)_{\widetilde{w}\in W^{\textnormal{M}}}\in \prod_{\widetilde{w}\in W^{\textnormal{M}}}\mathcal{H}(\sigma_{\textnormal{P},\widetilde{w}\cdot\lambda}).
\end{equation*}
\end{remark}

\subsection{Integral structures}\label{Subsection_integralstructures}
We now combine the setup of the previous subsections. Let $E/\mathbf{Q}_p$ be a finite field extension with ring of integers $\mathcal{O}$ satisfying $\#\Hom(L,E)=[L:\mathbf{Q}_p]$. Let $\lambda\in (\mathbf{Z}_+^n)^{\Hom(L,E)}$ be a highest weight vector and set $\mathcal{V}_{\lambda}\subset V_{\lambda}$ be the usual $K$-stable $\mathcal{O}$-lattice given by the dual Weyl module.

Fix a Bernstein block $\Omega=[\widetilde{\textnormal{M}},\widetilde{\pi}]_{\textnormal{G}}$ with its standard presentation and the associated Bushnell--Kutzko type $(J,\kappa)$.
We introduced the associated $\overline{\mathbf{Q}}_p$-representations $\sigma_{\Omega}$ of $K$, and, for standard parabolic subgroups $\textnormal{P}=\textnormal{M}\textnormal{N}\leq \textnormal{G}$, $\sigma_{\textnormal{P},\Omega}$, $\sigma_{\textnormal{M},\Omega}$ of $K_{\textnormal{M}}$. 

Write $\widetilde{\textnormal{M}}=\textnormal{GL}_{n_1}(L)\times...\times\textnormal{GL}_{n_h}(L)$ and $\widetilde{\pi}=\pi_1\otimes...\otimes \pi_h$. For a subset $S\subset \{1,...,h\}$ equipped with an \textit{ordering}, set $n_S:=\sum_{i\in S}n_i$, $\widetilde{\textnormal{M}}_S:=\prod_{i\in S}\textnormal{GL}_{n_i}(L)\leq G_{\textnormal{S}}:=\textnormal{GL}_{n_S}(L)$, $\widetilde{\pi}_S:=\otimes_{i\in S}\pi_i$, supercuspidal $\overline{\mathbf{Q}}_p$-representations of $\widetilde{\textnormal{M}}_S$. Set $\Omega_{S}:=[\widetilde{\textnormal{M}}_S,\widetilde{\pi}_S]_{\textnormal{G}_S}$ and note that $\Omega_S$ does not depend on the ordering we choose on $S$.

For our analysis of the boundary cohomology of $\textnormal{GL}_n$-locally symmetric spaces in \S\ref{sec_boundarycoh}, we will have to make several auxiliary choices on which our results will depend. Namely, given $\lambda$ and $\Omega$, we will need to fix a choice of $\mathcal{O}$-lattice in $\sigma_{\Omega}\otimes V_{\lambda}$ and similarly for locally algebraic representations induced by $\lambda$ and $\Omega$ for Levi subgroups of $\textnormal{G}$. Moreover, for our inductive argument, we need to fix comparisons between $H^{\ast}(K_{\textnormal{N}},\sigma_{\Omega}\otimes V_{\lambda})$ and $\oplus_{w\in W^{\textnormal{M}},\Omega_{\textnormal{M}}}\sigma_{\Omega_{\textnormal{M}}}\otimes V_{w\cdot \lambda}$ for standard parabolic subgroups $\textnormal{P}=\textnormal{M}\textnormal{N}\leq \textnormal{G}$.

For the algebraic part, there are canonical choices we can make. Namely, the dual Weyl module for choice of lattices and the combination of Lazard's and Kostant's isomorphisms for the comparison maps. However, for the smooth part, we have no canonical choices at our disposal. The purpose of the following definition is to spell out the choices that we need make.

\begin{definition}\label{Definition_integralsystem}
An $\mathcal{O}$-integral $\Omega$-system is a tuple
\begin{equation*}
    \mathcal{L}:=\left((\sigma_{\Omega_{S}}^{\circ})_{\Omega_S},(\varphi_{\textnormal{P}_S,\textnormal{M}_S})_{(\Omega_S,\textnormal{M}_S)}\right)
\end{equation*}
containing the following data.
\begin{itemize}
    \item For every Bernstein block of the form $\Omega_S$ for some subset $S\subset \{1,...,h\}$, a choice of a $K_{\textnormal{S}}:=\textnormal{GL}_{n_S}(\mathcal{O}_L)$-stable $\mathcal{O}$-lattice $\sigma_{\Omega_S}^{\circ}\subset \sigma_{\Omega_S}^{\vee}$.
    \item For every pair $(\Omega_S,\textnormal{M}_S)$ of a Bernstein block of the form $\Omega_S=[\widetilde{\textnormal{M}}_{S},\widetilde{\pi}_S]_{\textnormal{G}_S}$ for some subset $S\subset \{1,...,h\}$ equipped with an ordering and a standard parabolic subgroup $\widetilde{\textnormal{P}}_S\leq  \textnormal{P}_{S}=\textnormal{M}_S\textnormal{N}_S\leq \textnormal{G}_S$, a choice of isomorphism
    \begin{align*}
        \varphi_{\textnormal{P}_S,\textnormal{M}_S}\colon\textnormal{c-Ind}_{K_{\textnormal{M}_S}}^{\textnormal{M}_S}\Gamma(K_{\textnormal{N}_{S}},\sigma_{\Omega_{S}}^{\circ}\otimes_{\mathcal{O}}E)^{\vee}\xrightarrow{\sim} \\
        \textnormal{c-Ind}_{K_{\textnormal{M}_S}}^{\textnormal{M}_S}\left(\bigoplus_{\Omega_{\textnormal{M}_S}\in \mathfrak{B}(\textnormal{M}_S,\Omega_S)}(\sigma_{\Omega_{\textnormal{M}_S}}^{\circ}\otimes_{\mathcal{O}}E)^{\vee} \right)
    \end{align*}
    where the sum runs over the set $\mathfrak{B}(\textnormal{M}_S,\Omega_S)$ from Proposition \ref{Proposition_BushnellKutzkoTypesIso}.
\end{itemize}
\end{definition}
To ease notation, for a Bernstein block of the form $\Omega_S=[\widetilde{\textnormal{M}}_{S},\widetilde{\pi}_S]_{\textnormal{G}_S}$ for some subset $S\subset \{1,...,h\}$ equipped with an ordering and a standard parabolic subgroup $\widetilde{\textnormal{P}}_S\leq  \textnormal{P}_{S}=\textnormal{M}_S\textnormal{N}_S\leq \textnormal{G}_S$, we write
\begin{align*}
\sigma_{\textnormal{P}_S,\Omega_S}^{\circ} &:=\Gamma(K_{\textnormal{N}_S},\sigma_{\Omega_S}^{\circ}), \\
\sigma_{\textnormal{M}_S,\Omega_{S}}^{\circ} &:=\bigoplus_{\Omega_{\textnormal{M}_S}\in \mathfrak{B}(\textnormal{M}_S,\Omega_S)}\sigma_{\Omega_{\textnormal{M}_S}}^{\circ}.
\end{align*}
\begin{lemma}\label{Lemma_changeofweightIII}
    Given an $\mathcal{O}$-integral $\Omega$-system, there is an integer $N\geq 0$ such that, for every $S$ and $\textnormal{M}_S$, the maps $p^N\varphi_{\textnormal{P}_S,\textnormal{M}_S}$, and $p^N\varphi_{\textnormal{P}_S,\textnormal{M}_S}^{-1}$ restrict to morphisms
    \begin{equation*}
        \textnormal{c-Ind}_{K_{\textnormal{M}_S}}^{\textnormal{M}_S}(\sigma_{\textnormal{P}_S,\Omega_S}^{\circ})^{\vee}\to \textnormal{c-Ind}_{K_{\textnormal{M}_S}}^{\textnormal{M}_S}(\sigma_{\textnormal{M}_S,\Omega_S}^{\circ})^{\vee},
    \end{equation*}
    and
    \begin{equation*}
        \textnormal{c-Ind}_{K_{\textnormal{M}_S}}^{\textnormal{M}_S}(\sigma_{\textnormal{M}_S,\Omega_S}^{\circ})^{\vee} \to \textnormal{c-Ind}_{K_{\textnormal{M}_S}}^{\textnormal{M}_S}(\sigma_{\textnormal{P}_S,\Omega_S}^{\circ})^{\vee} ,
    \end{equation*}
    respectively.
\end{lemma}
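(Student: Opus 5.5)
The plan is to reduce to finitely many pairs $(S,\textnormal{M}_S)$ and then handle each pair by a finite generation argument. There are finitely many subsets $S\subset\{1,\dots,h\}$, finitely many orderings of each, and finitely many standard parabolics $\textnormal{P}_S$ containing $\widetilde{\textnormal{P}}_S$. So it suffices to find, for each single pair, an integer $N$ that works for both $\varphi=\varphi_{\textnormal{P}_S,\textnormal{M}_S}$ and $\varphi^{-1}$. We then take the maximum over all pairs.

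Fix one pair. I would use the following notation:
\begin{itemize}
\item $X^\circ:=\textnormal{c-Ind}_{K_{\textnormal{M}_S}}^{\textnormal{M}_S}(\sigma_{\textnormal{P}_S,\Omega_S}^{\circ})^{\vee}$ and $Y^\circ:=\textnormal{c-Ind}_{K_{\textnormal{M}_S}}^{\textnormal{M}_S}(\sigma_{\textnormal{M}_S,\Omega_S}^{\circ})^{\vee}$;
\item $X:=X^\circ\otimes_{\mathcal{O}}E$ and $Y:=Y^\circ\otimes_{\mathcal{O}}E$, which are the source and target of $\varphi$.
\end{itemize}
Here $(\cdot)^\vee$ is the $\mathcal{O}$-linear dual of a finite free $\mathcal{O}$-module. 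Note that $\sigma_{\textnormal{P}_S,\Omega_S}^{\circ}=\Gamma(K_{\textnormal{N}_S},\sigma_{\Omega_S}^{\circ})$ is a $K_{\textnormal{M}_S}$-stable lattice in $\Gamma(K_{\textnormal{N}_S},\sigma_{\Omega_S}^{\circ}\otimes E)$, being saturated in $\sigma^\circ_{\Omega_S}$. Hence its $\mathcal{O}$-dual is a lattice in the $E$-dual, and compact induction commutes with $\otimes_{\mathcal{O}}E$. The same holds for $\sigma_{\textnormal{M}_S,\Omega_S}^{\circ}$. So $X^\circ\subset X$ and $Y^\circ\subset Y$ are $\textnormal{M}_S$-stable $\mathcal{O}$-submodules spanning over $E$, and the statement makes sense.

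The key point is that $X^\circ$ is generated as an $\mathcal{O}[\textnormal{M}_S]$-module by the finitely many elements $[K_{\textnormal{M}_S},v_i]$, where $v_1,\dots,v_r$ is an $\mathcal{O}$-basis of the finite free module $(\sigma_{\textnormal{P}_S,\Omega_S}^{\circ})^{\vee}$. Since $\varphi$ is $\textnormal{M}_S$-equivariant, it suffices to find $N$ with $p^N\varphi([K_{\textnormal{M}_S},v_i])\in Y^\circ$ for each $i$. Each $\varphi([K_{\textnormal{M}_S},v_i])$ is a compactly supported function $\textnormal{M}_S\to(\sigma_{\textnormal{M}_S,\Omega_S}^\circ)^\vee\otimes E$. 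It is determined by its values on finitely many cosets $m_jK_{\textnormal{M}_S}$, and these values are finitely many vectors in a finite-dimensional $E$-space containing a lattice. Multiplying by $p^{N}$ for $N$ large (larger than the maximal denominator of the coordinates) puts all these values in $(\sigma_{\textnormal{M}_S,\Omega_S}^\circ)^\vee$. For such $N$, $p^N\varphi([K_{\textnormal{M}_S},v_i])\in Y^\circ$. One should check that $Y^\circ$ is exactly the set of compactly supported functions with values in the lattice; this holds because $Y^\circ$ is a compact induction of a lattice.

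The same argument applied to $\varphi^{-1}$, using an $\mathcal{O}$-basis of $(\sigma_{\textnormal{M}_S,\Omega_S}^{\circ})^{\vee}$, gives a bound for the inverse, and we take the maximum. The only mild obstacle is the bookkeeping just described: identifying the lattices $X^\circ$ and $Y^\circ$ inside $X$ and $Y$ (the saturatedness of $K_{\textnormal{N}_S}$-invariants and the compatibility of compact induction with duals and base change). The finiteness itself is formal.
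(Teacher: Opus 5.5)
Your proposal is correct and is essentially the paper's argument. The paper also reduces to the finitely many maps $\varphi_{\textnormal{P}_S,\textnormal{M}_S}^{\pm1}$ and then uses that $(\textnormal{c-Ind}_U^{\textnormal{H}}\sigma^{\circ})[1/p]=\textnormal{c-Ind}_U^{\textnormal{H}}(\sigma^{\circ}[1/p])$ for a finite free lattice $\sigma^{\circ}$; you simply make this explicit, via finite generation of the source by the functions supported on $K_{\textnormal{M}_S}$ and the bounded denominators of the finitely many values of their images.
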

\begin{proof}
    Note that there are only finitely many maps involved. In particular, the lemma follows from the fact that for a smooth $\mathcal{O}$-representation $\sigma^{\circ}$ of a compact open subgroup $U$ of a locally profinite group $\textnormal{H}$ with finite free underlying $\mathcal{O}$-module, $(\textnormal{c-Ind}_{U}^{\textnormal{H}}\sigma^{\circ})[1/p]=\textnormal{c-Ind}_{U}^{\textnormal{H}}(\sigma^{\circ}[1/p])$.
\end{proof}
\begin{remark}
    Given an $\mathcal{O}$-integral $\Omega$-system $\mathcal{L}$ and a subset $S\subset \{1,...,h\}$, we obtain an $\mathcal{O}$-integral $\Omega_{S}$-system $\mathcal{L}_S$ by remembering only the lattices and isomorphisms for $\Omega_{S'}$ with $S'\subset S$.
\end{remark}
\begin{prop}
    Given a Bernstein block $\Omega$ of $\textnormal{Rep}_{\textnormal{sm}}(\textnormal{G},\overline{\mathbf{Q}}_p)$, after possibly enlarging $E$, there exists an $\mathcal{O}$-integral $\Omega$-system.
\end{prop}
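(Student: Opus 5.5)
The plan is to construct the data in two stages. First we choose the lattices, and only then the isomorphisms $\varphi_{\textnormal{P}_S,\textnormal{M}_S}$. The definition of an $\mathcal{O}$-integral $\Omega$-system imposes no compatibility between the lattices and the isomorphisms beyond their existence over $E$; the integrality defect is absorbed afterwards by Lemma \ref{Lemma_changeofweightIII}. So the only real content is that everything descends from $\overline{\mathbf{Q}}_p$ to a finite extension $E$, and there are only finitely many objects to handle.

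\textbf{Step 1: descend the representations to a finite extension.} The set of pairs $(\Omega_S, \textnormal{M}_S)$ is finite, since $S$ ranges over ordered subsets of $\{1,\dots,h\}$ and $\textnormal{M}_S$ over standard Levis containing $\widetilde{\textnormal{M}}_S$. Each $\mathfrak{B}(\textnormal{M}_S,\Omega_S)$ is a finite multiset. Hence only finitely many representations occur: the $\sigma_{\Omega_S}$ and the $\sigma_{\Omega_{\textnormal{M}_S}}$ for $\Omega_{\textnormal{M}_S}\in\mathfrak{B}(\textnormal{M}_S,\Omega_S)$.

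Each of these is of the form $\textnormal{c-Ind}_{J'}^{K'}\kappa'$, with $\kappa'$ a finite-dimensional smooth representation of a compact open $J'$. Such a $\kappa'$ factors through a finite quotient of $J'$, so it is defined over a number field. After enlarging $E$ we may therefore assume that all the types $\kappa'$, and hence all the $\sigma$'s, are defined over $E$. Concretely, we fix $E$-forms $\sigma_{\Omega_S,E}$ and $\sigma_{\Omega_{\textnormal{M}_S},E}$ of the relevant representations of the compact groups $K_S$ and $K_{\textnormal{M}_S}$.

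\textbf{Step 2: choose lattices.} For each $\Omega_S$, choose a $K_S$-stable $\mathcal{O}$-lattice $\sigma^\circ_{\Omega_S}\subset\sigma^\vee_{\Omega_S,E}$. Such a lattice exists because $K_S$ is compact and acts through a finite quotient: take the $\mathcal{O}$-span of the $K_S$-orbit of any lattice. The same construction gives the lattices $\sigma^\circ_{\Omega_{\textnormal{M}_S}}$; these lattices belong to blocks of products of smaller general linear groups, and we include them among the choices.

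\textbf{Step 3: the comparison isomorphisms, which is the main obstacle.} Proposition \ref{Proposition_BushnellKutzkoTypesIso} provides an isomorphism $\textnormal{c-Ind}_{K_{\textnormal{M}_S}}^{\textnormal{M}_S}\sigma_{\textnormal{P}_S,\Omega_S}\cong\textnormal{c-Ind}_{K_{\textnormal{M}_S}}^{\textnormal{M}_S}\sigma_{\textnormal{M}_S,\Omega_S}$, but only over $\overline{\mathbf{Q}}_p$. We need it over $E$, after dualising appropriately.

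I would first check whether the construction in that proof is already defined over $E$ once the types are. The ingredients are \cite[Lemma 10.3]{BK98}, the comparison of \cite{Dat1999} \S1.5, the geometric lemma, and the splitting of the filtration. The factor $\delta_{\textnormal{P}}^{1/2}$ is a concern, but that is why $E$ contains $p^{1/2}$ when $L=\mathbf{Q}_p$; in general we enlarge $E$ to contain $q_L^{1/2}$. Every step except the splitting is an explicit construction with rational intertwiners, so it descends. The splitting holds because each graded piece is projective in $\textnormal{Rep}_{\textnormal{sm}}(\textnormal{M}_S,E)$; this projectivity holds over any field of characteristic $0$, as these pieces are compact inductions of representations of compact groups.

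If this direct descent is awkward, the fallback is Galois descent. The space $\Hom_{\textnormal{M}_S}(X_E,Y_E)$ between the two $E$-forms of the compact inductions is an $E$-vector space $H$, and it satisfies $H\otimes_E\overline{\mathbf{Q}}_p=\Hom(X,Y)$ because $X$ is finitely generated. The isomorphisms form a nonempty Zariski-open subset of a finite-dimensional subspace. To see this, choose finite-dimensional generating $K_{\textnormal{M}_S}$-subspaces and reduce to finitely many Hecke-module equations; invertibility is then detected by the existence of an inverse with bounded support. Hence the open set contains points defined over a finite extension of $E$. Enlarging $E$ once more, finitely many times in total, gives the $\varphi_{\textnormal{P}_S,\textnormal{M}_S}$ over $E$, and the resulting tuple is an $\mathcal{O}$-integral $\Omega$-system.
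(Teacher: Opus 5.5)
Your proposal is correct and follows the paper's proof. Lattices exist trivially once everything is defined over $E$, Proposition~\ref{Proposition_BushnellKutzkoTypesIso} supplies the isomorphisms over $\overline{\mathbf{Q}}_p$, and the descent to a finite extension of $E$, which the paper simply quotes from \cite[Lemma 5.1]{Pas13}, is exactly what your fallback argument proves by hand.

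One small correction: the isomorphisms inside a bounded-support space of morphisms need not form a Zariski-open set. Already in $E[T^{\pm 1}]$, the units among the elements $aT^{-1}+b+cT$ are those with exactly one nonzero coefficient, and this is not an open condition. The formulation you also give is the right one: pairs $(\phi,\psi)$ of bounded support with $\psi\circ\phi=\mathrm{id}$ and $\phi\circ\psi=\mathrm{id}$ form an affine $E$-variety with a $\overline{\mathbf{Q}}_p$-point, so the Nullstellensatz gives the required point over a finite extension.
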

\begin{proof}
    The existence of $K_S$-stable $\mathcal{O}$-lattices is clear. Proposition~\ref{Proposition_BushnellKutzkoTypesIso} implies the existence of the required isomorphisms after scalar extension to $\overline{\mathbf{Q}}_p$. As these are isomorphisms over $\overline{\mathbf{Q}}_p$ between finitely generated $E$-representations, they are already defined over some finite extension of $\mathbf{Q}_p$ (see for instance \cite[Lemma 5.1]{Pas13}).
\end{proof}
Given a highest weight vector $\lambda$, a Bernstein block $\Omega$ and an $\mathcal{O}$-integral $\Omega$-system $\mathcal{L}$, we set
\begin{align*}
\sigma_{\Omega,\lambda}^{\circ} &:=\sigma_{\Omega}^{\circ}\otimes_{\mathcal{O}}\mathcal{V}_{\lambda}\subset \sigma_{\Omega}^{\vee}\otimes_EV_{\lambda}, \\
\sigma_{\Omega^{\vee},\lambda^{\vee}}^{\circ} &:=\sigma_{\Omega}^{\circ,\vee}\otimes_{\mathcal{O}}\mathcal{V}_{-w_0^{\textnormal{G}}\lambda}\subset \sigma_{\Omega}\otimes_EV_{\lambda}^{\vee},\\
\sigma_{\textnormal{P},\Omega,\lambda}^{\circ} &:=\Gamma(K_{\textnormal{N}},\sigma_{\Omega}^{\circ})\otimes_{\mathcal{O}}\left(\bigoplus_{w\in W^{\textnormal{M}}}\mathcal{V}_{w\cdot \lambda}\right) \\
&\subset \sigma_{\textnormal{P},\Omega}^{\vee}\otimes_E(\bigoplus_{w\in W^{\textnormal{M}}}V_{w\cdot \lambda}) \cong
     H^{\ast}(K_{\textnormal{N}},\sigma_{\Omega}^{\vee}\otimes_EV_{\lambda}),\\
\sigma_{\textnormal{M},\Omega,\lambda}^{\circ} &:=\left(\bigoplus_{\Omega_{\textnormal{M}}\in \mathfrak{B}(\textnormal{M},\Omega)}\sigma_{\Omega_{\textnormal{M}}}^{\circ}\right)\otimes_{\mathcal{O}}\left(\bigoplus_{w\in W^{\textnormal{M}}}\mathcal{V}_{w\cdot \lambda}\right) \\
&\subset \sigma_{\textnormal{M},\Omega}^{\vee}\otimes_E(\bigoplus_{w\in W^{\textnormal{M}}}V_{w\cdot\lambda})\cong
    \sigma_{\textnormal{M},\Omega}^{\vee}\otimes_EH^{\ast}(K_{\textnormal{N}},V_{\lambda}),
\end{align*}
where in the last two definitions we run over standard parabolic subgroups $\textnormal{P}=\textnormal{M}\textnormal{N}\leq \textnormal{G}$ and the indicated isomorphisms are induced by Theorem~\ref{Theorem_Kostant} and Lemma~\ref{Lemma_cupproductiso}. 

Set $\sigma_{\Omega,\lambda}:=\sigma_{\Omega,\lambda}^{\circ}[1/p]$, $\sigma_{\textnormal{P},\Omega,\lambda}:=\sigma_{\textnormal{P},\Omega,\lambda}^{\circ}[1/p]$, $\sigma_{\textnormal{M},\Omega,\lambda}:=\sigma_{\textnormal{M},\Omega,\lambda}^{\circ}[1/p]$. Note that there is an isomorphism $\phi_{\Omega,\lambda}\colon\sigma_{\Omega,\lambda}^{\vee}\xrightarrow{\sim}\sigma_{\Omega^{\vee},\lambda^{\vee}}^{\circ}[1/p]$ unique up to scalar. 

We have the locally algebraic Satake transforms
\begin{equation*}
    \mathcal{S}_{\textnormal{P},\Omega,\lambda}\colon\mathcal{H}(\sigma_{\Omega,\lambda})\to \mathcal{H}(\sigma_{\textnormal{P},\Omega,\lambda})
\end{equation*}
and the isomorphisms
\begin{equation*}
    r_{\mathcal{L},\textnormal{P},\textnormal{M}}\colon\mathcal{H}(\sigma_{\textnormal{P},\Omega,\lambda})\xrightarrow{\sim}\mathcal{H}(\sigma_{\textnormal{M},\Omega,\lambda})
\end{equation*}
induced by the isomorphisms $\varphi_{\textnormal{P},\textnormal{M}}$ of $\mathcal{L}$ and the anti-isomorphisms $\mathcal{H}(\sigma)\xrightarrow{\sim}\mathcal{H}(\sigma^{\vee})$. Moreover, induced by $\phi_{\Omega,\lambda}$, we have an anti-isomorphism
\begin{align*}
    \iota\colon\mathcal{H}(\sigma_{\Omega,\lambda}^{\circ}[1/p]) &\xrightarrow{\sim}\mathcal{H}(\sigma_{\Omega^{\vee},\lambda^{\vee}}^{\circ}[1/p]) \\
    [KgK,\psi] &\mapsto [Kg^{-1}K,\psi^t].
\end{align*}
This isomorphism is independent of the choice of $\phi_{\Omega,\lambda}$, making it canonical.

Recall the $\overline{\mathbf{Q}}_p$-subalgebras $\mathcal{H}^{B,+}(\sigma_{\Omega})\leq\mathcal{H}^B(\sigma_{\Omega})\leq \mathcal{H}(\sigma_{\Omega}^{\vee})$ from \eqref{equation_BernsteinSubalgPlus}, \eqref{equation_BernsteinSubalg}. After possibly enlarging $E$, each of the elements $Y_{i,j}^{\pm 1}$ are defined over $E$ and so, under the canonical isomorphism
\begin{equation*}
    \mathcal{H}((\sigma_{\Omega}^{\circ}\otimes_{\mathcal{O}}E)^{\vee})\xrightarrow{\sim}\mathcal{H}(\sigma_{\Omega,\lambda}),
\end{equation*}
we obtain $E$-subalgebras
\begin{equation*}
    \mathcal{H}^{B,+}(\sigma_{\Omega,\lambda})\cong E[\{Y_{i,j}\}_{i,j}]\leq \mathcal{H}^{B}(\sigma_{\Omega,\lambda})\cong E[\{Y_{i,j}^{\pm1}\}_{i,j}]\leq\mathcal{H}(\sigma_{\Omega,\lambda})
\end{equation*}
where in formulas we have $Y_{i,j}=[K t_{i,j}K,\psi_{i,j}\otimes t_{i,j}]$ for $t_{i,j}=\nu_{i,j}(\varpi_{L})$ for a minuscule cocharacter $\nu_{i,j}\in X_{\ast}(\textnormal{T})^+$.

The choice of $\mathcal{L}$ induces $\mathcal{O}$-models of the algebras $\mathcal{H}^{B,+}(\sigma_{\Omega,\lambda}),$ $\mathcal{H}^{B}(\sigma_{\Omega,\lambda})$. 
Namely, for $1\leq i\leq k, 1\leq j\leq n_i/d_i$, let $N_{i,j}$ be the smallest integer such that the following are satisfied.
\begin{itemize}
    \item We have $p^{N_{i,j}}Y_{i,j}\in \mathcal{H}(\sigma_{\Omega,\lambda}^{\circ})\cap \iota^{-1}(\mathcal{H}(\sigma_{\Omega^{\vee},\lambda^{\vee}}^{\circ}))$.
    \item For every standard parabolic subgroup $\textnormal{P}=\textnormal{M}\textnormal{N}\leq \textnormal{G}$ and element $w\in {}^{\textnormal{M}}W^{\textnormal{M}_{\nu_{i,j}}}$, we have
    \begin{equation*}
        p^{N_{i,j}}(\psi_{i,j})_{\textnormal{P}}^w\otimes t_{i,j}^w\in \textnormal{End}_{\mathcal{O}}(\sigma_{\textnormal{P},\Omega,\lambda}^{\circ}).
    \end{equation*}
    In particular, $\mathcal{S}_{\textnormal{P},\Omega,\lambda}(p^{N_{i,j}}Y_{i,j})\in \mathcal{H}(\sigma_{\textnormal{P},\Omega,\lambda}^{\circ})$ for every standard parabolic subgroup $\textnormal{P}=\textnormal{M}\textnormal{N}\leq \textnormal{G}$ (cf. Remark~\ref{Remark_MinusculeSatakeTransform}).
    \item We have $r_{\mathcal{L},\textnormal{P},\textnormal{M}}\circ\mathcal{S}_{\textnormal{P},\Omega,\lambda}(p^{N_{i,j}}Y_{i,j})\in \mathcal{H}(\sigma_{\textnormal{M},\Omega,\lambda}^{\circ})$ for every standard parabolic subgroup $\textnormal{P}=\textnormal{M}\textnormal{N}\leq \textnormal{G}$.
\end{itemize}

We set 
\begin{equation*}
    \mathcal{H}^{B,+}(\sigma_{\Omega,\lambda})^{\circ}\leq \mathcal{H}(\sigma_{\Omega,\lambda})
\end{equation*}
to be the $\mathcal{O}$-subalgebra generated by the set $\{V_{i,j}:=p^{N_{i,j}}Y_{i,j}\mid 1\leq i\leq k,1\leq j\leq n_i/d_i\}$. Note that we have a canonical isomorphism $\mathcal{H}^{B,+}(\sigma_{\Omega,\lambda})^{\circ}\otimes_{\mathcal{O}}\overline{\mathbf{Q}}_p\cong\mathcal{H}^{B,+}(\sigma_{\Omega})$.

Given a standard parabolic subgroup $\textnormal{P}=\textnormal{M}\textnormal{N}\leq \textnormal{G}$, a Weyl group element $w\in W^{\textnormal{M}}$ and Bernstein block $\Omega_{\textnormal{M}}$ of $\textnormal{Rep}_{\textnormal{sm}}(\textnormal{M},\overline{\mathbf{Q}}_p)$ inducing $\Omega$ under the parabolic induction, the integral $\Omega$-system $\mathcal{L}$ allows us to define $\mathcal{H}^{B,+}(\sigma_{\Omega_{\textnormal{M}},w\cdot \lambda})^{\circ}$ as well.
\begin{definition}\label{Definition_lambdaelladmissible}
    We say that an integer $N$ is $(\lambda,\mathcal{L})$-admissible if the following are satisfied.
    \begin{itemize}
    \item For every proper standard Levi subgroup $\textnormal{M}=\textnormal{G}_{S_1}\times...\times \textnormal{G}_{S_h}\leq \textnormal{G}$, weight vector $w\cdot \lambda=(\lambda_1,...,\lambda_h)$ for $w\in W^{\textnormal{M}}$ and Bernstein block $\Omega_{\textnormal{M}}=(\Omega_{S_1},...,\Omega_{S_h})$ of $\textnormal{Rep}_{\textnormal{sm}}(\textnormal{M},\overline{\mathbf{Q}}_p)$ inducing $\Omega$ under parabolic induction, $N$ is $(\lambda_i,\mathcal{L}_{S_i})$-admissible.
    \item We have $p^{N}V_{i,j}^{-1}\in \mathcal{H}(\sigma_{\Omega,\lambda}^{\circ})\cap\iota^{-1}(\mathcal{H}(\sigma_{\Omega^{\vee},\lambda^{\vee}}^{\circ}))$.
    \item We have $r_{\mathcal{L},\textnormal{P},\textnormal{M}}\circ\mathcal{S}_{\textnormal{P},\Omega,\lambda}(p^N V_{i,j}^{-1})\in \mathcal{H}(\sigma_{\textnormal{M},\Omega,\lambda}^{\circ})$ for every standard parabolic subgroup $\textnormal{P}=\textnormal{M}\textnormal{N}\leq \textnormal{G}$.
\end{itemize}
\end{definition}

For a $(\lambda,\mathcal{L})$-admissible integer $N$, we set
\begin{equation}\label{equation_integralmodel}
    \mathcal{H}^B(\sigma_{\Omega,\lambda})_N^{\circ}\leq \mathcal{H}^B(\sigma_{\Omega,\lambda})
\end{equation}
to be the $\mathcal{O}$-subalgebra generated by the set $\{V_{i,j},p^NV_{i,j}^{-1}\mid 1\leq i\leq k,1\leq j\leq n_i/d_i\}$. Note that we have a canonical isomorphism $\mathcal{H}^{B}(\sigma_{\Omega,\lambda})^{\circ}\otimes_{\mathcal{O}}\overline{\mathbf{Q}}_p\cong\mathcal{H}^{B}(\sigma_{\Omega})$ and, in particular, the former contains the Bernstein centre $\mathfrak{Z}_{\textnormal{G},\Omega}$. Similarly, we can define $\mathcal{H}^{B}(\sigma_{\textnormal{M},\Omega,\lambda})^{\circ}_N$ for every standard parabolic subgroup $\textnormal{P}=\textnormal{M}\textnormal{N}\leq \textnormal{G}$.

We note that we have a commutative diagram
\begin{equation}\label{explicitpresentation}
    \begin{tikzcd}
	{\mathcal{O}[V_{i,j}\mid i,j]} & {\mathcal{H}^{B,+}(\sigma_{\Omega,\lambda})^{\circ}} \\
	{\mathcal{O}[V_{i,j},W_{i,j}\mid i,j]/(V_{i,j}W_{i,j}-p^{N}\mid i,j)} & {\mathcal{H}^{B}(\sigma_{\Omega,\lambda})^{\circ}.}
	\arrow["\sim", from=1-1, to=1-2]
	\arrow[hook, from=1-1, to=2-1]
	\arrow[hook, from=1-2, to=2-2]
	\arrow["\sim", from=2-1, to=2-2]
\end{tikzcd}
\end{equation}

Finally, we note that, by definition, the $E$-algebra homomorphisms $\mathcal{S}_{\textnormal{P},\Omega,\lambda}$, and $r_{\mathcal{L},\textnormal{P},\textnormal{M}}\circ\mathcal{S}_{\textnormal{P},\Omega,\lambda}$ restrict to $\mathcal{O}$-algebra homomorphisms
\begin{align*}
\mathcal{S}_{\textnormal{P},\Omega,\lambda}^{\circ(,+)}\colon\mathcal{H}^{B(,+)}(\sigma_{\Omega,\lambda})^{\circ} &\to \mathcal{H}(\sigma_{\textnormal{P},\Omega,\lambda}^{\circ}), \\
 \mathcal{S}_{\textnormal{M},\mathcal{L},\lambda}^{(+)} \colon\mathcal{H}^{B(,+)}(\sigma_{\Omega,\lambda})^{\circ} &\to \mathcal{H}(\sigma_{\textnormal{M},\Omega,\lambda}^{\circ}).
\end{align*}

\subsubsection{Interaction of Satake transform with interpolation of semisimple local Langlands}
We have a commutative diagram
\begin{equation}\label{equation_Satakediagram}
    \begin{tikzcd}
	{\mathfrak{Z}_{\textnormal{G},\Omega}} & {\mathcal{H}^B(\sigma_{\Omega,\lambda})^{\circ}\otimes_{\mathcal{O}}\overline{\mathbf{Q}}_p} \\
	{\mathfrak{Z}_{\textnormal{M},\Omega}} & {\mathcal{H}(\sigma_{\textnormal{M},\Omega,\lambda}^{\circ})\otimes_{\mathcal{O}}\overline{\mathbf{Q}}_p}
	\arrow[hook, from=1-1, to=1-2]
	\arrow["{\textnormal{BD}}"', from=1-1, to=2-1]
	\arrow["{\mathcal{S}_{\textnormal{M},\mathcal{L},\lambda}}", from=1-2, to=2-2]
	\arrow[hook, from=2-1, to=2-2]
\end{tikzcd}
\end{equation}
matching the unnormalised Satake transform with the morphism on centres intertwining unnormalised parabolic induction.
We will also need an analysis of the interaction of the map $\textnormal{BD}$ with interpolation of the semisimple local Langlands. In particular, write $\textnormal{M}=\textnormal{GL}_{n_1}(L)\times...\times\textnormal{GL}_{n_r}(L)$ for a partition $n=n_1+...+n_r$. Let $k=\mathcal{O}/\varpi$, and, for $1\leq i\leq r$, $\overline{D}_i:G_L\to k$ be a continuous $n_i$-dimensional determinant and set $\overline{D}:=\overline{D}_1(n-n_1)\oplus...\oplus \overline{D}_{r-1}(n-(n_1+...+n_{r-1}))\oplus \overline{D}_r$. Let $w\in W^{\textnormal{M}}$ and $\Omega_{\textnormal{M}}$ be a Bernstein block of $\textnormal{Rep}_{\textnormal{sm}}(\textnormal{M},\overline{\mathbf{Q}}_p)$ that induces $\Omega$ under parabolic induction. Write $\lambda_i=((w\cdot\lambda)_{\tau,n_1+...+n_{i-1}+1},...,(w\cdot\lambda)_{\tau,n_1+...+n_{i}})\in (\mathbf{Z}_+^{n_i})^{\Hom(L,E)}$ and set $\Omega_i$ to be the Bernstein block of $\textnormal{Rep}_{\textnormal{sm}}(\textnormal{GL}_{n_i}(L),\overline{\mathbf{Q}}_p)$ given by the $i$th factor of $\Omega_{\textnormal{M}}$.

Let $R^{\lambda,\Omega}_{\overline{D}}$, and $ R^{\lambda_i,\Omega_i}_{\overline{D}_i}$ be the corresponding potentially semistable pseudodeformation rings and $\eta_{\textnormal{G}}\colon\mathfrak{Z}_{\textnormal{G},\Omega}\to R^{\lambda,\Omega}_{\overline{D}}\otimes_{\mathcal{O}}\overline{\mathbf{Q}}_p$, and $\eta_i\colon\mathfrak{Z}_{\textnormal{GL}_{n_i}(L),\Omega_i}\to R^{\lambda_i,\Omega_i}_{\overline{D}_i}\otimes_{\mathcal{O}}\overline{\mathbf{Q}}_p$ be the maps interpolating the Tate normalised semisimple local Langlands correspondences.
We set $R^{w\cdot \lambda,\Omega_{\textnormal{M}}}_{(\overline{D}_i)_i}:=\widehat{\otimes}_{i}R^{\lambda_i,\Omega_i}_{\overline{D}_i}$ and $\eta_{\textnormal{M}}:=\otimes_i\eta_i$ with its source canonically identified with $\mathfrak{Z}_{\textnormal{M},\Omega_{\textnormal{M}}}$.

\begin{lemma}\label{Lemma_InterpolationOfSSvsEisenstein}
    We have a commutative diagram
    \begin{equation}\label{equation_interpolationdiagram}
        \begin{tikzcd}
	{\mathfrak{Z}_{\textnormal{G},\Omega}} & {R^{\lambda,\Omega}_{\overline{D}}\otimes_{\mathcal{O}}\overline{\mathbf{Q}}_p} \\
	{\mathfrak{Z}_{\textnormal{M},\Omega_{\textnormal{M}}}} & {R^{w\cdot \lambda,\Omega_{\textnormal{M}}}_{(\overline{D}_i)_i}\otimes_{\mathcal{O}}\overline{\mathbf{Q}}_p.}
	\arrow["{\eta_{\textnormal{G}}}", from=1-1, to=1-2]
	\arrow["{\textnormal{BD}}"', from=1-1, to=2-1]
	\arrow["{(D_i)_i\mapsto\oplus_iD_i(n-(n_1+...+n_i))}", from=1-2, to=2-2]
	\arrow["{\eta_{\textnormal{M}}}"', from=2-1, to=2-2]
\end{tikzcd}
    \end{equation}
\end{lemma}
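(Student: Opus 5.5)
We reduce commutativity of \eqref{equation_interpolationdiagram} to a pointwise identity, as in the proofs of Lemma \ref{Corollary_InperpolationOfSSLL} and Lemma \ref{Qlbarpoint_lemma}. The target $R^{w\cdot\lambda,\Omega_{\textnormal{M}}}_{(\overline{D}_i)_i}\otimes_{\mathcal{O}}\overline{\mathbf{Q}}_p$ is a completed tensor product of rings that are reduced, $p$-torsion free and Jacobson after inverting $p$ (Theorem \ref{thm_existence_of_pseudodeformation_ring_with_conditions} and \cite[Lemma 4.17]{WE18}). So, as in Lemma \ref{Corollary_InperpolationOfSSLL}, the map to $\prod_y\overline{\mathbf{Q}}_p$ over all $\overline{\mathbf{Q}}_p$-points $y=(y_i)_i$ is injective. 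It therefore suffices to fix $z\in\mathfrak{Z}_{\textnormal{G},\Omega}$ and such a point $y$, and to check that both composites agree at $y$.

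First we make sure the right vertical map exists. For $\rho_{y_i}$ potentially semistable of Hodge type $\mathbf{v}_{\lambda_i}$ and inertial type $\tau_{\Omega_i}$, we must show that $x:=\oplus_i\rho_{y_i}\epsilon^{-(n-(n_1+\dots+n_i))}$ defines a point of $R^{\lambda,\Omega}_{\overline{D}}$. The twist is the cyclotomic twist implicit in $\overline{D}_i(n-(n_1+\dots+n_i))$. Three things need checking.
\begin{enumerate}
\item The Hodge--Tate weights of $x$ are those of $\mathbf{v}_{\lambda}$. Since $w\in W^{\textnormal{M}}$ is a Kostant representative, the multiset $\{(w\cdot\lambda)_{\tau,j}+n_i-j'\}$, shifted by $n-(n_1+\dots+n_i)$, recovers $\{\lambda_{\tau,j}+n-j\}$. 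This is the usual compatibility of the dot action with the $\rho$-shift.
\item The inertial type of $x$ is $\tau_{\Omega}$. This holds because $\Omega_{\textnormal{M}}$ induces $\Omega$, so the direct sum of the $\tau_{\Omega_i}$ equals $\rec_L(\pi)|_{I_L}$ for any $\pi$ in $\Omega$. Twisting by $\epsilon$ does not change the inertial type on $I_L$ up to the unramified part, but one has to check this carefully since $\epsilon|_{I_L}$ is nontrivial. We expect the Tate normalisation of $\rec^T$ to absorb exactly these twists.
\item Potential semistability passes to the direct sum, and the map on rings is induced by the universal property of the pseudodeformation rings, using property (3) of Theorem \ref{thm_existence_of_pseudodeformation_ring_with_conditions}.
\end{enumerate}

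Next, evaluating $\eta_{\textnormal{G}}(z)$ at $x$ gives $z\bigl((\rec^T_L)^{-1}(\mathrm{WD}(x))^{\textnormal{F-ss}}\bigr)$. Evaluating $\eta_{\textnormal{M}}(\textnormal{BD}(z))$ at $y$ gives the scalar by which $\textnormal{BD}(z)$ acts on $\otimes_i\pi_i$, where $\pi_i=(\rec^T_L)^{-1}(\mathrm{WD}(\rho_{y_i})^{\textnormal{F-ss}})$. By the defining property of $\textnormal{BD}$, this equals the scalar by which $z$ acts on the unnormalised induction $\textnormal{Ind}_{\textnormal{P}}^{\textnormal{G}}(\otimes_i\pi_i)$, hence on any of its Jordan--H\"older constituents. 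The key step is to show that these constituents have supercuspidal support equal to that of $(\rec^T_L)^{-1}(\mathrm{WD}(x))$. Since elements of the Bernstein centre act through supercuspidal support, this finishes the argument.

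Semisimplified $\rec^T$ is compatible with normalised induction: the supercuspidal support of a constituent of $\textnormal{n-Ind}(\otimes\pi_i)$ corresponds to $\oplus_i\rec^T(\pi_i)^{\textnormal{ss}}$. Unnormalised induction differs from normalised induction by $\delta_{\textnormal{P}}^{1/2}$, which on the $i$th block is an unramified character $|\det|^{a_i}$. Tate normalisation adds a further $|\cdot|^{(1-n)/2}$ versus $|\cdot|^{(1-n_i)/2}$. We expect these shifts to combine to $|\cdot|^{-(n-(n_1+\dots+n_i))}$ on block $i$, matching the cyclotomic twist via $\mathrm{WD}(\epsilon)=|\cdot|^{-1}$ under the paper's conventions. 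Fontaine's $\mathrm{WD}$ commutes with direct sums and twists, so $\mathrm{WD}(x)^{\textnormal{ss}}\cong\oplus_i\mathrm{WD}(\rho_{y_i})^{\textnormal{ss}}\otimes|\cdot|^{\cdots}$.

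The main obstacle is the exponent bookkeeping in this last step: the modulus character of the standard upper parabolic, the ordering of the blocks, and the sign conventions for $\epsilon$ and $\rec^T$ must produce exactly the twist $n-(n_1+\dots+n_i)$ on block $i$. We will verify this first on $\textnormal{GL}_2$ with $\textnormal{M}=\textnormal{T}$ and then argue blockwise.
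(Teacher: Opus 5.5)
Your reduction is the same as the paper's. Its entire proof is: the bottom right corner is reduced and Jacobson, so it suffices to check on $\overline{\mathbf{Q}}_p$-points, where ``the claim follows from the definitions''. The only content is therefore the exponent bookkeeping you postpone, and there your expectation fails.

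Your automorphic computation is correct. Since $\delta_{\textnormal{P}}$ is $|\det g_i|^{\sum_{j>i}n_j-\sum_{j<i}n_j}$ on the $i$th block, every constituent $\Pi$ of the unnormalised induction $\textnormal{Ind}_{\textnormal{P}}^{\textnormal{G}}(\otimes_i\pi_i)$ satisfies $\rec^T(\Pi)^{\textnormal{ss}}\cong\oplus_i\rec^T(\pi_i)^{\textnormal{ss}}\otimes|\cdot|^{-m_i}$, with $m_i=n-(n_1+\dots+n_i)$.

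However, under the paper's normalisations one has $\mathrm{WD}(\epsilon)=|\cdot|$, not $|\cdot|^{-1}$. Here $\mathrm{Frob}_v$ is geometric, $\epsilon(\mathrm{Frob}_v)=q_v^{-1}$, $\mathrm{HT}(\epsilon)=\{-1\}$, and $D(k)=D\otimes\epsilon^k$; the last is forced by $\widetilde{D}=D\cdot D^{\vee,c}(1-2n)$ together with \eqref{equation_effectofSatake} and the weights in Theorem \ref{Theorem_selfdualLGC}. At $p$ this is also Fontaine's recipe: $\varphi$ acts on $D_{\mathrm{cris}}(\mathbf{Q}_p(1))$ by $p^{-1}$.

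For example, $P_v(X)$ acts on the trivial representation of $\GL_2(F_v)$ by $(X-1)(X-q_v)$. So $\rec^T(\mathbf{1})=\mathbf{1}\oplus|\cdot|^{-1}$ goes with $1\oplus\epsilon^{-1}$, whose Hodge--Tate weights $\{1,0\}$ are those required by $\mathbf{v}_0$. Hence $|\cdot|^{-m_i}$ corresponds to $\epsilon^{-m_i}$, not $\epsilon^{m_i}$.

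The same sign already appears in your step (1). To pass from the block weights $(w\cdot\lambda)_{n_1+\dots+n_{i-1}+j}+n_i-j$ to the weights $(w\cdot\lambda)_k+n-k$, you must raise Hodge--Tate weights by $m_i$, i.e.\ twist by $\epsilon^{-m_i}$. The twist $D_i(m_i)$ lowers them by $m_i$ instead.

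Consequently, executed literally, your plan cannot succeed for the arrow as displayed. Take $n=2$, $\textnormal{M}=\textnormal{T}$, $\lambda=0$, $w=1$ and $\rho_{y_1}=\rho_{y_2}=1$. The image $\epsilon\oplus 1$ has weights $\{-1,0\}$, so it is not a point of $R^{\lambda,\Omega}_{\overline{D}}$. Moreover $\mathrm{WD}(\epsilon\oplus 1)=|\cdot|\oplus\mathbf{1}$ is not $\rec^T$ of the trivial constituent of $\textnormal{Ind}_{\textnormal{B}}^{\textnormal{G}}\mathbf{1}$.

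The fix is to take the right vertical arrow to be $(D_i)_i\mapsto\oplus_iD_i(-m_i)$. The sign has to be reversed consistently: in the statement, in the definition of $\overline{D}$ just before it, and in $D_{\mathfrak{n}}$ in Lemma \ref{Lemma_BoundaryLGC}. With this correction your steps (1)--(3) and the final comparison of supercuspidal supports go through, and your argument becomes a written-out version of the paper's.

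A smaller point: step (2) involves no subtlety. Since $\mathrm{WD}(\epsilon)$ is unramified, cyclotomic twists never change the inertial type, and neither $\epsilon|_{I_L}$ nor the Tate normalisation enters.
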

\begin{proof}
    The right bottom corner of the diagram is a reduced Jacobson ring. In particular, it suffices to check commutativity after post-composition along an arbitrary $\overline{\mathbf{Q}}_p$-point $R^{w\cdot \lambda,\Omega_{\textnormal{M}}}_{(\overline{D}_i)_i}\to \overline{\mathbf{Q}}_p$. In this case, the claim follows from the definitions.
\end{proof}

We finally introduce $\mathcal{O}$-models of Bernstein centres that are compatible with both \eqref{equation_Satakediagram} and \eqref{equation_interpolationdiagram}. We do this by induction on parabolic subgroups ordered by inclusion.
For a standard parabolic subgroup $\widetilde{\textnormal{M}}\leq \textnormal{G}$ such that $\Omega=[\widetilde{\textnormal{M}},\widetilde{\pi}]_{\textnormal{G}}$, set $\mathfrak{Z}_{\widetilde{\textnormal{M}},\mathcal{L},\lambda}^{\circ}:=\mathfrak{Z}_{\widetilde{\textnormal{M}},\Omega}\cap \mathcal{H}^B(\sigma_{\widetilde{\textnormal{M}},\Omega,\lambda})^{\circ}$.

Let $\textnormal{M}\leq \textnormal{G}$ be a standard Levi subgroup such that the direct product factor $\textnormal{Rep}_{\textnormal{sm}}(\textnormal{M},\overline{\mathbf{Q}}_p)[\Omega]\subset\textnormal{Rep}_{\textnormal{sm}}(\textnormal{M},\overline{\mathbf{Q}}_p)$ of blocks inducing $\Omega$ is non-trivial. Assume that, for every proper standard Levi subgroup $\textnormal{M}'\lneq\textnormal{M}$ with non-trivial $\textnormal{Rep}_{\textnormal{sm}}(\textnormal{M}',\overline{\mathbf{Q}}_p)[\Omega]$, $\mathfrak{Z}_{\textnormal{M}',\mathcal{L},\lambda}^{\circ}$ is defined. We define the $\mathcal{O}$-model
\begin{equation*}
\mathfrak{Z}_{\textnormal{M},\mathcal{L},\lambda}^{\circ}:=\big(\bigcap_{\textnormal{M}'\lneq\textnormal{M}}\textnormal{BD}^{-1}(\mathfrak{Z}_{\textnormal{M}',\mathcal{L},\lambda}^{\circ})\big)\cap \mathcal{H}^{B}(\sigma_{\textnormal{M},\Omega,\lambda})^{\circ}\leq \mathfrak{Z}_{\textnormal{M},\Omega},
\end{equation*}
where the intersection runs over $\textnormal{M}'\lneq \textnormal{M}$ as above and $\textnormal{BD}:\mathfrak{Z}_{\textnormal{M},\Omega}\to \mathfrak{Z}_{\textnormal{M}',\Omega}$ denotes the map of centres intertwining $\textnormal{Ind}_{\textnormal{M}'}^{\textnormal{M}}$.

Finally, in the situation of \eqref{equation_interpolationdiagram} define the $\mathcal{O}$-models
\begin{align*}
    \mathfrak{Z}_{\textnormal{G},\mathcal{L},\lambda,\overline{D}}^{\circ} &:=\mathfrak{Z}_{\textnormal{G},\mathcal{L},\lambda}^{\circ}\cap \eta_{\textnormal{G}}^{-1}(R_{\overline{D}}^{\lambda,\Omega})\leq \mathfrak{Z}_{\textnormal{G},\Omega}, \\
    \mathfrak{Z}_{\textnormal{M},\mathcal{L},\lambda,(\overline{D}_i)_i}^{\circ} &:=\mathfrak{Z}_{\textnormal{M},\mathcal{L},\lambda}^{\circ}\cap \bigcap_{w,\Omega_{\textnormal{M}}}\eta_{\textnormal{M}}^{-1}(R^{w\cdot \lambda,\Omega_{\textnormal{M}}}_{(\overline{D}_i)_i}) \leq \mathfrak{Z}_{\textnormal{M},\Omega}.
\end{align*}
The following commutative diagram summarises the situation. 
\begin{equation*}
    \begin{tikzcd}
	& {\mathcal{H}^{B}(\sigma_{\Omega,\lambda})^{\circ}} & {\mathfrak{Z}_{\textnormal{G},\mathcal{L},\lambda,\overline{D}}^{\circ}} & {R^{\lambda,\Omega}_{\overline{D}}} \\
	{\mathcal{H}(\sigma_{\textnormal{M},\Omega,\lambda}^{\circ})} & {\mathcal{H}^B(\sigma_{\textnormal{M},\Omega,\lambda})^{\circ}} & {\mathfrak{Z}_{\textnormal{M},\mathcal{L},\lambda,(\overline{D}_i)_i}^{\circ}} & {R^{w\cdot \lambda,\Omega_{\textnormal{M}}}_{(\overline{D}_i)_i},}
	\arrow["{\mathcal{S}_{\textnormal{M},\mathcal{L},\lambda}}"', from=1-2, to=2-1]
	\arrow[hook', from=1-3, to=1-2]
	\arrow["{\eta_{\textnormal{G}}^{\circ}}", from=1-3, to=1-4]
	\arrow["{\textnormal{BD}^{\circ}}", from=1-3, to=2-3]
	\arrow["{(D_i)_i\mapsto\oplus_iD_i(m_i)}", from=1-4, to=2-4]
	\arrow[hook', from=2-2, to=2-1]
	\arrow[hook', from=2-3, to=2-2]
	\arrow["{\eta_{\textnormal{M}}^{\circ}}"', from=2-3, to=2-4]
\end{tikzcd}
\end{equation*}
where $m_i = n - (n_1 + \dots + n_i)$.

\subsection{Smooth representations with \texorpdfstring{$p$}{p}-power torsion coefficients}\label{subsection_smooth_modp}

Consider $\sigma\in\textnormal{Rep}(K,E)$, finite dimensional over $E$. Fix a choice of $K$-stable $\mathcal{O}$-lattice $\sigma^{\circ}\subset\sigma$.
Set $\Lambda:=\mathcal{O}/\varpi^m$ for an integer $m\geq 1$. In this subsection, we consider $\overline{\sigma}:=\sigma^{\circ}\otimes_{\mathcal{O}}\Lambda\in \textnormal{Rep}_{\textnormal{sm}}(K,\Lambda)$, finite free over $\Lambda$.

Given $\pi\in D^+_{\textnormal{sm}}(\textnormal{G},\Lambda)$, $R\Gamma(K,\pi\otimes_{\Lambda}\overline{\sigma})$ is naturally isomorphic in $D^+(\Lambda)$ to an object of $D^+(\mathcal{H}(\sigma)\otimes_{\mathcal{O}}\Lambda).$ In particular, there is an algebra homomorphism
\begin{equation*}
    \mathcal{H}(\sigma)\to \textnormal{End}_{D^+(\Lambda)}(R\Gamma(K,\pi\otimes_{\Lambda} \overline{\sigma})).
\end{equation*}
To see this, note that both $\Res_K^{\textnormal{G}}(-)$ and $-\otimes_{\Lambda}\overline{\sigma}$ preserve injectives. The first functor by \cite[Proposition 2.1.2]{Eme10} and the second by admitting $\Hom_{\Lambda}(\overline{\sigma},-)\cong -\otimes_{\Lambda}\overline{\sigma}^{\vee}$ as an exact left adjoint. Consequently, $R\Gamma(K,-)\circ (-\otimes_{\Lambda}\overline{\sigma})\circ \textnormal{Res}_K^{\textnormal{G}}\cong R\left(\Gamma(K,-)\circ (-\otimes_{\Lambda}\overline{\sigma})\circ \textnormal{Res}_K^{\textnormal{G}}\right)$. Concretely, for $\pi\xrightarrow{\sim}\mathcal{I}^{\bullet}$ an injective resolution in $D^+_{\textnormal{sm}}(\textnormal{G},\Lambda)$, applying the forgetful functor to $\Gamma(K,\mathcal{I}^{\bullet}\otimes \overline{\sigma})\in D^+(\mathcal{H}(\sigma)\otimes_{\mathcal{O}}\Lambda)$ computes $R\Gamma(K,\pi\otimes_{\Lambda}\overline{\sigma})$.

Let $r_{\textnormal{P}}\colon\mathcal{H}(\sigma)\to \mathcal{H}(\sigma\mid_{K_P})$ be the algebra homomorphism restricting a function $f\colon\textnormal{G}\to \textnormal{End}(\sigma)$ to $\textnormal{P}$.

\begin{lemma}\label{RestrictionFunc}
    For $\pi\in D^+_{\textnormal{sm}}(\textnormal{P},\Lambda)$, there is an isomorphism
    \begin{equation*}
        R\Gamma(K,(\textnormal{Ind}_{K_{\textnormal{P}}}^K\pi)\otimes\overline{\sigma})\cong r_{\textnormal{P}}^{\ast}R\Gamma(K_{\textnormal{P}},\pi\otimes \overline{\sigma})
    \end{equation*}
    in $D^+(\mathcal{H}(\sigma)\otimes_{\mathcal{O}}\Lambda)$.
\end{lemma}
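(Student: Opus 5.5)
The plan is to reduce the statement to Shapiro's lemma (Frobenius reciprocity for $K_{\textnormal{P}}\leq K$), carried out on an injective resolution, and then to check Hecke-equivariance by a direct computation on the level of complexes.

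\textbf{Step 1: reformulation.} The Iwasawa decomposition $\textnormal{G}=\textnormal{P}K$ gives an isomorphism of smooth $K$-representations
\[ \textnormal{Res}^{\textnormal{G}}_K\textnormal{Ind}_{\textnormal{P}}^{\textnormal{G}}\pi\cong\textnormal{Ind}_{K_{\textnormal{P}}}^K\textnormal{Res}^{\textnormal{P}}_{K_{\textnormal{P}}}\pi, \]
natural in $\pi\in\textnormal{Rep}_{\textnormal{sm}}(\textnormal{P},\Lambda)$. Through this isomorphism, the left-hand side acquires its $\mathcal{H}(\sigma)$-action as in the discussion preceding the lemma, applied to $\textnormal{Ind}_{\textnormal{P}}^{\textnormal{G}}\pi\in D^+_{\textnormal{sm}}(\textnormal{G},\Lambda)$. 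The right-hand side carries the $\mathcal{H}(\sigma\mid_{K_{\textnormal{P}}})$-action defined in the same way, with $\textnormal{P}$ and $K_{\textnormal{P}}$ in place of $\textnormal{G}$ and $K$. The same argument as before shows that $\textnormal{Res}^{\textnormal{P}}_{K_{\textnormal{P}}}$ preserves injectives (by \cite[Proposition 2.1.2]{Eme10}) and that so does $-\otimes\overline{\sigma}$.

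\textbf{Step 2: derived Shapiro.} Choose an injective resolution $\pi\xrightarrow{\sim}\mathcal{I}^\bullet$ in $D^+_{\textnormal{sm}}(\textnormal{P},\Lambda)$. The functor $\textnormal{Ind}_{\textnormal{P}}^{\textnormal{G}}$ is exact, because $\textnormal{P}\backslash\textnormal{G}$ is compact, and it preserves injectives, being right adjoint to the exact restriction functor. Hence $\textnormal{Ind}_{\textnormal{P}}^{\textnormal{G}}\mathcal{I}^\bullet$ is an injective resolution of $\textnormal{Ind}_{\textnormal{P}}^{\textnormal{G}}\pi$. Combining the tensor identity
\[ (\textnormal{Ind}_{K_{\textnormal{P}}}^K\mathcal{I})\otimes\overline{\sigma}\cong\textnormal{Ind}_{K_{\textnormal{P}}}^K(\mathcal{I}\otimes\overline{\sigma}\mid_{K_{\textnormal{P}}}) \]
with evaluation at $1$, we obtain isomorphisms of complexes
\[ \Gamma\bigl(K,\textnormal{Ind}_{\textnormal{P}}^{\textnormal{G}}\mathcal{I}^\bullet\otimes\overline{\sigma}\bigr)\xrightarrow{\sim}\Gamma\bigl(K_{\textnormal{P}},\mathcal{I}^\bullet\otimes\overline{\sigma}\bigr). \]
These isomorphisms are functorial in $\mathcal{I}$, so they give the desired isomorphism in $D^+(\Lambda)$.

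\textbf{Step 3: Hecke-equivariance (the main obstacle).} It remains to show that the chain-level map of Step 2 intertwines the action of $f\in\mathcal{H}(\sigma)$ with that of $r_{\textnormal{P}}(f)$. I would verify this termwise, for $\mathcal{I}$ any smooth $\textnormal{P}$-representation. Write
\[ \Gamma(K,\textnormal{Ind}_{\textnormal{P}}^{\textnormal{G}}\mathcal{I}\otimes\overline{\sigma})=\Hom_K\bigl(\overline{\sigma}^\vee,\textnormal{Ind}_{\textnormal{P}}^{\textnormal{G}}\mathcal{I}\bigr). \]
In this description the Hecke action of $f$ is $\phi\mapsto\sum_{g\in\textnormal{G}/K}g\cdot\phi\circ f(g^{-1})^t$ (with the appropriate transpose conventions). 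Evaluating at $1\in\textnormal{G}$ sends $(g\cdot\phi)(1)$ to $\phi(g)$. The key point is to use the Iwasawa decomposition to write each $g\in\textnormal{G}$ as $g=pk$ with $p\in\textnormal{P}$ and $k\in K$, which gives $\phi(pk)=p\cdot\phi(k)$. Absorbing $k$ into the $K$-bi-invariance of $f$, the sum over $\textnormal{G}/K$ becomes a sum over $\textnormal{P}/K_{\textnormal{P}}$ in which only the restriction $f\mid_{\textnormal{P}}=r_{\textnormal{P}}(f)$ appears. The resulting expression is exactly the formula for the $\mathcal{H}(\sigma\mid_{K_{\textnormal{P}}})$-action of $r_{\textnormal{P}}(f)$.

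The care needed here is in keeping track of the normalisations: the left versus right actions, and the transposes coming from the identification of $\Gamma(K,-\otimes\overline{\sigma})$ with $\Hom_K(\overline{\sigma}^\vee,-)$. I would organise the verification on generators $[KgK,\psi]$ of $\mathcal{H}(\sigma)$. This uses the bijection
\[ \textnormal{P}/K_{\textnormal{P}}\xrightarrow{\sim}\textnormal{G}/K, \]
which holds because $\textnormal{P}\cap K=K_{\textnormal{P}}$. Finally, since the chain-level isomorphism is $\mathcal{H}(\sigma)\otimes_{\mathcal{O}}\Lambda$-linear and natural in the resolution, it descends to an isomorphism in $D^+(\mathcal{H}(\sigma)\otimes_{\mathcal{O}}\Lambda)$, as claimed in Lemma \ref{RestrictionFunc}.
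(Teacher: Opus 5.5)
Your proposal is correct and follows the same route as the paper. Both arguments reduce to the underived statement because induction preserves injectives (it is right adjoint to the exact restriction), identify the two sides by Shapiro's lemma via evaluation at $1$, and check that $[KpK,\phi]$ corresponds to $r_{\textnormal{P}}([KpK,\phi])$. Your Step 3, which uses $\textnormal{G}=\textnormal{P}K$ and $\textnormal{P}/K_{\textnormal{P}}\xrightarrow{\sim}\textnormal{G}/K$ to turn the sum over $\textnormal{G}/K$ into a sum over $\textnormal{P}/K_{\textnormal{P}}$, spells out the Hecke-compatibility check that the paper simply calls clear.
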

\begin{proof}
    As $\Ind_{K_{\textnormal{P}}}^{K}$ admits $\Res_{K_{\textnormal{P}}}^K$ as a left adjoint, an exact functor, it suffices to check the claim on underived functors. In other words, for $\pi\in \textnormal{Rep}_{\textnormal{sm}}(\textnormal{P},\Lambda)$, we seek an isomorphism
    \begin{equation*}
        \Gamma(K,\textnormal{Ind}_{K_{\textnormal{P}}}^K\pi\otimes_{\mathcal{O}}\sigma^{\circ})\cong\Gamma(K_{\textnormal{P}},\pi\otimes_{\mathcal{O}}\sigma^{\circ})
    \end{equation*}
    such that, for $p\in \textnormal{P}$, and $\phi\in \Hom_{p^{-1}Kp\cap K}(\sigma^{\circ},p^{\ast}\sigma^{\circ})$, the action of
    $[KpK,\phi]$ on the LHS matches the action of $r_{\textnormal{P}}([KpK,\phi])$
    on the RHS. This is clear.
\end{proof}

Let $m\in \textnormal{M}$ and $\phi\in \Hom_{m^{-1}K_{\textnormal{P}}m\cap K_{\textnormal{P}}}(\sigma^{\circ},m^{\ast}\sigma^{\circ})$.
For $\pi\in D^+_{\textnormal{sm}}(\textnormal{P},\Lambda)$ one obtains a map
\begin{equation*}
    \overline{\phi}_{\textnormal{P},\pi}\colon\Res_{m^{-1}K_{\textnormal{M}}m\cap K_{\textnormal{M}}}(R\Gamma(K_N,\pi\otimes_{\Lambda}\overline{\sigma}))\to m^{\ast}\Res_{K_{\textnormal{M}}\cap m K_{\textnormal{M}}m^{-1}}(R\Gamma(K_{\textnormal{N}},\pi\otimes_{\Lambda}\overline{\sigma}))
\end{equation*}
in $D^+_{\textnormal{sm}}(m^{-1}K_{\textnormal{M}}m\cap K_{\textnormal{M}},\Lambda)$ defined by the usual Hecke action. 

We can define it as follows. Pick an injective resolution $\pi\cong \mathcal{I}^{\bullet}$ in $D^+_{\textnormal{sm}}(\textnormal{P},\Lambda)$. Then $\mathcal{I}^{\bullet}\otimes_{\Lambda}\overline{\sigma}$ is an injective resolution of $\pi\otimes_{\Lambda}\overline{\sigma}$ in $D^{+}_{\textnormal{sm}}(K_{\textnormal{P}},\Lambda)$ and we can compute the source and target of $\overline{\phi}_{\textnormal{P}}$ by applying the underived variants of the appearing functors to it.
We define
\begin{align*}
\overline{\phi}_{\textnormal{P},\pi}^i\colon\Gamma(K_{\textnormal{N}},\mathcal{I}^i\otimes_{\mathcal{O}}\sigma^{\circ}) &\to \Gamma(K_{\textnormal{N}},\mathcal{I}^{i}\otimes_{\mathcal{O}}\sigma^{\circ}) \\
    v &\mapsto \sum_{n\in K_{\textnormal{N}}/(mK_{\textnormal{N}}m^{-1}\cap K_{\textnormal{N}})}\bigl(\sigma^{\circ}(n)\circ (m\otimes \phi)\bigr)(v),
\end{align*}
that can be easily checked to be $m^{-1}K_{\textnormal{M}}m\cap K_{\textnormal{M}}$-equivariant when we consider the twisted action on the target. We then set $\overline{\phi}_{\textnormal{P},\pi}:=\overline{\phi}_{\textnormal{P},\pi}^{\bullet}$.

This construction will be of special interest in the case of the trivial representation $\pi=\mathbf{1}[0]$.
\begin{lemma}\label{TransitivityOfHeckeAction}
    For $\pi\in D^+_{\textnormal{sm}}(\textnormal{P},\Lambda)$, there is an isomorphism
    \begin{equation*}
R\Gamma(K_{\textnormal{P}},\pi\otimes_{\Lambda}\overline{\sigma})\cong R\Gamma(K_{\textnormal{M}},R\Gamma(K_{\textnormal{N}},\pi\otimes_{\Lambda}\overline{\sigma}))
    \end{equation*}
    in $D^{+}(\Lambda)$, for every $m\in \textnormal{M}$, matching the endomorphism induced by $[K_{\textnormal{P}}mK_{\textnormal{P}},m\otimes \phi]$ on the left with the endomorphism induced by $[K_{\textnormal{M}}mK_{\textnormal{M}},\overline{\phi}_{\textnormal{P},\pi}]$ on the right.
\end{lemma}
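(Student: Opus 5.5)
The isomorphism itself is the Hochschild--Serre identification $\Gamma(K_{\textnormal{P}},-)=\Gamma(K_{\textnormal{M}},-)\circ\Gamma(K_{\textnormal{N}},-)$, derived. I will work with a fixed injective resolution $\pi\xrightarrow{\sim}\mathcal{I}^{\bullet}$ in $D^+_{\textnormal{sm}}(\textnormal{P},\Lambda)$, exactly as in the definition of $\overline{\phi}_{\textnormal{P},\pi}$. By the argument before Lemma \ref{RestrictionFunc}, $\mathcal{I}^{\bullet}\otimes_{\Lambda}\overline{\sigma}$ is a complex of injectives in $\textnormal{Rep}_{\textnormal{sm}}(K_{\textnormal{P}},\Lambda)$. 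This is because restriction from $\textnormal{P}$ to the open subgroup $K_{\textnormal{P}}$ preserves injectives (\cite[Proposition 2.1.2]{Eme10}), and $-\otimes\overline{\sigma}$ has an exact left adjoint. Moreover, $\Gamma(K_{\textnormal{N}},-)$ sends injective smooth $K_{\textnormal{P}}$-representations to injective smooth $K_{\textnormal{M}}=K_{\textnormal{P}}/K_{\textnormal{N}}$-representations, since it is right adjoint to the exact inflation functor. Hence $R\Gamma(K_{\textnormal{N}},\pi\otimes\overline{\sigma})$ is computed by $\Gamma(K_{\textnormal{N}},\mathcal{I}^{\bullet}\otimes\overline{\sigma})$, which is a complex of $K_{\textnormal{M}}$-injectives. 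Applying $\Gamma(K_{\textnormal{M}},-)$ to it then computes the right-hand side and gives literally the complex $\Gamma(K_{\textnormal{P}},\mathcal{I}^{\bullet}\otimes\overline{\sigma})$. This yields the isomorphism in $D^+(\Lambda)$ as an equality of complexes.

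It then suffices to compare the two Hecke operators termwise on $\Gamma(K_{\textnormal{P}},\mathcal{I}^i\otimes\sigma^{\circ})$ for each $i$, since both endomorphisms are induced by maps of these complexes. I will write out the action of $[K_{\textnormal{P}}mK_{\textnormal{P}},m\otimes\phi]$ explicitly as
\[ v\mapsto \sum_{x\in K_{\textnormal{P}}/(mK_{\textnormal{P}}m^{-1}\cap K_{\textnormal{P}})} x\,(m\otimes\phi)(v). \]
The key step is to decompose the coset space. Since $m\in\textnormal{M}$ normalises $\textnormal{N}$, and $K_{\textnormal{P}}=K_{\textnormal{M}}\ltimes K_{\textnormal{N}}$ (which holds for standard parabolics and $K=\textnormal{GL}_n(\mathcal{O}_L)$), we have
\[ mK_{\textnormal{P}}m^{-1}\cap K_{\textnormal{P}}=(mK_{\textnormal{M}}m^{-1}\cap K_{\textnormal{M}})\ltimes(mK_{\textnormal{N}}m^{-1}\cap K_{\textnormal{N}}). \]
Therefore $K_{\textnormal{P}}/(mK_{\textnormal{P}}m^{-1}\cap K_{\textnormal{P}})$ is in bijection with pairs $(y,n)$, where $y$ runs over $K_{\textnormal{M}}/(mK_{\textnormal{M}}m^{-1}\cap K_{\textnormal{M}})$ and $n$ runs over $K_{\textnormal{N}}/(mK_{\textnormal{N}}m^{-1}\cap K_{\textnormal{N}})$, via $x=yn$.

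Substituting this decomposition, the inner sum over $n$ is exactly $\overline{\phi}^i_{\textnormal{P},\pi}(v)$, with $v$ regarded as a $K_{\textnormal{N}}$-invariant vector. The outer sum over $y$ is then the usual Hecke action of $[K_{\textnormal{M}}mK_{\textnormal{M}},\overline{\phi}_{\textnormal{P},\pi}]$ on $K_{\textnormal{M}}$-invariants of $\Gamma(K_{\textnormal{N}},\mathcal{I}^i\otimes\sigma^{\circ})$. Here one needs the $(m^{-1}K_{\textnormal{M}}m\cap K_{\textnormal{M}})$-equivariance of $\overline{\phi}^i$, which is already noted in the text. I must also check that the cosets in the two sums are representative-independent, so that the inner sum lands in $K_{\textnormal{N}}$-invariants and the outer sum is well defined. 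This follows because the full sum is independent of representatives and because $\textnormal{M}$ normalises $K_{\textnormal{N}}$-cosets appropriately.

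I expect the main obstacle to be bookkeeping, not any conceptual difficulty. First, I must ensure the semidirect product decomposition of the intersection $mK_{\textnormal{P}}m^{-1}\cap K_{\textnormal{P}}$ holds. Second, I must get the order of $y$ and $n$, and the twisting conventions $m^{\ast}$, consistent with the definition of $\overline{\phi}_{\textnormal{P},\pi}$. Finally, I must check that these termwise identities commute with the differentials of $\mathcal{I}^{\bullet}$, which is automatic because all the maps involved are functorial in $\mathcal{I}^i$.
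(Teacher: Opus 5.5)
Your proposal is correct and follows the paper's proof. In both, one reduces to the underived statement using the resolution $\mathcal{I}^{\bullet}\otimes_{\Lambda}\overline{\sigma}$, then splits the sum over $K_{\textnormal{P}}/(mK_{\textnormal{P}}m^{-1}\cap K_{\textnormal{P}})$ into an outer sum over $K_{\textnormal{M}}/(mK_{\textnormal{M}}m^{-1}\cap K_{\textnormal{M}})$ and an inner sum over $K_{\textnormal{N}}/(mK_{\textnormal{N}}m^{-1}\cap K_{\textnormal{N}})$, the inner sum being $\overline{\phi}_{\textnormal{P},\pi}$; the paper justifies this splitting by exactly your observation that $k_1k_2=m\widetilde{k}_1\widetilde{k}_2m^{-1}$ forces $k_1=m\widetilde{k}_1m^{-1}$ and $k_2=m\widetilde{k}_2m^{-1}$. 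Your remark that $\Gamma(K_{\textnormal{N}},-)$ preserves injectives, being right adjoint to exact inflation, makes explicit a step the paper leaves implicit; note also that the correspondence $x=yn$ is a bijection only after fixing a set of representatives $y$, which is all the computation uses.
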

\begin{proof}
    We can use the injective resolution $\pi\otimes_{\Lambda}\overline{\sigma}\cong\mathcal{I}^{\bullet}\otimes_{\Lambda}\overline{\sigma}$
    to reduce the question to the underived statement.
    In particular, we can assume that $\pi$ is injective, pick $v\in \Gamma(K_{\textnormal{M}},\Gamma(K_{\textnormal{N}},\pi\otimes_{\Lambda}\overline{\sigma}))=\Gamma(K_{\textnormal{P}},\pi\otimes_{\Lambda}\overline{\sigma})$ and compute
    \begin{align*}
    [K_{\textnormal{P}}mK_{\textnormal{P}},\phi]\ast v &=\sum_{k\in K_{\textnormal{P}}/m K_{\textnormal{P}}m^{-1}\cap K_{\textnormal{P}}}(\sigma^{\circ}(k)\circ (m\otimes \phi))(v) \\
    &= \sum_{(k_1,k_2)\in K_{\textnormal{M}}/(mK_{\textnormal{M}}m^{-1}\cap K_{\textnormal{M}})\times K_{\textnormal{N}}/(mK_{\textnormal{N}}m^{-1}\cap K_{\textnormal{N}})}\sigma^{\circ}(k_1)(\sigma^{\circ}(k_2)(m\otimes\phi(v))) \\
    &= \sum_{k_1\in K_{\textnormal{M}}/mK_{\textnormal{M}}m^{-1}\cap K_{\textnormal{M}}}\sigma^{\circ}(k_1)(\overline{\phi}_{\textnormal{P},\pi}\ast v)= [K_{\textnormal{M}}mK_{\textnormal{M}},\overline{\phi}_{\textnormal{P},\pi}]\ast v.
    \end{align*}
    To check the validity of the second equality, we note that, for every $k_1,\widetilde{k}_1\in K_{\textnormal{M}}$, $k_2,\widetilde{k}_2\in K_{\textnormal{N}}$, we have $k_1k_2=m\widetilde{k}_1\widetilde{k}_2m^{-1}$ if and only if $k_1=m\widetilde{k}_1m^{-1}$ and $k_2=m\widetilde{k}_2m^{-1}$.
\end{proof}

Note that $\textnormal{Rep}_{\textnormal{sm}}(\textnormal{M},\Lambda)$ has enough $\Lambda$-flat objects (given by arbitrary direct sums of representations of the form $\textnormal{c-Ind}_{J}^{\textnormal{M}}\Lambda$ for open subgroups $J\subset \textnormal{K}$) to compute $-\otimes_{\Lambda}^{\mathbf{L}}-$ in $D^{-}_{\textnormal{sm}}(\textnormal{M},\Lambda)$. We say that a complex $\pi\in D^{-}_{\textnormal{sm}}(\textnormal{M},\Lambda)$ has \textit{bounded Tor-dimension} if there are integers $a\leq b$ such that $H^i(\pi\otimes^{\mathbf{L}}_{\Lambda}M)=0$ for every $\Lambda$-module $M$ and integer $i\notin [a,b]$.

\begin{lemma}\label{TorDimLemma}
    If $\pi\in D^{b}_{\textnormal{sm}}(\textnormal{M},\Lambda)$ has bounded Tor-dimension, then it admits a \textit{bounded} $\Lambda$-flat resolution.
\end{lemma}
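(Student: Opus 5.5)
Start from an arbitrary $\Lambda$-flat resolution and truncate it, using the Tor-dimension bound to see that the truncation stays flat. Concretely, since $\textnormal{Rep}_{\textnormal{sm}}(\textnormal{M},\Lambda)$ has enough $\Lambda$-flat objects (direct sums of $\textnormal{c-Ind}_J^{\textnormal{M}}\Lambda$), and $\pi$ is bounded above, choose a bounded-above complex $F^\bullet$ of such flat objects with a quasi-isomorphism $F^\bullet\to\pi$. This uses the standard construction of resolutions by a generating class closed under direct sums, applied degree by degree from the top. Let $[a,b]$ be the interval from the Tor-dimension hypothesis. Then $F^\bullet\otimes_\Lambda M$ computes $\pi\otimes^{\mathbf{L}}_\Lambda M$ for every $\Lambda$-module $M$, so its cohomology vanishes outside $[a,b]$.

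Consider the good truncation $\tau_{\geq a}F^\bullet$, namely
\[
0\to \coker(F^{a-1}\to F^a)\to F^{a+1}\to\cdots .
\]
It is quasi-isomorphic to $F^\bullet$ because $H^i(F^\bullet)=H^i(\pi)=0$ for $i<a$; take $M=\Lambda$. Hence $\tau_{\geq a}F^\bullet$ is bounded, and all its terms except possibly $C:=\coker(d^{a-1})$ are flat. So it remains to show that $C$ is $\Lambda$-flat, as a $\Lambda$-module with its $\textnormal{M}$-action forgotten, since flatness is a module-theoretic property.

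For flatness of $C$, use the exact sequence $\cdots\to F^{a-2}\to F^{a-1}\to F^a\to C\to 0$. The complex $\cdots\to F^{a-1}\to F^a$ is exact in degrees $<a$, so it is a flat resolution of $C[-a]$, shifted. Therefore, for any $M$,
\[
\textnormal{Tor}_j^\Lambda(C,M)=H^{a-j}(\sigma_{\leq a}F^\bullet\otimes_\Lambda M).
\]
For $j\geq 1$ the degree $a-j<a$ lies in the range where $\sigma_{\leq a}F^\bullet\otimes M$ agrees with $F^\bullet\otimes M$, up to the boundary term at degree $a-1$. Checking the indices, $H^{a-1}$ of the brutal truncation involves $\ker(F^{a-1}\otimes M\to F^a\otimes M)$ modulo the image, which coincides with $H^{a-1}(F^\bullet\otimes M)$. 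The same holds for all lower degrees. All of these vanish by the Tor-dimension bound, so $\textnormal{Tor}_1^\Lambda(C,M)=0$ for all $M$ and $C$ is flat.

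The only delicate point is this index bookkeeping: one must use that brutal and naive truncation agree in degrees $\le a-1$ for the computation of $H^{a-1}$, which holds because the differential out of degree $a-1$ is unchanged. One also needs the routine existence of a bounded-above flat resolution in the smooth category. Given both, $\tau_{\geq a}F^\bullet$ is the desired bounded $\Lambda$-flat resolution, concentrated in degrees $[a,\max(b,\text{top of }F)]$, and it can be further cut at the top since $\pi$ is bounded.
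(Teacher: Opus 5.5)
Your proof is correct and follows the paper's approach: take a bounded-above resolution by $\Lambda$-flat smooth representations, pass to the good truncation $\tau_{\geq a}F^\bullet$, and check that $C=\coker(F^{a-1}\to F^a)$ is $\Lambda$-flat. The paper cites the Stacks Project (Tag 0653) for that flatness step, whereas you verify it directly via $\textnormal{Tor}^\Lambda_j(C,M)\cong H^{a-j}(F^\bullet\otimes_\Lambda M)=0$ for $j\geq 1$; your closing remark about cutting at the top is unnecessary, since $\tau_{\geq a}F^\bullet$ is already bounded.
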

\begin{proof}
    Pick a bounded above $\Lambda$-flat resolution $F^{\bullet}\to \pi$ in $D^{-}_{\textnormal{sm}}(\textnormal{M},\Lambda)$. We claim that $\tau_{\geq N}F^{\bullet}$ is a $\Lambda$-flat resolution of $\pi$ for any $N\leq a$ where $\pi$ has Tor-amplitude $[a,b]$.

    To see this, we need to prove that $\textnormal{coker}(d^{N}:F^{N-1}\to F^{N})$ is $\Lambda$-flat. This is \cite[\href{https://stacks.math.columbia.edu/tag/0653}{Tag 0653}]{stacks-project}.
\end{proof}

Given $\pi \in D^b_{\textnormal{sm}}(\textnormal{M},\Lambda)$ with bounded Tor-dimension, using the Lemma above, we can now define
\begin{equation*}
\pi\otimes^{\mathbf{L}}_{\Lambda}-:D^+_{\textnormal{sm}}(K_{\textnormal{M}},\Lambda)\to D^+_{\textnormal{sm}}(K_{\textnormal{M}},\Lambda)
\end{equation*}
as $\textnormal{Tot}(F^{\bullet}\otimes_{\Lambda}-)$ for any bounded $\Lambda$-flat resolution $F^{\bullet}\to \pi$ in $D_{\textnormal{sm}}^+(\textnormal{M},\Lambda)$. One checks that this functor sends quasi-isomorphisms to quasi-isomorphisms (cf. \cite[\href{https://stacks.math.columbia.edu/tag/06Y0}{Tag 06Y0}]{stacks-project}) and that it is independent of the choice of the resolution up to canonical quasi-isomorphism (cf. \cite[\href{https://stacks.math.columbia.edu/tag/064L}{Tag 064L}]{stacks-project}).

\begin{lemma}\label{ProjectionFormula}
    Let $\pi\in D^b_{\textnormal{sm}}(\textnormal{M},\Lambda)$ having bounded Tor-dimension. Then there is an isomorphism
    \begin{equation*}
        R\Gamma(K_{\textnormal{P}},(\textnormal{Inf}_{\textnormal{M}}^{\textnormal{P}}\pi)\otimes_{\Lambda}\overline{\sigma})\cong R\Gamma(K_{\textnormal{M}},\pi\otimes^{\mathbf{L}}_{\Lambda}R\Gamma(K_{\textnormal{N}},\overline{\sigma}))
    \end{equation*}
    matching the endomorphism induced by $[K_{\textnormal{P}}mK_{\textnormal{P}},m\otimes \phi]$ on the left with the endomorphism induced by $[K_{\textnormal{M}}mK_{\textnormal{M}},m\otimes\overline{\phi}_{\textnormal{P},\mathbf{1}}]$ on the right.
\end{lemma}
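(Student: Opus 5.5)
The plan is to prove the isomorphism first, using Lemma~\ref{TransitivityOfHeckeAction}, and then to identify the Hecke endomorphisms using the explicit maps $\overline{\phi}_{\textnormal{P},-}$.

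\textbf{Step 1: the isomorphism.} Apply Lemma~\ref{TransitivityOfHeckeAction} to $\textnormal{Inf}_{\textnormal{M}}^{\textnormal{P}}\pi$. This gives
\[ R\Gamma(K_{\textnormal{P}},(\textnormal{Inf}_{\textnormal{M}}^{\textnormal{P}}\pi)\otimes_\Lambda\overline{\sigma})\cong R\Gamma(K_{\textnormal{M}},R\Gamma(K_{\textnormal{N}},(\textnormal{Inf}\pi)\otimes_\Lambda\overline{\sigma})). \]
It also matches $[K_{\textnormal{P}}mK_{\textnormal{P}},m\otimes\phi]$ with $[K_{\textnormal{M}}mK_{\textnormal{M}},\overline{\phi}_{\textnormal{P},\textnormal{Inf}\pi}]$. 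It therefore remains to show that
\[ R\Gamma(K_{\textnormal{N}},(\textnormal{Inf}\pi)\otimes_\Lambda\overline{\sigma})\cong \pi\otimes^{\mathbf{L}}_\Lambda R\Gamma(K_{\textnormal{N}},\overline{\sigma}) \]
in $D^+_{\textnormal{sm}}(K_{\textnormal{M}},\Lambda)$. This is a projection formula, and it must be compatible with the twisted maps, in the sense that $\overline{\phi}_{\textnormal{P},\textnormal{Inf}\pi}$ corresponds to $m\otimes\overline{\phi}_{\textnormal{P},\mathbf{1}}$.

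\textbf{Step 2: resolutions.} Using Lemma~\ref{TorDimLemma}, replace $\pi$ by a bounded complex $F^\bullet$ of $\Lambda$-flat smooth $\textnormal{M}$-representations. Choose an injective resolution $\overline{\sigma}\to\mathcal{J}^\bullet$ in $D^+_{\textnormal{sm}}(K_{\textnormal{P}},\Lambda)$. Since $K_{\textnormal{N}}$ acts trivially on each $F^i$, there is a natural map of double complexes
\[ F^\bullet\otimes_\Lambda\Gamma(K_{\textnormal{N}},\mathcal{J}^\bullet)\to\Gamma(K_{\textnormal{N}},F^\bullet\otimes_\Lambda\mathcal{J}^\bullet). \]
This map is $K_{\textnormal{M}}$-equivariant. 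Moreover, $F^\bullet\otimes\mathcal{J}^\bullet$ is quasi-isomorphic to $(\textnormal{Inf}\pi)\otimes\overline{\sigma}$, because $F^\bullet$ is flat and bounded.

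\textbf{Step 3: the comparison map is a quasi-isomorphism.} This is the main obstacle. I would first reduce to the case where each $F^i$ is a filtered colimit of finite free $\Lambda$-modules; this holds by Lazard's theorem, since the $F^i$ are flat. Next I would use that $K_{\textnormal{N}}$ is a compact $p$-adic analytic group, so its continuous cohomology on smooth torsion modules commutes with filtered colimits. For finite free $F^i$ with trivial $K_{\textnormal{N}}$-action the comparison is clear. One must also check that $F^i\otimes\mathcal{J}^j$ is $K_{\textnormal{N}}$-acyclic; this follows from the same colimit argument.

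The remaining point is boundedness. Since $F^\bullet$ is bounded, the comparison of total complexes reduces to a finite filtration by degree, so a spectral-sequence (or five-lemma) argument finishes the proof.

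\textbf{Step 4: compatibility with the Hecke operators.} Check this on the explicit resolutions. On $F^i\otimes\mathcal{J}^j$, the formula defining $\overline{\phi}^i_{\textnormal{P}}$ sums over $n\in K_{\textnormal{N}}/(mK_{\textnormal{N}}m^{-1}\cap K_{\textnormal{N}})$ the terms $\sigma^\circ(n)\circ(m\otimes\phi)$. Because $n$ acts trivially on the $F$-factor, this sum factors as $m$ acting on $F^i$, tensored with the analogous sum on $\Gamma(K_{\textnormal{N}},\mathcal{J}^j)$. The latter is exactly $\overline{\phi}_{\textnormal{P},\mathbf{1}}$ computed via $\mathcal{J}^\bullet$. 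Hence the comparison map of Step 2 intertwines the two endomorphisms on the nose, at the level of complexes. Combining this with Step 1 gives the claim.
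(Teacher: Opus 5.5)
Your proposal is correct and follows the paper's proof essentially step for step: you reduce via Lemma~\ref{TransitivityOfHeckeAction}, use a bounded $\Lambda$-flat resolution $F^\bullet$ and Lazard's theorem to see that the terms $F^i\otimes\mathcal{J}^j$ are $\Gamma(K_{\textnormal{N}},-)$-acyclic (the paper says injective, via \cite[Proposition 2.1.3]{Eme10}), and use that $\Gamma(K_{\textnormal{N}},F^i\otimes -)=F^i\otimes\Gamma(K_{\textnormal{N}},-)$, which in fact makes your comparison map an isomorphism termwise, so no spectral sequence is needed. The one adjustment is in Step~4: on a bare $K_{\textnormal{P}}$-injective resolution $\mathcal{J}^\bullet$ of $\overline{\sigma}$ the element $m$ does not act, so the defining formula for $\overline{\phi}_{\textnormal{P},\mathbf{1}}$ does not literally apply; the simplest fix is to take, as the paper does, $\mathcal{J}^\bullet=\mathcal{I}^\bullet\otimes_\Lambda\overline{\sigma}$ with $\mathcal{I}^\bullet$ an injective resolution of $\mathbf{1}$ in $\textnormal{Rep}_{\textnormal{sm}}(\textnormal{P},\Lambda)$ (otherwise you must lift $\phi$ to $\mathcal{J}^\bullet$ and check the result is independent of choices up to homotopy).
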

\begin{proof}
    Using Lemma~\ref{TransitivityOfHeckeAction}, one reduces to the assertion that there is an isomorphism
    \begin{equation*}
       R\Gamma(K_{\textnormal{N}},(\textnormal{Inf}_{\textnormal{M}}^{\textnormal{P}}\pi)\otimes_{\Lambda}\overline{\sigma})\cong \pi\otimes^{\mathbf{L}}_{\Lambda}R\Gamma(K_{\textnormal{N}},\overline{\sigma})
    \end{equation*}
    in $D^+_{\textnormal{sm}}(K_{\textnormal{M}},\Lambda)$
    that matches the intertwining map $\overline{\phi}_{\textnormal{P},\textnormal{Inf}_{\textnormal{M}}^{\textnormal{P}}\pi}$ on the left with $m\otimes \overline{\phi}_{\textnormal{P},\mathbf{1}}$ on the right.

    To prove this, we pick an injective resolution $\mathbf{1}\xrightarrow{\sim}\mathcal{I}^{\bullet}$ of the trivial representation in $D^+_{\textnormal{sm}}(\textnormal{P},\Lambda)$ and a $\Lambda$-flat bounded resolution $\pi\xrightarrow{\sim} F^{\bullet}$ in $D^+_{\textnormal{sm}}(\textnormal{M},\Lambda)$. We claim that the resolution $(\textnormal{Inf}_{\textnormal{M}}^{\textnormal{P}}\pi)\otimes_{\Lambda}\overline{\sigma}\xrightarrow{\sim}\textnormal{Tot}((\textnormal{Inf}_{M}^{\textnormal{P}}F^{\bullet})\otimes_{\Lambda}\mathcal{I}^{\bullet}\otimes_{\Lambda}\overline{\sigma})$
    is $\Gamma(K_{N},-)$-acyclic. Indeed, by a theorem of Lazard \cite[\href{https://stacks.math.columbia.edu/tag/058G}{Tag 058G}]{stacks-project} the members of the resolution (as objects of $\textnormal{Rep}_{\textnormal{sm}}(K_{\textnormal{N}},\Lambda)$) are filtered colimits of finite direct sums of the injective objects $\mathcal{I}^i\otimes_{\Lambda}\overline{\sigma}$. In particular, they are injective in $D^+_{\textnormal{sm}}(K_{\textnormal{N}},\Lambda)$ by \cite[Proposition 2.1.3.]{Eme10}

    Therefore,  we have
    \begin{align*}
         R\Gamma(K_{\textnormal{N}},(\textnormal{Inf}_{\textnormal{M}}^{\textnormal{P}}\pi)\otimes_{\Lambda}\overline{\sigma}) &\cong \Gamma(K_{\textnormal{N}},\textnormal{Tot}((\textnormal{Inf}_{\textnormal{M}}^{\textnormal{P}}F^{\bullet})\otimes_{\Lambda}\mathcal{I}^{\bullet}\otimes_{\Lambda}\overline{\sigma})) \\
        &= \textnormal{Tot}(F^{\bullet}\otimes_{\Lambda}\Gamma(K_{\textnormal{N}},\mathcal{I}^{\bullet}\otimes_{\Lambda}\overline{\sigma}))\\
        &\cong\pi\otimes^{\mathbf{L}}_{\Lambda}R\Gamma(K_{\textnormal{N}},\overline{\sigma}).
    \end{align*}
    The Hecke actions can be easily compared between the second and third members of the sequence of isomorphisms above.
\end{proof}

We finish the subsection with two key lemmas on ``change of coefficient systems up to $p^N$-torsion".
\begin{lemma}\label{Lemma_changeofweightI}
    Assume that we are given finite dimensional $E$-representations $\sigma_1,\sigma_2$ of $K$ with fixed choices of $K$-stable $\mathcal{O}$-lattices $\sigma_1^{\circ}\subset \sigma_1$, $\sigma_{2}^{\circ}\subset \sigma_2$, an isomorphism
    \begin{equation*}
        \varphi\colon\textnormal{c-Ind}_{K}^{\textnormal{G}}\sigma_1\xrightarrow{\sim}\textnormal{c-Ind}_K^{\textnormal{G}}\sigma_2,
    \end{equation*}
    an integer $N\geq 0$ such that $p^N\varphi$ and $p^N\varphi^{-1}$ restricts to a morphism
    \begin{equation*}
        \textnormal{c-Ind}_K^{\textnormal{G}}\sigma_1^{\circ}\to\textnormal{c-Ind}_K^{\textnormal{G}}\sigma_2^{\circ}
    \end{equation*}
    and
    \begin{equation*}
        \textnormal{c-Ind}_K^{\textnormal{G}}\sigma_2^{\circ}\to\textnormal{c-Ind}_K^{\textnormal{G}}\sigma_1^{\circ},
    \end{equation*}
    respectively.

Let $r\colon\mathcal{H}(\sigma_1)\to \mathcal{H}(\sigma_2)$
be the homomorphism $h\mapsto \varphi\circ h\circ \varphi^{-1}$ and set $\mathcal{H}^{\circ}\subset \mathcal{H}(\sigma_1^{\vee})$ be the $\mathcal{O}$-subalgebra corresponding to $\mathcal{H}(\sigma_1^{\circ})\cap r^{-1}(\mathcal{H}(\sigma_2^{\circ}))$ under the usual anti-isomorphism $\mathcal{H}(\sigma_1^{\vee})\cong \mathcal{H}(\sigma_1)$. For $\pi\in D^+_{\textnormal{sm}}(\textnormal{G},\Lambda)$ we have algebra homomorphisms
\begin{align*}
    \mathcal{H}^{\circ} &\to \textnormal{End}_{D^+(\Lambda)}(R\Gamma(K,\pi\otimes_{\mathcal{O}}\sigma_1^{\circ,\vee})), \\
     \mathcal{H}^{\circ} &\to \textnormal{End}_{D^+(\Lambda)}(R\Gamma(K,\pi\otimes_{\mathcal{O}}\sigma_2^{\circ,\vee})),
\end{align*}
with the latter being induced by $r$.

Moreover, there are $\mathcal{H}^{\circ}$-equivariant homomorphisms
\begin{align*}
    \alpha \colon R\Gamma(K,\pi\otimes_{\mathcal{O}}\sigma_1^{\circ,\vee}) &\to R\Gamma(K,\pi\otimes_{\mathcal{O}}\sigma_2^{\circ,\vee}), \\
    \beta \colon R\Gamma(K,\pi\otimes_{\mathcal{O}}\sigma_2^{\circ,\vee}) &\to R\Gamma(K,\pi\otimes_{\mathcal{O}}\sigma_1^{\circ,\vee})
\end{align*}
in $D^+(\Lambda)$ satisfying $\alpha\circ \beta=p^{2N}$, $\beta\circ \alpha=p^{2N}$.
\end{lemma}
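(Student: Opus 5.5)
The plan is to rewrite everything in terms of $\Hom$ out of compact inductions, so that the Hecke actions and the comparison maps $\alpha,\beta$ all become pre-composition with explicit $\mathcal{O}$-integral $\textnormal{G}$-equivariant maps.

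First, for $i=1,2$, identify
\[
R\Gamma(K,\pi\otimes_{\mathcal{O}}\sigma_i^{\circ,\vee})\cong R\Hom_{\textnormal{G},\Lambda}\bigl(\textnormal{c-Ind}_K^{\textnormal{G}}(\sigma_i^{\circ}\otimes_{\mathcal{O}}\Lambda),\pi\bigr).
\]
This follows from Frobenius reciprocity $\Hom_{\textnormal{G}}(\textnormal{c-Ind}_K^{\textnormal{G}}\overline{\sigma},I)\cong\Gamma(K,I\otimes\overline{\sigma}^{\vee})$. The derived version holds because, as in the discussion preceding Lemma \ref{RestrictionFunc}, restriction to $K$ and $-\otimes\overline{\sigma}_i^{\vee}$ both preserve injectives; here $\overline{\sigma}_i^{\vee}$ is finite free. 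Under this identification the action of $\mathcal{H}(\sigma_i^{\circ})=\textnormal{End}_{\textnormal{G}}(\textnormal{c-Ind}_K^{\textnormal{G}}\sigma_i^{\circ})$ by pre-composition agrees with the usual Hecke action, via the anti-isomorphism with $\mathcal{H}(\sigma_i^{\circ,\vee})$. This gives the first homomorphism. For the second, note that $h\in\mathcal{H}^{\circ}$ means $h$ preserves $\textnormal{c-Ind}\,\sigma_1^{\circ}$ and $r(h)=\varphi h\varphi^{-1}$ preserves $\textnormal{c-Ind}\,\sigma_2^{\circ}$. So $r(h)$ acts integrally on $R\Hom(\textnormal{c-Ind}\,\sigma_2^{\circ}\otimes\Lambda,\pi)$, and $h\mapsto r(h)$ is an algebra map. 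Compatibility with $\Lambda$-reduction uses that $\textnormal{c-Ind}_K^{\textnormal{G}}\sigma_i^{\circ}$ is $\mathcal{O}$-free and that $(\textnormal{c-Ind}\,\sigma^{\circ})\otimes\Lambda=\textnormal{c-Ind}(\sigma^{\circ}\otimes\Lambda)$.

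Next, set $a:=p^N\varphi\colon\textnormal{c-Ind}\,\sigma_1^{\circ}\to\textnormal{c-Ind}\,\sigma_2^{\circ}$ and $b:=p^N\varphi^{-1}$ in the other direction. Both are $\textnormal{G}$-equivariant and integral by hypothesis, with $ba=p^{2N}$ and $ab=p^{2N}$. Reduce them mod $\varpi^m$ and define $\alpha:=R\Hom(\bar b,\pi)$ and $\beta:=R\Hom(\bar a,\pi)$. Contravariance means $\alpha$ goes from the $\sigma_1$-side to the $\sigma_2$-side, as required. Then $\alpha\circ\beta=R\Hom(\bar a\bar b,\pi)=p^{2N}$, and likewise $\beta\circ\alpha=p^{2N}$.

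Equivariance is a one-line check: $b\circ r(h)=p^N\varphi^{-1}\varphi h\varphi^{-1}=h\circ b$, and similarly $r(h)\circ a=a\circ h$. Applying $R\Hom(-,\pi)$ to these identities shows that $\alpha$ and $\beta$ intertwine the action of $h$ on the $\sigma_1$-side with the action of $r(h)$ on the $\sigma_2$-side.

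The main point needing care is the first step. One has to check that the derived identification is functorial in the first argument, so that a $\textnormal{G}$-map between compact inductions induces the claimed map on $R\Gamma(K,-)$, compatibly with the Hecke action defined via injective resolutions earlier in this section. I would handle this by working with a fixed injective resolution $\mathcal{I}^\bullet$ of $\pi$ and using that $\textnormal{c-Ind}_K^{\textnormal{G}}(\sigma_i^{\circ}\otimes\Lambda)$ is projective relative to $\Lambda$. Then everything is computed at the level of complexes by $\Hom_{\textnormal{G}}(\textnormal{c-Ind}\,\overline{\sigma}_i,\mathcal{I}^\bullet)$, on which $a,b,h$ act by pre-composition.
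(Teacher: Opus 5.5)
Your proposal is correct and matches the paper's proof. Both identify $R\Gamma(K,\pi\otimes_{\mathcal{O}}\sigma_i^{\circ,\vee})$ with $R\Hom_{\textnormal{G}}(\textnormal{c-Ind}_K^{\textnormal{G}}\sigma_i^{\circ},\pi)$, define $\alpha,\beta$ as pre-composition with $p^N\varphi^{-1}$ and $p^N\varphi$, and verify equivariance via $h\circ(p^N\varphi^{-1})=(p^N\varphi^{-1})\circ r(h)$. The functoriality you flag at the end is automatic once $R\Hom_{\textnormal{G}}(-,\pi)$ is computed with a fixed injective resolution of $\pi$, so no relative projectivity of the compact inductions is needed; genuine projectivity in fact fails here, since $K$ is not of pro-order prime to $p$.
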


\begin{proof}
    The first claim follows from the definition of $\mathcal{H}^{\circ}$ and the fact that the object $R\Gamma(K,\pi\otimes_{\mathcal{O}}\sigma_i^{\circ,\vee})$ naturally lies in $D^+_{\textnormal{sm}}(\mathcal{H}(\sigma_i^{\circ,\vee})\otimes_{\mathcal{O}}\Lambda)$.

    To see the second part, we note that we have a $\mathcal{H}(\sigma_i^{\circ,\vee})$-equivariant isomorphism
    \begin{equation*}
        R\Gamma(K,\pi\otimes_{\mathcal{O}}\sigma_i^{\circ,\vee})\cong R\Hom_{\textnormal{G}}(\textnormal{c-Ind}_K^{\textnormal{G}}\sigma_i^{\circ},\pi)
    \end{equation*}
    where on the RHS we act through the anti-isomorphism $\mathcal{H}(\sigma_i^{\circ,\vee})\xrightarrow{\sim}\mathcal{H}(\sigma_i^{\circ})=\textnormal{End}_{\textnormal{G}}(\textnormal{c-Ind}_K^{\textnormal{G}}\sigma_i^{\circ})$ with the right action of the latter.

    It suffices to see that pre-composition by $p^N\varphi^{-1}$ and $p^N\varphi$ induce morphisms
    \begin{equation*}
        R\Hom_{\textnormal{G}}(\textnormal{c-Ind}_K^{\textnormal{G}}\sigma_1^{\circ},-)\to R\Hom_{\textnormal{G}}(\textnormal{c-Ind}_K^{\textnormal{G}}\sigma_2^{\circ},-),
    \end{equation*}
    and
    \begin{equation*}
        R\Hom_{\textnormal{G}}(\textnormal{c-Ind}_K^{\textnormal{G}}\sigma_2^{\circ},-)\to R\Hom_{\textnormal{G}}(\textnormal{c-Ind}_K^{\textnormal{G}}\sigma_1^{\circ},-),
    \end{equation*}
    respectively,
    of derived functors $D^+_{\textnormal{sm}}(\textnormal{G},\Lambda)\to D^+(\mathcal{H}^{\circ}\otimes_{\mathcal{O}}\Lambda)$.

    To verify the first claim it suffices to see that for every $h\in \mathcal{H}^\circ$ we have $h\circ(p^N\varphi^{-1})=(p^N\varphi^{-1})\circ r(h)$. When we view $h$ and $r(h)$ as elements of $\mathcal{H}(\sigma_1^{\circ})$ and $\mathcal{H}(\sigma_2^{\circ})$ (under the usual anti-isomorphisms), we have the formula $r(h)=\varphi\circ h\circ\varphi^{-1}$. We then compute
    \begin{equation*}
        (p^N\varphi^{-1})\circ r(h)=(p^N\varphi^{-1})\circ \varphi\circ h\circ\varphi^{-1}=h\circ (p^N\varphi^{-1}).
    \end{equation*}
    The second claim follows by symmetry.
\end{proof}

Finally, we will need a lemma that is more specific to the setup of locally algebraic types. Let $\lambda\in (\mathbf{Z}_+^n)^{\Hom(L,E)}$ be a highest weight vector and set $\mathcal{V}_{\lambda}\subset V_{\lambda}$ be our fixed choice of $K$-stable $\mathcal{O}$-lattice given by the dual Weyl module. Let $K'\trianglelefteq K$ be a fixed compact open normal subgroup and $\sigma$ be a finite dimensional $E$-representation of $K$ on which $K'$ acts trivially. Consider a choice of $K$-stable $\mathcal{O}$-lattice $\sigma^{\circ}\subset \sigma$.

For a standard parabolic subgroup $\textnormal{P}=\textnormal{M}\textnormal{N}$, call a pair $(t,\phi)$, consisting of an element $t\in \textnormal{M}$ and an intertwining map $\phi\in \Hom_{t^{-1}K_{\textnormal{P}}t\cap K_{\textnormal{P}}}(\sigma,t^{\ast}\sigma)$, $\lambda$\textit{-integral} if $\phi\otimes t\in \Hom_{t^{-1}K_{\textnormal{P}}t\cap K_{\textnormal{P}}}(\sigma_{\lambda}^{\circ},t^{\ast}\sigma_{\lambda}^{\circ})\subset \Hom_{t^{-1}K_{\textnormal{P}}t\cap K_{\textnormal{P}}}(\sigma\otimes V_{\lambda},t^{\ast}(\sigma\otimes V_{\lambda}))$.
\begin{lemma}\label{Lemma_changeofweightII}
    There is an integer $N\geq 0$ depending only on $\lambda$ and $[K:K']$ satisfying the following property. For all integers $m\in \mathbf{Z}_{\geq 1}$, $q\in \mathbf{Z}$ and standard parabolic subgroup $\textnormal{P}=\textnormal{M}\textnormal{N}\leq \textnormal{G}$, there are maps
    \begin{align*}
        \alpha_m &\colon H^q(K_{\textnormal{N}},\sigma_{\lambda}^{\circ}/\varpi^m)\to \Gamma(K_{\textnormal{N}},\sigma^{\circ})\otimes_{\mathcal{O}}\left(\bigoplus_{w\in W^{\textnormal{M}},l(w)=q}\mathcal{V}_{w\cdot \lambda}\right)/\varpi^m, \\
        \beta_m &\colon \Gamma(K_{\textnormal{N}},\sigma^{\circ})\otimes_{\mathcal{O}}\left(\bigoplus_{w\in W^{\textnormal{M}},l(w)=q}\mathcal{V}_{w\cdot \lambda}\right)/\varpi^m\to H^q(K_{\textnormal{N}},\sigma_{\lambda}^{\circ}/\varpi^m)
    \end{align*}
    such that the following are satisfied.
    \begin{itemize}
        \item We have equalities of maps $\alpha_m\circ \beta_m=p^N$, $\beta_m\circ \alpha_m=p^N$.
        \item For every $\lambda$-integral pair $(t,\phi)$, the endomorphism of the source of $\alpha_m$ induced by $(\overline{\phi\otimes t})_{\textnormal{P},\mathbf{1}}$ and the endomorphism of the target of $\alpha_m$ given by
        \begin{equation}\label{equation_locallyalgHeckeaction}
            x\otimes y\mapsto \sum_{k\in K_{\textnormal{N}}/tK_{\textnormal{N}}t^{-1}\cap K_{\textnormal{N}}}k\big((\phi\otimes t)(x\otimes y)\big)
        \end{equation}
        are intertwined by both $\alpha_m$ and $\beta_m$.
    \end{itemize}
\end{lemma}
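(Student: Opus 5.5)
The plan is to reduce the statement to an integral comparison between $H^\ast(K_{\textnormal{N}},\sigma^\circ_\lambda)$ and the Kostant lattice, and then pass to $\varpi^m$-coefficients. First we reduce to $K'$. Let $K'_{\textnormal{N}}:=K'\cap K_{\textnormal{N}}$, which is normal and open in $K_{\textnormal{N}}$ with index dividing $[K:K']$ (since $K_{\textnormal{N}}/K'_{\textnormal{N}}\hookrightarrow K/K'$). Restriction followed by corestriction, and corestriction followed by restriction, are multiplication by $[K_{\textnormal{N}}:K'_{\textnormal{N}}]$ on $H^q(K_{\textnormal{N}},-)$ and on $H^0(K_{\textnormal{N}}/K'_{\textnormal{N}},H^q(K'_{\textnormal{N}},-))$. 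Via the Hochschild--Serre spectral sequence, this shows that $H^q(K_{\textnormal{N}},\sigma^\circ_\lambda/\varpi^m)$ agrees with $\Gamma(K_{\textnormal{N}}/K'_{\textnormal{N}},H^q(K'_{\textnormal{N}},\sigma^\circ_\lambda/\varpi^m))$ up to maps whose two composites are a power of $[K:K']$ bounded by the cohomological dimension. Since $K'_{\textnormal{N}}$ acts trivially on $\sigma^\circ$, we have $H^q(K'_{\textnormal{N}},\sigma^\circ_\lambda/\varpi^m)=\sigma^\circ\otimes_{\mathcal{O}}H^q(K'_{\textnormal{N}},\mathcal{V}_\lambda/\varpi^m)$, because $\sigma^\circ$ is finite free. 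This is the integral analogue of Lemma~\ref{Lemma_cupproductiso}.

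Next we need an integral Kostant comparison. We want maps
\[ H^q(K'_{\textnormal{N}},\mathcal{V}_\lambda/\varpi^m)\rightleftarrows \bigoplus_{l(w)=q}\mathcal{V}_{w\cdot\lambda}/\varpi^m \]
whose two composites are $p^{N_0}$, with $N_0$ depending only on $\lambda$ (and $[K:K']$). We first do this for $\mathcal{O}$-coefficients. $K'_{\textnormal{N}}$ is a $p$-adic analytic pro-$p$ group (after shrinking by a bounded index), so its continuous cohomology with coefficients in a finite free $\mathcal{O}$-module is finitely generated. By Theorem~\ref{Theorem_Kostant}, after inverting $p$ it is canonically $\bigoplus_{l(w)=q}V_{w\cdot\lambda}$. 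Hence the torsion of $H^\ast(K'_{\textnormal{N}},\mathcal{V}_\lambda)$ is killed by some $p^{a}$, and its torsion-free quotient is commensurable with $\bigoplus\mathcal{V}_{w\cdot\lambda}$ up to some $p^{b}$. To pass to $\varpi^m$-coefficients we use the universal coefficient sequence
\[ 0\to H^q(\mathcal{V}_\lambda)/\varpi^m\to H^q(\mathcal{V}_\lambda/\varpi^m)\to H^{q+1}(\mathcal{V}_\lambda)[\varpi^m]\to 0. \]
Its outer torsion term is killed by $p^{a}$, uniformly in $m$. Composing and splitting then produces $\alpha_m,\beta_m$ with composites $p^{N}$, where $N$ is uniform in $m$, $q$ and $\textnormal{P}$; uniformity in $\textnormal{P}$ holds because there are finitely many standard parabolics. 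All the constructions are $K_{\textnormal{M}}$-equivariant, and the dependence on $K'$ enters only through its index, since the relevant exponents can be taken on a fixed uniform subgroup and then transported by the index argument of the first step.

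Finally we check Hecke compatibility. For a $\lambda$-integral pair $(t,\phi)$, the operator $(\overline{\phi\otimes t})_{\textnormal{P},\mathbf{1}}$ is defined as $t\otimes\phi$ followed by the trace over $K_{\textnormal{N}}/(tK_{\textnormal{N}}t^{-1}\cap K_{\textnormal{N}})$. On cohomology this is conjugation by $t$ (from $K_{\textnormal{N}}$ to $t^{-1}K_{\textnormal{N}}t\cap K_{\textnormal{N}}$) composed with corestriction. The rational Lazard--Kostant isomorphisms are compatible with conjugation and with corestriction in the sense of Lemma~\ref{Lemma_cupproductiso}, and on the target this is exactly formula \eqref{equation_locallyalgHeckeaction}. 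Since the integral maps are restrictions of $p$-power multiples of canonical rational maps, they remain intertwined wherever both sides are defined. The main obstacle is making the integral maps \emph{canonical enough}, i.e.\ natural in the coefficients and compatible with conjugation and corestriction, so that this equivariance holds on the nose modulo $\varpi^m$ and not merely after inverting $p$. To handle it, I would define $\alpha_m$ and $\beta_m$ as reductions of maps on $\mathcal{O}$-cochain complexes, namely the canonical rational comparison scaled by $p^N$ and restricted to the lattices, rather than by choosing splittings. Equivariance then follows from the characteristic-zero statement, because $H^\ast$ of the torsion-free lattices injects after inverting $p$ up to the bounded torsion.
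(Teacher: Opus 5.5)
You follow the paper's route, except that the step you yourself call the main obstacle is not handled correctly. The ingredients match: bounded torsion in the finitely generated $\mathcal{O}$-module $H:=H^q_{\mathrm{cont}}(K_{\textnormal{N}},\sigma^\circ_\lambda)$, the identification of $H[1/p]$ with $\Gamma(K_{\textnormal{N}},\sigma)\otimes_E\bigoplus_{l(w)=q}V_{w\cdot\lambda}$ from Theorem~\ref{Theorem_Kostant} and Lemma~\ref{Lemma_cupproductiso}, lattice comparison up to powers of $p$, the universal coefficient sequence to reach $\varpi^m$-coefficients, and Hecke compatibility from Theorem~\ref{Theorem_Lazard} and the second half of Lemma~\ref{Lemma_cupproductiso}.

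Two side remarks. Your preliminary reduction to $K'_{\textnormal{N}}$ is not needed to build the maps: the cup-product lemma already does it after inverting $p$, and the paper uses $K'$ only, through Hochschild--Serre, to bound torsion. Routing $\alpha_m$ through restriction to $K'_{\textnormal{N}}$ would also make the Hecke check awkward, since $t$ need not normalise $K'_{\textnormal{N}}$. Separately, a fixed uniform subgroup does not explain why the bound depends only on $[K:K']$, because the torsion of $H^\ast(K'_{\textnormal{N}},\mathcal{V}_\lambda)$ genuinely depends on $K'_{\textnormal{N}}$. The correct reason is that the topologically finitely generated group $K$ has only finitely many open subgroups of each index.

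The real issue is the obstacle step. Three things go wrong there:
\begin{itemize}
\item A splitting of the universal coefficient sequence, or of $H_{\mathrm{tors}}\subset H$, need not commute with the Hecke operators.
\item There is no canonical $\mathcal{O}$-cochain-level version of the Lazard--Kostant comparison to scale and reduce; it is an isomorphism on rational cohomology.
\item The rational injectivity you invoke says nothing directly about $\mathcal{O}/\varpi^m$-coefficients, where everything is torsion.
\end{itemize}

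None of these devices is needed. Let $\kappa$ be the canonical Hecke-equivariant isomorphism from $H[1/p]$ to $\Gamma(K_{\textnormal{N}},\sigma)\otimes_E\bigoplus_{l(w)=q}V_{w\cdot\lambda}$, and let $p^aH_{\mathrm{tors}}=0$. Choose $b,c$ so that $p^b\kappa^{-1}$ carries the target lattice into $H/H_{\mathrm{tors}}$ and $p^c\kappa$ carries $H/H_{\mathrm{tors}}$ into the target lattice.
\begin{itemize}
\item Put $\alpha:=p^c\kappa$ on $H$; it factors through $H/H_{\mathrm{tors}}$.
\item Put $\beta(y):=p^a\widetilde{z}$ for any lift $\widetilde{z}\in H$ of $p^b\kappa^{-1}(y)$; this does not depend on the lift.
\end{itemize}
Both maps intertwine the Hecke actions exactly, because the Hecke operators preserve $H_{\mathrm{tors}}$, and $\alpha\beta=\beta\alpha=p^{a+b+c}$.

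To pass to $\varpi^m$-coefficients, note that the injection $H/\varpi^m\hookrightarrow H^q(K_{\textnormal{N}},\sigma^\circ_\lambda/\varpi^m)$ is itself Hecke-equivariant. Its cokernel lies in $H^{q+1}_{\mathrm{cont}}(K_{\textnormal{N}},\sigma^\circ_\lambda)[\varpi^m]$, which is killed by a fixed $p^{N'}$ independent of $m$. So let $\beta_m$ be $\beta\bmod\varpi^m$ followed by this injection, and let $\alpha_m(x):=(\alpha\bmod\varpi^m)(u)$, where $u$ is the unique preimage of $p^{N'}x$. These maps intertwine the Hecke actions on the nose, and both composites equal $p^{a+b+c+N'}$. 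This is what the paper's reduction to continuous cohomology amounts to.
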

\begin{proof}
To ease notation, write
\begin{equation*}
    \sigma_{\textnormal{P},\lambda,q}^{\circ}:=\Gamma(K_{\textnormal{N}},\sigma^{\circ})\otimes_{\mathcal{O}}\left(\bigoplus_{w\in W^{\textnormal{M}},l(w)=q}\mathcal{V}_{w\cdot \lambda}\right).
\end{equation*}
    We first claim that it suffices to find $N$ for which we can construct maps
    \begin{align*}
        \alpha &\colon H^{q}_{\textnormal{cont}}(K_{\textnormal{N}},\sigma_{\lambda}^{\circ})\to \sigma_{\textnormal{P},\lambda,q}^{\circ} \\
        \beta &\colon \sigma_{\textnormal{P},\lambda,q}^{\circ}\to H^{q}_{\textnormal{cont}}(K_{\textnormal{N}},\sigma_{\lambda}^{\circ})
    \end{align*}
    satisfying the following.
    \begin{itemize}
        \item We have equalities $\alpha\circ \beta=p^N$ and $\beta\circ\alpha=p^N$.
        \item For every $\lambda$-integral pair $(t,\phi)$, the endomorphism
        \begin{equation}\label{equation_derivedsmoothHeckeaction}
            \begin{tikzcd}
            H^q(K_{\textnormal{N}},\sigma_{\lambda}^{\circ}) \arrow{r}{\textnormal{res}} & H^q(t^{-1}K_{\textnormal{N}}t\cap K_{\textnormal{N}},\sigma_{\lambda}^{\circ}) \arrow{ld}[swap]{\phi\cdot} \\
            H^q(K_{\textnormal{N}}\cap tK_{\textnormal{N}}t^{-1},\sigma_{\lambda}^{\circ}) \arrow{r}{\textnormal{cores}} & H^{q}(K_{\textnormal{N}},\sigma_{\lambda}^{\circ})
            \end{tikzcd}
        \end{equation}
        and \eqref{equation_locallyalgHeckeaction} are intertwined by both $\alpha$ and $\beta$.
    \end{itemize}
To see that this indeed suffices, note that there is a short exact sequence
\begin{equation*}
    0\to H^q_{\textnormal{cont}}(K_{\textnormal{N}},\sigma_{\lambda}^{\circ})/\varpi^m\to H^q(K_{\textnormal{N}},\sigma_{\lambda}^{\circ}/\varpi^m)\to H^{q+1}_{\textnormal{cont}}(K_{\textnormal{N}},\sigma_{\lambda}^{\circ})[\varpi^m]\to 0
\end{equation*}
such that endomorphism of the LHS induced by \eqref{equation_derivedsmoothHeckeaction} matches with the endomorphism of the middle induced by $\overline{(\phi\otimes t)}_{\textnormal{P},\mathbf{1}}$. Moreover, since continuous group cohomology of $K_{\textnormal{N}}$ on finitely generated $\mathcal{O}$-modules is finitely generated, the RHS of the short exact sequence is contained in $H^{q+1}_{\textnormal{cont}}(K_{\textnormal{N}},\sigma_{\lambda}^{\circ})[p^{N'}]$ for an integer $N'$ independent of $m\geq 1$.
In fact a simple argument with the Hochschild--Serre spectral sequence shows that $N'$ depends solely on $[K_{\textnormal{N}}:K_{\textnormal{N}}']$ and $\lambda$. In particular, we verified that it suffices to prove the auxiliary claim.

Using once more that the exponent of the $p$-torsion in $H^{\ast}(K_{\textnormal{N}},\sigma_{\lambda}^{\circ})$ is bounded by an integer depending solely on $\lambda$ and $[K:K']$, the existence of the maps follows from Theorem~\ref{Theorem_Kostant} and the first part of Lemma~\ref{Lemma_cupproductiso}. The fact that it also matches the Hecke actions follows from Theorem~\ref{Theorem_Lazard} and the second part of Lemma~\ref{Lemma_cupproductiso}.
\end{proof}

\section{Local-global compatibility for Betti cohomology}\label{sec_LGC_for_Betti}
The goal of this final section is to prove Theorem \ref{Theorem_MainLGCThm}, our main result on local–global compatibility for automorphic Galois representations. We deduce the characteristic $0$ statement from Proposition \ref{Proposition_MainLGCProp}, which establishes a more general local–global compatibility result for the Betti cohomology of $\mathrm{Res}_{F/\mathbf{Q}}\mathrm{GL}_n$-locally symmetric spaces. The proof proceeds by induction on $n \geq 1$.

The inductive hypothesis is used to establish local–global compatibility for the contribution of the Borel–Serre boundary. This is carried out in \S\ref{sec_boundarycoh} and is the only point where the results of \S\ref{sec_local_representation_theory} are needed. Our analysis yields a clean description of the boundary cohomology only after discarding $p^N$-torsion, where the exponent $N$ depends only on $n$, $F$, and the chosen local system. Consequently, Proposition \ref{Proposition_MainLGCProp} is also formulated modulo such $p^N$-torsion.

It remains to treat the interior cohomology, where it suffices to prove local–global compatibility for cohomology with coefficients in certain flat $\mathbf{Z}_p$-local systems.\footnote{Namely, those associated with weights that will be called “cohomologically cuspidal”.} This is achieved by following the strategy of \cite{10author}. More precisely, we employ a slightly modified version of the $P$-ordinary degree-shifting argument developed in \cite{CN25}. In the course of the proof, to produce the required congruences with conjugate self-dual automorphic representations of $\mathrm{GL}_{2n}(\mathbf{A}_F)$, we replace the vanishing result of \cite{caraiani_scholze} with Theorem \ref{VanishingUpToBoundedTorsion}. As a consequence, we must also ignore $(M\Delta)^N$-torsion, for some integers $M, N \geq 1$ depending only on $n$ and $F$.

\subsection{Generalities on locally symmetric spaces}\label{subsection_locallysymmetricspaces}
We introduce some key notation, following \cite{New16} and \cite{10author}. If $F$ is a number field, and $G$ is a connected linear algebraic group over $F$, we will write $X^G$ for the symmetric space associated by Borel--Serre \cite{Bor73} to the group $\Res_{F / \mathbf{Q}} G$, $\overline{X}^G$ for its partial compactification, and $\partial X^G = \overline{X}^G - X^G$. 

In the presence of an integral model for $G$ over $\cO_F$, an open compact subgroup $K_G \leq G(\mathbf{A}_F^\infty)$ will be said to be \textit{good} if it is neat, and decomposes as a product $K_G = \prod_v K_{G, v}$ over the set of finite places of $F$, where each $K_{G, v}$ is contained in $G(\cO_{F_v})$. For a finite set of finite places $S$ of $F$, a good subgroup $K_G$ will further be called $S$\textit{-good} if $K_{G,v}=G(\mathcal{O}_{F_v})$ for every finite place $v\notin S$. 

If $K_G$ is a good subgroup, then we introduce the quotient
\[ X^G_{K_G} = G(F) \backslash (X^G \times G(\mathbf{A}^\infty_F) / K_G), \]
and define $\overline{X}^G_{K_G}$, $\partial X^G_{K_G}$ similarly. Then $X^G_{K_G}$ is a smooth manifold with finitely many connected components; $\overline{X}^G_{K_G}$ is a compact manifold with corners; and $\partial X^G_{K_G}$ is a compact manifold with corners.

Following \cite{CN25}, we set $\overline{\mathfrak{X}}_G$ to be the projective limit
\begin{equation*}
    \overline{\mathfrak{X}}_G=\varprojlim_{K_G}\overline{X}_{K_G}^G
\end{equation*}
in the category of compact Hausdorff spaces where the limit runs over good subgroups $K_G\leq G(\mathbf{A}_F^{\infty})$. We similarly define $\partial \mathfrak{X}_G=\varprojlim_{K_G}\partial X_{K_G}^G$. These spaces come equipped with the continuous right regular action of $G(\mathbf{A}_F^{\infty})$. The restriction of this action to any good subgroup $K_G$ makes both $\overline{\mathfrak{X}}_G$ and $\partial\mathfrak{X}_G$ a free $K_G$-space in the sense of \cite[Definition 2.23.]{New16}

To define compactly supported cohomology via the space $\overline{\mathfrak{X}}_G$, we also consider the projective limit
\begin{equation*}
    \mathfrak{X}_G:=\varprojlim_{K_G}X_{K_G}^G
\end{equation*}
and write $j_G\colon\mathfrak{X}_G\hookrightarrow\overline{\mathfrak{X}}_G$ for the open embedding.

Let $S$ be a finite set of finite places of $F$, $R$ be a commutative ring and $\mathcal{V}$ be a smooth $R[K_{\textnormal{G,S}}]$-module. For a $G^S\times K_{G,S}$-space $X$, set $\textnormal{Sh}_{G^S\times K_{G,S}}(X,R)$ to be the abelian category of $G^S\times K_{G,S}$-equivariant sheaves of $R$-modules in the sense of \cite[Definition 2.22]{New16}. By \textit{loc. cit.}, Lemma 2.26, $\mathcal{F}\mapsto \mathcal{F}(\ast)$ yields an equivalence of categories $\textnormal{Sh}_{G^S\times K_{G,S}}(\ast,R)\xrightarrow{\sim}\textnormal{Mod}_{\textnormal{sm}}(G^S\times K_{G,S},R)$. In particular, given $\mathcal{V}\in \textnormal{Mod}_{\textnormal{sm}}(K_{G,S},R)$, inflation to $G^S\times K_{G,S}$ and pullback of equivariant of sheaves along $\overline{\mathfrak{X}}_G\to \ast$ yields an object of $\textnormal{Sh}_{G^S\times K_{G,S}}(\overline{\mathfrak{X}}_G,R)$ that, by abusing the notation, will be denoted by $\mathcal{V}$. By taking derived global sections, we obtain
\begin{equation*}
    R\Gamma(\overline{\mathfrak{X}}_G,\mathcal{V})\in D^+_{\textnormal{sm}}(G^S\times K_{G,S},R).
\end{equation*}
Analogous observations apply to the space $\partial\mathfrak{X}_G$.

On the other hand, by descent (cf. \cite[Lemma 2.24]{New16}), we have an equivalence $\textnormal{Sh}_{K_G}(\overline{\mathfrak{X}}_G,R)\cong\textnormal{Sh}(\overline{X}_{K_G}^G,R)$ between the category of $K_G$-equivariant sheaves on the free $K_G$-space $\overline{\mathfrak{X}}_G$ and sheaves of $R$-modules on $\overline{X}_{K_G}^G$. In particular, $\mathcal{V}$ descends to a sheaf on $\overline{X}_{K_G}^G$ that we keep denoting by $\mathcal{V}$, just as its restriction along $j_{K_G}:X_{K_G}\hookrightarrow \overline{X}_{K_G}^G$. There are natural isomorphisms
\begin{align*}
    R\Gamma(X_{K_G}^G,\mathcal{V}) &\cong R\Gamma(\overline{X}_{K_G}^G,\mathcal{V}) \cong R\Gamma(K_G, R\Gamma(\overline{\mathfrak{X}}_G,\mathcal{V})), \\
    R\Gamma_c(X_{K_G}^G,\mathcal{V}) &:= R\Gamma(\overline{X}_{K_G}^G,j_{K_G,!}\mathcal{V})\cong  R\Gamma(K_G,R\Gamma(\overline{\mathfrak{X}}_G,j_{G,!}\mathcal{V}))
\end{align*}
in $D^+(R)$ where the first isomorphism is induced by restriction along the homotopy equivalence $j_{K_G}$. In particular, we obtain algebra homomorphisms
\begin{equation}\label{equation_primetoSHeckeaction}
    \mathcal{H}(G^S,K_{G}^S)\otimes_{\mathbf{Z}}R\to \textnormal{End}_{D^+(R)}(R\Gamma_{(c)}(X_{K_G}^G,\mathcal{V})).
\end{equation}
The analogous conclusion holds for boundary cohomology as well.

We now specialise to $p$-torsion coefficients. Let $E/\mathbf{Q}_p$ be a finite field extension, $\mathcal{O}$ be its ring of integers and $\varpi\in \mathcal{O}$ be a uniformiser. Set $\Lambda=\mathcal{O}/\varpi^m$. The cohomology of the complexes
\begin{equation*}
    R\Gamma(\overline{\mathfrak{X}}_G,\Lambda),R\Gamma(\overline{\mathfrak{X}}_G,j_{G,!}\Lambda)\in D^+_{\textnormal{sm}}(G(\mathbf{A}_F^{\infty}),\Lambda),
\end{equation*}

recover regular and compactly supported completed cohomology 
\begin{equation*}
    \varinjlim_{K_G}H^{\ast}(X_{K_G},\Lambda), \varinjlim_{K_G}H^{\ast}_c(X_{K_G},\Lambda)\in \textnormal{Mod}_{\textnormal{sm}}(G(\mathbf{A}_F^{\infty}),\Lambda),
\end{equation*} 
respectively cf. \cite[Lemma 2.1.7]{CN25}. 

If $\sigma$ is a finite free $\mathcal{O}$-module with an action of $K_{G,S}$, $\overline{\sigma}:=\sigma\otimes_{\mathcal{O}}\Lambda$ becomes a smooth $\Lambda[K_{G,S}]$-module, fitting into the formalism described above. The projection formula for $R\Gamma(\overline{\mathfrak{X}}_G,-)$ (cf. \cite[Lemma 2.1.8]{CN25}) yields natural isomorphisms
\begin{align*}
    R\Gamma(K_G,R\Gamma(\overline{\mathfrak{X}}_G,\Lambda)\otimes_{\mathcal{O}}\sigma)\cong R\Gamma(X_{K_G}^G,\overline{\sigma}), \\
    R\Gamma(K_G,R\Gamma(\overline{\mathfrak{X}}_G,j_{G,!}\Lambda)\otimes_{\mathcal{O}}\sigma)\cong R\Gamma_c(X_{K_G}^G,\overline{\sigma})
\end{align*}
in $D^+(\Lambda)$. For any subset $T\subset S$, the source of these isomorphisms are naturally isomorphic to objects of $D^+(\mathcal{H}(G^T,\sigma)\otimes_{\mathcal{O}}\Lambda)$ for the convolution algebra $\mathcal{H}(G^T,\sigma)$ of compactly supported $\sigma$-bi-invariant functions $f:G(\mathbf{A}_F^{\infty\cup T})\to \textnormal{End}_{\mathcal{O}}(\sigma)$. In particular, we obtain algebra homomorphisms
\begin{equation*}
    \mathcal{H}(G^T,\sigma)\to \textnormal{End}_{D^+(\Lambda)}(R\Gamma_{(c)}(X_{K_G}^G,\overline{\sigma})).
\end{equation*}
Their restriction along $\mathcal{H}(G^S,K_G^S)\xrightarrow{}\mathcal{H}(G^T,\sigma)$ recovers the prime-to-$S$ Hecke action \eqref{equation_primetoSHeckeaction}. The analogous conclusion holds for boundary cohomology just as well and the natural maps between them respect the Hecke action.

\subsubsection{The Borel--Serre boundary}
Suppose now that $G$ is a connected reductive group. 
We have a $G(\mathbf{A}_{F}^{\infty})$-equivariant decomposition
\begin{equation*}
    \partial \mathfrak{X}_G=\coprod_{Q}\mathfrak{X}_G^{Q}
\end{equation*}
into locally closed subspaces
\begin{equation*}
    \mathfrak{X}_G^Q:=(\mathfrak{X}_Q\times G(\mathbf{A}_{F}^{\infty}))/Q(\mathbf{A}_{F}^{\infty})
\end{equation*}
labelled by representatives of the $G(F)$-conjugacy classes of $F$-rational parabolic subgroups $Q\leq G$, and $G(\mathbf{A}_{F}^{\infty})$ is equipped with its profinite topology. This yields a stratification
\begin{equation}\label{equation_BorelSerreStratification}
    \partial X_{K_G}=\coprod_{Q}X_{K_G}^Q
\end{equation}
for any good subgroup $K_G\subset G(\mathbf{A}_{F}^{\infty})$.

We similarly introduce the compactified variants
\begin{equation*}
    \overline{\mathfrak{X}}_{G}^{Q}:=(\overline{\mathfrak{X}}_Q\times G(\mathbf{A}_{F}^{\infty}))/Q(\mathbf{A}_{F}^{\infty})
\end{equation*}
and write $j_G^Q$ for the $G(\mathbf{A}_F^{\infty})$-equivariant open embedding $\mathfrak{X}_G^Q\hookrightarrow\overline{\mathfrak{X}}_G^Q$.

Write now $Q=M\rtimes N$ and consider a finite set of finite places $S$ of $F$ satisfying the following.
\begin{itemize}
    \item For $v\notin S$ the base change $G\times_FF_v$ admits a reductive model over $\mathcal{O}_{F_v}$.
\end{itemize}

Given a compact open subgroup $K_{G,S}\leq G(\widehat{\mathcal{O}}_{F,S})$, to state the forthcoming lemma, we introduce a $G(\mathbf{A}_{F}^{\{\infty\}\cup S})\times K_{G,S}$-equivariant subspace $\overline{\mathfrak{X}}_{G,K_{G,S}}^Q\subset \overline{\mathfrak{X}}_{G}^Q$. Write $G_S=\coprod_{i=1}^rQ_{S}g_iK_{G,S}$ and let $g_1=1$. Consider the decomposition
    \begin{equation}\label{decomposition}
        \overline{\mathfrak{X}}_{G}^Q\cong \coprod_{i=1}^r\left(\overline{\mathfrak{X}}_Q\times(G^S\times K_{G,S})\right)/\left(Q^S\times ((g_iK_{G,S}g_{i}^{-1})\cap Q_{S})\right),
    \end{equation}
    where the quotient on the $i$-th factor is taken with respect to the action $(x,g)\cdot q=(xq,g_i^{-1}q^{-1}g_ig)$. This is a $G^S\times K_{G,S}$-equivariant decomposition and we define $\overline{\mathfrak{X}}_{G,K_{G,S}}^{Q}$ to be the factor corresponding to $g_1$.

    We similarly define the space $\mathfrak{X}_{G,K_{G,S}}^Q\subset \overline{\mathfrak{X}}_{G,K_{G,S}}^{Q}$ and denote by $j_{G,K_{G,S}}^{Q}$ the open embedding.
    
    Using the Iwasawa decomposition and \ref{decomposition}, one sees that both $\overline{\mathfrak{X}}_G^Q$ and $\overline{\mathfrak{X}}_{G,K_{G,S}}^Q$ are compact Hausdorff spaces admitting a free $K_G$-action.
    
    \begin{lemma}\label{Lemma_BorelSerreLemma}
        Suppose we are given an $F$-rational parabolic subgroup $Q=M\rtimes N\leq G$, a finite set of finite places $S$ of $F$ such that for $v\notin S$ the base change $G\times_FF_v$ admits a reductive model over $\mathcal{O}_{F_v}$. For every compact open subgroup $K_{G,S}\leq G_S$, we have a commutative diagram
        \begin{equation*}
\begin{tikzcd}
	{R\Gamma(\overline{\mathfrak{X}}_{G,K_{G,S}}^Q,j_{G,K_{G,S},!}^Q\mathcal{O}/\varpi^m)} && {\textnormal{Ind}^{G^S\times K_{G,S}}_{Q^S\times K_{Q,S}}R\Gamma(\overline{\mathfrak{X}}_M,j_{M,!}\mathcal{O}/\varpi^m)} \\
	{R\Gamma(\overline{\mathfrak{X}}_{G,K_{G,S}}^Q,\mathcal{O}/\varpi^m)} && {\textnormal{Ind}^{G^S\times K_{G,S}}_{Q^S\times K_{Q,S}}R\Gamma(\overline{\mathfrak{X}}_M,\mathcal{O}/\varpi^m)}
	\arrow["\sim", from=1-1, to=1-3]
	\arrow["{i_{c,G,K_{G,S}}^Q}", from=1-1, to=2-1]
	\arrow["{\textnormal{Ind}_Q^G(i_{c,M})}", from=1-3, to=2-3]
	\arrow["\sim", from=2-1, to=2-3]
\end{tikzcd}
    \end{equation*}
    in $D^+_{\textnormal{sm}}(\mathcal{O}/\varpi^m[G^S\times K_{G,S}])$, where the horizontal maps are isomorphisms and the vertical maps are the ones induced by forgetting the support.
    \end{lemma}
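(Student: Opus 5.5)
The plan is to identify the Borel--Serre stratum as an induced space and then apply the induction/projection formalism, keeping track of supports. First I would use the decomposition \eqref{decomposition} restricted to the $g_1=1$ factor. This gives a $G^S\times K_{G,S}$-equivariant homeomorphism
\[ \overline{\mathfrak{X}}_{G,K_{G,S}}^Q \cong \big(\overline{\mathfrak{X}}_Q\times (G^S\times K_{G,S})\big)/(Q^S\times K_{Q,S}), \qquad K_{Q,S}=K_{G,S}\cap Q_S . \]
The right-hand side is the induced space of the $Q^S\times K_{Q,S}$-space $\overline{\mathfrak{X}}_Q$. Under this identification the open piece $\mathfrak{X}^Q_{G,K_{G,S}}$ corresponds to the induced space of $\mathfrak{X}_Q$.

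Second, I would apply the general principle that for a free induced space $Y=(Z\times H)/H'$ with $H'\le H$ open in the restricted sense, there is an isomorphism
\[ R\Gamma(Y,\mathcal{F}) \cong \Ind_{H'}^{H} R\Gamma(Z,\mathcal{F}|_Z) \]
in $D^+_{\textnormal{sm}}(H,\Lambda)$, together with the analogous statement for $j_!$. This follows by computing global sections with an equivariant injective resolution and observing that $\Ind$ (smooth induction, which is exact and preserves injectives, being right adjoint to restriction) matches sections over the disjoint union of translates. The same argument applied to $j_!$ gives the compactly supported version, and naturality in the sheaf gives compatibility with the forget-supports maps. This produces the square with $\overline{\mathfrak{X}}_Q$ in place of $\overline{\mathfrak{X}}_M$.

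Third, I would pass from $Q$ to $M$. The map $\overline{\mathfrak{X}}_Q\to\overline{\mathfrak{X}}_M$ is a fibration with fibres the compact nilmanifolds $N(F)\backslash N(\mathbf{A}_F)$ at infinite level. Since the limit over levels of $N(\mathbf{A}^\infty_F)$ makes the finite-part fibre profinite and acted on through $N$, taking derived sections over it followed by $N(\mathbf{A}_F^{\infty,S})\times K_{N,S}$-invariants is what $\Ind_{Q}$ versus inflation from $M$ encodes. Concretely, I would show that $R\Gamma(\overline{\mathfrak{X}}_Q,\Lambda)\cong \textnormal{Inf}_M^Q R\Gamma(\overline{\mathfrak{X}}_M,\Lambda)$, using the fact that at infinite level in $N(\mathbf{A}^\infty)$ the $N(F)$-quotient fibre becomes contractible real fibres times $N(\mathbf{A}^\infty)$ (strong approximation for unipotent groups). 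I would check the same with $j_!$, using that the boundary of $\overline{\mathfrak{X}}_Q$ is the preimage of the boundary of $\overline{\mathfrak{X}}_M$.

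I expect this last step to be the main obstacle: making the contractibility and strong-approximation argument precise for the compactified spaces and for $j_!$, compatibly with equivariance. Once it is in place, the first and second steps are formal, and the commutativity of the square follows from naturality of all the isomorphisms in the map $j_!\Lambda\to\Lambda$.
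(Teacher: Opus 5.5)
Your first two steps are fine. The $g_1=1$ factor of \eqref{decomposition} is by definition the induced space of the $Q^S\times K_{Q,S}$-space $\overline{\mathfrak{X}}_Q$, and its cohomology is the smooth induction of $R\Gamma(\overline{\mathfrak{X}}_Q,-)$, compatibly with $j_!$ and with forgetting supports. One correction: $Q^S\times K_{Q,S}$ is closed, not open. The space therefore fibres over the compact profinite set $(G^S\times K_{G,S})/(Q^S\times K_{Q,S})$ (compact by Iwasawa), not over a discrete set of translates. Local constancy of sections along this base is exactly the smoothness condition in $\textnormal{Ind}$. The paper just quotes \cite[Lemma 3.2]{Hev25}, and elsewhere takes the passage from $Q$ to $M$ from \cite[Lemma 4.1.6]{CN25}. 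Your third step is where the gap is: the mechanism you propose is false, not just imprecise.

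Strong approximation gives $N(\mathbf{A}_F^\infty)=N(F)K_N$ for every compact open $K_N$. Hence each finite-level fibre of $X^Q\to X^M$ is the \emph{connected compact} nilmanifold $\Gamma_{K_N}\backslash N(F\otimes_{\mathbf{Q}}\mathbf{R})$, with $\Gamma_{K_N}=N(F)\cap K_N$. The infinite-level fibre of $\overline{\mathfrak{X}}_Q\to\overline{\mathfrak{X}}_M$ is therefore the solenoid $\varprojlim_{K_N}\Gamma_{K_N}\backslash N(F\otimes_{\mathbf{Q}}\mathbf{R})=N(F)\backslash N(\mathbf{A}_F)$, not ``contractible real fibres times $N(\mathbf{A}_F^\infty)$''. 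This space is not contractible. Its rational cohomology is $\varinjlim_{K_N}H^\ast(\Gamma_{K_N},\mathbf{Q})$, the Lie algebra cohomology of $\Res_{F/\mathbf{Q}}N$, which is nonzero in degree $1$ when $N\neq 1$; for example $H^1(\mathbf{Q}\backslash\mathbf{A}_{\mathbf{Q}},\mathbf{Q})=\mathbf{Q}$. Your argument never uses that the coefficients are torsion, so it would equally ``prove'' the statement with $E$-coefficients, where it is false.

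What you actually need is that these fibres are acyclic for $\Lambda=\mathcal{O}/\varpi^m$: $H^i(N(F)\backslash N(\mathbf{A}_F),\Lambda)=\varinjlim_{K_N}H^i(\Gamma_{K_N},\Lambda)=0$ for $i>0$. This holds because nilmanifolds are $K(\Gamma,1)$'s and every class with finite coefficients dies on a deep enough congruence subgroup. For $\Gamma=\mathbf{Z}$, restriction to $k\mathbf{Z}$ is multiplication by $k$. In general, use that finitely generated nilpotent groups are good in Serre's sense and that congruence subgroups are cofinal among finite-index subgroups. Then apply proper base change along the proper $Q(\mathbf{A}_F^\infty)$-equivariant map $\pi\colon\overline{\mathfrak{X}}_Q\to\overline{\mathfrak{X}}_M$. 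This gives $R\pi_\ast\Lambda\cong\Lambda$ and $R\pi_\ast j_{Q,!}\Lambda\cong j_{M,!}\Lambda$, using $\mathfrak{X}_Q=\pi^{-1}(\mathfrak{X}_M)$. Hence $R\Gamma(\overline{\mathfrak{X}}_Q,(j_!)\Lambda)\cong\textnormal{Inf}_M^Q R\Gamma(\overline{\mathfrak{X}}_M,(j_!)\Lambda)$ compatibly with forgetting supports, and your first two steps then give the square.
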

    \begin{proof}
        This is Lemma 3.2 of \cite{Hev25}.
    \end{proof}

\subsection{Formulating local-global compatibility}
\subsubsection{The cohomology of $\textnormal{Res}_{F/\mathbf{Q}}\textnormal{GL}_n$-locally symmetric spaces}\label{subsubsection_GL_nlocallysymmetricspaces}
Let $F$ be a number field and $n\geq 1$ be an integer.
We write $T_n\leq B_n \leq \textnormal{GL}_{n,\mathcal{O}_F}$ for the subgroups of diagonal and upper-triangular matrices, respectively. For a prime $\ell$, let $S_{\ell}$ denote the set of $\ell$-adic places of $F$.

Let $p$ be a prime, and let $E\subset \overline{\mathbf{Q}}_p$ be a subfield, finite over $\mathbf{Q}_p$, and large enough to contain the images of all embeddings $F\hookrightarrow\overline{\mathbf{Q}}_p$. We write $\mathcal{O}$ for the ring of integers of $E$, $\varpi\in \mathcal{O}$ for a choice of uniformiser and $k=\mathcal{O}/\varpi$ for the residue field. The character group $X^\ast((\Res_{F / \mathbf{Q}} T_n)_E)$ may be identified with $(\mathbf{Z}^n)^{\Hom(F, E)}$ in the usual way, and the subset of weights that are dominant with respect to the Borel subgroup $(\Res_{F  / \mathbf{Q}} B_n)_E$ with the subset $(\mathbf{Z}^n_+)^{\Hom(F, E)}\subset (\mathbf{Z}^n)^{\Hom(F,E)}$ of tuples $\lambda = (\lambda_\tau)_{\tau \in \Hom(F, E)}$ satisfying 
\[ \lambda_{\tau, 1} \geq \lambda_{\tau, 2} \geq \dots \geq \lambda_{\tau, n} \]
for each $\tau \in \Hom(F, E)$. Associated to each such $\lambda$ is an $E[\GL_n(F \otimes_\mathbf{Q} \mathbf{Q}_p)]$-module $V_\lambda$. It may be described explicitly as follows. Let $V_{\lambda_\tau} = \operatorname{Ind}^{\GL_{n, E}}_{\overline{B}_{n, E}} \lambda_\tau$ denote the algebraic induction as in \cite[I, Ch. 3]{jantzen2003}; it is an irreducible representation of highest weight $\lambda_\tau$.
If $v$ is the place of $F$ induced by $\tau$, we make $\GL_n(F_v)$ act on $V_{\lambda_\tau}$ via the map $\tau \colon \GL_n(F_v) \to \GL_n(E)$. Finally, we set $V_\lambda = \otimes_{\tau \in \Hom(F, E)} V_{\lambda_\tau}$. 

We use the same recipe to define an $\cO[\GL_n(\cO_F \otimes_{\mathbf{Z}} \mathbf{Z}_p)]$-module associated to any $\lambda \in (\mathbf{Z}^n_+)^{\Hom(F, E)}$: we write $\mathcal{V}_{\lambda_\tau} = \Ind^{\GL_{n, \cO}}_{\overline{B}_{n, \cO}} \lambda_\tau$ for the algebraic induction, an $\cO$-lattice in $V_{\lambda_\tau}$, and define $\mathcal{V}_\lambda = \otimes_\tau \mathcal{V}_{\lambda_\tau}$.

Given a dominant weight $\lambda=(\lambda_{\tau})_{\tau:F\hookrightarrow E} \in (\mathbf{Z}^n_+)^{\Hom(F,E)}$ and a $p$-adic place $v|p$ of $F$, we will write $\lambda_v=(\lambda_{\tau})_{\tau\colon F_v\hookrightarrow E}\in (\mathbf{Z}^n_+)^{\Hom(F_v,E)}$ where we identify $\Hom(F_v,E)$ with the subset of $\Hom(F,E)$ of embeddings inducing $v$. We further define the $E[\textnormal{GL}_n(F_v)]$-module $V_{\lambda_v}:=\otimes_{\tau\in \Hom(F_v,E)}V_{\lambda_{\tau}}$ and the $\textnormal{GL}_n(\mathcal{O}_{F_v})$-stable $\mathcal{O}$-lattice $\mathcal{V}_{\lambda_v}=\otimes_{\tau\in \Hom(F_v,E)}\mathcal{V}_{\lambda_{\tau}}$.

For a finite place place $v$ of $F$, let $\mathfrak{B}(F_v,n)$ denote the set of Bernstein blocks of $\textnormal{Rep}_{\textnormal{sm}}(\textnormal{GL}_n(F_v),\overline{\mathbf{Q}}_p)$. Given $\Omega=(\Omega_v)_{v\in S_p}\in \prod_{v\in S_p}\mathfrak{B}(F_v,n)$, we set $\sigma_{\Omega}=\otimes_{v\in S_p}\sigma_{\Omega_v}$, a smooth $\overline{\mathbf{Q}}_p[\textnormal{GL}_n(\mathcal{O}_F\otimes_{\mathbf{Z}}\mathbf{Z}_p)]$-module, where $\sigma_{\Omega_v}$ is the smooth $\overline{\mathbf{Q}}_p[\textnormal{GL}_n(\mathcal{O}_{F_v})]$-module introduced in \S\ref{Subsubsection_smooth_char0}.

We will refer to a tuple of $\mathcal{O}$-integral $\Omega_v$-systems\footnote{See Definition~\ref{Definition_integralsystem}.} $\mathcal{L}=(\mathcal{L}_v)_{v\in S_p}$ as an $\mathcal{O}$-integral $\Omega$-system and we set $\sigma_{\Omega}^{\circ}=\otimes_{v\in S_p}\sigma_{\Omega_v}^{\circ}$, a $\textnormal{GL}_n(\mathcal{O}_F\otimes_{\mathbf{Z}}\mathbf{Z}_p)$-stable $\mathcal{O}$-lattice in $\sigma_{\Omega}^{\vee}$.

Combining the discussion of the previous paragraphs, given a dominant weight $\lambda\in (\mathbf{Z}^n_+)^{\Hom(F,E)}$, a collection of Bernstein blocks $\Omega\in \prod_{v\in S_p}\mathfrak{B}(F_v,n)$ and an $\mathcal{O}$-integral $\Omega$-system $\mathcal{L}$, we set
\begin{equation*}
    \sigma_{\Omega,\lambda}^{\circ}:=\sigma_{\Omega}^{\circ}\otimes_{\mathcal{O}}\mathcal{V}_{\lambda}\in \textnormal{Mod}(\mathcal{O}[\textnormal{GL}_n(\mathcal{O}_F\otimes_{\mathbf{Z}}\mathbf{Z}_p)])
\end{equation*}
and note that it is finite free over $\mathcal{O}$. In particular, for an integer $m\geq 1$, $\sigma_{\Omega,\lambda}^{\circ}/\varpi^m$ is a smooth $\mathcal{O}/\varpi^m[\textnormal{GL}_n(\mathcal{O}_{F}\otimes_{\mathbf{Z}}\mathbf{Z}_p)]$-module, fitting into the formalism of \ref{subsection_locallysymmetricspaces}. 

Given any finite set of finite places $T$ of $F$ and a $T$-good subgroup $K\leq \textnormal{GL}_n(\mathbf{A}_F^{\infty})$, we obtain algebra homomorphisms
\begin{equation*}
    \mathcal{H}((\textnormal{GL}_{n})^{T},\sigma_{\Omega,\lambda}^{\circ})\to \textnormal{End}_{D^+(\mathcal{O}/\varpi^m)}(R\Gamma(X_{K}^{\textnormal{GL}_n},\sigma_{\Omega,\lambda}^{\circ}/\varpi^m)).
\end{equation*}
Moreover, we define the homotopy limit
\begin{equation*}
    R\Gamma(X_K^{\textnormal{GL}_n},\sigma_{\Omega,\lambda}^{\circ}):=\varprojlim _{m\geq 1}R\Gamma(X_K^{\textnormal{GL}_n},\sigma_{\Omega,\lambda}^{\circ}/\varpi^m)
\end{equation*}
in $D^+(\mathcal{O})$. In particular, we obtain an algebra homomorphism
\begin{equation*}
    \mathcal{H}((\textnormal{GL}_{n,F})^{ T},\sigma_{\Omega,\lambda}^{\circ})\to \textnormal{End}_{D^+(\mathcal{O})}(R\Gamma(X_{K}^{\textnormal{GL}_n},\sigma_{\Omega,\lambda}^{\circ})).
\end{equation*}

Finally, we discuss the `unnormalised Satake transform' associated to a standard parabolic subgroup of $\textnormal{GL}_n$. Let $Q$ be a standard parabolic subgroup of $\GL_{n, \cO_F}$, with standard (i.e.\ block diagonal) Levi subgroup $M$, and unipotent radical $N$. Writing,
\begin{equation*}
    \mathbf{T}^S=\mathbf{T}^S_n:=\mathcal{H}(\textnormal{GL}_n(\mathbf{A}_F^{\infty,S}),\textnormal{GL}_n(\widehat{\mathcal{O}}_F^S))\otimes_{\mathbf{Z}}\mathcal{O},
\end{equation*}
we see that if $M = \GL_{n_1 ,\cO_F} \times \dots \times \GL_{n_r, \cO_F}$, then we can naturally identify
\[ \mathcal{H}(M(\mathbf{A}_F^{S, \infty}), M(\widehat{\cO}_F^S)) \otimes_\mathbf{Z} \mathcal{\cO} = \mathbf{T}^S_{n_1} \otimes_\cO \dots \otimes_{\mathcal{O}} \mathbf{T}^S_{n_r}. \]
We write $\mathbf{T}^S_M$ for this algebra. The map `restrict to $Q$ and integrate along $N$' gives an $\cO$-algebra homomorphism $\mathcal{S}_M \colon \mathbf{T}^S \to \mathbf{T}^S_M$, that we call the unnormalised Satake transform (see e.g.\ \cite[\S 2.2.6]{New16}; an explicit description of this homomorphism is given in \cite[Lemma 4.6]{New16}).

\subsubsection{Axiomatising local-global compatibility}
As in the previous subsection, let $E\subset \overline{\mathbf{Q}}_p$ be a subfield, finite over $\mathbf{Q}_p$ and large enough to contain all field embeddings $F\hookrightarrow \overline{\mathbf{Q}}_p$. Let $\mathcal{O}\subset E$ denote its ring of integers and $\varpi\in \mathcal{O}$ a fixed choice of uniformiser.

Let $S$ be a finite set of finite places of $F$ containing the set $S_p$ of $p$-adic places of $F$. We set $\mathbf{T}^S:=\mathcal{H}(\textnormal{GL}_n(\mathbf{A}_F^{\infty,S}),\textnormal{GL}_n(\widehat{\mathcal{O}}_F^S))\otimes_{\mathbf{Z}}\mathcal{O}$. For a finite place $v\notin S$ of $F$, let $P_v(X)\in \mathbf{T}^S[X]$ denote the Hecke polynomial
\begin{equation*}
    \sum_{i=0}^{n}(-1)^iq_v^{\frac{i(i-1)}{2}}T_{v,i}X^{n-i}\in \mathcal{H}(\textnormal{GL}_n(F_v),\textnormal{GL}_n(\mathcal{O}_{F_v}))\otimes_{\mathbf{Z}}\mathcal{O}
\end{equation*}
where for $i=0,\dots,n$,
\begin{equation*}
    T_{v,i}:=[\textnormal{GL}_n(\mathcal{O}_{F_v})\textnormal{diag}(\underbrace{\varpi_v,...,\varpi_v}_{i\textnormal{ times}},1,...,1)\textnormal{GL}_n(\mathcal{O}_{F_v})]\in \mathcal{H}(\textnormal{GL}_n(F_v),\textnormal{GL}_n(\mathcal{O}_{F_v})).
\end{equation*}
For an unramified $\overline{\mathbf{Q}}_p$-representation $\pi_v$ of $\textnormal{GL}_n(F_v)$, specialisation of $P_v(X)$ along the corresponding point $\mathbf{T}^S\to \overline{\mathbf{Q}}_p$ yields the characteristic polynomial of $\textnormal{Frob}_v$ acting on $\textnormal{rec}_{F_v}^T(\pi_v)$.

\begin{definition}
    We say that a local Artinian $\mathcal{O}$-algebra $A$ admitting a surjective algebra homomorphism $x\colon\mathbf{T}^S\to A$ is \textit{of Galois type} if there exists a continuous determinant $D_x:G_{F,S}\to A$ satisfying the following condition.
    \begin{itemize}
        \item For all finite places $v\notin S$ of $F$, we have $D_x(X-\textnormal{Frob}_v)=P_v(X).$
    \end{itemize}

    More generally, we say that an Artinian $\mathcal{O}$-algebra $A$ admitting an algebra homomorphism $x\colon\mathbf{T}^S\to A$ is of Galois type if $A^S:=x(\mathbf{T}^{S})$ is of Galois type.\footnote{In particular, it means that $A^S$ must be local.}
\end{definition}

In addition to $S$, consider a $p$-adic place $a$ of $F$, a dominant weight $\lambda\in (\mathbf{Z}_+^n)^{\Hom(F,E)}$, a collection of Bernstein blocks $\Omega=(\Omega_v)_{v\in S_p}\in \prod_{v\in S_p}\mathfrak{B}(F_v,n)$ and an $\mathcal{O}$-integral $\Omega$-system $\mathcal{L}=(\mathcal{L}_v)_{v\in S_p}$ (see Definition~\ref{Definition_integralsystem}). The tuple $(S,a,\lambda_a,\mathcal{L}_a)$ and a choice of an $(\lambda_a,\mathcal{L}_a)$-admissible integer $N\geq 0$ (cf. Definition \ref{Definition_lambdaelladmissible}) allows us to introduce the commutative Hecke algebras
\begin{equation*}
    \mathbf{T}_a:=\mathbf{T}^S\otimes_{\mathcal{O}}\mathfrak{Z}_{\textnormal{GL}_n(F_a),\mathcal{L}_a,\lambda_a}^{\circ}\leq
\mathbf{T}_{a}^{B}:=\mathbf{T}^S\otimes_{\mathcal{O}}\mathcal{H}^B(\sigma_{\Omega_a,\lambda_a})^{\circ},
\end{equation*}
where $\mathfrak{Z}_{\textnormal{GL}_n(F_a),\mathcal{L}_a,\lambda_a}^{\circ}\leq\mathcal{H}^B(\sigma_{\Omega_a,\lambda_a})^{\circ}:=\mathcal{H}^B(\sigma_{\Omega_a,\lambda_a})^{\circ}_N$ are introduced in \eqref{equation_integralmodel}. The appearing Hecke algebras at $a$ are $\mathcal{O}$-models of the $\overline{\mathbf{Q}}_p$-algebras $\mathfrak{Z}_{\textnormal{GL}_n(F_a),\Omega_a}\leq\mathcal{H}(\sigma_{\Omega_a}^{\vee})$.

For a continuous $n$-dimensional determinant $\overline{D}:G_{F,S}\to \mathcal{O}/\varpi$, a finite extension $L_a/F_a$ and integers $h_1\leq h_2$, in Theorem \ref{Thm_WWE_global} we introduced the quotient $R_{\overline{D}}^{\{a\},L_a,[h_1,h_2]}$ of $R_{\overline{D}}$ satisfying the following property.
\begin{itemize}
    \item An $A$ point $x:R_{\overline{D}}\to A$ for an Artinian $\mathcal{O}$-algebra, with corresponding lift $D_x:G_{F,S}\to A$, factors through $R_{\overline{D}}^{\{a\},L_a,[h_1,h_2]}$ if and only if there is a Cayley--Hamilton representation $(\mathcal{E},\rho:G_{F,S}\to \mathcal{E}^{\times},D':\mathcal{E}\to A)$ such that $D=D'\circ \rho$ and $\mathcal{E}$ is torsion semistable with Hodge--Tate weights within $[h_1,h_2]$ as a $A[G_{L_a}]$-module.
\end{itemize}

On the other hand, for a continuous $n$-dimensional determinant $\overline{D}_a:G_{F_a}\to \mathcal{O}/\varpi$, in Definition \ref{def_defnring_notation} we set $R^{\lambda_a,\Omega_a}_{\overline{D}_a}$ to be the $\mathcal{O}$-flat quotient of the unrestricted pseudodeformation ring $R_{\overline{D}_a}$ satisfying the following property.
\begin{itemize}
    \item An $\mathcal{O}$-algebra homomorphism $x:R_{\overline{D}_a}\to \overline{\mathbf{Q}}_p$ with corresponding semisimple Galois representation $\rho_x:G_{F_a}\to \textnormal{GL}_n(\overline{\mathbf{Q}}_p)$ factors through $R^{\lambda_a,\Omega_a}_{\overline{D}_a}$ if and only if $\rho_x$ is potentially semistable with $\tau$-labelled Hodge--Tate weights
    \begin{equation*}
        \textnormal{HT}(\rho_x)=\{\lambda_{\tau, 1}+n-1>...>\lambda_{\tau,n}\}_{\tau:F_a\hookrightarrow E}
    \end{equation*}
    and with $(\textnormal{rec}^T_{F_a})^{-1}(\textnormal{WD}(\rho_x)^{F-ss})$ lying in $\Omega_a$.
\end{itemize}
We further recall the map from Corollary \ref{Corollary_InperpolationOfSSLL} interpolating the semisimple (Tate-normalised) local Langlands correspondence
\begin{equation*}
    \eta\colon \mathfrak{Z}_{\textnormal{GL}_n(F_a),\Omega_a}\to R_{\overline{D}_a}^{\lambda_a,\Omega_a}\otimes_{\mathcal{O}}\overline{\mathbf{Q}}_p
\end{equation*}
and the integral $\mathcal{O}$-model
\begin{equation*}
    \mathfrak{Z}_{\overline{D}_a}^{\circ}:=\mathfrak{Z}_{\textnormal{GL}_n(F_a),\mathcal{L}_a,\lambda_a,\overline{D}_a}^{\circ}\leq \mathfrak{Z}_{\textnormal{GL}_n(F_a),\mathcal{L}_a,\lambda_a}^{\circ}\leq  \mathfrak{Z}_{\textnormal{GL}_n(F_a),\Omega_a}
\end{equation*}
lying in $\mathcal{H}^{B}(\sigma_{\Omega,\lambda})^{\circ}$ and descending $\eta$ to an $\mathcal{O}$-algebra homomorphism
\begin{equation*}
    \eta^{\circ}\colon\mathfrak{Z}_{\overline{D}_a}^{\circ}\to R_{\overline{D}_a}^{\lambda_a,\Omega_a}.
\end{equation*}

For a $\mathbf{T}_a$-algebra $A$, we will write $\textnormal{nat}_A\colon\mathfrak{Z}_{\overline{D}_a}^{\circ}\to A$ for the map induced by the canonical map $\mathfrak{Z}_{\overline{D}_a}^{\circ}\to\mathbf{T}_a$. We can finally axiomatise local-global compatibility.
\begin{definition}\label{def_LGC}
    Suppose that $A$ is an Artinian $\mathcal{O}$-algebra admitting an $\mathcal{O}$-algebra homomorphism $x\colon\mathbf{T}_a\to A$ that is of Galois type as a $\mathbf{T}^S$-algebra with corresponding determinant $D_x:G_{F,S}\to A$ and $D_{x,a}:=D_{x}|_{G_{F_a}}$. We say that $\textnormal{LGC}(S,a,\lambda,\Omega,\mathcal{L})$ holds for $D_x$ if the following properties are satisfied.
    \begin{enumerate}
        \item\textit{(Global condition)} 
        There is a factorisation
        \begin{equation}
            \begin{tikzcd}
	{R_{\overline{D}_x}} & A \\
	{R_{\overline{D}_x}^{\{a\}, L_a,[h_1,h_2]},} & 
	\arrow["{D_{x}}", from=1-1, to=1-2]
	\arrow[two heads, from=1-1, to=2-1]
	\arrow[dotted, from=2-1, to=1-2]
\end{tikzcd}
        \end{equation}
        where $L_a/F_a$ and $h_{1}, h_{2}\in \mathbf{Z}$ only depend on $\Omega$, $\lambda$ and $a$.
        \item\textit{(Local condition)} There exists a dotted arrow making the following diagram commutative
        \begin{equation}\label{equation_LGCdiagram}
            \begin{tikzcd}
	{R_{\overline{D}_{x,a}}} & A \\
	{R_{\overline{D}_{x,a}}^{\lambda_a,\Omega_a}} & {\mathfrak{Z}^{\circ}_{\overline{D}_{x,a}}.}
	\arrow["{D_{x,a}}", from=1-1, to=1-2]
	\arrow[two heads, from=1-1, to=2-1]
	\arrow[dotted, from=2-1, to=1-2]
	\arrow["{\textnormal{nat}_A}"', from=2-2, to=1-2]
	\arrow["{\eta^{\circ}}", from=2-2, to=2-1]
\end{tikzcd}
        \end{equation}
    \end{enumerate}
\end{definition}

\subsubsection{Local-global compatibility for Betti cohomology of $\textnormal{Res}_{F/\mathbf{Q}}\textnormal{GL}_n$-locally symmetric spaces}\label{subsubsection_LGCstatement}
We keep the notation from \ref{subsubsection_GL_nlocallysymmetricspaces} and for the rest of the subsection we further impose the following assumption on $F$.
\begin{assumption}\label{assumption_CM}
The field $F$ is a CM number field that contains an imaginary quadratic subfield $F_0\leq F$ in which $p$ splits. In particular, each $p$-adic place of its maximal totally real subfield $F^+$ splits in $F$.
\end{assumption}
We write $\overline{S}_p$ for the set of $p$-adic places of $F^+$ and fix a choice of decomposition $S_p = \widetilde{S}_p \sqcup \widetilde{S}_p^c$.

We are now ready to state our main technical result on local-global compatibility. We need to induct on $n$, so we will proceed on an axiomatic basis with a clearly stated hypothesis. We suppose $F$ and $S$ to be fixed, and fix too a choice of place $a \in \widetilde{S}_p$ lying above a place $\overline{a} \in \overline{S}_p$. We formulate hypothesis $P(n, a, S, T)$, where $n \geq 1$ and $T \in \mathbf{T}^S$:
\begin{itemize}
    \item Let $\lambda \in (\mathbf{Z}^n_+)^{\Hom(F, E)}$, $\Omega\in \prod_{v\in S_p}\mathfrak{B}(F_v,n)$, $\mathcal{L}$ be an $\mathcal{O}$-integral $\Omega$-system. Then we can find integers $N_1, N_2 \geq 1$ with the following properties:
    \begin{itemize}
        \item $N_1$ depends only on $n$ and $F$.
        \item $N_2$ depends only on $n$, $F$, $a$, $\lambda$ and $\mathcal{L}$.
        \item For any $q \in \mathbf{Z}$, $S\setminus\{a\}$-good subgroup $K \leq \GL_n(\mathbf{A}_F^\infty)$, and for any integer $m \geq 1$, there exist ideals $J \leq \mathbf{T}_a(H^q(X_K^{\textnormal{GL}_n}, \sigma_{\Omega,\lambda}^{\circ}/ \varpi^m))$ and $J_c \leq \mathbf{T}_a(H^q_c(X_K^{\textnormal{GL}_n}, \sigma_{\Omega,\lambda}^{\circ}/ \varpi^m))$ satisfying $J^{N_1}_{(c)} = 0$ and such that for any maximal ideal $\mathfrak{m}\trianglelefteq \mathbf{T}^S$ with residue field $k$ supported on $H^q_{(c)}(X_K^{\textnormal{GL}_n},\sigma_{\Omega,\lambda}^{\circ}/\varpi^m)$, the localisation 
        \[ \mathbf{T}:=\mathbf{T}_a(H^q_{(c)}(X_K^{\textnormal{GL}_n}, \sigma_{\Omega,\lambda}^{\circ}/ \varpi^m))_{\mathfrak{m}}/(J_{(c)})_{\mathfrak{m}} \]
        satisfies the following properties:
       \begin{enumerate}
           \item As a $\mathbf{T}^S$-algebra, it is of Galois type with associated determinant $D_{\mathfrak{m}}:G_{F,S}\to \mathbf{T}$.
           \item The determinant $D_{\mathfrak{m}}:G_{F,S}\xrightarrow{}\mathbf{T}\to \mathbf{T}/\mathbf{T}[p^{N_2}T^2]$ satisfies $\textnormal{LGC}(S,a,\lambda,\Omega,\mathcal{L}).$
       \end{enumerate}
    \end{itemize}
\end{itemize}
In light of the main result of \cite{Sch15}, the hypothesis admits a slightly different formulation that will be more convenient for us to work with.
\begin{lemma}\label{lemma_equivalent_LGC_hypothesis}
    The hypothesis $P(n,a,S,T)$ is equivalent to the following hypothesis.
    \begin{itemize}
    \item Let $\lambda \in (\mathbf{Z}^n_+)^{\Hom(F, E)}$, $\Omega\in \prod_{v\in S_p}\mathfrak{B}(F_v,n)$, $\mathcal{L}$ be an $\mathcal{O}$-integral $\Omega$-system. Then we can find integers $N_1, N_2 \geq 1$ with the following properties:
    \begin{itemize}
        \item $N_1$ depends only on $n$ and $F$.
        \item $N_2$ depends only on $n$, $F$, $a$, $\lambda$ and $\mathcal{L}$.
        \item For any $q \in \mathbf{Z}$, $S\setminus\{a\}$-good subgroup $K \leq \GL_n(\mathbf{A}_F^\infty)$, integer $m \geq 1$, and for any maximal ideal $\mathfrak{m}\trianglelefteq \mathbf{T}^S$ with residue field $k$ supported on $H^q_{(c)}(X_K^{\textnormal{GL}_n},\sigma_{\Omega,\lambda}^{\circ}/\varpi^m)$, there exist ideals $J^{\mathfrak{m}} \leq \mathbf{T}_a(H^q(X_K^{\textnormal{GL}_n}, \sigma_{\Omega,\lambda}^{\circ}/ \varpi^m))_{\mathfrak{m}}$ and $J_{c}^{\mathfrak{m}} \leq \mathbf{T}_a(H^q_c(X_K^{\textnormal{GL}_n}, \sigma_{\Omega,\lambda}^{\circ}/ \varpi^m))_{\mathfrak{m}}$ satisfying $(T_n^2p^{N_2}J_{(c)}^{\mathfrak{m}})^{N_1} = 0$ and such that the $\mathbf{T}_a$-algebra 
        \[ \mathbf{T}:=\mathbf{T}_a(H^q_{(c)}(X_K^{\textnormal{GL}_n}, \sigma_{\Omega,\lambda}^{\circ}/ \varpi^m))_{\mathfrak{m}}/J_{(c)}^{\mathfrak{m}} \]
        satisfies the following properties:
       \begin{enumerate}
           \item As a $\mathbf{T}^S$-algebra, it is of Galois type with associated determinant $D_{\mathfrak{m}}:G_{F,S}\to \mathbf{T}$.
           \item The determinant $D_{\mathfrak{m}}:G_{F,S}\xrightarrow{}\mathbf{T}$ satisfies $\textnormal{LGC}(S,a,\lambda,\Omega,\mathcal{L}).$
       \end{enumerate}
    \end{itemize}
\end{itemize}
\end{lemma}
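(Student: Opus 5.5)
We prove the two implications separately. Both hypotheses concern the finite $\mathbf{T}_a$-algebra $\mathbf{T}_a(H^q_{(c)}(X_K^{\textnormal{GL}_n},\sigma^\circ_{\Omega,\lambda}/\varpi^m))$. This algebra is Artinian, so it splits as a finite product of its localisations at the maximal ideals $\mathfrak{m}$ of $\mathbf{T}^S$ in its support. Translating between the formulations is therefore mostly bookkeeping with ideals. The exception is the Galois-type condition, which is where \cite{Sch15} enters. We read ``$T_n$'' in the second formulation as the element $T$ of $\mathbf{T}^S$ appearing in $P(n,a,S,T)$.

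\emph{First formulation implies second.} Given $J_{(c)}$ with $J_{(c)}^{N_1}=0$, fix $\mathfrak{m}$ and consider the image $\overline{\mathbf{T}}$ of $\mathbf{T}_{\mathfrak{m}}/(J_{(c)})_{\mathfrak{m}}$ in $\mathbf{T}_{\mathfrak{m}}/((J_{(c)})_{\mathfrak{m}}+\mathbf{T}_{\mathfrak{m}}[p^{N_2}T^2])$. Define $J^{\mathfrak{m}}_{(c)}$ as the preimage in $\mathbf{T}_{\mathfrak{m}}$ of the annihilator ideal $\mathbf{T}_{\mathfrak{m}}/(J_{(c)})_{\mathfrak{m}}[p^{N_2}T^2]$. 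Then $p^{N_2}T^2J^{\mathfrak{m}}_{(c)}\subseteq (J_{(c)})_{\mathfrak{m}}$, and hence $(p^{N_2}T^2J^{\mathfrak{m}}_{(c)})^{N_1}=0$. The quotient $\mathbf{T}_{\mathfrak{m}}/J^{\mathfrak{m}}_{(c)}$ is exactly the ring on which hypothesis (2) of $P(n,a,S,T)$ gives LGC. It is also of Galois type: the determinant $D_{\mathfrak{m}}$ simply composes with the quotient map.

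\emph{Second formulation implies first.} This is the direction that needs \cite{Sch15}. Given $J^{\mathfrak{m}}_{(c)}$, the natural choice is $J_{(c)}=\prod_{\mathfrak{m}} p^{N_2}T^2J^{\mathfrak{m}}_{(c)}$, using the product decomposition over $\mathfrak{m}$. This gives $J_{(c)}^{N_1}=0$ with the same $N_1$. Localising, $(J_{(c)})_{\mathfrak{m}}\subseteq J^{\mathfrak{m}}_{(c)}$, so we have a surjection $\mathbf{T}_{\mathfrak{m}}/(J_{(c)})_{\mathfrak{m}}\to \mathbf{T}_{\mathfrak{m}}/J^{\mathfrak{m}}_{(c)}$. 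Two things must then be checked.

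First, condition (1) requires $\mathbf{T}_{\mathfrak{m}}/(J_{(c)})_{\mathfrak{m}}$ itself to be of Galois type, not just its quotient. For this we invoke Scholze's theorem \cite[Cor.~5.4.3]{Sch15}, in the form with a nilpotent ideal of exponent depending only on $n$ and $F$. It provides an ideal $I$ with $I^{N_0}=0$ such that $\mathbf{T}^S(H^q_{(c)})_{\mathfrak{m}}/I$ carries a determinant. To make $\mathbf{T}_{\mathfrak{m}}/(J_{(c)})_{\mathfrak{m}}$ itself of Galois type, we enlarge $J_{(c)}$ to $J_{(c)}+I\mathbf{T}_a$ and $N_1$ to $N_1+N_0$. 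We must still check that $p^{N_2}T^2$ times the new ideal remains nilpotent of bounded exponent; this holds since the sum of two nilpotent ideals with exponents $N_1$ and $N_0$ is nilpotent of exponent at most $N_1+N_0$.

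Second, condition (2) requires that the determinant on $\mathbf{T}_{\mathfrak{m}}/((J_{(c)})_{\mathfrak{m}}+\mathbf{T}_{\mathfrak{m}}[p^{N_2}T^2])$ satisfy LGC. We expect this ring to be a quotient of $\mathbf{T}_{\mathfrak{m}}/J^{\mathfrak{m}}_{(c)}$, possibly after replacing $N_2$ by $2N_2$. The key point is that $p^{N_2}T^2$ annihilates the kernel $J^{\mathfrak{m}}_{(c)}/(J_{(c)})_{\mathfrak{m}}$. LGC then passes to quotients, since both the factorisation through $R^{\{a\},L_a,[h_1,h_2]}_{\overline{D}}$ and the commutativity of diagram \eqref{equation_LGCdiagram} are preserved under further quotients of $A$.

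The main obstacle is this last containment. The kernel is $J^{\mathfrak{m}}_{(c)}$ modulo $p^{N_2}T^2J^{\mathfrak{m}}_{(c)}$, and this is not obviously killed by $p^{N_2}T^2$ inside the ring. We will therefore define $J_{(c)}$ more carefully, as the ideal generated by $J^{\mathfrak{m}}_{(c)}\cdot(p^{N_2}T^2)$ together with $I$, and track exponents so that everything stays bounded by constants depending only on $(n,F)$ and on $(n,F,a,\lambda,\mathcal{L})$ respectively.
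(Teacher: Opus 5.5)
Your construction is the paper's own. In the forward direction, $J^{\mathfrak m}_{(c)}$ is the preimage of $\bigl(\mathbf{T}_{\mathfrak m}/(J_{(c)})_{\mathfrak m}\bigr)[p^{N_2}T^2]$. In the reverse direction, $J_{(c)}=I_{(c)}+p^{N_2}T^2\sum_{\mathfrak m}J^{\mathfrak m}_{(c)}$, where $I_{(c)}$ is the nilpotent ideal supplied by \cite{Sch15}, giving exponent $N_0+N_1$.

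However, the step you call the main obstacle is immediate. It needs neither a more careful definition of $J_{(c)}$ (the one you propose at the end is the one you already had) nor replacing $N_2$ by $2N_2$. Any module of the form $X/p^{N_2}T^2X$ is killed by $p^{N_2}T^2$. Concretely, $p^{N_2}T^2J^{\mathfrak m}_{(c)}\subseteq (J_{(c)})_{\mathfrak m}$ by construction. So, writing $B:=\mathbf{T}_{\mathfrak m}/(J_{(c)})_{\mathfrak m}$, the image of $J^{\mathfrak m}_{(c)}$ in $B$ lies in $B[p^{N_2}T^2]$, and the image of $(I_{(c)})_{\mathfrak m}$ is zero. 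Hence $B/B[p^{N_2}T^2]$ is a quotient of $\mathbf{T}_{\mathfrak m}/\bigl((I_{(c)})_{\mathfrak m}+J^{\mathfrak m}_{(c)}\bigr)$. That ring is in turn a quotient of $\mathbf{T}_{\mathfrak m}/J^{\mathfrak m}_{(c)}$, so LGC descends, with the original $N_2$.

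One small correction. Once $I_{(c)}$ is added you no longer have $(J_{(c)})_{\mathfrak m}\subseteq J^{\mathfrak m}_{(c)}$ in general. The surjection $\mathbf{T}_{\mathfrak m}/(J_{(c)})_{\mathfrak m}\to \mathbf{T}_{\mathfrak m}/J^{\mathfrak m}_{(c)}$ you wrote down should therefore be replaced by the passage through $(I_{(c)})_{\mathfrak m}+J^{\mathfrak m}_{(c)}$ above, which is exactly how the paper phrases it.

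Also, what must be checked for the enlarged ideal is $J_{(c)}^{N_0+N_1}=0$ itself, not nilpotence of $p^{N_2}T^2$ times it. Your sum-of-nilpotent-ideals remark already gives this.
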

\begin{proof}
    Fix $\lambda,\Omega,\mathcal{L},q,K$ and $m$ as in the statement and let $J_{(c)}$ be the ideal provided by $P(n,a,S,T)$. Set $A_{(c)}:= \mathbf{T}_a(H^q_{(c)}(X_K^{\textnormal{GL}_n},\sigma_{\Omega,\lambda}^{\circ}/\varpi^m))$.
    
    For a choice of maximal ideal as in the statement, we can set $J_{(c)}^{\mathfrak{m}}$ to be the kernel of the surjection
    \begin{equation*}
        A_{(c,)\mathfrak{m}}=\mathbf{T}_a(H^q_{(c)}(X_K^{\textnormal{GL}_n},\sigma_{\Omega,\lambda}^{\circ}/\varpi^m))_{\mathfrak{m}}\to 
        \big(A_{(c,)\mathfrak{m}}/J_{(c,)\mathfrak{m}}\big)\big/\Big(\big( A_{(c,)\mathfrak{m}}/J_{(c,)\mathfrak{m}}\big)[p^{N_2}T^2]\Big).
    \end{equation*}

    Conversely, assume that we are given ideals $J_{(c)}^{\mathfrak{m}}$ as in the statement for every maximal ideal $\mathfrak{m}$ of residue field $k$ and supported on $H^q_{(c)}(X_K^{\textnormal{GL}_n},\sigma_{\Omega,\lambda}^{\circ}/\varpi^m)$. By \cite{Sch15}, there is an integer $M_1\geq 0$ depending only on $n$ and $[F:\mathbf{Q}]$ and ideals $I_{(c)}\trianglelefteq A_{(c)}= \mathbf{T}_a(H^q_{(c)}(X_K^{\textnormal{GL}_n},\sigma_{\Omega,\lambda}^{\circ}/\varpi^m))$ with $I_{(c)}^{M_1}=0$ such that $A_{(c,)\mathfrak{m}}/I_{(c,)\mathfrak{m}}$ is of Galois type for every $\mathfrak{m}$. Then $J_{(c)}:=I_{(c)}+T^2p^{N_2}\sum_{\mathfrak{m}}\Big(J_{(c)}^{\mathfrak{m}}A_{(c)}\Big)\trianglelefteq A_{(c)}=\mathbf{T}_a\big(H^q_{(c)}(X_{K},\sigma_{\Omega,\lambda}/\varpi^m)\big)$ is so that $J^{M_1+N_1}_{(c)}=0$ and the corresponding localised quotients $A_{(c,)\mathfrak{m}}/J_{(c,)\mathfrak{m}}$ are of Galois type satisfying local-global compatibility modulo $(I_{(c,)\mathfrak{m}}+J_{(c)}^{\mathfrak{m}})/J_{(c,)\mathfrak{m}}\leq \Big(A_{(c,)\mathfrak{m}}/J_{(c,)\mathfrak{m}}\Big)[T^2p^{N_2}]$. 
\end{proof}

Our main result on local-global compatibility is then the following.

\begin{prop}\label{Proposition_MainLGCProp}
     Given an integer $n\geq 1$, a finite set $S\subset S(F)$ that is invariant under complex conjugation and contains the $p$-adic places, and $a\in S_p$. Suppose that the following are satisfied.
     \begin{enumerate}
         \item For $v\notin S$ with residual characteristic $l$, either $l$ is unramified in $F$ and $S_l(F)\cap S\neq \emptyset$ or $l$ splits in an imaginary quadratic subfield of $F$.
         \item There is a place $\overline{b} \in \overline{S}_p - \{ \overline{a} \}$ such that
     \begin{equation}\label{eqn_enough_shifting_new} \sum_{\overline{v} \in \overline{S}_p - \{ \overline{a}, \overline{b} \} } [F^+_{\overline{v}} : \mathbf{Q}_p] > \frac{1}{2}[F^+ :  \mathbf{Q}]. 
     \end{equation}
     \end{enumerate}
        
    Then there exists an \textit{explicit} element $T_n\in \mathbf{T}^S$ satisfying the following.
    \begin{itemize}
        \item For any $\textnormal{GL}_n(\widehat{\mathcal{O}}_{F}^S)$-spherical cuspidal automorphic representation $\pi$ of $\textnormal{GL}_n(\mathbf{A}_F)$, $T_n$ acts on $\pi\otimes(\chi\circ\det)$ by a non-zero scalar for some finite order Hecke character $\chi$.
        \item Hypothesis $P(n,a,S,T_n)$ holds.
    \end{itemize}
\end{prop}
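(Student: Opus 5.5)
We argue by induction on $n$. For $n=1$ the locally symmetric space is a finite union of tori and the Hecke algebra acts through Hecke characters. Galois type and the local condition at $a$ then follow from class field theory, applied to algebraic Hecke characters of the relevant infinity type and conductor, together with the compatibility of local Langlands with Artin reciprocity. So we may take $T_1=1$. For $n\geq 2$ we assume $P(n',a,S,T_{n'})$ for all $n'<n$.

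We then define $T_n$ as follows. Fix an auxiliary rational prime $\ell\neq p$ that splits completely in $F$ and lies outside $S$; such an $\ell$ exists by condition (1) and by enlarging $S$ by complex conjugation invariance. Apply the operator $\Delta$ of Theorem~\ref{VanishingUpToBoundedTorsion}, in the roles $p\leftrightarrow\ell$, for the group $\GL_{2n}$ at places above $\ell$. Pull it back along the unnormalised Satake transform from $U(n,n)$ to the Siegel Levi $\GL_n$; this gives an explicit element of $\mathbf{T}^S$. Multiply this element by $M$ and by the products $\prod_{n'<n}\mathcal{S}(T_{n'})$ coming from proper Levis, twisted appropriately. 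Genericity of local components of cuspidal representations \cite{Sha74}, together with a choice of finite order twist $\chi$ making $\pi_w\boxplus\pi_w^{c,\vee}$ generic, gives the non-vanishing required in the first bullet.

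To prove $P(n,a,S,T_n)$ we use the equivalent form of Lemma~\ref{lemma_equivalent_LGC_hypothesis} and split cohomology via the excision sequence
\[ \cdots \to H^q_c(X_K^{\GL_n},\sigma^{\circ}_{\Omega,\lambda}/\varpi^m)\to H^q(X_K^{\GL_n},\sigma^{\circ}_{\Omega,\lambda}/\varpi^m)\to H^q(\partial X_K^{\GL_n},\sigma^{\circ}_{\Omega,\lambda}/\varpi^m)\to \cdots. \]
\begin{enumerate}
\item \emph{Boundary.} Using the stratification \eqref{equation_BorelSerreStratification}, Lemma~\ref{Lemma_BorelSerreLemma} expresses the boundary cohomology through $R\Gamma(K_N,\sigma^{\circ}_{\Omega,\lambda})$ tensored with the cohomology of the Levi $M$. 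Lemmas~\ref{RestrictionFunc}, \ref{ProjectionFormula}, \ref{Lemma_changeofweightI} and \ref{Lemma_changeofweightII}, together with Proposition~\ref{Proposition_BushnellKutzkoTypesIso}, identify this, up to $p^{N}$-torsion, with the cohomology of $M$ with coefficients $\sigma^{\circ}_{\Omega_M,w\cdot\lambda}$. These identifications are compatible with $\mathcal{S}_{M,\mathcal{L},\lambda}$ and with $\mathrm{BD}^\circ$. The inductive hypothesis for the blocks of $M$, combined with Lemma~\ref{Lemma_InterpolationOfSSvsEisenstein}, then gives LGC for the determinant $\oplus_i D_i(m_i)$ modulo bounded nilpotent and $p^{N_2}T^2$-torsion ideals. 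Finiteness of the stratification keeps the exponents bounded.
\item \emph{Interior.} What remains is the part of $H^q$ that does not come from the boundary. Here we run the $P$-ordinary degree-shifting argument of \cite{CN25} on the Shimura variety for $U(n,n)$ over $F^+$; the decomposition $S_p=\widetilde S_p\sqcup\widetilde S_p^c$ and inequality \eqref{eqn_enough_shifting_new} supply the needed places $\overline a,\overline b$. Theorem~\ref{VanishingUpToBoundedTorsion} replaces \cite{caraiani_scholze}: below the middle degree, cohomology is killed by $(M\Delta)^{N}$, which is why the ideal involves $T_n$. This allows torsion classes to be lifted to characteristic zero middle-degree classes with controlled weight and type at the places above $a$. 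Those classes come from conjugate self-dual automorphic representations of $\GL_{2n}$, where Conjecture~\ref{introconj_lgc} is known \cite{Car14}. Their Galois representations are points of the global ring of Theorem~\ref{Thm_WWE_global} with the local conditions of Theorem~\ref{thm_existence_of_pseudodeformation_ring_with_conditions}. Corollary~\ref{cor:local_global_deformation_ring} and Lemma~\ref{lem_inertial_type_of_semisimplified_representation} let us pass from the $2n$-dimensional determinant to its $n$-dimensional summand at $a$. This works because, after restriction to $G_{F_a}$, the summand is cut out by the ordinary/Hodge--Tate filtration.
\item \emph{Conclusion.} The local factorisation \eqref{equation_LGCdiagram} follows because the relevant rings are reduced and $\mathcal{O}$-flat. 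Gluing along the excision sequence multiplies the nilpotence exponents and the torsion bounds, which gives $N_1,N_2$.
\end{enumerate}

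The main obstacle is the interior step. Interior cohomology is an image, not a term in an exact sequence, so the $U(n,n)$ boundary contributes non-Siegel strata, and these must be absorbed by the inductive hypothesis. Degree shifting has to be carried out with all exponents independent of $K$ and $m$. The first thing to try is to kill the non-Siegel parabolic contributions by the factors of $T_n$ coming from smaller Levis, and to control the Siegel stratum by the boundary step above.
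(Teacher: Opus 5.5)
Your overall architecture matches the paper's. That is: induction on $n$; $T_n$ a Satake transform of $\Delta$ at an auxiliary split prime $\mathfrak p$; the $\GL_n$ boundary handled via Proposition~\ref{Proposition_EisFunc} and the inductive hypothesis; and interior classes handled via $P$-ordinary degree shifting, with Theorem~\ref{VanishingUpToBoundedTorsion} in place of Caraiani--Scholze. However, the interior step has a genuine gap, and the remedy you suggest for it would fail.

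\textbf{The object you feed into degree shifting.} You apply degree shifting to the mod $\varpi^m$ interior part $\im(H^q_c\to H^q)$ of the original system $\sigma^{\circ}_{\Omega,\lambda}/\varpi^m$. Degree shifting only controls $\mathcal{O}$-coefficient interior cohomology $H^q_!(X_K,\sigma^{\circ}_{\Omega_{\overline a}}\otimes_{\mathcal O}\mathcal V_\mu)$, where $\mu$ matches $w\cdot\widetilde\mu$ and $\widetilde\mu$ is cohomologically cuspidal.
\begin{itemize}
\item Cohomological cuspidality makes $H^d(\widetilde X,\mathcal W)^{\overline a\textnormal{-ord}}[1/p]$ purely cuspidal, so that Theorem~\ref{Theorem_selfdualLGC} supplies Galois representations.
\item Mod $\varpi^m$ interior classes of the original weight are not in general reductions of such classes.
\end{itemize}

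\textbf{Your proposed fix fails.} Killing the non-Siegel strata of $\partial\widetilde X$ with extra factors of $T_n$ does not work. Those strata carry Eisenstein classes induced from cuspidal cohomology of Levis $\GL_{n_1}\times\dots\times\widetilde G_{n_r}$ with generic Satake parameters at $\mathfrak p$. On these, $\widetilde T_n$ acts by a nonzero scalar, as does any operator at $\mathfrak p$ that is nonzero on generic parameters (which $T_n$ must be).

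\textbf{What the paper does instead.} It never analyses the non-Siegel strata.
\begin{itemize}
\item The map $H_c(\widetilde X^P)\to H(\widetilde X^P)$ factors through $H(\partial\widetilde X)$. Hence the $\mathcal O$-coefficient interior cohomology of the Siegel stratum is a subquotient of the full boundary cohomology $H^{q+s}(\partial\widetilde X,\mathcal W)^{\overline a\textnormal{-ord}}$ (Lemma~\ref{lemma_degreeshiftingpreliminarylemma1}).
\item In degree $d$, that boundary cohomology is, up to $\widetilde T_n$, a quotient of $H^d(\widetilde X,\mathcal W)^{\overline a\textnormal{-ord}}$. This is because $\widetilde T_n$ kills $H^{d+1}_c$ and also kills the torsion in $H^d$ (Lemma~\ref{lemma_degreeshifting}).
\end{itemize}

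\textbf{The missing reductions.} Your proposal lacks the chain of reductions that carries the original group to that object, which is the real content of the paper's proof.
\begin{enumerate}
\item For $q<(d-1)/2$, pass to degree $d-1-q$ using Poincar\'e duality, the $\GL_n$ boundary, and a comparison of lattices $\mathcal V_\lambda^\vee\leftrightarrow\mathcal V_{-w_0^G\lambda}$. This step cannot be skipped, since shifts are by $l(w)\le n^2\sum_{\overline v\neq\overline a,\overline b}[F^+_{\overline v}:\mathbf Q_p]<d$.
\item Twist, and deepen the level away from $\overline a$, so that mod $\varpi^m$ only the data at $\overline a$ matter and $\lambda$ may be replaced by $\mu$. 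Here $\widetilde\mu$ comes from Lemma~\ref{lem_existence_of_cohomologically_cuspidal_weights_new}: it is matched with $\lambda_{\overline a}$ at $\overline a$ and vanishes off $\overline a,\overline b$. The element $w\in W^P_{\overline S_p\setminus\{\overline a,\overline b\}}$ has $l(w)=d-q$, and guaranteeing such a $w$ exists is the actual role of \eqref{eqn_enough_shifting_new}.
\item Use the universal coefficient sequence
\[ 0\to H^q(\mathcal V_\mu)/\varpi^m\to H^q(\mathcal V_\mu/\varpi^m)\to H^{q+1}(\mathcal V_\mu)[\varpi^m]\to 0. \]
Embed the last term into $H^{q+1}(\mathcal V_\mu/\varpi^{m'})$ and handle it by a descending induction on $q$.
\item Use $0\to H^q_!\to H^q\to H^q(\partial X_K)$ with $\mathcal O$-coefficients, together with Lemma~\ref{Lemma_BoundaryLGC}.
\end{enumerate}

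\textbf{Two further corrections.}
\begin{itemize}
\item Over the torsion Hecke algebra, the $n$-dimensional summand at $a$ is not cut out by the Hodge--Tate or ordinary filtration. The paper first twists by a character $\chi$ that separates the residual constituents of $\overline D_{\mathfrak m}|_{G_{F_a}}$ and $\overline D_{\mathfrak m}^{\vee,c}(1-2n)|_{G_{F_a}}$. It then lifts the corresponding central idempotent integrally and applies Hensel's lemma for determinants (Lemma~\ref{lemma_interiorLGC}). Corollary~\ref{cor:local_global_deformation_ring} and Lemma~\ref{lem_inertial_type_of_semisimplified_representation} enter only in deducing Theorem~\ref{Theorem_MainLGCThm}.
\item Lemma~\ref{Lemma_BoundaryLGC} requires $\mathcal S_M(T_n)$ to be divisible by $T_{n_1}\otimes\dots\otimes T_{n_r}$. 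This does not come from multiplying in factors $\mathcal S(T_{n'})$, which do not lie in $\mathbf T^S_n$. It is automatic for $T_n=\mathcal S(\widetilde T_n)$, because the Satake transform of $\prod_{i\neq j}(X_iX_j^{-1}-\mathfrak p)$ to a block Levi contains the block-wise products.
\end{itemize}
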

We will make explicit the definition of the Hecke operator $T_n$ in \S\ref{subsubsection_theelementT_n} once we introduced the quasi-split unitary group $\widetilde{G}$ admitting $\textnormal{Res}_{F/F^+}\textnormal{GL}_n$ as a Levi subgroup. The Hecke operator will essentially be given by the image under the unnormalised Satake transform from $\widetilde{G}$ to $\textnormal{Res}_{F/F^+}\textnormal{GL}_n$ of the Hecke operator $\Delta_{\mathfrak{p}}$ at an auxiliary rational prime provided by \S\ref{VanishingUpToBoundedTorsion}. 

As a corollary, we obtain semisimple local-global compatibility at $p$ for cohomological cuspidal automorphic representations of $\textnormal{GL}_n(\mathbf{A}_F)$.
\begin{theorem}\label{Theorem_MainLGCThm}
    Let $F$ be a CM number field and $\pi$ be a cohomological cuspidal automorphic representation of $\textnormal{GL}_n(\mathbf{A}_F)$ of weight $\lambda\in (\mathbf{Z}_+^n)^{\Hom(F,\mathbf{C})}$. Let $p$ be a prime and let $\iota\colon\overline{\mathbf{Q}}_p\xrightarrow{\sim}\mathbf{C} $ be an isomorphism of fields. Then, for any $p$-adic place $v|p$ of $F$, the following hold true.
    \begin{enumerate}
        \item The local Galois representation $r_{\pi,\iota}|_{G_{F_v}}$ is de Rham, with $\tau$-labelled Hodge--Tate weights 
        \begin{equation*}
            \textnormal{HT}(r_{\pi,\iota}|_{G_{F_v}})=\{\lambda_{\iota\tau ,1}+n-1> \lambda_{\iota\tau ,2}+n-2>...>\lambda_{\iota\tau,n}\}_{\tau\colon F_{v}\hookrightarrow\overline{\mathbf{Q}}_p}.
        \end{equation*}
        \item There is an isomorphism
    \begin{equation*}
        \textnormal{WD}(r_{\pi,\iota}|_{G_{F_v}})^{\textnormal{ss}}\cong \iota^{-1}\textnormal{rec}_{F_v}^T(\pi_v)^{\textnormal{ss}}.
    \end{equation*}
    \end{enumerate}
\end{theorem}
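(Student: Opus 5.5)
We deduce Theorem \ref{Theorem_MainLGCThm} from Proposition \ref{Proposition_MainLGCProp} by the usual route: realise $\pi$ in the cohomology of $X_K^{\textnormal{GL}_n}$ with coefficients in $\sigma_{\Omega,\lambda}^{\circ}$, pass to $\overline{\mathbf{Q}}_p$-points of the Hecke algebra, and use Corollary \ref{cor:local_global_deformation_ring}.

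\emph{Reductions.} By solvable base change and the compatibility of $r_{\pi,\iota}$ with base change, we may replace $F$ by a solvable CM extension $F'$, linearly disjoint from the kernel field of $\overline{r}_{\pi,\iota}$ as needed. We may then assume Assumption \ref{assumption_CM} and condition (1) of Proposition \ref{Proposition_MainLGCProp} for a suitable $S$. We also arrange condition (2) for the chosen place $a$ above $v$, by making $p$ split into many places of $F^+$ of equal degree. Both de Rhamness and $\textnormal{WD}^{\textnormal{ss}}$ can be checked after finite base change, since restriction to an open subgroup of $G_{F_v}$ is compatible with $\textnormal{rec}^T$ under base change of $\pi_v$. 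Replacing $\pi$ by $\pi\otimes(\chi\circ\det)$ for a finite order character $\chi$ changes both sides compatibly. We use this twist so that the explicit $T_n$ acts on $\pi$ by a nonzero scalar $t_\pi$.

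\emph{Main step.} Take $\Omega$ to be the tuple of Bernstein blocks of $(\iota^{-1}\pi_w)_{w\in S_p}$, $\lambda$ the weight transported by $\iota$, $\mathcal{L}$ an $\mathcal{O}$-integral $\Omega$-system after enlarging $E$, and $K$ an $S\setminus\{a\}$-good level with $\pi^{\infty,S_p}$ having $K$-fixed vectors. By Lemma \ref{BKlemma} and the type theory, $\Hom_K(\sigma_{\Omega_w}\otimes V_\lambda^\vee,\pi)\neq0$. Matsushima/Franke then puts $\pi$ in $H^q(X_K,\sigma_{\Omega,\lambda}^{\circ})\otimes\overline{\mathbf{Q}}_p$ in cuspidal degree $q$. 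This gives a homomorphism $x\colon\mathbf{T}_a\to\overline{\mathbf{Q}}_p$ factoring through the image of $\mathbf{T}_a$ on $H^q(X_K,\sigma_{\Omega,\lambda}^{\circ})$ mod torsion. On $\mathfrak{Z}^{\circ}_{\overline{D}_a}$ it is given by the central character $[\pi_a]$, via the Satake/Bernstein-centre diagrams of \S\ref{Subsection_integralstructures}. Apply $P(n,a,S,T_n)$ in the form of Lemma \ref{lemma_equivalent_LGC_hypothesis} for every $m$. We get quotients $\mathbf{T}_m$ of the localised Hecke algebras mod $\varpi^m$ carrying determinants satisfying LGC, with kernels $J^{\mathfrak m}$ such that $(T_n^2p^{N_2}J^{\mathfrak m})^{N_1}=0$. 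The constants $N_1,N_2$ are independent of $m$ and $K$.

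\emph{Passage to the limit.} Take the inverse limit over $m$ and localise at $\mathfrak m=\ker\overline{x}$. The factorisations through $R^{\{a\},L_a,[h_1,h_2]}_{\overline{D}}$ and the diagrams \eqref{equation_LGCdiagram} are closed conditions with uniform $(L_a,h_1,h_2)$. They pass to the limit, and because the kernel is killed by $(T_n^2p^{N_2})^{N_1}$-type elements, they become genuine conditions after inverting $p$ and $T_n$. Since $x(T_n)=t_\pi\neq0$ and $x$ is $p$-adic, $x$ factors through the localisation where $T_n$ and $p$ are units. This is the main obstacle: making sure the nilpotent ideals and the $T_n^2p^{N_2}$-torsion disappear compatibly in the limit over $m$. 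This uses the uniformity of $N_1,N_2$, and that killing a nilpotent ideal does not change $\overline{\mathbf{Q}}_p$-points. The outcome is a point of $R^{\{a\},?,(\lambda_a,\Omega_a)}_{\overline{D}}$ corresponding to $r_{\pi,\iota}$, with $\eta^\circ$ compatible with $[\iota^{-1}\pi_a]$. Corollary \ref{cor:local_global_deformation_ring}, with Lemma \ref{lem_inertial_type_of_semisimplified_representation}, now gives that $r_{\pi,\iota}|_{G_{F_a}}$ is potentially semistable. It has Hodge type $\mathbf{v}_{\lambda}$, which gives the stated Hodge--Tate weights, and $\textnormal{WD}(r_{\pi,\iota}|_{G_{F_a}})^{\textnormal{ss}}\cong\iota^{-1}\textnormal{rec}^T_{F_a}(\pi_a)^{\textnormal{ss}}$. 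Since $v$ was arbitrary after base change, the theorem follows.
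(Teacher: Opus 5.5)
Your argument is the paper's. You twist so that $x_\pi(T_n)\neq 0$, realise $\pi$ in $H^\ast(X_K,\sigma^{\circ}_{\Omega,\lambda})$ via Franke, apply Proposition~\ref{Proposition_MainLGCProp} for every $m$, and conclude with Corollary~\ref{cor:local_global_deformation_ring}. Your limit step can be made concrete as the paper does it. Let $A_{\mathfrak m}$ and $A^m_{\mathfrak m}$ be the Hecke algebras of $H^\ast(X_K,\sigma^\circ_{\Omega,\lambda})_{\mathfrak m}$ and $H^\ast(X_K,\sigma^\circ_{\Omega,\lambda}/\varpi^m)_{\mathfrak m}$, and put $J=\ker\bigl(A_{\mathfrak m}\to\prod_m A^m_{\mathfrak m}/J_m\bigr)$. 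Then $(T_n^2p^{N_2}J)^{N_1}=0$, so $x_\pi(J)=0$ because $x_\pi(T_n)p\neq 0$ in the domain $\overline{\mathbf{Q}}_p$, and $x_\pi \bmod \varpi^k$ satisfies $\textnormal{LGC}$ for every $k$.

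The step that fails as written is your justification of the base-change reduction for part (2). Suppose you know $\mathrm{WD}(r_{\pi,\iota}|_{G_{F'_w}})^{\mathrm{ss}}\cong\iota^{-1}\mathrm{rec}^T_{F'_w}(\pi'_w)^{\mathrm{ss}}$, with $\pi'$ the base change and $F'_w\supsetneq F_v$. This only says that $\mathrm{WD}(r_{\pi,\iota}|_{G_{F_v}})^{\mathrm{ss}}$ and $\iota^{-1}\mathrm{rec}^T_{F_v}(\pi_v)^{\mathrm{ss}}$ have isomorphic restrictions to $W_{F'_w}$. That does not make them isomorphic: they can differ by twisting constituents by nontrivial characters of $W_{F_v}/W_{F'_w}$. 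De Rhamness and Hodge--Tate weights do descend along an open subgroup, but $\mathrm{WD}^{\mathrm{ss}}$ does not.

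The fix is to choose the base change so that $v$ splits completely. Then $F'_w=F_v$ and $\pi'_w=\pi_v$, so nothing needs to be descended. This is compatible with what you must arrange. Adjoin an imaginary quadratic field in which $p$ splits, and a cyclic totally real field in which $p$ splits completely into enough places to force \eqref{eqn_enough_shifting_new}. Both steps are cyclic and keep $F'$ CM.

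You should also choose these extensions so that the base change of $\pi$ stays cuspidal. That means avoiding the finitely many characters $\omega$ with $\pi\cong\pi\otimes(\omega\circ\det)$. This, rather than linear disjointness from the field cut out by $\overline{r}_{\pi,\iota}$, is the condition actually needed.
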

\begin{proof}
    We can proceed `one place at a time', so fix a place $a|p$ of $F$. After making a cyclic base change (using \cite{Art89}), we can assume that the following conditions are satisfied:
    \begin{itemize}
        \item $F$ contains an imaginary quadratic field in which $p$ splits.
        \item There is a place $\overline{b}\in \overline{S}_p\setminus \{\overline{a}\}$ such that \eqref{eqn_enough_shifting_new} holds.
    \end{itemize}

    Choose a good subgroup $K\leq \textnormal{GL}_n(\mathbf{A}_F^{\infty})$ with $K_p=\prod_{v\mid p}\textnormal{GL}_n(\mathcal{O}_{F_v})$ and $\pi^{K^p}\neq 0$. Let $S$ be a finite set of places of $F$ containing the $p$-adic places, satisfying $(1)$ from Proposition \ref{Proposition_MainLGCProp} and so that $K$ is $S$-good. Let $\Omega=(\Omega_v)_{v\mid p}$ be the collection of Bernstein blocks associated with $\pi_p=\otimes_{v\mid p}\pi_v$. Let $\mathcal{L}$ be an $\mathcal{O}$-integral $\Omega$-system for the ring of integers of a suitable finite field extension $E/\mathbf{Q}_p.$

    After possibly enlarging $E$, the action of $\mathbf{T}^S$ on $\pi$ is via an $\mathcal{O}$-valued point $x_{\pi}$ and we write $\mathfrak{m}\trianglelefteq \mathbf{T}^S$ for the corresponding maximal ideal with residue field $k=\mathcal{O}/\varpi$.

    By \cite{Fra98}, \cite{Fra98a}, we can assume that the Hecke algebra $\mathbf{T}_a:=\mathbf{T}^S\otimes_{\mathcal{O}}\mathfrak{Z}_{\textnormal{GL}_n(F_a),\mathcal{L}_a,\lambda_a,\overline{D}_{\mathfrak{m},a}}^{\circ}$ acts on $\Hom_K(\sigma_{\Omega},\pi)$ through the faithful quotient $A_{\mathfrak{m}}=\mathbf{T}_a(H^{\ast}(X_K,\sigma_{\Omega,\lambda}^{\circ})_{\mathfrak{m}})$. By Proposition \ref{Proposition_MainLGCProp}, there are integers $N_1\geq 1$, $N_2\geq 0$ and, for every $m\geq 1$, an ideal $J_m\trianglelefteq A_{\mathfrak{m}}^m:=\mathbf{T}_a(H^{\ast}(X_K,\sigma_{\Omega,\lambda}^{\circ}/\varpi^m)_{\mathfrak{m}})$  with $(T_n^2p^{N_2}J_m)^{N_1}=0$ such that $A_{\mathfrak{m}}^m/J_m$ satisfies $\textnormal{LGC}(S,a,\iota^{-1}\lambda,\Omega,\mathcal{L})$. Set
    \begin{equation*}
        J:=\ker(A_{\mathfrak{m}}\hookrightarrow\prod_mA_{\mathfrak{m}}^m\twoheadrightarrow \prod_m(A_{\mathfrak{m}}^m/J_m))
    \end{equation*}
    and note that $(T_n^2p^{N_2}J)^{N_1}=0$ and $\big(A_{\mathfrak{m}}/J\big)/\mathfrak{m}^k$ satisfies $\textnormal{LGC}(S,a,\iota^{-1}\lambda,\Omega,\mathcal{L})$ for every $k\geq 1$.
    By \cite{Sha74}, after possibly twisting by a Hecke character, $T_n$ is sent under $x_{\pi}$ to a non-zero element. In particular, the extension to $\mathbf{T}_a$ of the point $x_{\pi}$ factors through $A_{\mathfrak{m}}/J$ and so it satisfies $\textnormal{LGC}(S,a,\iota^{-1}\lambda,\Omega,\mathcal{L})$ mod $\varpi^k$ for every $k\geq 1$. The theorem now follows from Corollary \ref{cor:local_global_deformation_ring}.
\end{proof}

Our proof of Proposition \ref{Proposition_MainLGCProp} will be broken into two separate parts. First, we describe the cohomology of the Borel--Serre boundary of $X^{\textnormal{GL}_n}$ in terms of ``torsion Eisenstein series" with desired local behaviour at our fixed $p$-adic place allowing us to show that the boundary cohomology satisfies local-global compatibility by induction on $n$. As we have limited control over the local systems appearing in this analysis, this forces us to ignore $p^{N_2}$-torsion of exponent depending on our initial local system. This is a new phenomenon compared to previous works \cite{10author}, \cite{AC23}, \cite{CN25} and \cite{Hev24} originating in relaxing the residual irreducibility condition in their theorems. Indeed, in any of the listed articles, results are only proved after localisation at a non-Eisenstein maximal ideal, which has the effect of immediately killing all boundary cohomology for $X^{\textnormal{GL}_n}$. 

Finally, to fill the gap between the boundary cohomology and cohomology (without compact supports) with $\mathcal{O}/\varpi^m$-coefficients, it turns out to be sufficient to prove local-global compatibility at $a\in S_p$ for the interior cohomology of $X^{\textnormal{GL}_n}$ with $\mathcal{O}$-coefficients with values in a local system \textit{suitably chosen at places away from $a$}. This puts us into the situation that is handled by the ``degree shifting argument" introduced in \cite{10author}. More precisely, we closely follow the $P$-ordinary version of the argument as presented in \cite{CN25} and \cite{Hev24}, building on \cite{10author}, and \cite{AC23}, with the key difference coming from replacing the `decomposed generic' assumption on our maximal ideal to instead proving results up to $T_n$-torsion, using our vanishing result established in \S \ref{sec_torsion_in_the_cohomology}. Roughly speaking, saying that a maximal ideal is decomposed generic corresponds to saying that the image of $T_n$ in the localised Hecke algebra is a unit.

\begin{remark}
    We emphasise that to deduce Theorem \ref{Theorem_MainLGCThm} it appears to be necessary to analyse the boundary cohomology as well. Indeed, the degree shifting argument only proves local-global compatibility for \textit{interior} cohomology with coefficients in specific $\mathcal{O}$-local systems $\mathcal{W}$ (the `cohomologically cuspidal' ones) which consist of only torsion classes. Moreover, even when the Hecke eigensystem $\psi$ of $\pi$ appears in $H^{\ast}_!(X_K,\mathcal{V})$ for some $\mathcal{O}$-local system $\mathcal{V}$ with $\mathcal{V}/\varpi^m\cong \mathcal{W}/\varpi^m$, we cannot ensure that $\psi\mod{\varpi^m}$ appears in the image of $H^{\ast}_!(X_K,\mathcal{W})\to H^{\ast}_!(X_K,\mathcal{W}/\varpi^m)$.
\end{remark}

\subsection{Torsion Eisenstein series in boundary cohomology}\label{sec_boundarycoh}
We continue to work with the setup of the previous subsection. In particular, we have an integer $n\geq 1$, a number field $F/\mathbf{Q}$ fixed and $\mathcal{O}$ will denote the ring of integers of a sufficiently large coefficient field $E/\mathbf{Q}_p$ with a fixed choice of uniformiser $\varpi\in \mathcal{O}$.
\subsubsection{The automorphic side}

Fix a dominant weight $\lambda\in (\mathbf{Z}_+^n)^{\Hom(F,E)}$, a collection of Bernstein blocks $\Omega=(\Omega_v)_{v\mid p}\in \prod_{v\mid p}\mathfrak{B}(F_v,n)$, an $\mathcal{O}$-integral $\Omega$-system $\mathcal{L}=(\mathcal{L}_v)_{v\mid p}$ and an integer $N\geq 1$ that is $(\lambda_v,\mathcal{L}_v)$-admissible for every $v\in S_p$.

Consider a standard parabolic subgroup $Q=MN\leq \textnormal{GL}_{n,\mathcal{O}_F}$. For a finite place $v$ of $F$ we will write $\textnormal{Q}_v:=Q(F_v),$ $\textnormal{M}_v:=M(F_v)$ and $\textnormal{N}_v:=N(F_v)$. Recall that in Proposition~\ref{Proposition_BushnellKutzkoTypesIso}, we introduced the multiset $\mathfrak{B}(\textnormal{M}_v,\Omega_v)$ of Bernstein blocks $\Omega_{\textnormal{M}_v}$ of $\textnormal{Rep}_{\textnormal{sm}}(\textnormal{M}_v,\overline{\mathbf{Q}}_p)$ for every finite place $v\in S_p$.

Fix some finite set of finite places $S$ of $F$ containing all $p$-adic places such that $K^S=G(\widehat{\mathcal{O}}_F^{S})$ and an integer $N\geq 0$ that is $(\lambda_v,\mathcal{L}_v)$-admissible for every $v\in S_p$. In \S\ref{Subsection_integralstructures} we introduced the Satake transforms $\mathcal{S}_{\textnormal{Q}_v,\Omega_v,\lambda_v}^{\circ
}$, $\mathcal{S}_{\textnormal{M}_v,\mathcal{L}_v,\lambda_v}$ with source given by the commutative $\mathcal{O}$-algebra $\mathcal{H}^B(\sigma_{\Omega_v,\lambda_v})^{\circ}:=\mathcal{H}^B(\sigma_{\Omega_v,\lambda_v})^{\circ}_N$. We introduce the Satake transforms
\begin{equation*}
    \mathcal{S}_{Q,v}^{\circ}:=\mathcal{S}_M\otimes \mathcal{S}_{\textnormal{Q}_v,\Omega_v,\lambda_v}^{\circ}\textnormal{, }\mathcal{S}_{M,v}^{\circ}:=\mathcal{S}_M\otimes \mathcal{S}_{\textnormal{M}_v,\mathcal{L}_v,\lambda_v}
\end{equation*}
with source given by the Hecke algebra 
\begin{equation*}
    \mathbf{T}_v^B=\mathbf{T}^S\otimes_{\mathcal{O}}\mathcal{H}^B(\sigma_{\Omega_v,\lambda_v})^{\circ}
\end{equation*}
 introduced in the previous subsection. We will also need the $\mathcal{O}$-subalgebra
 \begin{equation*}
     \mathbf{T}^{B,+}_v:=\mathbf{T}^S\otimes_{\mathcal{O}}\mathcal{H}^{B,+}(\sigma_{\Omega_v,\lambda_v})^{\circ}\leq \mathbf{T}^B_v.
 \end{equation*}

The next Proposition is our main result on ``torsion Eisenstein functoriality" in the cohomology of $\textnormal{Res}_{F/\mathbf{Q}}\textnormal{GL}_n$-locally symmetric spaces. Although the statement appears to be rather technical, it roughly states the following. The Hecke eigensystems (with Hecke operators at a fixed $p$-adic place $a$ of $F$ included) appearing in 
\begin{equation*}
    H^{\ast}_{(c)}(X_K^{Q},\sigma_{\Omega,\lambda}^{\circ}/\varpi^m),
\end{equation*} up to nilpotency of degree $N_1$ and $p$-power torsion of exponent $N_2$, already appear (under the Satake transform) in
\begin{equation*}
    \bigoplus_{w\in W^{\textnormal{M}_a}}\bigoplus_{\Omega_{\textnormal{M}}\in \mathfrak{B}(\textnormal{M}_a,\Omega)}H^{\ast}_{(c)}(X_{K_{M}},\sigma_{\Omega_{\textnormal{M}_a},w\cdot\lambda_a}^{\circ}/\varpi^m)
\end{equation*}
where the integers $N_1,N_2$ are independent of $m\geq 1$.

\begin{prop}\label{Proposition_EisFunc}
There are integers $N_1\geq 1$, $N_2\geq 0$ satisfying the following.
\begin{itemize}
    \item The integer $N_1$ only depends on $n$ and $F$.
    \item The integer $N_2$ only depends on $n$, $F$, $\lambda$ and $\mathcal{L}$.
    \item For every integer $m\geq 1$, place $a\in S_p$, standard parabolic subgroup $Q=MN\leq \textnormal{GL}_{n,\mathcal{O}_F}$ and $S\setminus\{a\}$-good level subgroup $K\leq \textnormal{GL}_n(\mathbf{A}_F^{\infty})$, there is an $S\setminus \{a\}$-good level subgroup $K_{M}\leq M(\mathbf{A}_F^{\infty})$ and an inclusion
    \begin{align}\label{equation_goaleisfunc}
    \begin{split}
\left(p^{N_2}\textnormal{Ann}_{\mathbf{T}_a^B}\left(\bigoplus_{w,\Omega_{\textnormal{M}_a}}H^{\ast}_{(c)}(X_{K_{M}},\sigma_{\Omega_{\textnormal{M}_a},w\cdot\lambda_a}^{\circ}/\varpi^m)\right)\right)^{N_1} \\
        \leq \textnormal{Ann}_{\mathbf{T}_a^B}\left(R\Gamma_{(c)}(X_K^Q,\sigma_{\Omega,\lambda}^{\circ}/\varpi^m)\right),
    \end{split}
    \end{align}
    where the direct sum runs over $w\in W^{\textnormal{M}_a}$, $\Omega_{\textnormal{M}_a}\in \mathfrak{B}(\textnormal{M}_a,\Omega)$.
\end{itemize}
\end{prop}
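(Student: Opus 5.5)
The strategy is to compute $R\Gamma_{(c)}(X_K^Q,\sigma_{\Omega,\lambda}^{\circ}/\varpi^m)$ in terms of the $M$-locally symmetric space by combining Lemma~\ref{Lemma_BorelSerreLemma} with the local results of \S\ref{sec_local_representation_theory}. We then compare Hecke actions at $a$ through the integral Satake transforms, losing only $p$-power torsion of bounded exponent. Concretely, we write $R\Gamma_{(c)}(X_K^Q,\sigma^{\circ}_{\Omega,\lambda}/\varpi^m)$ as $R\Gamma(K,R\Gamma(\overline{\mathfrak{X}}^Q_G,(j_!)\Lambda)\otimes\sigma^{\circ}_{\Omega,\lambda})$. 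Using the decomposition \eqref{decomposition} at the places of $S\setminus\{a\}$, this breaks into finitely many pieces, indexed by $Q(\mathbf{A}^\infty_{F,S\setminus\{a\}})\backslash G/K$, each of the same shape. It therefore suffices to treat one piece and intersect annihilators; the number of pieces is absorbed by taking the direct sum over the auxiliary level $K_M$, chosen as the image in $M$ of $gKg^{-1}\cap Q$.

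For a single piece, Lemma~\ref{Lemma_BorelSerreLemma} rewrites it as $R\Gamma(K_Q,\operatorname{Inf}R\Gamma(\overline{\mathfrak{X}}_M,(j_!)\Lambda)\otimes\sigma^\circ_{\Omega,\lambda})$. At $a$ we use Lemma~\ref{RestrictionFunc} to pass from $K_a$ to $K_{\textnormal{Q}_a}$, matching $\mathcal{H}(\sigma_{\Omega_a,\lambda_a})$ with $r_{\textnormal{P}}$. Next, Lemma~\ref{ProjectionFormula} and Lemma~\ref{TransitivityOfHeckeAction} give
\[ R\Gamma(K_{M},R\Gamma(\overline{\mathfrak{X}}_M,(j_!)\Lambda)\otimes^{\mathbf{L}}R\Gamma(K_{\textnormal{N}_a},\overline{\sigma}_{\Omega_a,\lambda_a})), \]
with the Hecke operators $V_{i,j}$ and $p^NV_{i,j}^{-1}$ acting through $\overline{\phi}_{\textnormal{P},\mathbf{1}}$. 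We then filter $R\Gamma(K_{\textnormal{N}_a},\overline{\sigma})$ by its cohomology sheaves; the filtration has length bounded in terms of $n$ and $[F:\mathbf{Q}]$. Lemma~\ref{Lemma_changeofweightII} replaces each $H^q(K_{\textnormal{N}_a},\sigma^\circ/\varpi^m)$, up to maps composing to $p^N$, by $\sigma^\circ_{\textnormal{P},\Omega_a}\otimes\bigoplus_{\ell(w)=q}\mathcal{V}_{w\cdot\lambda}/\varpi^m$, with the locally algebraic Hecke action of Remark~\ref{Remark_MinusculeSatakeTransform}. This is exactly $\mathcal{S}^{\circ}_{Q,a}$, which is integral by the choice of the $N_{i,j}$. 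Finally, Lemma~\ref{Lemma_changeofweightI} together with Lemma~\ref{Lemma_changeofweightIII} replaces $\sigma^\circ_{\textnormal{P},\Omega_a}$ by $\sigma^{\circ}_{\textnormal{M},\Omega_a}=\bigoplus_{\Omega_{\textnormal{M}_a}}\sigma^\circ_{\Omega_{\textnormal{M}_a}}$, again up to $p^{2N}$, so that the Hecke action becomes $\mathcal{S}^\circ_{M,a}$, using the integrality built into the third condition on $N_{i,j}$. Away from $S$, the prime-to-$S$ action is intertwined with $\mathcal{S}_M$ by the standard Borel--Serre boundary computation.

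From these quasi-isomorphisms ``up to $p^{N'}$'' we extract annihilator statements. If $\alpha\beta=p^{N'}$ and $\beta\alpha=p^{N'}$ with both maps equivariant, then any $t$ annihilating the target object satisfies $p^{2N'}t=0$ on the source, since $p^{2N'}t=\beta\,t\,\alpha$ up to a factor of $p^{N'}$. However, we only know annihilation on the cohomology groups $H^{\ast}_{(c)}(X_{K_M},\ldots)$, not on the complex. Passing from cohomology to the complex and through the finite filtration costs raising to a power equal to the number of nonzero cohomological degrees times the filtration length. This is why $N_1$ depends only on $n$ and $F$, while $N_2$ collects the $p$-exponents, which depend on $\lambda$ and $\mathcal{L}$ via Lemmas~\ref{Lemma_changeofweightII} and \ref{Lemma_changeofweightIII}.

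The main obstacle is checking that the Hecke action of $\mathbf{T}^B_a$ is correctly transported at each step, at the derived level and integrally. In particular, the composite identification must intertwine $V_{i,j}$ and $p^NV_{i,j}^{-1}$ with $\mathcal{S}^{\circ}_{M,a}(V_{i,j})$ and its counterpart on every graded piece simultaneously, which is needed for $\mathbf{T}^B_a$-equivariance. I would try to handle this first for the minuscule generators $Y_{i,j}$ using Lemma~\ref{Lemma_explicitdescriptionofgenerators} and the explicit formula in Remark~\ref{Remark_MinusculeSatakeTransform}. I would then deduce the claim for the inverses from the relation $V_{i,j}W_{i,j}=p^N$ in \eqref{explicitpresentation}, absorbing any ambiguity into $N_2$.
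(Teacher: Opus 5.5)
Your analysis at the place $a$ is the paper's: Lemma~\ref{Lemma_BorelSerreLemma} and Lemma~\ref{RestrictionFunc} to restrict to $Q$, Lemmas~\ref{TransitivityOfHeckeAction} and~\ref{ProjectionFormula} to reach a complex of the form $R\Gamma(X_{K_M},R\Gamma(K_{N},\ldots))$, its hypercohomology spectral sequence, the change-of-weight Lemmas~\ref{Lemma_changeofweightII}, \ref{Lemma_changeofweightI} and~\ref{Lemma_changeofweightIII}, and $V_{i,j}W_{i,j}=p^N$ to pass from $\mathbf{T}_a^{B,+}$ to $\mathbf{T}_a^{B}$. The gap is at the places of $S\setminus\{a\}$, and it is visible in your choice of $K_M$.

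The coefficient system $\sigma^\circ_{\Omega,\lambda}$ is nontrivial at every $v\mid p$, not only at $a$. Moreover, for $v\in S_p\setminus\{a\}$ the group $K_{N,v}$ has nonzero cohomology with $\mathcal{O}/\varpi^m$-coefficients in positive degrees. Your displayed complex, which involves only $K_{\textnormal{N}_a}$ and $\overline{\sigma}_{\Omega_a,\lambda_a}$, silently drops both. If $K_M$ is the image of $gKg^{-1}\cap Q$, your argument ends with cohomology of $X_{K_{M,g}}$ whose coefficients are still nontrivial at $v\in S_p\setminus\{a\}$ (built from $\sigma^\circ_{\Omega_v}$, $\mathcal{V}_{\lambda_v}$ and $H^\ast(K_{N,v},-)$), at a level that varies with $g$. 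The left side of \eqref{equation_goaleisfunc} instead involves a single level $K_M$ and coefficients $\sigma^\circ_{\Omega_{\textnormal{M}_a},w\cdot\lambda_a}$ living only at $a$, with $w\in W^{\textnormal{M}_a}$. Nothing compares the annihilators of these two kinds of groups, and ``absorbing the pieces into the direct sum over $K_M$'' does not supply such a comparison.

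What is missing is a Hochschild--Serre reduction to a deep level at $S\setminus\{a\}$. Since $K_a$ and $K_v$ for $v\notin S$ are hyperspecial, the Iwasawa decomposition lets you choose the representatives $g$ supported on $S\setminus\{a\}$, with components in $\textnormal{GL}_n(\mathcal{O}_{F_v})$. Define $K_Q\leq Q(\mathbf{A}_F^\infty)$ as follows: for $v\notin S\setminus\{a\}$ take $K_v\cap Q(F_v)$, and for $v\in S\setminus\{a\}$ take $\ker(Q(\mathcal{O}_{F_v})\to Q(\mathcal{O}_{F_v}/\varpi_v^h))$. Here $h\geq m$ is chosen large enough that this group acts trivially on $\sigma^\circ_{\Omega_v}$ and on $\mathcal{V}_{\lambda_v}/\varpi^m$; then $K_Q$ is normal in every $K_{Q,g}$.

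The spectral sequence $H^r(K_{Q,g}/K_Q,H^s(K_Q,-))\Rightarrow H^{r+s}(K_{Q,g},-)$ is $\mathbf{T}_a^B$-equivariant, because the finite quotient lives at $S\setminus\{a\}$. Only degrees up to $\dim X^Q_K$ occur in the abutment. Hence annihilators at level $K_Q$, raised to a power depending only on $n$ and $F$, kill each summand at level $K_{Q,g}$. At level $K_Q$ the places $v\neq a$ contribute only trivial coefficients modulo $\varpi^m$. So you may replace $\sigma^\circ_{\Omega,\lambda}$ by $\sigma^\circ_{\Omega_a}\otimes\mathcal{V}_{\lambda}$ and work with the single level $K_M=K_Q\cap M(\mathbf{A}_F^\infty)$; this level depends on $m$, which the statement permits.

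After this reduction your argument at $a$ applies. One correction to your last paragraph: $p^NV_{i,j}^{-1}$ cannot be made to act on the graded pieces at all. The pages $E_2$, the filtration $\textnormal{Fil}^\bullet$ and $E_\infty$ are only $\mathbf{T}_a^{B,+}$-modules, because only the minuscule generators have the explicit description of Remark~\ref{Remark_MinusculeSatakeTransform}. The relation $V_{i,j}W_{i,j}=p^N$ must therefore be used by passing to $p^k$-images of modules, as in Lemmas~\ref{Lemma_EisFuncMorphism}, \ref{Lemma_EisFuncInjection}, \ref{Lemma_EisFuncTStructure}, \ref{Lemma_EisFuncSurjection} and Corollary~\ref{Corollary_EisFuncSubquotient}.
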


\begin{remark}
    We note that even though our local-global compatibility results are concerned with the action of $\mathfrak{Z}_{\textnormal{GL}_n(F_v),\mathcal{L}_v,\lambda_v}^{\circ}$, for our analysis of the boundary cohomology we work with the larger commutative Hecke algebra $\mathcal{H}^B(\sigma_{\Omega_v,\lambda_v})^{\circ}$ that admits an explicit presentation by `sufficiently simple' Hecke operators.
Namely, we have explicit presentations
\begin{equation*}
    \begin{tikzcd}
	{\mathbf{T}^S[V_{i,j}\mid i,j]} & {\mathbf{T}^{B,+}_a} \\
	{\mathbf{T}^S[V_{i,j},W_{i,j}\mid i,j]/(V_{i,j}W_{i,j}-p^{N})} & {\mathbf{T}_a^{B}}
	\arrow["\sim", from=1-1, to=1-2]
	\arrow[hook, from=1-1, to=2-1]
	\arrow[hook, from=1-2, to=2-2]
	\arrow["\sim", from=2-1, to=2-2]
\end{tikzcd}
\end{equation*}
that we will use in the course of the proof of Proposition~\ref{Proposition_EisFunc}.
\end{remark}

For a $\mathbf{Z}_p$-module $M$ and an integer $k\geq 0$, we introduce the following simple notation
\begin{equation*}
    \overline{M}^k:=\im(M\xrightarrow{p^k\cdot}M).
\end{equation*}
If $M$ is in fact a module over some $\mathbf{Z}_p$-algebra $R$, then $\overline{M}^k$ is naturally an $R$-submodule $\overline{M}^k\leq M$. Moreover, given two $R$-modules $M,M'$ and an $R$-module homomorphism between them $\alpha:M\to M'$, we obtain an induced map $\overline{\alpha}^k\colon\overline{M}^k\to \overline{M'}^k$. If $\alpha$ is surjective or injective, respectively, so is $\overline{\alpha}^k$. Moreover, if we have an exact sequence of $R$-modules
\begin{equation*}
    0\to M'\to M\to M''\to 0,
\end{equation*}
then the natural surjection
\begin{equation*}
    \overline{M}^k/\overline{M'}^k\to \overline{M''}^k\to 0
\end{equation*}
has kernel killed by $p^k$. Indeed, if  $x\in \overline{M}^k$ is sent to $0$ in $\overline{M''}^k$, we have $x\in M'$ and so $p^kx\in \overline{M'}^k$.

\begin{lemma}\label{Lemma_EisFuncMorphism}
    Let $M'$ and $M$ be $\mathbf{T}^B_a$-modules and $\alpha:M\to M'$ be a $\mathbf{T}^{B,+}_a$-algebra homomorphism. Then the induced map $\overline{\alpha}^N\colon\overline{M}^N\to \overline{M'}^N$ is $\mathbf{T}_a^B$-equivariant.
\end{lemma}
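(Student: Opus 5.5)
We must show that for $x\in \overline{M}^N$ and $t\in \mathbf{T}^B_a$ we have $\overline{\alpha}^N(tx)=t\,\overline{\alpha}^N(x)$. We use the explicit presentation of $\mathbf{T}^B_a$ recalled in the preceding remark: $\mathbf{T}^B_a$ is generated as a $\mathbf{T}^{B,+}_a$-algebra by the elements $W_{i,j}$, subject to the relations $V_{i,j}W_{i,j}=p^N$. Since $\overline{\alpha}^N$ is the restriction of $\alpha$ to $\overline{M}^N$, it is already $\mathbf{T}^{B,+}_a$-linear. The set of $t\in\mathbf{T}^B_a$ for which $\overline{\alpha}^N(tx)=t\overline{\alpha}^N(x)$ holds for all $x\in\overline{M}^N$ is a subalgebra: it is closed under sums and products because $\overline{M}^N$ is a $\mathbf{T}^B_a$-submodule. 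It therefore suffices to check the identity for $t=W_{i,j}$.

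Write $x=p^Ny$ with $y\in M$. Using $V_{i,j}W_{i,j}=p^N$ (the diagram \eqref{explicitpresentation} shows this relation holds in $\mathbf{T}^B_a$), the $\mathbf{T}^{B,+}_a$-linearity of $\alpha$, and commutativity of $\mathbf{T}^B_a$, we compute
\[ \alpha(W_{i,j}x)=\alpha(W_{i,j}p^Ny)=\alpha(W_{i,j}V_{i,j}W_{i,j}y)=\alpha(V_{i,j}(W_{i,j}^2y))=V_{i,j}\,\alpha(W_{i,j}^2y). \]
On the other side,
\[ W_{i,j}\alpha(x)=W_{i,j}\alpha(V_{i,j}W_{i,j}y)=W_{i,j}V_{i,j}\,\alpha(W_{i,j}y)=p^N\alpha(W_{i,j}y). \]
The first expression does not obviously equal the second, so we rewrite more carefully: $p^N\alpha(W_{i,j}y)=\alpha(p^NW_{i,j}y)=\alpha(W_{i,j}\cdot p^Ny)=\alpha(W_{i,j}x)$, where the first equality uses that $\alpha$ is $\mathbf{Z}_p$-linear, since $\mathcal{O}\subset\mathbf{T}^{B,+}_a$. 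Combining with the second display gives $W_{i,j}\alpha(x)=\alpha(W_{i,j}x)$, which is the required identity. The first display is unnecessary.

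The only point to be careful about is the role of the relation $V_{i,j}W_{i,j}=p^N$. It is used once, to pull $W_{i,j}$ through $\alpha$ after factoring $p^N$ out of $x$. This is why the statement restricts to $\overline{M}^N$ rather than asserting equivariance on all of $M$. No further obstacle arises: everything else follows from the generation of $\mathbf{T}^B_a$ by $\mathbf{T}^{B,+}_a$ together with the $W_{i,j}$.
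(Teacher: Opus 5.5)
Your proof is correct and follows the paper's argument: both use $\alpha\circ V_{i,j}=V_{i,j}\circ\alpha$ and $W_{i,j}V_{i,j}=p^N$ to show that $W_{i,j}$ commutes with $\alpha$ on $p^NM=V_{i,j}W_{i,j}(M)\subseteq V_{i,j}(M)$. The only difference is that you spell out the reduction to the generators $W_{i,j}$ of $\mathbf{T}^B_a$ over $\mathbf{T}^{B,+}_a$, which the paper leaves implicit; your first display and the remark about it play no role and can be deleted.
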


\begin{proof}
    By the formulas $W_{i,j}V_{i,j}=p^N$, and $\alpha\circ V_{i,j}=V_{i,j}\circ \alpha$, we obtain $W_{i,j}\circ \alpha|_{V_{i,j}(M)}=\alpha\circ W_{i,j}|_{V_{i,j}(M)}$. Finally, using $V_{i,j}W_{i,j}=p^N$, we see that $\overline{M}^N=V_{i,j}(W_{i,j}(M))\subset V_{i,j}(M)$.
\end{proof}

\begin{lemma}\label{Lemma_EisFuncInjection}
    Let $M$ be a $\mathbf{T}_a^B$-module and $M'\leq M$ be a sub-$\mathbf{T}_a^{B,+}$-module. Then $\overline{M'}^N\leq \overline{M}^N$ is $\mathbf{T}_a^B$-invariant.
\end{lemma}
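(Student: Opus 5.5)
The plan is to deduce this from Lemma \ref{Lemma_EisFuncMorphism}, applied to the inclusion $\iota\colon M'\hookrightarrow M$, which is a homomorphism of $\mathbf{T}_a^{B,+}$-modules. The source $M'$, however, is only a $\mathbf{T}_a^{B,+}$-module, whereas that lemma assumes both modules are $\mathbf{T}_a^B$-modules. For that reason I will repeat its argument directly rather than cite it.

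Recall the presentation $\mathbf{T}_a^B=\mathbf{T}^S[V_{i,j},W_{i,j}]/(V_{i,j}W_{i,j}-p^N)$, with $\mathbf{T}_a^{B,+}=\mathbf{T}^S[V_{i,j}]$. Since $\mathbf{T}_a^B$ is generated over $\mathbf{T}_a^{B,+}$ by the $W_{i,j}$, and $\overline{M'}^N=p^NM'$ is already a $\mathbf{T}_a^{B,+}$-submodule of $M'$, it is enough to show that each $W=W_{i,j}$ satisfies $W(p^NM')\subset p^NM'$. Here $W$ acts on $M$, and $p^NM'\subset p^NM=\overline{M}^N$.

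The computation is as follows. Take $x=p^Ny$ with $y\in M'$. Then $Wx=Wp^Ny=WVWy$? That route would require $Wy$, which need not lie in $M'$. Instead I will use $p^N=VW=WV$ in the commutative ring $\mathbf{T}_a^B$. This gives $Wx=W\cdot p^N y=p^N\,(Wy)$, and $Wy$ again need not lie in $M'$. So I factor differently: $p^N y=V(Wy)$ is not useful either. The right move is $W(p^Ny)=W(VW)y$? That is circular.

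The fix is to write $p^N=V W$ and apply it as $W\cdot p^N y = W V (W y)$; this still contains $Wy$. Using $p^{N}y$ with $y\in M'$ and $WV=p^N$ instead gives $W(p^N y)=W(V(Wy))$, and again $Wy\notin M'$ in general. Since this direct approach keeps failing, the genuine obstacle is that the statement needs the structure of $M'$ itself. I will therefore exploit the fact that $V$ acts on $M'$: we have $W(Vz)=p^Nz\in M'$ for all $z\in M'$, so $W$ maps $V(M')$ into $p^NM'\subset M'$. It follows that $W\bigl(p^NV(M')\bigr)\subset p^N\cdot p^NM'$, which lies in $\overline{M'}^N$. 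It remains to show $\overline{M'}^N\subset V(M')$; this is the hardest step. I expect it to fail in general without extra input, and I would try to obtain it by replacing $M'$ with $V(M')$ or by using $p^Ny=WVy$ combined with commutativity.

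Concretely, the approach I would commit to is this: $W(p^Ny)=p^N Wy=(VW)(Wy)=V(W^2y)$. I would then check whether the image of $p^NM'$ can be rewritten as $W(V M')$, using $p^N y=W(Vy)$ with $Vy\in M'$. That gives $p^NM'=W(V(M'))$, so $W(p^NM')=W^2V(M')$. On the other hand $p^N\cdot W V(M')$ \ldots{} I expect this bookkeeping of factors $VW=p^N$ to be the main obstacle. If the exact invariance of $p^NM'$ does not come out of it, the fallback is to establish the statement with the exponent doubled to $\overline{M'}^{2N}$, via $W(p^{2N}y)=W(V(p^Ny))=p^N\cdot p^N y\cdot$ \ldots{} More precisely, $W(p^{2N}y)=p^N\,W(p^Ny)=p^N\,W V(Wy)$ \ldots{} and then to check whether the weaker exponent suffices for the intended application in Proposition \ref{Proposition_EisFunc}.
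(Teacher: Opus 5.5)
Your proposal never proves the required inclusion $W_{i,j}(\overline{M'}^N)\subseteq\overline{M'}^N$. You correctly note two things: that $\overline{M'}^N=W_{i,j}V_{i,j}(M')$, and that $W_{i,j}$ maps $V_{i,j}(M')$ into $\overline{M'}^N$. With these, the claim reduces to $\overline{M'}^N\subseteq V_{i,j}(M')$, and you leave that open. Your suspicion is right: this inclusion fails, and in fact the lemma as stated is false.

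Here is a counterexample. Let $N\geq 1$, $k>N$, and $M=(\mathcal{O}/p^k\mathcal{O})e_1\oplus(\mathcal{O}/p^k\mathcal{O})e_2$. Let $\mathbf{T}^S$ act through any $\mathcal{O}$-algebra homomorphism $\mathbf{T}^S\to\mathcal{O}$. Let every $V_{i,j}$ act by $p^Ns$ and every $W_{i,j}$ by $s$, where $s$ swaps $e_1$ and $e_2$. These operators commute and satisfy $V_{i,j}W_{i,j}=p^N$, so $M$ is a $\mathbf{T}_a^B$-module. The submodule $M'=\mathcal{O}e_1+p^N\mathcal{O}e_2$ is $\mathbf{T}_a^{B,+}$-stable, because $V_{i,j}(M')=p^{2N}\mathcal{O}e_1+p^N\mathcal{O}e_2\subseteq M'$. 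However, $W_{i,j}(p^Ne_1)=p^Ne_2$ does not lie in $\overline{M'}^N=p^N\mathcal{O}e_1+p^{2N}\mathcal{O}e_2$. The same example gives $W_{i,j}(p^je_1)=p^je_2\notin\overline{M'}^j$ for every $j<k$. So your fallback with exponent $2N$ also fails, and no exponent independent of $k$ (that is, of $m$ in the application) can work.

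The paper's two-line proof has exactly the hole you ran into. It observes that $W_{i,j}$ sends $V_{i,j}(M')$ into $\overline{M'}^N$, and that $\overline{M'}^N=V_{i,j}W_{i,j}(M')\leq V_{i,j}(M)$. But $W_{i,j}(M')\not\subseteq M'$ in general, so this only places $\overline{M'}^N$ inside $V_{i,j}(M)$, where $W_{i,j}$ is not controlled.

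What does hold is the version in which $M'$ comes from a $\mathbf{T}_a^B$-module. If $M'=\alpha(M'')$ for a $\mathbf{T}_a^{B,+}$-linear map $\alpha\colon M''\to M$ with $M''$ a $\mathbf{T}_a^B$-module, then $\overline{M'}^N=\overline{\alpha}^N(\overline{M''}^N)$ is $\mathbf{T}_a^B$-stable by Lemma \ref{Lemma_EisFuncMorphism}.

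So further bookkeeping with $V_{i,j}W_{i,j}=p^N$ cannot close the gap. The lemma is used in Proposition \ref{Proposition_EisFunc} on the steps of the spectral-sequence filtration, which are only known to be $\mathbf{T}_a^{B,+}$-stable. That step needs a separate argument exploiting extra structure of those submodules.
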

\begin{proof}
    For every $i$ and $j$, $W_{i,j}$ sends $V_{i,j}(M')$ into $\overline{M'}^N$ as $W_{i,j}V_{i,j}=p^N$. The claim follows from $\overline{M'}^N=V_{i,j}W_{i,j}(M')\leq V_{i,j}(M)$.
\end{proof}
\begin{corollary}\label{Corollary_EisFuncSubquotient}
    Let $M,M'$ be $\mathbf{T}_a^B$-modules such that $M'$ is a $\mathbf{T}_a^{B,+}$-equivariant subquotient of $M$. Then $\overline{M'}^{2N}$ is a $\mathbf{T}_a^B$-equivariant subquotient of $\overline{M}^{2N}$.
\end{corollary}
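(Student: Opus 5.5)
The plan is to obtain the corollary by applying the two preceding lemmas one after the other, factoring the subquotient relation into a submodule step and a quotient step. By hypothesis there is a $\mathbf{T}_a^{B,+}$-submodule $M_1\leq M$ and a $\mathbf{T}_a^{B,+}$-equivariant surjection $\beta\colon M_1\twoheadrightarrow M'$. Here $M'$ is a $\mathbf{T}_a^B$-module, but $M_1$ a priori is not.

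First, Lemma \ref{Lemma_EisFuncInjection} shows that $\overline{M_1}^N\leq \overline{M}^N$ is $\mathbf{T}_a^B$-invariant, so $\overline{M_1}^N$ is a $\mathbf{T}_a^B$-submodule of $\overline{M}^N$. Iterating this once more is harmless: $\overline{M_1}^{2N}=\overline{(\overline{M_1}^N)}^N$, and by Lemma \ref{Lemma_EisFuncInjection} again (or directly, since $\overline{M_1}^N$ is already a $\mathbf{T}_a^B$-module), $\overline{M_1}^{2N}$ is a $\mathbf{T}_a^B$-submodule of $\overline{M}^{2N}$.

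Next, restrict $\beta$ to $\overline{M_1}^N$. Because $\beta$ is surjective and $p$-linear, it maps $\overline{M_1}^N$ onto $\overline{M'}^N$. This gives a $\mathbf{T}_a^{B,+}$-equivariant surjection between two $\mathbf{T}_a^B$-modules. Lemma \ref{Lemma_EisFuncMorphism}, applied to $\beta|_{\overline{M_1}^N}\colon \overline{M_1}^N\to \overline{M'}^N$, then shows that the induced map $\overline{M_1}^{2N}\to \overline{M'}^{2N}$ is $\mathbf{T}_a^B$-equivariant, and it remains surjective by the remark preceding the lemmas. Hence $\overline{M'}^{2N}$ is a $\mathbf{T}_a^B$-equivariant quotient of the $\mathbf{T}_a^B$-submodule $\overline{M_1}^{2N}\leq \overline{M}^{2N}$, which is exactly the claim.

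The only point needing care is the ordering of the two operations. Lemma \ref{Lemma_EisFuncMorphism} requires its source to be a $\mathbf{T}_a^B$-module, so it cannot be applied to $\beta$ on $M_1$ directly. Passing first to $\overline{M_1}^N$ (which costs one factor of $p^N$) fixes this, and applying the morphism lemma then costs the second factor, which accounts for the exponent $2N$.
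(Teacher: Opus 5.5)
Your proof is correct and matches the paper's argument: you use Lemma \ref{Lemma_EisFuncInjection} to make $\overline{M_1}^N$ a $\mathbf{T}_a^B$-submodule of $\overline{M}^N$, then apply Lemma \ref{Lemma_EisFuncMorphism} to the restricted surjection $\overline{M_1}^N\to\overline{M'}^N$ to get the $\mathbf{T}_a^B$-equivariant surjection $\overline{M_1}^{2N}\to\overline{M'}^{2N}$. You also spell out two points the paper leaves implicit: the restriction is onto $\overline{M'}^N$, and $\overline{M_1}^{2N}$ is a $\mathbf{T}_a^B$-stable submodule of $\overline{M}^{2N}$.
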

\begin{proof}
    Consider a $\mathbf{T}_a^{B,+}$-equivariant diagram $M\hookleftarrow M''\twoheadrightarrow M'$ for a $\mathbf{T}_a^{B,+}$-module $M''$. Then $\overline{M''}^N\leq \overline{M}^N$ is preserved by $\mathbf{T}_a^B$ by Lemma~\ref{Lemma_EisFuncInjection}. Moreover, $\overline{M''}^N\to \overline{M'}^N$ being a $\mathbf{T}_a^{B,+}$-equivariant map, $\overline{M''}^{2N}\to \overline{M'}^{2N}$ is $\mathbf{T}_a^B$-equivariant by Lemma~\ref{Lemma_EisFuncMorphism}.
\end{proof}

\begin{lemma}\label{Lemma_EisFuncSurjection}
    Let $M, M'$ be $\mathbf{T}_a^B$-modules, $k\geq    0$ an integer and $M\twoheadrightarrow M'$ be a surjection of $\mathbf{T}_a^B$-modules with kernel killed by $p^k$. Then $(p^k\textnormal{Ann}_{\mathbf{T}_a^B}(M'))^2\leq \textnormal{Ann}_{\mathbf{T}_a^B}(M)$. 
\end{lemma}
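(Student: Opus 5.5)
Let $\pi \colon M \twoheadrightarrow M'$ be the given surjection, and write $I = \textnormal{Ann}_{\mathbf{T}_a^B}(M')$ and $K = \ker \pi$, so that $p^k K = 0$. The plan is to show that every product $(p^k t_1)(p^k t_2)$ with $t_1, t_2 \in I$ kills $M$. Since $\textnormal{Ann}_{\mathbf{T}_a^B}(M)$ is an ideal, and $(p^kI)^2$ is generated as an ideal by such products, this will give the inclusion $(p^k\textnormal{Ann}_{\mathbf{T}_a^B}(M'))^2\leq \textnormal{Ann}_{\mathbf{T}_a^B}(M)$.

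The key steps, in order, are as follows.

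First, take $t \in I$ and $x \in M$. Because $\pi$ is $\mathbf{T}_a^B$-equivariant, $\pi(tx) = t\pi(x) = 0$. Hence $tx \in K$, and so $IM \subseteq K$.

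Next, use that $K$ is killed by $p^k$. This gives $p^k t x = 0$, so $p^k I \cdot M = 0$.

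This second step already yields the stronger inclusion $p^k\textnormal{Ann}_{\mathbf{T}_a^B}(M') \leq \textnormal{Ann}_{\mathbf{T}_a^B}(M)$. Finally, since $\textnormal{Ann}_{\mathbf{T}_a^B}(M)$ is an ideal, the square $(p^k I)^2 \subseteq p^k I$ is contained in it as well, which is the stated inclusion.

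Nothing here is a real obstacle, since the argument is a direct computation. The only point that needs checking is that the kernel $K$ is a $\mathbf{T}_a^B$-submodule, so that $p^k$ killing $K$ is compatible with the Hecke action. This holds because $\pi$ is assumed to be a map of $\mathbf{T}_a^B$-modules, not merely of $\mathbf{T}_a^{B,+}$-modules. The square in the statement is presumably there so the lemma combines uniformly with Corollary~\ref{Corollary_EisFuncSubquotient}, where one must also pass through $\overline{(\cdot)}^{2N}$; the exponent $2$ costs nothing in the present argument.
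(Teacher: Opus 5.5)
Your argument is correct and is essentially the paper's. Both rest on $\textnormal{Ann}_{\mathbf{T}_a^B}(M')\cdot M\subseteq K$ together with $p^kK=0$; the paper packages this as $(\textnormal{Ann}_{\mathbf{T}_a^B}(M')\cap\textnormal{Ann}_{\mathbf{T}_a^B}(K))^2\leq\textnormal{Ann}_{\mathbf{T}_a^B}(M)$ combined with $p^k\textnormal{Ann}_{\mathbf{T}_a^B}(M')\leq\textnormal{Ann}_{\mathbf{T}_a^B}(M')\cap\textnormal{Ann}_{\mathbf{T}_a^B}(K)$. Your direct observation that $p^k\textnormal{Ann}_{\mathbf{T}_a^B}(M')$ already kills $M$ gives the sharper inclusion, so the square is just an artefact of the paper's packaging rather than something needed for Corollary~\ref{Corollary_EisFuncSubquotient}.
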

\begin{proof}
    Denote by $K\leq M$ the kernel. We then have $(\textnormal{Ann}_{\mathbf{T}_a^B}(M')\cap \textnormal{Ann}_{\mathbf{T}_a^B}(K))^2\leq \textnormal{Ann}_{\mathbf{T}_a^B}(M)$ and $p^k\mathbf{T}_a^B\leq \textnormal{Ann}_{\mathbf{T}_a^B}(K)$. Therefore, we have $p^k\textnormal{Ann}_{\mathbf{T}_a^B}(M')\leq \textnormal{Ann}_{\mathbf{T}_a^B}(M')\cap \textnormal{Ann}_{\mathbf{T}_a^B}(K)$.
\end{proof}

\begin{lemma}\label{Lemma_EisFuncTStructure}
    Let $M$ be a $\mathbf{T}^B_a$-module, $M'$ be a $\mathbf{T}_a^{B,+}$-module and $\alpha:M\twoheadrightarrow M'$ be a $\mathbf{T}_a^{B,+}$-equivariant surjection. Then $\overline{M'}^N$ can be equipped with a $\mathbf{T}_a^B$-module structure, making the surjection $\beta:M\to M'\to\overline{M'}^N$ $\mathbf{T}^B_a$-equivariant.
\end{lemma}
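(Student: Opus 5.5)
We have to define a $\mathbf{T}_a^B$-action on $\overline{M'}^N=p^N M'$ compatible with $\beta$. Since $\mathbf{T}_a^B$ is generated over $\mathbf{T}_a^{B,+}$ by the elements $W_{i,j}$, it suffices to define the action of each $W_{i,j}$ on $\overline{M'}^N$. We then check three things: these operators commute with each other and with $\mathbf{T}_a^{B,+}$; they satisfy $V_{i,j}W_{i,j}=p^N$; and $\beta$ intertwines them. The presentation \eqref{explicitpresentation} then gives a well-defined $\mathbf{T}_a^B$-module structure.

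The natural definition uses the surjectivity of $\alpha$. Given $y\in\overline{M'}^N$, write $y=p^N\alpha(x)=\alpha(p^Nx)$ with $x\in M$, and set
\[ W_{i,j}\cdot y:=\alpha(W_{i,j}p^N x)=p^N\alpha(W_{i,j}x). \]
This lies in $\overline{M'}^N$, and it equals $\beta(W_{i,j}x)$ once we note that $\beta(x)=p^N\alpha(x)$. The key step is well-definedness: if $p^N\alpha(x)=0$, we must show $p^N\alpha(W_{i,j}x)=0$. For this I would compute
\[ p^N\alpha(W_{i,j}x)=\alpha(W_{i,j}\,p^Nx)=\alpha(W_{i,j}V_{i,j}W_{i,j}x)=V_{i,j}\,\alpha(W_{i,j}^2x), \]
which does not obviously vanish. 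So instead I would use $p^N\cdot p^N\alpha(W_{i,j}x)=\alpha(W_{i,j}\,p^N\,V_{i,j}W_{i,j}x)$. This again reduces to commutation with $V_{i,j}$ and suggests that only $\overline{M'}^{2N}$ is controlled directly.

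Rather than fight this, I would define the action more cleanly through the map $\beta$ itself. Set $\ker\beta=\{x\in M : p^N\alpha(x)=0\}$. The claim is that $\ker\beta$ is stable under $W_{i,j}$. The set $\ker\beta$ is a $\mathbf{T}_a^{B,+}$-submodule, since $\alpha$ and multiplication by $p^N$ are $\mathbf{T}_a^{B,+}$-equivariant. For $x\in\ker\beta$ we have
\[ V_{i,j}\bigl(p^N\alpha(W_{i,j}x)\bigr)=p^N\alpha(V_{i,j}W_{i,j}x)=p^N\cdot p^N\alpha(x)=0, \]
and also $p^N\alpha(W_{i,j}x)=\alpha(W_{i,j}\,p^Nx)$. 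The remaining difficulty is to get from these to vanishing, and I expect this to be the main obstacle. I would try writing $p^Nx=V_{i,j}W_{i,j}x$ inside $M$, which is allowed because $M$ is a $\mathbf{T}_a^B$-module, so $p^N\alpha(W_{i,j}x)=\alpha(V_{i,j}W_{i,j}^2x)=V_{i,j}\alpha(W_{i,j}^2x)$. Combined with $V_{i,j}$-equivariance, this reduces the question to whether $V_{i,j}$ is injective on the relevant image.

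If the direct argument fails, the fallback is to replace $\overline{M'}^N$ by $\beta(M)$, viewed as the quotient $M/\ker\beta$, and to transport the structure along the isomorphism $M/\ker\beta\cong \overline{M'}^N$. Stability of $\ker\beta$ under $W_{i,j}$ is then exactly what is needed. Once it holds, the $\mathbf{T}_a^B$-structure on $M$ descends to the quotient, making $\beta$ $\mathbf{T}_a^B$-equivariant automatically. The relations $V_{i,j}W_{i,j}=p^N$ and commutativity are then inherited from $M$, and the structure restricts to the given $\mathbf{T}_a^{B,+}$-action because $\alpha$ is $\mathbf{T}_a^{B,+}$-equivariant.
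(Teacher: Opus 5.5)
Your reduction is the right one, and it is the same as the paper's. Everything comes down to showing that $\ker\beta=\{x\in M: p^N\alpha(x)=0\}$ is stable under each $W_{i,j}$; the $\mathbf{T}_a^B$-structure then descends to $M/\ker\beta\cong\overline{M'}^N$.

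The gap is that you never prove this stability. You flag it as the main obstacle, and your ``fallback'' is circular: $\beta(M)$ already equals $\overline{M'}^N$, and descending the structure needs exactly this stability. Your computation is also weaker than necessary, since for $x\in\ker\beta$ one has directly $V_{i,j}\alpha(W_{i,j}x)=\alpha(V_{i,j}W_{i,j}x)=p^N\alpha(x)=0$.

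The paper finishes from this identity by asserting $\ker(V_{i,j}\colon M'\to M')\subseteq M'[p^N]$ ``since $W_{i,j}V_{i,j}=p^N$''. But $M'$ is only a $\mathbf{T}_a^{B,+}$-module, so $W_{i,j}$ does not act on it. That inclusion, and with it the step you could not prove, fails in the stated generality.

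Concretely, take $M=\mathbf{T}_a^B$, $M'=\mathbf{T}_a^B/p^N\mathbf{T}_a^{B,+}$, and $\alpha$ the projection.
\begin{itemize}
\item From the presentation in terms of the $V_{i,j},W_{i,j}$, $\mathbf{T}_a^B$ is free over the $\mathcal{O}$-torsion-free ring $\mathbf{T}^S$ on the monomials $\prod V_{i,j}^{a_{i,j}}W_{i,j}^{b_{i,j}}$ with all $a_{i,j}b_{i,j}=0$. The subring $\mathbf{T}_a^{B,+}$ is the span of those with all $b_{i,j}=0$, so $p^NW_{i,j}\notin p^N\mathbf{T}_a^{B,+}$.
\item Hence $\beta(1)=\alpha(p^N)=0$ while $\beta(W_{i,j})=\alpha(p^NW_{i,j})\neq 0$, so no $\mathbf{T}_a^B$-structure on $\overline{M'}^N$ can make $\beta$ equivariant.
\item Equivalently, $\overline{W_{i,j}}\in\ker(V_{i,j}|_{M'})$ but $p^N\overline{W_{i,j}}\neq 0$, which refutes the paper's claimed inclusion.
\end{itemize}
Finite counterexamples arise the same way. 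Let $\mathbf{T}^S$ and all but one pair $(V,W)=(V_{i,j},W_{i,j})$ act through scalars, and take $M=\mathcal{O}[V,W]/(VW-p^N,\,p^{2N},\,V^2,\,W^3)\cong\mathcal{O}/p^{2N}\oplus\mathcal{O}/p^N\,V\oplus\mathcal{O}/p^{2N}\,W\oplus\mathcal{O}/p^N\,W^2$ with $M'=M/\mathcal{O}p^N$.

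So your gap cannot be closed without an extra hypothesis. One that works is $\ker(V_{i,j}|_{M'})\subseteq M'[p^N]$ for all $i,j$, which holds for instance if the action on $M'$ extends to a $\mathbf{T}_a^B$-action. Under it, the identity above gives $p^N\alpha(W_{i,j}x)=0$ at once, and your descent argument then completes the proof. Any use of the lemma, such as in the proof of Proposition~\ref{Proposition_EisFunc}, would have to verify such a hypothesis.
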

\begin{proof}
    We need to show that $\ker(\beta)=\{x\in M\mid \alpha(x)\in M'[p^N]\}$ is preserved by $\mathbf{T}_a^B$. In other words, we need to check that for every 
    $i, j$ and $x\in \textnormal{ker}(\beta)$, $\alpha(W_{i,j}(x))$ is killed by $p^N$.

    To see this, we compute
    \begin{equation*}
        V_{i,j}(\alpha(W_{i,j}(x)))=\alpha(V_{i,j}W_{i,j}(x))=\alpha(p^Nx)=p^N\alpha(x)=0.
    \end{equation*}
    In other words, $\alpha(W_{i,j}(x))\in \ker(M'\xrightarrow{V_{i,j}}M')$. However, $\ker(M'\xrightarrow{V_{i,j}}M')\leq M'[p^N]$ as $W_{i,j}V_{i,j}=p^N$, finishing the proof.
\end{proof}
\begin{lemma}\label{BoundedTorDim}
    The complex $R\Gamma(\overline{\mathfrak{X}}_M,\mathcal{O}/\varpi^m)\in D^+_{\textnormal{sm}}(M(\mathbf{A}_F^{\infty}),\mathcal{O}/\varpi^m)$ has bounded Tor-dimension for every integer $m\geq 1$.
\end{lemma}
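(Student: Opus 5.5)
The statement is a finiteness property of the complex computing completed cohomology of the Borel--Serre compactification of the $M$-locally symmetric space. To prove it, we work at a fixed good level and use that each finite-level object is a compact manifold with corners. Concretely, for $\Lambda=\mathcal{O}/\varpi^m$ and any $\Lambda$-module $N$, the plan is to show
\[
H^i\bigl(R\Gamma(\overline{\mathfrak{X}}_M,\Lambda)\otimes^{\mathbf{L}}_\Lambda N\bigr)=0
\]
for $i$ outside a fixed interval $[0,d]$. For $d$ we take the (real) dimension of $\overline{X}^M_{K_M}$, which does not depend on the level.

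First we identify the derived tensor product with cohomology with coefficients in $N$. By \cite[Lemma 2.1.7]{CN25}, the cohomology of $R\Gamma(\overline{\mathfrak{X}}_M,\Lambda)$ is the colimit over good $K_M$ of $H^\ast(\overline{X}^M_{K_M},\Lambda)$. More precisely, $R\Gamma(\overline{\mathfrak{X}}_M,\Lambda)$ is the filtered homotopy colimit of the complexes $R\Gamma(\overline{X}^M_{K_M},\Lambda)$, since $\overline{\mathfrak{X}}_M$ is a cofiltered limit of compact Hausdorff spaces and sheaf cohomology commutes with such limits. Derived tensor product commutes with filtered colimits, so it suffices to treat a single compact level.

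Next we bound each finite level. Choose a finite triangulation, or a finite CW structure, of the compact manifold with corners $\overline{X}^M_{K_M}$. Then $R\Gamma(\overline{X}^M_{K_M},\Lambda)$ is computed by a bounded complex of finite free $\Lambda$-modules concentrated in degrees $[0,d]$, namely the cellular cochains. Tensoring this complex with $N$ computes
\[
R\Gamma(\overline{X}^M_{K_M},\Lambda)\otimes^{\mathbf{L}}_\Lambda N \cong R\Gamma(\overline{X}^M_{K_M},N),
\]
and this is concentrated in $[0,d]$. The amplitude is uniform in $K_M$, because every level has the same dimension $d$.

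Passing to the filtered colimit, which is exact, preserves vanishing outside $[0,d]$. This gives the required bounded Tor-dimension, with amplitude $[0,d]$.

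The step needing most care is the compatibility of the derived tensor product in $D_{\textnormal{sm}}(M(\mathbf{A}_F^\infty),\Lambda)$ with the colimit description. The issue is that the derived tensor product is computed with flat resolutions in the smooth category, so one has to know that the underlying $\Lambda$-complex is the homotopy colimit of the finite-level cochain complexes. We would handle this by noting that the forgetful functor to $D(\Lambda)$ is exact and commutes with $\otimes^{\mathbf{L}}$ (flat smooth objects are $\Lambda$-flat). The statement is then about $\Lambda$-modules only, where it follows from the cellular argument above.
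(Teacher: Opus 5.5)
Your proof is correct, but it takes a different route from the paper's. The paper never descends to finite level. It cites \cite[Lemma 2.1.4]{CN25} for the fact that the compact Hausdorff space $\overline{\mathfrak{X}}_M$ has finite cohomological dimension. It then cites \cite[Lemma 2.1.9]{CN25} for a universal-coefficient isomorphism $R\Gamma(\overline{\mathfrak{X}}_M,\mathcal{N})\cong R\Gamma(\overline{\mathfrak{X}}_M,\mathcal{O}/\varpi^m)\otimes^{\mathbf{L}}_{\mathcal{O}/\varpi^m}N$, where $\mathcal{N}$ is the equivariant sheaf attached to $N$ with trivial action. The left-hand side is then concentrated in $[0,\operatorname{cd}(\overline{\mathfrak{X}}_M)]$.

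You instead use a finite triangulation of each compact manifold with corners $\overline{X}^M_{K_M}$. This presents $R\Gamma(\overline{X}^M_{K_M},\Lambda)$ as a bounded complex of finite free $\Lambda$-modules in degrees $[0,d]$, and you pass to the limit by continuity of cohomology along the system of levels. Your route buys two things:
\begin{itemize}
\item an explicit amplitude $[0,\dim_{\mathbf{R}}\overline{X}^M_{K_M}]$;
\item the stronger fact that the underlying $\Lambda$-complex is a filtered colimit of perfect complexes.
\end{itemize}
The price is finite-level topology, namely triangulability. You also need continuity at the level of complexes, not just cohomology groups, since commuting $\otimes^{\mathbf{L}}$ past the colimit requires a quasi-isomorphism $\textnormal{hocolim}_{K_M}R\Gamma(\overline{X}^M_{K_M},\Lambda)\to R\Gamma(\overline{\mathfrak{X}}_M,\Lambda)$. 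This is easily supplied. Take a countable cofinal chain of good levels and map the telescope to $R\Gamma(\overline{\mathfrak{X}}_M,\Lambda)$ using the pullback maps. On cohomology this map is the canonical map $\varinjlim H^\ast(\overline{X}^M_{K_M},\Lambda)\to H^\ast(\overline{\mathfrak{X}}_M,\Lambda)$, which is an isomorphism by the statement you cite from \cite[Lemma 2.1.7]{CN25}, so the map is a quasi-isomorphism.

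Incidentally, you do not need to identify the finite-level tensor product with $R\Gamma(\overline{X}^M_{K_M},N)$: a complex of free modules in degrees $[0,d]$ tensored with $N$ is already concentrated there. The paper's argument avoids all finite-level topology and applies verbatim to any compact Hausdorff space of finite cohomological dimension.
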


\begin{proof}
    We know that the compact Hausdorff space $\overline{\mathfrak{X}}_M$ has finite cohomological dimension cf. \cite[Lemma 2.1.4]{CN25}. For an $\mathcal{O}/\varpi^m$-module $N$, write $\mathcal{N}$ for the $M(\mathbf{A}_F^{\infty})$-equivariant sheaf associated with the trivial representation with underlying module $N$. Then, by \cite[Lemma 2.1.9]{CN25}, there is an isomorphism
    \begin{equation*}
        R\Gamma(\overline{\mathfrak{X}}_M,\mathcal{N})\cong R\Gamma(\overline{\mathfrak{X}}_M,\mathcal{O}/\varpi^m)\otimes^{\mathbf{L}}_{\mathcal{O}/\varpi^m}N.
    \end{equation*}
    The claim follows.
    \end{proof}

    \begin{proof}[Proof of Proposition~\ref{Proposition_EisFunc}]
    We only prove the case of cohomology without compact support. The compactly supported case follows the same argument. 
    Note that we have an inclusion
\begin{equation*}
    \textnormal{Ann}_{\mathbf{T}_a^B}(H^{\ast}(X_K^Q,\sigma_{\Omega,\lambda}^{\circ}/\varpi^m))^{n^2[F:\mathbf{Q}]/2}\leq \textnormal{Ann}_{\mathbf{T}_a^B}(R\Gamma(X_K^Q,\sigma_{\Omega,\lambda}^{\circ}/\varpi^m)).
\end{equation*}
In particular, it suffices to find integers $N_1$, $N_2$ such that \eqref{equation_goaleisfunc} holds with $\textnormal{Ann}_{\mathbf{T}_a^B}(H^{\ast}(X_K^Q,\sigma_{\Omega,\lambda}^{\circ}/\varpi^m))$ on the RHS.

By Lemma \ref{Lemma_BorelSerreLemma} and the projection formula (cf. \cite[Lemma 2.1.8]{CN25}), we have a series of isomorphisms
        \begin{align*}
            R\Gamma(X_K^Q,\sigma_{\Omega,\lambda}^{\circ}/\varpi^m) &\cong R\Gamma(K,(\textnormal{Ind}_{Q(\mathbf{A}_F^{\infty})}^{G(\mathbf{A}_F^{\infty})}R\Gamma(\overline{\mathfrak{X}}_Q,\mathcal{O}/\varpi^m))\otimes_{\mathcal{O}/\varpi^m}^{\mathbf{L}}\sigma_{\Omega,\lambda}^{\circ}/\varpi^m) \\
            &\cong \bigoplus_gR\Gamma(K_{Q,g},R\Gamma(\overline{\mathfrak{X}}_Q,\mathcal{O}/\varpi^m)\otimes^{\mathbf{L}}_{\mathcal{O}/\varpi^m}\sigma_{\Omega,\lambda}^{\circ}/\varpi^m)
        \end{align*}
        where the sum runs over representatives $g\in G^{S\setminus \{a\}}$ of $Q(\mathbf{A}_{F}^{\infty})\backslash G(\mathbf{A}_F^{\infty})/K$ and $K_{Q,g}:=gKg^{-1}\cap Q(\mathbf{A}_F^{\infty})$. 
        By Lemma~\ref{RestrictionFunc}, this isomorphism is $\mathbf{T}_a^{B}$-equivariant when we act on the RHS via restricting functions to $Q(\mathbf{A}_F^{\infty\cup (S\setminus \{a\})})$. By combining this with the existence of the Hochschild--Serre spectral sequence, we see that it suffices to find integers $N_1, N_2$ such that
        \begin{align*}    \bigg(p^{N_2}\textnormal{Ann}_{\mathbf{T}_a^B}\big(\bigoplus_{w,\Omega_{\textnormal{M}_a}}H^{\ast}(X_{K_{M}},\sigma_{\Omega_{\textnormal{M}_a},w\cdot\lambda_a}^{\circ}/\varpi^m)\big)\bigg)^{N_1} \\
        \leq \textnormal{Ann}_{\mathbf{T}_a^B}\bigg(H^{\ast}\big(R\Gamma(K_{Q},R\Gamma(\overline{\mathfrak{X}}_Q,\mathcal{O}/\varpi^m)\otimes_{\mathcal{O}/\varpi^m}^{\mathbf{L}}\sigma_{\Omega,\lambda}^{\circ}/\varpi^m)\big)\bigg)
        \end{align*}
        where $K_M=K_Q\cap M(\mathbf{A}_F^{\infty})$ for a subgroup $K_{Q}\leq Q(\mathbf{A}_F^{\infty})$ satisfying the following.
        \begin{itemize}
            \item For $v\notin S\setminus\{a\}$, $K_{Q,v}=K_v\cap Q(F_v)$.
            \item For $v\in S\setminus \{a\}$, $K_{Q,v}= \textnormal{ker}\big(Q(\mathcal{O}_{F_v})\to Q(\mathcal{O}_{F_v}/\varpi_v^h)\big)$ for $h\geq m$ such that $K_{Q,v}$ acts trivially on $\sigma_{\Omega_{v}}^{\circ}$.
        \end{itemize}
        In particular, we have
        \begin{align*}
        &\textnormal{Ann}_{\mathbf{T}_a^B}\bigg(H^{\ast}\big(R\Gamma(K_{Q},R\Gamma(\overline{\mathfrak{X}}_Q,\mathcal{O}/\varpi^m)\otimes_{\mathcal{O}/\varpi^m}^{\mathbf{L}}\sigma_{\Omega,\lambda}^{\circ}/\varpi^m)\big)\bigg) \\
        =& \textnormal{Ann}_{\mathbf{T}_a^B}\bigg(H^{\ast}\big(R\Gamma(K_{Q},R\Gamma(\overline{\mathfrak{X}}_Q,\mathcal{O}/\varpi^m)\otimes_{\mathcal{O}/\varpi^m}^{\mathbf{L}}\sigma_{\Omega_a,\lambda}^{\circ}/\varpi^m)\big)\bigg)
        \end{align*}
        for $\sigma_{\Omega_a,\lambda}^{\circ}:=\sigma_{\Omega_a}^{\circ}\otimes_{\mathcal{O}} \mathcal{V}_{\lambda}$.
    
        Combining \cite[Lemma 4.1.6]{CN25} with Lemmas \ref{TransitivityOfHeckeAction}, \ref{BoundedTorDim} and \ref{ProjectionFormula}, we further have a $\mathbf{T}^S$-equivariant isomorphism
        \begin{align*}
            & R\Gamma(K_{Q},R\Gamma(\overline{\mathfrak{X}}_Q,\mathcal{O}/\varpi^m)\otimes_{\mathcal{O}/\varpi^m}^{\mathbf{L}}\sigma_{\Omega_a,\lambda}^{\circ}/\varpi^m) \\
            \cong & 
            R\Gamma(K_{Q},\textnormal{Inf}_{M(\mathbf{A}_{F}^{\infty})}^{Q(\mathbf{A}_F^{\infty})}R\Gamma(\overline{\mathfrak{X}}_M,\mathcal{O}/\varpi^m)\otimes_{\mathcal{O}/\varpi^m}^{\mathbf{L}}\sigma_{\Omega_a,\lambda}^{\circ}/\varpi^m) \\
            \cong & R\Gamma\left(K_{M},R\Gamma(K_{N,p},\textnormal{Inf}_{M_p}^{Q_p}R\Gamma(\overline{\mathfrak{X}}_M,\mathcal{O}/\varpi^m)\otimes_{\mathcal{O}/\varpi^m}^{\mathbf{L}}\sigma_{\Omega_a,\lambda}^{\circ}/\varpi^m)\right) \\
            \cong & R\Gamma(K_{M},R\Gamma(\overline{\mathfrak{X}}_M,\mathcal{O}/\varpi^m)\otimes_{\mathcal{O}/\varpi^m}^{\mathbf{L}}R\Gamma(K_{N,p},\sigma_{\Omega_a,\lambda}^{\circ}/\varpi^m))\\ 
            \cong &
            R\Gamma(X_{K_{M}},R\Gamma(K_{N,p},\sigma_{\Omega_a,\lambda}^{\circ}/\varpi^m))
        \end{align*}
that matches the action of $V_{i,j}=[K_at_{i,j}K_a,p^{N_{i,j}}\psi_{i,j}\otimes t_{i,j}]$ on the LHS with the action of
\begin{equation*}
    \mathcal{S}_a(V_{i,j}):=\sum_{w\in {}^{\textnormal{M}_a}W^{M_{\nu_{i,j},a}}}[K_{\textnormal{M},a}t_{i,j}^wK_{\textnormal{M},a},\overline{(p^{N_{i,j}}\psi_{i,j}^{w}\otimes  t_{i,j}^w)}_{\textnormal{Q}_a,\mathbf{1}}]
\end{equation*}
on the RHS.

Consider the $\mathbf{T}^{S}$-equivariant hypercohomology spectral sequence
\begin{align*}
E_2^{r,s}=H^r(X_{K_{M}},H^s(K_{N,p},\sigma_{\Omega_a,\lambda}^{\circ}/\varpi^m))\Rightarrow  \\
    H^{r+s}:=H^{r+s}(X_{K_{M}},R\Gamma(K_{N,p},\sigma_{\Omega_a,\lambda}^{\circ}/\varpi^m))
\end{align*}
and write $\textnormal{Fil}^{\bullet}$ for the corresponding filtration on $H^{r+s}$ with graded pieces $E_{\infty}^{r,s}$.
To ease notation, for $w\in W^M$ and $r\in \mathbf{Z}$, set
\begin{equation*}
    M^{r}_w:=\bigoplus_{\Omega_{\textnormal{M}_a}\in \mathfrak{B}(\textnormal{M}_a,\Omega_a)}H^{r}(X_{K_{M}},\sigma_{\Omega_{\textnormal{M}_a},w_a\cdot \lambda_a}^{\circ}/\varpi^m) .  
\end{equation*}
It suffices to show that for every integer $q\geq 0$, there are integers $N_1^q$, $N_2^q\geq 0$ as in the statement (in particular independent of $m$) such that there is an inclusion
\begin{equation*}
    \left(p^{N_1^q}\textnormal{Ann}_{\mathbf{T}_a^B}(\bigoplus_{l(w)\leq q}M_w^{q-l(w)})\right)^{N_2^q}\leq\textnormal{Ann}_{\mathbf{T}_a^B}(H^q).
\end{equation*}
We therefore fix an integer $q\geq 0$. 

Note that the spectral sequence is equivariant with respect to the action of the Hecke operators $\mathcal{S}_a(V_{i,j})$ and the filtration $\textnormal{Fil}^{\bullet}$ on $H^{q}$ is preserved by the $\mathbf{T}_a^{B,+}$-action. By Lemma~\ref{Lemma_EisFuncInjection}, the induced filtration $\overline{\textnormal{Fil}^{\bullet}}^{N}$ on $\overline{H^{q}}^N$ is preserved by the action of $\mathbf{T}^B_a$.  We further see that, for any integer $k\geq 0$, there is a $\mathbf{T}_a^{B,+}$-equivariant surjection
\begin{equation}\label{Equation_FiltrationSurjection}
   \overline{\textnormal{gr}^i}^k:=\overline{\textnormal{Fil}^{r+1}}^k/\overline{\textnormal{Fil}^{r}}^k\twoheadrightarrow\overline{E_{\infty}^{r,q-r}}^{k}
\end{equation}
with kernel killed by $p^k$. Consequently, by Lemma~\ref{Lemma_EisFuncTStructure}, $\overline{E_{\infty}^{r,q-r}}^{2N}$ can be promoted to a $\mathbf{T}^{B}_a$-module.

We claim that, there are $\mathbf{T}^{S}$-equivariant maps
\begin{align*}
    \alpha_{r} \colon & E_{2}^{r,q-r}\xrightarrow{}M^{r}:=\bigoplus_{l(w)=q-r}M_{w}^r= \\
    & H^r(X_{K_{M}},\bigoplus_{w\in W^{M}, l(w)=q-r}\bigoplus_{\Omega_{\textnormal{M}_a}\in \mathfrak{B}(\textnormal{M}_a,\Omega_a)}\sigma_{\Omega_{\textnormal{M}_a},w_a\cdot \lambda_a}^{\circ}/\varpi^m), \\
    \beta_{r} \colon & M^{r}\to E_2^{r,q-r}
\end{align*}
satisfying $\alpha_{r}\circ\beta_{r}=\beta_{r}\circ\alpha_{r}=p^{N_3}$ for an integer $N_3\geq N$ that only depends on $n, F, \lambda, \mathcal{L}$.
Moreover, these maps match the action of $\mathcal{S}_a(V_{i,j})$ on $E_{2}^{r,q-r}$ with the action of $\mathcal{S}_{\textnormal{Q}_a,\Omega_a,\lambda_a}^{\circ}(V_{i,j})$ on $M^{r}$. The claim follows from combining Lemma~\ref{Lemma_changeofweightI}, Lemma~\ref{Lemma_changeofweightII}, Lemma~\ref{Lemma_changeofweightIII} and the fact that, for every $v\in S_p\setminus \{a\}$ and $w_v\in W^{\textnormal{M}_v}$, $\mathcal{V}_{w_v\cdot \lambda_v}/\varpi^m$ is trivial as a $K_{M,v}$-representation.

We set $E^r:=\overline{\beta_r(M^r)}^{N_3}\leq E_2^{r,q-r}$, and $\overline{\beta}_r:=p^{N_3}\beta_r:M^r\to E^r$ a surjection with kernel killed by $p^{2N_3}$, equipping $E^r$ with a $\mathbf{T}_a^{B}$-module structure by Lemma~\ref{Lemma_EisFuncTStructure} (as $N_3\geq N$).

Moreover, $\overline{E_2^{r,q-r}}^{2N_3}\leq E^r$ is a $\mathbf{T}_a^{B,+}$-invariant submodule as $\beta_r\circ\alpha_r=p^{N_3}$. Since $\overline{E_2^{r,q-r}}^{2N_3}$ admits $\overline{E_{\infty}^{r,q-r}}^{2N_3}$ as a $\mathbf{T}_a^{B,+}$-equivariant subquotient, we see that $\overline{E^r}^{2N}$, and consequently $M^r$, admits $\overline{E_{\infty}^{r,q-r}}^{2N+2N_3}$ as a $\mathbf{T}_a^B$-equivariant subquotient by Corollary~\ref{Corollary_EisFuncSubquotient}.

We obtain that
\begin{equation*}
    \textnormal{Ann}_{\mathbf{T}_a^B}(M^r)\leq \textnormal{Ann}_{\mathbf{T}_a^B}(\overline{E^r}^{2N})\leq \textnormal{Ann}_{\mathbf{T}_a^B}(\overline{E_{\infty}^{r,q-r}}^{2N+2N_3}).
\end{equation*}

On the other hand, recall the surjection \eqref{Equation_FiltrationSurjection} for $k=2N+2N_3$ with kernel killed by $p^{2N+2N_3}$. In particular, by Lemma~\ref{Lemma_EisFuncSurjection}, we have an inclusion
\begin{equation*}
    (p^{2N+2N_3}\textnormal{Ann}_{\mathbf{T}_a^B}(\overline{E_{\infty}^{r,q-r}}^{2N+2N_3}))^2\leq \textnormal{Ann}_{\mathbf{T}_a^B}(\overline{\textnormal{gr}^r}^{2N+2N_3}).
\end{equation*}
We also have inclusions
\begin{equation*}
    (\cap_{r=0,...,q}\textnormal{Ann}_{\mathbf{T}_a^B}(\overline{\textnormal{gr}^r}^{2N+2N_3}))^q\leq \textnormal{Ann}_{\mathbf{T}_a^B}(\overline{H^q}^{2N+2N_3}), 
\end{equation*}
and (by Lemma~\ref{Lemma_EisFuncSurjection})
\begin{equation*}
    (p^{2N+2N_3}\textnormal{Ann}_{\mathbf{T}_a^B}(\overline{H^q}^{2N+2N_3}))^2\leq \textnormal{Ann}_{\mathbf{T}_a^B}(H^q).
\end{equation*}
This finishes the proof.
\end{proof}

\subsubsection{Consequences on the Galois side}
To discuss the corollary of our description of boundary cohomology via ``torsion Eisenstein series" to local-global compatibility for the cohomology of the Borel--Serre boundary, we specialise to the setup of \S\ref{subsubsection_LGCstatement}. In particular, $F$ will now be a CM field as in Assumption \ref{assumption_CM}. We fix a finite set of finite places $S$ of $F$ containing $S_p$ and a place $a\in S_p$. Moreover, we fix a dominant weight $\lambda\in (\mathbf{Z}_+^n)^{\Hom(F,E)}$, a collection of Bernstein blocks $\Omega\in \prod_{v\in S_p}\mathfrak{B}(F_v,n)$ and an $\mathcal{O}$-integral $\Omega$-system $\mathcal{L}$.

The following simple lemma will be used repeatedly in the proof of Lemma \ref{Lemma_BoundaryLGC}.
\begin{lemma}\label{lemma_shortexactsequencereduction}
    Suppose we are given an integer $h\geq 1$, an element $T\in \mathbf{T}^S$, a short exact sequence
    \begin{equation*}
        0\to M'\to M\to M''\to 0
    \end{equation*}
    of $\mathbf{T}_a$-modules of finite cardinality and ideals $J'\trianglelefteq \mathbf{T}_a(M')$, $J''\trianglelefteq \mathbf{T}_a(M'')$ such that, for every maximal ideal $\mathfrak{m}\trianglelefteq \mathbf{T}^S$ with residue field $k$, the following are satisfied. 
    \begin{enumerate}
        \item We have $(TJ')^{h}=0$ and $(TJ'')^{h}=0$.
        \item The $\mathbf{T}^S$-algebras $(\mathbf{T}_a(M')/J')_{\mathfrak{m}}$, $(\mathbf{T}_a(M'')/J'')_{\mathfrak{m}}$ are of Galois type with associated determinants $D'$, $D''$.
        \item The determinants $D'$, $D''$ satisfy $\textnormal{LGC}(S,a,\lambda,\Omega,\mathcal{L})$.
    \end{enumerate}
    Then there is an ideal $J\trianglelefteq \mathbf{T}_a(M)$ such that $(TJ)^{2h}=0$, and for every maximal ideal $\mathfrak{m}$ as above, $(\mathbf{T}_a(M)/J)_{\mathfrak{m}}$ satisfies $(2)$ and $(3)$.
\end{lemma}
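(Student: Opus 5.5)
The natural candidate is to take $J$ to be the ideal of elements of $\mathbf{T}_a(M)$ whose images in $\mathbf{T}_a(M')$ and $\mathbf{T}_a(M'')$ lie in $J'$ and $J''$. Here $\mathbf{T}_a(M)$ surjects onto $\mathbf{T}_a(M')$ and $\mathbf{T}_a(M'')$, since $M'$ is a submodule and $M''$ a quotient. In other words, $J$ is the preimage of $J'\times J''$ under
\[
\mathbf{T}_a(M)\to \mathbf{T}_a(M')\times \mathbf{T}_a(M'').
\]
The kernel $I$ of this map consists of elements $t$ with $tM\subset M'$ and $tM'=0$, so $I^2=0$ and $I\subseteq J$.

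\textbf{Nilpotence bound.} Let $x\in (TJ)^h$. Its image in $\mathbf{T}_a(M')$ lies in $(TJ')^h=0$, and likewise its image in $\mathbf{T}_a(M'')$ is $0$, so $x\in I$. Hence $(TJ)^{2h}\subseteq I^2=0$.

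\textbf{Galois type.} All rings are finite, hence semilocal and equal to the product of their localisations. Localising at $\mathfrak{m}$ (with residue field $k$), we get an injection
\[
(\mathbf{T}_a(M)/J)_{\mathfrak{m}}\hookrightarrow (\mathbf{T}_a(M')/J')_{\mathfrak{m}}\times(\mathbf{T}_a(M'')/J'')_{\mathfrak{m}}=:A'\times A''.
\]
This is injective by the definition of $J$, and it is a map of $\mathbf{T}_a$-algebras. If one factor is zero, the statement reduces to the other factor. Otherwise both $D'$ and $D''$ have the same residual determinant $\overline{D}_{\mathfrak{m}}$, so $D'\times D''$ is a determinant $G_{F,S}\to A'\times A''$. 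Its characteristic polynomials of $\mathrm{Frob}_v$ equal $P_v(X)$, whose coefficients lie in the image $B$ of $\mathbf{T}^S$.

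We must show that $D'\times D''$ takes values in $B$ itself. By the Chebotarev density theorem and continuity, the coefficients of $D(X-g)$ for all $g$ lie in the closure of the subring generated by the Frobenius coefficients, and this subring is $B$ since $B$ is finite. A determinant is determined by these characteristic polynomial coefficients, via Chenevier's theory over the product of the local rings $\mathbf{T}^S$-images, both of which are of Galois type. Concretely, I would use that $R_{\overline{D}}$ is topologically generated by the characteristic polynomial coefficients of Frobenius elements. Then the map $R_{\overline{D}}\to A'\times A''$ classifying $D'\times D''$ has image equal to the closure of $B$, which is $B$. This gives the Galois-type determinant $D$ on $B$.

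\textbf{LGC conditions.} Condition (3) must be checked for both the global and the local factorisation. The global condition asks for a factorisation through $R^{\{a\},L_a,[h_1,h_2]}_{\overline{D}}$. This quotient ring is a pseudodeformation ring with a stable condition, so its functor of points is closed under products and subobjects. Indeed, a Cayley--Hamilton representation over $A'\times A''$ is the product of the two given ones, the condition $\mathcal{C}$ is stable under finite products, and the pseudodeformation property depends only on the map from $R_{\overline{D}}$, whose image lies in $B$. So $R_{\overline{D}}\to A'\times A''$ factors through the quotient, hence so does $R_{\overline{D}}\to B$.

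The local condition is similar. The map $R_{\overline{D}_{x,a}}\to B\subset A'\times A''$ factors through $R^{\lambda_a,\Omega_a}$ on each factor, and the dotted arrow of \eqref{equation_LGCdiagram} is then obtained componentwise. We also need compatibility with $\mathrm{nat}$: the componentwise arrows $R^{\lambda_a,\Omega_a}\to A'\times A''$ agree on $\mathfrak{Z}^\circ$ with $\mathrm{nat}_{A'}\times\mathrm{nat}_{A''}$. The remaining point is that the image lands in $(\mathbf{T}_a(M)/J)_{\mathfrak{m}}$. This holds because the kernel of $R_{\overline{D}_{x,a}}\to R^{\lambda_a,\Omega_a}$ dies already in $A'\times A''$, and $R_{\overline{D}_{x,a}}$ maps into $B$.

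\textbf{Main obstacle.} The step I expect to be delicate is exactly this last subtlety in the local condition. One must ensure that the image of $R_{\overline{D}_{x,a}}$ in $A'\times A''$, which lies in $B\subseteq(\mathbf{T}_a(M)/J)_{\mathfrak{m}}$ by the Chebotarev argument applied to the restricted determinant, is compatible with the $\mathbf{T}_a$-structure. Every other step is formal once the injection into the product is in place.
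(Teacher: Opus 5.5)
Your proposal is correct and follows the paper's proof. Your $J$ (the preimage of $J'\times J''$ under $\mathbf{T}_a(M)\to\mathbf{T}_a(M')\times\mathbf{T}_a(M'')$, whose kernel is square-zero) is exactly the paper's, and your verification of (2) and (3) via the injection $(\mathbf{T}_a(M)/J)_{\mathfrak{m}}\hookrightarrow(\mathbf{T}_a(M')/J')_{\mathfrak{m}}\times(\mathbf{T}_a(M'')/J'')_{\mathfrak{m}}$, Chebotarev, and Chenevier's result that a determinant is valued in the subring generated by its characteristic-polynomial coefficients fills in what the paper leaves as ``one checks easily''. The step you flag as the main obstacle is in fact immediate: that injection is a map of $\mathbf{T}_a$-algebras, so both the factorisation through $R^{\lambda_a,\Omega_a}_{\overline{D}_{x,a}}$ and the compatibility with $\textnormal{nat}$ can be checked after composing with it.
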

\begin{proof}
    There is a square-zero ideal $I\trianglelefteq \mathbf{T}_a(M)$ and a surjection
    \begin{equation*}
        f\colon\mathbf{T}_a(M'\oplus M'')\twoheadrightarrow\mathbf{T}_a(M)/I
    \end{equation*}
    of $\mathbf{T}_a$-algebras. Let $I'\trianglelefteq \mathbf{T}_a(M'\oplus M'')$ be the preimage of $(J',J'')$ under the inclusion
    \begin{equation*}
        \mathbf{T}_a(M'\oplus M'')\hookrightarrow\mathbf{T}_a(M')\times \mathbf{T}_a(M'') 
    \end{equation*}
    and note that it satisfies $(TI')^{h}=0$. Let $J$ be the preimage of $f(I')$ under $\mathbf{T}_a(M)\twoheadrightarrow\mathbf{T}_a(M)/I$ and note that we have $(TJ)^{2h}=0$. Moreover, one checks easily that $(\mathbf{T}_a(M)/J)_{\mathfrak{m}}$ satisfies conditions $(2)$ and $(3)$.
\end{proof}

\begin{lemma}\label{Lemma_BoundaryLGC}

Suppose that, for $1\leq n'\leq n$, we are given $T_{n'}\in\mathbf{T}_{n'}^S$ such that the following hold.
\begin{enumerate}
    \item For each $1\leq n'<n$, $P(n',a,S,T_{n'})$ holds true.
    \item For each proper standard Levi subgroup $M=\textnormal{GL}_{n_1}\times ...\times \textnormal{GL}_{n_r}$ of $\textnormal{GL}_{n,\mathcal{O}_F}$, $T_{n_1}\otimes...\otimes T_{n_r}$ divides $\mathcal{S}_M(T_n)$.
\end{enumerate}
    Then we can find integers $N_1,N_2\geq 1$ with the following properties:
    \begin{itemize}
        \item The integer $N_1$ depends only on $n, F$ and $T_{n}$.
        \item The integer $N_2$ depends only on $n, F,\lambda,\mathcal{L}$ and $T_{n}$.
        \item For every $S\setminus\{a\}$-good subgroup $K\leq \textnormal{GL}_n(\mathbf{A}_F^{\infty})$ and integer $m\geq 1$, there exists an ideal $J \leq \mathbf{T}_a(R\Gamma(\partial X_K, \sigma_{\Omega,\lambda}^{\circ}/ \varpi^m))$ satisfying $(T_n^2p^{N_2}J)^{N_1} = 0$ and such that for any maximal ideal $\mathfrak{m}\trianglelefteq \mathbf{T}^S$ of residue field $k$ and such that we have $J_{\mathfrak{m}}\neq \mathbf{T}_a(R\Gamma(\partial X_K, \sigma_{\Omega,\lambda}^{\circ}/ \varpi^m))_{\mathfrak{m}}$, the localisation 
        \[ \mathbf{T}_{\partial}:=\mathbf{T}_a(R\Gamma(\partial X_K, \sigma_{\Omega,\lambda}^{\circ}/ \varpi^m))_{\mathfrak{m}}/J_{\mathfrak{m}} \]
        satisfies the following properties:
       \begin{enumerate}
           \item As a $\mathbf{T}^S$-algebra, it is of Galois type with associated determinant $D_{\mathfrak{m}}:G_{F,S}\to \mathbf{T}_{\partial}$.
           \item The determinant $D_{\mathfrak{m}}:G_{F,S}\xrightarrow{}\mathbf{T}_{\partial}$ satisfies $\textnormal{LGC}(S,a,\lambda,\Omega,\mathcal{L}).$
        \end{enumerate}
    \end{itemize}
\end{lemma}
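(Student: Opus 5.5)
We reduce to the individual Borel--Serre strata and then to the Levi subgroups, where the inductive hypothesis applies. The stratification \eqref{equation_BorelSerreStratification} of $\partial X_K$ into the locally closed pieces $X_K^Q$, indexed by the standard proper parabolics $Q$ of $\textnormal{GL}_{n}$ up to conjugacy, induces a finite filtration of $R\Gamma(\partial X_K,\sigma_{\Omega,\lambda}^{\circ}/\varpi^m)$ (ordered by closure relations among strata). Its graded pieces are the complexes $R\Gamma_c$-type contributions of the strata; equivalently, we may use excision triangles relating $R\Gamma_{(c)}(X_K^Q,-)$. Passing to cohomology, $H^\ast(\partial X_K,-)$ is an iterated extension of subquotients of the groups $H^\ast_{(c)}(X_K^Q,\sigma_{\Omega,\lambda}^\circ/\varpi^m)$. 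The number of strata and degrees is bounded in terms of $n$ and $F$ only. Repeated application of Lemma~\ref{lemma_shortexactsequencereduction} (and a variant for subquotients and for passing from cohomology groups to the complex, as in the first step of the proof of Proposition~\ref{Proposition_EisFunc}) therefore reduces us to a single stratum. The cost is only multiplying $N_1$ by a constant depending on $n,F$.

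For a fixed stratum $Q=MN$ with $M=\textnormal{GL}_{n_1}\times\dots\times\textnormal{GL}_{n_r}$, we apply Proposition~\ref{Proposition_EisFunc}. Up to $p^{N_2}$ and nilpotence of order $N_1$, the $\mathbf{T}_a^B$-annihilator of $R\Gamma_{(c)}(X_K^Q,\sigma^\circ_{\Omega,\lambda}/\varpi^m)$ contains that of
\[
\bigoplus_{w,\Omega_{\textnormal{M}_a}}H^\ast_{(c)}(X_{K_M},\sigma^\circ_{\Omega_{\textnormal{M}_a},w\cdot\lambda_a}/\varpi^m),
\]
with $\mathbf{T}_a^B$ acting through the Satake transform $\mathcal{S}_{M,a}^\circ$. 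By Künneth for $X_{K_M}$, which is a product of $\textnormal{GL}_{n_i}$-spaces, combined with hypothesis (1), namely $P(n_i,a,S,T_{n_i})$ in the form of Lemma~\ref{lemma_equivalent_LGC_hypothesis}, each factor carries an ideal $J_i$ killed by $(T_{n_i}^2p^{N_2}J_i)^{N_1}$. Modulo $J_i$ the factor has determinant $D_i$ satisfying $\textnormal{LGC}$ for weight $\lambda_i$ and block $\Omega_i$. On the tensor product we get an ideal killed by a power of $\bigotimes_i T_{n_i}^2p^{N_2}$. Hypothesis (2) says $\bigotimes_iT_{n_i}$ divides $\mathcal{S}_M(T_n)$, so this gives the required control by $T_n^2p^{N_2}$ after pulling back along $\mathcal{S}_M$.

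On the Galois side, the pulled-back quotient of $\mathbf{T}^S$ is of Galois type with determinant
\[
\bigoplus_i D_i(n-(n_1+\dots+n_i)),
\]
using the explicit form of the unnormalised Satake transform on Hecke polynomials (\cite[Lemma 4.6]{New16}). For the global condition, a direct sum of Cayley--Hamilton representations satisfying $\mathcal{C}^{\textnormal{st}}_{L_a/F_a,[h_1,h_2]}$ again satisfies it, provided $L_a$ and $[h_1,h_2]$ are chosen large enough to cover all $(w\cdot\lambda,\Omega_M)$; these depend only on $\lambda,\Omega,a$, and the cyclotomic twists are crystalline. For the local condition, the factorisations through $R^{\lambda_i,\Omega_i}_{\overline{D}_i}$ give a factorisation through their completed tensor product. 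Via the map $(D_i)\mapsto\oplus D_i(m_i)$ this is compatible with $\eta^\circ$ by Lemma~\ref{Lemma_InterpolationOfSSvsEisenstein} and the summary diagram relating $\mathfrak{Z}^\circ_{\textnormal{G},\mathcal{L},\lambda,\overline{D}}$, $\textnormal{BD}^\circ$ and $\mathcal{S}_{M,\mathcal{L},\lambda}$.

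The main obstacle is the local condition. We must check that the map $R^{\lambda_a,\Omega_a}_{\overline{D}_{x,a}}\to A$ exists, i.e.\ that the direct-sum map lands in the correct $(\lambda_a,\Omega_a)$ quotient rather than only in a product ring. This requires matching the restriction of $\textnormal{nat}_A$ along $\mathfrak{Z}^\circ_{\overline{D}_a}\to\mathbf{T}_a^B\xrightarrow{\mathcal{S}}$ with $\textnormal{BD}^\circ$ followed by $\eta_M^\circ$. We would first handle this using \eqref{equation_Satakediagram} and the definition of $\mathfrak{Z}^\circ_{\textnormal{M},\mathcal{L},\lambda,(\overline{D}_i)}$ as an intersection designed exactly for this purpose. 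The remaining subtlety is the passage between $\mathbf{T}^{B,+}$ and $\mathbf{T}^B$, which costs the extra factors $p^{2N}$ absorbed into $N_2$.
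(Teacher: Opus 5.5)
Your outline is the paper's proof: reduce via the Borel--Serre stratification and Lemma~\ref{lemma_shortexactsequencereduction} to a single stratum $X_K^Q$, pass to the Levi with Proposition~\ref{Proposition_EisFunc}, use K\"unneth together with the inductive hypothesis and hypothesis (2) for the $T_n$-control, and get the local condition from Lemma~\ref{Lemma_InterpolationOfSSvsEisenstein} and the diagram relating $\textnormal{BD}^{\circ}$, $\eta^{\circ}$ and $\eta_M^{\circ}$. (The $\mathbf{T}^{B,+}_a$ versus $\mathbf{T}^B_a$ bookkeeping you mention at the end is already internal to Proposition~\ref{Proposition_EisFunc} and does not need redoing.)

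The one step that is not justified as written is your Galois-type claim. A maximal ideal $\mathfrak{m}\trianglelefteq\mathbf{T}^S$ in the support can have several maximal ideals $\mathfrak{n}\trianglelefteq\mathbf{T}^S_M$ lying over it. The paper enlarges $E$ so that all of them have residue field $k$, which you need in order to invoke $P(n_i,a,S,T_{n_i})$ at all. The determinants $\bigoplus_i D_{\mathfrak{n}_i}(m_i)$ therefore only assemble to a determinant valued in $\prod_{\mathfrak{n}}(A_M)_{\mathfrak{n}}$. That ring contains the image $A_{\mathfrak{m}}$ of $\mathbf{T}_a$ as a possibly proper subring.

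Knowing from the Satake transform that the Frobenius characteristic polynomials are images of $P_v(X)$ does not by itself put the determinant in $A_{\mathfrak{m}}$. The paper supplies this descent via \cite[Corollary 1.14]{Che14} together with Chebotarev density. It then verifies the conditions of $\textnormal{LGC}(S,a,\lambda,\Omega,\mathcal{L})$ on $A_{\mathfrak{m}}$ by composing with the injection $A_{\mathfrak{m}}\hookrightarrow\prod_{\mathfrak{n}}(A_M)_{\mathfrak{n}}$ and checking on each factor. You should add this step explicitly.
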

\begin{proof}
    Fix $m\geq 1$, $K$ as in the statement. By considering the $\mathbf{T}_a$-equivariant filtration of $H^{\ast}(\partial X_K, \sigma_{\Omega,\lambda}^{\circ}/ \varpi^m)$ provided by excision long exact sequences induced by the stratification \eqref{equation_BorelSerreStratification}, it suffices to prove the Lemma for $H^{\ast}_c(X_K^Q, \sigma_{\Omega,\lambda}^{\circ}/ \varpi^m)$ in place of $H^{\ast}(\partial X_K, \sigma_{\Omega,\lambda}^{\circ}/ \varpi^m)$ for a fixed standard parabolic subgroup $Q=MN\leq \textnormal{GL}_n$.
    
    By Proposition \ref{Proposition_EisFunc}, it suffices to show the existence of integers $N_3,N_4\geq 1$ such that, for every $w\in W^{\textnormal{M}_a}$ and $\Omega_{\textnormal{M}_a}\in \mathfrak{B}(\textnormal{M}_a,\Omega)$, there is an ideal
    \begin{equation*}
        J\trianglelefteq \mathbf{T}_a\left(H_{\Omega_{\textnormal{M}_a},w} \right)
    \end{equation*}
    satisfying $(T_n^2p^{N_4}J)^{N_3}=0$ such that, for every maximal ideal $\mathfrak{m}$, the assertions $(1)$ and $(2)$ hold for the $\mathbf{T}_a$-algebra $(\mathbf{T}_a(H_{\Omega_{\textnormal{M}_a},w})/J)_{\mathfrak{m}}$ , where
    \begin{equation*}
        H_{\Omega_{\textnormal{M}_a},w}:=H^{\ast}_{c}(X_{K_{M}},\sigma_{\textnormal{M}_a,w\cdot\lambda}^{\circ}/\varpi^m)
    \end{equation*}
    for some $S\setminus\{a\}$-good level subgroup $K_M\leq M(\mathbf{A}_{F}^{\infty})$.

    Note that the $\mathbf{T}_a$ action on $H_{\Omega_{\textnormal{M}_a},w}$ is via
    \begin{equation*}
        \mathcal{S}_M\otimes\textnormal{BD}^{\circ}\colon\mathbf{T}_a\to \mathbf{T}_{M,a}:=\mathbf{T}_M^S\otimes\mathfrak{Z}_{M(F_a),\mathcal{L}_a,\lambda_a}^{\circ}.
    \end{equation*}
    To proceed further, we introduce some notation. Write $M=\textnormal{GL}_{n_1}\times...\times\textnormal{GL}_{n_r}$ and consider the corresponding isomorphism $\mathbf{T}_{M}^S\cong \mathbf{T}_{n_1}^{S}\otimes...\otimes\mathbf{T}^S_{n_r}$. Given a maximal ideal $\mathfrak{n}\trianglelefteq\mathbf{T}^{S}_M$ of residue field $k$, for $i=1,...,r$, write $\mathfrak{n}_i\trianglelefteq \mathbf{T}_{n_i}^S$ for the corresponding maximal ideal of residue field $k$. Assume that $\mathfrak{n}_i$ are of Galois type, write $\overline{D}_{\mathfrak{n}_i}:G_{F,S}\to k$ for the associated determinant and set $\overline{D}_{\mathfrak{n}_i,a}:=\overline{D}_{\mathfrak{n}_i}\mid_{G_{F_a}}$.
    Moreover, for $w\in W^{\textnormal{M}_a}$ and $\Omega_{\textnormal{M}_{a}}$, write $w\cdot \lambda_a=(\lambda_{1},\dots,\lambda_{r})\in (\mathbf{Z}_{+}^{n_1}\times\dots\times\mathbf{Z}_+^{n_r})^{\Hom(F_a,E)}$ and $\Omega_{\textnormal{M}_a}=(\Omega_1,\dots,\Omega_r)\in \mathfrak{B}(F_a,n_1)\times\dots\times \mathfrak{B}(F_a,n_r)$, and write $\mathcal{L}_i$ for the $\mathcal{O}$-integral $\Omega_i$-system induced by $\mathcal{L}$. Finally, set
    \begin{equation*}
        R^{w\cdot \lambda,\Omega_{\textnormal{M}_a}}_{\overline{D}_{\mathfrak{n},a}}:=R^{\lambda_1,\Omega_1}_{\overline{D}_{\mathfrak{n}_1,a}}\widehat{\otimes}_{\mathcal{O}}\dots\widehat{\otimes}_{\mathcal{O}}R^{\lambda_r,\Omega_r}_{\overline{D}_{\mathfrak{n}_r,a}}.
    \end{equation*}

    By the Künneth formula, Lemma \ref{lemma_shortexactsequencereduction} and the assumptions of the Lemma, there are integers $N_3,N_4\geq 1$ such that for every $w$ and $\Omega_{\textnormal{M}_a}$, there is an ideal
    \begin{equation*}
        J_M\trianglelefteq \mathbf{T}_{M,a}\left(H_{\Omega_{\textnormal{M}_a},w} \right)
    \end{equation*}
    satisfying $(T_n^2p^{N_4}J_{M})^{N_3}=0$ such that, for every maximal ideal $\mathfrak{n}=(\mathfrak{n}_1,...,\mathfrak{n}_r)\trianglelefteq \mathbf{T}_M^S$, the localised $\mathbf{T}_{M,a}$-algebra
    \begin{equation*}
        (A_{M})_{\mathfrak{n}}:=\big(\mathbf{T}_{M,a}\left(H_{\Omega_{\textnormal{M}_a},w} \right)/J_{M}\big)_{\mathfrak{n}}
    \end{equation*}
   is of Galois type as a $\mathbf{T}^S_{n_i}$-algebra for every $i=1,...,r$ with associated determinant $D_{\mathfrak{n}_i}:G_{F,S}\to (A_{M})_{\mathfrak{n}}$ satisfying $\textnormal{LGC}(S,a,\lambda_i,\Omega_i,\mathcal{L}_i)$.

   We set
   \begin{equation*}
       D_{\mathfrak{n}}:= D_{\mathfrak{n}_1}(n-n_1)\oplus...\oplus D_{\mathfrak{n}_r}(n-(n_1+...+n_r))\colon G_{F,S}\to (A_{M})_{\mathfrak{n}}.
   \end{equation*}

   Now let $\mathfrak{m}\trianglelefteq \mathbf{T}^S$ be a maximal ideal of residue field $k$ supported on $H_{\Omega_{\textnormal{M}_a},w}$ and let $\mathfrak{S}$ be the set of maximal ideals of $\mathbf{T}_M^S$ supported on $H_{\Omega_{\textnormal{M}_a},w}$ such that
   \begin{equation*}
      \mathbf{T}_{M,a}(H_{\Omega_{\textnormal{M}_a},w})_{\mathfrak{m}}= \prod_{\mathfrak{n}\in \mathfrak{S}}\mathbf{T}_{M,a}(H_{\Omega_{\textnormal{M}_a},w})_{\mathfrak{n}}.
   \end{equation*}
   After possibly enlarging the coefficient field $E$, we can assume that each $\mathfrak{n}\in \mathfrak{S}$ has residue field $k$. This can be done uniformly for each $\mathfrak{m}$ in the support of $\oplus_{w,\Omega_{\textnormal{M}_a}}H_{\Omega_{\textnormal{M}_a},w}$ as there are finitely many such maximal ideals. Moreover, let $J:=J_{M}\cap \mathbf{T}_a(H_{\Omega_{\textnormal{M}_a},w})$ and note that, for $v\notin S$, $D_{\mathfrak{n}}(X-\textnormal{Frob}_v)\in (A_{M})_{\mathfrak{n}}[X]$ is given by the image of $P_v(X)\in \mathbf{T}^S[X]$ and, in particular, its coefficients lie in the subalgebra
   \begin{equation*}
(A)_{\mathfrak{m}}:=\big(\mathbf{T}_a(H_{\Omega_{\textnormal{M}_a},w})/J\big)_{\mathfrak{m}}\leq (A_{M})_{\mathfrak{m}}=\prod_{\mathfrak{n}}(A_{M})_{\mathfrak{n}}.
   \end{equation*}
Therefore, by \cite[Corollary 1.14]{Che14}  and Chebotarev's density theorem, we see that $D_{\mathfrak{m}}:=\prod_{\mathfrak{n}}D_{\mathfrak{n}}$
takes values in $(A)_{\mathfrak{m}}$, showing that $(A)_{\mathfrak{m}}$ is of Galois type as a $\mathbf{T}^S$-algebra.

It is clear that $D_{\mathfrak{m}}$ satisfies (1) of $\textnormal{LGC}(S,a,\lambda,\Omega,\mathcal{L})$.

Finally, to check the existence of the commutative diagram \eqref{equation_LGCdiagram} for $(A)_{\mathfrak{m}}$, it suffices to show that the diagram exists after extending scalars to $(A_M)_{\mathfrak{m}}$ and projecting onto any of the factors $(A_{M})_{\mathfrak{n}}$. We have a diagram
\begin{equation*}
    \begin{tikzcd}
	{\mathfrak{Z}_{\overline{D}_{\mathfrak{m},a}}^{\circ}} && {R^{\lambda,\Omega_a}_{\overline{D}_{\mathfrak{m},a}}} & {R_{\overline{D}_{\mathfrak{m},a}}} \\
	& {(A_{M})_{\mathfrak{n}}} \\
	{\mathfrak{Z}_{M(F_a),\mathcal{L},w\cdot\lambda,(\overline{D}_{\mathfrak{n}_i})_i}^{\circ}} && {R^{w\cdot \lambda,\Omega_{\textnormal{M}_a}}_{(\overline{D}_{\mathfrak{n}_i,a})_i}} & {R_{(\overline{D}_{\mathfrak{n}_i,a})_i}}
	\arrow["{\eta^{\circ}}", from=1-1, to=1-3]
	\arrow["{\textnormal{nat}}", from=1-1, to=2-2]
	\arrow["{\textnormal{BD}^{\circ}}", from=1-1, to=3-1]
	\arrow["{D_{\mathfrak{n},a}}"', dotted, from=1-3, to=2-2]
	\arrow[, from=1-3, to=3-3]
	\arrow[two heads, from=1-4, to=1-3]
	\arrow["{(D_i)_i\mapsto \oplus_iD_i(n_{i+1}+..+n_r)}"{description}, from=1-4, to=3-4]
	\arrow["{\textnormal{nat}}", from=3-1, to=2-2]
	\arrow["{\eta_{M(F_a)}^{\circ}}"', from=3-1, to=3-3]
	\arrow["{(D_{\mathfrak{n}_i,a})_i}", dashed, from=3-3, to=2-2]
	\arrow[two heads, from=3-4, to=3-3]
\end{tikzcd}
\end{equation*}
where the existence of the dotted factorisation of $D_{\mathfrak{n},a}$ follows from the existence of the dashed factorisation of $(D_{\mathfrak{n}_i,a})_i$ (implied by the inductive hypothesis) and the commutativity of the right square. The commutativity of the top triangle follows from the commutativity of the other three triangles (from left to right, by definition, by induction and by construction, respectively) and of the left square (thanks to Lemma \ref{Lemma_InterpolationOfSSvsEisenstein}).
\end{proof}
\subsection{The degree shifting argument}
We have now arrived to the analysis of interior cohomology. As before, $n\geq 1$ is a fixed integer, $F$ is fixed a CM number field satisfying Assumption \ref{assumption_CM} and we have a fixed decomposition $S_p=\widetilde{S}_p\coprod \widetilde{S}_p^c$. Moreover, $E/\mathbf{Q}_p$ is a finite field extension such that $\# \Hom(F,E)=[F:\mathbf{Q}]$, with ring of integers $\mathcal{O}$, a fixed choice of uniformiser $\varpi\in \mathcal{O}$ and residue field $k=\mathcal{O}/\varpi$.

\subsubsection{The quasi-split unitary group, and its Siegel parabolic}
We now define the key linear algebraic groups needed for the degree shifting argument. Let $\Psi_n$ denote the $n \times n$ matrix with $1$s on the antidiagonal and $0$s elsewhere, and let $J_n$ denote the $2n \times 2n$ matrix $J_n = \left( \begin{smallmatrix} 0 & \Psi_n \\ -\Psi_n & 0 \end{smallmatrix}\right)$. We introduce the group scheme $\widetilde{G}:=\widetilde{G}_n$ over $\cO_{F^+}$ whose functor of points is given by
\[ \widetilde{G}_n(R) = \{ g \in \GL_{2n}(R \otimes_{\cO_{F^+}} \cO_F) \mid {}^t g J_n g^c = J_n \}. \]
We write $T \leq B \leq \widetilde{G}$ for the subgroups of diagonal and upper-triangular matrices, respectively. After extension to $F^+$, $\widetilde{G}$ becomes a reductive group scheme and $T, B$ a maximal torus and Borel subgroup. 

We write $G \leq P \leq \widetilde{G}$ for the subgroups of block diagonal matrices and block upper-triangular matrices, respectively, where the blocks have size $n, n$. After extension to $F^+$, $G$ becomes a Levi subgroup of the maximal parabolic subgroup $P$, that we call that Siegel parabolic. We identify $G$ with $\Res_{\cO_F / \cO_{F^+}} \GL_n$ using the formula 
\[ g = \left( \begin{array}{cc}  A & 0 \\ 0 & D \end{array}\right) \in G(R) \mapsto D \in \GL_n(R \otimes_{\cO_{F^+}} \cO_F).  \]
Under this identification, $T$ is identified with $T_n$ and $B\cap G$ with $B_n$. In particular, given $\lambda\in (\mathbf{Z}_+^n)^{\Hom(F,E)}$, the discussion of \S\ref{subsubsection_GL_nlocallysymmetricspaces} yields an $E[G(F^{+}\otimes_{\mathbf{Q}}\mathbf{Q}_p)]$-module $V_{\lambda} $ with a $G(\mathcal{O}_{F^+\otimes_{\mathbf{Z}}}\mathbf{Z}_p)$-stable $\mathcal{O}$-lattice $\mathcal{V}_{\lambda}$.

To define coefficient systems for the group $\widetilde{G}$ coming from algebraic representations, we now use our Assumption \ref{assumption_CM}. Namely, the choice of $\widetilde{S}_p$ induces an identification
\[ \prod_{v \in \widetilde{S}_p} \iota_v \colon \widetilde{G}(F^+ \otimes_\mathbf{Q} \mathbf{Q}_p) \cong \prod_{v \in \widetilde{S}_p} \GL_{2n}(F_v). \]
Writing $\widetilde{I}_p \subset \Hom(F, E)$ for the set of embeddings inducing a place of $\widetilde{S}_p$, we may identify the character group of the torus $(\Res_{F^+ / \mathbf{Q}} T)_E$ with $(\mathbf{Z}^{2n})^{\widetilde{I}_p}$, and the subset of characters which are $(\Res_{F^+ / \mathbf{Q}} B)_E$-dominant with the subset of weights $\widetilde{\lambda} \in (\mathbf{Z}^{2n}_+)^{\widetilde{I}_p}$. The implicit identification $(\mathbf{Z}^n)^{\Hom(F, E)} = (\mathbf{Z}^{2n})^{\widetilde{I}_p}$ is given by the formula
\[ \widetilde{\lambda}_\tau = (-\lambda_{\tau c, n}, \dots, -\lambda_{\tau c, 1}, \lambda_{\tau, 1}, \dots, \lambda_{\tau, n}), \]
if $\tau \in \widetilde{I}_p$. If $\lambda \in (\mathbf{Z}^n)^{\Hom(F, E)}$, $\widetilde{\lambda} \in (\mathbf{Z}^{2n})^{\widetilde{I}_p}$ correspond under this identification, then we will say that they are \textit{matching weights}. If $\widetilde{\lambda} \in (\mathbf{Z}^{2n}_+)^{\widetilde{I}_p}$, then we define  associated representation $\widetilde{V}_{\widetilde{\lambda}} = \otimes_{\tau \in \widetilde{I}_p} \widetilde{V}_{\widetilde{\lambda}_\tau}$ and $\widetilde{\mathcal{V}}_{\widetilde{\lambda}} = \otimes_\tau \widetilde{\mathcal{V}}_{\widetilde{\lambda}_\tau}$ as before.

We adopt the notations for Weyl groups $W_\tau$, $W_{\overline{v}}$, $W$, and $W_{P, \tau}$, $W_{P, \overline{v}}$, and $W_P$, and $W^P_\tau$, $W^P_{\overline{v}}$, and $W^P$, as well as weights $\widetilde{\lambda}_\tau$, $\widetilde{\lambda}_{\overline{v}}$, $\lambda_\tau$, and $\lambda_{\overline{v}}$ introduced in \cite[\S 4.2]{10author}. Thus especially $W$ is identified with the Weyl group $W((\Res_{F^+ / \mathbf{Q}} \widetilde{G})_E, (\Res_{F^+ / \mathbf{Q}} T)_E)$, and if $\widetilde{\lambda} \in (\mathbf{Z}^{2n}_+)^{\widetilde{I}_p}$ and $w \in W^P$, and $\lambda \in (\mathbf{Z}^n)^{\Hom(F, E)}$ matches $w \cdot \widetilde{\lambda} = w(\widetilde{\lambda} + \rho) - \rho$, then $\lambda$ in fact lies in $(\mathbf{Z}^n_+)^{\Hom(F, E)}$.

We next introduce unramified Hecke algebras. Let $S$ be a finite set of finite places of $F$, containing the $p$-adic ones, and let $K$ be an $S$-good subgroup of $\GL_n(\mathbf{A}_F^{\infty})$. We have set $\mathbf{T}^S = \mathcal{H}(\GL_n(\mathbf{A}_F^{\infty, S}), K^S) \otimes_\mathbf{Z}\cO$.

Similarly, let $S$ be a finite set of finite places of $F$, containing the $p$-adic ones, and stable under complex conjugation. Then we write $\overline{S}$ for the set of places of $F^+$ lying below a place of $S$. If $\widetilde{K}$ is an $S$-good subgroup of $\widetilde{G}(\mathbf{A}_{F^+}^\infty)$, then we set $\widetilde{\mathbf{T}}^S = \mathcal{H}(\widetilde{G}(\mathbf{A}_{F^+}^{\infty, \overline{S}}), \widetilde{K}^{\overline{S}}) \otimes_\mathbf{Z} \cO$. 

Finally, we discuss the `unnormalised Satake transform' associated to a standard parabolic subgroup of $\widetilde{G}$. Choose a partition $n = n_1 + n_2 + \dots + n_{r-1} + n_r$ with $n_i \geq 1$ for each $i = 1, \dots, r$, and let $Q \leq \widetilde{G}$ denote the intersection of $\widetilde{G}$ with the subgroup of block upper-triangular matrices, with blocks of sizes \[ n_1, n_2, \dots, n_{r-1}, 2n_r, n_{r-1}, \dots, n_2, n_1. \]
Let $M \leq Q$ denote the block diagonal matrices, and $N \leq Q$ the kernel of the projection $Q \to M$. If $\overline{S}$ contains the ramified places of $F / F^+$, then the base extension of $\widetilde{G}$ to $\cO_{F^+, \overline{S}}$ is a reductive group scheme and $Q$ is a parabolic subgroup with Levi subgroup $M$. Further, we can identify (with the obvious notation)
\[ \mathcal{H}(M(\mathbf{A}_{F^+}^{\overline{S}, \infty}), M(\widehat{\cO}_{F^+}^{\overline{S}})) \otimes_\mathbf{Z} \mathcal{\cO} = \mathbf{T}^S_{n_1} \otimes_\cO \dots \otimes \mathbf{T}^S_{n_{r-1}} \otimes \widetilde{\mathbf{T}}^S_{n_r}, \]
and the unnormalised Satake transform $\mathcal{S}^{\widetilde{G}}_M \colon \widetilde{\mathbf{T}}^S \to \mathbf{T}^S_M$ is described explicitly by \cite[Proposition-Definition 5.3]{New16}. 

The most important special case arises when $Q = P$ is the Siegel parabolic. In this case, we write simply $\mathcal{S} = \mathcal{S}_n$ for the induced homomorphism $\mathcal{S} \colon \widetilde{\mathbf{T}}^S \to \mathbf{T}^S$. 

\subsubsection{The element $T_n$}\label{subsubsection_theelementT_n}

We finally provide a construction of $T_n$. Let $\mathfrak{p}$ be a \textit{rational} prime that splits completely in $F$ and such that $S_{\mathfrak{p}}(F)\cap S=\emptyset$. For $n\geq 1$, let $\Delta_{\mathfrak{p},n}:=\prod_{v\mid \mathfrak{p}}\Delta_{v,n}\in \widetilde{\mathbf{T}}^S$ where $\Delta_{v,n}$ is the spherical Hecke operator defined in \S\ref{subsection_Vanishing statement}. 
\begin{lemma}\label{lemma_existence_of_T_n}
    There are integers $M_1,M_2\geq 1$ only depending on $n$, $F$ and $\mathfrak{p}$ such that, if we set $\widetilde{T}_{n'}:=M_2^{n'}\Delta_{\mathfrak{p},n'}^{M_1}$ and $T_{n'}:=\mathcal{S}_{n'}(\widetilde{T}_{n'})$, then the following are satisfied.
    \begin{itemize}
        \item For each proper Levi subgroup $M=\textnormal{GL}_{n_1}\times...\times \textnormal{GL}_{n_r}$ of $\textnormal{GL}_{n,\mathcal{O}_F}$, $\mathcal{S}_{n_1}(\widetilde{T}_{n_1})\otimes...\otimes \mathcal{S}_{n_r}(\widetilde{T}_{n_r})$ divides $\mathcal{S}^{\widetilde{G}}_M(\widetilde{T}_n)$.
        \item  For each $1\leq n'\leq n$, each good subgroup $\widetilde{K}\leq \widetilde{G}_{n'}(\mathbf{A}_{F^+}^{\infty})$ and any local system $\mathcal{W}$ on $X^{\widetilde{G}_{n'}}_{\widetilde{K}}$ associated with a $\widetilde{K}_p$-stable $\mathcal{O}$-lattice in a finite dimensional locally algebraic $E$-representation of $\widetilde{K}_p$, we have
        \[ \widetilde{T}_{n'} H^q(X^{\widetilde{G}_{n'}}_{\widetilde{K}}, \mathcal{W}/ \varpi^m) = 0 \text{ for all }m \geq 1, q < \frac{1}{2} \dim_{\mathbf{R}} X^{\widetilde{G}_{n'}}; \]
        and 
        \[ \widetilde{T}_{n'} H^q_c(X^{\widetilde{G}_{n'}}_{\widetilde{K}}, \mathcal{W}/ \varpi^m) = 0 \text{ for all }m \geq 1, q > \frac{1}{2} \dim_{\mathbf{R}} X^{\widetilde{G}_{n'}}. \]
    \end{itemize}
\end{lemma}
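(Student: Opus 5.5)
The plan has two parts: first secure the vanishing statement from Theorem \ref{VanishingUpToBoundedTorsion}, then arrange the divisibility by choosing the constants $M_1, M_2$ suitably.

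\textbf{Vanishing.} The Theorem treats the Shimura variety for the similitude group $\mathbf{G}$ at hyperspecial level at $\mathfrak{p}$, with constant coefficients $\Lambda$. It states that
\[
M^{[F^+:\mathbf{Q}]n'+|B|}\,\Delta^{|B|}
\]
kills $H^q$ for $q<d$ and $H^{2d-q}_c$ for $q<d$. Here $M$ and $|B|=|B(\textnormal{G},\mu^{-1})|$ depend only on $n'$, $F$ and $\mathfrak{p}$; in that section the auxiliary prime is called $p$ and the coefficient prime $\ell$, and in our setting $\mathfrak{p}\neq p$.

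To pass to $X^{\widetilde{G}_{n'}}_{\widetilde{K}}$ with a local system $\mathcal{W}/\varpi^m$, I would argue as follows.
\begin{itemize}
\item Shrink $\widetilde{K}_p$ to a normal open subgroup acting trivially on $\mathcal{W}/\varpi^m$. The coefficient prime $p$ does not divide the level at $\mathfrak{p}$, so Hecke operators at $\mathfrak{p}$ commute with this change of level. Passing back uses Hochschild--Serre, whose $E_2$ terms with trivial coefficients inherit the annihilation. This costs a power bounded by the real dimension $2d$.
\item The locally symmetric space for $\widetilde{G}$ is, up to the standard comparison, a union of components of the similitude Shimura variety for a suitable level: finitely many copies of a quotient of $\mathrm{Sh}_K(\mathbf{C})$ by a finite group. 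I would take this reduction from \cite{10author} and \cite{caraiani_scholze}.
\item Under this comparison, $\Delta_{\mathfrak{p},n'}$ corresponds to $\Delta$, up to the $\mathcal{H}_0$ factor, which is a unit.
\end{itemize}

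With this in hand, choose $M_1\geq 2d\,|B|$ (times the Hochschild--Serre length) and let $M_2$ absorb $M^{[F^+:\mathbf{Q}]n+|B|}$. Then $\widetilde{T}_{n'}=M_2^{n'}\Delta^{M_1}$ kills the relevant groups, uniformly in $m$, $\widetilde{K}$ and $\mathcal{W}$, for every $n'\leq n$.

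\textbf{Divisibility.} Here I would use the Satake isomorphism and compute $\mathcal{S}^{\widetilde{G}}_M(\Delta_{v,n})$ on eigenvalues. The operator $\Delta_{v,n}$ is the product of $(X_iX_j^{-1}-\mathfrak{p})$ over ordered pairs $i\neq j$ among the $2n$ Satake parameters. For $M=\textnormal{GL}_{n_1}\times\dots\times\textnormal{GL}_{n_r}$, embedded through a Levi of $\widetilde{G}_n$ with blocks $n_1,\dots,n_r,n_r,\dots,n_1$, the $2n$ parameters split into the block parameters and their conjugate-duals. So after the Satake transform this product factors as
\[
\prod_{i=1}^{r}\mathcal{S}_{n_i}(\Delta_{v,n_i})\times(\text{cross terms}).
\]
Each factor $\mathcal{S}_{n_i}(\Delta_{v,n_i})$ is the analogous product for $\widetilde{G}_{n_i}$, transported to $\textnormal{GL}_{n_i}$.

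The key point is that the unnormalised Satake transform differs from the normalised one by powers of $\mathfrak{p}^{1/2}$ shifts. I would therefore check that the cross terms are polynomials in the Satake variables with $\mathcal{O}$-integral coefficients, after multiplying by a suitable power of the unit $\mathfrak{p}$, and in particular that the transform is $\mathcal{O}$-integral. The factors $M_2^{n'}$ satisfy $M_2^{n}=\prod_i M_2^{n_i}$ exactly, because $\sum_i n_i=n$. Raising to the $M_1$-th power is compatible with all of this.

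\textbf{Main obstacle.} The hardest step is checking integrality of the cross terms in the unnormalised Satake transform, together with exact compatibility of the twist normalisations between $\mathcal{S}^{\widetilde{G}}_M$ and $\mathcal{S}_{n_i}$. My first attempt would be the explicit formula of \cite[Proposition-Definition 5.3]{New16}, which expresses the unnormalised transform as the normalised one twisted by $\delta^{1/2}$. In that form the twist only rescales the variables by powers of $\mathfrak{p}$, and such rescaling preserves the product shape of each factor $\Delta$.
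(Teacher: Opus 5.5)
Your vanishing argument takes the same route as the paper: Theorem \ref{VanishingUpToBoundedTorsion}, transported to $X^{\widetilde G_{n'}}_{\widetilde K}$ via \cite[Lemma 2.1.1]{caraiani_scholze}, then Hochschild--Serre. But the divisibility step has a genuine gap, and the compactly supported half of the vanishing step has a smaller one.

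\textbf{Divisibility.} The obstruction is not integrality. Since $\mathfrak{p}$ is a unit in $\mathcal{O}$, and a quotient of $W_M$-invariant elements of a Laurent polynomial domain is automatically $W_M$-invariant, integrality takes care of itself. The real problem is that the shifts do not match, so the factorisation $\prod_i\mathcal{S}_{n_i}(\Delta_{v,n_i})\times(\text{cross terms})$ is false.

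Write $v=ww^c$ with $v\mid\mathfrak{p}$, and let $\alpha^{(i)}_k,\beta^{(i)}_l$ be the Satake variables of the $i$-th factor of $M$ at $w$ and $w^c$. The transform $\mathrm{GL}_n\to M$ multiplies the roots coming from block $i$ by $\mathfrak{p}^{m_i}$, where $m_i=n-(n_1+\dots+n_i)$; this is the twist $D_i(m_i)$ of Lemma \ref{Lemma_InterpolationOfSSvsEisenstein}. Combined with \eqref{equation_effectofSatake}, the roots of $\mathcal{S}^{\widetilde G}_M(\widetilde P_v)$ coming from block $i$ are $\mathfrak{p}^{m_i}\alpha^{(i)}_k$ and $\mathfrak{p}^{2n-1-m_i}(\beta^{(i)}_l)^{-1}$. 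Their mixed ratios are therefore $\alpha^{(i)}_k\beta^{(i)}_l\,\mathfrak{p}^{1-2(n_1+\dots+n_i)}$. In $\mathcal{S}_{n_i}(\Delta_{v,n_i})$ the corresponding ratios are $\alpha^{(i)}_k\beta^{(i)}_l\,\mathfrak{p}^{1-2n_i}$. The two agree only for $i=1$.

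Here is a concrete failure. Take $n=2$ and $M=\mathrm{GL}_1\times\mathrm{GL}_1$. The roots of $\mathcal{S}^{\widetilde G}_M(\widetilde P_v)$ are $\mathfrak{p}\alpha_1,\alpha_2,\mathfrak{p}^2\beta_1^{-1},\mathfrak{p}^3\beta_2^{-1}$, while $\mathcal{S}_1(\widetilde P_v)$ has roots $\alpha_i,\mathfrak{p}\beta_i^{-1}$. Evaluate at a $\overline{\mathbf{Q}}_p$-point with $\alpha_2=\beta_2=1$ and all other Satake parameters generic.
\begin{itemize}
\item $1\otimes\mathcal{S}_1(\Delta_{v,1})$ vanishes there, because its factor $\mathfrak{p}\beta_2^{-1}/\alpha_2-\mathfrak{p}$ is zero.
\item No factor of $\mathcal{S}^{\widetilde G}_M(\Delta_{v,2})$ vanishes, because the relevant $(\alpha_2,\beta_2)$-ratios are $\mathfrak{p}^{\pm 3}$.
\end{itemize}
Hence $\mathcal{S}_1(\widetilde T_1)\otimes\mathcal{S}_1(\widetilde T_1)$ does not divide $\mathcal{S}^{\widetilde G}_M(\widetilde T_2)$ for any choice of $M_1,M_2$. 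Your claim that rescaling ``preserves the product shape of each factor'' is exactly where the argument breaks.

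The paper's proof records this divisibility as an ``observation'' without argument, and as stated it does not hold. What your computation actually gives is divisibility by $\bigotimes_i\tau_i(\mathcal{S}_{n_i}(\widetilde T_{n_i}))$, where $\tau_i$ is the automorphism of $\mathbf{T}^S_{n_i}$ twisting by $|\det|^{\pm(n_1+\dots+n_{i-1})}$. Feeding this into Lemma \ref{Lemma_BoundaryLGC} would also require $P(n_i,a,S,\tau_i(T_{n_i}))$. One could try to obtain that by twisting the coefficient system by a power of $\det$ and tracking the effect at $a$. Neither you nor the paper supplies that argument.

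\textbf{Compact support.} Hochschild--Serre from a deeper level $\widetilde K'$ acting trivially on $\mathcal{W}/\varpi^m$ does not transmit the annihilation to $H^q_c$ for $q>d$. The group $\widetilde K/\widetilde K'$ in general has order divisible by $p$, and the terms $H^r(\widetilde K/\widetilde K',H^{q-r}_c)$ with $q-r\le d$ contribute to $H^q_c$.

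The paper avoids this as follows.
\begin{itemize}
\item It runs Hochschild--Serre only for $H^q$ with $q<d$, where every $E_2^{r,q-r}$ lies below the middle degree.
\item It then obtains $H^q_c$ for $q>d$ by Poincar\'e duality. This uses that $\mathcal{W}^\vee$ is of the same type, and that the Hecke anti-involution fixes $\Delta$, as shown in the proof of Theorem \ref{VanishingUpToBoundedTorsion}.
\end{itemize}
Alternatively, you could use the homology spectral sequence for coinvariants, whose contributions to $H^q_c$ all sit in degrees $\geq q$.
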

\begin{proof}
Combining Theorem \ref{VanishingUpToBoundedTorsion} with \cite[Lemma 2.1.1]{caraiani_scholze}  and the observation that for each proper Levi subgroup $M = \GL_{n_1} \times \dots \times \GL_{n_r}$ of $G$, $\mathcal{S}_{n_1}(\Delta_{\mathfrak{p}, n_1}) \otimes \dots \otimes \mathcal{S}_{n_r}(\Delta_{\mathfrak{p}, n_r})$ divides $\mathcal{S}^{\widetilde{G}}_M(\Delta_{\mathfrak{p}, n})$ in $\mathbf{T}^S_M$, we obtain the desired vanishing when $\mathcal{W}$ is the trivial coefficient system.

A standard argument with the Hochschild--Serre spectral sequence for cohomology without compact support, combined with Poincar\'e duality shows that the desired vanishing for general $\mathcal{W}$ holds after changing $M_1$ to $n^2[F:\mathbf{Q}]M_1$ and $M_2$ to $M_2^{n^2[F:\mathbf{Q}]}$.
\end{proof}
From now on, we fix a rational prime $\mathfrak{p}$ as above and a choice of Hecke operators $\widetilde{T}_{n'}$ as in the statement of the Lemma. Moreover, we set
\[
    T_{n'}=\mathcal{S}_{n'}(\widetilde{T}_{n'}) \text{ for }1\leq n'\leq n.
\]
\begin{remark}
    Note that for a $\textnormal{GL}_n(\mathcal{O}_F\otimes_{\mathbf{Z}}\mathbf{Z}_{\mathfrak{p}})$-spherical cuspidal automorphic representation $\pi$ of $\textnormal{GL}_n(\mathbf{A}_F)$, after possibly twisting by a finite order Hecke character, $T_n$ acts on $\pi^{\textnormal{GL}_n(\mathcal{O}_F\otimes_{\mathbf{Z}}\mathbf{Z}_{\mathfrak{p}})}$ via a non-zero scalar as $\pi$ is generic by \cite{Sha74}.
\end{remark}

\subsubsection{The Local System}
We now introduce the local system on the locally symmetric spaces $X^{\widetilde{G}}_{\widetilde{K}}$ that we will use in our degree shifting argument.

Let $\Lambda=\mathcal{O}$ or $\mathcal{O}/\varpi^m$. Given a smooth $\Lambda$-representation $\sigma$ of $K_{\overline{a}}(0):=\textnormal{GL}_n(\mathcal{O}_{F_a})\times \textnormal{GL}_n(\mathcal{O}_{F_{a^c}})$ on a finite free $\Lambda$-module, we first explain how to lift it to a representation of some deep enough $P$-parahoric subgroup of $\widetilde{G}(\mathcal{O}_{F^+_{\overline{a}}})$. For an integer $t\geq 0$, set 
\begin{equation*}
    K_a(t):=\ker (\textnormal{GL}_n(\mathcal{O}_{F_a})\to\textnormal{GL}_n(\mathcal{O}_{F_a}/\varpi^t_a))
\end{equation*} 
and similarly define $K_{a^c}(t)$ and $K_{\overline{a}}(t)$. For integers $t\geq 0$ and $u\geq \max(t,1)$, we set $\mathcal{P}_{\overline{a}}(t,u)$ to be the preimage under $\iota_{a}\colon\widetilde{G}(F^+_{\overline{a}})\xrightarrow{\sim}\textnormal{GL}_{2n}(F_a)$ of the compact open subgroup
\begin{equation*}
    \left\{\begin{pmatrix}
A & B \\
C & D 
\end{pmatrix}\in \textnormal{GL}_{2n}(\mathcal{O}_{F_a})\mid C\in \varpi_a^uM_{n\times n}(\mathcal{O}_{F_a}),A,D\in K_{a}(t) \right\}.
\end{equation*}
Let now $u\geq 1$ be an integer such that $K_{\overline{a}}(u)$ acts trivially on $\sigma$. Then we set
\begin{equation*}
    \textnormal{Inf}^{\mathcal{P}_{\overline{a}}(0,u)}(\sigma)\in \textnormal{Mod}_{\textnormal{sm}}(\mathcal{P}_{\overline{a}}(0,u),\Lambda)
\end{equation*}
to be the representation defined as follows. The underlying $\Lambda$-module is given by that of $\sigma$ and, for $g=\iota_a^{-1}\left(\begin{pmatrix}
A & B \\
C & D 
\end{pmatrix}\right)\in \mathcal{P}_{\overline{a}}(0,u)$, we set
\begin{equation*}
    \textnormal{Inf}^{\mathcal{P}_{\overline{a}}(0,u)}(\sigma)(g)=\sigma(A,\theta_n^{-1}(D))
\end{equation*}
where\footnote{We note that the embedding $G(F^+_{\overline{a}})\hookrightarrow \textnormal{GL}_{2n}(F_a)$ induced by $\iota_a$ sends $(A,B)$ to $(\theta_n(B),A)$.} $\theta_n\colon\textnormal{GL}_n(F_{a^c})\xrightarrow{\sim}\textnormal{GL}_n(F_{a})$, $A\mapsto (\Psi_n{}^tA^{-1}\Psi_n)^c$. Given another integer $u'\geq u\geq 1$, we have $\textnormal{Res}_{\mathcal{P}(0,u')}(\textnormal{Inf}^{\mathcal{P}_{\overline{a}}(0,u)}(\sigma))=\textnormal{Inf}^{\mathcal{P}_{\overline{a}}(0,u')}(\sigma)$.

Suppose that we are given $\overline{a}\in \overline{S}_p$, a dominant weight $\widetilde{\lambda}\in (\mathbf{Z}_+^{2n})^{\widetilde{I}_p}$, a Bernstein block $\Omega_{\overline{a}}=(\Omega_a,\Omega_{a^c})$ of $\textnormal{Rep}_{\textnormal{sm}}(G(F^+_{\overline{a}}),\overline{\mathbf{Q}}_p)$, and a pair of an $\mathcal{O}$-integral $\Omega_{a}$- and $\Omega_{a^c}$-system $\mathcal{L}_{\overline{a}}=(\mathcal{L}_a,\mathcal{L}_{a^c})$. For the smallest integer $u=u_{\Omega_{\overline{a}}}\geq 1$ such that $K_{\overline{a}}(u)$ acts trivially on $\sigma_{\Omega_{\overline{a}}}$, we define
\begin{align*}
    \mathcal{W}_{\widetilde{\lambda}_{\overline{a}},\mathcal{L}_{\overline{a}}} & :=\textnormal{Inf}^{\mathcal{P}_{\overline{a}}(0,u)}(\sigma_{\Omega_{\overline{a}}}^{\circ})\otimes_{\mathcal{O}}\mathcal{V}_{\widetilde{\lambda}_{\overline{a}}}\in \textnormal{Mod}_{\textnormal{sm}}\left( \mathcal{P}_{\overline{a}}(0,u),\mathcal{O}\right), \\
    \mathcal{W}_{\widetilde{\lambda},\mathcal{L}_{\overline{a}}} &:=\textnormal{Inf}^{\mathcal{P}_{\overline{a}}(0,u)}(\sigma_{\Omega_{\overline{a}}}^{\circ})\otimes_{\mathcal{O}}\mathcal{V}_{\widetilde{\lambda}}\in \textnormal{Mod}_{\textnormal{sm}}\left(\mathcal{P}_{\overline{a}}(0,u)\times\prod_{\overline{v}\in\overline{S}_p\setminus\{\overline{a}\}}\widetilde{G}(\mathcal{O}_{F^+_{\overline{v}}}),\mathcal{O}\right) .
\end{align*}
For any integer $u\geq u_{\Omega_{\overline{a}}}$ and $u\geq t\geq 0$, we further set
\begin{equation*}
    \mathcal{W}_{\widetilde{\lambda}_{\overline{a}},\mathcal{L}_{\overline{a}}}^{t,u}:= \textnormal{Res}_{\mathcal{P}_{\overline{a}}(t,u)}(\mathcal{W}_{\widetilde{\lambda}_{\overline{a},\mathcal{L}_{\overline{a}}}}).
\end{equation*}

\subsubsection{Some explicit Hecke operators}
We first quickly recall the definition of the Hecke polynomials in $\widetilde{\mathbf{T}}^S[X]$ and the effect of the unnormalised Satake transform on them.
For a finite place $\overline{v}\notin \overline{S}$ that is unramified in $F$ with a place $v\notin S$ above it and an integer $1\leq i\leq 2n$, let $\widetilde{T}_{v,i}\in \widetilde{\mathbf{T}}^S$ be the Hecke operator $T_{G,v,i}$ from \cite{New16}, Proposition-Definition 5.2 and set
\begin{equation*}
    \widetilde{P}_v(X)=X^{2n}+\sum_{i=1}^{2n}(-1)^iq_v^{\frac{i(i-1)}{2}}\widetilde{T}_{v,i}X^{2n-i}\in \widetilde{\mathbf{T}}^S[X].
\end{equation*}
We then have
\begin{equation}\label{equation_effectofSatake}
    \mathcal{S}(\widetilde{P}_v(X))=P_v(X)q_v^{n(2n-1)}P_{v^c}^{\vee}(q_v^{1-2n}X)\in \mathbf{T}^S[X]
\end{equation}
where, for a polynomial $f(X)$ of degree $d$ with invertible constant term $a_0$, $f^{\vee}(X):=a_0^{-1}X^df(X^{-1})$.

Finally, at $P$-parahoric level, we introduce the $U_p$-operators we need to define $P$-ordinary parts. We set 
\begin{equation*}
    \widetilde{u}_{a,n}:=\textnormal{diag}(\underbrace{\varpi_a,...,\varpi_a}_{n},1,...,1),\widetilde{u}_{a,2n}:=\textnormal{diag}(\varpi_a,...,\varpi_a)\in \textnormal{GL}_{2n}(F_a).
\end{equation*}
Given a dominant weight $\widetilde{\lambda}\in (\mathbf{Z}_+^{2n})^{\widetilde{I}_p}$ viewed as an $E$-rational character, we further set
\begin{equation*}
    \alpha_{\widetilde{\lambda}_a,n}:=\prod_{\tau:F_a\hookrightarrow E}\widetilde{\lambda}_{\tau}(\tau(\widetilde{u}_{a,n})), \textnormal{ }\alpha_{\widetilde{\lambda}_a,2n}:=\prod_{\tau:F_a\hookrightarrow E}\widetilde{\lambda}_{\tau}(\tau(\widetilde{u}_{a,2n}))\in E.
\end{equation*}
By lowest weight considerations (cf. \cite[Lemma 2.1.17]{CN25}), the elements $\alpha_{\widetilde{\lambda}_{\overline{a}},n}^{-1}\iota_a^{-1}(\widetilde{u}_{a,n})$ and $\alpha_{\widetilde{\lambda}_{\overline{a}},2n}^{-1}\iota_a^{-1}(\widetilde{u}_{a,2n})$ preserve $\mathcal{V}_{\widetilde{\lambda}_{\overline{a}}}\subset V_{\widetilde{\lambda}_{\overline{a}}}$.
In particular, for any Bernstein block $\Omega_{\overline{a}}=(\Omega_a,\Omega_{a^c})$ of $\textnormal{Rep}_{\textnormal{sm}}(G(F^+_{\overline{a}}),\overline{\mathbf{Q}}_p)$, pair of an $\mathcal{O}$-integral $\Omega_{a}$- and $\Omega_{a^c}$-system $\mathcal{L}_{\overline{a}}=(\mathcal{L}_a,\mathcal{L}_{a^c})$,  and integers $u\geq u_{\Omega_{\overline{a}}}$, $u\geq t\geq 0$, we can define the Hecke operators
\begin{align*}
    U_{a,n} &:=\bigg[\mathcal{P}_{\overline{a}}(t,u)\iota^{-1}_a(\widetilde{u}_{a,n})\mathcal{P}_{\overline{a}}(t,u),\id\otimes\big(\alpha_{\widetilde{\lambda}_{\overline{a}},n}^{-1}\iota_a^{-1}(\widetilde{u}_{a,n})\big)\bigg]\in \mathcal{H}(\mathcal{W}_{\widetilde{\lambda}_{\overline{a}},\mathcal{L}_{\overline{a}}}^{t,u}), \\
    U_{a,2n} &:=\bigg[\mathcal{P}_{\overline{a}}(t,u)\iota^{-1}_a(\widetilde{u}_{a,2n})\mathcal{P}_{\overline{a}}(t,u),\id\otimes \big(\alpha_{\widetilde{\lambda}_{\overline{a}},2n}^{-1}\iota_a^{-1}(\widetilde{u}_{a,2n})\big)\bigg]\in \mathcal{H}(\mathcal{W}_{\widetilde{\lambda}_{\overline{a}},\mathcal{L}_{\overline{a}}}^{t,u})^{\times},
\end{align*}
where, as before, $\mathcal{H}(\mathcal{W}_{\widetilde{\lambda}_{\overline{a}},\mathcal{L}_{\overline{a}}}^{t,u})$ denotes the convolution algebra of $\mathcal{W}_{\widetilde{\lambda}_{\overline{a}},\mathcal{L}_{\overline{a}}}^{t,u}$-bi-invariant functions $\widetilde{G}(F^+_{\overline{a}})\to \textnormal{End}_{\mathcal{O}}(\mathcal{W}_{\widetilde{\lambda}_{\overline{a}},\mathcal{L}_{\overline{a}}}^{t,u})$.
In particular, we obtain algebra homomorphisms
\begin{align*}
    \widetilde{\mathbf{T}}^S[U_{a,n},U_{a,2n}^{\pm 1}] &\to \textnormal{End}_{D^+(\mathcal{P}_{\overline{a}}(0,u)/\mathcal{P}_{\overline{a}}(t,u),\mathcal{O}/\varpi^m)}(R\Gamma_{(c)}(\widetilde{X}_{\widetilde{K}^{\overline{a}}\mathcal{P}_{\overline{a}}(t,u)},\mathcal{W}_{\widetilde{\lambda},\mathcal{L}_{\overline{a}}})), \\
    \widetilde{\mathbf{T}}^S[U_{a,n},U_{a,2n}^{\pm 1}] &\to \textnormal{End}_{D^+(\mathcal{P}_{\overline{a}}(0,u)/\mathcal{P}_{\overline{a}}(t,u),\mathcal{O}/\varpi^m)}(R\Gamma(\partial\widetilde{X}_{\widetilde{K}^{\overline{a}}\mathcal{P}_{\overline{a}}(t,u)},\mathcal{W}_{\widetilde{\lambda},\mathcal{L}_{\overline{a}}}))
\end{align*}
and these Hecke actions are compatible with respect to the natural maps between cohomology complexes with and without compact supports and the maps induced by restriction along $\mathcal{P}_{\overline{a}}(t',u')\leq \mathcal{P}_{\overline{a}}(t,u)$.

\subsubsection{Reminder on $P$-ordinary Hida theory}
We give a minimalistic reminder on the notion of $P$-ordinary parts at $\overline{a}\in \overline{S}_p$. For details and the whole picture, we refer to \cite[\S2.2]{CN25} and \cite[\S3]{Hev24}. Let $\widetilde{K}^{\overline{a}}\leq \widetilde{G}(\mathbf{A}_{F^+}^{\infty\cup\{\overline{a}\}})$ be a fixed $\overline{S}$-good subgroup. For integers $u\geq 1$, $u\geq t\geq 0$, set
\begin{equation*}
    \widetilde{K}(t,u):=\widetilde{K}^{\overline{a}}\mathcal{P}_{\overline{a}}(t,u)\leq \widetilde{G}(\mathbf{A}_{F^+}^{\infty}).
\end{equation*}
Given an integer $m\geq 1$ and $\widetilde{\lambda}$, $\Omega_{\overline{a}}$ and $\mathcal{L}_{\overline{a}}$ as before, following \cite{Kha17}, we define
\begin{equation*}
    R\Gamma_{(c)}(\widetilde{X}_{\widetilde{K}(t,u)},\mathcal{W}_{\widetilde{\lambda},\mathcal{L}_{\overline{a}}}/\varpi^m)^{\overline{a}\textnormal{-ord}}\in D^+(\mathcal{P}_{\overline{a}}(0,u)/\mathcal{P}_{\overline{a}}(t,u),\mathcal{O}/\varpi^m)
\end{equation*}
to be the maximal direct summand on which $U_{a,n}$ acts as an automorphism.
Let $\lambda_{\overline{a}}\in (\mathbf{Z}_+^n)^{\Hom(F_a,E)\coprod \Hom(F_{a^c},E)}$ be the dominant weight matching $\widetilde{\lambda}_{\overline{a}}$. The content of the following Proposition is the observation that, as a consequence of $P$-ordinary Hida theory, we have a canonical algebra homomorphism
\begin{equation*}
    \mathcal{H}(\sigma_{\Omega_{\overline{a}},\lambda_{\overline{a}}}^{\circ})\to \textnormal{End}_{D^+(\mathcal{O}/\varpi^m)}(R\Gamma_{(c)}(\widetilde{X}_{\widetilde{K}(0,u)},\mathcal{W}_{\widetilde{\lambda},\mathcal{L}_{\overline{a}}}/\varpi^m)^{\overline{a}\textnormal{-ord}}).
\end{equation*}
To state the Proposition, note that we have a natural $P(\mathcal{O}_{F^+_{\overline{a}}})\cong^{\iota_a} P_{(n,n)}(\mathcal{O}_{F_a})$-equivariant surjection\footnote{See for instance the discussion around \cite[Lemma 2.1.12]{CN25}.}
\begin{equation*}
    \mathcal{V}_{\widetilde{\lambda}_{\overline{a}}}\to \mathcal{V}_{\lambda_a}\otimes_{\mathcal{O}}\theta_n^{-1}\mathcal{V}_{\lambda_{a^c}}.
\end{equation*}
In particular, for every integer $m\geq 1$, 
we obtain a $\mathcal{P}_{\overline{a}}(0,m)$-equivariant surjection
\begin{equation}\label{equation_ModpmSurjection}
    \textnormal{pr}_m\colon\mathcal{W}_{\widetilde{\lambda}_{\overline{a}},\mathcal{L}_{\overline{a}}}^{0,m}/\varpi^m\to \textnormal{Inf}^{\mathcal{P}_{\overline{a}}(0,m)}(\sigma^{\circ}_{\Omega_{\overline{a}},\lambda_{\overline{a}}}/\varpi^m).
\end{equation}
\begin{prop}\label{proposition_OrdinaryPartsHeckeAction}
    Suppose that we are given $\overline{a}\in \overline{S}_p$, $\widetilde{\lambda}\in (\mathbf{Z}_+^{2n})^{\widetilde{I}_p}$, $\Omega_{\overline{a}}=(\Omega_a,\Omega_{a^c})$ of $\textnormal{Rep}_{\textnormal{sm}}(G(F^+_{\overline{a}}),\overline{\mathbf{Q}}_p)$, and $\mathcal{L}_{\overline{a}}=(\mathcal{L}_a,\mathcal{L}_{a^c})$ as before. Moreover, assume we are given integers $m\geq 1$, $u\geq u_{\Omega_{\overline{a}}}$ and set $h:=\max(m,u)$.

    Then the natural maps
    \begin{align*}
        R\Gamma_{(c)}(\widetilde{X}_{\widetilde{K}(0,u)},\mathcal{W}_{\widetilde{\lambda},\mathcal{L}_{\overline{a}}}/\varpi^m)^{\overline{a}\textnormal{-ord}}\to R\Gamma_{(c)}(\widetilde{X}_{\widetilde{K}(0,h)},\mathcal{W}_{\widetilde{\lambda},\mathcal{L}_{\overline{a}}}/\varpi^m)^{\overline{a}\textnormal{-ord}}\to \\
        R\Gamma_{(c)}\bigg(\widetilde{X}_{\widetilde{K}(0,h)},\textnormal{Inf}^{\mathcal{P}_{\overline{a}}(0,h)}\big(\sigma_{\Omega_{\overline{a}},\lambda_{\overline{a}}}^{\circ}/\varpi^m\big)\otimes_{\mathcal{O}}\mathcal{V}_{\widetilde{\lambda}^{\overline{a}}}\bigg)^{\overline{a}\textnormal{-ord}}
    \end{align*}
    induced by restriction and the $\mathcal{P}_{\overline{a}}(0,h)$-equivariant surjection \eqref{equation_ModpmSurjection}, respectively, are isomorphisms. In particular, there are canonical algebra homomorphisms
    \begin{equation*}
        \mathcal{H}(\sigma_{\Omega_{\overline{a}},\lambda_{\overline{a}}}^{\circ})\to \textnormal{End}_{D^+(\mathcal{O}/\varpi^m)}(R\Gamma_{(c)}(\widetilde{X}_{\widetilde{K}(0,u)},\mathcal{W}_{\widetilde{\lambda},\mathcal{L}_{\overline{a}}}/\varpi^m)^{\overline{a}\textnormal{-ord}})
    \end{equation*}
    compatible when changing $m\geq 1$ and $u\geq u_{\Omega_{\overline{a}}}$ and
    commuting with the $\widetilde{\mathbf{T}}^{S}$-action.
\end{prop}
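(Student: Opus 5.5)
The plan is to apply $P$-ordinary Hida theory in the form of \cite[\S2.2]{CN25} and \cite[\S3]{Hev24}, as in \cite{Kha17}: first show that the ordinary part is insensitive to deepening the level in the lower-left block, and then that it is insensitive to replacing the coefficients by their $P$-coinvariant quotient. Once both isomorphisms are in hand, the Hecke action of $\mathcal{H}(\sigma^{\circ}_{\Omega_{\overline{a}},\lambda_{\overline{a}}})$ is transported from the third complex, where it is visible, back to the first.

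\textbf{Step 1: the restriction map.} I would show that restriction from $\widetilde{K}(0,u)$ to $\widetilde{K}(0,h)$ is an isomorphism on $\overline{a}$-ordinary parts. The standard argument factors the operator $U_{a,n}$ at level $\mathcal{P}_{\overline{a}}(0,u)$ through the cohomology at level $\mathcal{P}_{\overline{a}}(0,u+1)$. Concretely, $\iota_a^{-1}(\widetilde{u}_{a,n})^{-1}\mathcal{P}(0,u+1)\iota_a^{-1}(\widetilde{u}_{a,n})$ contains $\mathcal{P}(0,u)$ in the relevant sense, since conjugation multiplies the $C$-block by $\varpi_a$. One then checks that $U_{a,n}$ on either level equals the composite of a trace map and restriction, up to the other. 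It follows that on ordinary parts restriction and $U_{a,n}^{-1}\circ(\textnormal{trace twisted by }U_{a,n})$ are mutually inverse, and induction on $u$ reaches $h$. This uses only that $\textnormal{Inf}^{\mathcal{P}(0,u)}\sigma$ is compatible with restriction, which was noted above, and that $U_{a,n}$ commutes with these maps.

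\textbf{Step 2: the surjection $\textnormal{pr}_m$.} I would show that $\textnormal{pr}_m$ induces an isomorphism on ordinary parts; this is the main obstacle. The kernel of $\mathcal{V}_{\widetilde{\lambda}_{\overline{a}}}\to\mathcal{V}_{\lambda_a}\otimes\theta_n^{-1}\mathcal{V}_{\lambda_{a^c}}$ is spanned by weight spaces on which the normalised element $\alpha^{-1}_{\widetilde{\lambda}_{\overline{a}},n}\iota_a^{-1}(\widetilde{u}_{a,n})$ acts divisibly by $\varpi$. This follows from the lowest-weight considerations of \cite[Lemma 2.1.17]{CN25}: only the $P$-coinvariant piece has unit eigenvalue. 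Hence $U_{a,n}$ is topologically nilpotent on cohomology with coefficients in the kernel tensored with $\textnormal{Inf}(\sigma^\circ_{\Omega_{\overline{a}}})/\varpi^m$. The delicate point is that $U_{a,n}$ acts through a correspondence and not just on coefficients. I would therefore verify, using level $h\ge m$, that on the kernel sub-local-system the composite $U_{a,n}^{m}$ factors through multiplication by $\varpi^m=0$. This is exactly why $h=\max(m,u)$ is needed: the $C$-block is then divisible by $\varpi_a^m$, so the unipotent lower part acts trivially mod $\varpi^m$ after normalisation. The ordinary part of the kernel complex then vanishes. Since taking ordinary parts is exact on triangles (it is an idempotent summand cut out by $U_{a,n}$ in the derived category, as in \cite{Kha17}), $\textnormal{pr}_m$ induces an isomorphism.

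\textbf{Step 3: the Hecke action.} On the target complex the coefficient system at $\overline{a}$ is $\textnormal{Inf}^{\mathcal{P}(0,h)}(\sigma^\circ_{\Omega_{\overline{a}},\lambda_{\overline{a}}}/\varpi^m)$, which is inflated from the Levi. The Hecke algebra $\mathcal{H}(\sigma^{\circ}_{\Omega_{\overline{a}},\lambda_{\overline{a}}})$ of $G(F^+_{\overline{a}})$ acts on it via the $P$-ordinary formalism of \cite[\S3]{Hev24}. For $g$ in the Levi, the operator $[\mathcal{P}g\mathcal{P},\phi]$ is composed with a sufficiently high power of $U_{a,n}$ so that it becomes well defined at parahoric level. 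Multiplicativity and independence of the power on ordinary parts follow because $U_{a,n}$ is central in the relevant monoid. Transporting this action through the isomorphisms of Steps 1 and 2 gives the homomorphism. Compatibility in $m$ and $u$ follows from the naturality of each map, and commutation with $\widetilde{\mathbf{T}}^S$ holds because all maps are prime-to-$\overline{a}$ Hecke equivariant.
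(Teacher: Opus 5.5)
Your proposal is correct and follows the paper's route. The paper cites \cite[Corollary 2.2.9]{CN25} (independence of level) and \cite[Propositions 2.2.12, 2.2.17]{CN25} (independence of weight) for the two isomorphisms, where you sketch proofs instead, and then, as in your Step 3, uses \cite[Lemma 3.8]{Hev24} to map the positive subalgebra $\mathcal{H}(\sigma_{\Omega_{\overline{a}},\lambda_{\overline{a}}}^{\circ})^+$ multiplicatively to the parahoric Hecke algebra and extends this via $\mathcal{H}(\sigma_{\Omega_{\overline{a}},\lambda_{\overline{a}}}^{\circ})=\mathcal{H}(\sigma_{\Omega_{\overline{a}},\lambda_{\overline{a}}}^{\circ})^+[[K_{\overline{a}}\widetilde{u}_{a,n}K_{\overline{a}},\textnormal{id}]^{\pm 1}]$.

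Two small corrections. First, multiplicativity on the positive part is the content of that lemma, not a consequence of the centrality of $U_{a,n}$. Second, the level-change step rests on $\mathcal{P}(0,u)\cap\widetilde{u}_{a,n}^{-1}\mathcal{P}(0,u)\widetilde{u}_{a,n}=\mathcal{P}(0,u+1)$, since conjugation by $\widetilde{u}_{a,n}^{-1}$ shrinks the lower-left block; a literal containment of $\mathcal{P}(0,u)$ in a conjugate of $\mathcal{P}(0,u+1)$ fails.
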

\begin{proof}
    The first map is an isomorphism by the ``independence of level" property in $P$-ordinary Hida theory (cf. \cite[Corollary 2.2.9]{CN25}) and the second map is an isomorphism by the ``independence of weight" property (cf. combination of \cite[Proposition 2.2.12 and Proposition 2.2.17]{CN25}).

    To see the last claim, note that, by \cite[Lemma 3.8]{Hev24}, there is an algebra homomorphism
    \begin{align*}
        \mathcal{H}(\sigma_{\Omega_{\overline{a}},\lambda_{\overline{a}}}^{\circ})^+ &\to \mathcal{H}(\textnormal{Inf}^{\mathcal{P}_{\overline{a}}(0,h)}(\sigma_{\Omega_{\overline{a},\lambda_{\overline{a}}}}^{\circ}/\varpi^m)) \\
        [K_{\overline{a}}gK_{\overline{a}},\psi] &\mapsto [\mathcal{P}_{\overline{a}}(0,h)g\mathcal{P}_{\overline{a}}(0,h),\textnormal{Inf}^{\mathcal{P}_{\overline{a}}(0,h)}\psi],
    \end{align*}
    where $ \mathcal{H}(\sigma^{\circ}_{\Omega_{\overline{a}},\lambda_{\overline{a}}})^+\leq  \mathcal{H}(\sigma_{\Omega_{\overline{a}},\lambda_{\overline{a}}}^{\circ})$ is the subalgebra of functions supported on
    \begin{equation*}
        G(F^+_{\overline{a}})^+:=\{g\in G(F^+_{\overline{a}})\mid gU(\mathcal{O}_{F^+_{\overline{a}}})g^{-1}\leq U(\mathcal{O}_{F^+_{\overline{a}}}), g^{-1}\overline{U}(\mathcal{O}_{F^+_{\overline{a}}})g\leq \overline{U}(\mathcal{O}_{F^+_{\overline{a}}})\}.
    \end{equation*}
    Moreover, one has $\mathcal{H}(\sigma_{\Omega_{\overline{a}},\lambda_{\overline{a}}}^{\circ})=\mathcal{H}(\sigma_{\Omega_{\overline{a}},\lambda_{\overline{a}}}^{\circ})^+[[K_{\overline{a}}\widetilde{u}_{a,n}K_{\overline{a}},\textnormal{id}]^{\pm 1}]$ and $[K_{\overline{a}}\widetilde{u}_{a,n}K_{\overline{a}},\textnormal{id}]$ acts invertibly on ordinary parts by definition.
\end{proof}
We finally note that the discussion holds equally well for the cohomology complex of the boundary $\partial\widetilde{X}_{\widetilde{K}(0,u)}$, and the Hecke actions introduced at $\overline{a}$ are compatible with the natural maps between these complexes.

\subsubsection{Rational $P$-ordinary middle degree cohomology}
Let $d$ denote the middle degree $\frac{1}{2}\dim_{\mathbf{R}}X^{\widetilde{G}}$ for $\widetilde{G}$.
Suppose that we are given $\overline{a}\in \overline{S}_p$, a dominant weight $\widetilde{\lambda}\in (\mathbf{Z}_+^{2n})^{\widetilde{I}_p}$, a Bernstein block $\Omega_{\overline{a}}=(\Omega_a,\Omega_{a^c})$, and a pair of an $\mathcal{O}$-integral $\Omega_{a}$- and $\Omega_{a^c}$-system $\mathcal{L}_{\overline{a}}=(\mathcal{L}_a,\mathcal{L}_{a^c})$, as before. For a fixed finite set of finite places $\overline{S}$ of $F^+$ containing $\overline{S}_p$, and $\lambda_{\overline{a}}=(\lambda_a,\lambda_{a^c})\in (\mathbf{Z}_+^n)^{\Hom(F_a,E)\times \Hom(F_{a^c},E)}$ matching $\widetilde{\lambda}_{\overline{a}}$, we define
\begin{equation*}
    \widetilde{\mathbf{T}}_a:=\widetilde{\mathbf{T}}^S\otimes_{\mathcal{O}}\mathfrak{Z}_{\textnormal{GL}_n(F_a),\mathcal{L}_a,\lambda_a}^{\circ}.
\end{equation*}
Given an $S$-good subgroup $\widetilde{K}^{\overline{a}}\leq \widetilde{G}(\mathbf{A}_{F^+}^{\infty\cup\{\overline{a}\}})$ and integer $u\geq u_{\Omega_{\overline{a}}}$, we would like to describe the  $\widetilde{\mathbf{T}}_a[1/p]$-module
\begin{equation*}
    H^d(\widetilde{X}_{\widetilde{K}(0,u)},\mathcal{W}_{\widetilde{\lambda},\mathcal{L}_{\overline{a}}})^{\overline{a}\textnormal{-ord}}[1/p]
\end{equation*}
in terms of cohomological cusp forms for $\widetilde{G}$.
The following definition plays a similar role here to \cite[Definition 4.3.5]{10author}. 
\begin{definition}
    Let $\widetilde{\lambda} \in (\mathbf{Z}^{2n}_+)^{\widetilde{I}_p}$. We say that $\widetilde{\lambda}$ is cohomologically cuspidal if it satisfies the following condition: for all proper standard parabolic subgroups $Q \leq  \widetilde{G}$, for all $k \in \{1, \dots, n\}$, and for all $w \in \widetilde{W}^{M_Q}$, there exist $\tau, \tau' \in \widetilde{I}_p$ such that
    \[ (w \cdot \widetilde{\lambda})_{\tau, 2n} - (w \cdot \widetilde{\lambda})_{\tau, k} \neq (w \cdot \widetilde{\lambda})_{\tau', 2n} - (w \cdot \widetilde{\lambda})_{\tau', k}. \]
\end{definition}
Here $\widetilde{W} = W( (\Res_{F^+ / \mathbf{Q}} \widetilde{G})_E, (\Res_{F^+ / \mathbf{Q}} \widetilde{T})_E)$ denotes the absolute Weyl group of the reductive group $\Res_{F^+ / \mathbf{Q}} \widetilde{G}$ over $\mathbf{Q}$. If $Q < \widetilde{G}$ is a proper standard  parabolic subgroup, then there is an isomorphism 
\[
M_Q \cong \Res_{F/F^+} \GL_{n_1} \times \Res_{F/F^+} \GL_{n_2} \times \dots \times \Res_{F/F^+} \GL_{n_{r-1}} \times \widetilde{G}_{n_r},
\]
where $\widetilde{G}_{n_r}$ is a quasi-split unitary group in $2n_r$ variables.
The point of the definition is that a cuspidal, regular algebraic automorphic representation $\sigma = \sigma_1 \otimes \dots \otimes \sigma_r$ of $M_Q(\mathbf{A}_{F^+})$ 
cannot have weight $w \cdot \widetilde{\lambda}$, since $\sigma_1$ would have weight $\lambda \in (\mathbf{Z}^{n_1}_+)^{\Hom(F,E)}$ given by
\[
    \lambda_{\tau, i} = (w \cdot \widetilde{\lambda})_{\tau, 2n - n_1 + i}, \qquad
    \lambda_{\tau c, i} = -(w \cdot \widetilde{\lambda})_{\tau, n_1 + 1 - i},
\]
for $i = 1, \dots, n_1$ and $\tau \in \widetilde{I}_p$. This contradicts Clozel's purity lemma, which implies that $\lambda_{\tau, n_1} + \lambda_{\tau c, 1} = (w \cdot \widetilde{\lambda})_{\tau, 2n} - (w \cdot \widetilde{\lambda})_{\tau, n_1}$ is an integer independent of $\tau$.
Therefore, any regular algebraic automorphic representation of $\widetilde{G}(\mathbf{A}_{F^+})$ of cohomologically cuspidal weight $\widetilde{\lambda}$ is cuspidal.

\begin{lemma}\label{lem_existence_of_cohomologically_cuspidal_weights_new}
    Let $\widetilde{\lambda} \in (\mathbf{Z}^{2n}_+)^{\widetilde{I}_p}$, let $\tau_0 \in \widetilde{I}_p$, and suppose that $[F^+ : \mathbf{Q}] > 1$. Then there exists $\widetilde{\lambda}' \in (\mathbf{Z}^{2n}_+)^{\widetilde{I}_p}$ satisfying the following conditions:
    \begin{enumerate}
        \item $\widetilde{\lambda}'$ is cohomologically cuspidal.
        \item $\widetilde{\lambda}'_\tau = \widetilde{\lambda}_\tau$ for all $\tau \neq \tau_0$.
    \end{enumerate}
\end{lemma}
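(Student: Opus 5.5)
The plan is to modify only the $\tau_0$-component of $\widetilde{\lambda}$, pushing it into a region far from all the other components so that every required inequality holds for the pair $(\tau_0, \tau)$ with any fixed $\tau \neq \tau_0$. Such a $\tau$ exists because $[F^+:\mathbf{Q}] > 1$ gives $\# \widetilde{I}_p = [F^+:\mathbf{Q}] \geq 2$. Condition (2) is then automatic. Concretely, I would set
\[ \widetilde{\lambda}'_{\tau_0} = \widetilde{\lambda}_{\tau_0} + (A_1, A_2, \dots, A_{2n}), \]
with integers $A_1 \geq A_2 \geq \dots \geq A_{2n}$ chosen so that the gaps $A_i - A_{i+1}$ are huge and generic. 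This keeps $\widetilde{\lambda}'$ dominant.

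The key observation is that for $w \in \widetilde{W}$ the dotted action splits as a product over embeddings: $(w \cdot \widetilde{\lambda}')_\tau = w_\tau(\widetilde{\lambda}'_\tau + \rho) - \rho$. So for each fixed $\tau$, the difference $(w \cdot \widetilde{\lambda}')_{\tau, 2n} - (w \cdot \widetilde{\lambda}')_{\tau, k}$ equals $(\widetilde{\lambda}'_\tau + \rho)_{w_\tau^{-1}(2n)} - (\widetilde{\lambda}'_\tau + \rho)_{w_\tau^{-1}(k)}$. Since $k \neq 2n$ and $w_\tau$ is a permutation, this is a difference of two \emph{distinct} coordinates of $\widetilde{\lambda}'_\tau + \rho$. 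There are only finitely many possible values: for each $\tau$, at most $2n(2n-1)$ values as $w$ ranges over the finite set $\widetilde{W}^{M_Q}$ and $Q$ over the finitely many standard parabolics.

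Now fix $\tau_1 \neq \tau_0$ and let $B$ be the maximum of the absolute values of the finitely many numbers $(\widetilde{\lambda}_{\tau_1} + \rho)_i - (\widetilde{\lambda}_{\tau_1} + \rho)_j$. If I choose $\widetilde{\lambda}'_{\tau_0}$ so that every gap between consecutive entries of $\widetilde{\lambda}'_{\tau_0} + \rho$ exceeds $B$ (for example $A_i = (2n - i)C$ with $C > B$), then every difference of distinct coordinates of $\widetilde{\lambda}'_{\tau_0} + \rho$ has absolute value greater than $B$. Hence, for every $Q$, $k$ and $w$, the quantity at $\tau = \tau_0$ cannot equal the quantity at $\tau' = \tau_1$. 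This verifies (1) with the pair $(\tau_0, \tau_1)$.

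The only point needing care, and the one I expect to be the (mild) main obstacle, is confirming that the dotted action of $\widetilde{W}$, and the indexing $(w \cdot \widetilde{\lambda})_{\tau, i}$ used in the definition of cohomological cuspidality, really decompose embedding-by-embedding in the identification $(\mathbf{Z}^{2n})^{\widetilde{I}_p}$. That is, I need $\widetilde{W} \cong \prod_{\tau \in \widetilde{I}_p} S_{2n}$, acting factorwise, with $\rho$ factorwise as well. This follows from the splitting $(\Res_{F^+/\mathbf{Q}} \widetilde{G})_E \cong \prod_{\tau \in \widetilde{I}_p} \GL_{2n}$ under Assumption \ref{assumption_CM}. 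Beyond that, the argument is a finite avoidance argument, and no other earlier result is needed.
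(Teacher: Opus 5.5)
Your proposal is correct and is essentially the paper's argument: the paper likewise changes only the $\tau_0$-component, adding $a\cdot((2n-1),(2n-3),\dots,(1-2n))$ for $a\gg 0$. This shifts the $\tau_0$-difference by a nonzero multiple $r(k,w)a$, so for large $a$ it can no longer agree with the unchanged difference at any other $\tau$, which exists because $\#\widetilde{I}_p=[F^+:\mathbf{Q}]\geq 2$.

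One small correction: $(w\cdot\widetilde{\lambda}')_{\tau,2n}-(w\cdot\widetilde{\lambda}')_{\tau,k}$ equals your difference of two coordinates of $\widetilde{\lambda}'_\tau+\rho$ plus the constant $\rho_k-\rho_{2n}$. Since that constant does not depend on $\tau$, it cancels when you compare $\tau_0$ with $\tau_1$, so your argument goes through unchanged.
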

\begin{proof}
    Define $\widetilde{\lambda}'$ by $\widetilde{\lambda}'_\tau = \widetilde{\lambda}_\tau$ if $\tau \neq \tau_0$, and 
    \[ \widetilde{\lambda}'_{\tau_0} = \widetilde{\lambda}_{\tau_0} + ((2n-1)a, (2n-3)a, \dots, (1-2n) a) \]
    for some integer $a > 0$. If $Q$ is a proper standard parabolic subgroup of $\widetilde{G}$, $k \in \{1, \dots, n\}$, and $w \in \widetilde{W}^{M_Q}$, then we have
    \[ (w \cdot \widetilde{\lambda}')_{\tau_0, 2n} - (w \cdot \widetilde{\lambda}')_{\tau_0, k} = (w \cdot \widetilde{\lambda})_{\tau_0, 2n} - (w \cdot \widetilde{\lambda})_{\tau_0, k} + r(k, w) a, \]
    where $r(k, w)$ is a non-zero integer that depends on $k$ and $w$.
    In particular, if we choose $a$ to be larger than all the finitely many integers
    \[ |((w \cdot \widetilde{\lambda})_{\tau, 2n} - (w \cdot \widetilde{\lambda})_{\tau, k}) - ((w \cdot \widetilde{\lambda})_{\tau_0, 2n} - (w \cdot \widetilde{\lambda})_{\tau_0, k})|, \]
    as $w$ ranges over the sets $\widetilde{W}^{M_Q}$, $k$ ranges over $\{1, \dots, n\}$, and $\tau$ ranges over the non-empty set $\widetilde{I}_p - \{ \tau_0 \}$, then $\widetilde{\lambda}'$ will be cohomologically cuspidal and have the desired form. 
\end{proof}

To proceed further, we also need to discuss $P$-ordinary parts in characteristic $0$. To ease notation, set $\widetilde{\textnormal{G}}:=\textnormal{GL}_{2n}(F_a)$, $\textnormal{P}:=P_{(n,n)}(F_a)$, $\textnormal{K}:=\textnormal{GL}_n(\mathcal{O}_{F_a})\times \textnormal{GL}_n(\mathcal{O}_{F_a})\leq\textnormal{G}:=\textnormal{GL}_n(F_a)\times\textnormal{GL}_n(F_a)$ and $\textnormal{U}^0:=N_{(n,n)}(\mathcal{O}_{F_a})\leq\textnormal{U}:=N_{(n,n)}(F_a)$. Moreover, for integers $u\geq t\geq 0$, $u\geq 1$, write $\mathcal{P}(t,u)\leq \textnormal{GL}_{2n}(\mathcal{O}_{F_a})$ to be the subgroup of matrices that are block upper-triangular modulo $\varpi^u$ and strictly block upper-triangular modulo $\varpi^t$ with blocks of size $n$. For $\widetilde{\mu}\in (\mathbf{Z}_+^{2n})^{\Hom(F_a,E)}$, introduce the double coset operators
\begin{align*}
    U^{t,u}_{n,\widetilde{\mu}}&:=\alpha_{\widetilde{\mu},n}^{-1}[\mathcal{P}(t,u)\widetilde{u}_{a,n}\mathcal{P}(t,u)] \\
    U^{t,u}_{2n,\widetilde{\mu}}&:=\alpha_{\widetilde{\mu},2n}^{-1}[\mathcal{P}(t,u)\widetilde{u}_{a,2n}\mathcal{P}(t,u)].
\end{align*}
Let $\widetilde{\Delta}^{u}\leq \widetilde{\textnormal{G}}$ be a submonoid containing the monoid $\widetilde{\Delta}_0^u:=\langle\mathcal{P}(0,u),\widetilde{u}_{a,n},\widetilde{u}_{a,2n}^{\pm1}\rangle$.
For a smooth admissible $E$-representation $\pi$ of $\widetilde{\Delta}^u$, we write
\begin{equation*}
    \pi^{\mathcal{P}(t,u),\widetilde{\mu}\textnormal{-ord}}\hookrightarrow \pi^{\mathcal{P}(t,u)}
\end{equation*}
for the maximal direct summand consisting of generalised eigenspaces of $\pi^{\mathcal{P}(t,u)}$ with respect to the action of $U^{t,u}_{n,\widetilde{\mu}}$ and $U^{t,u}_{2n,\widetilde{\mu}}$ with eigenvalues in $\overline{\mathbf{Z}}_p^{\times}$. One sees easily that, for $u'\geq t'$, $t'\geq t$, $u'\geq u$, 
\begin{equation}\label{equation_rationalindependenceoflevel}
    (\pi^{\mathcal{P}(t',u'),\widetilde{\mu}\textnormal{-ord}})^{\mathcal{P}(t,u)}=\pi^{\mathcal{P}(t,u),\widetilde{\mu}\textnormal{-ord}}.
\end{equation} 
Set \begin{equation*}
    \Delta^+_0=\widetilde{\Delta}_0^u\cap\textnormal{G}\leq \Delta^+:=\{g\in \widetilde{\Delta}^u\cap\textnormal{G}\mid g\textnormal{U}^0g^{-1}\leq \textnormal{U}^0,g^{-1}\overline{\textnormal{U}}^0g\leq \overline{\textnormal{U}}^0\}
\end{equation*}
and $\Delta_0:=\langle\Delta^+_0,\widetilde{u}_{a,n}^{-1}\rangle\leq \Delta:=\langle\Delta^+,\widetilde{u}_{a,n}^{-1}\rangle$.
We set
\begin{equation*}
    \pi^{\textnormal{P-}\widetilde{\mu}\textnormal{-ord}}:=\varinjlim_{t\to \infty}\pi^{\mathcal{P}(t,t),\widetilde{\mu}\textnormal{-ord}}\in \textnormal{Rep}_{\textnormal{sm}}(\Delta,E).
\end{equation*}
We finally note that, for $0\leq t\leq u$ and $\textnormal{K}(t):=\ker\Big(\textnormal{K}\to \textnormal{GL}_n(\mathcal{O}_{F_a}/\varpi^t_a)\times\textnormal{GL}_n(\mathcal{O}_{F_a}/\varpi^t_a)\Big)$, we have
\begin{equation}\label{equation_rationalordinaryinfiniteleveltofinitelevel}
    (\pi^{\textnormal{P-}\widetilde{\mu}\textnormal{-ord}})^{\textnormal{K}(t)}=\pi^{\mathcal{P}(t,u),\widetilde{\mu}\textnormal{-ord}}.
\end{equation}

Now let $\pi$ be a smooth admissible $E$-representation of $\widetilde{\textnormal{G}}$, and $\sigma$ be a smooth irreducible $E$-representation of $\textnormal{K}$ inflated from $\textnormal{K}/\textnormal{K}(u)$ and treat it as a $\Delta_0^+$-module via inflation. Write $\textnormal{Inf}^u\sigma$ for the induced $\widetilde{\Delta}^u_0$-module obtained by inflation along $\widetilde{\Delta}^u_0\to\widetilde{\Delta}^u_0/\mathcal{P}(u,u)=\Delta^+_0/\textnormal{K}(u)$. We then have a canonical identification
\begin{equation}\label{equation_infinitelvl_to_finitelvl}
    (\pi\otimes_E\textnormal{Inf}^u\sigma)^{\mathcal{P}(0,u),\widetilde{\mu}\textnormal{-ord}}=((\pi\otimes_E\textnormal{Inf}^u\sigma)^{\textnormal{P-}\widetilde{\mu}\textnormal{-ord}})^{\textnormal{K}}=(\pi^{\textnormal{P-}\widetilde{\mu}\textnormal{-ord}}\otimes_E\sigma)^{\textnormal{K}} 
\end{equation}
of sub-$E$-vector spaces of $\varinjlim_{t\to \infty}(\pi\otimes_E\textnormal{Inf}^u\sigma) ^{\mathcal{P}(t,t)}=(\varinjlim_{t\to \infty}\pi^{\mathcal{P}(t,t)})\otimes_E\sigma$.
Moreover, for $g\in \textnormal{G}^+$ and $\psi\in \Hom_{\textnormal{K}\cap g^{-1}\textnormal{K}g}(\sigma,g^{\ast}\sigma)$, \eqref{equation_infinitelvl_to_finitelvl} intertwines the action of $[\mathcal{P}(0,u)g\mathcal{P}(0,u),\textnormal{Inf}^u\psi]$ on the source with the action of $[\textnormal{K}g\textnormal{K},\psi]$ on the target. Let $\mathcal{H}(\sigma)^+\leq \mathcal{H}(\sigma)$ denote the subalgebra of functions supported on $\textnormal{G}^+$ and note that, as $\textnormal{G}=\langle \textnormal{G}^+,\widetilde{u}_{a,n}^{-1}\rangle$, we have $\mathcal{H}(\sigma)=\mathcal{H}(\sigma)^+[[\textnormal{K}\widetilde{u}_{a,n}\textnormal{K},\textnormal{id}]^{\pm1}]$. Therefore, $(\pi\otimes_E\textnormal{Inf}^u\sigma)^{\mathcal{P}(0,u),\widetilde{\mu}\textnormal{-ord}}$ admits a canonical action of $\mathcal{H}(\sigma)$ compatible with \eqref{equation_infinitelvl_to_finitelvl}.

We now apply this discussion to the admissible $E$-representation
\begin{equation*}
    H:=\iota_a^{-1}\bigg(\varinjlim_{\widetilde{K}_{\overline{a}}}H^{\ast}(\widetilde{X}_{\widetilde{K}^{\overline{a}}\widetilde{K}_{\overline{a}}},V_{\widetilde{\lambda}})\bigg)
\end{equation*}
of $\widetilde{\textnormal{G}}$.
\begin{lemma}
    There is a $\widetilde{\mathbf{T}}_a[1/p]$-equivariant isomorphism
    \begin{align}\label{equation_compatibilityofordinaryheckeactions}
    \begin{split}
        H^{\ast}(\widetilde{X}_{\widetilde{K}(0,u)}, \mathcal{W}_{\widetilde{\lambda},\mathcal{L}_{\overline{a}}})^{\overline{a}\textnormal{-ord}}[1/p]\xrightarrow{\sim} \\
        \Hom_{\textnormal{K}}\bigg((\sigma_{\Omega_a}^{\circ}[1/p])^{\vee}\otimes_E (\theta_n^{-1}\sigma_{\Omega_{a^c}}^{\circ}[1/p])^{\vee},H^{\textnormal{P-}\widetilde{\lambda}_a\textnormal{-ord}}\bigg)
    \end{split}
    \end{align}
    where, on the LHS we act at $a$ via the action induced by Proposition \ref{proposition_OrdinaryPartsHeckeAction} and on the RHS via the natural action of the centre $\mathfrak{Z}_{\textnormal{GL}_n(F_a),\mathcal{L}_a,\lambda_a}^{\circ}$.
\end{lemma}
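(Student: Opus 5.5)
The isomorphism will be assembled as a composite of three identifications, each of which is already available in the paper, after which the Hecke actions are compared step by step.

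\textbf{Step 1: pass to characteristic zero and finite level.} Inverting $p$ commutes with taking ordinary parts, since these are cut out as direct summands by the idempotent attached to $U_{a,n}$. The lattice $\mathcal{W}_{\widetilde{\lambda},\mathcal{L}_{\overline{a}}}[1/p]$ equals $\textnormal{Inf}^{\mathcal{P}_{\overline{a}}(0,u)}(\sigma_{\Omega_{\overline{a}}}^{\circ}[1/p])\otimes V_{\widetilde{\lambda}}$. Since $\mathcal{P}_{\overline{a}}(0,u)$ is compact, Hochschild--Serre with rational coefficients degenerates. We therefore obtain
\[ H^{\ast}(\widetilde{X}_{\widetilde{K}(0,u)},\mathcal{W}_{\widetilde{\lambda},\mathcal{L}_{\overline{a}}})[1/p]\cong \big(H\otimes_E \textnormal{Inf}^u\sigma\big)^{\mathcal{P}(0,u)}, \]
where $\sigma=\sigma_{\Omega_a}^{\circ}[1/p]\otimes\theta_n^{-1}\sigma_{\Omega_{a^c}}^{\circ}[1/p]$, transported via $\iota_a$. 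This isomorphism is $\widetilde{\mathbf{T}}^S$-equivariant by construction.

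\textbf{Step 2: match the normalised ordinary projectors.} Under the isomorphism of Step 1, the operators $U_{a,n}$ and $U_{a,2n}$ must be identified with $U^{0,u}_{n,\widetilde{\lambda}_a}$ and $U^{0,u}_{2n,\widetilde{\lambda}_a}$. The normalisations $\alpha^{-1}$ agree by definition, and the factor $\textnormal{id}$ on the smooth part agrees with the inflation.

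One point needs care: integrally, the ordinary part is defined as the summand on which $U_{a,n}$ acts invertibly, whereas rationally it is defined via generalised eigenvalues in $\overline{\mathbf{Z}}_p^{\times}$. The integral cohomology modulo torsion is a $U_{a,n}$-stable lattice, and $U_{a,2n}$ is a unit there. Hence the invertible-summand of the lattice, after inverting $p$, is exactly the span of the generalised eigenspaces with unit eigenvalues, and these coincide. This gives
\[ H^{\ast}(\ldots)^{\overline{a}\textnormal{-ord}}[1/p]\cong (H\otimes\textnormal{Inf}^u\sigma)^{\mathcal{P}(0,u),\widetilde{\lambda}_a\textnormal{-ord}}. \]
Applying \eqref{equation_infinitelvl_to_finitelvl} then identifies the right-hand side with $(H^{\textnormal{P-}\widetilde{\lambda}_a\textnormal{-ord}}\otimes\sigma)^{\textnormal{K}}$, which equals $\Hom_{\textnormal{K}}(\sigma^{\vee},H^{\textnormal{P-}\widetilde{\lambda}_a\textnormal{-ord}})$. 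This is the stated target, because $\sigma^{\vee}=(\sigma_{\Omega_a}^{\circ}[1/p])^{\vee}\otimes(\theta_n^{-1}\sigma_{\Omega_{a^c}}^{\circ}[1/p])^{\vee}$.

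\textbf{Step 3: compatibility of the Hecke action at $a$.} This is the main obstacle. On the left-hand side, the action of $\mathcal{H}(\sigma_{\Omega_{\overline{a}},\lambda_{\overline{a}}}^{\circ})$ comes from Proposition \ref{proposition_OrdinaryPartsHeckeAction}. That action is built by changing weight via $\textnormal{pr}_m$ and by the homomorphism $[K g K,\psi]\mapsto[\mathcal{P}g\mathcal{P},\textnormal{Inf}\,\psi]$ on the positive part $\mathcal{H}^+$, extended by inverting $[K\widetilde{u}_{a,n}K,\textnormal{id}]$. I would check that, after inverting $p$, this matches the action of $\mathcal{H}(\sigma)^+[[\textnormal{K}\widetilde{u}_{a,n}\textnormal{K},\textnormal{id}]^{\pm1}]$ described after \eqref{equation_infinitelvl_to_finitelvl}. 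On $\mathcal{H}^+$ the match holds by the stated intertwining property. The weight-change step needs separate care: rationally, $V_{\widetilde{\lambda}_{\overline{a}}}$ projects onto its $\textnormal{U}$-coinvariants $V_{\lambda_a}\otimes\theta_n^{-1}V_{\lambda_{a^c}}$, and on ordinary parts this projection is an isomorphism by the independence-of-weight theorem of \cite{CN25}. Once that is verified, the inverse of the $U_{a,n}$-operator matches automatically, since both are inverses of matching operators.

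Finally, the centre $\mathfrak{Z}_{\textnormal{GL}_n(F_a),\mathcal{L}_a,\lambda_a}^{\circ}$ lies in $\mathcal{H}(\sigma_{\Omega_a,\lambda_a})$, acting through the $a$-factor. On $\Hom_{\textnormal{K}}(\sigma^{\vee},-)$, the Hecke algebra acts via Corollary \ref{corollary_equiv_with_cat_of_Heckemodules}, so the centre acts there through its natural action on the $\textnormal{GL}_n(F_a)$-representation $H^{\textnormal{P-}\widetilde{\lambda}_a\textnormal{-ord}}$, restricted to the first factor of $\textnormal{G}$. Together with Step 2, this gives the claimed $\widetilde{\mathbf{T}}_a[1/p]$-equivariance.
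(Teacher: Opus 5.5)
\textbf{Steps 1 and 2.} These match the paper's first two steps and are fine. You write the rational cohomology at level $\widetilde K(0,u)$ as $(H\otimes_E\textnormal{Inf}^u\sigma)^{\mathcal{P}(0,u)}$. You identify the integral ordinary part after inverting $p$ with the rational $\widetilde{\lambda}_{\overline a}$-ordinary part. You then apply \eqref{equation_infinitelvl_to_finitelvl}.

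\textbf{The gap in Step 3.} Step 3 carries the actual content of the lemma. The action at $a$ on the left-hand side is \emph{defined} in two stages. First, the operators $[\mathcal{P}(0,h)g\mathcal{P}(0,h),\textnormal{Inf}^{\mathcal{P}_{\overline a}(0,h)}\psi]$ act on mod $\varpi^m$ cohomology with a different coefficient system at $\overline a$, and are transported through the isomorphism $\textnormal{pr}_m\circ\textnormal{res}$ of Proposition \ref{proposition_OrdinaryPartsHeckeAction}. Second, these actions are assembled over $m$.

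The intertwining property stated after \eqref{equation_infinitelvl_to_finitelvl} concerns only the rational operators $[\mathcal{P}(0,u)g\mathcal{P}(0,u),\textnormal{Inf}^u\psi]$ on weight-$\widetilde\lambda$ cohomology. So your claim that ``on $\mathcal{H}^+$ the match holds by the stated intertwining property'' presupposes that the two actions agree, which is exactly what has to be proved.

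Your proposed bridge, a characteristic-zero independence of weight, does not exist. The projection $V_{\widetilde\lambda_{\overline a}}\to V_{\lambda_a}\otimes\theta_n^{-1}V_{\lambda_{a^c}}$ is only $P(\mathcal{O}_{F_a})$-equivariant. The map $\left(\begin{smallmatrix}A&B\\C&D\end{smallmatrix}\right)\mapsto(A,D)$ is a homomorphism on $\mathcal{P}_{\overline a}(0,h)$ only modulo $\varpi_a^h$. Hence $V_{\lambda_{\overline a}}$ cannot be inflated to any $\mathcal{P}_{\overline a}(0,h)$; only $\mathcal{V}_{\lambda_{\overline a}}/\varpi^m$ can, and only with $h\geq m$. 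This is why the level in \eqref{equation_ModpmSurjection} grows with $m$, and why the independence of weight in \cite{CN25} is a torsion statement. In any case, bijectivity of the weight change is already given by the Proposition; what is needed is its Hecke-equivariance.

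\textbf{The missing argument.} The argument is integral and then passes to the limit.
\begin{enumerate}
\item Take $[\textnormal{K}g\textnormal{K},\psi]\in\mathcal{H}(\sigma)^+\cap\mathcal{H}(\sigma^\circ_{\Omega_{\overline a},\lambda_{\overline a}})$. Choose $M$ large enough that $p^M(\textnormal{Inf}^u\psi)\otimes g$ preserves the lattice $\mathcal{W}_{\widetilde\lambda_{\overline a},\mathcal{L}_{\overline a}}$.
\item Then $[\mathcal{P}(0,u)g\mathcal{P}(0,u),p^M\textnormal{Inf}^u\psi\otimes g]$ acts on $H^{\ast}(\widetilde X_{\widetilde K(0,u)},\mathcal{W}_{\widetilde\lambda,\mathcal{L}_{\overline a}})$. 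After inverting $p$ it recovers $p^M[\mathcal{P}(0,u)g\mathcal{P}(0,u),\textnormal{Inf}^u\psi]$.
\item Check directly, using $g\in\textnormal{G}^+$, that for every $m\geq u$ the composite $\textnormal{pr}_m\circ\textnormal{res}$ intertwines two operators: this one, on mod $\varpi^m$ cohomology at level $\mathcal{P}(0,u)$, and $[\mathcal{P}(0,m)g\mathcal{P}(0,m),p^M\textnormal{Inf}^{\mathcal{P}_{\overline a}(0,m)}(\psi\otimes g)]$ at level $\mathcal{P}(0,m)$. The latter is by definition the action of $p^M[\textnormal{K}g\textnormal{K},\psi]$ from Proposition \ref{proposition_OrdinaryPartsHeckeAction}.
\item Pass to the limit over $m$, invert $p$ and divide by $p^M$. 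This gives agreement on $\mathcal{H}(\sigma)^+$.
\end{enumerate}
Your remark that $[\textnormal{K}\widetilde u_{a,n}\textnormal{K},\textnormal{id}]$ is invertible on both sides then extends the agreement to all of $\mathcal{H}(\sigma)$, and hence to the centre.
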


\begin{proof}
    Set $\sigma:= \sigma_{\Omega_a}^{\circ}[1/p]\otimes_E\theta_n^{-1}\sigma_{\Omega_{a^c}}^{\circ}[1/p]$. By the discussion prior to the Lemma, we have
    \begin{equation*}
        H^{\ast}(\widetilde{X}_{\widetilde{K}(0,u)},\mathcal{W}_{\widetilde{\lambda},\mathcal{L}_{\overline{a}}}[1/p])^{\widetilde{\lambda}_{\overline{a}}\textnormal{-ord}}=(H\otimes_E\textnormal{Inf}^u\sigma)^{\mathcal{P}(0,u),\widetilde{\lambda}_{\overline{a}}\textnormal{-ord}}=\Hom_{\textnormal{K}}(\sigma^{\vee},H^{\textnormal{P-}\widetilde{\lambda}_{\overline{a}}\textnormal{-ord}})
    \end{equation*}
    under which the action of $[\textnormal{K}g\textnormal{K},\psi]\in\mathcal{H}(\sigma)^+$ on the RHS matches the action of $[\mathcal{P}(0,u)g\mathcal{P}(0,u),\textnormal{Inf}^u\psi]$ on the LHS. Moreover, unravelling the definitions, we see that
    \begin{equation*}
        H^{\ast}(\widetilde{X}_{\widetilde{K}(0,u)},\mathcal{W}_{\widetilde{\lambda},\mathcal{L}_{\overline{a}}})^{\overline{a}\textnormal{-ord}}[1/p]=H^{\ast}(\widetilde{X}_{\widetilde{K}(0,u)},\mathcal{W}_{\widetilde{\lambda},\mathcal{L}_{\overline{a}}}[1/p])^{\widetilde{\lambda}_{\overline{a}}\textnormal{-ord}}
    \end{equation*}
    as $\mathcal{H}(\textnormal{Inf}^u\sigma)$-submodules of $H^{\ast}(\widetilde{X}_{\widetilde{K}(0,u)},\mathcal{W}_{\widetilde{\lambda},\mathcal{L}_{\overline{a}}}[1/p])$. In particular, we obtain \eqref{equation_compatibilityofordinaryheckeactions} and see that it is clearly $\widetilde{\mathbf{T}}^S$-equivariant. 
    
    To match the Hecke actions at $a$, it suffices to see that, for every $[\textnormal{K}g\textnormal{K},\psi]\in \mathcal{H}(\sigma)^+\cap \mathcal{H}(\sigma_{\Omega_{\overline{a}},\lambda_{\overline{a}}}^{\circ})$ and some sufficiently large integer $M$, the action of $p^M[\textnormal{K}g\textnormal{K},\psi]=[\textnormal{K}g\textnormal{K},p^M\psi]$ on $H^{\ast}(\widetilde{X}_{\widetilde{K}(0,u)},\mathcal{W}_{\widetilde{\lambda},\mathcal{L}_{\overline{a}}})^{\overline{a}\textnormal{
    -ord}}[1/p]$ defined via Proposition \ref{proposition_OrdinaryPartsHeckeAction} matches with the action of the Hecke operator $[\mathcal{P}(0,u)g\mathcal{P}(0,u),p^M\textnormal{Inf}^u\psi]$. We choose $M$ to be large enough so that $p^M(\textnormal{Inf}^u\psi)\otimes g$ preserves $\mathcal{W}_{\widetilde{\lambda}_{\overline{a}},\mathcal{L}_{\overline{a}}}\subset \Big(\textnormal{Inf}^{\mathcal{P}(0,u)}\sigma_{\Omega_{\overline{a}}}^{\circ}[1/p]\Big)\otimes_E V_{\widetilde{\lambda}_{\overline{a}}}$. In particular, $[\mathcal{P}(0,u)g\mathcal{P}(0,u),p^M\textnormal{Inf}^u\psi\otimes g]$ acts on $H^{\ast}(\widetilde{X}_{\widetilde{K}(0,u)},\mathcal{W}_{\widetilde{\lambda},\mathcal{L}_{\overline{a}}})$, recovering the action of $[\mathcal{P}(0,u)g\mathcal{P}(0,u),p^M\textnormal{Inf}^u\psi]$ on $H^{\ast}(\widetilde{X}_{\widetilde{K}(0,u)},\mathcal{W}_{\widetilde{\lambda},\mathcal{L}_{\overline{a}}})[1/p]$ after inverting $p$. 
    
    On the other hand, for an integer $m\geq u$, the action of $[\textnormal{K}g\textnormal{K},p^M\psi]$ on $H^{\ast}(\widetilde{X}_{\widetilde{K}(0,u)},\mathcal{W}_{\widetilde{\lambda},\mathcal{L}_{\overline{a}}}/\varpi^m)^{\overline{a}\textnormal{-ord}}$ is defined by acting with
    \begin{equation*}
        [\mathcal{P}(0,m)g\mathcal{P}(0,m),p^M\textnormal{Inf}^{\mathcal{P}_{\overline{a}}(0,m)}(\psi\otimes g)]
    \end{equation*} on the target of the isomorphism induced by taking $P$-ordinary parts of the following composition
    \begin{align}\label{equation_prmcircres}
    \begin{split}
        H^{\ast}(\widetilde{X}_{\widetilde{K}(0,u)},\mathcal{W}_{\widetilde{\lambda},\mathcal{L}_{\overline{a}}}/\varpi^m) & \xrightarrow{\textnormal{res}}H^{\ast}(\widetilde{X}_{\widetilde{K}(0,m)},\mathcal{W}_{\widetilde{\lambda},\mathcal{L}_{\overline{a}}}/\varpi^m) \\
        & \xrightarrow{\textnormal{pr}_m} H^{\ast}(\widetilde{X}_{\widetilde{K}(0,m)},\textnormal{Inf}^{\mathcal{P}_{\overline{a}}(0,m)}\big(\sigma_{\Omega_{\overline{a}},\lambda_{\overline{a}}}^{\circ}/\varpi^m\big)\otimes_{\mathcal{O}}\mathcal{V}_{\widetilde{\lambda}^{\overline{a}}})
    \end{split}
    \end{align}
    where $\textnormal{pr}_m\colon\mathcal{W}_{\widetilde{\lambda}_{\overline{a}},\mathcal{L}_{\overline{a}}}/\varpi^m\to \textnormal{Inf}^{\mathcal{P}_{\overline{a}}(0,m)}\big(\sigma_{\Omega_{\overline{a}},\lambda_{\overline{a}}}^{\circ}/\varpi^m\big)$ is the $\mathcal{P}(0,m)$-equivariant map introduced in \eqref{equation_ModpmSurjection}. Using that $g\in \textnormal{G}^+$, one checks easily that \eqref{equation_prmcircres} intertwines the action of
    $[\mathcal{P}(0,u)g\mathcal{P}(0,u),p^M\textnormal{Inf}^u\psi\otimes g]$ on the source with the action of
    $[\mathcal{P}(0,m)g\mathcal{P}(0,m),p^M\textnormal{Inf}^{\mathcal{P}_{\overline{a}}(0,m)}(\psi\otimes g)]$ on the target. We conclude by passing to the limit over $m\geq 1$.
\end{proof}

As a corollary, we obtain a generalisation of \cite[Theorem 2.4.11]{10author} and \cite[Proposition 6.5]{Hev24}. After combining it with the base change result \cite{Shin_2014}, reciprocity for conjugate self-dual regular algebraic cuspidal automorphic representations of $\textnormal{GL}_{2n,F}$ (see for instance \cite[Theorem 2.3.3]{10author}) and the main characteristic $0$ result of \cite{Hev24} on $P$-ordinary local-global compatibility cf. \cite[Theorem 4.27]{Hev24} we obtain the following theorem.

\begin{theorem}\label{Theorem_selfdualLGC}
   Suppose that we are given a finite set $\overline{S}$ of $F^+$ containing $\overline{S}_p$, a place $\overline{a}\in \overline{S}_p$, a dominant weight $\widetilde{\lambda}\in (\mathbf{Z}^{2n}_+)^{\widetilde{I}_p}$ with $\lambda_{\overline{a}}=(\lambda_{a},\lambda_{a^c})\in (\mathbf{Z}_+^n)^{\Hom(F_a,E)\times \Hom(F_{a^c},E)}$ matching $\widetilde{\lambda}_{\overline{a}}$, a Bernstein block $\Omega_{\overline{a}}=(\Omega_{a},\Omega_{a^c})$ of $\textnormal{Rep}_{\textnormal{sm}}(G(F^+_{\overline{a}}),\overline{\mathbf{Q}}_p)$, a pair of $\mathcal{O}$-integral $\Omega_a$- and $\Omega_{a^c}$-system $\mathcal{L}_{\overline{a}}=(\mathcal{L}_a,\mathcal{L}_{a^c})$, and an $\overline{S}$-good subgroup $\widetilde{K}\leq \widetilde{G}(\mathbf{A}_{F^+}^{\infty})$. 
   Assume that the following are satisfied.
   \begin{itemize}
       \item For $v\notin S$ with residual characteristic $l$, either $l$ is unramified in $F$ and $S_l(F)\cap S\neq \emptyset$ or $l$ splits in an imaginary quadratic subfield of $F$.
       \item The weight $\widetilde{\lambda}$ is cohomologically cuspidal.
   \end{itemize}
   Set $d=\frac{1}{2}\dim_{\mathbf{R}}X^{\widetilde{G}}=n^2[F^+:\mathbf{Q}]$, $\widetilde{\mathbf{T}}_a:=\widetilde{\mathbf{T}}^S\otimes_{\mathcal{O}}\mathfrak{Z}_{\textnormal{GL}_n(F_a),\mathcal{L}_a,\lambda_a}^{\circ}$ and, for an integer $u\geq u_{\Omega_a}$, write $\widetilde{K}(0,u):=\widetilde{K}^{\overline{a}}\mathcal{P}_{\overline{a}}(0,u)$. Then the following statements hold true.
   \begin{itemize}
       \item The middle degree cohomology group
       \begin{equation*}
           H^d(\widetilde{X}_{\widetilde{K}(0,u)},\mathcal{W}_{\widetilde{\lambda},\mathcal{L}_{\overline{a}}})^{\overline{a}\textnormal{-ord}}[1/p]
       \end{equation*}
       is a semisimple $\widetilde{\mathbf{T}}_a[1/p]$-module.
       \item To every $\overline{\mathbf{Q}}_p$-point $f\colon\widetilde{\mathbf{T}}_a\Big(H^d(\widetilde{X}_{\widetilde{K}(0,u)},\mathcal{W}_{\widetilde{\lambda},\mathcal{L}_{\overline{a}}})^{\overline{a}\textnormal{-ord}}\Big)\to \overline{\mathbf{Q}}_p$, we can associate a $2n$-dimensional continuous Galois representation
       \begin{equation*}
           \widetilde{\rho}_f:G_{F,S}\to \textnormal{GL}_{2n}(\overline{\mathbf{Q}}_p)
       \end{equation*}
       with associated determinant $\widetilde{D}_f$
       satisfying the following properties.
       \begin{enumerate}
           \item For $v\notin S$, $\widetilde{D}_f(X-\textnormal{Frob}_v)=f\big(\widetilde{P}_v(X)\big)$ in $\overline{\mathbf{Q}}_p[X]$.
           \item The representation $\widetilde{\rho}_f$ is potentially semistable with $\tau$-labelled Hodge--Tate weights
           \begin{align*}
               \textnormal{HT}(\widetilde{\rho}_{f}\mid_{G_{F_a}})=\{-\lambda_{\tau c, n}+2n-1>...>-\lambda_{\tau c,1}+n> \\
               \lambda_{\tau,1}+n-1>...>\lambda_{\tau,n}\}_{\tau:F_a\hookrightarrow E}.
           \end{align*}
           \item There is an isomorphism
           \begin{equation*}
               \widetilde{\rho}|_{G_{F_a}}\sim \begin{pmatrix}\rho_{1} & \ast \\0 & \rho_{2} \end{pmatrix}
           \end{equation*}
           where $\rho_1$ has $\tau$-labelled Hodge--Tate weights
           \begin{equation*}
               \textnormal{HT}(\rho_1)=\{\lambda_{\tau,1}+n-1>...>\lambda_{\tau,n}\}_{\tau:F_a\hookrightarrow E}.
           \end{equation*}
           \item The supercuspidal support of $\textnormal{rec}_{F_a}^{-1}(\textnormal{WD}(\rho_1))|\det|_{F_a}^{\frac{n-1}{2}}$ is given by
           \begin{equation*}
               \mathfrak{Z}_{\textnormal{GL}_n(F_a),\mathcal{L}_a,\lambda_a}^{\circ}\xrightarrow{\textnormal{nat}}\widetilde{\mathbf{T}}_a\Big(H^d(\widetilde{X}_{\widetilde{K}(0,u)},\mathcal{W}_{\widetilde{\lambda},\mathcal{L}_{\overline{a}}})^{\overline{a}\textnormal{-ord}}\Big)\xrightarrow{f}\overline{\mathbf{Q}}_p.
           \end{equation*}
       \end{enumerate}
   \end{itemize}
\end{theorem}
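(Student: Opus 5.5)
The plan is to reduce both bullets to statements about cuspidal automorphic representations of $\widetilde{G}$, and then transfer these to $\mathrm{GL}_{2n}(\mathbf{A}_F)$, where conjugate self-dual local-global compatibility is known.

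First, I would use the preceding Lemma, i.e.\ the isomorphism \eqref{equation_compatibilityofordinaryheckeactions}. It identifies the rational $P$-ordinary cohomology $H^d(\widetilde{X}_{\widetilde{K}(0,u)},\mathcal{W}_{\widetilde{\lambda},\mathcal{L}_{\overline{a}}})^{\overline{a}\textnormal{-ord}}[1/p]$, $\widetilde{\mathbf{T}}_a[1/p]$-equivariantly, with $\Hom_{\textnormal{K}}(\sigma^{\vee},H^{\textnormal{P-}\widetilde{\lambda}_a\textnormal{-ord}})$ in degree $d$. Here the centre $\mathfrak{Z}^{\circ}_{\textnormal{GL}_n(F_a),\mathcal{L}_a,\lambda_a}$ acts naturally.

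Next, after fixing $\iota$, I would compute $\varinjlim H^d(\widetilde{X},V_{\widetilde{\lambda}})\otimes\mathbf{C}$ by Franke's theorem, as in \cite[Theorem 2.4.11]{10author}. Because $\widetilde{\lambda}$ is cohomologically cuspidal, no Eisenstein or residual contribution can occur: by the discussion after the Definition (Clozel's purity lemma), a proper Levi subgroup supports no cuspidal representation of weight $w\cdot\widetilde{\lambda}$. Hence the degree-$d$ cohomology equals cuspidal cohomology, $\bigoplus_\Pi (\Pi^\infty)^{m(\Pi)}\otimes H^d(\mathfrak{g},K_\infty;\Pi_\infty\otimes V_{\widetilde{\lambda}})$. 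Passing to $P$-ordinary parts, the $\mathrm{K}$-types $\sigma^\vee$ and the spherical Hecke algebra away from $S$ then give a direct sum of $\widetilde{\mathbf{T}}_a$-eigensystems. The action of $\widetilde{\mathbf{T}}^S$ factors through commuting semisimple operators, since these are normal with respect to the Petersson inner product on cusp forms. The Bernstein centre acts by scalars on each irreducible Jacquet-type piece $(\Pi_a^{\textnormal{P-ord}}\otimes\sigma)^{\mathrm K}$, so the module is semisimple.

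To construct $\widetilde{\rho}_f$, I would take the $\Pi$ contributing to $f$ and apply Shin's base change \cite{Shin_2014} to obtain a conjugate self-dual cohomological isobaric representation $\Pi'$ of $\mathrm{GL}_{2n}(\mathbf{A}_F)$. Property (1) follows from the compatibility \eqref{equation_effectofSatake} of unramified Hecke polynomials with base change. The hypothesis on the places $v\notin S$ ensures that unramified base change is defined at every such place. I would set $\widetilde{\rho}_f=r_{\Pi',\iota}$, using \cite[Theorem 2.3.3]{10author} for each cuspidal summand. Property (2) is then the known conjugate self-dual local-global compatibility, where the Hodge--Tate weights are read off from $\widetilde{\lambda}$ via the matching formula.

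Properties (3) and (4) would follow from \cite[Theorem 4.27]{Hev24}: for $P$-ordinary $\Pi_a$, the representation $\widetilde{\rho}|_{G_{F_a}}$ has a $P$-ordinary filtration, and the Weil--Deligne representation of the graded piece $\rho_1$ corresponds to the $P$-ordinary Jacquet module, with the twist $|\det|^{(n-1)/2}$. Combined with the Lemma, the natural action of $\mathfrak{Z}$ on $\Hom_{\mathrm K}(\sigma^\vee,H^{\textnormal{P-ord}})$ is the action of the Bernstein centre on that Jacquet module, which gives (4).

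I expect the main obstacle to be the normalisation bookkeeping needed to match the action of $\mathfrak{Z}$ on $\Pi_a^{\textnormal{P-}\widetilde{\lambda}_a\textnormal{-ord}}$, twisted by $\alpha_{\widetilde{\lambda}}$ and by the identification $\theta_n$, with the Jacquet-module statement in \cite{Hev24}, so that the twist $|\det|^{(n-1)/2}$ comes out correctly. To handle this, I would first check the unramified principal-series case, then reduce the general case using the Bernstein–Deligne compatibility \eqref{equation_Satakediagram}.
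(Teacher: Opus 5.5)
Your outline is the paper's route. The paper proves the theorem by combining the isomorphism \eqref{equation_compatibilityofordinaryheckeactions} with the cuspidal-cohomology computation of \cite[Theorem 2.4.11]{10author} and \cite[Proposition 6.5]{Hev24}, Shin's base change, \cite[Theorem 2.3.3]{10author} and \cite[Theorem 4.27]{Hev24}, which is exactly your chain.

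The one step you argue yourself rather than import is the semisimplicity in the first bullet, and there the argument has a gap. You say the Bernstein centre acts by scalars because $\Pi_{\overline{a}}^{\textnormal{P-}\widetilde{\lambda}_a\textnormal{-ord}}$ is irreducible, but you give no reason for this, and it is not formal. The $P$-ordinary part is a generalised-eigenspace summand of a Jacquet module. Jacquet modules of irreducible representations can be non-split self-extensions on which the centre of the Levi acts non-semisimply. For example, the normalised Jacquet module of the irreducible $\textnormal{n-Ind}_B^{\textnormal{GL}_2}(\chi\otimes\chi)$ is a non-split extension of $\chi\otimes\chi$ by itself. It is non-split because $\Hom_T$ into $\chi\otimes\chi$ is one-dimensional by Frobenius reciprocity, so $T$ acts through a Jordan block. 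Since $\widetilde{\mathbf{T}}_a$ contains $\mathfrak{Z}^{\circ}_{\textnormal{GL}_n(F_a),\mathcal{L}_a,\lambda_a}$, and this semisimplicity is exactly what is used later to write $\widetilde{A}_\chi[1/p]=\prod E$ in Lemma \ref{lemma_interiorLGC}, the step needs an argument. Your Petersson-normality remark only covers $\widetilde{\mathbf{T}}^S$, which in any case acts by scalars on spherical vectors.

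What rescues the step is the ordinarity condition combined with regularity of $\widetilde{\lambda}$.
\begin{enumerate}
\item A unit normalised $U_{a,n}$-eigenvalue means that the Newton polygon of $\textnormal{WD}(\widetilde{\rho}_f|_{G_{F_a}})$ touches the Hodge polygon at the break point $n$. This uses local--global compatibility for the conjugate self-dual base change and weak admissibility.
\item The Hodge--Tate weights are distinct in each embedding, so this forces a strict gap between the slopes going into the two $\textnormal{GL}_n$-blocks.
\item Hence at most one piece of the geometric-lemma filtration of the Jacquet module survives in the $P$-ordinary part.
\item For $\Pi_{\overline{a}}$ tempered and generic (Ramanujan for the base change), that piece is a product of pairwise unlinked segment representations, hence irreducible. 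The top parts of segments with the same base and parity share a left endpoint, so they are nested.
\end{enumerate}
Alternatively, import this from the $P$-ordinary analysis of \cite{Hev24}, as the paper implicitly does.

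A smaller point: property (1) comes from the definition of $\widetilde{P}_v$ together with compatibility of base change at unramified places. The equation \eqref{equation_effectofSatake} concerns the Siegel Satake transform $\mathcal{S}$ and plays no role there.
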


\subsubsection{Local-global compatibility for interior cohomology}
For a subset $\overline{T}\subset \overline{S}_p$ and an integer $m\geq 1$, set 
\begin{equation*}
\mathcal{V}_U(\overline{T},m):=R\Gamma(U(\mathcal{O}_{F^+,\overline{T}}),\mathcal{O}/\varpi^m)\in D^b_{\textnormal{sm}}(G(\mathcal{O}_{F^+,\overline{T}}),\mathcal{O}/\varpi^m)
\end{equation*}
and, for a good subgroup $K\leq G(\mathbf{A}_F^{\infty})$ with $K_{\overline{T}}\leq G(\mathcal{O}_{F^+,\overline{T}})$, use the same notation to denote the corresponding object in the bounded derived category $D^b(\textnormal{Sh}(\overline{X}_K,\mathcal{O}/\varpi^m))$. Set $\mathcal{V}^j_U(\overline{T},m)$ to be its $j^{th}$ cohomology group, a locally constant sheaf on $\overline{X}_K$ that is non-zero if and only if $j\in [0,n^2(\sum_{\overline{v}\in \overline{T}}[F^+_{\overline{v}}:\mathbf{Q}_p])]$.
\begin{lemma}\label{lemma_degreeshiftingpreliminarylemma1}
       Suppose that we are given a finite set $\overline{S}$ of places of $F^+$ containing $\overline{S}_p$, distinct places $\overline{a},\overline{b}\in \overline{S}_p$, a dominant weight $\widetilde{\lambda}\in (\mathbf{Z}^{2n}_+)^{\widetilde{I}_p}$ with $\lambda_{\overline{a}}=(\lambda_{a},\lambda_{a^c})\in (\mathbf{Z}_+^n)^{\Hom(F_a,E)\times \Hom(F_{a^c},E)}$ and $\lambda_{\overline{b}}=(\lambda_{b},\lambda_{b^c})\in (\mathbf{Z}_+^n)^{\Hom(F_b,E)\coprod \Hom(F_{b^c},E)}$ matching $\widetilde{\lambda}_{\overline{a}}$ and $\widetilde{\lambda}_{\overline{b}}$, respectively, a Bernstein block $\Omega_{\overline{a}}=(\Omega_{a},\Omega_{a^c})$ of $\textnormal{Rep}_{\textnormal{sm}}(G(F^+_{\overline{a}}),\overline{\mathbf{Q}}_p)$, a pair of $\mathcal{O}$-integral $\Omega_a$- and $\Omega_{a^c}$-system $\mathcal{L}_{\overline{a}}=(\mathcal{L}_a,\mathcal{L}_{a^c})$, and an $\overline{S}$-good subgroup $\widetilde{K}\leq \widetilde{G}(\mathbf{A}_{F^+}^{\infty})$. 
   Assume that the following are satisfied.
   \begin{itemize}
       \item The subgroup $\widetilde{K}$ is decomposed with respect to $P$ and, for every $\overline{v}\in \overline{S}_p$, $\widetilde{K}\cap U(F^+_{\overline{v}})=U(\mathcal{O}_{F^+_{\overline{v}}})$.
       \item For any embedding $\tau:F^+\hookrightarrow E$ inducing a place different from $\overline{a}$ and $\overline{b}$, $\widetilde{\lambda}_{\tau}=(0,...,0)$.
   \end{itemize}
   For an integer $u\geq u_{\Omega_{\overline{a}}}$, write $\widetilde{K}(0,u):=\widetilde{K}^{\overline{a}}\mathcal{P}_{\overline{a}}(0,u)$ and $K:=\widetilde{K}(0,u)\cap G(\mathbf{A}_{F^+}^{\infty})$. Moreover, set $\mathbf{T}_a:=\mathbf{T}^S\otimes_{\mathcal{O}}\mathfrak{Z}_{\textnormal{GL}_n(F_a),\mathcal{L}_a,\lambda_a}^{\circ}$, $\widetilde{\mathbf{T}}_a:=\widetilde{\mathbf{T}}^S\otimes_{\mathcal{O}}\mathfrak{Z}_{\textnormal{GL}_n(F_a),\mathcal{L}_a,\lambda_a}^{\circ}$ and $\mathcal{S}_a:=\mathcal{S}\otimes \id\colon\widetilde{\mathbf{T}}_a\to \mathbf{T}_a$.

   Then, for any integer $q\geq 0$, 
   \begin{equation*}
       \varprojlim_{m\geq 1}H^q_!\big(X_K,\sigma_{\Omega_{\overline{a}},\lambda_{\overline{a}}}^{\circ}\otimes_{\mathcal{O}}\mathcal{V}_{\lambda_{\overline{b}}}\otimes_{\mathcal{O}}\mathcal{V}_U(\overline{S}_p\setminus\{\overline{a},\overline{b}\},m)\big)
   \end{equation*}
   is a $\widetilde{\mathbf{T}}_a$-module subquotient of $H^q(\partial\widetilde{X}_{\widetilde{K}(0,u)},\mathcal{W}_{\widetilde{\lambda},\mathcal{L}_{\overline{a}}})^{\overline{a}\textnormal{-ord}}$ where $\widetilde{\mathbf{T}}_a$ acts on the former via $\mathcal{S}_a$. 
\end{lemma}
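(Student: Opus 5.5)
This is the $P$-ordinary analogue of the first half of the degree-shifting argument of \cite[\S 4--5]{10author}, in the form given in \cite[Proposition 5.3]{CN25} and \cite[Lemma 6.7]{Hev24}. The plan is to compare the cohomology of the Siegel stratum $\widetilde{X}^P_{\widetilde{K}(0,u)}\subset\partial\widetilde{X}_{\widetilde{K}(0,u)}$ with that of $X_K$. Lemma \ref{Lemma_BorelSerreLemma}, applied with $Q=P$, together with the projection formula \cite[Lemma 2.1.8]{CN25}, identifies
\[ R\Gamma_{(c)}(\widetilde{X}^P_{\widetilde{K}(0,u)},\mathcal{W}_{\widetilde{\lambda},\mathcal{L}_{\overline{a}}}/\varpi^m) \]
with a finite sum of complexes $R\Gamma_{(c)}\big(X_{K_g},R\Gamma(\widetilde{K}_{U,g},\mathcal{W}_{\widetilde{\lambda},\mathcal{L}_{\overline{a}}}/\varpi^m)\big)$, indexed by $P(\mathbf{A}^\infty)\backslash\widetilde{G}(\mathbf{A}^\infty)/\widetilde{K}(0,u)$. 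By Lemma \ref{RestrictionFunc} and the unnormalised Satake transform, this identification is $\widetilde{\mathbf{T}}^S$-equivariant, with $\widetilde{\mathbf{T}}^S$ acting on the right through $\mathcal{S}$.

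Next I would take $\overline{a}$-ordinary parts. Following \cite[Lemma 5.2.x]{CN25} and \cite[\S 5]{Hev24}, the $\overline{a}$-ordinary part sees only the double coset $g=1$, since $U_{a,n}$ is topologically nilpotent on the other Bruhat cells. In the remaining term the ordinary projection picks out the bottom Kostant piece at $\overline{a}$. Using Proposition \ref{proposition_OrdinaryPartsHeckeAction}, the ordinary part of the Siegel-stratum complex becomes
\[ R\Gamma_{(c)}\big(X_K,\sigma^{\circ}_{\Omega_{\overline{a}},\lambda_{\overline{a}}}/\varpi^m\otimes\mathcal{V}_{\lambda_{\overline{b}}}\otimes\mathcal{V}_U(\overline{S}_p\setminus\{\overline{a},\overline{b}\},m)\big). \]
Here the factor at $\overline{b}$ comes from the weight $\widetilde{\lambda}_{\overline{b}}$, via the $P$-equivariant quotient $\mathcal{V}_{\widetilde{\lambda}_{\overline{b}}}\to\mathcal{V}_{\lambda_{\overline{b}}}$, with the ordinary projection taken at $\overline{a}$ only. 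The factor at the remaining $p$-adic places is the complex $R\Gamma(U(\mathcal{O}_{F^+,\overline{T}}),\mathcal{O}/\varpi^m)$, because $\widetilde{\lambda}_\tau=0$ there and $\widetilde{K}\cap U=U(\mathcal{O})$. It remains to check that the Hecke action at $a$ is compatible. For $\mathfrak{Z}^{\circ}_{\textnormal{GL}_n(F_a),\mathcal{L}_a,\lambda_a}$ this follows from the compatibility built into Proposition \ref{proposition_OrdinaryPartsHeckeAction}, since those operators are supported on $G^+$.

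I would then pass from the stratum to the whole boundary and to interior cohomology, as in \cite[Theorem 4.2.1]{10author}. There are two ingredients. First, the map $H^q_c(\widetilde{X}^P)\to H^q(\partial\widetilde{X})$ is obtained by composing $H^q_c(\widetilde{X}^P)\to H^q(\overline{\widetilde{X}}^P)$, i.e.\ $H^q_c(X_K)\to H^q(X_K)$ on the Levi side, with the excision maps for the stratification of $\partial\widetilde{X}$. Second, on ordinary parts the strata of non-Siegel parabolics either vanish or contribute only through maps compatible with the filtration. I expect this step to be the main obstacle: I must show that the image of $H_c^q\to H^q$ on the Siegel stratum, i.e.\ $H^q_!$, appears as a subquotient of $H^q(\partial\widetilde{X})^{\overline{a}\textnormal{-ord}}$, even though the other strata are not killed by any localisation. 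I would try the argument of \cite[Proposition 4.4.x]{10author}. The open Siegel stratum and its closure give $H^q_c(\widetilde{X}^P)\to H^q(\partial\widetilde{X})\to H^q(\overline{\widetilde{X}}^P)$, whose composite is the forgetful map. Hence the image of the composite, namely $H^q_!$, is a subquotient of $H^q(\partial\widetilde{X})$, with no need to kill the other strata. Finally, the inverse limit over $m$ is handled by finiteness of the groups involved and Mittag-Leffler.
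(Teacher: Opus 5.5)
Your outline matches the paper's. You pass to the Siegel stratum, embed the Levi cohomology into the $\overline{a}$-ordinary part compatibly with forgetting supports, and factor $H^q_c(\widetilde{X}^P_{\widetilde{K}})\to H^q(\widetilde{X}^P_{\widetilde{K}})$ through $H^q(\partial\widetilde{X}_{\widetilde{K}})$. Your final diagram chase is exactly the paper's. The gap is the middle step, where your description of the ordinary part of the stratum is wrong. The operator $U_{a,n}$ does not preserve the decomposition over $P(F^+_{\overline{a}})\backslash\widetilde{G}(F^+_{\overline{a}})/\mathcal{P}_{\overline{a}}(0,u)$. It only preserves a filtration, in which the identity coset is a quotient: the value of $U_{a,n}f$ at $1$ depends only on $f(1)$. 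On that quotient $\widetilde{u}_{a,n}$ contracts $U$. The operator therefore acts on $R\Gamma(U(\mathcal{O}_{F_a}),-)$ by conjugation followed by corestriction from $\widetilde{u}_{a,n}U(\mathcal{O}_{F_a})\widetilde{u}_{a,n}^{-1}$, a subgroup of index $q_a^{n^2}$. On the $U$-invariant piece $\sigma^{\circ}_{\Omega_{\overline{a}},\lambda_{\overline{a}}}$ it is therefore $q_a^{n^2}$ times an integral operator, so it is not ordinary. What survives on the identity coset is the top-degree Kostant piece, not the piece you want. The degree-preserving piece with coefficients $\sigma^{\circ}_{\Omega_{\overline{a}},\lambda_{\overline{a}}}$ comes from the coset of $w_0^P$, the open Bruhat stratum. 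This is \cite[Corollary 3.30]{Hev24} (cf.\ \cite[Proposition 2.3.11]{CN25}), which the paper invokes. It gives $\widetilde{\mathbf{T}}_a$-equivariant \emph{injections} of $H^q_{(c)}(X_K,\dots)$ into $H^q_{(c)}(\widetilde{X}^P_{\widetilde{K}},\mathcal{W}_{\widetilde{\lambda},\mathcal{L}_{\overline{a}}}/\varpi^m)^{\overline{a}\textnormal{-ord}}$, compatible with forgetting supports and including Hecke-equivariance at $a$. It is not an identification of complexes, since other cosets can contribute to the ordinary part. Injectivity on the non-compactly-supported side is exactly what your chase needs.

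Second, you take no ordinary projection at $\overline{b}$. The coefficient there is therefore the whole complex $R\Gamma(U(\mathcal{O}_{F^+_{\overline{b}}}),\mathcal{V}_{\widetilde{\lambda}_{\overline{b}}}/\varpi^m)$, not $\mathcal{V}_{\lambda_{\overline{b}}}/\varpi^m$. The quotient $\mathcal{V}_{\widetilde{\lambda}_{\overline{b}}}\to\mathcal{V}_{\lambda_{\overline{b}}}$ does not give that identification. Nor does it give the section and retraction, compatible for $H_c$ and $H$, that the chase requires. The paper uses instead that $\mathcal{V}_{\lambda_{\overline{b}}}/\varpi^m$ is a direct summand of this complex in $D^+_{\textnormal{sm}}(K_{\overline{b}},\mathcal{O}/\varpi^m)$, as in the proof of \cite[Theorem 2.4.4]{10author}. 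Likewise, the $\overline{a}$-ordinary projection cannot remove the non-identity cosets at places of $\overline{S}\setminus\{\overline{a}\}$. The paper splits them off with the Mackey formula, as $\widetilde{\mathbf{T}}^S$-equivariant direct summands compatible with forgetting supports. Replace your ``only $g=1$ survives'' step with these three inputs: the Mackey splitting away from $\overline{a}$, the direct summand at $\overline{b}$, and \cite{Hev24} for $w_0^P$ at $\overline{a}$. Your argument then becomes the paper's, and the limit over $m$ is handled the same way in both.
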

\begin{proof}
    Let $m\geq 1$ be an integer. By Lemma \ref{Lemma_BorelSerreLemma}, the Mackey formula, Lemma \ref{RestrictionFunc}, Lemma \ref{ProjectionFormula} and Lemma \ref{BoundedTorDim}, the complex
    \begin{equation*}
        R\Gamma\big(\widetilde{K}^{\{\overline{a},\overline{b}\}},R\Gamma(\overline{\mathfrak{X}}_{\widetilde{G}}^{P},(j_!)\mathcal{O}/\varpi^m)\big)
    \end{equation*}
    admits
    \begin{equation*}
        \textnormal{Ind}_{P(F^+_{\{\overline{a},\overline{b}\}})}^{\widetilde{G}(F^+_{\{\overline{a},\overline{b}\}})}R\Gamma\Big(K^{\{\overline{a},\overline{b}\}},R\Gamma\big(\overline{\mathfrak{X}}_G,(j_!)\mathcal{V}_U(\overline{S}_p\setminus\{\overline{a},\overline{b}\},m)\big)\Big)
    \end{equation*}
    as a $\widetilde{\mathbf{T}}^S$-equivariant direct summand in $D^+_{\textnormal{sm}}(\widetilde{G}(F^+_{\{\overline{a},\overline{b}\}}),\mathcal{O}/\varpi^m)$ in a manner compatible with the respective maps induced by forgetting the support.

    On the other hand, as shown in the proof of \cite[Theorem 2.4.4]{10author}, the complex $R\Gamma(U(\mathcal{O}_{F^+_{\overline{b}}}),\mathcal{V}_{\widetilde{\lambda}_{\overline{b}}}/\varpi^m)$ admits  $\mathcal{V}_{\lambda_{\overline{b}}}/\varpi^m$ as a direct summand in $D^+_{\textnormal{sm}}(K_{\overline{b}},\mathcal{O}/\varpi^m)$. Consequently, the complex
    \begin{equation*}
        R\Gamma\big(\widetilde{K}^{\overline{a}},R\Gamma(\overline{\mathfrak{X}}_{\widetilde{G}}^{P},(j_!)\mathcal{V}_{\widetilde{\lambda}_{\overline{b}}}/\varpi^m)\big)
    \end{equation*}
    admits 
    \begin{equation*}
        \textnormal{Ind}_{P(F^+_{\overline{a}})}^{\widetilde{G}(F^+_{\overline{a}})}R\Gamma\bigg(K^{\overline{a}},R\Gamma\Big(\overline{\mathfrak{X}}_G,(j_!)\big(\mathcal{V}_{\lambda_{\overline{b}}}/\varpi^m\otimes_{\cO/\varpi^m}\mathcal{V}_U(\overline{S}_p\setminus\{\overline{a},\overline{b}\},m)\big)\Big)\bigg)
    \end{equation*}
    as a $\widetilde{\mathbf{T}}^S$-equivariant direct summand in $D^+_{\textnormal{sm}}(\widetilde{G}(F^+_{\overline{a}}),\cO/\varpi^m)$ in a manner that is compatible with the respective maps forgetting the support.

    By \cite[Corollary 3.30]{Hev24} (see \cite[Proposition 2.3.11]{CN25} for the statement in the case of the Siegel parabolic and without asserting Hecke equivariance), there are $\widetilde{\mathbf{T}}_a$-equivariant injections
    \begin{align*}
        &H^q_{(c)}\Big(X_K,\sigma_{\Omega_{\overline{a}},\lambda_{\overline{a}}}^{\circ}\otimes_{\cO}\mathcal{V}_{\lambda_{\overline{b}}}\otimes_{\cO}\mathcal{V}_U(\overline{S}_p\setminus\{\overline{a},\overline{b}\},m)\Big) \\
\cong & H^q\bigg(K,R\Gamma\Big(\overline{\mathfrak{X}}_G,(j_!)\big(\sigma_{\Omega_{\overline{a}},\lambda_{\overline{a}}}^{\circ}\otimes_{\cO}\mathcal{V}_{\lambda_{\overline{b}}}\otimes_{\cO}\mathcal{V}_U(\overline{S}_p\setminus\{\overline{a},\overline{b}\},m)\big)\Big)\bigg) \\
        \hookrightarrow & H^q\Big(\widetilde{K},R\Gamma(\overline{\mathfrak{X}}_{\widetilde{G}}^P,(j_!)\mathcal{W}_{\widetilde{\lambda},\mathcal{L}_{\overline{a}}}/\varpi^m)\Big)^{\overline{a}\textnormal{-ord}} \\
        \cong & H^q_{(c)}(\widetilde{X}_{\widetilde{K}}^P,\mathcal{W}_{\widetilde{\lambda},\mathcal{L}_{\overline{a}}}/\varpi^m)^{\overline{a}\textnormal{-ord}}
    \end{align*}
    compatible with the respective maps forgetting the support.\footnote{Here it is implicit that we are applying the result to the Weyl group element $w_0^P$ corresponding to the open Bruhat stratum and in this case the argument of \textit{loc. cit.} yields an injection, not a subquotient.}
    Finally, the natural map
    $H^q_{c}(\widetilde{X}_{\widetilde{K}}^P,\mathcal{W}_{\widetilde{\lambda},\mathcal{L}_{\overline{a}}}/\varpi^m)^{\overline{a}\textnormal{-ord}}\to H^q_{}(\widetilde{X}_{\widetilde{K}}^P,\mathcal{W}_{\widetilde{\lambda},\mathcal{L}_{\overline{a}}}/\varpi^m)^{\overline{a}\textnormal{-ord}}$ forgetting the support factors as
    \begin{align*}
        H^q_{c}(\widetilde{X}_{\widetilde{K}}^P,\mathcal{W}_{\widetilde{\lambda},\mathcal{L}_{\overline{a}}}/\varpi^m)^{\overline{a}\textnormal{-ord}} &\to H^q_{}(\partial \widetilde{X}_{\widetilde{K}},\mathcal{W}_{\widetilde{\lambda},\mathcal{L}_{\overline{a}}}/\varpi^m)^{\overline{a}\textnormal{-ord}} \\
        & \to
        H^q_{}(\widetilde{X}_{\widetilde{K}}^P,\mathcal{W}_{\widetilde{\lambda},\mathcal{L}_{\overline{a}}}/\varpi^m)^{\overline{a}\textnormal{-ord}}
    \end{align*}
    where both are the natural maps on cohomology with and without compact supports, respectively, induced by the open embedding $\widetilde{X}_{\widetilde{K}}^P\hookrightarrow \partial \widetilde{X}_{\widetilde{K}}$.
    In particular, we obtain a $\widetilde{\mathbf{T}}_a$-equivariant diagram
    \begin{equation*}
        \begin{tikzcd}[scale cd=0.6, column sep = tiny]
	& {H^q_{}(\partial \widetilde{X}_{\widetilde{K}},\mathcal{W}_{\widetilde{\lambda},\mathcal{L}_{\overline{a}}}/\varpi^m)^{\overline{a}\textnormal{-ord}}} \\
	{H^q_{c}(\widetilde{X}_{\widetilde{K}}^P,\mathcal{W}_{\widetilde{\lambda},\mathcal{L}_{\overline{a}}}/\varpi^m)^{\overline{a}\textnormal{-ord}}} && {H^q(\widetilde{X}_{\widetilde{K}}^P,\mathcal{W}_{\widetilde{\lambda},\mathcal{L}_{\overline{a}}}/\varpi^m)^{\overline{a}\textnormal{-ord}}} \\
	{H^q_{c}\Big(X_K,\sigma_{\Omega_{\overline{a}},\lambda_{\overline{a}}}^{\circ}\otimes_{\mathcal{O}}\mathcal{V}_{\lambda_{\overline{b}}}\otimes_{\mathcal{O}}\mathcal{V}_U(\overline{S}_p\setminus\{\overline{a},\overline{b}\},m)\Big)} && {H^q\Big(X_K,\sigma_{\Omega_{\overline{a}},\lambda_{\overline{a}}}^{\circ}\otimes_{\mathcal{O}}\mathcal{V}_{\lambda_{\overline{b}}}\otimes_{\mathcal{O}}\mathcal{V}_U(\overline{S}_p\setminus\{\overline{a},\overline{b}\},m)\Big),}
	\arrow[from=1-2, to=2-3]
	\arrow[from=2-1, to=1-2]
	\arrow[from=2-1, to=2-3]
	\arrow[hook, from=3-1, to=2-1]
	\arrow[from=3-1, to=3-3]
	\arrow[hook, from=3-3, to=2-3]
\end{tikzcd}
    \end{equation*}
    where the bottom map is the one induced by forgetting the support. Furthermore, these diagrams are compatible with respect to changing $m\geq 1$. Passing to the limit along $m\geq 1$ finishes the proof.
\end{proof}

\begin{lemma}\label{lem_p_torsion_spectral_sequence}
    Let $E^{p, q}_r$ be a cohomological spectral sequence of $\mathbf{Z}_p$-modules concentrated in the quadrant $[0, N] \times [0, N]$. Suppose there exists $M \geq 1$ such that $p^M$ annihilates all differentials $d^{p, q}_r$, $r \geq 1$. Then there exists $N' \geq 1$ depending only on $N$ and $M$, and natural maps $\alpha_{p, q} \colon E^{p, q}_2 \to E^{p, q}_\infty$, with kernel and cokernel annihilated by $p^{N'}$. 
\end{lemma}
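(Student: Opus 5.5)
The plan is to construct $\alpha_{p,q}$ as the composite of the natural maps relating successive pages, namely $E_2^{p,q} \supseteq Z_r^{p,q} \to E_r^{p,q}$, and then to bound the defects one page at a time. Recall that $E_{r+1}^{p,q} = \ker(d_r^{p,q})/\operatorname{im}(d_r^{p-r,q+r-1})$. Since the spectral sequence is concentrated in $[0,N]\times[0,N]$, all differentials $d_r$ with $r > N+1$ vanish on every term, so $E_\infty = E_{N+2}$. Hence at most $N+1$ pages intervene between $E_2$ and $E_\infty$, a bound depending only on $N$.

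The first key step treats a single page. Given a module $A = E_r^{p,q}$ with outgoing differential $d$ and incoming differential $d'$, both killed by $p^M$, we want a natural map $A \to \ker d/\operatorname{im} d'$ whose kernel and cokernel are killed by a power of $p$ depending only on $M$. We define $\phi_r \colon A \to \ker d/\operatorname{im} d'$ by $x \mapsto [p^M x]$, which is well defined because $d(p^M x) = p^M d(x) = 0$.
\begin{itemize}
\item If $\phi_r(x)=0$, then $p^M x = d'(y) \in \operatorname{im} d'$, and since $p^M d' = 0$ we get $p^{2M}x = p^M d'(y) = 0$. So the kernel is killed by $p^{2M}$.
\item For $[z]$ in the target, $p^M[z] = \phi_r(z)$ by definition, so the cokernel is killed by $p^M$.
\end{itemize}
Naturality of $\phi_r$ in the data of the spectral sequence is clear, since it is built only from multiplication by $p^M$ and the quotient map.

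The second step composes these maps. We set $\alpha_{p,q} = \phi_{N+1}\circ\cdots\circ\phi_2$. A composite of two maps whose kernels and cokernels are killed by $p^{a}$ and $p^{b}$ respectively has kernel and cokernel killed by $p^{a+b}$; this is a routine snake-lemma style check. Applying it $N$ times shows that $\alpha_{p,q}$ has kernel and cokernel killed by $p^{2M(N)}$, so we may take $N' = 2MN$ (or $2M(N+1)$ to be safe).

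I do not expect a genuine obstacle in this argument. The only point needing care is the hypothesis that $p^M$ kills every $d_r$ on every page. If it were only assumed on $E_2$, one would have to note that the differentials on later pages are induced by subquotients of earlier ones and so inherit the annihilation. As the lemma is stated, the hypothesis applies directly to every page, so no such argument is needed.
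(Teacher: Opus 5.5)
Your proof is correct and is essentially the paper's own argument: the paper likewise defines the page-to-page map $E_r^{p,q}\to E_{r+1}^{p,q}$ by $x\mapsto p^M x \bmod B$, checks that its kernel and cokernel are killed by $p^{2M}$, and composes over the number of pages before degeneration, which is bounded in terms of $N$. One minor caveat, about your closing aside only: the higher differentials $d_r$ for $r\geq 3$ are not induced by $d_2$, so annihilation of $d_2$ alone would not propagate to later pages in the way you describe; this does not affect your proof, since the lemma assumes the bound for every $r$.
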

\begin{proof}
    The spectral sequence degenerates after a number of steps that can be bounded in terms of $N$, so it suffices to show that for each $r \geq 2$, there is a natural map $\beta_{p, q}^r \colon E^{p, q}_r \to E^{p, q}_{r+1}$ with kernel and cokernel annihilated by $p^{2M}$. Let $B \leq Z \leq E_{p, q}^r$ denote the coboundaries and cocycles. Then $p^M E^{p, q}_r \leq Z$, so we can define $\beta_{p, q}^r(x) = p^M x \text{ mod }B$. It's then easy to check that the kernel and cokernel of $\beta_{p, q}^r$ are annihilated by $p^{2M}$. 
\end{proof}

\begin{lemma}[Degree shifting]\label{lemma_degreeshifting}Suppose that we are in the setup of Lemma \ref{lemma_degreeshiftingpreliminarylemma1}. Fix integers $r,s\geq 0$ and $w\in W^{P}_{\overline{S}_p\setminus\{\overline{a},\overline{b}\}}$ such that $l(w)=s$. Let $\lambda\in (\mathbf{Z}_+^n)^{\Hom(F,E)}$ match $w\cdot \widetilde{\lambda}$. Then there exists an integer $N_1\geq 0$ depending only on $n$ and $[F:\mathbf{Q}]$ such that
\begin{equation*}
    p^{N_1}H^r_!(X_K,\sigma_{\Omega_{\overline{a}},\lambda_{\overline{a}}}^{\circ}\otimes_{\mathcal{O}}\mathcal{V}_{\lambda^{\overline{a}}})
\end{equation*}
is a subquotient $\widetilde{\mathbf{T}}_a$-module of $H^{r+s}(\partial \widetilde{X}_{\widetilde{K}(0,u)},\mathcal{W}_{\widetilde{\lambda},\mathcal{L}_{\overline{a}}})^{\overline{a}\textnormal{-ord}}$.

In particular, if we further assume that our fixed rational prime $\mathfrak{p}$ is so that $S_{\mathfrak{p}}(F^+)\cap\overline{S}=\emptyset$ and, in particular, $T_n\in \widetilde{\mathbf{T}}^S$ from \S\ref{subsubsection_theelementT_n}, then the following holds true. For $d:=\frac{1}{2}\dim_{\mathbf{R}}X^{\widetilde{G}}$,
\begin{equation*}
    T_n^2p^{N_1}H^{d-s}_!(X_K,\sigma_{\Omega_{\overline{a}},\lambda_{\overline{a}}}^{\circ}\otimes_{\mathcal{O}}\mathcal{V}_{\lambda^{\overline{a}}})
\end{equation*}
is a $\widetilde{\mathbf{T}}_a$-equivariant subquotient of the $\widetilde{\mathbf{T}}_a[1/p]$-module 
\begin{equation*}
    H^{d}(\widetilde{X}_{\widetilde{K}(0,u)},\mathcal{W}_{\widetilde{\lambda},\mathcal{L}_{\overline{a}}})^{\overline{a}\textnormal{-ord}}[1/p].
\end{equation*}
    
\end{lemma}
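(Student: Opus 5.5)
We follow the degree-shifting pattern of \cite[Theorem 4.3.3]{10author} and \cite{CN25}, with Lemma \ref{lemma_degreeshiftingpreliminarylemma1} as the bridge. The plan is to compute
\[ \varprojlim_{m} H^{q}_!\big(X_K,\sigma_{\Omega_{\overline{a}},\lambda_{\overline{a}}}^{\circ}\otimes\mathcal{V}_{\lambda_{\overline{b}}}\otimes\mathcal{V}_U(\overline{S}_p\setminus\{\overline{a},\overline{b}\},m)\big) \]
by a spectral sequence whose $E_2$-terms contain $H^r_!(X_K,\sigma^{\circ}_{\Omega_{\overline a},\lambda_{\overline a}}\otimes\mathcal{V}_{\lambda^{\overline a}})$, and then take $q=r+s$. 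Here $\lambda$ matches $w\cdot\widetilde\lambda$ with $w\in W^P_{\overline S_p\setminus\{\overline a,\overline b\}}$.

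First we filter the coefficient complex $\mathcal{V}_U(\overline{S}_p\setminus\{\overline a,\overline b\},m)$ by its cohomology sheaves $\mathcal{V}^j_U$. By Lazard (Theorem \ref{Theorem_Lazard}) and Kostant (Theorem \ref{Theorem_Kostant}), integrally up to bounded $p$-torsion as in Lemma \ref{Lemma_changeofweightII}, $\mathcal{V}^j_U$ agrees with $\bigoplus_{l(w)=j}\mathcal{V}_{\lambda_w}/\varpi^m$. The representation $\mathcal{V}_{\lambda_w}$ restricts to trivial weight $0$ at those places, since $\widetilde\lambda_\tau=0$ there. The torsion bound depends only on $n$ and $[F:\mathbf Q]$, because the weight at these places is $0$ and the group $U(\mathcal{O})$ is fixed.

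We then run the hypercohomology spectral sequence for both compactly supported and ordinary cohomology, compatibly with the forget-supports map. The differentials are killed by a bounded power of $p$: rationally the sequence degenerates by Kostant's theorem, since the weights $w\cdot\widetilde\lambda$ are distinct and the differentials are equivariant for the centre of the Levi. Lemma \ref{lem_p_torsion_spectral_sequence} then gives maps $E_2\to E_\infty$ with kernel and cokernel killed by $p^{N'}$. Passing to the image of compactly supported in ordinary cohomology, $p^{N_1}$ times the $w$-summand $H^r_!$ becomes a subquotient of $H^{r+s}_!$ of the full complex. Lemma \ref{lemma_degreeshiftingpreliminarylemma1} makes this a subquotient of $H^{r+s}(\partial\widetilde X,\mathcal{W})^{\overline a\text{-ord}}$. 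Hecke equivariance for $\widetilde{\mathbf T}_a$, acting through $\mathcal{S}_a$, holds because every construction is $\mathbf{T}^S$-equivariant and only the places $\overline a,\overline b$ and those in $\overline S_p$ are touched. This step is the main obstacle: making the $p$-power bound uniform and independent of $m$ and $K$, and controlling the passage from $\varprojlim$ of interior cohomology to subquotients with only bounded torsion loss.

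For the second claim, take $r=d-s$, so the class lands in $H^d(\partial\widetilde X,\mathcal W)^{\text{ord}}$. Use the boundary exact sequence
\[ H^d(\widetilde X,\mathcal W)\to H^d(\partial\widetilde X,\mathcal W)\to H^{d+1}_c(\widetilde X,\mathcal W). \]
By Lemma \ref{lemma_existence_of_T_n}, passed to $\varprojlim_m$ via Mittag-Leffler for finite groups, $\widetilde T_n$ kills $H^{d+1}_c(\widetilde X,\mathcal W)$. Hence $\widetilde T_n$ times our subquotient lies in the image of $H^d(\widetilde X,\mathcal W)^{\text{ord}}$. To land in the torsion-free quotient $H^d[1/p]$, we also need to kill the torsion of $H^d(\widetilde X,\mathcal W)$. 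That torsion is a quotient of $H^{d-1}(\widetilde X,\mathcal W/\varpi^m)$, which $\widetilde T_n$ kills again. This accounts for $T_n^2$, with $\widetilde T_n$ acting as $T_n=\mathcal{S}(\widetilde T_n)$ on the source. The result is a subquotient of a lattice in $H^d(\widetilde X,\mathcal W)^{\text{ord}}[1/p]$, as claimed.
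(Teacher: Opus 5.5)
Your proposal is correct and follows essentially the same route as the paper. The paper's steps are: compatible compactly supported and ordinary spectral sequences with coefficients $H^s_{\textnormal{cont}}(U(\mathcal{O}_{F^+_{\overline{S}_p\setminus\{\overline a,\overline b\}}}),\mathcal{O})$; a uniform $p$-power bound on the differentials, fed into Lemma~\ref{lem_p_torsion_spectral_sequence}; a diagram chase down to interior cohomology; Kostant's theorem to isolate the $\mathcal{V}_{\lambda^{\{\overline a,\overline b\}}}$ summand; Lemma~\ref{lemma_degreeshiftingpreliminarylemma1}; and two applications of the vanishing theorem, once to $H^{d+1}_c$ and once to the torsion of $H^d$.

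The one point to sharpen is where the uniform bound on the differentials comes from. It is not rational degeneration, nor the distinctness of the weights $w\cdot\widetilde\lambda$; weights of the same length have the same central character. It comes from an element of $\mathcal{O}[Z(G(\mathcal{O}_{F^+_{\overline{S}_p\setminus\{\overline a,\overline b\}}}))]$ acting on the coefficients by scalars that differ in distinct degrees $s$, so each differential is killed by a difference of two such scalars.
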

\begin{proof}
For $?=\varnothing, c,!$, and integer $q\geq 0$, set
\begin{equation*}
    H^{q}_?:= \varprojlim_{m\geq 1}H^{q}_?\big(X_K,\sigma_{\Omega_{\overline{a}},\lambda_{\overline{a}}}^{\circ}\otimes_{\mathcal{O}}\mathcal{V}_{\lambda_{\overline{b}}}\otimes_{\mathcal{O}}\mathcal{V}_U(\overline{S}_p\setminus\{\overline{a},\overline{b}\},m)\big)
\end{equation*}
and note that $H_!^q=\im (H_c^q\to H^q)$ as the inverse system computing $\ker (H_c^q\to H^q)$ satisfies Mittag--Leffler.

There are spectral sequences
$E_{c,q}^{r,s},E_q^{r,s}$ of $\mathbf{T}_a$-modules and a morphism between them $i_{c,q}^{r,s}\colon E_{c,q}^{r,s}\to E_q^{r,s}$ as follows.
\begin{equation*}
    \begin{tikzcd}[scale cd=0.9]
	{E_{2,c}^{r,s}=H^r_c\Big(X_K,\sigma_{\Omega_{\overline{a}},\lambda_{\overline{a}}}^{\circ}\otimes_{\mathcal{O}}\mathcal{V}_{\lambda_{\overline{b}}}\otimes_{\mathcal{O}}H^s_{\textnormal{cont}}(U(\mathcal{O}_{F^+_{\overline{S}_p\setminus\{\overline{a},\overline{b}\}}}),\mathcal{O})\Big)} & \Rightarrow & {H_c^{r+s}} \\
	{E_{2}^{r,s}=H^r\Big(X_K,\sigma_{\Omega_{\overline{a}},\lambda_{\overline{a}}}^{\circ}\otimes_{\mathcal{O}}\mathcal{V}_{\lambda_{\overline{b}}}\otimes_{\mathcal{O}}H^s_{\textnormal{cont}}(U(\mathcal{O}_{F^+_{\overline{S}_p\setminus\{\overline{a},\overline{b}\}}}),\mathcal{O})\Big)} & \Rightarrow & {H^{r+s}}
	\arrow["{i_{c,2}^{r,s}}", from=1-1, to=2-1]
	\arrow["{i_c^{r+s}}", from=1-3, to=2-3]
\end{tikzcd}
\end{equation*}
where $i_{c,2}^{r,s}$ and $i_c^{r+s}$ are the maps induced by forgetting the support.

Arguing as in the proof of \cite[Lemma 4.2.3]{10author}, we can find an element $z \in \mathcal{O}\Big[Z\big(G(\mathcal{O}_{F^+_{\overline{S}_p\setminus\{\overline{a},\overline{b}\}}})\big)\Big]$ that acts as multiplication by a distinct element of $\cO$ on each nonzero cohomology group $H^s_{\textnormal{cont}}(U(\mathcal{O}_{F^+_{\overline{S}_p\setminus\{\overline{a},\overline{b}\}}}), \cO)$. It follows that there is an integer $M \geq 1$, depending only on $n$ and $[F : \mathbf{Q}]$, such that the differentials in these spectral sequences $E_{c,2}^{r,s}, E_2^{r,s}$ are annihilated by $p^M$.

By Lemma \ref{lem_p_torsion_spectral_sequence}, there are maps $\alpha$, $\alpha_c$ of $\mathbf{T}_a$-modules fitting into a diagram
\begin{equation*}
    \begin{tikzcd}
	{H^r_c\Big(X_K,\sigma_{\Omega_{\overline{a}},\lambda_{\overline{a}}}^{\circ}\otimes_{\mathcal{O}}\mathcal{V}_{\lambda_{\overline{b}}}\otimes_{\mathcal{O}}H^s_{\textnormal{cont}}(U(\mathcal{O}_{F^+_{\overline{S}_p\setminus\{\overline{a},\overline{b}\}}}),\mathcal{O})\Big)} && {E_{c,\infty}^{r,s}} \\
	{H^r\Big(X_K,\sigma_{\Omega_{\overline{a}},\lambda_{\overline{a}}}^{\circ}\otimes_{\mathcal{O}}\mathcal{V}_{\lambda_{\overline{b}}}\otimes_{\mathcal{O}}H^s_{\textnormal{cont}}(U(\mathcal{O}_{F^+_{\overline{S}_p\setminus\{\overline{a},\overline{b}\}}}),\mathcal{O})\Big)} && {E_{\infty}^{r,s}}
	\arrow["{\alpha_c}", from=1-1, to=1-3]
	\arrow["{i_{c,2}^{r,s}}", from=1-1, to=2-1]
	\arrow["{i_{c,\infty}^{r,s}}", from=1-3, to=2-3]
	\arrow["\alpha", from=2-1, to=2-3]
\end{tikzcd}
\end{equation*}
with kernel and cokernel killed by $p^{N_2}$ for an integer $N_2\geq 0$ depending solely on $n$ and $F$. An easy diagram chase combined with Lemma \ref{lemma_degreeshiftingpreliminarylemma1} then shows that 
\begin{equation*}
    p^{N_2}  H^r_!(X_{K},\sigma_{\Omega_{\overline{a}},\lambda_{\overline{a}}}^{\circ}\otimes_{\mathcal{O}}  \mathcal{V}_{\lambda_{\overline{b}}} \otimes_\cO\Hom_{\mathbf{Z}_p}(\bigwedge^s U(\cO_{F^+_{\overline{S}_p\setminus\{\overline{a},\overline{b}\}}}), \cO))
\end{equation*} is a subquotient $\widetilde{\mathbf{T}}_a$-module of $H^{r+s}(\partial\widetilde{X}_{\widetilde{K}(0,u)},\mathcal{W}_{\widetilde{\lambda},\mathcal{L}_{\overline{a}}})^{\overline{a}\textnormal{-ord}}$. 

        Kostant's theorem shows that $\Hom_{\mathbf{Z}_p}(\bigwedge^s U(\cO_{F^+_{\overline{S}_p\setminus\{\overline{a},\overline{b}\}}}), E)$ admits $V_{\lambda_{\overline{S}_{p}\setminus\{\overline{a},\overline{b}\}}}$ as a direct summand $E[\GL_n(\cO_{F_{S_p\setminus\{a,a^c,b,b^c\}}})]$-module, and therefore that we can find homomorphisms of $\cO[\GL_n(\cO_{F_{S_p\setminus\{a,a^c,b,b^c\}}})]$-modules
        \[ \mathcal{V}_{\lambda^{\{\overline{a},\overline{b}\}}} \to  \Hom_{\mathbf{Z}_p}(\bigwedge^s U(\cO_{F^+_{\overline{S}_{p}\setminus\{\overline{a},\overline{b}\}}}), \cO) \]
        and
        \[ \Hom_{\mathbf{Z}_p}(\bigwedge^s U(\cO_{F^+_{\overline{S}_{p}\setminus\{\overline{a},\overline{b}\}}}), \cO) \to \mathcal{V}_{\lambda^{\{\overline{a},\overline{b}\}}} \]
        whose composite is multiplication by a power of $p$ bounded solely in terms of $n$ and $[F : \mathbf{Q}]$ (and in particular, not depending on $\widetilde{\lambda}$). Increasing $N_2$ to exceed this power, we find that $ H^r_!(X_{K},\sigma_{\Omega_{\overline{a}},\lambda_{\overline{a}}}^{\circ}\otimes_{\mathcal{O}}  \mathcal{V}_{\lambda_{\overline{b}}} \otimes_\cO\Hom_{\mathbf{Z}_p}(\bigwedge^s U(\cO_{F^+_{\overline{S}_p\setminus\{\overline{a},\overline{b}\}}}), \cO))$ admits
        \begin{equation*}
            p^{N_2}  H^r_!(X_{K},\sigma_{\Omega_{\overline{a}},\lambda_{\overline{a}}}^{\circ}\otimes_{\mathcal{O}}  \mathcal{V}_{\lambda^{\overline{a}}})
        \end{equation*}
         as a $\mathbf{T}_a$-equivariant subquotient, and therefore that $p^{2 N_2}H^r_!(X_{K},\sigma_{\Omega_{\overline{a}},\lambda_{\overline{a}}}^{\circ}\otimes_{\mathcal{O}}  \mathcal{V}_{\lambda^{\overline{a}}})$ is a subquotient $\widetilde{\mathbf{T}}_a$-module of $H^{r+s}(\partial\widetilde{X}_{\widetilde{K}(0,u)},\mathcal{W}_{\widetilde{\lambda},\mathcal{L}_{\overline{a}}})^{\overline{a}\textnormal{-ord}}$. This completes the proof of the first part of the Lemma by setting $N_1=2N_2$. 

         For the second claim, it suffices to prove that $T_n^2H^d(\partial\widetilde{X}_{\widetilde{K}(0,u)},\mathcal{W}_{\widetilde{\lambda},\mathcal{L}_{\overline{a}}})^{\overline{a}\textnormal{-ord}}$ is a $\widetilde{\mathbf{T}}_a$-equivariant subquotient of $H^d(\widetilde{X}_{\widetilde{K}(0,u)},\mathcal{W}_{\widetilde{\lambda},\mathcal{L}_{\overline{a}}})^{\overline{a}\textnormal{-ord}}[1/p]$. By Theorem \ref{VanishingUpToBoundedTorsion}, $T_n$ annihilates the third member of the $\widetilde{\mathbf{T}}_a$-equivariant exact sequence 
         \begin{align*}
             H^d(\widetilde{X}_{\widetilde{K}(0,u)},\mathcal{W}_{\widetilde{\lambda},\mathcal{L}_{\overline{a}}})^{\overline{a}\textnormal{-ord}} &\to H^d(\partial\widetilde{X}_{\widetilde{K}(0,u)},\mathcal{W}_{\widetilde{\lambda},\mathcal{L}_{\overline{a}}})^{\overline{a}\textnormal{-ord}}\\ 
             &\to H^{d+1}_c(\widetilde{X}_{\widetilde{K}(0,u)},\mathcal{W}_{\widetilde{\lambda},\mathcal{L}_{\overline{a}}})^{\overline{a}\textnormal{-ord}}.
         \end{align*}
         In particular, $T_nH^d(\partial\widetilde{X}_{\widetilde{K}(0,u)},\mathcal{W}_{\widetilde{\lambda},\mathcal{L}_{\overline{a}}})^{\overline{a}\textnormal{-ord}}$ is a $\widetilde{\mathbf{T}}_a$-equivariant subquotient of $H^d(\widetilde{X}_{\widetilde{K}(0,u)},\mathcal{W}_{\widetilde{\lambda},\mathcal{L}_{\overline{a}}})^{\overline{a}\textnormal{-ord}}$. On the other hand, $H^d(\widetilde{X}_{\widetilde{K}(0,u)},\mathcal{W}_{\widetilde{\lambda},\mathcal{L}_{\overline{a}}})^{\overline{a}\textnormal{-ord}}$ is a finitely generated $\mathcal{O}$-module and another application of Theorem \ref{VanishingUpToBoundedTorsion} shows that its $\varpi^{\infty}$-torsion is annihilated by $T_n$, showing that $T_nH^d(\widetilde{X}_{\widetilde{K}(0,u)},\mathcal{W}_{\widetilde{\lambda},\mathcal{L}_{\overline{a}}})^{\overline{a}\textnormal{-ord}}$ is a $\widetilde{\mathbf{T}}_a$-equivariant subquotient of the module $H^d(\widetilde{X}_{\widetilde{K}(0,u)},\mathcal{W}_{\widetilde{\lambda},\mathcal{L}_{\overline{a}}})^{\overline{a}\textnormal{-ord}}[1/p]$. The proof is now complete.    
\end{proof}
\begin{lemma}\label{lemma_interiorLGC}
Suppose that we are given a finite set $\overline{S}$ of places of $F^+$ containing $\overline{S}_p$, distinct places $\overline{a},\overline{b}\in \overline{S}_p$, a dominant weight $\widetilde{\lambda}\in (\mathbf{Z}^{2n}_+)^{\widetilde{I}_p}$, 
a Bernstein block $\Omega_{\overline{a}}=(\Omega_{a},\Omega_{a^c})$ of $\textnormal{Rep}_{\textnormal{sm}}(G(F^+_{\overline{a}}),\overline{\mathbf{Q}}_p)$, a pair of $\mathcal{O}$-integral $\Omega_a$- and $\Omega_{a^c}$-system $\mathcal{L}_{\overline{a}}=(\mathcal{L}_a,\mathcal{L}_{a^c})$, and an $\overline{S}$-good subgroup $\widetilde{K}\leq \widetilde{G}(\mathbf{A}_{F^+}^{\infty})$. 
   Assume that the following are satisfied.
\begin{enumerate}
    \item For $v\notin S$ with residual characteristic $l$, either $l$ is unramified in $F$ and $S_l(F)\cap S \neq \emptyset$ or $l$ splits in an imaginary quadratic subfield of $F$.
    \item The fixed rational prime $\mathfrak{p}$ from \S\ref{subsubsection_theelementT_n} is so that $S_{\mathfrak{p}}(F^+)\cap \overline{S}=\emptyset$ and $T_n\in \mathbf{T}^S$ is the fixed element provided by Lemma \ref{lemma_existence_of_T_n}.
    \item The weight $\widetilde{\lambda}$ is cohomologically cuspidal.
    \item For any embedding $\tau:F^+\hookrightarrow E$ inducing a place different from $\overline{a}$ and $\overline{b}$, $\widetilde{\lambda}_{\tau}=(0,...,0)$.
    \item The subgroup $\widetilde{K}$ is decomposed with respect to $P$ and, for every $\overline{v}\in \overline{S}_p$, $\widetilde{K}\cap U(F^+_{\overline{v}})=U(\mathcal{O}_{F^+_{\overline{v}}})$. Write $K:=(\widetilde{K}^{\overline{a}}\cap G(\mathbf{A}_{F^+}^{\infty}))\cdot G(\mathcal{O}_{F^+_{\overline{a}}})$.
\end{enumerate}

Fix $w\in W^{P}_{\overline{S}_p\setminus\{\overline{a},\overline{b}\}}$, set $q=d-l(w)$ and let $\lambda\in (\mathbf{Z}_+^n)^{\Hom(F,E)}$ be matching $w\cdot \widetilde{\lambda}$. Then there are integers $N_1,N_2\geq 1$ only depending on $n$ and $F$, satisfying the following.
    \begin{itemize}
        \item For every maximal ideal $\mathfrak{m}\trianglelefteq \mathbf{T}^S$ with residue field $k$ supported on $ A:=\mathbf{T}_a\big(H^{q}_!(X_K,\sigma_{\Omega_{\overline{a}},\lambda_{\overline{a}}}^{\circ}\otimes_{\cO}\mathcal{V}_{\lambda^{\overline{a}}})\big)$, there is an ideal $J\trianglelefteq A_{\mathfrak{m}}$ satisfying $(T_n^2p^{N_2}J)^{N_1}=0$ such that $A_{\mathfrak{m}}/J$ is of Galois type and the associated determinant $D_{\mathfrak{m}}$ satisfies $\textnormal{LGC}(S,a,\lambda_{\overline{a}},\Omega_{\overline{a}},\mathcal{L}_{\overline{a}})$.
    \end{itemize}
\end{lemma}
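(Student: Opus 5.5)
The plan is to combine the degree shifting Lemma \ref{lemma_degreeshifting} with the characteristic $0$ description in Theorem \ref{Theorem_selfdualLGC}, and then to pass from the $2n$-dimensional determinants to $n$-dimensional ones using the local structure at $a$. By Lemma \ref{lemma_degreeshifting}, with $s=l(w)$, the module $T_n^2p^{N}H^{q}_!(X_K,\sigma_{\Omega_{\overline{a}},\lambda_{\overline{a}}}^{\circ}\otimes\mathcal{V}_{\lambda^{\overline{a}}})$ is a $\widetilde{\mathbf{T}}_a$-equivariant subquotient of $H:=H^d(\widetilde{X}_{\widetilde{K}(0,u)},\mathcal{W}_{\widetilde{\lambda},\mathcal{L}_{\overline{a}}})^{\overline{a}\textnormal{-ord}}[1/p]$, where $\widetilde{\mathbf{T}}_a$ acts on the source through $\mathcal{S}_a$, and $N$ depends only on $n,F$. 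We have to match levels first: the level $K$ in the statement is hyperspecial at $\overline{a}$, while the degree shifting lemma uses $\widetilde{K}(0,u)\cap G$. Here I would use that the $\overline{a}$-component enters only through $\sigma_{\Omega_{\overline{a}}}$ on $G(\mathcal{O}_{F^+_{\overline{a}}})$, so $K$ can be taken hyperspecial at $\overline{a}$ from the start. Put $\widetilde{A}=\widetilde{\mathbf{T}}_a(H)$. By Theorem \ref{Theorem_selfdualLGC} (applicable because $\widetilde{\lambda}$ is cohomologically cuspidal, together with hypothesis (1)), $\widetilde{A}\otimes\overline{\mathbf{Q}}_p$ is a finite product of copies of $\overline{\mathbf{Q}}_p$. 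Its image $\widetilde{A}^\circ$ in $\End(H^\circ)$, for an integral lattice $H^\circ$, is a reduced, $\mathcal{O}$-flat, finite $\mathcal{O}$-algebra, and each of its points $f$ carries $\widetilde{\rho}_f$ with properties (1)--(4).

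Next I would construct the $n$-dimensional determinant. For each $f$, equation \eqref{equation_effectofSatake} shows that the $2n$-dimensional characteristic polynomials factor as $P_v(X)\cdot q_v^{n(2n-1)}P_{v^c}^\vee(q_v^{1-2n}X)$. Chebotarev and the existence of $r_{\pi,\iota}$ for the relevant $\mathbf{T}^S$-eigensystems \cite{Sch15} then give $\widetilde{D}_f=D_{1,f}\oplus D_{2,f}$, with $D_{1,f}$ interpolating $P_v$. I expect to realise $\mathbf{T}^S(M)_{\mathfrak{m}}$ modulo a nilpotent ideal as Galois type integrally by combining Scholze's determinant with the $\mathcal{O}$-flat algebra above. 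Concretely: let $B$ be the image of $\mathbf{T}_a\to\widetilde{A}^\circ_{\mathfrak{m}}$ through $\mathcal{S}_a$, and use \cite[Corollary 1.14]{Che14} to see that the determinant $\prod_fD_{1,f}$ takes values in $B$.

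The hard step, as Remark \ref{rem:local_global_pseudoring} warns, is the local condition at $a$, because $D_{1,f}|_{G_{F_a}}$ is not the restriction of a subrepresentation of $\widetilde{\rho}_f$. To handle it, I would use (3) of Theorem \ref{Theorem_selfdualLGC}, which gives $\widetilde{\rho}_f|_{G_{F_a}}$ an invariant subspace $\rho_1$ whose Hodge--Tate weights are exactly those of $\lambda_a$, distinct from the weights coming from $\lambda_{a^c}$. Since $\widetilde{\rho}_f$ is potentially semistable, $(D_{1,f}|_{G_{F_a}})$ and $\rho_1$ have the same semisimplification, which one sees by comparing labelled Hodge--Tate weights and Frobenius eigenvalues. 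By (4), $\textnormal{WD}(\rho_1)^{\textnormal{ss}}$ is in turn given by the $\mathfrak{Z}^\circ$-eigencharacter. Then Lemma \ref{lemma_semisimplification_of_deRham_reps} and Lemma \ref{lem_inertial_type_of_semisimplified_representation} show that each point $f$ lies in $R^{\{a\},\textnormal{pst},(\lambda_a,\Omega_a)}_{\overline{D}}$ and satisfies the diagram \eqref{equation_LGCdiagram} pointwise. Because $B\subset\prod_f\overline{\mathbf{Q}}_p$ is reduced and $\mathcal{O}$-flat, and both $R^{\lambda_a,\Omega_a}$ and the global quotient of Theorem \ref{Thm_WWE_global} are $\mathcal{O}$-flat reduced quotients characterised by their $\overline{\mathbf{Q}}_p$-points, the maps factor integrally and the diagram commutes. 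This gives $\textnormal{LGC}$ for $B$. The point I am least sure of is the integrality of $\eta^\circ$ landing compatibly; it is handled by the definition of $\mathfrak{Z}^\circ_{\overline{D}_a}$.

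Finally, I would pull this back to $A_{\mathfrak{m}}$. The subquotient relation says that the annihilator of $H$ in $\mathbf{T}_a$, multiplied by $T_n^2p^{N}$, kills $H^q_!$, which gives a surjection $B\twoheadrightarrow A_{\mathfrak{m}}/J$ with $J$ the ideal generated by the image of this annihilator. To control nilpotency, one first reduces mod $\varpi^m$ and passes to the limit using the finite generation of $H^q_!$ over $\mathcal{O}$. Subquotient annihilator arguments as in Lemma \ref{Lemma_EisFuncSurjection} give $(T_n^2p^{N_2}J)^{N_1}=0$, with $N_1$ coming from the length of the subquotient chain and from Scholze's nilpotence bound. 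Galois type and LGC then descend to quotients, which finishes the argument.
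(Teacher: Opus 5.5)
Your first step (degree shifting into $H^d(\widetilde{X}_{\widetilde{K}(0,u)},\mathcal{W}_{\widetilde{\lambda},\mathcal{L}_{\overline{a}}})^{\overline{a}\textnormal{-ord}}[1/p]$, then Theorem \ref{Theorem_selfdualLGC} for $\widetilde{A}$) matches the paper. The level-matching worry is moot, since $\mathcal{P}_{\overline{a}}(0,u)\cap G=G(\mathcal{O}_{F^+_{\overline{a}}})$. There is, however, a genuine gap in your second and third paragraphs.

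The $\overline{\mathbf{Q}}_p$-points $f$ of $\widetilde{A}$ are Hecke eigensystems of \emph{cuspidal} representations of $\widetilde{G}$; this is what cohomological cuspidality of $\widetilde{\lambda}$ guarantees. The identity \eqref{equation_effectofSatake} holds in $\mathbf{T}^S[X]$, so it says nothing about $f(\widetilde{P}_v(X))$ unless $f$ factors through $\mathcal{S}$, which it does not.

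The arrows also run the other way from what you use. We have $\mathcal{S}_a\colon\widetilde{\mathbf{T}}_a\to\mathbf{T}_a$, and degree shifting gives a surjection $\overline{\mathcal{S}}\colon\widetilde{A}\twoheadrightarrow A_{\mathfrak{m}}/J$. There is no map $\mathbf{T}_a\to\widetilde{A}^\circ$ whose image could be your $B$. Moreover, interior cohomology in these weights is $\varpi$-power torsion, so $A_{\mathfrak{m}}$ has no $\overline{\mathbf{Q}}_p$-points at all.

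The splitting $\widetilde{D}\otimes_{\widetilde{A},\overline{\mathcal{S}}}A_{\mathfrak{m}}/J=D_{\mathfrak{m}}\cdot D_{\mathfrak{m}}^{\vee,c}(1-2n)$ exists only on this torsion quotient; $\widetilde{\rho}_f$ has no reason to decompose as $D_{1,f}\oplus D_{2,f}$. So there is no $D_{1,f}$, and no $\mathcal{O}$-flat reduced algebra carrying an $n$-dimensional determinant specialising to $D_{\mathfrak{m}}$. Your plan of checking both conditions of $\textnormal{LGC}$ on $\overline{\mathbf{Q}}_p$-points and descending by reducedness therefore has nothing to act on.

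What is missing is a way to carry the local information across the congruence. In characteristic $0$, only the ordinary subrepresentation $\rho_{1}\subset\widetilde{\rho}_f|_{G_{F_a}}$ of Theorem \ref{Theorem_selfdualLGC}(3) has the right Hodge and inertial type, and it is a local constituent, not a global one. The paper proceeds as follows.

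\begin{enumerate}
\item It twists $\mathfrak{m}$ by a suitable character $\chi$, using sub-lemma 1 in the proof of \cite[Proposition 4.2.13]{CN25}. After the twist, $\overline{D}_{\mathfrak{m}(\chi)}|_{G_{F_a}}$ and $\overline{D}^{\vee,c}_{\mathfrak{m}(\chi)}(1-2n)|_{G_{F_a}}$ share no irreducible constituent, and each $\overline{\rho}_{\chi,i,1}$ has the same constituents as $\overline{D}_{\mathfrak{m}(\chi)}|_{G_{F_a}}$.
\item It lifts the corresponding central idempotent to $\widetilde{e}$ in the image of $\widetilde{A}_\chi[G_{F_a}]$ in $M_{2n}(\widetilde{A}_\chi[1/p])$, by \cite[Lemma 3.2.1]{CN25}. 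This yields an \emph{integral} determinant $D_{\widetilde{e}}\colon G_{F_a}\to\widetilde{A}_\chi$ with $D_{\widetilde{e}}[1/p]=D_{\chi,1}$. This determinant does satisfy the local condition, by Theorem \ref{Theorem_selfdualLGC} and reducedness of $\widetilde{A}_\chi$.
\item Hensel's lemma for determinants \cite[Lemma 3.2.4]{10author} then gives $D_{\widetilde{e}}\otimes_{\widetilde{A}_\chi,\overline{\mathcal{S}}_\chi}A_{\mathfrak{m}(\chi)}/J_\chi=D_{\mathfrak{m}(\chi)}|_{G_{F_a}}$. One then twists back.
\end{enumerate}

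The global condition (1) is likewise not verified on points. One shows that $(A_{\mathfrak{m}(\chi)}/J_\chi)[G_{F,S}]/\ker D_{\mathfrak{m}(\chi)}$ is a quotient of $\widetilde{A}_\chi[G_{F,S}]/\ker\widetilde{D}_\chi$, using $\ker(\widetilde{D}_\chi\otimes A_{\mathfrak{m}(\chi)}/J_\chi)=\ker D_{\mathfrak{m}(\chi)}\cap\ker D^{\vee,c}_{\mathfrak{m}(\chi)}(1-2n)$. The algebra $\widetilde{A}_\chi[G_{F,S}]/\ker\widetilde{D}_\chi$ embeds $G_{L_a}$-equivariantly into $M_{2n}(\widetilde{A}_\chi[1/p])\cong\widetilde{\rho}_\chi^{2n}$. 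Hence the quotient is torsion semistable with the required weights.
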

\begin{proof}
    By \cite{Sch15}, there exists an integer $N_3\geq 1$ only depending on $n$ and $F$ and an ideal $I\leq A$ satisfying $I^{N_3}=0$ such that $A_{\mathfrak{m}}/I_{\mathfrak{m}}$ is of Galois type for any $\mathfrak{m}$. In particular, to verify local-global compatibility at $a$, we are allowed to pass to a larger set $\overline{S}$, still satisfying the conditions listed in the statement of the Lemma.

    For a continuous character $\overline{\chi} \colon G_{F}\to k^{\times}$ unramified at $\mathfrak{p}$ and $p$, write $\chi:G_{F}\to \mathcal{O}^{\times}$ for its Teichm\"uller lift. Let $S\subset S_{\chi}$ be a finite set of finite places of $F$ and $K_{\chi}\trianglelefteq K$ be a good normal subgroup such that the following hold.
    \begin{itemize}
        \item For each finite place $v$ of $F$, $\chi|_{G_{F_v}}\circ \textnormal{Art}_{F_v}$ is trivial on $\det(K_{\chi,v})$.
        \item The set $\overline{S}_{\chi}$ still satisfies assumption $(1)$ and $(2)$ of the Lemma.
        \item $(K_{\chi})^{S_{\chi}\setminus S}=K^{S_{\chi}\setminus S}$.
        \item $K/K_{\chi}$ is abelian of order prime to $p$.
        \item There is $\widetilde{K}_{\chi}$ with $(\widetilde{K}_{\chi})^{S_{\chi}\setminus S}=\widetilde{K}^{S_{\chi}\setminus S}$, $\widetilde{K}_\chi\cap G(\mathbf{A}_{F^+}^{\infty})=K_{\chi}$ and $\widetilde{K}_{\chi}$ still satisfies $(5)$.
    \end{itemize}
    Recall the maximal ideal (cf. discussion before \cite[Proposition 2.22]{10author}) $\mathfrak{m}(\chi)\trianglelefteq \mathbf{T}^{S_{\chi}}$ obtained by `twisting $\mathfrak{m}$ by $\chi$'. Concretely, it is defined to be $\mathfrak{m}(\chi):=f_{\chi}(\mathfrak{m})\trianglelefteq\mathbf{T}_{\chi,a}:=\mathbf{T}^{S_{\chi}}\otimes f_{\chi}(\mathfrak{Z}_{\textnormal{GL}_n(F_a),\lambda_a,\mathcal{L}_a}^{\circ})$ for the isomorphism
    
    \begin{align*} 
        f_{\chi} \colon \mathcal{H}(G^{S_{\chi}\setminus\{a\}},\sigma_{\Omega_{a},\lambda_a}^{\circ}) &\to \mathcal{H}(G^{S_{\chi}\setminus\{a\}},\sigma_{\Omega_{a},\lambda_a}^{\circ}),
     \\ 
        h \colon G^{S_{\chi}\setminus\{a\}} \to \textnormal{End}(\sigma_{\Omega_{a},\lambda_a}^{\circ}) &\mapsto \Big(f_{\chi}(h):g\mapsto \chi\big(\textnormal{Art}_F(\det(g))\big)^{-1}h(g)\Big).
     \end{align*}
    We will show the claim for some $\mathfrak{m}(\chi)$ that will then imply it for $\mathfrak{m}$.
   
    By Lemma \ref{lemma_degreeshifting} and \cite{Sch15}, for every $\chi$, $S_{\chi}$ and $K_{\chi}$ we obtain integers $N_{\chi,1}, N_{\chi,2} $ only depending on $n$ and $F$, an ideal 
    \begin{equation*}
         J_{\chi}\trianglelefteq A_{\mathfrak{m}(\chi)}:=\mathbf{T}_{\chi,a}\big(H^q_!(X_{K_{\chi}},\sigma_{\Omega_{\overline{a}},\lambda_{\overline{a}}}^{\circ}\otimes_{\cO}\mathcal{V}_{\lambda^{\overline{a}}})\big)_{\mathfrak{m}(\chi)}
    \end{equation*}
    for $\mathbf{T}_{\chi,a}:=\mathbf{T}^{S_{\chi}}\otimes f_{\chi}(\mathfrak{Z}_{\textnormal{GL}_n(F_a),\lambda_a,\mathcal{L}_a}^{\circ})$
    satisfying $(T_n^2p^{N_{\chi,2}}J_{\chi})^{N_{\chi,1}}=0$ and
    such that the following hold.
    \begin{itemize}
        \item The $\mathbf{T}^{S_{\chi}}$-algebra $A_{\mathfrak{m}(\chi)}/J_{\chi}$ is of Galois type.
        \item The Satake transform $\mathcal{S}_{\chi,a}\colon\widetilde{\mathbf{T}}_{\chi,a}=\widetilde{\mathbf{T}}^{S_{\chi}}\otimes f_{\chi}(\mathfrak{Z}_{\textnormal{GL}_n(F_a),\lambda_a,\mathcal{L}_a}^{\circ})\to \mathbf{T}_{\chi,a}$
    descends to a surjection
    \begin{equation*}
        \widetilde{A}_{\chi}:=\im \Big(\widetilde{\mathbf{T}}_a\to \textnormal{End}_{E}(H^d(\widetilde{X}_{\widetilde{K}_{\chi}(0,u)},\mathcal{W}_{\widetilde{\lambda},\mathcal{L}_{\overline{a}}})_{\widetilde{\mathfrak{m}(\chi)}}^{\overline{a}\textnormal{-ord}}[1/p])\Big)\xrightarrow{\overline{\mathcal{S}}_{\chi}} A_{\mathfrak{m}(\chi)}/J_{\chi}.
    \end{equation*}
    \end{itemize}

    By Theorem \ref{Theorem_selfdualLGC}, $\widetilde{A}_{\chi}[1/p]$ is a semisimple $\widetilde{\mathbf{T}}_{\chi,a}[1/p]$-algebra and, after possibly enlarging $E$, we can write $\widetilde{A}_{\chi}[1/p]=\prod_{i=1}^rE$. Moreover, we obtain $2n$-dimensional Galois representations $\widetilde{\rho}_{\chi,i}:G_{F,S_{\chi}}\to \textnormal{GL}_{2n}(E)$ such that
    \begin{equation*}
        \widetilde{\rho}_{\chi,i}|_{G_{F_a}}\sim \begin{pmatrix}
\rho_{\chi,i,1} & \ast \\
0 & \rho_{\chi,i,2}
\end{pmatrix}.
    \end{equation*}
    We can then set $\widetilde{\rho}_{\chi}=\prod_{i=1}^r\widetilde{\rho}_{\chi,i}$, $\rho_{\chi,j}=\prod_{i=1}^r\rho_{\chi,i,j}$ and write $\widetilde{D}_{\chi}:G_{F,S}\to \widetilde{A}_{\chi}$, $D_{\chi,j}:G_{F_a}\to \widetilde{A}_{\chi}[1/p]$ for the corresponding determinants. 
    We have a global decomposition
    \begin{equation*}
    \widetilde{D}_{\chi}\otimes_{\widetilde{A}_{\chi},\overline{\mathcal{S}}_{\chi}}A_{\mathfrak{m}(\chi)}/J_{\chi}=D_{\mathfrak{m}(\chi)}D_{\mathfrak{m}(\chi)}^{\vee,c}(1-2n)
    \end{equation*}
    and a local decomposition
    \begin{equation*}
        (\widetilde{D}_{\chi}|_{G_{F_a}})[1/p]=D_{\chi,1}D_{\chi,2}
    \end{equation*}
    that we can force to be compatible.
    Namely, by sub-lemma 1 in the proof of \cite[Proposition 4.2.13]{CN25}, after possibly enlarging $E$, we can find $\chi$ such that the following hold.
    \begin{itemize}
        \item The irreducible factors of $\overline{D}_{\mathfrak{m}(\chi)}|_{G_{F_{a}}}$ and $\overline{D}_{\mathfrak{m}(\chi)}^{\vee,c}(1-2n)|_{G_{F_a}}$ are distinct.
        \item For each $i=1,...,r$, the irreducible factors of $\overline{D}_{\rho_{\chi,i,1}}$ and of $\overline{D}_{\mathfrak{m}(\chi)}|_{G_{F_a}}$ coincide.
    \end{itemize}
    By further enlarging $E$, we may assume that $\overline{\widetilde{D}_{\chi}|_{G_{F_a}}}$ is split in the sense of \cite[Definition 2.19]{Che14} i.e. $k[G_{F_a}]/\ker (\widetilde{D}_{\chi}|_{G_{F_a}}\otimes_{\widetilde{A}_{\chi}}k)$ is a product of matrix algebras over $k$ (indexed by irreducible factors).
    Thus, we can define $\overline{e}\in k[G_{F_a}]/\ker(\widetilde{D}_{\chi}|_{G_{F_a}}\otimes_{\widetilde{A}_{\chi}}k)=\im\big(k[G_{F_a}]\to M_{2n}(k)\big)$ as the central idempotent acting as identity on the irreducible factors of $\overline{D}_{\mathfrak{m}(\chi)}|_{G_{F_a}}$ and as zero on the irreducible factors of $\overline{D}_{\mathfrak{m}(\chi)}^{\vee,c}(1-2n)\mid_{G_{F_a}}$. By \cite[Lemma 3.2.1]{CN25}, we can lift $\overline{e}$ to an idempotent $\widetilde{e}\in\widetilde{B}_a=\im\big(\widetilde{A}_{\chi}[G_{F_a}]\xrightarrow{\widetilde{\rho}_{\chi}}M_{2n}(\widetilde{A}_{\chi}[1/p])\big)$ that, when viewed as an element of $M_{2n}(\widetilde{A}_{\chi}[1/p])$, is given by idempotent projection onto the representation space of $\rho_{\chi,1}$ (see the proof of \cite[Lemma 3.2.2 (4)]{CN25}). Therefore, the natural map
    \begin{align}\label{equation_GaloisStableLattice}
    \begin{split}
        \widetilde{A}_{\chi}[G_{F_a}] &\to \widetilde{e}\widetilde{B}_a\widetilde{e} \\
        x &\mapsto \widetilde{e}x\widetilde{e},
    \end{split}
    \end{align}
    is an algebra homomorphism as can be checked after inverting $p$. Moreover, the determinant $D_{\widetilde{e}}:G_{F_a}\to \widetilde{A}_{\chi}$ we obtain by postcomposing \eqref{equation_GaloisStableLattice} with the determinant $\widetilde{e}\widetilde{B}_a\widetilde{e}\to \widetilde{A}_{\chi}$, $x\mapsto \widetilde{D}_{\chi}(x+1-\widetilde{e})$ satisfies $\overline{D}_{\widetilde{e}}=\overline{D}_{\mathfrak{m}(\chi)}|_{G_{F_a}}$ and $D_{\widetilde{e}}[1/p]=D_{\chi,1}$.
    
    By Hensel's lemma for determinants \cite[Lemma 3.2.4]{10author}, we obtain
    \begin{equation*}
        D_{\widetilde{e}}\otimes_{\widetilde{A}_{\chi},\overline{\mathcal{S}}_{\chi}}A_{\mathfrak{m}_{\chi}}/J_{\chi}=D_{\mathfrak{m}(\chi)}|_{G_{F_a}}.
    \end{equation*}
    In particular, we see that condition (2) of $\textnormal{LGC}(S_{\chi},a,\lambda_{\overline{a}},\Omega_{\overline{a}},\mathcal{L}_{\overline{a}})$ is satisfied for $A_{\mathfrak{m}(\chi)}/J_{\chi}$ (with the $\mathcal{O}$-model $f_{\chi}(\mathfrak{Z}_{\textnormal{GL}_n(F_a),\lambda_a,\mathcal{L}_a}^{\circ})\cap \eta^{-1}(R_{\overline{D}_{\chi,1}}^{\lambda_a,\Omega_a})$ appearing in the diagram). Noting that $\Gamma(K/K_{\chi},-)$ is exact, $f_{\chi^{-1}}$ descends to a surjection $A_{\mathfrak{m}(\chi)}\to A_{\mathfrak{m}}$, we can then set $J:=f_{\chi^{-1}}(J_{\chi})+I_{\mathfrak{m}}$, $D_{\mathfrak{m}}=(f_{\chi^{-1}}\circ D_{\mathfrak{m}(\chi)})\otimes \chi^{-1}\mod{J}$ to see that $A_{\mathfrak{m}}/J$ is of Galois type as a $\mathbf{T}^S$-algebra and satisfies condition $(2)$ of $\textnormal{LGC}(S,a,\lambda_{\overline{a}},\Omega_{\overline{a}},\mathcal{L}_{\overline{a}})$ (see \cite{Hev24}, proof of claim on page 92).

    We are left to prove that $D_{\mathfrak{m}}$ satisfies (1) of $\textnormal{LGC}(S,a,\lambda_{\overline{a}},\Omega_{\overline{a}},\mathcal{L}_{\overline{a}})$. It suffices to prove that the $\mathbf{Z}_p[G_{L_a}]$-module  
    \begin{equation*}
        (A_{\mathfrak{m}(\chi)}/J_{\chi})[G_{F,S_{\chi}}]/\ker(D_{\mathfrak{m}(\chi)})
    \end{equation*} is torsion semistable of weight $[h_{\lambda_{\overline{a}},1},h_{\lambda_{\overline{a}},2}]$ with $L_a/F_a$ so that $\tau_{\Omega_a}:I_{F_a}\to \textnormal{GL}_n(\overline{\mathbf{Q}}_p)$ factors through $I_{L_a/F_a}$. By flat base change, we have $\ker (\widetilde{D}_{\chi})\otimes_{\mathcal{O}}E=\ker (\widetilde{D}_{\chi}\otimes_{\mathcal{O}}E)$, yielding an embedding
\begin{align}\label{equation_embedding}
\begin{split}
    \widetilde{A}_{\chi}[G_{F,S}]/\ker(\widetilde{D}_{\chi}) &\hookrightarrow (\widetilde{A}_{\chi}\otimes_{\mathcal{O}}E)[G_{F,S_{\chi}}]/\ker(\widetilde{D}_{\chi}\otimes_{\mathcal{O}}E) \\
    &\hookrightarrow M_{2n}(\widetilde{A}_{\chi}\otimes_{\mathcal{O}}E)\cong \widetilde{\rho}_{\chi}^{2n}
\end{split}
\end{align}
of $\mathbf{Z}_p[G_{L_a}]$-modules. In particular, since the target of \eqref{equation_embedding} is semistable with Hodge--Tate weights within a range $[h_{\lambda_{\overline{a}},1},h_{\lambda_{\overline{a}},2}]$, the source is torsion semistable of weight within $[h_{\lambda_{\overline{a}},1},h_{\lambda_{\overline{a}},2}]$.

Finally we have surjections
\begin{align*}
    \widetilde{A}_{\chi}[G_{F,S_{\chi}}]/\ker(\widetilde{D}_{\chi}) &\twoheadrightarrow (A_{\mathfrak{m}(\chi)}/J_{\chi})[G_{F,S}]/\ker (\widetilde{D}_{\chi}\otimes_{\widetilde{A}_{\chi},\overline{\mathcal{S}}}A_{\mathfrak{m}(\chi)}/J_{\chi})\\ 
    & \twoheadrightarrow (A_{\mathfrak{m}(\chi)}/J_{\chi})[G_{F,S}]/\ker (D_{\mathfrak{m}(\chi)})
\end{align*}
of $\widetilde{A}_{\chi}[G_{L_a}]$-modules. Indeed, the first map uses 
\begin{equation*}
\ker(\widetilde{D}_{\chi})\otimes_{\widetilde{A}_{\chi},\overline{\mathcal{S}}_{\chi}} (A_{\mathfrak{m}(\chi)}/J_{\chi})\leq \ker (\widetilde{D}_{\chi}\otimes_{\widetilde{A}_{\chi},\overline{\mathcal{S}}_{\chi}}A_{\mathfrak{m}(\chi)})
\end{equation*}
by
\cite[Lemma 1.18 (iii)]{Che14} and the second uses that
\begin{equation*}
    \ker (\widetilde{D}_{\chi}\otimes_{\widetilde{A}_{\chi},\overline{\mathcal{S}}_{\chi}} (A_{\mathfrak{m}(\chi)}/J_{\chi}))=\ker (D_{\mathfrak{m}(\chi)})\cap\ker (D_{\mathfrak{m}(\chi)}^{\vee,c}(1-2n))\leq \ker (D_{\mathfrak{m}(\chi)})
\end{equation*}
by \cite[Lemma 3.2.4]{10author}. In particular, we see that property $(1)$ of $\textnormal{LGC}(S_{\chi},a,\lambda_{\overline{a}},\Omega_{\overline{a}},\mathcal{L}_{\overline{a}})$ is satisfied for $D_{\mathfrak{m}(\chi)}:G_{F,S_{\chi}}\to A_{\mathfrak{m}(\chi)}/J_{\chi}$. 
\end{proof}
We will need the dual statement as well.
\begin{lemma}\label{lemma_interiorLGCdual}
    Suppose that we are in the situation of the first paragraph of the statement of Lemma \ref{lemma_interiorLGC}.

    Fix $w\in W^P_{\overline{S}_p\setminus\{\overline{a},\overline{b}\}}$, set $q=d-l(w)$ and $\lambda\in (\mathbf{Z}_+^n)^{\Hom(F,E)}$ so that $-w_0^G\lambda$ matches $w\cdot \widetilde{\lambda}$. Then there are integers $N_1\geq 1$, $N_2\geq 0$ only depending on $n$, and $F$, satisfying the following.
    \begin{itemize}
         \item For every maximal ideal $\mathfrak{m}\trianglelefteq \mathbf{T}^S$ with residue field $k$ supported on\footnote{Here $\mathbf{T}_a$ acts through the restriction of the duality involution $\iota\colon\mathbf{T}_a\to \mathbf{T}^S\otimes_{\mathcal{O}}\mathcal{H}(\sigma_{\Omega_a^{\vee},\lambda_a^{\vee}}^{\circ})$.} $ A:=\mathbf{T}_a\big(H^{q}_!(X_K,\sigma_{\Omega_{\overline{a}}}^{\circ,\vee}\otimes_{\cO}\mathcal{V}_{-w_0^G\lambda})\big)$ for $\mathbf{T}_a:=\mathbf{T}^S\otimes_{\mathcal{O}}\mathfrak{Z}_{\textnormal{GL}_n(F_a),\mathcal{L}_a,\lambda_a}^{\circ}$, there is an ideal $J\trianglelefteq A_{\mathfrak{m}}$ satisfying $(T_n^2p^{N_2}J)^{N_1}=0$ such that $A_{\mathfrak{m}}/J$ is of Galois type and the associated determinant $D_{\mathfrak{m}}$ satisfies $\textnormal{LGC}(S,a,\lambda_{\overline{a}},\Omega_{\overline{a}},\mathcal{L}_{\overline{a}})$.
    \end{itemize}
\end{lemma}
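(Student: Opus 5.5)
The plan is to deduce Lemma \ref{lemma_interiorLGCdual} from Lemma \ref{lemma_interiorLGC} by Poincar\'e duality for interior cohomology, rather than by rerunning the degree-shifting argument. First I would record that the duality involution $\iota$ matches the two Hecke actions: on $\mathbf{T}^S$ it is $[KgK]\mapsto[Kg^{-1}K]$, and at $a$ it is the canonical anti-isomorphism $\mathcal{H}(\sigma_{\Omega_a,\lambda_a}^{\circ}[1/p])\cong\mathcal{H}(\sigma_{\Omega_a^{\vee},\lambda_a^{\vee}}^{\circ}[1/p])$ of \S\ref{Subsection_integralstructures}. This anti-isomorphism preserves the $\mathcal{O}$-models $\mathcal{H}^B(\sigma_{\Omega_a,\lambda_a})^{\circ}$, by the first bullet in the definition of the $N_{i,j}$ and by Definition \ref{Definition_lambdaelladmissible}. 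The Poincar\'e duality pairing
\[
H^q_!(X_K,\sigma_{\Omega_{\overline{a}}}^{\circ,\vee}\otimes_{\cO}\mathcal{V}_{-w_0^G\lambda})\times H^{D-q}_!(X_K,\sigma_{\Omega_{\overline{a}},\lambda_{\overline{a}}}^{\circ}\otimes_{\cO}\mathcal{V}_{\lambda})\to E/\cO,
\]
with $D=\dim_{\mathbf{R}}X^{\mathrm{GL}_n}$, is perfect up to torsion in $H^{\ast}_!$. Working with $\varpi^m$-coefficients and then passing to the limit, or with the Pontryagin dual of the torsion, it identifies $A$ with the Hecke algebra of the dual degree, twisted by $\iota$, up to an ideal of bounded nilpotence.

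Second, I would check that the dual degree $D-q$ is again of the form $d-l(w')$ with $\lambda$ matching $w'\cdot\widetilde{\lambda}'$ for a suitable cohomologically cuspidal $\widetilde{\lambda}'$ satisfying (4). Concretely, I would replace $w$ by $w_0^Pw$-type data, or replace $\widetilde{\lambda}$ by $-w_0\widetilde{\lambda}$ together with its $c$-conjugate. This lets Lemma \ref{lemma_interiorLGC} apply on the dual side. If the degree bookkeeping fails, the fallback is to rerun the proof of Lemma \ref{lemma_interiorLGC} verbatim with $\lambda$ replaced by $-w_0^G\lambda$. The ingredients all have symmetric versions: Lemma \ref{lemma_degreeshifting} with compact supports and duals, Theorem \ref{Theorem_selfdualLGC}, and the $\chi$-twisting argument. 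In that case the only change is that $\mathbf{T}_a$ acts through $\iota$.

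Third, I would transport the Galois side. Under $\iota$ the determinant $P_v(X)$ becomes $P_v^{\vee}$, so a Galois-type quotient for the dual degree with determinant $D'$ produces one for $A_{\mathfrak{m}}$ with determinant $D'^{\vee}$ up to a cyclotomic twist. I would also verify that $\eta^{\circ}$ is compatible with $\pi\mapsto\pi^{\vee}$, which corresponds to $\rho\mapsto\rho^{\vee}$ on $\mathrm{WD}$. With that, conditions (1) and (2) of $\mathrm{LGC}$ pass between the two sides, after adjusting $[h_1,h_2]$.

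I expect the main obstacle to be this last compatibility at $a$: showing that $\iota$ restricted to $\mathfrak{Z}^{\circ}_{\mathrm{GL}_n(F_a),\mathcal{L}_a,\lambda_a}$ lands in the correct integral model $\mathfrak{Z}^{\circ}_{\overline{D}}$ for the dual block. Since the paper defines the $\mathcal{O}$-models only up to the admissible $N$, I would try to absorb any discrepancy into $p^{N_2}$. The justification is that both models have the same $\overline{\mathbf{Q}}_p$-span and the ideals involved are finitely generated.
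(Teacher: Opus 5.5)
Your primary route, deducing the statement from Lemma~\ref{lemma_interiorLGC} by Poincar\'e duality for interior cohomology with $\mathcal{O}$-coefficients, has a genuine gap.

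\emph{Torsion.} The algebra $A$ acts on the whole finitely generated $\mathcal{O}$-module $H^q_!(X_K,\sigma_{\Omega_{\overline{a}}}^{\circ,\vee}\otimes_{\mathcal{O}}\mathcal{V}_{-w_0^G\lambda})$, torsion included. Integral Poincar\'e duality (via the universal coefficient sequence, with $\dim_{\mathbf{R}}X_K=d-1$) links the torsion in degree $q$ with the torsion of $H^{d-q}_c$ of the dual system. It does not link it with anything in the dual degree $d-1-q$. So a maximal ideal supported only on torsion classes in degree $q$ need not occur in $H^{d-1-q}_!$ of the dual system at all. Hence $A$ cannot be identified with the dual-degree Hecke algebra up to an ideal of bounded nilpotence. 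Nor can the torsion be killed by a power of $p$ depending only on $n$ and $F$. Note that the paper applies Poincar\'e duality only with $\mathcal{O}/\varpi^m$-coefficients, where it is perfect (step (1) of the proof of Proposition~\ref{Proposition_MainLGCProp}). The dual situation produced there is exactly what the present lemma has to handle.

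\emph{Degree bookkeeping.} Your doubt here is also justified. Away from $\overline{a},\overline{b}$, dualising $\mathcal{V}_{w\cdot 0}$ gives, up to a global twist, the Kostant weight of a complementary element $w^{*}$ with $l(w^{*})=L-l(w)$, where $L=n^2\sum_{\overline{v}\in\overline{S}_p\setminus\{\overline{a},\overline{b}\}}[F^+_{\overline{v}}:\mathbf{Q}_p]<d$. So Lemma~\ref{lemma_interiorLGC} would in general only address degree $d-L+l(w)$, not the dual degree $l(w)-1$.

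Your fallback is exactly what the paper does, and it is forced rather than optional. One reruns the argument of Lemma~\ref{lemma_interiorLGC} with the local system $\textnormal{Inf}^{\mathcal{P}_{\overline{a}}(0,u)}(\sigma^{\circ,\vee}_{\Omega_{\overline{a}}})\otimes_{\mathcal{O}}\mathcal{V}_{\widetilde{\lambda}}$, i.e.\ the same $\widetilde{\lambda}$ but the dual smooth type at $\overline{a}$. Torsion causes no difficulty there: Lemma~\ref{lemma_degreeshifting} realises $T_n^2p^{N_1}H^{q}_!$, torsion included, as a subquotient of the characteristic-zero module $H^d(\widetilde{X}_{\widetilde{K}(0,u)},\,\cdot\,)^{\overline{a}\textnormal{-ord}}[1/p]$. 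This gives $\textnormal{LGC}$ for the data $(-w_0\lambda_a,\Omega_a^{\vee})$ with $\iota(\mathbf{T}_a)$ acting. One then sets $D_{\mathfrak{m}}:=D_{\mathfrak{m}^{\vee}}^{\vee}$ and $J:=\iota^{-1}(J^{\vee})$.

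On the integral-model issue you single out, do not absorb a discrepancy into $p^{N_2}$. That would make $N_2$ depend on $\lambda$ and $\mathcal{L}$, contrary to the statement. In fact no discrepancy arises. In the rerun the integral Hecke algebra at $a$ is simply $\iota(\mathfrak{Z}^{\circ}_{\textnormal{GL}_n(F_a),\mathcal{L}_a,\lambda_a})$. Moreover, $D\mapsto D^{\vee}$ (with the cyclotomic twist you note) is an isomorphism $R^{\lambda_a,\Omega_a}_{\overline{D}_{\mathfrak{m},a}}\cong R^{-w_0\lambda_a,\Omega_a^{\vee}}_{\overline{D}_{\mathfrak{m}^{\vee},a}}$ defined over $\mathcal{O}$ and intertwining $\eta$ with $\eta\circ\iota$. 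Hence $\eta\circ\iota$ descends to $\eta^{\circ}\circ\iota$ on $\mathfrak{Z}^{\circ}_{\textnormal{GL}_n(F_a),\mathcal{L}_a,\lambda_a,\overline{D}_{\mathfrak{m},a}}$ automatically, and condition (2) of $\textnormal{LGC}$ transports across the duality.
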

\begin{proof}
    Running the proof of Lemma \ref{lemma_interiorLGC} with the local system
    \begin{equation*}
        \mathcal{W}_{\widetilde{\lambda},\mathcal{L}_{\overline{a}}^{\vee}}:=\textnormal{Inf}^{\mathcal{P}_{\overline{a}}(0,u)}(\sigma_{\Omega_{\overline{a}}}^{\circ,\vee})\otimes_{\mathcal{O}}\mathcal{V}_{\widetilde{\lambda}}
    \end{equation*}
    yields integers $N_8,N_9$ and ideal $J^{\vee}\trianglelefteq A_{\mathfrak{m}^{\vee}}^{\vee}$ satisfying $(T_n^2p^{N_9}J^\vee)^{N_8}=0$ for
    \begin{equation*}
        A^{\vee}:=\mathbf{T}_a^{\vee}(H^q_!(X_K,\sigma_{\Omega_{\overline{a}},\lambda_{\overline{a}}}^{\circ}\otimes_{\mathcal{O}}\mathcal{V}_{\lambda_{\overline{a}}})),\textnormal{ }\mathbf{T}_a^{\vee}:=\iota(\mathbf{T}_a),\textnormal{ and }\mathfrak{m}^{\vee}:=\iota(\mathfrak{m}).
    \end{equation*}
    Moreover, the quotient $A^{\vee}_{\mathfrak{m}^{\vee}}/J^{\vee}$ is of Galois type with associated determinant $D_{\mathfrak{m}^{\vee}}:G_{F,S}\to A^{\vee}_{\mathfrak{m}^{\vee}}/J^{\vee}$ satisfying the following.
    \begin{itemize}
        \item It is potentially torsion semistable of weight within a range $[h_{\lambda_{\overline{a}},1},h_{\lambda_{\overline{a}},2}]$.
        \item For the determinant $D_{\mathfrak{m}^{\vee},a}:=D_{\mathfrak{m}^{\vee}}|_{G_{F_a}}$ there is a dotted arrow making the following diagram commute.
        \begin{equation}\label{equation_dualLGC}
            \begin{tikzcd}
	{R_{\overline{D}_{\mathfrak{m}^{\vee},a}}} & A \\
	{R_{\overline{D}_{\mathfrak{m}^{\vee},a}}^{-w_0^{\textnormal{GL}_n}\lambda_a,\Omega_a^{\vee}}} & {\mathfrak{Z}^{\circ}_{\textnormal{GL}_n(F_a),\mathcal{L}_a,\lambda_a,\overline{D}_{\mathfrak{m},a}}.}
	\arrow["{D_{\mathfrak{m}^{\vee},a}}", from=1-1, to=1-2]
	\arrow[two heads, from=1-1, to=2-1]
	\arrow[dotted, from=2-1, to=1-2]
	\arrow["{\textnormal{nat}_A\circ \iota}"', from=2-2, to=1-2]
	\arrow["{\eta^{\circ}\circ\iota}", from=2-2, to=2-1]
\end{tikzcd}
        \end{equation}
    \end{itemize}
    In the diagram above we denoted by $\Omega_a^{\vee}$ the contragredient of the block $\Omega_a$ and $\overline{D}_{\mathfrak{m},a}:=(\overline{D}_{\mathfrak{m}^{\vee},a})^{\vee}$.
    To see that the map $\eta\circ\iota$ descends to a map $\eta^{\circ}\circ\iota$ as in the diagram one considers the following commutative diagram
    \begin{equation}\label{equation_dualitydiagram}
        \begin{tikzcd}
	{R_{\overline{D}_{\mathfrak{m},a}}^{\lambda_a,\Omega_a}\otimes_{\mathcal{O}}\overline{\mathbf{Q}}_p} && {R^{-w_0^{\textnormal{GL}_n}\lambda_a,\Omega_a^{\vee}}_{\overline{D}_{\mathfrak{m}^{\vee},a}}\otimes_{\mathcal{O}}\overline{\mathbf{Q}}_p} \\
	{\mathfrak{Z}_{\textnormal{GL}_n(F_a),\Omega_a}} && {\mathfrak{Z}_{\textnormal{GL}_n(F_a),\Omega_{a}^{\vee}}}
	\arrow["{D\mapsto D^{\vee}}", from=1-1, to=1-3]
	\arrow["\eta", from=2-1, to=1-1]
	\arrow["\iota"', from=2-1, to=2-3]
	\arrow["\eta"', from=2-3, to=1-3]
\end{tikzcd}
    \end{equation}
    where the horizontal maps are isomorphisms and the top horizontal map descends to a map on the $\mathcal{O}$-integral deformation rings.

    We can now set $J:=\iota^{-1}(J^{\vee})$, $A:=\iota^{-1}(A^{\vee})$ and $D_{\mathfrak{m}}:=D_{\mathfrak{m}^{\vee}}^{\vee}$. It is then clear that $D_{\mathfrak{m}}$ is potentially torsion semistable with weights within a range $[h_{\lambda_{\overline{a}},1},h_{\lambda_{\overline{a},2}}]$. Combining \eqref{equation_dualLGC}  and \eqref{equation_dualitydiagram} one easily verifies condition $2$ of $\textnormal{LGC}(S,a,\lambda_{\overline{a}},\Omega_{\overline{a}},\mathcal{L}_{\overline{a}})$ as well.
    
\end{proof}

\subsection{Finishing the proof of Proposition \ref{Proposition_MainLGCProp}}
\begin{proof}[Proof of Proposition \ref{Proposition_MainLGCProp}]
For $1\leq n'\leq n$ we set $T_{n'}$ to be the Hecke operator defined at the end of \S\ref{subsubsection_theelementT_n}. By running induction on $n$, we may assume that $P(n',a,S,T_{n'})$ holds true as long as $1\leq n'<n$.

    We recall, in light of Lemma \ref{lemma_equivalent_LGC_hypothesis}, what we are trying to do: given the hypothesis of the Proposition and $q$, $K$, $\lambda$, $\Omega$ and $\mathcal{L}$, we must find $N_1$, $N_2$ and, for every $m\geq 1$, and maximal ideal $\mathfrak{m}\trianglelefteq \mathbf{T}^S$ an ideal $J\trianglelefteq A_{\mathfrak{m}}:=\mathbf{T}_a(H^q(X_K,\sigma_{\Omega,\lambda}^{\circ}/\varpi^m)_{\mathfrak{m}})$
    satisfying $(T_n^2p^{N_2}J)^{N_1}=0$ such that
    \begin{itemize}
        \item $A_{\mathfrak{m}}/J$ is of Galois type with associated determinant $D_{\mathfrak{m}}:G_{F,S}\to A_{\mathfrak{m}}/J$ satisfying $\textnormal{LGC}(S,a,\lambda,\Omega,\mathcal{L})$.
    \end{itemize}

    We will now show that the desired property holds for each degree $q \in [0, d-1]$ (with fixed $K$, $\lambda$, $\Omega$ and $\mathcal{L}$) by descending induction on $q$. (Note that for $q$ outside the range $[0,d-1]$ there is nothing to prove as, $K$ being good, $X_K$ is a smooth real manifold of dimension $d-1$ and so its cohomology is concentrated in this range.)
     The proof proceeds by a list of steps eventually reducing the assertion to Lemma \ref{Lemma_BoundaryLGC}, Lemma \ref{lemma_interiorLGC} and Lemma \ref{lemma_interiorLGCdual}.
    \begin{enumerate}
        \item If $q < \frac{d-1}{2}$, our argument requires more involved reduction steps. First, in this case we consider the Poincar\'e duality isomorphism (respecting the action of $\mathbf{T}_a$ on the target through the restriction of the duality involution $\iota\colon\mathbf{T}_a\to \mathbf{T}^S\otimes_{\mathcal{O}}\mathcal{H}(\sigma_{\Omega_a,\lambda_a}^{\circ,\vee})$)
    \[H^q(X_K ,\sigma_{\Omega,\lambda}^{\circ}/\varpi^m ) \cong H^{d-1-q}_c(X_K, \sigma_{\Omega,\lambda}^{\circ,\vee} /\varpi^m)^\vee \]
    (where the outer $(\cdot)^\vee$ denotes Pontryagin dual).
    Further, there is an exact sequence
    \[ H^{d-2-q}(\partial X_K,\sigma_{\Omega,\lambda}^{\circ,\vee} /\varpi^m) \to H^{d-1-q}_c(X_K, \sigma_{\Omega,\lambda}^{\circ,\vee} /\varpi^m) \to  \]
    \begin{equation*}
        H^{d-1-q}(X_K, \sigma_{\Omega,\lambda}^{\circ,\vee} /\varpi^m).
    \end{equation*}
    By Poincar\'e duality, Lemma \ref{Lemma_BoundaryLGC} and induction (on $n$), the faithful Hecke algebra 
    \begin{equation*}
        \mathbf{T}_a(H^{d-2-q}(\partial X_K,\sigma_{\Omega,\lambda}^{\circ,\vee}/\varpi^m))_{\mathfrak{m}}
    \end{equation*} satisfies the assertion of the Proposition. Therefore, by Lemma \ref{lemma_shortexactsequencereduction} it suffices to consider $\mathbf{T}_a(H^{d-1-q}(X_K, \sigma_{\Omega,\lambda}^{\circ,\vee} /\varpi^m))_{\mathfrak{m}}$.
    On the other hand, we can find morphisms of $\cO[\GL_n(\cO_{F, S_p})]$-modules
    \[ \mathcal{V}_\lambda^\vee \to \mathcal{V}_{-w_0^G(\lambda)} \]
    and
    \[  \mathcal{V}_{-w_0^G(\lambda)} \to \mathcal{V}_\lambda^\vee \]
    whose composite is multiplication by a power of $p$ depending only on $\lambda$. In particular, when $q<\frac{d-1}{2}$, another application of Lemma \ref{lemma_shortexactsequencereduction} shows that it suffices to prove that 
    \begin{equation*}
        \mathbf{T}_a\big(H^{d-1-q}(X_K,\sigma_{\Omega}^{\circ,\vee}\otimes_{\cO}\mathcal{V}_{-w_0^G\lambda}/\varpi^m)\big)_{\mathfrak{m}}
    \end{equation*} 
    has the required property (where we use that $\iota(\mathbf{T}_a)\leq \mathcal{H}(\sigma_{\Omega^{\vee}_a,\lambda_a^{\vee}}^{\circ})$ by definition).
    \item Analogous to the twisting argument in the second and third paragraph of the proof of \cite[Theorem 4.5.1]{10author}, we can assume that the following are satisfied.
    \begin{itemize}
        \item If $q\geq \frac{d-1}{2}$, $\lambda$ is matched with a dominant weight in $(\mathbf{Z}_+^{2n})^{\widetilde{I}_p}$.
        \item If $q<\frac{d-1}{2}$, $-w_0^G\lambda$ is matched with a dominant weight in $(\mathbf{Z}_+^{2n})^{\widetilde{I}_p}$.
    \end{itemize}
    Indeed, it is easy to track the Hecke action and weight at $a$ throughout the argument of \cite{10author} and what it translates to on the Galois side, allowing one to verify both condition $(1)$ and $(2)$ of $\textnormal{LGC}(S,a,\lambda,\Omega,\mathcal{L})$.
    \item 
    We can replace $K$ by a good normal compact open subgroup $K'\trianglelefteq K$ with $K'^{\overline{S}\setminus\{\overline{a}\}}=K^{\overline{S}\setminus\{\overline{a}\}}$. Indeed, for $\mathcal{V}=\sigma_{\Omega,\lambda}^{\circ},\sigma_{\Omega}^{\circ,\vee}\otimes_{\cO}\mathcal{V}_{-w_0^G\lambda}$, the Hochschild--Serre spectral sequence for compactly supported cohomology, and Poincar\'e duality, together give a $\mathbf{T}_a$-equivariant spectral sequence
    \[ H^r(K/K', H^{d-1-s}(X_{K'}, \mathcal{V} / \varpi^m)^\vee) \Rightarrow H^{d-1-r-s}(X_K, \mathcal{V}/ \varpi^m)^\vee \]
    (where $(\cdot)^\vee$ denotes Pontryagin dual). In particular, $H^{q}(X_K, \mathcal{V} / \varpi^m)$ has a filtration whose graded pieces are subquotients of the groups 
    \[ H^s(K / K', H^{q + s}(X_{K'}, \mathcal{V} / \varpi^m)^\vee)^\vee, \] so Lemma \ref{lemma_shortexactsequencereduction} shows that, by induction, the desired property will hold for $H^q(X_K, \mathcal{V} / \varpi^m)$, provided that it holds for $H^q(X_{K'}, \mathcal{V} / \varpi^m)$.

    In particular, we can assume the following.
    \begin{itemize}
        \item For $\overline{v}\in \overline{S}_p\setminus\{\overline{a}\}$, $K_{\overline{v}}\leq \ker\big(G(\mathcal{O}_{F^+_{\overline{v}}})\to G(\mathcal{O}_{F^+_{\overline{v}}}/\varpi_{\overline{v}}^h)\big)$ for $h=\max (m,u_{\Omega_{\overline{v}}})$.
        \item There exists a good subgroup $\widetilde{K} \leq \widetilde{G}(\mathbf{A}_{F^+}^\infty)$ such that $K_P = K_U \rtimes K_G$ is decomposed with respect to $P$, $\widetilde{K}^S = \widetilde{G}(\widehat{\cO}_{F^+}^S)$, $K_G = K$, and $K_U = U(\widehat{\cO}_{F^+})$.
    \end{itemize}
    
    \item By Lemma \ref{lem_existence_of_cohomologically_cuspidal_weights_new}, we can find a cohomologically cuspidal weight $\widetilde{\mu} \in (\mathbf{Z}^{2n}_+)^{\widetilde{I}_p}$ with the following properties: 
    \begin{itemize}
        \item If $q\geq \frac{d-1}{2}$, $\widetilde{\mu}_{\overline{a}}$ is matched with $ \lambda_{\overline{a}}$.
        \item If $q<\frac{d-1}{2}$, $\widetilde{\mu}_{\overline{a}}$ is matched with $-w_0^{G}\lambda_{\overline{a}}$.
        \item If $\overline{v} \in \overline{S}_{p}\setminus\{\overline{a},\overline{b}\}$, then $\widetilde{\mu}_{\overline{v}} = 0$.
    \end{itemize}
    
    Choose $w \in W^P_{\overline{S}_{p}\setminus\{\overline{a},\overline{b}\}}$ with length $l(w)$ depending on $q$ as follows.
    \begin{enumerate}
        \item If $q\geq \frac{d-1}{2}$, $l(w)=d-q$.
        \item If $q< \frac{d-1}{2}$, $l(w)=d-(d-1-q)=q+1$.
    \end{enumerate}
    (The existence of $w$ follows from our assumption that \eqref{eqn_enough_shifting_new} holds, as in the proof of \cite[Proposition 4.4.1]{10author}.)
    Let $\mu \in (\mathbf{Z}^n_+)^{\Hom(F, E)}$ be the weight matched to $w \cdot \widetilde{\mu}$. Then, as our level at $\overline{S}_p\setminus\{\overline{a}\}$ is assumed to be sufficiently deep, for $q\geq \frac{d-1}{2}$ and $q<\frac{d-1}{2}$, there are identifications 
    \[ \mathbf{T}_a(H^q( X_K, \sigma_{\Omega,\lambda}^{\circ} / \varpi^m) )= \mathbf{T}_a(H^q(X_K,\sigma_{\Omega_{\overline{a}}}^{\circ}\otimes_{\cO} \mathcal{V}_\mu / \varpi^m)), \]
    and
    \begin{align*}
        &\mathbf{T}_a(H^{d-1-q}( X_K, \sigma_{\Omega}^{\circ,\vee}\otimes_{\cO}\mathcal{V}_{-w_0^G\lambda}/ \varpi^m) ) \\
        =& 
        \mathbf{T}_a(H^{d-1-q}(X_K,\sigma_{\Omega_{\overline{a}}}^{\circ,\vee}\otimes_{\cO} \mathcal{V}_{\mu} / \varpi^m)),
    \end{align*}
    respectively, where $\mathbf{T}_a$ at $a$ acts on the latter via the anti-involution
    \begin{equation*}
        \iota\colon\mathcal{H}(\sigma_{\Omega_{\overline{a}},\lambda_{\overline{a}}}^{\circ})\xrightarrow{\sim}\mathcal{H}(\sigma_{\Omega_{\overline{a}},\lambda_{\overline{a}}}^{\circ,\vee}).
    \end{equation*}
    
    \item Moreover, when $q\geq \frac{d-1}{2}$, we look at the short exact sequence of $\mathbf{T}_a$-modules
    \begin{align*}
        0\to H^q(X_K,\sigma_{\Omega_{\overline{a}}}^{\circ}\otimes_{\cO}\mathcal{V}_{\mu})/\varpi^m &\to H^q(X_K,\sigma_{\Omega_{\overline{a}}}^{\circ}\otimes_{\cO}\mathcal{V}_{\mu}/\varpi^m) \\ &\to 
        H^{q+1}(X_K,\sigma_{\Omega_{\overline{a}}}^{\circ}\otimes_{\cO}\mathcal{V}_{\mu})[\varpi^m]\to 0.
    \end{align*}
    As $H^{q+1}(X_K,\sigma_{\Omega_{\overline{a}}}^{\circ}\otimes_{\cO}\mathcal{V}_{\mu})$ is a finitely generated $\mathcal{O}$-module, there is an integer $m'\geq m$ such that the composition
    \begin{align*}
        H^{q+1}(X_K,\sigma_{\Omega_{\overline{a}}}^{\circ}\otimes_{\cO}\mathcal{V}_{\mu})[\varpi^m] &\hookrightarrow H^{q+1}(X_K,\sigma_{\Omega_{\overline{a}}}^{\circ}\otimes_{\cO}\mathcal{V}_{\mu}) \\ &\to
         H^{q+1}(X_K,\sigma_{\Omega_{\overline{a}}}^{\circ}\otimes_{\cO}\mathcal{V}_{\mu}/\varpi^{m'})
    \end{align*}
    is injective, where the second map is induced by multiplication by $\varpi^{m'}$. Thus, by the induction hypothesis and Lemma \ref{lemma_shortexactsequencereduction} it suffices to find $N_1,N_2$ and $J\trianglelefteq\mathbf{T}_a\Big(H^q(X_K,\sigma_{\Omega_{\overline{a}}}^{\circ}\otimes_{\cO}\mathcal{V}_{\mu})\Big) _{\mathfrak{m}}$ satisfying $(T_n^2p^{N_2}J)^{N_1}=0$ such that $\mathbf{T}_a\Big(H^q(X_K,\sigma_{\Omega_{\overline{a}}}^{\circ}\otimes_{\cO}\mathcal{V}_{\mu})\Big) _{\mathfrak{m}}/J$ is of Galois type and satisfies $\textnormal{LGC}(S,a,\lambda,\Omega,\mathcal{L})$.

    The analogous reduction applies just as well when $q<\frac{d-1}{2}$.
    \item Finally, when $q\geq \frac{d-1}{2}$, we can look at the exact sequence of $\mathbf{T}_a$-modules
    \begin{align*}
        0\to H^{q}_!(X_K,\sigma_{\Omega_{\overline{a}}}^{\circ}\otimes_{\cO}\mathcal{V}_{\mu}) &\to H^{q}(X_K,\sigma_{\Omega_{\overline{a}}}^{\circ}\otimes_{\cO}\mathcal{V}_{\mu}) \\ &\to
        H^{q}(\partial X_K,\sigma_{\Omega_{\overline{a}}}^{\circ}\otimes_{\cO}\mathcal{V}_{\mu}).
    \end{align*}
    In particular, using Lemma \ref{Lemma_BoundaryLGC}, and Lemma \ref{lemma_shortexactsequencereduction}, we see that it suffices to find integers $N_1,N_2$, ideal $J\trianglelefteq \mathbf{T}_a\Big(H^{q}_!(X_K,\sigma_{\Omega_{\overline{a}}}^{\circ}\otimes_{\cO}\mathcal{V}_{\mu})\Big)_{\mathfrak{m}}$ satisfying $(T_n^2p^{N_2}J)^{N_1}=0$
    such that $\mathbf{T}_a\Big(H^{q}_!(X_K,\sigma_{\Omega_{\overline{a}}}^{\circ}\otimes_{\cO}\mathcal{V}_{\mu})\Big)_{\mathfrak{m}}/J$ is of Galois type and satisfies $\textnormal{LGC}(S,a,\lambda,\Omega,\mathcal{L})$.

    Again, analogously, when $q< \frac{d-1}{2}$, it suffices to verify that 
    \begin{equation*}
        \mathbf{T}_a(H^{d-1-q}_!(X_K,\sigma_{\Omega_{\overline{a}}}^{\circ,\vee}\otimes_{\cO}\mathcal{V}_{\mu}))_{\mathfrak{m}}
    \end{equation*} has the required property.
    \end{enumerate}

The claim for the interior cohomology in $(6)$ now follows from Lemma \ref{lemma_interiorLGC} and Lemma \ref{lemma_interiorLGCdual}.    
\end{proof}

\section{Remarks on Weil--Deligne representations}\label{sec_nilp_prec}

In this section, we prove some simple results about Weil--Deligne representations, including Corollary \ref{cor_generic_is_maximal}, which shows that Theorem \ref{introthm_lgc} implies Corollary \ref{introcor_lgc_nilp}. Our approach is inspired by the proof of  \cite[Lemma 3.1.9]{acampo2024dworkmotivesmonodromypotential}.  

We work over $\mathbf{C}$, and consider finite-dimensional Weil--Deligne representations of $W_K$, where $K$ is a fixed non-archimedean local field. Let $\mathcal{I}$ denote the set of isomorphism classes of irreducible continuous representations of $W_K$ on finite-dimensional $\mathbf{C}$-vector spaces, and let $\mathcal{W}$ denote the set of equivalence classes in $\mathcal{I}$ under the relation of unramified twist. If $\omega \in \mathcal{W}$, and $(r, N)$ is a Weil--Deligne representation on $V$, then the $\omega$-isotypic part $V[\omega] \leq V$ is, by definition, the subspace generated by $W_K$-subrepresentations of $V$ that lie in $\omega$. It is preserved by $N$, so defines a sub-Weil--Deligne representation of $V$.
\begin{definition}
    Let $(r, N)$ be a Frobenius-semisimple Weil--Deligne representation on the finite-dimensional $\mathbf{C}$-vector space $V$. We define integers 
    \[ m_{1, \omega}(r, N) \geq m_{2, \omega}(r, N) \geq \dots \geq 0 \]
    by the requirement that there is an isomorphism
    \[ V[\omega] \cong \oplus_{s_i \in \omega} s_i \otimes \textnormal{Sp}(m_{i, \omega}(r, N)). \]
\end{definition}
The following is \cite[Definition 8.2]{Var24}.
\begin{definition}\label{def_prec}
    Let $(r, N)$, $(r', N')$ be Frobenius-semisimple Weil--Deligne representations. We write $(r, N) \prec (r', N')$ if for all $\omega \in \mathcal{W}$, and for all $i \geq 1$, we have
    \[ m_{1, \omega}(r, N) + \dots + m_{i, \omega}(r, N) \leq m_{1, \omega}(r', N') + \dots + m_{i, \omega}(r', N'). \]
\end{definition}
    Given a continuous semisimple representation $r : W_K \to \GL(V)$ on a finite-dimensional $\mathbf{C}$-vector space, we define (following \cite{Vog93}) $P(r) \leq \End(V)$ to be the subspace of endomorphisms $N : V \to V$ such that $(r, N)$ is a Weil--Deligne representation; equivalently, $P(r) = \Hom_{W_K}(V, V(-1))$.

    Let $G(r) = \operatorname{Cent}_{\GL(V)}(r(W_K))$. Then $G(r)$ is a (connected) reductive group, and it acts on $P(r)$. 
    \begin{lemma}\label{lem_description_of_closure_ordering}
        Let $(r, N)$, $(r, N')$ be Frobenius-semisimple Weil--Deligne representations on the finite-dimensional $\mathbf{C}$-vector space $V$. Suppose that $N$ lies in the closure of the $G(r)$-orbit of $N'$. Then $(r, N) \prec (r, N')$.
    \end{lemma}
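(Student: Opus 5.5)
The plan is to reduce to a single isotypic component and then express the partial sums $m_{1,\omega}+\dots+m_{i,\omega}$ through ranks of powers of $N$, which are lower semicontinuous. Both $N$ and $N'$ commute with $r(W_K)$ up to twist, so each preserves $V[\omega]$. The decomposition $V=\oplus_\omega V[\omega]$ is $G(r)$-stable, and $G(r)=\prod_\omega G(r)|_{V[\omega]}$, because the centraliser of $r$ preserves isotypic parts. Consequently, if $N$ lies in the closure of $G(r)\cdot N'$, then $N|_{V[\omega]}$ lies in the closure of the orbit of $N'|_{V[\omega]}$ under the group $\operatorname{Cent}_{\GL(V[\omega])}(r)$. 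We may therefore fix $\omega$ and assume $V=V[\omega]$.

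Next we compute the ranks of powers of $N$. Let $s\in\omega$ have dimension $d$. Using $V\cong\oplus_i s_i\otimes \textnormal{Sp}(m_i)$, the operator $N^k$ acts on $s_i\otimes\textnormal{Sp}(m_i)$ with rank $d\cdot\max(m_i-k,0)$, since all $s_i$ are twists of $s$ and so have dimension $d$. It follows that
\[ \operatorname{rank}(N^k)=d\sum_i \max(m_{i,\omega}(r,N)-k,0), \]
so $\operatorname{rank}(N^k)-\operatorname{rank}(N^{k+1})=d\cdot\#\{i: m_{i,\omega}>k\}$. Write $\lambda=(m_{1,\omega}\geq m_{2,\omega}\geq\dots)$. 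The ranks of the powers of $N$ therefore determine the conjugate partition $\lambda^t$ through its partial sums: $\operatorname{rank}(N^k)=d\sum_{j>k}\lambda^t_j$.

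We then use semicontinuity. On the closure of the orbit, the function $X\mapsto \operatorname{rank}(X^k)$ is lower semicontinuous, so $\operatorname{rank}(N^k)\leq\operatorname{rank}(N'^k)$ for every $k$. Moreover $N$ and $N'$ act on the same space, so $\sum_j\lambda^t_j=\dim V/d$ is the same for both. Combining these, $\sum_{j\leq k}\lambda^t_j(N)\geq\sum_{j\leq k}\lambda^t_j(N')$ for all $k$; that is, $\lambda(N)^t$ dominates $\lambda(N')^t$. Since $\lambda(N)$ and $\lambda(N')$ have the same total size, the standard fact that transposition reverses dominance order gives $\lambda(N)\preceq\lambda(N')$. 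This is exactly the inequality of partial sums in Definition \ref{def_prec}.

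The main point needing care is the reduction to a single $\omega$, namely that orbit closures decompose across isotypic parts. This holds because $G(r)$ is a product of the groups acting on the pieces, and the projection to each factor is continuous and equivariant. Apart from that, the argument is standard partition combinatorics. The factor $d$ causes no trouble since it is uniform within $\omega$.
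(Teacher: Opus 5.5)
Your proof is correct and follows the same route as the paper: reduce to a single $\omega$ using the compatible product decompositions $P(r)=\prod_\omega P(r[\omega])$ and $G(r)=\prod_\omega G(r[\omega])$, then compare the Jordan types of $N[\omega]$ and $N'[\omega]$, in which each $m_{i,\omega}$ occurs $\dim s_i$ times. The only difference is that the paper cites the dominance-order criterion for closures of nilpotent $\GL(W)$-orbits, whereas you prove the direction actually needed, via lower semicontinuity of $X\mapsto\operatorname{rank}(X^k)$ and the fact that transposition reverses dominance order, which makes your argument self-contained.
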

    \begin{proof}
        If $\omega \in \mathcal{W}$, let $r[\omega]$ denote the representation of the Weil group on $V[\omega]$. Then $P(r) = \prod_\omega P(r[\omega])$ and $G(r) = \prod_\omega G(r[\omega])$ (and these product decompositions are compatible). If $N$ lies in the closure of the $G(r)$-orbit of $N'$, then for any $\omega \in \mathcal{W}$, $N[\omega] \in P(r[\omega])$ lies in the closure of the $G(r[\omega])$-orbit of $N'[\omega]$, and therefore in the closure of the $\GL(V[\omega])$-orbit of $N'[\omega]$. 
        
        It is well-known (cf. \cite[\S 7.8.1]{Bel09}) that if $M, M'$ are nilpotent endomorphisms on the finite-dimensional $\mathbf{C}$-vector space $W$, with Jordan blocks of size $t_1 \geq t_2 \geq \dots$ and $t_1' \geq t_2' \geq \dots$, then $M$ is in the closure of the $\GL(W)$-orbit of $M'$ if and only if for all $i \geq 1$ we have
        \[ t_1 + t_2 + \dots + t_i \leq t_1' + t_2' + \dots + t_i'.  \]
        The Jordan blocks of $N[\omega]$ have size $m_{1, \omega}(r, N), \dots, m_{1, \omega}(r, N)$ ($\dim s_1$ times), $m_{2, \omega}(r, N), \dots, m_{2, \omega}(r, N)$ ($\dim s_2 = \dim s_1$ times), and so on. We see that we must have $(r, N) \prec (r, N')$, as required.
    \end{proof}
\begin{definition}
    Let $(r, N)$ be a Frobenius-semisimple Weil--Deligne representation on the finite-dimensional $\mathbf{C}$-vector space $V$. We say that $(r, N)$ is generic if $\Hom_{\mathrm{WD}}((r, N), (r(1), N)) = 0$.
\end{definition}
\begin{prop}\label{prop_unique_generic}
    Let $V$ be a finite-dimensional $\mathbf{C}$-vector space, and let $r : W_K \to \GL(V)$ be a continuous semisimple representation. Then:
    \begin{enumerate}
        \item $G(r)$ has finitely many orbits in $P(r)$. In particular, there is a unique open orbit.
        \item If $N \in P(r)$, then $N$ lies in the unique open orbit if and only if $(r, N)$ is a generic Weil--Deligne representation.
        \item Suppose that $N, N' \in P(r)$ and that $(r, N')$ is a generic Weil--Deligne representation. Then $(r, N) \prec (r, N')$.
    \end{enumerate}
\end{prop}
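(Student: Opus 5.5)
My plan is to reduce to a single isotypic component and then to a quiver-type description. As in the proof of Lemma \ref{lem_description_of_closure_ordering}, there are compatible product decompositions $P(r)=\prod_\omega P(r[\omega])$ and $G(r)=\prod_\omega G(r[\omega])$. Orbits, open orbits and genericity all split as products, since $\Hom_{\mathrm{WD}}((r,N),(r(1),N))$ decomposes over $\omega$ because $r(1)$ is an unramified twist of $r$. So we may assume $V=V[\omega]$ for a single $\omega$.

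Fix $s\in\omega$. Then $V\cong\bigoplus_{j} s\otimes|\cdot|^{j}\otimes M_j$ (for finitely many twists), where $M_j$ is a multiplicity space. Grouping twists into classes modulo $|\cdot|^{\mathbf{Z}}$ (up to the finite stabiliser of $s$ under unramified twists), we get
\[ G(r)=\prod_j \GL(M_j), \qquad P(r)=\bigoplus_j \Hom(M_j,M_{j+1}). \]
Here the indexing is adjusted so that $N$ shifts the twist by one. This is the representation space of a disjoint union of type $A$ equioriented quivers, that is, chains and possibly cycles.

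For (1), Gabriel's theorem gives finitely many orbits in the chain case, since chains are of finite type. The main obstacle is the cyclic case. There I would use that $N$ is nilpotent, because $N r(\phi)=q^{-1}r(\phi)N$ forces the eigenvalues of $r(\phi)$ on $V$ to be shifted by $N$. With finitely many $M_j$, a cycle can only occur if $q$-power twists return to the same isomorphism class, which happens only up to the finite torsion of unramified twisting. One then has to check that nilpotent representations of a cyclic quiver with fixed dimension vector still fall into finitely many orbits, classified by indecomposable strings. I would try to avoid this case entirely by indexing through the real exponent of twists, which turns every component into a chain. Uniqueness of the open orbit then follows from irreducibility of the vector space $P(r)$ together with finiteness of the orbits.

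For (2), the tangent space of the orbit at $N$ is the image of $\mathrm{Lie}\,G(r)=\End_{W_K}(V)\to P(r)$, $X\mapsto [X,N]$. So the orbit is open if and only if this map is surjective, which by dimension count is equivalent to
\[ \dim \mathrm{Cent}(N)=\dim\End_{W_K}(V)-\dim P(r). \]
Here $\mathrm{Cent}(N)$ denotes $\End_{\mathrm{WD}}(r,N)$. For quiver representations we have the Euler form identity $\dim\End - \dim\mathrm{Ext}^1 = \dim\End_{W_K}(V)-\dim P(r)$. The condition therefore becomes $\mathrm{Ext}^1=0$, which is the usual criterion for an open orbit. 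Finally, I would identify $\mathrm{Ext}^1_{\mathrm{WD}}$, by Serre-type duality for the hereditary quiver category, with the dual of $\Hom_{\mathrm{WD}}((r,N),(r(1),N))$. Equivalently, I would check directly on strings that $\Hom(\mathrm{Sp}(a)\otimes s,\mathrm{Sp}(b)\otimes s(1))\neq0$ exactly in the linked configurations that make $\mathrm{Ext}^1$ nonzero.

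For (3), the open orbit is dense in the irreducible space $P(r)$, so every $N$ lies in the closure of the orbit of $N'$. Lemma \ref{lem_description_of_closure_ordering} then gives $(r,N)\prec(r,N')$.
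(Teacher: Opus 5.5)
Your argument is correct, but it takes a different route from the paper.

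The paper proves (1) by citing \cite[Proposition 4.5(4)]{Vog93}. For (2), it observes that the generic locus is exactly where $\ad N\colon \Hom_{W_K}(r,r(1))\to\End_{W_K}(V)$ is injective, hence open. It then imports from \cite[Lemma 3.1.9]{acampo2024dworkmotivesmonodromypotential} that this locus is a non-empty single $G(r)$-orbit.

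You instead make everything explicit through equioriented type $A$ quivers. Gabriel's theorem gives finiteness of orbits. The tangent-space computation, the Euler form and Auslander--Reiten duality then identify the open orbit with the generic locus. So the single-orbit statement comes out as a consequence rather than an input, and your proof is self-contained.

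A few points to tidy:
\begin{enumerate}
\item The cyclic case never arises. If $s\otimes|\cdot|^k\cong s$, comparing determinants gives $|\cdot|^{k\dim s}=1$, so $k=0$. Your ``real exponent'' indexing is therefore not a workaround but the actual situation, and your ``main obstacle'' disappears.
\item The $\mathrm{Ext}^1$ you need is the one in the quiver category, namely the cokernel of $X\mapsto[X,N]$ on $\End_{W_K}(V)$. It is not $\mathrm{Ext}^1$ among all Weil--Deligne representations, which also sees extensions with non-semisimple Frobenius.
\item The Auslander--Reiten translate $\tau_{\mathrm{AR}}$ kills projective summands, whereas the twist $(1)$ pushes them off the support of $M$. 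These shifted summands admit no nonzero maps from $M$, so you still get $\Hom(M,\tau_{\mathrm{AR}}M)=\Hom_{\mathrm{WD}}((r,N),(r(1),N))$, but this should be said.
\end{enumerate}

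Finally, you can bypass the Euler form and Auslander--Reiten theory altogether. Since $r$ is semisimple, the trace pairing identifies $P(r)^\ast$ with $\Hom_{W_K}(V,V(1))$ and $\End_{W_K}(V)^\ast$ with $\End_{W_K}(V)$. Under these identifications, the transpose of $X\mapsto[X,N]$ is $Y\mapsto-[Y,N]$. Hence surjectivity of the tangent map is literally injectivity of the paper's $\ad N$, that is, genericity. Combined with (1), this short argument also removes the need for the appeal to \cite{acampo2024dworkmotivesmonodromypotential}.
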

\begin{proof}
    The first part is \cite[Proposition 4.5(4)]{Vog93}. For the second, we observe that the generic locus (as the locus where the map $\ad N : \Hom_{W_K}(r, r(1)) \to \End_{W_K}(V)$ is injective) is open in $P(r)$. It is also non-empty, and forms a single $G(r)$-orbit, by \cite[Lemma 3.1.9]{acampo2024dworkmotivesmonodromypotential}. We conclude that the unique open orbit in $P(r)$ consists precisely of those $N$ such that $(r, N)$ is generic. For the third, we observe that the first and second parts show that $N$ is in the closure of the generic orbit; we can then apply Lemma \ref{lem_description_of_closure_ordering}.
\end{proof}
\begin{corollary}\label{cor_generic_is_maximal}
    Let $n \geq 1$, let  $\pi$ be a generic irreducible admissible $\mathbf{C}[\GL_n(K)]$-module, and let $(r, N)$ be a Weil--Deligne representation on $\mathbf{C}^n$ such that $(r, N)^{\textnormal{ss}} \cong \rec_K(\pi)^{\textnormal{ss}}$. Then $(r, N)^{\textnormal{F-ss}} \prec \rec_K(\pi)$. 
\end{corollary}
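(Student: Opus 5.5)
The idea is to reduce the corollary to Proposition \ref{prop_unique_generic}(3) by arranging for both Weil--Deligne representations to live on the same semisimple $W_K$-representation. Set $(r_\pi, N_\pi) := \rec_K(\pi)$, which is Frobenius-semisimple by construction.

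First, since $(r,N)^{\textnormal{ss}} \cong (r_\pi, N_\pi)^{\textnormal{ss}}$, the Frobenius-semisimplification of $r$ and $r_\pi$ are isomorphic as $W_K$-representations. Here semisimplification forgets $N$ and semisimplifies the underlying Weil group representation. Passing to $(r,N)^{\textnormal{F-ss}} = (r^{\textnormal{ss}}, N)$ leaves $N$ unchanged, so after conjugating by an isomorphism of underlying spaces I may assume that $(r,N)^{\textnormal{F-ss}} = (r_\pi, N')$ for some $N' \in P(r_\pi)$. This uses that $r_\pi$ is semisimple, and that the Frobenius-semisimplification of a Weil--Deligne representation has semisimple underlying $W_K$-representation. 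The latter holds because $W_K$ acts through a finite quotient on inertia, so Frobenius-semisimple implies semisimple.

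Second, I need $(r_\pi, N_\pi)$ to be generic in the sense of the preceding definition, that is, $\Hom_{\mathrm{WD}}(\rec_K(\pi), \rec_K(\pi)(1)) = 0$. This is the standard characterisation of genericity under local Langlands. Write $\pi$ by the Zelevinsky/Langlands classification as an irreducible induced representation from essentially square-integrable pieces $\textnormal{St}_{m_i}(\sigma_i)$. Then $\pi$ is generic if and only if no two segments are linked. A nonzero WD-map $\rec(\pi)\to \rec(\pi)(1)$ restricts to a map between indecomposable summands $s_i\otimes \textnormal{Sp}(m_i) \to s_j\otimes\textnormal{Sp}(m_j)(1)$ commuting with $N$. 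A short computation with the Jordan structure shows that such a map exists exactly when the corresponding segments are linked (or overlap in the appropriate shifted way). Equivalently, I could cite the known fact that the $L$-parameter of a generic representation has its $N$ in the open $G(r)$-orbit. This is the content of the ``generic $\Leftrightarrow$ open orbit'' statement, which is essentially \cite[Lemma 3.1.9]{acampo2024dworkmotivesmonodromypotential} combined with the Zelevinsky criterion. I expect this step to be the main obstacle. My first attempt would be the direct segment computation, since $\Hom_{W_K}$ between $s\otimes\textnormal{Sp}(a)$ and $s'\otimes\textnormal{Sp}(b)(1)$ intertwining $N$ is nonzero precisely when $s'\cong s|\cdot|^{k}$ with the ends of the segments matching up as in a linking.

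Finally, with $N, N' \in P(r_\pi)$ and $(r_\pi, N_\pi)$ generic, Proposition \ref{prop_unique_generic}(3) gives $(r_\pi, N') \prec (r_\pi, N_\pi)$, which is $(r,N)^{\textnormal{F-ss}} \prec \rec_K(\pi)$. The relation $\prec$ depends only on the isomorphism class, so the conjugation performed in the first step is harmless.
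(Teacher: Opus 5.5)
Your proposal is correct and follows the paper's argument. You identify the underlying semisimple $W_K$-representations, use that $\rec_K(\pi)$ is a generic Weil--Deligne representation, and apply Proposition~\ref{prop_unique_generic}(3). Note that (3) should be applied to $N'$ and $N_\pi$, not ``$N, N'$'' as you wrote in the last step.

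The only difference is in the genericity step. The paper simply cites \cite[Lemma 1.1.3]{All16} for the equivalence of genericity of $\pi$ and of $\rec_K(\pi)$, whereas you sketch a direct proof via Zelevinsky's unlinked-segments criterion. Your Hom computation there is right: a nonzero map from the block attached to a segment $\Delta_1$ to the $(1)$-twist of the block attached to $\Delta_2$ exists exactly when $\Delta_2$ precedes $\Delta_1$ in the linked sense.
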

\begin{proof}
    By \cite[Lemma 1.1.3]{All16}, the genericity of $\pi$ is equivalent to that of $\rec_K(\pi)$. The result therefore follows from Proposition \ref{prop_unique_generic}. 
\end{proof}

\bibliographystyle{alpha}
\bibliography{refs}
\end{document}